\documentclass[letterpaper, 12pt, oneside]{book}

\usepackage[margin = 1in, includehead, footskip=0.25in]{geometry}

\usepackage{setspace}
\doublespacing

\usepackage{caption}
\usepackage{subcaption}

\usepackage{tocbibind}

\usepackage{url}
\usepackage{amsmath}
\usepackage{amsthm}
\usepackage{amssymb}
\usepackage{multirow}
\usepackage{booktabs}
\usepackage{tabularx}
\usepackage{longtable}
\usepackage{lscape}
\usepackage{graphicx}
\usepackage{import}
\usepackage[multiple]{footmisc}
\usepackage{multicol}
\usepackage{epigraph}


\usepackage{amssymb,amsmath,amsfonts,amsthm,mathtools,latexsym,ifthen,bezier}
\usepackage{graphicx}
\usepackage{amsxtra}
\usepackage{xspace}
\usepackage{dirtytalk}
\usepackage{url}
\usepackage[english]{babel} 

\usepackage{enumitem}
\usepackage{setspace}
\usepackage[english]{babel}
\usepackage{latexsym}
\usepackage{enumitem}
\usepackage{amsthm}
\usepackage{amssymb}
\DeclareMathAlphabet{\mathpzc}{OT1}{pzc}{m}{it}
\usepackage[latin1]{inputenc}
\usepackage{afterpage}


\newtheorem{theorem}{Theorem}[section]

\newtheorem{lemma}[theorem]{Lemma}
\newtheorem{proposition}[theorem]{Proposition}
\newtheorem{corollary}[theorem]{Corollary}
\newtheorem{observation}[theorem]{Observation}
\newtheorem{fact}[theorem]{Fact}
\newtheorem{claim}[theorem]{Claim}

\theoremstyle{definition}
\newtheorem{definition}[theorem]{Definition}
\newtheorem{example}[theorem]{Example}
\theoremstyle{remark}
\newtheorem{remark}{Remark}

\newtheorem{question}{Question}

\def\hook{\upharpoonright}
\def\forces{\Vdash}
\def\barN{\bN}
\def\barG{\overline{G}}

\newcommand{\hooks}{\mathrel{\angle}}



\newcommand{\kla}[1]{ {\langle #1 \rangle} }

\newcommand{\st}{\;|\;}

\newcommand{\ran}{ {\rm ran} }

\newcommand{\sub}{\subseteq}

\newfont{\ssi}{cmssi12 at 12pt}

\newcommand{\rest}{{\restriction}}

\newcommand{\verl}{{{}^\frown}}

\newenvironment{ea*}{\begin{eqnarray*}}{\end{eqnarray*}}

\setbox0=\hbox{$\longrightarrow$}

\newcommand{\ba}{{\bar{a}}}

\newcommand{\bs}{{\bar{s}}}
\newcommand{\bu}{{\bar{u}}}
\newcommand{\bA}{{\bar{A}}}

\newcommand{\bG}{{\bar{G}}}

\newcommand{\bS}{{\bar{S}}}
\newcommand{\bT}{{\bar{T}}}

\newcommand{\bd}{{\bar{d}}}
\newcommand{\bj}{{\bar{j}}}
\newcommand{\bk}{{\bar{k}}}

\newcommand{\btau}{{\bar{\tau}}}

\newcommand{\btheta}{{\bar{\theta}}}

\newcommand{\bsigma}{{\bar{\sigma}}}

\newcommand{\bp}{{\bar{p}}}

\newcommand{\bN}{{\bar{N}}}

\newcommand{\tsigma}{{\tilde{\sigma}}}

\usepackage{bbm}
\usepackage[rgb,dvipsnames]{xcolor}
\usepackage{tikz} 
\usetikzlibrary{decorations.text}
\usetikzlibrary{arrows,arrows.meta,hobby}
\usetikzlibrary{shapes.misc, positioning}

\newcommand{\seq}[2]{{\langle#1\;|\;}\linebreak[0]{#2\rangle}}

\renewcommand{\phi}{\varphi}

\newcommand{\ZFC}{\ensuremath{\mathsf{ZFC}}\xspace}

\newcommand{\V}{\ensuremath{\mathrm{V}}}


\def\<#1>{\langle#1\rangle}

\renewcommand{\P}{{\mathord{\mathbb P}}}


\newcommand{\MA}{\ensuremath{\mathsf{MA}}}
\newcommand{\MM}{\ensuremath{\mathsf{MM}}\xspace}

\newcommand{\PFA}{\ensuremath{\mathsf{PFA}}\xspace}

\newcommand{\SCFA}{\ensuremath{\mathsf{SCFA}}\xspace}

\newcommand{\MP}{\ensuremath{\mathsf{MP}}}

\newcommand{\ColNothing}{\mathrm{Col}}
\newcommand{\Col}[1]{\ColNothing(#1)}

\newcommand{\MPColNothing}[1]{\MP_{\Col{\dot{\kappa}}}}

\newcommand{\GCH}{\ensuremath{\mathsf{GCH}}\xspace}
\newcommand{\CH}{\ensuremath{\mathsf{CH}}\xspace}





\def\hook{\upharpoonright}
\def\forces{\Vdash}
\def\barN{\bN}
\def\barG{\overline{G}}

\def\Me{\mathcal M}
\def\Null{\mathcal N}
\def\ZFC{\mathsf{ZFC}}
\def\GBC{\mathsf{GBC}}
\def\PA{\mathsf{PA}}
\def\MM{\mathsf{MM}}
\def\PFA{\mathsf{PFA}}
\def\MA{\mathsf{MA}}
\def\SCFA{\mathsf{SCFA}}
\def\DCFA{\mathsf{DCFA}}

\def\Kb{\mathcal K}
\def\baire{\omega^\omega}
\def\bbb{(\omega^\omega)^{\omega^\omega}}
\def\cantor{2^\omega}

\def\mfc{\mathfrak{c}}
\def\mfb{\mathfrak b}
\def \mfd{\mathfrak{d}}
\def\GCH {\mathsf{GCH}}
\def\CH {\mathsf{CH}}


\hyphenation{Mi-ya-mo-to}


\newcommand{\suc}{\mathop{\mathrm{suc}}}

\begin{document}

\frontmatter

\begin{titlepage}

\begin{center}

~\vspace{2in}

\textsc{Alternative Cicho\'n Diagrams and Forcing Axioms Compatible with $\mathsf{CH}$} \\[0.5in]
by \\[0.5in]
\textsc{Corey Bacal Switzer}

\vspace{\fill}
A dissertation submitted to the Graduate Faculty in Mathematics in partial fulfillment of the requirements for the degree of Doctor of Philosophy, The City University of New York. \\[0.25in]
2020

\end{center}

\end{titlepage}

\setcounter{page}{2}

\phantom{}\vspace{\fill}
\begin{center}
\copyright~2020\\
\textsc{Corey Bacal Switzer}\\
All Rights Reserved\\
\end{center}

\begin{center}
  
\textsc{Alternative Cicho\'n Diagrams and Forcing Axioms Compatible with $\CH$} \\[0.2in]
by \\[0.2in]
\textsc{Corey Bacal Switzer}

\vspace{0.5in}

This manuscript has been read and accepted by the Graduate Faculty in Mathematics in satisfaction of the dissertation requirement for the degree of Doctor of Philosophy.
\end{center}

\vspace{0.5in}

\begin{tabular}{p{1.75in}p{0.5in}p{3.5in}}
~                                   & & \textbf{Professors Gunter Fuchs and Joel David Hamkins}\\
~                                   & & \\
\hrulefill                          & &\hrulefill \\
Date                                & & Co-Chairs of Examining Committee\\
~                                   & & \\
~                                   & & \textbf{Professor Ara Basmajian}\\
~                                   & & \\
\hrulefill                          & &\hrulefill \\
Date                                & & Executive Officer\\
\end{tabular}

\vspace{0.5in}

\begin{tabular}{l}
\textbf{Professor Gunter Fuchs} \\
\textbf{Professor Joel David Hamkins} \\
\textbf{Professor Arthur Apter} \\
Supervisory Committee \\
\end{tabular}

\vspace{\fill}
\begin{center}
\textsc{The City University of New York}
\end{center}

\begin{center}
Abstract \\
\textsc{Alternative Cicho\'n Diagrams and Forcing Axioms Compatible with $\CH$} \\
by \\
\textsc{Corey Bacal Switzer} \\[0.25in]
\end{center}

\vspace{0.25in}

\noindent Advisors: Professors Gunter Fuchs and Joel David Hamkins

\vspace{0.25in}

\noindent
This dissertation surveys several topics in the general areas of iterated forcing, infinite combinatorics and set theory of the reals. There are four largely independent chapters, the first two of which consider alternative versions of the Cicho\'n diagram and the latter two consider forcing axioms compatible with $\CH$. In the first chapter, I begin by introducing the notion of a {\em reduction concept}, generalizing various notions of reduction in the literature and show that for each such reduction there is a Cicho\'n diagram for effective cardinal characteristics relativized to that reduction. As an application I investigate in detail the Cicho\'n diagram for degrees of constructibility relative to a fixed inner model $W \models \ZFC$. 

In the second chapter, I study the space of functions $f:\baire \to \baire$ and introduce 18 new higher cardinal characteristics associated with this space. I prove that these can be organized into two diagrams of 6 and 12 cardinals respecitvely analogous to the Cicho\'n diagram on $\omega$. I then investigate their relation to cardinal invariants on $\omega$ and introduce several new forcing notions for proving consistent separations between the cardinals.

The third chapter concerns Jensen's subcomplete and subproper forcing. I generalize these notions to the (seemingly) larger classes of $\infty$-subcomplete and $\infty$-subproper. I show that both classes are (apparently) much more nicely behaved structurally than their non-$\infty$-counterparts and iteration theorems are proved for both classes using Miyamoto's nice iterations. Several preservation theorems are then presented. This includes the preservation of Souslin trees, the Sacks property, the Laver property, the property of being $\baire$-bounding and the property of not adding branches to a given $\omega_1$-tree along nice iterations of $\infty$-subproper forcing notions. As an application of these methods I produce many new models of the subcomplete forcing axiom, proving that it is consistent with a wide variety of behaviors on the reals and at the level of $\omega_1$. 

The final chapter contrasts the flexibility of $\SCFA$ with Shelah's dee-complete forcing and its associated axiom $\DCFA$. Extending a well known result of Shelah, I show that if a tree of height $\omega_1$ with no branch can be embedded into an $\omega_1$-tree, possibly with branches, then it can be specialized without adding reals. As a consequence I show that $\DCFA$ implies there are no Kurepa trees, even if $\CH$ fails.

\chapter*{Dedication}

\begin{center}

\vspace{2in}

For Paki, who taught me to ask questions

and

Mahala, who showed me how to find answers

\end{center}

\chapter*{Acknowledgments}

It's hard for me to put into words how grateful I am to all of the people who helped me enormously in this undertaking. First and foremost I would like to thank my advisors Joel and Gunter. Thank you both so much. 

Gunter, you are an inspiring mentor. Thank you for all of our meetings at the coffee shop, for all that you taught me, for your patience, and for teaching me to be more careful in checking myself (someday I hope to get this right). 

Joel, your enthusiasm is infectious. Thank you for all you have shown me about how to think about mathematics. It is something I am grateful to be able to carry forward. Thank you also for my visit to Oxford, it was an amazing experience.

Next I want to thank Roman Kossak. Roman, even though you weren't my advisor I am so grateful for all of our time together and all that you taught me. Thank you for introducing me to models of PA and inviting me to give my first ever talk.

Vika, thank you for allowing me to tag along and play organizer with you, for sharing with me your perspective on math, and for inviting me to help organize the conference, even if it didn't happen.

I would like to thank all of the amazing faculty at CUNY that I had the pleasure to interact with during these last four years, particularly the logic group. Thanks to all of the logicians that allowed me to bother you with questions continuously every Friday during and between the seminars. 

Thanks to Arthur Apter for agreeing to be on my defense committee and Alf Dolich for agreeing to be on my oral exam committee. 

Thanks to Alice Medvedev for all of her invaluable advice.

At CUNY I want to also thank all of the staff that I interacted with over the years. I especially want to thank Debbie Silverman at the Graduate Center and Norma Moy at Hunter. 

Next I would like to thank all the (non-CUNY) logicians and mathematicians who met with me and patiently answered all my annoying questions. In particular thanks so much to Ali Enayat, Andrew Brooke-Taylor, Asaf Karagila, Brent Cody, Chris Lambie-Hanson, Giorgio Venturi, Grigor Sargsyan, Henry Towsner, Hiroshi Sakai, J\"{o}rg Brendle, Sean Cox, Saka\'{e} Fuchino, Simon Thomas, and Vera Fischer. Thanks especially to Boban Veli\v{c}kovi\'c for teaching me forcing, supervising my M2 thesis in Paris and all of the conversations we have had since then.

I want to thank Mirna D\v{z}amonja for introducing me to set theory, a gift I couldn't ever hope to repay. 

Thanks to all of the great friends I have made throughout my PhD. Thanks to all of the set theory and MOPA students, Alex, Kameryn, Kaethe, Miha, Eoin, Ryan, Ben and Athar. Double thanks to Kameryn for pushing me to go to Brazil and traveling with me to Brazil and Japan. Thank you to Iv\'{a}n and Micha\l{}. Thanks to James for the music.

Thanks to Sam, Oliver, May, Bo, Jesse and Alan. 

Thank you to Alfie the dog for not being dead yet and being so dumb.

Thank you to my family. To my parents Jeff and Karen for all their love and support and putting up with me living upstairs from them. To my sister Shauna for the chats, the travels and being smarter than I could ever be. To my in-laws Adam, Sylvaine and Noah for including me in your family. To Ron, for teaching me what a shillelagh is, and not beating me with one. To Daniel, Jay, Julia, Matt, Amanda, Alessandra, James, Ian, Brooke, Connor, Chase, Gail, Jeff, Ginny, Michael, Susan, Jesse and Becky. Merci \`{a} la famille Franc\`{e}s-Combes pour m'avoir accueillit \`{a} bras ouverts en France quand j'ai commenc\'{e} ces \'{e}tudes.  Thank you to my grandmother Annie for everything you do and did. 

This academic year we lost my grandfather, Joe ``Paki" Bacal. Words cannot express how much he meant to me. Even though he never would have understood this thesis, he would have read it anyway. I like to think he would have found in it shadows of what he taught me. 

Thank you Mahala. You are my best friend and I love you. Everyday you amaze me, frustrate me and inspire me. Without you I never would have even gone to grad school yet alone finished it.

I never expected to finish my PhD during a global pandemic. The events of the last few months have been difficult and at times, overwhelming. More so than ever I feel so grateful for everything, especially to have had the opportunity to study something I love so much in an as amazing environment as CUNY. On that note I want to finish these acknowledgments by thanking all of the students I have had the privilege to teach at Hunter College. I didn't go to grad school to teach, but it ended up being an endless source of joy and frustration, especially when the pandemic raged. 

Thank you to you all, you've taught me so much.

\tableofcontents
\listoffigures

\mainmatter

\setcounter{chapter}{-1}
\chapter{Introduction}
\chaptermark{Introduction}

Paul Cohen's discovery of forcing, \cite{Cohen1964, Cohen1963, Cohen1966}, revolutionized set theory. The technique not only provided a flexible method for producing new models of $\ZFC$ but also opened up uncountably many possibilities for consistent models of the real line, telling vastly different stories about its topological and measure theoretic properties. Similarly, infinite combinatorics, especially concerning trees and their relatives, were soon seen to be equally malleable. These early results were extended by the discovery of iterated forcing, first seen in \cite{Easton1970}, and then, in the context of forcing axioms in \cite{SolovayTennenbaum, SolovayMartin}.

In this thesis I explore several topics roughly related to forcing constructions and the continuum. While each chapter is essentially independent, there are certain thematic threads that tie them together. Specifically, in each chapter I look at some aspect of set theory which is usually studied in the context of the failure of $\CH$, but I modify it to be compatible with the continuum taking many values, including $\aleph_1$. Here is a brief outline of the structure of the thesis.


In the first chapter I introduce the notion of a {\em reduction concept}, generalizing the idea of Turing reduction and prove that for any \say{reasonable} reduction concept there is a corresponding Cicho\'n diagram. This extends known results of Rupprecht \cite{rupprecht} and others \cite{BBNN, Kihara17, GreenbergTuretskyKuyper} who have looked at various cases of \say{effective cardinal characteristics}. As an application of this general setup, I study the Cicho\'n diagram for degrees of constructibility relative to a fixed inner model $W \models \ZFC$. I show that this diagram is complete in the sense that any two-valued cut in the diagram is consistent, however in most interesting cases the diagram splits into more than one set in contrast to the cardinal case. Most of the individual results here are known, however collectively the new perspective sheds light on the relation between the forcing notions used in cardinal characteristics and their effective counterparts. The main result of this chapter is that there is a proper forcing $\mathbb P \in W$ so that every separation in the diagram can be made simultaneously in a way that is preserved by any further forcing. This leads to a new axiom, $\mathsf{CD}(\leq_W)$, which essentially states that the Cicho\'n diagram for $\leq_W$ is as complicated as possible. I show that $\mathsf{CD}(\leq_W)$ is compatible with $\CH$, but a stronger version implies all of the cardinals in the Cicho\'n diagram are greater than $\aleph_1$. The results of this chapter appear in print in \cite{Switz18}. 

In the second chapter I consider a different generalization of cardinal characteristics of the continuum. Much work recently has considered generalizations of well known characteristics to the space $\kappa^\kappa$. Here, I look instead at the space $\bbb$ of functions from $\baire$ to $\baire$. Eighteen new cardinal characteristics for this space are introduced and provable inequalities and consistent separations for these cardinals are investigated. I show that several diagrams similar to the Cicho\'n diagram exist for these spaces. I also show that various constellations for the cardinal characteristics on $\omega$ play a role in the values of these ones and I also introduce three new forcing notions for proving separations between the new cardinals. The results of this chapter appear in print in \cite{SwitzHighDimCC}.

In the third and fourth chapters I switch gears and turn my attention from cardinal characteristics to forcing axioms compatible with $\CH$. The existence of such axioms has long been of interest in set theory see, for example, \cite{MR3037610, MR3821630, AbrahamTodPP}, however, Jensen's recent work in subcomplete forcing (\cite{JensenSPSC}) represents a breakthrough. The subcomplete forcing axiom is a strong axiom which is compatible with $\CH$ and even $\diamondsuit$. In the third chapter I investigate the role of the continuum in this axiom and show that many consequences of $\CH$ and $\diamondsuit$ can be preserved while forcing $\CH$ to fail in a model of $\SCFA$. This involves proving new iteration and preservation theorems for subcomplete and subproper forcing. The type of iteration I use is the nice support iteration of Miyamoto \cite{miyamoto}. One of the unexpected advantages of this approach is it allows a (seemingly) more general class of forcing notions beyond subcomplete and subproper to be iterable. I dub these $\infty$-subcomplete and $\infty$-subproper forcing and consider the structural properties of these classes as well. The work in this chapter also appears in print as part of the larger work \cite{FuSw}.

In the fourth chapter, to contrast my work on subcomplete forcing I look at the axiom $\DCFA$, first considered alongside the assumptions of $\CH$ and $2^{\aleph_1} = \aleph_2$ in \cite{PIPShelah} and less restrictively by Jensen in \cite{JensenCH}. While $\DCFA$ is also compatible with $\CH$, in contrast to $\SCFA$ this axiom seems to effect the universe at the level of the continuum and $\omega_1$. I show that it implies that there are no Kurepa trees, a result sketched by Shelah in \cite{PIPShelah}. There the statement assumes the additional assumptions of $\CH$ and $2^{\aleph_1} = \aleph_2$, though they are not used. To prove this theorem I generalize Shelah's idea, introducing a forcing notion which can specialize certain wide Aronszajn trees of height $\omega_1$ and can be iterated without adding reals. I explore a few other applications of this forcing. The work in this chapter appears in \cite{Switz20widetrees}.

Since each chapter is essentially independent I provide preliminaries at the beginning of each chapter. In some cases, a definition is listed in two chapters for the convenience of the reader. However every effort has been made to uniformize notation. Also, the work in each chapter has led to ongoing research and I briefly outline at the end of each chapter open questions and current investigations along the lines of the content there.

\section{Notation and Some Basic Definitions}

Let me end this introduction by fixing some notation and recalling some basic definitions that will be used in every chapter.  Overall, most notation is standard, and all undefined terms can be found in the well known monographs \cite{KenST} and \cite{JechST}. Also, I use the monograph \cite{BarJu95} as the standard reference for cardinal characteristics of the continuum and occasionally refer to the survey article \cite{BlassHB} as well. Throughout this thesis I use the convention that if $\mathbb P$ is a forcing notion and $p, q \in \mathbb P$ with $q \leq p$ then $q$ is stronger than $p$. One notational convention which varies slightly from the norm is that for the most part I will let letters like $x, y, z, ...$ stand for reals (elements of $\cantor$, $\baire$, etc) and letters like $f, g, h, ...$ stand for functions between uncountable Polish spaces. This will be relevant in particular in chapter $2$ where I will frequently refer to both.


Let $\mathcal I$ be a non-trivial ideal whose dual filter is non-principle. A set is $\mathcal I$-positive if it's not in $\mathcal I$ and is $\mathcal I$-measure one if its complement is in $\mathcal I$. For every such ideal $\mathcal I$ on a set $X$ we naturally associate four cardinal characteristics.
\begin{enumerate}
\item
{\em The additivity number}: $add (\mathcal I)$, the least size of a family of sets in $\mathcal I$ whose union is not in $\mathcal I$.

\item
{\em The uniformity number}: $non(\mathcal I)$, the least size of an $\mathcal I$-positive set.

\item
{\em The covering number}: $cov (\mathcal I)$, the least size of a family of sets in $\mathcal I$ needed to cover $X$.

\item
{\em The cofinality number}: $cof(\mathcal I)$, the least size of a family of sets in $\mathcal I$ which is cofinal in $\mathcal I$ with respect to inclusion.
\end{enumerate}

Given any set $X$ and a relation $R$ on $X$, we say that an element $x \in X$ is an $R$-{\em bound} for a set $A \subseteq X$ if for every $a \in A$ we have that $a \mathrel{R} x$. A set is $R$-{\em bounded} if it has an $R$-bound. It's $R$-{\em unbounded} otherwise. A set $D \subseteq X$ is $R$-{\em dominating} if for every $y \in X$ there is a $d \in D$ so that $y \mathrel{R} d$. For any such $X$ and $R$ I write $\mfb (R)$ for the least size of an $R$-unbounded set and $\mfd(R)$ for the least size of an $R$-dominating set. If $\mathbb{Q} = (Q, \leq_\mathbb Q)$ is a partially ordered set then I also write $\mfb (\mathbb Q)$ and $\mfd (\mathbb Q)$ for $\mfb (\leq_\mathbb Q)$ and $\mfd(\leq_\mathbb Q)$ respectively.

I let $\mu$ denote the Lebesgue measure on $\omega^\omega$ (or any other oft-encountered Polish space under consideration). The symbols $\Null$, $\Me$, $\Kb$ denote the null ideal, the meager ideal and the ideal generated by $\sigma$-compact subsets of $\baire$ respectively. If $x, y \in \baire$ then $x \leq^* y$ if and only if for all but finitely many $n$ we have $x(n) \leq y(n)$ and $\mfb = \mfb (\leq^*)$, $\mfd = \mfd (\leq^*)$. Recall that $A \in \Kb$ if and only if $A$ is $\leq^*$-bounded, see the proof of \cite[Theorem 2.8]{BlassHB}. The relevant properties that all three of these ideals share is that they are non-trivial $\sigma$-ideals containing all countable subsets of $\baire$ and have a Borel base: every element of each ideal is covered by a Borel set in that ideal. In the case $\Null$ and $\Me$ the fact that the underlying set is $\baire$, as opposed to any other perfect Polish space is unimportant in this thesis, however, it obviously matters for $\Kb$ since many Polish spaces are themselves $\sigma$-compact and hence $\Kb$ on such a space is trivial. 

Implicit in several of these chapters is the classical Cicho\'n diagram, see \cite[Chapter 2]{BarJu95}. This diagram relates the cardinal characteristics for $\Null$, $\Me$ and $\mfb$ and $\mfd$ (which are the cardinal invariants associated with $\Kb$). It is produced below for reference, note that $\mathfrak{x} \to \mathfrak{y}$ means that $\mathfrak{x}$ is $\ZFC$-provably less than or equal to $\mathfrak{y}$. 

\begin{figure}[h]\label{Figure.Cichon}
\centering
  \begin{tikzpicture}[scale=1.5,xscale=2]
     \draw 
	 (2,1) node (addm) {$add(\Me)$}
           (4,1) node (nonn) {$non (\Null)$}
           (3,3) node (cofm) {$cof (\Me)$}
           (2,2) node (bbb) {$\mfb$}
           (3,2) node (ddd) {$\mfd$}
	(1, 3) node (covn) {$cov (\Null)$}
	(1, 1) node (addn) {$add (\Null)$}
	(4, 3) node (cofn) {$cof (\Null)$}
	(0, 1) node (aleph1) {$\aleph_1$}
          (2, 3)  node (nonm) {$non (\Me)$}
          (5, 3) node (ccc) {$\mfc$}
          (3, 1) node (covm) {$cov (\Me)$}
	    
           ;
     \draw[->,>=stealth]
	      (aleph1) edge (addn)
            (addn) edge (addm)
            (addm) edge (covm)
            (covm) edge (nonn)
            (addn) edge (covn)
            (addm) edge (bbb)
            (bbb) edge (ddd)
	     (covm) edge (ddd)
            (ddd) edge (cofm)
            (nonm) edge (cofm)
           (cofm) edge (cofn)
	    (cofn) edge (ccc)
           (nonn) edge (cofn)
           (covn) edge (nonm)
           (bbb) edge (nonm)
            
            ;
  \end{tikzpicture}
\caption{The Cicho\'n Diagram}
\end{figure}
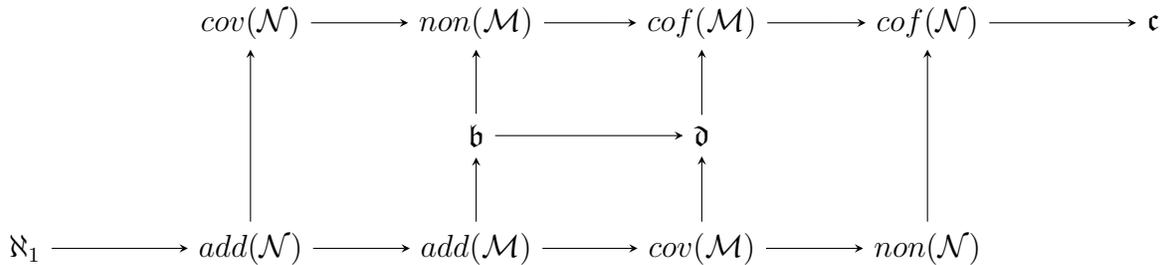

Finally we recall basic terminology of forcing axioms. If $\Gamma$ is a definable class of forcing notions and $\kappa$ is a cardinal then {\em Martin's Axiom For} $\Gamma$, sometimes also called {\em the forcing axiom for} $\Gamma$, which is denoted $\MA_\kappa(\Gamma)$, is the statement that for any $\mathbb P \in \Gamma$, and any $\kappa$ length sequence of dense subsets $\langle D_\alpha \; | \; \alpha < \kappa\rangle$ there is a filter $G \subseteq \mathbb P$ so that for any $\alpha < \kappa$ $G \cap D_\alpha \neq \emptyset$. If $\Gamma$ is the class of c.c.c. forcing notions then $\MA$ denotes $\forall \kappa < 2^{\aleph_0} \; \MA_\kappa (\Gamma)$ holds. If $\Gamma$ is the class of proper forcing notions then we write $\PFA$ for $\MA_{\aleph_1}(\Gamma)$. If $\Gamma$ is the class of stationary set preserving forcing notions then we write $\MM$ for $\MA_{\aleph_1}(\Gamma)$.


\chapter{The Cicho\'n Diagram for Degrees of Relative Constructibility}
\chaptermark{The Cicho\'n Diagram for Degrees}
In this chapter I introduce the notion of a {\em reduction concept} and tie it to cardinal characteristics. While the notion of a reduction concept is rarely written down explicitly in published work (though see \cite{Kihara17, GreenbergTuretskyKuyper}), it has been implicit in the literature since the beginning of the 20th century. Turing reductions, polytime reductions, arithmetic reductions and degrees of constructibility are all examples of reduction concepts. Each comes with its own notion of degree. In this chapter I show that each such degree theory can formulate a variety of \say{highness properties} analogous to some common cardinal characteristics of of the continuum and the implications between these highness properties resemble those of the standard Cicho\'n diagram for cardinals. The case of Turing degrees was worked out in \cite{BBNN}, piggybacking off work from \cite{rupprecht}, so my main contribution here is generalizing the result to the general case. Similar ideas have been explored in Section 5 of \cite{GreenbergTuretskyKuyper} and in \cite{Kihara17}\footnote{Thanks to the anonymous referee of \cite{Switz18} for pointing this out to me.}. In contrast with those papers though I work more on the level building analogues of the Cicho\'n diagram than in considering relations between various types of reducibilities in higher computability theory and Tukey reductions. In particular, Theorem \ref{propdi}, which in slightly less general contexts is essentially folklore, shows that for any ``reasonable" reduction concept, a corresponding Cicho\'n diagram exists. In the second half of the chapter I consider the special case of degrees of constructibility relative to some fixed $W \models \ZFC$. In this case I show, amongst other things, that there is a proper forcing $\mathbb P \in W$ so that in $W^\mathbb P$ the Cicho\'n diagram for $\leq_W$ is fully separated in the sense that every node is non-empty and all consistent non-implications are simultaneously realized. This set up is expressed as the axiom $\mathsf{CD}(\leq_W)$ and some consequences of it are investigated. 

\section{Preliminaries}
Before beginning in earnest I list a few definitions and facts which will be used throughout this chapter. The first definition will in fact be essential throughout this thesis.

\begin{definition}[Combinatorial relations]
Let $x$ and $y$ be elements of $\baire$. Then
\begin{enumerate}
\item
$x \leq^* y$ if and only if for all but finitely many $k$ we have $x(k) \leq y (k)$. In this case we say that $y$ {\em eventually dominates} $x$.
\item
 $x \neq^* y$ if and only if for all but finitely many $k$ we have $x(k) \neq y(k)$. In this case say that $y$ is {\em eventually different from} $x$. Note that the negation of $\neq^*$ is {\em infinitely often equal}, not eventual equality. 
 \item
 Let $z \in \omega^\omega$ and recall that a $z$-{\em slalom} is a function $s: \omega \to [\omega]^{< \omega}$ such that for all $n \in \omega$ the set $|s(n)| \leq z(n)$. In the case where $z$ is the identity function call $s$ simply a {\em slalom}. I denote the space of all slaloms as $\mathcal S$. This space is can be treated as homeomorphic to Baire space in the obvious way, see \cite{PawRec95} for the details of a particularly useful coding of this correspondence. 

For a slalom $s$, I write $x \in^* s$ if and only if for all but finitely many $k$ we have $x(k) \in s(k)$. In this case say that $x$ {\em is eventually captured by} $s$.
 \end{enumerate}
\end{definition}

The cardinal characteristics associated with these relations have nice descriptions in terms of the ideals $\Null$, $\Me$ and $\Kb$. First, for $\in^*$ there is a relation with $\Null$.
\begin{fact}[Bartoszy\'nski, see Theorem 2.3.9 of \cite{BarJu95}]

The following equalities are provable in $\ZFC$.

\begin{enumerate}

\item
$\mathfrak{b} (\in^*) = add(\Null)$
\item
$\mathfrak{d}(\in^*) = cof (\Null)$
\end{enumerate}
\label{infact}
\end{fact}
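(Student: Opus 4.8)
The plan is to derive both identities at once from a Galois--Tukey (Tukey) equivalence between the relation $\in^*$ (between $\baire$ and $\mathcal S$) and the relation $\subseteq$ on $\Null$. Since a $\subseteq$-unbounded subfamily of $\Null$ is precisely a family of null sets whose union is not null, and a $\subseteq$-cofinal subfamily is precisely a base for $\Null$, the least size of such an unbounded family is $add(\Null)$ and the least size of such a cofinal family is $cof(\Null)$ directly from the definitions. So it suffices to exhibit Tukey morphisms both ways. Concretely I would produce maps $x \mapsto N_x$ (reals to null sets) and $N \mapsto s_N$ (null sets to slaloms) with the property that $N_x \subseteq N$ implies $x \in^* s_N$, and dually maps $N \mapsto x_N$ (null sets to reals) and $s \mapsto M_s$ (slaloms to null sets) with $x_N \in^* s$ implying $N \subseteq M_s$. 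A short check of the Galois--Tukey calculus then shows the first pair yields $add(\Null) \le \mfb(\in^*)$ and $cof(\Null) \ge \mfd(\in^*)$, while the second pair yields the reverse inequalities, so both equalities follow.

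The engine of the argument is Bartoszy\'nski's combinatorial normal form for null sets, which is really the content of the lemmas preceding \cite[Theorem 2.3.9]{BarJu95}. Since the ground set is irrelevant for $\Null$, I would compute the null ideal on a product $\prod_n k_n$ of finite sets carrying the natural product probability measure $\mu$, and establish: $A$ is null if and only if there are an increasing interval partition $\langle I_m \mid m \in \omega\rangle$ of $\omega$ and finite sets $S_m$ of sequences on $I_m$ with $\sum_m \mu([S_m]) < \infty$ such that every $y \in A$ belongs to $[S_m]$ for infinitely many $m$; equivalently, the null sets of this form are cofinal in $(\Null,\subseteq)$. The forward direction covers $A$ by open sets of rapidly decreasing measure and slices them along a block partition; the converse is Borel--Cantelli. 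I would either cite this or reproduce the two short measure estimates.

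Granting the normal form, the morphisms follow from a by-now routine dictionary between these small sets and slaloms, with one genuine subtlety: renormalizing the slalom width. A null set in normal form, after passage to a sufficiently coarse re-blocking of $\omega$, is described by boundedly many forbidden blocks per coordinate, and a counting argument packs these into a slalom $s$ with $|s(n)| \le n$ for which a point avoiding the forbidden blocks from some coordinate on is exactly a point captured by $s$ from some coordinate on; conversely a slalom with $|s(n)| \le n$ produces, again after re-blocking and after choosing the $k_n$ growing fast enough, a small set whose membership relation is driven by the blocks coded into $s$. Threading a real $x$ and a null set $N$ through this dictionary --- coding $x$ by the sequence of blocks it occupies, decoding $N$ via its normal form --- yields the implications $N_x \subseteq N \Rightarrow x \in^* s_N$ and $x_N \in^* s \Rightarrow N \subseteq M_s$ demanded of the morphisms.

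The step I expect to be the real obstacle is forcing the infinitary quantifiers to agree: membership in a small set is an $\exists^\infty$ condition (a point belongs if it realizes a forbidden block for infinitely many $m$), whereas $\in^*$ is a $\forall^\infty$ condition, and the same re-blocking must simultaneously convert the summability constraint $\sum_m \mu([S_m]) < \infty$ into the linear bound $|s(n)| \le n$. The point is that $x_N \in^* s$ is exactly what upgrades coordinatewise eventual containment of forbidden-block sets into the required inclusion of $\exists^\infty$-defined small sets. Carrying out these two conversions together, via the blocking and pigeonhole estimates, is where essentially all the content of the theorem lies; the Galois--Tukey bookkeeping around them is purely formal.
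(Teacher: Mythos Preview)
The paper does not prove this statement; it is recorded as a \emph{Fact} with a citation to \cite[Theorem 2.3.9]{BarJu95} and no argument is given. Your proposal is a correct outline of the standard proof (Bartoszy\'nski's small-sets characterization of $\Null$ plus the Galois--Tukey bookkeeping), so there is nothing to compare against in the paper itself.
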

Next, for $\neq^*$ there is a relation with $\Me$.
\begin{fact}[Bartoszy\'nski, see Thereoms 2.4.1 and 2.4.7 of \cite{BarJu95}]
The following equalities are provable in $\ZFC$.
\begin{enumerate}
\item
$\mfb (\neq^*) = non (\Me)$
\item
$\mfd (\neq^*) = cov (\Me)$
\end{enumerate}
\label{neqfact}
\end{fact}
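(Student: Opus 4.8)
The plan is to split each equality into its two inequalities and let Bartoszy\'nski's combinatorial description of the meager ideal do the work. Write $x =^\infty y$ for the negation of $x \neq^* y$, i.e.\ that $x(k)=y(k)$ for infinitely many $k$. First I would recall the chopped-real characterization of $\Me$: a \emph{chopped real} is a pair $(z,\bar I)$ with $z\in\baire$ and $\bar I = \langle I_n \mid n<\omega\rangle$ a partition of $\omega$ into consecutive finite intervals, and $y$ \emph{matches} $(z,\bar I)$ when $y\restriction I_n = z\restriction I_n$ for infinitely many $n$; the input lemma is that every meager set is contained in a set of the form $A_{(z,\bar I)} = \{y \mid \forall^\infty n\; y\restriction I_n \neq z\restriction I_n\}$, and conversely each such $A_{(z,\bar I)}$ is meager. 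Unwinding definitions, $non(\Me)$ is then the least size of a set $X\subseteq\baire$ such that every chopped real is matched by some member of $X$, and $cov(\Me)$ is the least size of a family of chopped reals such that every $y\in\baire$ fails to match some member.

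Next I would dispose of the two easy inequalities. For fixed $z$ the set $\{y \mid y\neq^* z\} = \bigcup_{m<\omega}\{y \mid \forall k\ge m\; y(k)\neq z(k)\}$ is a countable union of closed nowhere dense sets, hence meager. Consequently any non-meager $X$ is contained in no $\{y\mid y\neq^* z\}$, so for every $z$ there is $x\in X$ with $x=^\infty z$; thus $X$ is $\neq^*$-unbounded and $\mfb(\neq^*)\le non(\Me)$. Dually, if $D$ is $\neq^*$-dominating then $\{\{y\mid y\neq^* d\}\mid d\in D\}$ is a cover of $\baire$ by meager sets, so $cov(\Me)\le\mfd(\neq^*)$.

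The content lies in the reverse inequalities $non(\Me)\le\mfb(\neq^*)$ and $\mfd(\neq^*)\le cov(\Me)$, for which I would go through a block-coding. Relative to a single fixed interval partition $\bar I$, identifying $y$ with $\langle y\restriction I_n \mid n<\omega\rangle$ and fixing bijections $\omega^{I_n}\cong\omega$ turns ``$y$ matches $(z,\bar I)$'' into ``$y'=^\infty z'$'' in a copy of $\baire$, so the ``matching'' number for one partition equals $\mfb(\neq^*)$ and the ``non-matching/cover'' number equals $\mfd(\neq^*)$. The real work is to convert a family witnessing $\mfb(\neq^*)$ (resp.\ $\mfd(\neq^*)$) into one that works against \emph{all} interval partitions at once, \emph{without enlarging it} --- equivalently, to prove Bartoszy\'nski's lemma that the ``block'' versions of these two cardinals equal the pointwise ones. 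Here a single preprocessing map is applied to the witnessing family that records, along blocks of rapidly increasing length, enough redundant information that one infinitely-often-equal coordinate is forced to certify agreement on a whole block of the adversary's partition, uniformly over every $\bar I$. Granting this lemma, the block reformulations of $non(\Me)$ and $cov(\Me)$ from the first paragraph yield the two remaining inequalities at once.

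The step I expect to be the main obstacle is precisely this uniformity over interval partitions: a non-meager set must match chopped reals whose blocks grow arbitrarily fast, while infinitely-often-equality controls only single coordinates, so one cannot reduce to a small family of partitions treated one at a time, and the self-describing coding behind Bartoszy\'nski's lemma is genuinely needed; everything else --- the cofinality characterization of $\Me$ and the two easy inequalities --- is routine. A cleaner packaging of the same proof is to exhibit a single Galois--Tukey equivalence between the relational system $(\baire,\baire,\neq^*)$ and the meager relational system: the easy direction supplies one morphism and the block-coding lemma the other, and then both equalities follow from the general fact that Galois--Tukey equivalent systems have the same $\mfb$ and $\mfd$.
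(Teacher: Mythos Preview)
The paper does not prove this statement at all; it is recorded as a \emph{Fact} with a citation to Bartoszy\'nski (Theorems 2.4.1 and 2.4.7 of \cite{BarJu95}), so there is no ``paper's own proof'' to compare against. Your outline is correct and is essentially the standard argument found in the cited reference (and in Blass's Handbook article): the chopped-real/interval-partition characterization of $\Me$, the two easy inequalities from the meagerness of $\{y\mid y\neq^* z\}$, and the block-coding lemma for the nontrivial directions, optionally repackaged as a Galois--Tukey equivalence.
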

Finally, for $\leq^*$, there is a relation with $\Kb$.
\begin{fact}[See Theorem 2.8 of \cite{BlassHB}]
The following equalities are provable in $\ZFC$.
\begin{enumerate}
\item
$\mfb = add (\mathcal K) = non (\Kb)$ 
\item
$\mfd = cov (\Kb) = cof (\Kb)$.
\end{enumerate}
\end{fact}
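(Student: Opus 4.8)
The plan is to deduce all four equalities from the single structural fact recalled in the preliminaries: for $A \subseteq \baire$ one has $A \in \Kb$ if and only if $A$ is $\leq^*$-bounded. Granting this, nothing but elementary counting remains, so the real work is in the characterization itself.

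The first step would therefore be to (re)prove that characterization, for which I would use the standard compactness criterion for subsets of $\baire$: a set has compact closure exactly when it is contained in a box $\prod_{n \in \omega}[0,h(n)]$ for some $h \in \baire$ (this is K\"onig's lemma, or Tychonoff together with the observation that a closed subset of $\baire$ is compact iff each coordinate projection is bounded). For the forward direction, a $\sigma$-compact set is a countable union $\bigcup_{m}K_m$ with $K_m \subseteq \prod_n [0,h_m(n)]$, and the diagonal bound $h(n) = 1 + \max_{m \leq n} h_m(n)$ satisfies $h_m \leq^* h$ for every $m$, so every element of $\bigcup_m K_m$ is $\leq^* h$ and the whole set is bounded. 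For the converse, if $g$ is a bound for $A$ then $A \subseteq \{f : f \leq^* g\} = \bigcup_{k \in \omega}\bigcup_{s \in \omega^{k}}\{f : s \subseteq f \text{ and } f(n) \leq g(n) \text{ for all } n \geq k\}$, a countable union of sets each homeomorphic to a compact product $\prod_{n \geq k}[0,g(n)]$, hence $A \in \Kb$.

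Once the characterization is available I would simply read off the equalities. Writing $C_g = \{f : f \leq^* g\}$, it says precisely that the members of $\Kb$ are the subsets of the various $C_g$, that $A \subseteq C_g$ iff $g$ bounds $A$, and that $C_g \subseteq C_h$ iff $g \leq^* h$. Then $non(\Kb) = \mfb$ is immediate, a $\Kb$-positive set being nothing but a $\leq^*$-unbounded one. For $add(\Kb) = \mfb$: a $\leq^*$-unbounded family of singletons witnesses ``$\leq$'', while conversely the bounds of fewer than $\mfb$ members of $\Kb$ are themselves $\leq^*$-dominated by a single $h$, so their union lies in $\Kb$, giving ``$\geq$''. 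Dually, for $cov(\Kb) = \mfd$: a dominating family $\{g_i\}$ gives the cover $\baire = \bigcup_i C_{g_i}$, while conversely the bounds of any cover of $\baire$ by members of $\Kb$ form a dominating family. And for $cof(\Kb) = \mfd$: a dominating family $\{g_i\}$ makes $\{C_{g_i}\}$ cofinal in $(\Kb,\subseteq)$, while conversely the bounds of any $\subseteq$-cofinal family dominate every $f$, since each singleton $\{f\}$ is a member of $\Kb$ and so must be contained in some set of the family.

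I do not expect a genuine obstacle. The only step with any content is the characterization lemma, and inside it the only non-bookkeeping points are the compactness criterion for $\baire$ and the diagonalization absorbing countably many bounds into one; everything downstream is pure cardinal arithmetic. This is, of course, why the statement is folklore once $\Kb$ has been identified with the ideal of $\leq^*$-bounded subsets of $\baire$.
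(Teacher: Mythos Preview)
Your argument is correct and is exactly the standard one; in fact the paper does not give its own proof of this statement at all, treating it as a cited fact from \cite{BlassHB} (and separately recalling the characterization $A\in\Kb\iff A$ is $\leq^*$-bounded in the introduction). Your derivation of that characterization and the four cardinal equalities from it is the usual route and matches what one finds in the cited reference.
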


When attempting to control these relations while forcing, the following three properties of forcing notions will be useful.
\begin{definition}(\cite[Definition 6.3.37]{BarJu95})
Let $\mathbb P$ be a forcing notion. We say that $\mathbb P$ has the {\em Sacks property} if given any $p \in \mathbb P$ and any name $\dot{x}$, so that $p \forces \dot{x}:\omega \to V$ is a function then there is a $q\leq p$ and a function $y:\omega \to V$ in $V$ such that for all $n$, $q \forces \dot{x}(\check{n}) \in \check{y}(\check{n})$ and $|y(n) | \leq n$. Slightly less formally this means that every new real (in fact $\omega$ sequence) is caught in an old slalom\footnote{In fact the function bounding $n \mapsto |y(n)|$ can be any function from $V$ tending to $\infty$, see \cite[p. 86]{BlassHB}.}. In particular, all reals added by $\mathbb P$ can be captured by a slalom from the ground model.
\label{sacks}
\end{definition}

\begin{definition}(\cite[Definition 6.3.1]{BarJu95})
Let $\mathbb P$ be a forcing notion. We say that $\mathbb P$ is $\omega^\omega$-{\em bounding} if for each $p \in \mathbb P$ and each $\mathbb P$-name $\dot{x}$, if $p \forces \dot{x} : \check{\omega} \to \check{\omega}$ there is a $y\in \baire \cap V$ and a $q \leq p$ so that $q \forces \dot{x} \leq^* \check{y}$. In other words, every new real is $\leq^*$-dominated by some old real. Note that this implies that the reals of $V$ are dominating in $V^\mathbb P$.
\label{ooboundingdef}
\end{definition}

\begin{definition}(\cite[Definition 6.3.27]{BarJu95} )
Let $\mathbb P$ be a forcing notion. We say that $\mathbb P$ has the {\em Laver Property} if given any $p \in \mathbb P$ and any name $\dot{x}$, so that $p \forces \dot{x}:\omega \to \omega$ is a function which is $\leq^*$-bounded by a ground model real then there is a $q\leq p$ and a function $y:\omega \to V$ in $V$ such that for all $n$, $q \forces \dot{x}(\check{n}) \in \check{y}(\check{n})$ and $|y(n) | \leq n$. In words, this says that every new real which is bounded by an old real is caught in an old slalom \footnote{Again, $n \mapsto n$ can be replaced with any ground model function tending to infinity.}. Note that the Laver property plus $\baire$-bounding is equivalent to the Sacks property.
\label{Laver}
\end{definition}

All three of these properties are preserved by countable support iterations of proper forcing notions. See \cite[Chapter 6]{BarJu95}.

\section{The Cicho\'n Diagram of a Reduction Concept}
Let us think of cardinal characteristics of the continuum in terms of small and large sets relative to some relation giving this notion of smallness and largeness. For example, recall that a family of reals $A$ is ($\leq^*)$ -{\em unbounded} if for all $x \in \omega^\omega$ there is some $y \in A$ such that $y \nleq^* x$. The smallest cardinality of an unbounded family is called the {\em bounding number}, denoted $\mathfrak b = \mathfrak b(\leq^*)$. Dually, a family of reals $A \subseteq \omega^\omega$ is ($\leq^*$) -{\em dominating} if for all $y \in \omega^\omega$ there is a $x \in A$ such that $y \leq^* x$. The least size of a dominating family is called the {\em dominating number}, denoted $\mathfrak{d} = \mathfrak{d}(\leq^*)$. Intuitively one thinks of bounded families as being \say{small} and dominating families as being \say{big}. Thus, heuristically one might think of $\mathfrak{b}$ as the least size of a set that's not \say{small} and $\mathfrak{d}$ as the least size of a set that's \say{big}. To obtain an analogy in the computable world, the authors of \cite{BBNN} define $\mathcal B (\leq^*)$ as the set of oracles computing a function $x$ such that $y \leq^* x$ for each computable function $y$ and $\mathcal D (\leq^*)$ as the set of oracles computing a function $x$ such that $x \nleq^* y$ for all computable $y$. In other words $\mathcal B(\leq^*)$ is the set of oracles which can compute a witness to the fact that the computable functions are \say{small} and $\mathcal D(\leq^*)$ is the set of oracles which can compute a witness to the fact that the computable functions are not \say{big}. Moreover, these sets turn out to correspond to \say{highness} properties of Turing degrees that are well studied in computability theory. Specifically, by a theorem of Martin (cf \cite[pp. 3]{BBNN}), $\mathcal B(\leq^*)$ is the set of high degrees and, by definition, $\mathcal D(\leq^*)$ is the set of hyperimmune degrees. Similar ideas hold for the relations $\neq^*$ and $\in^*$ (as discussed in more detail below).

My key observation is that this formalism has nothing to do with {\em Turing} computability per se. This motivates the following general definition.

\begin{definition}
A {\em reduction concept} is a triple $(X, \sqsubseteq, x_0)$ where $X$ is a nonempty set, $x_0 \in X$ is some distinguished element and $\sqsubseteq$ is a partial pre-order on $X$. We also say that the pair $(\sqsubseteq , x_0)$ is a {\em reduction concept on $X$}. If $(X, \sqsubseteq, x_0)$ is a reduction concept, then for $x, y \in X$ say that $x$ is $\sqsubseteq$-{\em reducible to} $y$ if $x \sqsubseteq y$ and say that $x$ is $\sqsubseteq$-{\em basic} if it is $\sqsubseteq$-reducible to $x_0$. 

Let $(\sqsubseteq, x_0)$ be a reduction concept on $X$ and $R \subseteq X \times X$ be a binary relation. Let $\sqsubseteq \upharpoonright x_0 = \{y\in X \; | \; y \sqsubseteq x_0\}$ be the basic reals. Then define the {\em bounding set} for $R$ as 

\[ \mathcal B_\sqsubseteq (R) = \{x \in X \; | \; \exists y \sqsubseteq x \; \forall z \in \sqsubseteq \upharpoonright x_0 \; [z\mathrel{R}y]\}\]

\noindent and the {\em non-dominating set} for $R$ as 

\[ \mathcal D_\sqsubseteq (R) = \{x \in X \; | \; \exists y \sqsubseteq x \; \forall z \in \sqsubseteq \upharpoonright x_0 \; [\neg y\mathrel{R}z]\}.\] 
\end{definition}
Roughly, if we think of $\sqsubseteq$ is some sort of relative computability relation, then being computable means computable from $x_0$ and $\mathcal B_\sqsubseteq (R)$ is the set of elements of $x \in X$ which compute an $R$-bound on the computable elements of $X$ and $\mathcal D_\sqsubseteq(R)$ is the set of $x \in X$ which compute an element which is not $R$-dominated by any computable element. If $R$ is a relation giving a notion of \say{small} and \say{big} sets as described above one can think of $\mathcal B_\sqsubseteq (R)$ as the set of elements computing a witness to the fact that the $\sqsubseteq$-basic sets are small and $\mathcal D_\sqsubseteq (R)$ as the set of elements computing a witness to the fact that the $\sqsubseteq$-basic elements are not big.

\begin{example}[\cite{BBNN}]
Let $x_0 \in \omega^\omega$ be some computable real, say the constant function at $0$. Then the pair $(\leq_T, x_0)$ forms a reduction concept on the reals. The basic reals are the computable reals. For any binary relation $R$ on the reals $\mathcal B_{\leq_T}(R)$ is the set of Turing degrees computing an element of $X$ which $R$-bounds all the computable sets. Similarly $\mathcal D_{\leq_T} (R)$ is the set of Turing degrees computing an element of $X$ which is not $R$-dominated by any computable set.
\end{example}

The next example will be the central focus of the second half of this chapter.
\begin{example}
Let $x_0 \in \omega^\omega$ be constructible. Then the pair $(\leq_L, x_0)$ is a reduction concept on $\omega^\omega$ where $x \leq_L y$ if $x \in L[y]$. The basic reals are the constructible reals. More generally, fix some inner model $W \subseteq V$ and let $\leq_W$ be constructibility relative to $W$. Then if $x_0 \in (\omega^\omega)^W$ is any given real in $W$ the pair $(\leq_W, x_0)$ forms a reduction concept on Baire space and the basic reals are those of $W$. Since this is the main case let me be explicit about what the bounding and non-dominating sets are. Let $R$ be a relation on the reals of $V$. The set $\mathcal B_{\leq_W} (R)$ consists of all reals $x$ in $V$ such that in $W[x]$ there is an $R$-bound on the reals of $W$. Similarly the set $\mathcal D_{\leq_W} (R)$ consists of all reals $x$ in $V$ such that in $W[x]$ there is a real which is not $R$-bounded by any real in $W$. For example, $\mathcal B_{\leq_W}(\leq^*)$ is the set of dominating reals over $W$ in $V$ and $\mathcal D_{\leq_W}(\leq^*)$ is the set of unbounded reals over $W$ in $V$.
\end{example}
I will come back to this example in the next section. First, let me give some more examples of reduction concepts, though I will not treat them in detail in this thesis.

\begin{example}
Recall that for $x, y \in \mathcal P(\mathbb N)$, the relation $\leq_A$ is defined by $x \leq_A y$ if and only if $x$ is definable in the standard model of arithmetic with an extra predicate for $y$. The pair $(\leq_A, \emptyset)$ forms a reduction concept on $\mathcal P(\mathbb N)$. In this case the basic reals are the sets which are $\emptyset$-definable in the standard model of arithmetic. More generally this could be done with any model of $\PA$.
\end{example}

\begin{example}
Recall that the relation of many-one polytime reduction, $\leq_m^p$ is defined by $x \leq_m^p y$ if and only if there is a function $z$ which is computable in polynomial time such that $n \in x$ if and only if $z(n) \in y$. The pair $(\leq_M^p, \emptyset)$ is a reduction concept on $\mathcal P(\mathbb N)$.
\end{example}

\begin{example}
Let $\kappa > \omega$ be an uncountable cardinal. Recently there has been much work in the descriptive set theory of \say{generalized} Baire and Cantor spaces, $\kappa^\kappa$ and $2^\kappa$, including various generalizations of cardinal characteristics of the continuum, see for instance \cite{brendle16}. The same can be done in my framework for degrees of constructibility. For instance notions of eventual domination, etc all make sense in the general context of $\kappa^\kappa$ and corresponding bounding and non-dominating sets can be constructed over the basic elements, $(\kappa^\kappa)^L$.
\end{example}

The framework described above is flexible enough that $(X, \sqsubseteq, x_0)$ need not be some actual notion of computability on the reals nor have an explicit relation to cardinal characteristics of the continuum. For instance one might consider a class of models of a fixed theory in a fixed language with embeddability. In this case, depending on the relations $R$ one studied, one would arrive at a diagram corresponding to when models with certain properties embed into one another. There are many possibilities, each giving a potentially interesting \say{Cicho\'n diagram} of inclusions between the various bounding and non-dominating sets for an appropriate collection of relations. In future work I hope to explore all of these more fully. 

Presently however, let me restrict my attention to the types of cases described in the preceding examples. Even in this general framework I can now prove a collection of implications giving a version of the Cicho\'n diagram. 


\begin{theorem}
Let $(\sqsubseteq, x_0)$ be a reduction concept on $\omega^\omega$ extending $\leq_T$ such that if $x, y \sqsubseteq z$ then $x\circ y \sqsubseteq z$. Interpreting arrows as inclusions, the implications in Figure \ref{arbcichon} all hold.

\begin{figure}[h]\label{Figure.Cichon-basic}
\centering
  \begin{tikzpicture}[scale=1.5,xscale=2]
     \draw (0,0) node (empty) {$\emptyset$}
           (1,0) node (Bin*) {$\mathcal B_\sqsubseteq(\in^*)$}
           (1,1) node (Bleq*) {$\mathcal B_\sqsubseteq(\leq^*)$}
           (1,2) node (Bneq*) {$\mathcal B_\sqsubseteq(\neq^*)$}
           (2,0) node (Dneq*) {$\mathcal D_\sqsubseteq(\neq^*)$}
           (2,1) node (Dleq*) {$\mathcal D_\sqsubseteq(\leq^*)$}
           (2,2) node (Din*) {$\mathcal D_\sqsubseteq(\in^*)$}
           (3,2) node (all) {$\omega^\omega\setminus \{x \; | \; x \sqsubseteq x_0\}$}
           ;
     \draw[->,>=stealth]
            (empty) edge (Bin*)
            (Bin*) edge (Bleq*)
            (Bleq*) edge (Bneq*)
            (Bleq*) edge (Dleq*)
            (Dneq*) edge (Dleq*)
            (Bin*) edge (Dneq*)
            (Bneq*) edge (Din*)
            (Dleq*) edge (Din*)
            (Din*) edge (all)
            
            ;
  \end{tikzpicture}
  \caption{A Cicho\'n diagram for an arbitrary reduction concept on Baire space}
\label{arbcichon}
\end{figure}
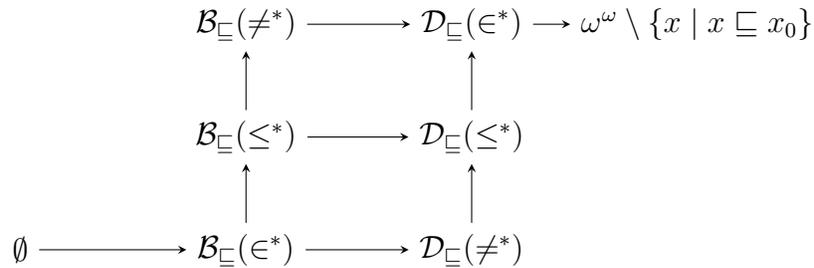
\label{propdi}
\end{theorem}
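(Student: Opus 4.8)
The statement asserts that the diagram of inclusions among the bounding and non-dominating sets $\mathcal B_\sqsubseteq(R)$, $\mathcal D_\sqsubseteq(R)$ for $R \in \{\in^*, \leq^*, \neq^*\}$ holds exactly as the arrows in Figure \ref{arbcichon} indicate. The strategy is to verify each arrow separately; most are almost immediate once one unwinds the definitions, and only a couple require a genuine argument using the hypothesis that $\sqsubseteq$ is closed under composition (i.e.\ $x, y \sqsubseteq z \Rightarrow x \circ y \sqsubseteq z$) and that it extends $\leq_T$. First I would record the trivial arrows: $\emptyset \to \mathcal B_\sqsubseteq(\in^*)$ and $\mathcal D_\sqsubseteq(\in^*) \to \omega^\omega \setminus \{x \mid x \sqsubseteq x_0\}$ are trivial (the empty set is contained in everything; and if $x$ witnesses $\mathcal D_\sqsubseteq(\in^*)$ via some $y \sqsubseteq x$ with $y$ not $\in^*$-captured by any basic slalom, then $x$ cannot itself be basic, since otherwise $y$ would be basic and a basic element is trivially $\in^*$-captured by itself — here using that a constant or identity-related slalom built from $y$ is $\sqsubseteq$-below $x_0$ when $y$ is, via $\sqsubseteq \supseteq \leq_T$).

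**The implication arrows among the $\mathcal B$'s and $\mathcal D$'s.** The three ``horizontal/diagonal'' implications $\mathcal B_\sqsubseteq(\in^*) \to \mathcal B_\sqsubseteq(\leq^*) \to \mathcal B_\sqsubseteq(\neq^*)$ and $\mathcal D_\sqsubseteq(\neq^*) \to \mathcal D_\sqsubseteq(\leq^*) \to \mathcal D_\sqsubseteq(\in^*)$ come from pointwise implications between the relations themselves, but one must be careful about the direction. If $s$ is a slalom with $z \in^* s$ for all basic $z$, then the function $n \mapsto \max s(n)$ (which is $\leq_T s$, hence $\sqsubseteq x$ whenever $s \sqsubseteq x$) eventually dominates every basic $z$; this gives $\mathcal B_\sqsubseteq(\in^*) \to \mathcal B_\sqsubseteq(\leq^*)$. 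For $\mathcal B_\sqsubseteq(\leq^*) \to \mathcal B_\sqsubseteq(\neq^*)$: if $y$ eventually dominates every basic $z$, then $y+1$ (which is $\leq_T y \sqsubseteq x$) is eventually different from every basic $z$; this uses that the basic reals are closed under $z \mapsto z$ composed with trivial computable operations, i.e.\ $\leq_T \subseteq \sqsubseteq$. The two arrows among the $\mathcal D$'s are dual: an $\leq^*$-unbounded-over-basic witness is $\neq^*$-undominated-over-basic after a shift, and an $\in^*$-non-captured witness from a $\leq^*$-unbounded one is obtained by viewing $y$ as the ``slalom'' $n \mapsto \{y(n)\}$ and noting no basic real can be the ``dominating function of a slalom capturing it'' when $y$ outgrows every basic function. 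The ``crossing'' arrows $\mathcal B_\sqsubseteq(\in^*) \to \mathcal D_\sqsubseteq(\neq^*)$, $\mathcal B_\sqsubseteq(\leq^*) \to \mathcal D_\sqsubseteq(\leq^*)$, $\mathcal B_\sqsubseteq(\neq^*) \to \mathcal D_\sqsubseteq(\in^*)$, $\mathcal D_\sqsubseteq(\leq^*) \to \mathcal D_\sqsubseteq(\in^*)$ express the fact that a real which $R$-bounds the basics also witnesses that the basics are not $R$-dominating (e.g.\ a dominating-over-basic real is also unbounded-over-basic, since if $y \geq^* z$ for all basic $z$ and also $y \leq^* z_0$ for some basic $z_0$, then $z_0$ would dominate all basics, making $z_0 + 1$ basic and dominating it, contradiction — this is where closure of the basics under $\leq_T$ is used crucially). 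Each of these is a short diagonalization; I would write them uniformly by extracting the single combinatorial lemma: ``for each of $R \in \{\in^*,\leq^*,\neq^*\}$, there is no basic element that is simultaneously an $R$-bound for all basics,'' which follows from the basics being closed under $\leq_T$ and each $R$ admitting a computable ``escaping'' operation ($z \mapsto z+1$ for $\leq^*$ and $\neq^*$, and for $\in^*$ the map sending a slalom $s$ to a real escaping it).

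**Where the composition hypothesis enters, and the main obstacle.** The hypothesis ``$x, y \sqsubseteq z \Rightarrow x \circ y \sqsubseteq z$'' is needed precisely for the arrows that pass between the ``$\in^*$ world'' and the ``$\leq^*/\neq^*$ world'' via coding of slaloms: the homeomorphism between the slalom space $\mathcal S$ and $\baire$ (the \Velickovic/\cite{PawRec95} coding) and the operations of reading off $\max s(n)$ or of building a slalom around a function are all arithmetic but involve \emph{composition} of the coding maps with the given reals, so one needs $\sqsubseteq$ to absorb such compositions to stay within $\sqsubseteq x$. Concretely, the steps $\mathcal B_\sqsubseteq(\in^*)\to\mathcal B_\sqsubseteq(\leq^*)$ and $\mathcal D_\sqsubseteq(\leq^*)\to\mathcal D_\sqsubseteq(\in^*)$ are the ones where I would invoke it explicitly. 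I expect the \textbf{main obstacle} to be purely bookkeeping: ensuring every auxiliary function I form to witness the target set (the $\max$, the shift, the slalom-hull, the decoded slalom) is genuinely $\sqsubseteq$-reducible to the witnessing $x$ — i.e.\ that all the "computable glue" stays inside $\sqsubseteq x_0$-many applications — rather than anything conceptually deep; the combinatorics is exactly that of the classical Cicho\'n diagram implications $add(\Null)\leq\mfb$, $\mfb\leq non(\Me)$, etc.\ (via Facts \ref{infact}, \ref{neqfact}), just relativized to the reduction concept instead of to cardinality. I would therefore structure the proof as: (1) state and prove the single "no basic $R$-bound for basics" lemma; (2) dispatch the two trivial arrows; (3) prove the six "implication/crossing" arrows by the uniform diagonalization, flagging the two that use composition; and (4) remark that the figure is the exact analogue of the classical diagram under $\mathcal B_\sqsubseteq \leftrightarrow \mfb$, $\mathcal D_\sqsubseteq \leftrightarrow \mfd$.
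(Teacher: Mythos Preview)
Your plan correctly handles the easy arrows, but it contains a genuine gap: you have misclassified the two diagonal inclusions $\mathcal B_\sqsubseteq(\in^*)\subseteq\mathcal D_\sqsubseteq(\neq^*)$ and $\mathcal B_\sqsubseteq(\neq^*)\subseteq\mathcal D_\sqsubseteq(\in^*)$ as instances of ``a real which $R$-bounds the basics also witnesses that the basics are not $R$-dominating.'' That reasoning only gives $\mathcal B_\sqsubseteq(R)\subseteq\mathcal D_\sqsubseteq(R)$ for the \emph{same} $R$; here the relations change, and your single lemma (``no basic element is an $R$-bound for all basics'') does not bridge $\in^*$ to $\neq^*$. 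Concretely: from a slalom $s$ capturing all basic reals you must produce a single real $y$ that hits \emph{every} basic $z$ infinitely often. A naive attempt---say, pick an element of $s(n)$ at each $n$---fails, since for any particular basic $z$ you have no control over which element of $s(n)$ equals $z(n)$. Dually, an eventually-different real $y$ might still be captured by a basic slalom $s$, because being $\neq^*$ from each of the finitely many ``column functions'' $n\mapsto(\text{$i$-th element of }s(n))$ does not prevent $y(n)\in s(n)$ when the witnessing index varies with $n$.

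The paper treats exactly these two arrows as the substantial cases and proves them via Bartoszy\'nski's partition technique: one fixes a basic partition $\{J_{n,k}:k<z(n)\}$ of $\omega$, passes between reals $x$ and block-functions $x'(n)=x\restriction J_n$, and uses the finiteness of the slalom values to build, from $s$, a real whose restriction to some $J_{n,k}$ must agree with any captured $z$ on that block---yielding infinitely many agreements (Lemma~\ref{infslalom}); the dual construction gives Lemma~\ref{neqinf}. This is also where the composition hypothesis is genuinely exercised, since the witness is assembled from the basic partition together with the given slalom. Your remark that ``the combinatorics is exactly that of the classical Cicho\'n implications'' is correct in spirit, but those classical implications (essentially $\mathrm{add}(\mathcal N)\le\mathrm{cov}(\mathcal M)$ and $\mathrm{non}(\mathcal M)\le\mathrm{cof}(\mathcal N)$ via the combinatorial characterizations) are themselves nontrivial theorems of Bartoszy\'nski, not mere diagonalizations; you need to import that argument, not just cite the inequality.
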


\begin{proof}
Note that slaloms can be computably coded by reals so, since the relation $\sqsubseteq$ extends Turing computability the $\in^*$ can be seen as a relation on the reals. I drop the $\sqsubseteq$ subscript for readability. Also, I'll write \say{basic} for $\sqsubseteq$-basic and if $y \sqsubseteq x$ then I'll say that \say{$x$ builds $y$}. The requirement that $\sqsubseteq$ be closed downwards under compositions will be used implicitly throughout the argument where I will show that a function can build two other functions hence it can build their composition.

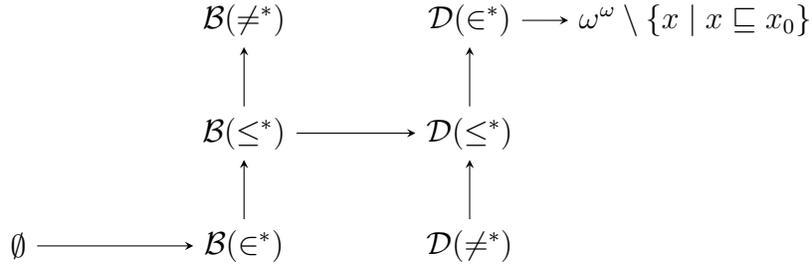
\begin{figure}[h]\label{Figure.Cichon-basic}
\centering
  \begin{tikzpicture}[scale=1.5,xscale=2]
     \draw (0,0) node (empty) {$\emptyset$}
           (1,0) node (Bin*) {$\mathcal B(\in^*)$}
           (1,1) node (Bleq*) {$\mathcal B(\leq^*)$}
           (1,2) node (Bneq*) {$\mathcal B(\neq^*)$}
           (2,0) node (Dneq*) {$\mathcal D(\neq^*)$}
           (2,1) node (Dleq*) {$\mathcal D(\leq^*)$}
           (2,2) node (Din*) {$\mathcal D(\in^*)$}
           (3,2) node (all) {$\omega^\omega \setminus \{x \; | \; x \sqsubseteq x_0\}$}
           ;
     \draw[->,>=stealth]
            (empty) edge (Bin*)
            (Bin*) edge (Bleq*)
            (Bleq*) edge (Bneq*)
            (Bleq*) edge (Dleq*)
            (Dneq*) edge (Dleq*)
            (Dleq*) edge (Din*)
            (Din*) edge (all)
            ;
  \end{tikzpicture}
  \caption{The Easy Cases}
\end{figure}


Let's begin with the easy cases, as shown in Figure 3. First I'll show that $\mathcal B(\in^*) \subseteq \mathcal B (\leq^*)$ or that every $x$ building a slalom eventually capturing all the basic reals builds a real which eventually dominates all basic reals. This is proved as follows. Suppose $x \in \mathcal B(\in^*)$ and let $s \sqsubseteq x$ be a slalom witnessing this. Then, define $z(n) = {\rm max} \, (s (n)) +1$. Notice that $z \leq_T s$ so $z \sqsubseteq s$ and hence $z \sqsubseteq x$. Moreover, since $s$ eventually captures all basic reals, $z$ must eventually dominate them all so $x \in \mathcal B (\leq^*)$.

Now let's show that $\mathcal B(\leq^*) \subseteq \mathcal B(\neq^*)$ or that if $x$ can build a real eventually dominating all basic reals then it can build a real eventually different real from all basic reals. Notice however that once stated like this the proof simply the observation that if $x$ eventually dominates $y$ and $y+1$ (which is basic if $y$ is since $\leq_T$ is extended by $\sqsubseteq$), then, in particular $x$ is eventually different from $y$. 

Next let's show $\mathcal B(\leq^*) \subseteq \mathcal D(\leq^*)$ or that if $x$ builds a real which eventually dominates all basic reals then it builds a real which is not dominated by any basic real. But now stated like this it's obvious.

Next I show that $\mathcal D(\neq^*) \subseteq \mathcal D(\leq^*)$ or that if there is a real which is equal to every basic real infinitely often then there is a real which is never dominated by any basic real. This is obvious though since if $x$ were dominated by some basic real $y$ then it could not be infinitely-often-equal to the basic real $y+1$.

Now I show that $\mathcal D(\leq^*) \subseteq \mathcal D(\in^*)$ or that if $x$ builds a real which is not dominated by any basic real then it builds a real that is never eventually captured by any basic slalom. Suppose $x \in \mathcal D (\leq^*)$ and let $y \sqsubseteq x$ witness this. Then, if $s$ is a basic slalom, let $z$ be defined by $z(n)$ is one plus the sum of the elements in $s (n)$. Note that $z \leq_T s$ so $z$ is basic. Thus there are infinitely many $n$ such that $y(n) \geq z(n)$ so $y$ cannot be eventually captured by $s$.

The last easy inclusion, that all the reals in every node are not themselves basic is completely straightforward. For instance, if $x \in \mathcal D(\in^*)$ is a real which is not eventually captured by any basic slalom then of course $x$ is not basic since if it were, the slalom $n \mapsto \{x(n)\}$ would be as well.

Now I move on to the more difficult inclusions, starting with $\mathcal B(\in^*) \subseteq \mathcal D(\neq^*)$. Substantively this states that if a real $x$ builds a slalom eventually capturing all basic functions then $x$ also builds a real which is infinitely-often-equal to all basic functions. In fact I will show a more general claim that implies this. The following lemma and proof is essentially a reinterpretation of Theorem 1.5 from \cite{Bar1987}.

\begin{lemma}
For any real $x$ the following are equivalent. 
\begin{enumerate}
\item
There is a real $y \sqsubseteq x$ such that for all basic $z \in \omega^\omega$, there exist infinitely many $n \in \omega$ such that $y(n) = z(n)$
\item
There is a basic $z \in \omega^\omega$ and a $z$-slalom $s \sqsubseteq x$ such that for all basic $y \in \omega^\omega$ there are infinitely many $n \in \omega$ such that $y(n) \in s(n)$.
\end{enumerate}
Moreover, given an infinitely-often-equal real as in $1)$, one can build from it a $z$-slalom as in $2)$ and given a $z$-slalom $s$ as in $2)$ one can build an infinitely-often-equal real as in $1)$. Thus, $x \in \mathcal D(\neq^*)$ if and only if there is a basic $z \in \omega^\omega$ and a $z$-slalom which captures each of the basic reals infinitely often.
\label{infslalom}
\end{lemma}
Before proving Lemma \ref{infslalom}, notice that it implies the inclusion $\mathcal B(\in^*) \subseteq \mathcal D(\neq^*)$ since any slalom which captures every basic real cofinitely often must in particular capture each basic real infinitely often so if $x \in \mathcal B(\in^*)$ builds such a slalom, by the lemma $x$ must be able to build an infinitely-often-equal real as well.

\begin{proof}[Proof of Lemma \ref{infslalom}]
The forward direction is obvious: suppose that $y$ is an infinitely-often-equal real. Then clearly the $1$-slalom $s:\omega \to [\omega]^1$ such that $s(n) = \{y(n)\}$ is $\leq_T$-computable from $y$ and hence $\sqsubseteq$-reducible to $y$, thus giving the desired $z$-slalom.

For the backward direction fix a basic real $z$ such that there exists a $z$-slalom as in the statement of 2. I need to find a real $y$ which is infinitely often equal to every basic real. In a basic fashion, fix a family of finite, nonempty, pairwise disjoint subsets of $\omega$ enumerated $\{J_{n, k} \; | \; n < \omega \; \& \; k \leq z(n)\}$ which collectively cover $\omega$. Since $z$ is assumed to be basic there is no problem building such a partition, for example one could use singletons. Label $J_n = \bigcup_{k \leq z(n)} J_{n, k}$. Then for each basic $v \in \omega^\omega$ let $v':\omega \to \omega^{< \omega}$ be the function defined by $v'(n) = f\upharpoonright J_n$. More generally let $\mathcal J = \{v: \omega \to \omega^{<\omega} \; | \; {\rm dom} ( v(n) ) = J_n\}$. Notice that the basic elements of $\mathcal J$ are exactly $\{v' \;|\; v \in \omega^\omega \; \& \; v \sqsubseteq 0\}$ since from any $v'$ we can build $v$ and vice versa (by the the fact that the $J_n$'s are basic). But now since the $v'$'s are basic and each one codes a real one can by applying 2 plus some simple coding to find a $z$-slalom, $s: \omega \to (\omega^{< \omega})^{< \omega}$ such that for every $n \in \omega$ $|s(n)| \leq z(n)$ and $s(n)$ is a set of finite partial functions from $J_n$ to $\omega$ and for every basic $v' \in \mathcal J$ there are infinitely many $n\in \omega$ such that $v'(n) \in s(n)$. 

Let me denote $s(n) = \{w^n_1,...,w^n_{z(n)}\}$. Now set $y_n = \bigcup_{k \leq z(n)} w^n_k \upharpoonright J_{n, k}$ and let $y = \bigcup_{n < \omega} y_n$. Notice that this gives an element of $\omega^\omega$ since the $J_{n, k}$'s were disjoint and collectively covered $\omega$. I claim that $y$ is as needed. Clearly $y$ is reducible to the $J_{n, k}$'s, which are basic, and the $w^n_k$'s, which are reducible to $s$ so $y$ is reducible to $s$. It remains to see that it is an infinitely-often-equal real. To see this, let $v \in \omega^\omega$ be basic and fix some $n$ such that $v'(n) \in s(n)$ (recall that there are infinitely many such $n$). Notice that since $v'(n) \in s(n)$ there must be some $k \leq z(n)$ such that $v\upharpoonright J_m = w^n_k$. Now let $x_n \in J_{n, k}$ (recall that this set is assumed to be non-empty). We have that $v(x_n) = w^n_k(x_n) = g(x_n)$. But there are infinitely many such $n$ and hence infinitely many such $x_n$ so this completes the proof.
\end{proof}

A similar proof produces the last inclusion, $\mathcal B(\neq^*) \subseteq \mathcal D(\in^*)$. In words this inclusion states that any real which can build a real which is eventually different from all basic reals can build a real which is not eventually captured by any given slalom. I will prove the following more general lemma, whose statement and proof is inspired by \cite{Bar1987}, Theorem 2.2. Given a $z$-slalom $s$ and a function $x$ let me say that $x$ is {\em eventually never captured by} $s$ if there is some $k$ such that for all $l > k$ we have $x(l) \notin s (l)$.
\begin{lemma}
For any real $x$, the following are equivalent.
\begin{enumerate}
\item
The real $x$ is eventually different from all basic reals.
\item
The real $x$ is such that for all basic reals $z$ and all basic $z$-slaloms $s$ for all but finitely many $n \in \omega$ $x(n) \notin s (n)$.
\end{enumerate}
Therefore $x \in \mathcal B(\neq^*)$ if and only if $x$ builds a real which is eventually never captured by any basic $z$-slalom for any basic $z$.
\label{neqinf}
\end{lemma}
Let me note before I prove Lemma \ref{neqinf} that it proves the inclusion $\mathcal B(\neq^*) \subseteq \mathcal D(\in^*)$ and hence Theorem \ref{propdi}. To see why, suppose that $x \in \mathcal B(\neq^*)$ and, without loss of generality suppose that $x$ itself is a real which is eventually different from all basic reals. Then by the lemma $x$ is eventually never captured by any basic slalom so, in particular for infinitely many $n$ $x(n) \notin s (n)$ for all basic $s$, which means $x \in \mathcal D(\in^*)$.

\begin{proof}[Proof of Lemma \ref{neqinf}]
Fix some $x \in \omega^\omega$. The backward direction of this lemma is easy: if $x$ is eventually never captured by any basic $z$-slalom for any basic $z$ then in particular it is eventually never captured by the slalom sending $n \mapsto \{y(n)\}$ for each basic $y$ and hence it is eventually different from each basic $y$.

For the forward direction, assume $x$ is eventually different from all basic functions. Fix a basic $z$ and, like in the proof of Lemma \ref{infslalom}, in a basic fashion partition $\omega$ into finite, disjoint, non-empty sets $\{J_{n, k} \; | \; k \leq z(n)\}$. Let $J_n = \bigcup_{k \leq z(n)} J_{n, k}$. Let $x':\omega \to \omega^{< \omega}$ be the function defined by $x'(n) = x\upharpoonright J_n$. Then if $s$ is any basic $z$-slalom, let $s '$ be such that on input $n$ gives $z(n)$ many finite partial functions $w^n_1,...,w^n_{z(n)}$ with domain $J_n$ where for all $k \leq z(n)$ and $l \in J_n$ $w^n_k(l)$ is the $k^{\rm th}$ greatest number in the set $s (l)$ if such exists and $0$ (say) otherwise. Suppose now towards a contradiction that there is a basic $z$-slalom $s$ such that  $x(n) \in s (n)$ for infinitely many $n$. For each $n$ let $s'(n) = \{w^n_1,...,w^n_{h(n)}\}$. Then define $y_n = \bigcup_{k \leq z(n)} w^n_k \upharpoonright J_{n, k}$ and let $y = \bigcup_{n < \omega} y_n$. Clearly $y$ can built using $s$, the function $z$ and the $J_{n, k}$'s each of which is basic so $y$ is basic. Thus there is a $k$ such that for all $n > k$ we have that $x(n) \neq y(n)$. But, since there are infinitely many $n$ such that $x(n) \in s (n)$, there are infinitely many $n > k$ such that $x(n) \in s (n)$ and therefore it follows that similarly we must have that there are infinitely many $n > k$ such that $x' (n)$ agrees with some $w^n_j$ on some element of their shared domain for some $j \leq z(n)$. But this means $x(k) = y(k)$ for some $k \in J_{n, j}$ for infinitely many $n$'s and $j$'s which is a contradiction.
\end{proof}

\noindent Since this was the final inclusion to prove, Theorem \ref{propdi} is now proved as well.
\end{proof}

Thus, even in this broad context one can construct diagrams for a wide variety of reduction concepts and a correspondence starts to form with the Cicho\'n diagram. This extends the proof given in the case of Turing degrees in \cite{BBNN} and gives a good framework for investigations into various computability reduction concepts. What it does not show, however, is that any of these nodes are non-empty or that the inclusions are strict. Indeed this is not necessarily the case. For instance $\mathcal B_{\leq_T}(\in^*) = \mathcal B_{\leq_T}(\leq^*)$ (see \cite{BBNN}). This is because, by a theorem of Rupprecht, the set $\mathcal B_{\leq_T} (\in^*)$ is simply the high reals, which as I mentioned above is also $\mathcal B (\leq^*)$. The analogue of this fact in the case of the classical Cicho\'n diagram is false since ${\rm add}(\mathcal N)$, the analogue of $\mathcal B_{\leq_T} (\in^*)$, can consistently be less than $\mathfrak{b}$, the analogue of $\mathcal B(\leq^*)$. The authors of \cite{BBNN} take this as evidence that the $\leq_T$-Cicho\'n diagram provides \say{only an analogy, not a full duality} \cite[p. 3]{BBNN} with the classical Cicho\'n diagram. Theorem \ref{propdi} proves the existence of a wide variety of such diagrams, therefore raising the question in each case of how strong the analogy between the reduction diagram and the classical diagram is, and whether we ever get a full duality. This depends on the strength of the reduction since, while the $\leq_T$ diagram gives only an analogy, I will show in the next section that in the $\leq_W$ diagram the inclusions proved in Theorem \ref{propdi} constitute the only ones true in every model of $\ZFC$, thereby suggesting something closer to a true duality.

\section{The Cicho\'n Diagram for $\leq_W$}

From now on fix an inner model $W \models \ZFC$. I work in the language of set theory with an extra predicate for $W$ and the theory ZFC($W$), that is ZFC with replacement and comprehension holding for formulas containing $W$. I view $W=L$ as a central case but it turns out that the analysis works out the same for arbitrary $W$.

Before presenting the full $\leq_W$-Cicho\'n diagram, let me state clearly what the bounding and dominating sets are for the combinatorial relations defined in the last section for $\leq_W$.
\begin{enumerate}
\item
	$\mathcal B(\in^*)$ is the set of reals $x$ such that there is a slalom $s \in W[x]$ that eventually captures all reals in $W$.
\item
	 $\mathcal B(\leq^*)$ is the set of reals $x$ such that there is a real $y \in W[x]$ that eventually dominates all reals in $W$. These are sometimes called {\em dominating reals} (for $W$).
\item
	$\mathcal B(\neq^*)$ is the set of reals $x$ such that there is a real $y \in W[x]$ that is eventually different from all reals in $W$. These are sometimes called {\em eventually different reals} (for $W$).
\item
	$\mathcal D(\in^*)$ is the set of reals $x$ such that there is a real $y \in W[x]$ that is not eventually captured by any slalom in $W$.
\item
	$\mathcal D(\leq^*)$ is the set of reals $x$ such that there is a real $y \in W[x]$ that is not eventually dominated by any real in $W$. These are sometimes called {\em unbounded reals} (for $W$).
\item
	$\mathcal D(\neq^*)$ is the set of reals $x$ such that there is a real $y \in W[x]$ that is equal infinitely often to every real in $W$. These are sometimes called {\em infinitely-often-equal reals} (for $W$).
	\end{enumerate}

In this section I will study how a variety of known forcing notions over $W$ can create separations in the $\leq_W$-Cicho\'n diagram as described in the previous section. Of course ZFC($W$) cannot prove any separations since if $V=W$ or, more generally $V$ and $W$ have the same reals, every node in the $\leq_W$-diagram will be empty. However, using simple forcing notions I will show that one can produce a wide variety of possible constellations for the $\leq_W$-diagram. The main theorem of this section is the following.

\begin{theorem}
The Cicho\'n diagram for $\leq_W$ as described in the previous section is complete for ZFC($W$)-provable implications. In other words if $A$ and $B$ are two nodes in the diagram and $A \subseteq B$ does not follow from the transitive closure of the arrows in the $\leq_W$-diagram then there is a forcing extension of $W$ where $A \nsubseteq B$.
\label{cichoncomplete}
\end{theorem}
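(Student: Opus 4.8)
The plan is to realize each consistent non-implication $A \nsubseteq B$ by an explicit forcing over $W$. The key observation is that the diagram of Figure \ref{arbcichon} has, up to the transitive closure of the arrows, only finitely many "cuts'': pairs $(A,B)$ with $A \not\to B$. Rather than handle each such cut separately, I would follow the standard strategy from the classical Cicho\'n diagram and isolate a small family of "building block'' forcing notions over $W$, each of which puts reals into certain nodes while keeping others empty, and then argue that every cut is witnessed by one of these (or a simple product/iteration thereof). Concretely, the building blocks I expect to use are: Cohen forcing $\mathrm{Add}(\omega,1)$ over $W$ (adds an eventually-different real and an infinitely-often-equal real for $W$, i.e.\ puts a real into $\mathcal B(\neq^*)$ and $\mathcal D(\neq^*)$, but adds no dominating or unbounded-then-more real in the relevant sense, and importantly adds no slalom capturing all of $W$'s reals); Hechler (dominating) forcing over $W$ (adds a dominating real, hence a real in $\mathcal B(\leq^*)$, and by Theorem \ref{propdi} into everything above it, but need not add a real into $\mathcal B(\in^*)$ or $\mathcal D(\neq^*)$ beyond what is forced); random real forcing over $W$ (adds an unbounded real that is not dominating — a real in $\mathcal D(\leq^*)\setminus\mathcal B(\leq^*)$ — and keeps $\mathcal B(\neq^*)$ empty, since random forcing adds no eventually different real); amoeba forcing over $W$ (adds a slalom capturing all ground-model reals, hence a real into $\mathcal B(\in^*)$); and an eventually-different real forcing over $W$. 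The preservation properties recalled in Definitions \ref{sacks}, \ref{ooboundingdef}, \ref{Laver} are exactly what is needed to check that the "wrong'' nodes stay empty: e.g.\ an $\omega^\omega$-bounding forcing adds nothing to $\mathcal D(\leq^*)$, a forcing with the Laver property adds nothing to $\mathcal B(\neq^*)$, and a forcing with the Sacks property adds nothing beyond $\mathcal D(\in^*)$'s complement.

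The heart of the argument is bookkeeping: list the cuts $(A,B)$ of the diagram, and for each one exhibit a forcing $\mathbb{P} \in W$ so that in $W^{\mathbb P}$ there is a real in $A$ but $B = \emptyset$ (equivalently $A \nsubseteq B$, and in fact one can arrange $A \setminus B \neq \emptyset$ strongly by making $B$ literally empty). Symmetry of the diagram (the duality swapping $\mathcal B$'s and $\mathcal D$'s, $\in^*$ with $\in^*$, etc., which is visible in Figure \ref{arbcichon}) should roughly halve the work. For the nodes at the "bottom'' ($\emptyset \to \mathcal B(\in^*)$) and "top'' ($\mathcal D(\in^*) \to \omega^\omega \setminus \{x : x \sqsubseteq x_0\}$) the non-implications are: $\mathcal B(\in^*) \nsubseteq \emptyset$ is witnessed by amoeba forcing; and $(\omega^\omega \setminus \{x : x \sqsubseteq_W x_0\}) \nsubseteq \mathcal D(\in^*)$ is witnessed by Sacks forcing itself (it adds a non-constructible-over-$W$ real but, by the Sacks property, every new real is captured by a ground-model slalom, so $\mathcal D(\in^*) = \emptyset$). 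I would organize the non-bottom, non-top cuts into a short table, pairing each with the appropriate block from the list above, invoking Theorem \ref{propdi} to propagate membership upward along arrows and invoking the Sacks/Laver/$\omega^\omega$-bounding preservation theorems to keep the forbidden nodes empty.

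The step I expect to be the main obstacle is verifying, for each block forcing, that it does \emph{not} add reals to the nodes that must remain empty in that particular constellation — in particular the subtle cases involving $\mathcal B(\in^*)$ and $\mathcal D(\neq^*)$. For example, to witness $\mathcal B(\neq^*) \nsubseteq \mathcal B(\in^*)$ (adding an eventually different real for $W$ without adding a slalom capturing all of $W$'s reals) one needs a forcing adding an eventually different real but having the Laver property over $W$; one must check that the standard eventually-different-real forcing, or a Laver-property-preserving variant, does the job, and that the Laver property genuinely implies $\mathcal B(\in^*) = \emptyset$ via Lemma \ref{infslalom} and the characterization of $\mathcal B(\in^*)$. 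Similarly, separating $\mathcal D(\neq^*)$ from $\mathcal B(\leq^*)$ (infinitely-often-equal real for $W$, but no dominating real) requires a forcing — Cohen forcing is the natural candidate — that is provably not adding a dominating real, which is immediate since Cohen forcing is not $\omega^\omega$-bounding-violating in the dominating direction (it adds unbounded but not dominating reals). Once the table is complete and each entry's preservation claims are cross-checked against the three definitions above and the cited chapters of \cite{BarJu95}, the theorem follows: every cut is realized in some forcing extension of $W$, so no implication beyond the transitive closure of Figure \ref{arbcichon} is ZFC($W$)-provable.
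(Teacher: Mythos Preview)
Your overall strategy is the same as the paper's: go through a small list of standard forcing notions over $W$, compute the resulting $\leq_W$-diagram for each, and observe that collectively these witness every non-implication. The paper does exactly this, using Sacks, Cohen, random, Laver, Miller, Hechler, eventually different, and localization forcing, and then simply remarks that inspection of the resulting pictures shows every arrow is consistently strict and no additional arrows hold.

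However, several of your claimed facts about the building blocks are wrong, and with them your table of cuts would not close. Two concrete errors:

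\begin{itemize}
\item You say Cohen forcing ``adds an eventually-different real\ldots i.e.\ puts a real into $\mathcal B(\neq^*)$''. It does not: for any name $\dot{x}$ one diagonalizes over an enumeration of the stems to build a ground-model $y$ with $y(n)=\dot{x}(n)$ infinitely often below every condition. In the paper's analysis, Cohen forcing leaves the entire left column $\mathcal B(\in^*)=\mathcal B(\leq^*)=\mathcal B(\neq^*)=\emptyset$ and makes the right column equal to the set of all new reals.
\item You say random forcing ``adds an unbounded real\ldots and keeps $\mathcal B(\neq^*)$ empty, since random forcing adds no eventually different real''. This is exactly backwards: random forcing is $\omega^\omega$-bounding (so $\mathcal D(\leq^*)=\emptyset$), and the random real \emph{is} eventually different from every ground-model real (so $\mathcal B(\neq^*)\neq\emptyset$). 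The paper uses random forcing precisely to realize the horizontal cut $\mathcal B(\neq^*)=\mathcal D(\in^*)=$ all new reals with everything below empty.
\end{itemize}

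Relatedly, ``a forcing with the Laver property adds nothing to $\mathcal B(\neq^*)$'' is false as stated---Laver forcing itself has the Laver property and adds a dominating, hence eventually different, real. What the Laver property \emph{does} give you is $\mathcal D(\neq^*)=\emptyset$ (no infinitely-often-equal reals), and then $\mathcal B(\in^*)=\emptyset$ follows from $\mathcal B(\in^*)\subseteq\mathcal D(\neq^*)$. Once you correct these three points and slot in Miller forcing (to get $\mathcal D(\leq^*)\neq\emptyset$ while $\mathcal B(\neq^*)=\mathcal D(\neq^*)=\emptyset$), your plan is the paper's proof.
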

That these implications all hold follows from the main theorem of the previous section since $\leq_W$ extends $\leq_T$.
\begin{figure}[h]\label{Figure.Cichon-basic}
\centering
  \begin{tikzpicture}[scale=1.5,xscale=2]
     \draw (0,0) node (empty) {$\emptyset$}
           (1,0) node (Bin*) {$\mathcal B_{\leq_W}(\in^*)$}
           (1,1) node (Bleq*) {$\mathcal B_{\leq_W}(\leq^*)$}
           (1,2) node (Bneq*) {$\mathcal B_{\leq_W}(\neq^*)$}
           (2,0) node (Dneq*) {$\mathcal D_{\leq_W}(\neq^*)$}
           (2,1) node (Dleq*) {$\mathcal D_{\leq_W}(\leq^*)$}
           (2,2) node (Din*) {$\mathcal D_{\leq_W}(\in^*)$}
           (3,2) node (all) {$\omega^\omega\setminus(\omega^\omega)^W$}
           ;
     \draw[->,>=stealth]
            (empty) edge (Bin*)
            (Bin*) edge (Bleq*)
            (Bleq*) edge (Bneq*)
            (Bin*) edge (Dneq*)
            (Bleq*) edge (Dleq*)
            (Bneq*) edge (Din*)
            (Dneq*) edge (Dleq*)
            (Dleq*) edge (Din*)
            (Din*) edge (all)
            ;
  \end{tikzpicture}
\end{figure}

Let me note one word on the relation between my diagram and the standard Cicho\'n diagram as commonly studied, for example in \cite{BarJu95}. Here I have focused on the so-called combinatorial nodes as discussed by \cite{BBNN}. As noted  in the previous section, I view my diagram in correspondence with the classical one via the mapping sending unbounded or dominating families with respect to a certain relation to the sets of reals $x$ such that in $W[x]$ the reals of $W$ are not unbounded or dominating. I have included this fragment of the Cicho\'n diagram to make this analogy clear visually. 

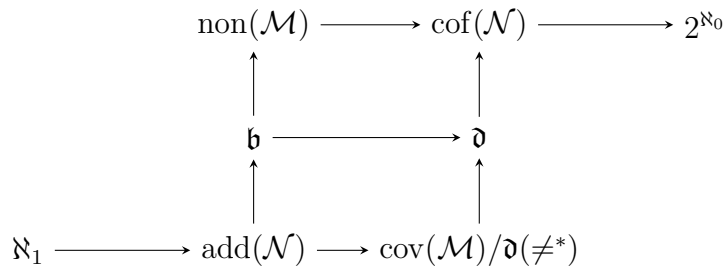
\begin{figure}[h]\label{Figure.Cichon-basic}
\centering
  \begin{tikzpicture}[scale=1.5,xscale=2]
     \draw (0,0) node (empty) {$\aleph_1$}
           (1,0) node (Bin*) {${\rm add}(\mathcal N)$}
           (1,1) node (Bleq*) {$\mathfrak b$}
           (1,2) node (Bneq*) {${\rm non}(\mathcal M)$}
           (2,0) node (Dneq*) {${\rm cov}(\mathcal M)/\mathfrak{d}(\neq^*)$}
           (2,1) node (Dleq*) {$\mathfrak{d}$}
           (2,2) node (Din*) {${\rm cof}(\mathcal N)$}
           (3,2) node (all) {$2^{\aleph_0}$}
           ;
     \draw[->,>=stealth]
            (empty) edge (Bin*)
            (Bin*) edge (Bleq*)
            (Bleq*) edge (Bneq*)
            (Bleq*) edge (Dleq*)
            (Dneq*) edge (Dleq*)
            (Bin*) edge (Dneq*)
            (Bneq*) edge (Din*)
            (Dleq*) edge (Din*)
            (Din*) edge (all)
            
            ;
  \end{tikzpicture}
  \caption{The Combinatorial Nodes of the Standard Cicho\'n Diagram}
\end{figure}
The details of these correspondences for $\leq_T$ can be found in \cite{BBNN} and similar ideas hold in the present case, with one exception: ${\rm cov}(\mathcal M) / \mathfrak{d}(\neq^*)$. As noted in Fact \ref{neqfact} these cardinals are the same, however Zapletal has shown in \cite{dimtheoryandforcing} that their degree theoretic analogues are in fact different, thus solving a well known problem of Fremlin. I will mention Zapletal's theorem again at the end of this chapter in connection with extensions of the current work. 



\subsection{Sacks Forcing}
The first forcing I will look at is Sacks forcing, $\mathbb S$. Recall that conditions in $\mathbb S$ are perfect trees $T \subseteq 2^{< \omega}$ ordered by inclusion. If $G$ is $\mathbb S$-generic then the unique branch in the intersection of all members of $G$ is called a {\em Sacks real}. I denote such a real $s$.

\begin{theorem}
In the Sacks extension all nodes of $\leq_W$-Cicho\'n diagram other than $\omega^\omega \setminus (\omega^\omega)^W$ are empty.
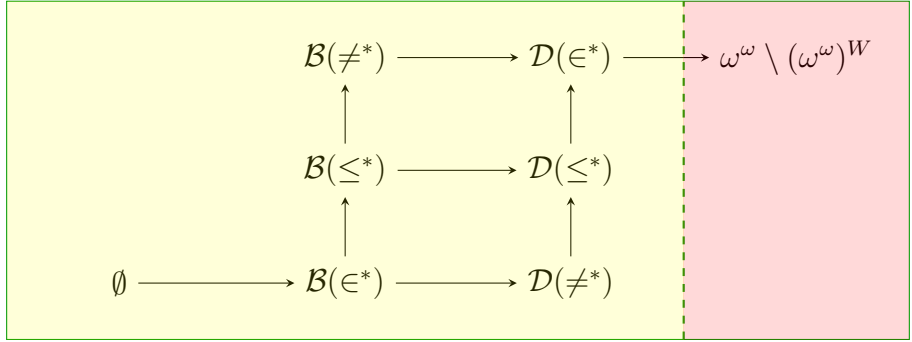
\begin{figure}[h]\label{Figure.Cichon-basic}
\centering
  \begin{tikzpicture}[scale=1.5,xscale=2]
     \draw (0,0) node (empty) {$\emptyset$}
           (1,0) node (Bin*) {$\mathcal B(\in^*)$}
           (1,1) node (Bleq*) {$\mathcal B(\leq^*)$}
           (1,2) node (Bneq*) {$\mathcal B(\neq^*)$}
           (2,0) node (Dneq*) {$\mathcal D(\neq^*)$}
           (2,1) node (Dleq*) {$\mathcal D(\leq^*)$}
           (2,2) node (Din*) {$\mathcal D(\in^*)$}
           (3,2) node (all) {$\omega^\omega\setminus(\omega^\omega)^W$}
           ;
     \draw[->,>=stealth]
            (empty) edge (Bin*)
            (Bin*) edge (Bleq*)
            (Bleq*) edge (Bneq*)
            (Bin*) edge (Dneq*)
            (Bleq*) edge (Dleq*)
            (Bneq*) edge (Din*)
            (Dneq*) edge (Dleq*)
            (Dleq*) edge (Din*)
            (Din*) edge (all)
            ;
       \draw[thick,dashed,OliveGreen] (2.5,-.5) -- (2.5,2.5);
       \draw[OliveGreen] (-.5,-.5) rectangle (3.5,2.5);
       \draw[draw=none,fill=yellow,fill opacity=.15] (-.5,-.5) rectangle (2.5,2.5);
       \draw[draw=none,fill=red,fill opacity=.15] (2.5,-.5) rectangle (3.5,2.5);
  \end{tikzpicture}
  \caption{After Sacks forcing}
\label{aftersacks}
\end{figure}
\end{theorem}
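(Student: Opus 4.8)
The plan is to show that Sacks forcing $\mathbb{S}$, performed over $W$, adds a real (the Sacks real $s$) but that $W[s]$ contains no witness to any of the combinatorial ``highness'' properties defining the nodes of the $\leq_W$-diagram. Since the diagram is ordered by inclusion and $\mathcal{D}(\in^*)$ is the largest proper node, it suffices to show $\mathcal{D}(\in^*)^{W[s]} = \emptyset$, i.e.\ that every real in $W[s]$ is eventually captured by some slalom in $W$. But this is precisely the statement that $\mathbb{S}$ has the Sacks property (Definition \ref{sacks}): every $\omega$-sequence in the extension is caught in a ground-model slalom $y$ with $|y(n)| \le n$, which after the standard reindexing gives a genuine (identity-bounded) slalom in $W$ capturing the new real. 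Hence no real in $W[s]$ is $\in^*$-unbounded over $W$, so $\mathcal{D}(\in^*) = \emptyset$, and by the inclusions of Theorem \ref{propdi} all of $\mathcal{B}(\in^*), \mathcal{B}(\leq^*), \mathcal{B}(\neq^*), \mathcal{D}(\neq^*), \mathcal{D}(\leq^*), \mathcal{D}(\in^*)$ are empty as well.

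The key steps, in order, are: (i) recall that Sacks forcing has the Sacks property --- this is classical (see \cite[Chapter 6]{BarJu95}) and proved by a fusion argument: given a name $\dot{x}$ and a condition $T$, one builds a fusion sequence $T = T_0 \supseteq T_1 \supseteq \cdots$ where at stage $n$ one thins $T$ below each of its first $n$ splitting nodes so as to decide $\dot{x}(n)$ to lie in a set of size $\le n$; the fusion $T_\omega = \bigcap_n T_n$ is a condition forcing $\dot x(n)$ into the resulting ground-model slalom. (ii) Note that the Sacks property (with bound $n \mapsto n$, or indeed any ground-model function tending to infinity, as remarked after Definition \ref{Laver}) says exactly that every real in $W[s]$ is $\in^*$-captured by a slalom in $W$, so the ``non-dominating set'' $\mathcal{D}_{\leq_W}(\in^*)$ is empty in $W[s]$. (iii) Invoke the arrows of Theorem \ref{propdi}: every node in the $\leq_W$-diagram except $\omega^\omega \setminus (\omega^\omega)^W$ is contained (as a set of reals) in $\mathcal{D}_{\leq_W}(\in^*)$ via the transitive closure of the implications, so all of them are empty. (iv) Finally, observe that the Sacks real $s$ itself is not in $(\omega^\omega)^W$ (Sacks forcing is nontrivial and adds a new real), so the node $\omega^\omega \setminus (\omega^\omega)^W$ is nonempty, matching Figure \ref{aftersacks}.

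I do not expect any genuine obstacle here: the entire content is the Sacks property of $\mathbb{S}$ together with the already-established diagram of inclusions. The only point requiring a sentence of care is the bookkeeping in step (ii)--(iii): one should note that ``slalom'' in Definition \ref{sacks} is stated with the bound $|y(n)| \le n$, whereas $\in^*$ in the preliminaries is defined relative to the identity-bounded slaloms in $\mathcal{S}$; these coincide up to a trivial reindexing (or one simply uses the footnoted strengthening that the bounding function is arbitrary subject to tending to infinity), so there is no loss. One might also remark explicitly that the argument shows nothing stronger is true --- e.g.\ it does not by itself separate nodes, only collapses them --- but that is consistent with the stated theorem.
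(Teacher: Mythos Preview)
Your proposal is correct and follows essentially the same approach as the paper: invoke the Sacks property to conclude that every new real is captured by a ground-model slalom, so $\mathcal D(\in^*)=\emptyset$, and then use the inclusions of Theorem~\ref{propdi} to collapse all the remaining nodes. The paper's proof is just a terser version of yours, omitting the fusion sketch and the bookkeeping remarks about the slalom bound.
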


\begin{proof}
Recall that Sacks forcing has the Sacks property, Definition \ref{sacks}, see Lemma 7.3.2 of \cite{BarJu95}. As a result, all reals added by $\mathbb S$ and hence all reals in $V$ that are not in $W$ can be captured by a slalom from the ground model i.e. $W$. Thus, $W[s]$ thinks that $\mathcal D (\in^*)$ is empty but $s \in \omega^\omega \setminus (\omega^\omega)^W$ and hence the only non-empty set in the $\leq_W$-Cicho\'n diagram is the latter.

\end{proof}

\subsection{Cohen Forcing} 
Let $\mathbb C = Add(\omega , 1)$ be the forcing to add one Cohen real. The main theorem of this section is:
\begin{theorem}
Let $c$ be a Cohen real generic over $W$. Then in $W[c]$ the following hold:
\begin{enumerate}
\item   
   $\emptyset = \mathcal B(\in^*) = \mathcal B(\leq^*) = \mathcal B(\neq^*)$ 
\item   
   $\mathcal D(\neq^*) = \mathcal D(\leq^*) = \mathcal D(\in^*) = \{x \; | \; \exists c \in W[x] \; {\rm Cohen \; over \; } W\} = \omega^\omega \setminus (\omega^\omega)^W$
   \end{enumerate}

\end{theorem}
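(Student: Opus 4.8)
The plan is to treat the two parts separately, using throughout that for any real $x\in W[c]$ we have $W\subseteq W[x]\subseteq W[c]$, so the membership of $x$ in any node of the $\leq_W$-diagram is governed entirely by what the one-step Cohen extension $W[c]$ does over $W$.

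\textbf{Part (1).} Since Theorem~\ref{propdi} (applied to $\sqsubseteq\,=\,\leq_W$) already gives $\mathcal B(\in^*)\subseteq\mathcal B(\leq^*)\subseteq\mathcal B(\neq^*)$, it is enough to show $\mathcal B_{\leq_W}(\neq^*)=\emptyset$ in $W[c]$, i.e.\ that $W[c]$ contains no real eventually different from every real of $W$. This is the classical fact that Cohen forcing adds no eventually different real (see \cite{BarJu95}); for completeness I would include the short argument. Fix a $\mathbb C$-name $\dot y$ and $p\Vdash\dot y\in\omega^\omega$; working below $p$, enumerate as $\langle (r_i,N_i):i<\omega\rangle$ all pairs with $r_i\le p$ and $N_i\in\omega$, and recursively choose at stage $i$ a coordinate $k_i$ larger than $N_i$ and all earlier $k_j$, together with $t_i\le r_i$ and $m_i$ such that $t_i\Vdash\dot y(\check k_i)=\check m_i$; set $z(k_i)=m_i$ and $z=0$ elsewhere. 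Then $z\in W$, and for every $N$ the set $\{t:\exists k\ge N\ t\Vdash\dot y(\check k)=\check{z(k)}\}$ is dense below $p$, so $p\Vdash\exists^\infty k\ \dot y(k)=\check z(k)$. Hence every real of $W[c]$ is infinitely often equal to some real of $W$, so no real of any $W[x]$ (with $x\in W[c]$) is eventually different from all reals of $W$.

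\textbf{Part (2).} Here I would close a cycle of inclusions. From Theorem~\ref{propdi} we have $\mathcal D(\neq^*)\subseteq\mathcal D(\leq^*)\subseteq\mathcal D(\in^*)\subseteq\omega^\omega\setminus(\omega^\omega)^W$. Write $\mathcal C=\{x:\text{there is a Cohen real over }W\text{ in }W[x]\}$. If $c'\in W[x]$ is Cohen over $W$ then $c'$ is infinitely often equal to every real of $W$ (for each $z\in(\omega^\omega)^W$ and each $N$ the set $\{s\in\mathbb C:\exists k\ge N\ s(k)=z(k)\}$ is dense), so $c'$ witnesses $x\in\mathcal D_{\leq_W}(\neq^*)$; thus $\mathcal C\subseteq\mathcal D(\neq^*)$. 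It remains to prove $\omega^\omega\setminus(\omega^\omega)^W\subseteq\mathcal C$, which is the substantive lemma: \emph{if $c$ is $\mathbb C$-generic over $W$ and $x\in W[c]\setminus W$ is a real, then $W[x]$ is a Cohen extension of $W$, hence contains a Cohen real over $W$.} I would prove this via the Boolean-algebra picture: let $\mathbb B=\mathrm{RO}(\mathbb C)$ be the Cohen algebra and let $\mathbb A\le\mathbb B$ be the complete subalgebra generated by $\{[[\dot x(\check n)=\check m]]:n,m\in\omega\}$, so that $W[x]=W^{\mathbb A}$. Passing below the condition $p$ forcing $\dot x\notin\check W$, the algebra $\mathbb A$ has no atoms (an atom would decide $\dot x$ completely, giving $p\ge a\Vdash\dot x=\check z$ with $z\in W$). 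Let $D\le\mathbb A$ be the (non-complete) subalgebra generated by those same generators; $D$ is countable, and a $D$-atom below which no generator splits would be an $\mathbb A$-atom, so $D$ is atomless, hence is the countable atomless Boolean algebra, hence has the Cohen algebra as its completion. Since $\mathbb A$ is the complete subalgebra generated by $D$, $\mathbb A$ is (isomorphic to) that completion, so $W[x]=W^{\mathbb A}$ is a Cohen extension of $W$. Combining, $\omega^\omega\setminus(\omega^\omega)^W\subseteq\mathcal C\subseteq\mathcal D(\neq^*)\subseteq\mathcal D(\leq^*)\subseteq\mathcal D(\in^*)\subseteq\omega^\omega\setminus(\omega^\omega)^W$, so all five sets coincide.

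\textbf{Main obstacle.} Everything except the lemma on intermediate models is bookkeeping or a quotation of standard facts; the lemma ``every non-ground-model real of a one-step Cohen extension generates a Cohen subextension'' is the real content. If one prefers to avoid Boolean algebras, it can be proved by hand: $p\Vdash\dot x\notin\check W$ implies $\dot x$ splits below every condition (for every $q\le p$ there are $q_0,q_1\le q$ and $n$ deciding $\dot x(\check n)$ differently), and one builds in $W$ a suitable tree of conditions and reads a Cohen-generic branch off $x$. The delicate point there is arranging the tree so that $x$ genuinely follows a branch and the branch meets every dense subset of $\mathbb C$ lying in $W$ — precisely the issue that the Boolean-algebra argument packages cleanly, which is why I would present that version.
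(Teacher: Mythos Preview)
Your approach matches the paper's: for Part~(1) both argue directly that Cohen forcing adds no eventually different real, and for Part~(2) both reduce to the lemma that every nontrivial intermediate model $W\subseteq W[x]\subseteq W[c]$ is itself a Cohen extension of $W$. The paper simply invokes the intermediate model theorem and cites this last fact as \cite[Theorem~3.3.1]{BarJu95}, whereas you try to prove it.

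Your Boolean-algebra argument has one unjustified step. From ``$D$ is countable atomless'' you correctly get that the abstract completion $\bar D$ is the Cohen algebra, but the complete subalgebra $\mathbb A\le\mathbb B$ generated by $D$ is in general only a \emph{complete homomorphic image} of $\bar D$, not $\bar D$ itself; equivalently, you have not shown that $D$ is dense in $\mathbb A$. (Also, the sentence ``a $D$-atom below which no generator splits would be an $\mathbb A$-atom'' is not right as stated: a $D$-atom need not be an $\mathbb A$-atom.) The repair is short: argue directly that $\mathbb A$ is atomless below $p$ (an $\mathbb A$-atom there would force $\dot x=\check z$ for some $z\in W$), and then note that the surjection $\bar D\twoheadrightarrow\mathbb A$ pushes a countable dense set of $\bar D$ to a countable dense set of $\mathbb A$, so $\mathbb A$ is an atomless complete Boolean algebra with countable $\pi$-weight, hence the Cohen algebra. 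With this fix your argument and the paper's coincide.
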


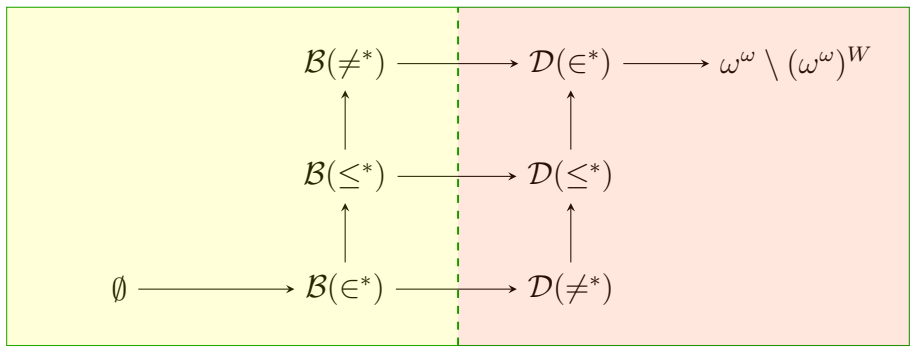
\begin{figure}[h]\label{Figure.Cichon-basic}
\centering
  \begin{tikzpicture}[scale=1.5,xscale=2]
     \draw (0,0) node (empty) {$\emptyset$}
           (1,0) node (Bin*) {$\mathcal B(\in^*)$}
           (1,1) node (Bleq*) {$\mathcal B(\leq^*)$}
           (1,2) node (Bneq*) {$\mathcal B(\neq^*)$}
           (2,0) node (Dneq*) {$\mathcal D(\neq^*)$}
           (2,1) node (Dleq*) {$\mathcal D(\leq^*)$}
           (2,2) node (Din*) {$\mathcal D(\in^*)$}
           (3,2) node (all) {$\omega^\omega\setminus(\omega^\omega)^W$}
           ;
     \draw[->,>=stealth]
            (empty) edge (Bin*)
            (Bin*) edge (Bleq*)
            (Bleq*) edge (Bneq*)
            (Bin*) edge (Dneq*)
            (Bleq*) edge (Dleq*)
            (Bneq*) edge (Din*)
            (Dneq*) edge (Dleq*)
            (Dleq*) edge (Din*)
            (Din*) edge (all)
            ;
       \draw[thick,dashed,OliveGreen] (1.5,-.5) -- (1.5,2.5);
       \draw[OliveGreen] (-.5,-.5) rectangle (3.5,2.5);
       \draw[draw=none,fill=yellow,fill opacity=.15] (-.5,-.5) rectangle (1.5,2.5);
       \draw[draw=none,fill=Orange,fill opacity=.15] (1.5,-.5) rectangle (3.5,2.5);
  \end{tikzpicture}
  \caption{After Cohen forcing}
\end{figure}

\begin{proof}
There are two parts to this proof. First I need to show that all of the elements on the left are empty. Since $\mathcal B(\in^*) \subseteq \mathcal B(\leq^*) \subseteq \mathcal B(\neq^*)$ it suffices to show that Cohen forcing adds no reals which are eventually different from all ground model reals. This is a standard argument but I repeat it here for completeness, see also \cite[p. 83]{BlassHB}. Let $\{p_i\}_{i \in \omega}$ enumerate the conditions of $\mathbb C$ and suppose that $\Vdash_\mathbb C \dot{x}: \omega \to \omega$. Then, for each $i$, pick a $q_i \leq p_i$ which decides the value of $\dot{x}(i)$ in other words let $q_i \Vdash \dot{x}(\check{i}) = \check{j_i}$ for some $j_i$. Now, in the ground model, set $y(i) = j_i$. Finally, suppose for contradiction that there was a $k \in \omega$ and a $p \in \mathbb C$ such that $p \Vdash \forall l > \check{k} \; \check{y}(l) \neq \dot{x}(l)$. But then one can find an $i> k$ and a $q_i \leq p$ such that $q_i \Vdash \check{y(i)} = \dot{x}(i)$, which is a contradiction.

So Cohen forcing leaves the left side of the diagram trivialized. The right side however changes since it's dense for $c$ to equal every real in $W$ infinitely often so $c \in \mathcal D(\neq^*)$. The second part of the proof is to show that every real added by Cohen forcing adds an element to $\mathcal D(\neq^*)$. Since $\mathcal D(\neq^*) \subseteq \mathcal D(\leq^*) \subseteq \mathcal D(\in^*) \subseteq \omega^\omega \setminus (\omega^\omega)^W$ it suffices to show that, $\omega^\omega \setminus (\omega^\omega)^W \subseteq \mathcal D(\neq^*)$. Let $x \in W[c] \setminus W$ be a new real and consider now the model $W[x]$. By the intermediate model theorem it must be the case that $W[x]$ is a generic extension of $W$ and that $W[c]$ is a generic extension of $W[x]$ so the forcing to add $x$ is a non trivial factor of Cohen forcing so it must in fact be isomorphic to it by Theorem 3.3.1 of \cite{BarJu95}. Thus in $W[x]$ there is a real $d$ which is Cohen generic over $W$, and $d$ is infinitely often equal to every real in $W$ so $x \in \mathcal D(\neq^*)$.
\end{proof}

\subsection{Random Real Forcing}
I denote random real forcing by $\mathbb B$. The diagram for random real forcing is as described in the theorem below and can be proved in a very similar way to that of Cohen forcing using the standard facts found in  \cite[Chapter 3]{BarJu95}. 
\begin{theorem}
Let $r$ be a random real over $W$. Then in $W[r]$ the $\leq_W$-Cicho\'n diagram is determined by the separations $\mathcal B(\in^*) = \mathcal B(\leq^*) = \mathcal D(\neq^*) = \mathcal D(\leq^*) = \emptyset$ and $\mathcal B (\neq^*) = \mathcal D(\in^*) = \omega^\omega \setminus (\omega^\omega)^W$.

\begin{figure}[h]\label{Figure.Cichon-basic}
\centering
  \begin{tikzpicture}[scale=1.5,xscale=2]
     \draw (0,0) node (empty) {$\emptyset$}
           (1,0) node (Bin*) {$\mathcal B(\in^*)$}
           (1,1) node (Bleq*) {$\mathcal B(\leq^*)$}
           (1,2) node (Bneq*) {$\mathcal B(\neq^*)$}
           (2,0) node (Dneq*) {$\mathcal D(\neq^*)$}
           (2,1) node (Dleq*) {$\mathcal D(\leq^*)$}
           (2,2) node (Din*) {$\mathcal D(\in^*)$}
           (3,2) node (all) {$\omega^\omega\setminus(\omega^\omega)^W$}
           ;
     \draw[->,>=stealth]
            (empty) edge (Bin*)
            (Bin*) edge (Bleq*)
            (Bleq*) edge (Bneq*)
            (Bin*) edge (Dneq*)
            (Bleq*) edge (Dleq*)
            (Bneq*) edge (Din*)
            (Dneq*) edge (Dleq*)
            (Dleq*) edge (Din*)
            (Din*) edge (all)
            ;
       \draw[thick,dashed,OliveGreen] (-.5,1.5) -- (3.5,1.5);
       \draw[OliveGreen] (-.5,-.5) rectangle (3.5,2.5);
       \draw[draw=none,fill=yellow,fill opacity=.15] (-.5,-.5) rectangle (3.5,1.5);
       \draw[draw=none,fill=Orange,fill opacity=.15] (-.5,1.5) rectangle (3.5,2.5);
  \end{tikzpicture}
  \caption{After Random Real forcing}
\end{figure}
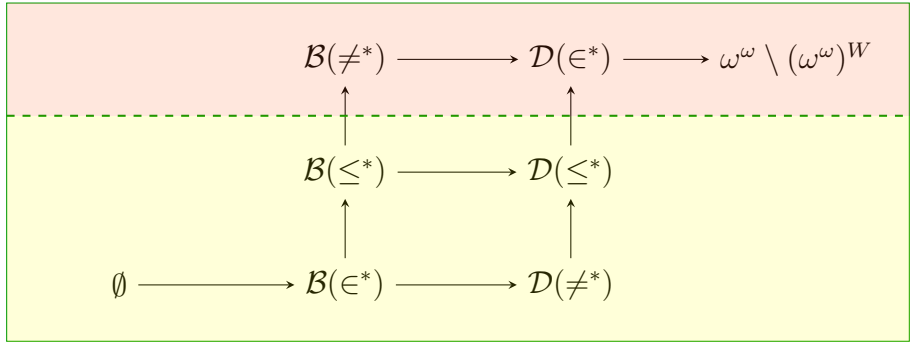
\label{randomthm}
\end{theorem}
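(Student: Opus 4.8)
The plan is to separate the diagram along the horizontal dashed line of Figure~\ref{randomthm}. For the four lower nodes I would use that $\mathbb{B}$ is $\omega^\omega$-bounding (\cite[Chapter~3]{BarJu95}): in $W[r]$ every real is $\leq^*$-dominated by a real of $W$, so no real of $W[r]$ eventually dominates all reals of $W$ and no real of $W[r]$ is unbounded over the reals of $W$; hence $\mathcal B(\leq^*)=\mathcal D(\leq^*)=\emptyset$. Since $\leq_W$ extends $\leq_T$, Theorem~\ref{propdi} gives the inclusions $\mathcal B(\in^*)\subseteq\mathcal B(\leq^*)$ and $\mathcal D(\neq^*)\subseteq\mathcal D(\leq^*)$, so all four of $\mathcal B(\in^*),\mathcal B(\leq^*),\mathcal D(\neq^*),\mathcal D(\leq^*)$ are empty.

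For the two upper nodes the content is that $\mathbb{B}$ adds a real eventually different from every real of $W$. To see this directly I would fix a partition of $\omega$ into consecutive finite intervals $\langle I_n\mid n<\omega\rangle$ with $|I_n|=n$, and for $r\in 2^\omega$ let $y_r\in\omega^\omega$ be the real with $y_r(n)$ equal to the integer coded in binary by $r\restriction I_n$; then $y_r\in W[r]$. For any $z\in(\omega^\omega)^W$ and any $n$, either $z(n)\geq 2^n$, in which case $y_r(n)\neq z(n)$ for every $r$, or $\mu(\{r:y_r(n)=z(n)\})=2^{-n}$; in both cases this measure is at most $2^{-n}$, so $\sum_n\mu(\{r:y_r(n)=z(n)\})<\infty$, and by the Borel--Cantelli lemma applied in $W$ (legitimate since $z\in W$) the set $\{r:y_r(n)=z(n)\text{ for infinitely many }n\}$ is null. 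Thus $\forces_{\mathbb{B}}\dot y_r\neq^*\check z$ for every $z\in(\omega^\omega)^W$, so $r\in\mathcal B(\neq^*)$. One could instead simply quote the standard fact that $\mathbb{B}$ makes the ground-model reals meager, i.e.\ adds an eventually different real (see \cite[Chapter~3]{BarJu95}).

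To get $\mathcal B(\neq^*)=\omega^\omega\setminus(\omega^\omega)^W$ I would upgrade this from $r$ to an arbitrary $x\in W[r]\setminus W$, exactly as in the Cohen case: by the intermediate model theorem $W[x]$ is a forcing extension of $W$ and $W[r]$ a forcing extension of $W[x]$, so the forcing adding $x$ over $W$ is a nontrivial complete subalgebra of $\mathbb{B}$, and every such subalgebra is isomorphic to $\mathbb{B}$ (the measure-algebra analogue of Theorem 3.3.1 of \cite{BarJu95}; see \cite[Chapter~3]{BarJu95}). Hence $W[x]$ contains a real $r'$ random over $W$, and the previous paragraph applied to $r'$ inside $W[x]$ produces a real of $W[x]$ eventually different from every real of $W$; so $x\in\mathcal B(\neq^*)$. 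Combined with the always-valid inclusions $\mathcal B(\neq^*)\subseteq\mathcal D(\in^*)\subseteq\omega^\omega\setminus(\omega^\omega)^W$ of Theorem~\ref{propdi}, this yields $\mathcal B(\neq^*)=\mathcal D(\in^*)=\omega^\omega\setminus(\omega^\omega)^W$ and completes the description of the diagram.

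The only non-routine ingredients are the Borel--Cantelli computation of the second paragraph and, more importantly, the structural fact used in the third paragraph that every nontrivial complete subalgebra of the random algebra is again the random algebra: this is precisely what licenses treating $W[x]$ as a random extension of $W$, and it plays the role that the homogeneity of Cohen forcing played in the previous theorem. Everything else is bookkeeping with the inclusions of Theorem~\ref{propdi}.
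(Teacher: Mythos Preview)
Your proposal is correct and follows essentially the same approach as the paper: both use $\omega^\omega$-bounding to empty the lower nodes, the existence of an eventually different real to populate $\mathcal B(\neq^*)$, and the intermediate-model fact (every nontrivial $W\subseteq W[x]\subseteq W[r]$ contains a real random over $W$) to upgrade this to all new reals. The only difference is expository: you supply an explicit Borel--Cantelli computation for the eventually different real, whereas the paper simply quotes the three needed properties of $\mathbb B$ from \cite[Chapter~3]{BarJu95} as a black-box fact and derives the diagram in two lines.
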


The proof of this theorem follows from the following list of facts that are well known and can be found in \cite{BarJu95}, Chapter 3. 
\begin{fact}
The random real forcing $\mathbb B$ 
\begin{enumerate}
\item
Adds no unbounded reals, 
\item
Adds an eventually different real and 
\item
If $x \in W[r] \cap \omega^\omega \setminus W \cap \omega^\omega$ then there is a real which is random over $W$ in $W[x]$.
\end{enumerate}
\label{factB}
\end{fact}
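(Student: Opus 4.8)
The plan is to establish the three clauses independently. Clauses (1) and (2) are routine measure-theoretic forcing arguments; clause (3) is the structural heart and the only real obstacle.

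For (1), I would recall that, up to equivalence, any $\mathbb{B}$-name $\dot x$ for an element of $\omega^\omega$ is represented by a Borel function $\xi \mapsto x_\xi$ from $2^\omega$ to $\omega^\omega$ coded in $W$. Fix a positive-measure Borel set $B$. For each $n$ the sets $\{\xi \in B : x_\xi(n) = k\}$, $k<\omega$, partition $B$, so by countable additivity there is a finite $F_n \subseteq \omega$ with $\mu(\{\xi \in B : x_\xi(n) \notin F_n\}) < \varepsilon_n$, where $\langle \varepsilon_n : n<\omega\rangle \in W$ is chosen with $\sum_n \varepsilon_n < \tfrac12\mu(B)$. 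Then $B' = B \setminus \bigcup_n \{\xi \in B : x_\xi(n) \notin F_n\}$ has measure at least $\tfrac12\mu(B) > 0$ and forces $\dot x(\check n) \le \max F_n$ for every $n$. Since $m := \langle \max F_n : n<\omega\rangle$ lies in $W$ and conditions of this form are dense below any condition, in $W[r]$ every function $\omega \to \omega$ is dominated everywhere — a fortiori $\le^*$-dominated — by a real of $W$; thus $\mathbb{B}$ adds no unbounded real.

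For (2), I would switch to the measure-isomorphic presentation of $\mathbb{B}$ as the measure algebra of $\Omega = \prod_{n<\omega} 2^n$ with the product of the uniform probability measures, viewing the generic point $g \in \Omega$ as an element of $\omega^\omega$ via $g(n) < 2^n$. For each $h \in (\omega^\omega)^W$, the slice $\{f \in \Omega : f(n) = h(n)\}$ has measure at most $2^{-n}$, so $\sum_n \mu(\{f : f(n)=h(n)\}) < \infty$, and the first Borel--Cantelli lemma gives $\mu(\{f \in \Omega : \exists^\infty n\ f(n)=h(n)\}) = 0$. The complement is conull, hence equals $1_{\mathbb{B}}$, so $\mathbb{B}$ forces $\dot g$ to be eventually different from $\check h$. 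As this holds for every $h \in (\omega^\omega)^W$, $g$ witnesses that $\mathbb{B}$ adds an eventually different real for $W$.

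For (3), I would argue as follows. If $x \in (\omega^\omega)^{W[r]} \setminus W$, then $W \subseteq W[x] \subseteq W[r]$ is an intermediate model of $\ZFC$, so by the intermediate model theorem $W[x] = W[G \cap \mathbb{B}']$ for some complete subalgebra $\mathbb{B}' \le \mathbb{B}$, and $\mathbb{B}'$ is nontrivial since $x \notin W$. Now $\mathbb{B}'$ carries the restricted measure, hence is itself a measure algebra; by Maharam's classification it is a direct sum of a purely atomic part and homogeneous atomless pieces, and since the Maharam type of $\mathbb{B}'$ is at most that of $\mathbb{B}$ (namely $\omega$), every homogeneous piece has Maharam type exactly $\omega$ and is therefore isomorphic to $\mathbb{B}$. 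The generic filter $G \cap \mathbb{B}'$ concentrates on one summand; it cannot concentrate on the atomic part, for then $x$ would already be decided in $W$, contradicting $x \notin W$. Hence $W[x]$ is a generic extension of $W$ by a copy of $\mathbb{B}$ and so contains a real random over $W$. The hard part here is precisely this structural step, which leans on the intermediate model theorem together with Maharam's structure theory for measure algebras; equivalently, one may simply invoke the folklore fact, available in \cite[Chapter 3]{BarJu95}, that every intermediate model of a random real extension is itself a random real extension.
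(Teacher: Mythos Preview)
Your proofs of all three clauses are correct and follow the standard approaches. Note, however, that the paper does not actually prove this statement: it is recorded as a \emph{Fact} with a citation to \cite[Chapter 3]{BarJu95}, and the surrounding text explicitly says that the theorem ``follows from the following list of facts that are well known and can be found in \cite{BarJu95}, Chapter 3.'' So there is no proof in the paper to compare against---you have supplied what the paper deliberately outsourced, and indeed your closing remark for clause (3) points to the same reference.
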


\begin{proof}[Proof of Theorem \ref{randomthm}]
Since by 1 of Fact \ref{factB}, $\mathbb B$ adds no unbounded reals $\mathcal D(\leq^*)$ is empty. Now, suppose $x \in W[r] \setminus W$, then there is a $y \leq_W x$ which is also random over $W$ by 3 of Fact \ref{factB}. Thus by 2 of Fact \ref{factB} we get that $x \in \mathcal B(\neq^*)$. Therefore $\omega^\omega \setminus (\omega^\omega)^W  \subseteq \mathcal B(\neq^*)$ and the result follows.
\end{proof}

\subsection{Laver Forcing}
Let me now turn to Laver forcing, $\mathbb L$. Recall that conditions in Laver forcing are trees $T\subseteq \omega^{ < \omega}$ with a distinguished {\em stem}, that is, a linearly ordered initial segment, after which there is infinite branching at each node. The order is inclusion. The union of the stems of the trees in a generic for $\mathbb L$ form a real, called a Laver real. Let $l$ denote such a real over $W$. Recall that $l$ is dominating. The main theorem of this section is
\begin{theorem}
Let $l$ be a Laver real over $W$. Then in $W[l]$ we have that $\emptyset = \mathcal B(\in^*) = \mathcal D(\neq^*)$ and all other nodes are equal to the set of all new reals.
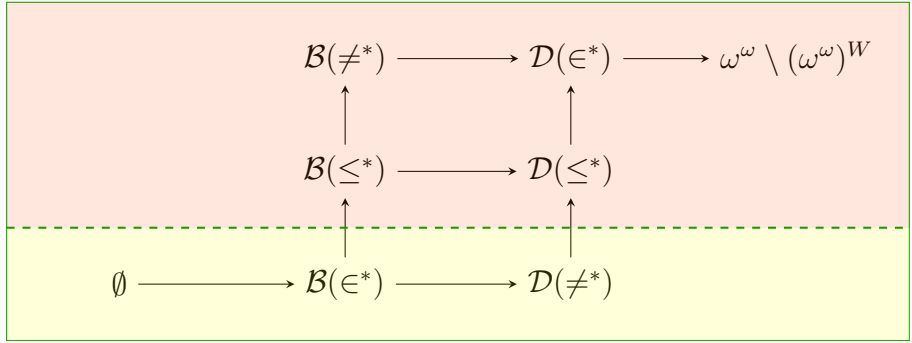
\begin{figure}[h]\label{Figure.Cichon-basic}
\centering
  \begin{tikzpicture}[scale=1.5,xscale=2]
     \draw (0,0) node (empty) {$\emptyset$}
           (1,0) node (Bin*) {$\mathcal B(\in^*)$}
           (1,1) node (Bleq*) {$\mathcal B(\leq^*)$}
           (1,2) node (Bneq*) {$\mathcal B(\neq^*)$}
           (2,0) node (Dneq*) {$\mathcal D(\neq^*)$}
           (2,1) node (Dleq*) {$\mathcal D(\leq^*)$}
           (2,2) node (Din*) {$\mathcal D(\in^*)$}
           (3,2) node (all) {$\omega^\omega\setminus(\omega^\omega)^W$}
           ;
     \draw[->,>=stealth]
            (empty) edge (Bin*)
            (Bin*) edge (Bleq*)
            (Bleq*) edge (Bneq*)
            (Bin*) edge (Dneq*)
            (Bleq*) edge (Dleq*)
            (Bneq*) edge (Din*)
            (Dneq*) edge (Dleq*)
            (Dleq*) edge (Din*)
            (Din*) edge (all)
            ;
       \draw[thick,dashed,OliveGreen] (-.5,.5) -- (3.5,.5);
       \draw[OliveGreen] (-.5,-.5) rectangle (3.5,2.5);
       \draw[draw=none,fill=yellow,fill opacity=.15] (-.5,-.5) rectangle (3.5,.5);
       \draw[draw=none,fill=Orange,fill opacity=.15] (-.5,.5) rectangle (3.5,2.5);
  \end{tikzpicture}
  \caption{After Laver forcing}
\end{figure}
\label{laver}
\end{theorem}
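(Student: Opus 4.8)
The implications shown are exactly those of Figure~\ref{arbcichon}, so they hold by Theorem~\ref{propdi} because $\leq_W$ extends $\leq_T$. Hence the work is to pin down which nodes are empty and which are all of $\omega^\omega \setminus (\omega^\omega)^W$, and for this it suffices to prove two facts about $W[l]$: that $\mathcal D(\neq^*) = \emptyset$, and that $\mathcal B(\leq^*) = \omega^\omega \setminus (\omega^\omega)^W$. The first gives $\mathcal B(\in^*) = \emptyset$ for free, since $\mathcal B(\in^*) \subseteq \mathcal D(\neq^*)$ by the previous section. The second gives $\mathcal B(\neq^*) = \mathcal D(\leq^*) = \mathcal D(\in^*) = \omega^\omega \setminus (\omega^\omega)^W$ for free, since by the arrows of the diagram each of these three sets is squeezed between $\mathcal B(\leq^*)$ and $\omega^\omega \setminus (\omega^\omega)^W$. (That $\mathcal B(\leq^*)$ is nonempty at all is automatic --- $l$ itself dominates $\omega^\omega \cap W$ --- so the content is that \emph{every} new real does.)

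For $\mathcal D(\neq^*) = \emptyset$: a real i.o.e.\ to every real of $W$ cannot lie in $W$, so this is equivalent to the statement that $\mathbb L$ adds no real i.o.e.\ to $\omega^\omega \cap W$, i.e.\ that $\omega^\omega \cap W$ stays $\neq^*$-dominating in $W[l]$. I would prove this by the standard fusion argument witnessing that Laver forcing preserves $\neq^*$-domination (cf.\ \cite[Ch.~7]{BarJu95}): starting from a name $\dot y$ for an element of $\omega^\omega$ and a condition $T$, use the pure-decision and fusion (Axiom~A) properties of $\mathbb L$ to build $T' \leq T$ along which $\dot y$ is read off the generic branch in a controlled enough way to extract a single ground-model real $z$ with $T' \Vdash \dot y \neq^* \check z$; genericity then finishes it.

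For $\mathcal B(\leq^*) = \omega^\omega \setminus (\omega^\omega)^W$: the inclusion $\subseteq$ is automatic, and for $\supseteq$ I must show that for every $x \in W[l] \setminus W$ the model $W[x]$ already contains a real dominating $\omega^\omega \cap W$. The plan is to invoke, or reprove via an analysis of the quotient $\mathbb L / \dot x$, the structural fact that every nontrivial intermediate model of a Laver extension contains a dominating real over the ground model --- the Laver counterpart of the ``any nontrivial factor is again generic'' fact used for Cohen forcing above. Granting it, any $x \in W[l]\setminus W$ supplies a dominating real in $W[x]$, so $x \in \mathcal B(\leq^*)$.

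The bookkeeping (the diagram implications and the two sandwiching observations) is formal; the content is in the two displayed facts. For the first, the fusion must respect the infinite branching of Laver conditions --- since a splitting node has infinitely many successors, each possibly forcing a different value of $\dot y$, one cannot merely list and avoid those values and must instead localise the reading of $\dot y$ along the tree. The harder point, I expect, is the second: it requires controlling \emph{all} intermediate models of the Laver extension rather than just $W[l]$, and this is where one genuinely uses the structure of Laver forcing beyond its having the Laver property and adding a dominating real.
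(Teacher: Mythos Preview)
Your reduction to the two key facts --- $\mathcal D(\neq^*)=\emptyset$ and $\mathcal B(\leq^*)=\omega^\omega\setminus(\omega^\omega)^W$ --- and the subsequent sandwiching match the paper exactly. Where you diverge is in how you establish those two facts. For the first, the paper simply cites that $\mathbb L$ has the Laver property (\cite[Theorem 7.3.29]{BarJu95}) and observes that the Laver property rules out infinitely-often-equal reals; your fusion argument is essentially a direct reproof of (a consequence of) that property. For the second, the paper uses a sharper and cleaner tool than the intermediate-model analysis you sketch: Groszek's minimality theorem for Laver forcing (\cite[Theorem~7]{Gro87}), which says that if $x\in W[l]\setminus W$ then already $l\in W[x]$. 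So there are no nontrivial proper intermediate models at all, and any new real $x$ literally reconstructs $l$, hence a dominating real. This subsumes your proposed ``every nontrivial intermediate model contains a dominating real'' and saves the quotient analysis entirely.
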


As before this theorem follows from well known facts about $\mathbb L$. In particular the Laver property, Definition \ref{Laver}, which holds of $\mathbb L$ \cite[Theorem 7.3.29]{BarJu95}, implies that there are no infinitely often equal reals in $W[l]$\footnote{I would like to thank Professor Martin Goldstern who explained this fact to me on Mathoverflow, https://mathoverflow.net/questions/287977/does-laver-forcing-add-an-infinitely-often-equal-real .}. Thus it suffices to note that $l$ is dominating and, by \cite[Theorem 7]{Gro87}, that Laver reals satisfy the following minimality property: if $x$ is a real such that $x \in W[l] \setminus W$ then $l \in W[x]$. Therefore every new real constructs a dominating real, hence the equality between $\mathcal B(\leq^*)$ and $\omega^\omega \setminus (\omega^\omega)^W$.

\subsection{Rational Perfect Tree Forcing}
Next I look at is Miller's rational perfect tree forcing, $\mathbb{PT}$. Recall that $\mathbb{PT}$ is the set of perfect trees $T \subseteq \omega^{< \omega}$ so that for all $s \in T$ there is a $t \supseteq s$ with $\omega$-many immediate successors. The order is inclusion and the unique branch through the trees in the generic is called a {\em Miller real}. Let us denote such a real by $m$.
\begin{theorem}
Let $m$ be a Miller real over $W$. Then the $\leq_W$ diagram in $W[m]$ is determined by $\emptyset = \mathcal B(\neq^*) = \mathcal D(\neq^*)$ and all other nodes are equal to the set of all new reals.
\begin{figure}[h]\label{Figure.Cichon-basic}
\centering
  \begin{tikzpicture}[scale=1.5,xscale=2]
     \draw (0,0) node (empty) {$\emptyset$}
           (1,0) node (Bin*) {$\mathcal B(\in^*)$}
           (1,1) node (Bleq*) {$\mathcal B(\leq^*)$}
           (1,2) node (Bneq*) {$\mathcal B(\neq^*)$}
           (2,0) node (Dneq*) {$\mathcal D(\neq^*)$}
           (2,1) node (Dleq*) {$\mathcal D(\leq^*)$}
           (2,2) node (Din*) {$\mathcal D(\in^*)$}
           (3,2) node (all) {$\omega^\omega\setminus(\omega^\omega)^W$}
           ;
     \draw[->,>=stealth]
            (empty) edge (Bin*)
            (Bin*) edge (Bleq*)
            (Bleq*) edge (Bneq*)
            (Bin*) edge (Dneq*)
            (Bleq*) edge (Dleq*)
            (Bneq*) edge (Din*)
            (Dneq*) edge (Dleq*)
            (Dleq*) edge (Din*)
            (Din*) edge (all)
            ;
       \draw[thick,dashed,OliveGreen] (1.5,.5) -- (3.5,.5);
       \draw[thick,dashed,OliveGreen] (1.5,.5) -- (1.5,2.5);
       \draw[OliveGreen] (-.5,-.5) rectangle (3.5,2.5);
       \draw[draw=none,fill=yellow,fill opacity=.15] (-.5,-.5) rectangle (3.5,.5);
       \draw[draw=none,fill=yellow,fill opacity=.15] (-.5,.5) rectangle (1.5,2.5);
       \draw[draw=none,fill=Orange,fill opacity=.15] (1.5,.5) rectangle (3.5,2.5);
  \end{tikzpicture}
  \caption{After rational perfect tree forcing}
\end{figure}
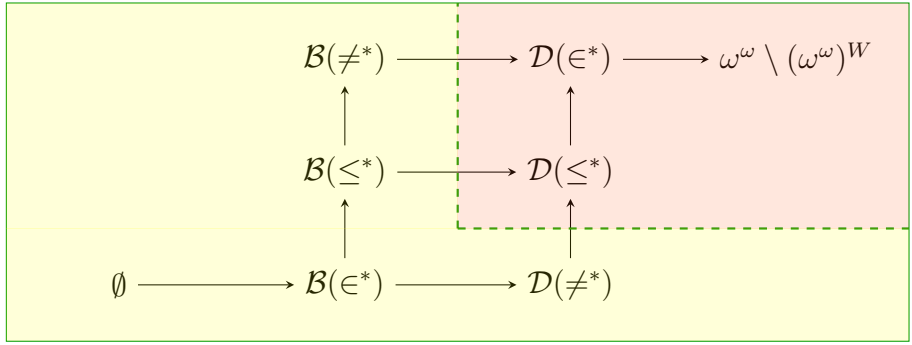
\label{miller}
\end{theorem}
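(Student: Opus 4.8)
The plan is to derive Theorem~\ref{miller} from a handful of standard facts about Miller forcing $\mathbb{PT}$ and then read off every node of the diagram from the inclusions of Theorem~\ref{propdi}, which apply to $\leq_W$ because $\leq_W$ extends $\leq_T$. The facts I would invoke are: (a) the Miller real $m$ is unbounded over $W$ but not dominating; (b) $\mathbb{PT}$ adds a real of minimal degree of constructibility, i.e.\ every $x \in W[m] \cap \omega^\omega$ with $x \notin W$ satisfies $m \in W[x]$; (c) $\mathbb{PT}$ does not make $W \cap \omega^\omega$ meager; and (d) $\mathbb{PT}$ does not make $W \cap \omega^\omega$ null, i.e.\ Miller forcing preserves outer Lebesgue measure. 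Item (a) is classical, (b) is obtained by a fusion argument on superperfect trees entirely parallel to the Sacks and Laver cases, and (c), (d) are the one-step instances of the preservation theorems that keep the uniformities of the meager and null ideals small in the Miller model; all of this can be found in, or extracted from, \cite[Chapter 7]{BarJu95}.

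Granting these, I would first handle the nonempty nodes. Let $x \in (W[m] \cap \omega^\omega) \setminus W$ be arbitrary. By (b) we have $m \in W[x]$, and by (a) the real $m \in W[x]$ is not eventually dominated by any real of $W$, so $x \in \mathcal D(\leq^*)$. Hence $\omega^\omega \setminus (\omega^\omega)^W \subseteq \mathcal D(\leq^*)$, and since Theorem~\ref{propdi} gives $\mathcal D(\leq^*) \subseteq \mathcal D(\in^*) \subseteq \omega^\omega \setminus (\omega^\omega)^W$, all three of these nodes coincide with $\omega^\omega \setminus (\omega^\omega)^W$.

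For the empty nodes, observe first that by (c) no real $y \in W[m] \cap \omega^\omega$ is eventually different from every real of $W$: such a $y$ would place all of $W \cap \omega^\omega$ inside the meager set $\{z : z \neq^* y\}$, contradicting (c). Thus $\mathcal B(\neq^*) = \emptyset$, and since $\mathcal B(\in^*) \subseteq \mathcal B(\leq^*) \subseteq \mathcal B(\neq^*)$ by Theorem~\ref{propdi}, also $\mathcal B(\in^*) = \mathcal B(\leq^*) = \emptyset$. Finally, suppose toward a contradiction that some $y \in W[m] \cap \omega^\omega$ is equal infinitely often to every real of $W$. Replacing $y(n)$ by $0$ whenever $y(n) \geq 2^{n+1}$ yields $y' \in W[m]$, a point of the compact space $\prod_n 2^{n+1}$, still equal infinitely often to every element of $W \cap \prod_n 2^{n+1}$. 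Under the natural product probability measure on $\prod_n 2^{n+1}$ the event $z(n) = y'(n)$ has probability $2^{-(n+1)}$, so by the Borel--Cantelli lemma the set of $z$ with $z(n) = y'(n)$ for infinitely many $n$ is null; hence $W \cap \prod_n 2^{n+1}$ is null in $W[m]$, which, transferred through a recursive measure isomorphism $\prod_n 2^{n+1} \cong 2^\omega$, contradicts (d). Therefore $\mathcal D(\neq^*) = \emptyset$, and together with the trivial nodes $\emptyset$ and $\omega^\omega \setminus (\omega^\omega)^W$ this pins down every node of the $\leq_W$-diagram as claimed.

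The step I expect to be the real obstacle is (d): that $\mathbb{PT}$ adds no real which is infinitely often equal to all ground-model reals. This does not follow from the commonly quoted fact that $\mathbb{PT}$ adds no Cohen real --- being infinitely often equal to every ground-model real is strictly weaker than being Cohen --- and it genuinely requires that Miller forcing preserves outer Lebesgue measure, combined with the Borel--Cantelli estimate above to convert that preservation into the nonexistence of such a real. The minimality fact (b) also rests on a fusion argument, but that argument is routine and parallels the Sacks and Laver cases already treated; everything else in the proof is bookkeeping with the inclusions of Theorem~\ref{propdi}.
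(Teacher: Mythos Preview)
Your argument is correct, and the overall architecture---minimality of the Miller real to push $\mathcal D(\leq^*)$ up to $\omega^\omega\setminus(\omega^\omega)^W$, together with showing $\mathcal B(\neq^*)=\mathcal D(\neq^*)=\emptyset$---matches the paper's. The difference lies in how you establish $\mathcal D(\neq^*)=\emptyset$. The paper simply invokes the Laver property of $\mathbb{PT}$ (\cite[Theorem 7.3.45]{BarJu95}) and the general fact, also used for $\mathbb L$, that the Laver property rules out infinitely-often-equal reals. You instead go through preservation of outer Lebesgue measure (\cite[Theorem 7.3.47]{BarJu95}) plus a Borel--Cantelli computation on $\prod_n 2^{n+1}$. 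Both routes are valid; the Laver-property route is shorter, while your route is essentially the argument the paper later uses for cut e) (via Corollary 2.5.2 of \cite{BarJu95}) and has the virtue of making the measure-theoretic content explicit. For $\mathcal B(\neq^*)=\emptyset$, your fact (c) and the paper's direct citation that $\mathbb{PT}$ adds no eventually different real are two sides of the same coin via the Bartoszy\'nski characterization.
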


This is proved in the same way as for Laver forcing. It suffices to note that $\mathbb{PT}$ adds no eventually different real, see \cite[Theorem 7.3.46, Part 1]{BarJu95}, $\mathbb{PT}$ adds no infinitely often equal real as it enjoys the Laver property (\cite[Theorem 7.3.45]{BarJu95}) and $m$ is of minimal degree, see \cite[Theorem 3]{Gro87}.


\subsection{Hechler Forcing}
Let $\mathbb D$ be Hechler forcing and let $d$ be the associated dominating real. Recall that conditions of $\mathbb D$ are pairs $(p, \mathcal F)$ where $p$ is a finite partial function from $\omega$ to $\omega$ and $\mathcal F$ is a finite family of elements of $\omega^\omega$. The order is given by $(q, \mathcal G) \leq_\mathbb D (p, \mathcal F)$ if and only if $q \supseteq p$, $\mathcal G \supseteq \mathcal F$ and for all $n \in {\rm dom}(q) \setminus {\rm dom}(p)$ and all $x \in \mathcal F$, $q(n) > x(n)$. Note that since $d$ is dominating, $d \in \mathcal B(\leq^*)$. 
\begin{theorem}
After Hechler forcing over $W$ the $\leq_W$-diagram has
\begin{enumerate}
\item
 $\emptyset = \mathcal B(\in^*)$, 
 \item
 $\mathcal B(\leq^*) = \mathcal B(\neq^*)$ and
 \item
 $\mathcal D(\neq^*) = \mathcal D(\leq^*) = \mathcal D(\in^*) = \omega^\omega \setminus (\omega^\omega)^W$. 
 \end{enumerate}

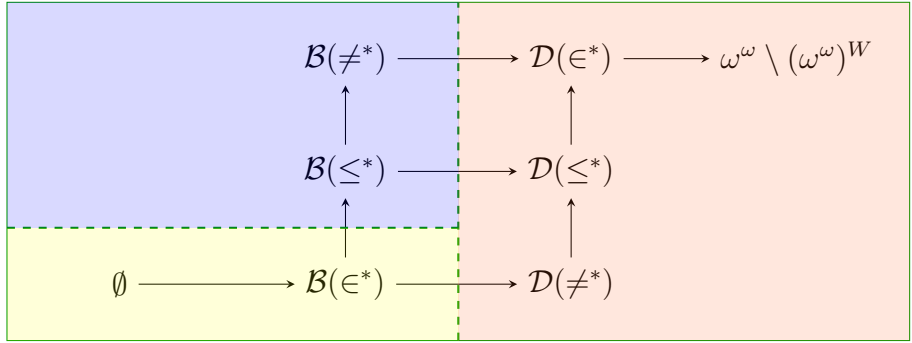
\begin{figure}[h]\label{Figure.Cichon-basic}
\centering
  \begin{tikzpicture}[scale=1.5,xscale=2]
     \draw (0,0) node (empty) {$\emptyset$}
           (1,0) node (Bin*) {$\mathcal B(\in^*)$}
           (1,1) node (Bleq*) {$\mathcal B(\leq^*)$}
           (1,2) node (Bneq*) {$\mathcal B(\neq^*)$}
           (2,0) node (Dneq*) {$\mathcal D(\neq^*)$}
           (2,1) node (Dleq*) {$\mathcal D(\leq^*)$}
           (2,2) node (Din*) {$\mathcal D(\in^*)$}
           (3,2) node (all) {$\omega^\omega\setminus(\omega^\omega)^W$}
           ;
     \draw[->,>=stealth]
            (empty) edge (Bin*)
            (Bin*) edge (Bleq*)
            (Bleq*) edge (Bneq*)
            (Bin*) edge (Dneq*)
            (Bleq*) edge (Dleq*)
            (Bneq*) edge (Din*)
            (Dneq*) edge (Dleq*)
            (Dleq*) edge (Din*)
            (Din*) edge (all)
            ;
       \draw[thick,dashed,OliveGreen] (1.5,-.5) -- (1.5,2.5);
       \draw[thick,dashed,OliveGreen] (-.5,.5) -- (1.5,.5);
       \draw[OliveGreen] (-.5,-.5) rectangle (3.5,2.5);
       \draw[draw=none,fill=yellow,fill opacity=.15] (-.5,-.5) rectangle (1.5,.5);
       \draw[draw=none,fill=blue,fill opacity=.15] (-.5,.5) rectangle (1.5,2.5);
       \draw[draw=none,fill=Orange,fill opacity=.15] (1.5,-.5) rectangle (3.5,2.5);
  \end{tikzpicture}
  \caption{After Hechler forcing}
\end{figure}
\label{hechler}
\end{theorem}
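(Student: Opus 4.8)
The plan is to treat Hechler forcing just as the random real case was treated via Fact \ref{factB}: isolate the forcing-theoretic facts about $\mathbb D$ and its generic $d$ over $W$ that are needed, and then read the three clauses off the diagram of Theorem \ref{propdi}. The facts I would use --- all standard, found in \cite[Chapters 3 and 7]{BarJu95} or obtainable by routine amalgamation arguments in that style --- are: (i) $d$ is dominating over $W$; (ii) $\mathbb D$ adds no slalom (in $V$) that eventually captures every real of $W$ (the single-step ingredient behind the fact that finite support iterations of $\mathbb D$ keep ${\rm add}(\mathcal N)$ small); (iii) every real $x \in W[d] \setminus W$ computes, inside $W[x]$, a Cohen real over $W$, and hence a real that is infinitely often equal to every real of $W$; and (iv) if $x \in W[d]$ and $W[x]$ contains a real eventually different from every real of $W$, then $d \in W[x]$.

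Granting these, the three clauses are immediate. For clause (1): if some $x \in W[d]$ witnessed $x \in \mathcal B(\in^*)$, the slalom capturing $(\omega^\omega)^W$ would already lie in $W[d]$, contradicting (ii); so $\mathcal B(\in^*) = \emptyset$. For clause (3): (iii) gives $\omega^\omega \setminus (\omega^\omega)^W \subseteq \mathcal D(\neq^*)$, and combining this with the inclusions $\mathcal D(\neq^*) \subseteq \mathcal D(\leq^*) \subseteq \mathcal D(\in^*) \subseteq \omega^\omega \setminus (\omega^\omega)^W$ supplied by Theorem \ref{propdi} forces all three to equal $\omega^\omega \setminus (\omega^\omega)^W$. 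For clause (2): Theorem \ref{propdi} already gives $\mathcal B(\leq^*) \subseteq \mathcal B(\neq^*)$, and (iv) gives the converse, since if $W[x]$ has an eventually different real over $W$ then $d \in W[x]$, so $W[x]$ has a dominating real over $W$, i.e. $x \in \mathcal B(\leq^*)$.

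Facts (i) and (ii) are routine --- (i) is the defining feature of $\mathbb D$, and (ii) is well known (see \cite[Chapter 7]{BarJu95}) and can also be proved directly using that two conditions with a common stem are compatible. The substantive points are (iii) and (iv). For (iii), the idea is that the ``Cohen part'' of the Hechler real --- read off stage by stage from the slack between the generic's values and the lower bounds imposed by the side conditions --- is recoverable not only from $d$ but from any name for a new real below any condition, because every nontrivial intermediate model of $W[d]$ still contains a Cohen real over $W$; one makes this precise via the intermediate model theorem together with the factoring analysis of $\mathbb D$ in \cite[Chapter 3]{BarJu95}, in the same spirit as the argument given above for Cohen forcing.

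I expect (iv) to be the main obstacle. It is a minimality statement: the only intermediate models of $W[d]$ carrying a real eventually different from $(\omega^\omega)^W$ are those that already contain $d$. The plan is to argue, again via the intermediate model theorem, that $W[x] = W[G \cap \mathbb Q]$ for a complete subforcing $\mathbb Q$ of $\mathbb D$, and that if $\mathbb Q$ does not (densely) exhaust $\mathbb D$ then the quotient $\mathbb D / \mathbb Q$, forced over $W[x]$, is a nontrivial forcing that still adds a dominating real over $W$; the eventual-difference witness is then ``absorbed'' by the quotient, so $W[x]$ alone carries no such witness. Should this quotient analysis prove unwieldy, the fallback is to quote the known structural results on the intermediate models of a Hechler extension, which identify the proper ones not containing $d$ as carrying no real eventually different from all of $W$, and hence give (iv) directly.
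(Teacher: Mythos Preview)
Your skeleton for clauses (1) and (3) matches the paper exactly: (ii) is the paper's Lemma~\ref{L[d], B(in)}, proved directly from the stem-compatibility of $\mathbb D$, and (iii) is precisely Palumbo's result \cite[Theorem~8.1]{Pal13}, which the paper quotes as Fact~\ref{palumbo1}. So far so good.

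The gap is (iv). You frame clause (2) as a minimality statement---any intermediate model carrying an eventually different real over $W$ must contain $d$---and then try to argue this via a quotient analysis. But your sketch (``the eventual-difference witness is then absorbed by the quotient'') is not an argument: the fact that $\mathbb D/\mathbb Q$ adds a dominating real over $W$ says nothing about whether $W[x]$ already had an eventually different real before the quotient was applied. More to the point, the paper does \emph{not} use a minimality result here at all. It cites a theorem of Brendle and L\"owe \cite[Corollary~13]{BL11}: if $x\in W[d]\cap\omega^\omega$ is eventually different from every real of $W$, then $x$ itself eventually dominates every real of $W$. This is a pointwise statement about individual reals, not about degrees, and it gives $\mathcal B(\neq^*)\subseteq\mathcal B(\leq^*)$ immediately---if $z\in\mathcal B(\neq^*)$ then some $y\in W[z]\subseteq W[d]$ is eventually different over $W$, hence by Brendle--L\"owe $y$ is dominating, so $z\in\mathcal B(\leq^*)$. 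Your (iv) would imply this (via $d\in W[z]$), but it is strictly stronger, it is not what the paper uses, and your justification does not establish it. Replace (iv) with the Brendle--L\"owe theorem and the proof goes through.
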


The proof of this theorem is broken up into several lemmas. First I show that $\mathbb D$ adds no slaloms eventually capturing all the ground model reals. This is well known but included here for completeness.

\begin{lemma}
After Hechler forcing over $W$ the set $\mathcal B(\in^*)$ is empty.
\label{L[d], B(in)}
\end{lemma}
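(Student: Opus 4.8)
The plan is to show that Hechler forcing $\mathbb D$ has the property that every slalom in the extension is, on a cofinite set of coordinates, forced to miss some ground-model real — equivalently, no slalom from $W[d]$ eventually captures all reals of $W$. Since $\mathcal B(\in^*)$ is the set of reals $x$ such that some slalom in $W[x]$ eventually captures every real of $W$, and every real of $W[d]$ generates (via the intermediate model theorem, cf. the Cohen argument above) a model that is again a Hechler-type extension — or, more simply, since $W[x] \subseteq W[d]$ for every $x \in W[d]$, so any slalom witnessing membership in $\mathcal B(\in^*)$ would lie in $W[d]$ — it suffices to prove the single statement: \emph{Hechler forcing adds no slalom $s$ with $|s(n)| \le n$ such that every ground-model real is eventually in $s$.}

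First I would recall the standard combinatorial analysis of $\mathbb D$: a condition is $(p,\mathcal F)$ with $p \in \omega^{<\omega}$ and $\mathcal F \in [\omega^\omega]^{<\omega}$, and the key lemma is that $\mathbb D$ has the \emph{pure decision / fusion} structure making it $\sigma$-centered, and in particular below any condition one can read off values of a name using only finitely many side conditions. Fix a name $\dot s$ and a condition forcing $\dot s$ to be a slalom with $|\dot s(n)| \le n$. The heart of the argument is a diagonalization: I will build, working in $W$, a real $y \in \omega^\omega$ that defeats $\dot s$, i.e., such that no condition forces $\dot s$ to eventually capture $\check y$. To do this I enumerate the conditions $\{(p_i,\mathcal F_i) : i \in \omega\}$ of $\mathbb D$ (there are only $\mathfrak c$ many, but working over $W$ with a bookkeeping of the relevant dense sets, or more carefully using that for the \emph{single} value $\dot s(n)$ only conditions with stem length $\le$ something matter — here one uses that Hechler conditions with a fixed finite stem and finitely many side conditions are dense and there are countably many finite stems) one arranges that for each $i$ there is some coordinate $n_i$, with $n_i \to \infty$, and an extension $(q_i,\mathcal G_i) \le (p_i,\mathcal F_i)$ deciding $\dot s(n_i)$ to be some specific finite set $A_i$ with $|A_i| \le n_i$; then one chooses $y(n_i) \notin A_i$ (possible since $n_i < \omega$, the complement of $A_i$ is infinite) and defines $y$ arbitrarily elsewhere. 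The point is that because the value $\dot s(n)$ is decided by a condition \emph{whose stem can be taken arbitrarily long without changing what side conditions are needed to decide that single coordinate}, one can in fact arrange that for \emph{every} condition $r$ and every $k$ there is $n > k$ and $r' \le r$ with $r' \Vdash \check y(n) \notin \dot s(n)$ — so $r$ cannot force $\dot s$ to capture $\check y$ eventually.

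The cleanest way to package the genericity bookkeeping: for each condition $r=(p,\mathcal F)$ and each $k \in \omega$, the set $E_{r,k}$ of conditions $r'$ extending $r$ which decide $\dot s(n)$ for some $n>k$ is dense below $r$ (decide more and more of $\dot s$); in $W$ one then meets, by a standard tree/fusion argument over the countably many finite stems, enough of these so that the resulting $y$ has: $\forall r\, \forall k\, \exists n > k\ \exists r' \le r\ [\,r' \Vdash \check y(n) \notin \dot s(n)\,]$. Then in $W[d]$, if $s = \dot s[d]$ eventually captured $\check y$, some condition in the generic would witness $\check y \in^* \dot s$, contradicting density of the "$\check y(n) \notin \dot s(n)$ for large $n$" conditions. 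I expect the main obstacle to be handling the name $\dot s$ cleanly enough that the value $\dot s(n)$ really is decided by conditions "cofinal in stem length" — this is where one must be slightly careful and appeal to the specific form of Hechler conditions (the finite side-condition family is what matters for centeredness, and extending the stem of $p$ past $\dom(p)$ respecting $\mathcal F$ is always possible), rather than to a black-box property. Granting that, the construction of $y$ in $W$ and the contradiction are routine; this is the known fact that $\mathbb D$ preserves "the ground-model reals are non-meager-slalom-captured", equivalently $\mathbb D$ does not increase $\mathrm{cof}(\mathcal N)$ by adding a small slalom over $W$ — though here we only need the one-step statement, so no iteration preservation is invoked.
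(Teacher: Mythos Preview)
Your overall strategy---build a ground-model real $y$ by diagonalizing against the name $\dot s$---is the same as the paper's, but your execution has a real gap and does not isolate the key observation that makes the diagonalization finite at each coordinate.

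The problem is your enumeration of conditions. As you note parenthetically, there are $\mathfrak c$ many conditions $(p,\mathcal F)$, so you cannot list them as $\{(p_i,\mathcal F_i):i\in\omega\}$ and diagonalize. Your proposed fix, a ``standard tree/fusion argument over the countably many finite stems,'' is too vague: you never say how, for a fixed stem $p$ and fixed $n$, you control the set of values that \emph{some} condition with stem $p$ might force into $\dot s(n)$. A priori, different side-condition families $\mathcal F$ could force different finite sets $A$, and there are uncountably many such $\mathcal F$.

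The missing observation, which the paper states and uses directly, is this: any two conditions with the same stem $p$ are compatible (their common extension is $(p,\mathcal F\cup\mathcal G)$). Hence no two conditions with stem $p$ can force contradictory statements, and since it is forced that $|\dot s(n)|\le n$, at most $n$ distinct check-name values $\check k$ can be forced into $\dot s(n)$ by \emph{any} condition with stem $p$. Now enumerate the stems (not the conditions) as $\{p_i:i<\omega\}$ and simply set
\[
x(n)=\sup\{k:\exists i<n\ \exists\mathcal F\ (p_i,\mathcal F)\Vdash\check k\in\dot s(n)\}+1.
\]
The set on the right has size at most $n^2$ (at most $n$ values per stem, $n$ stems considered), so $x$ is well defined in $W$. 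If some condition $(p_j,\mathcal F)$ forced $x\in^*\dot s$, taking $l>j$ large and strengthening to a condition with stem $p_l$ for some $l$ in the enumeration gives $x(l)\in\dot s(l)$ forced by a condition whose stem is among $p_0,\dots,p_{l-1}$, contradicting the definition of $x(l)$.

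So your sketch is on the right track but stops short of the centeredness-plus-cardinality-bound step that actually makes the argument go through; once that is in place, no fusion or tree argument is needed.
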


\begin{proof}
Let me begin with a simple observation about Hechler forcing: if $s$ is a sentence in the forcing language and $p$ is the stem of a condition (the first coordinate) then it cannot be that there are there are finite families of functions $\mathcal F$ and $\mathcal G$ such that $(p, \mathcal F) \Vdash s$ and $(p, \mathcal G) \Vdash \neg s$. To see why, simply notice that $(p, \mathcal F \cup \mathcal G)$ is a condition extending them both. Now, using the weak homogeneity of Hechler forcing, suppose that $\Vdash_\mathbb D$\say{$\dot{s}$ is a slalom eventually capturing all elements of $(\omega^\omega)^W$}. Now fix an enumeration of $\omega^{< \omega} = \{p_0, p_1, p_2,...\}$ and consider the following function $x:\omega \to \omega$ such that $x(n) = {\rm sup} \; \{k \; | \; \exists i < n \; \exists \mathcal F \; (p_i, \mathcal F) \Vdash \check{k} \in \dot{s}(n)\} + 1$. Note that $x$ is definable in $W$.

\begin{claim}
The function $x$ is total and well defined.
\end{claim}

\begin{proof}
To see this, notice that since the maximal condition forces that $\dot{s}$ names a slalom, all conditions force that for all $n$, $\dot{s}(n)$ has size at most $n$. In particular, no condition can force more than $n$ check names to be in $s(n)$. Moreover, by the simple observation I began with, there cannot be more than $n$ check names forced to be in $\dot{s}$ by any set of conditions sharing the same stem. Thus, since there are only finitely many stems being considered, each of which can only be paired to force at most $n$ check names, there are at most $n^2$ numbers in the set $\{k \; | \; \exists i < n \; \exists \mathcal F \; (p_i, \mathcal F) \Vdash \check{k} \in \dot{s}(n)\}$ so $x$ is well defined and always finite.
\end{proof}

Now work in $W[d]$. It remains to show that $x$ is not eventually captured by the slalom $s = \dot{s}_d$. Suppose not and let $k, j \in \omega$ such that $(p_j, \mathcal F) \Vdash \forall l > \check{k} \; \check{x(k)} \in \dot{s}(\check{k})$. Let now let $l > k, k$ be such that $(p_l, \mathcal G) \leq (p_j, \mathcal F)$. Then, $(q, \mathcal G) \Vdash \check{x(l)} \in \dot{s}(l)$ but this implies $x(l) \geq x(l) +1$, which is a contradiction.
\end{proof} 
 
Continuing, recall the following theorem of Brendle and L\"owe. I have adapted it to our specific situation and terminology:
\begin{theorem}(\cite[Corollary 13]{BL11})
If $d$ is Hechler generic over $W$ and $x \in W[d] \cap \omega^\omega$ is eventually different from every $y \in W \cap \baire$, then $x$ eventually dominates every $y \in W \cap \baire$.
\end{theorem}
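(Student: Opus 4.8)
The plan is to prove the contrapositive: if some condition forces $\dot x\in\baire$ \emph{not} to eventually dominate some ground-model real, then that same condition forces $\dot x$ to be infinitely-often equal to some ground-model real, hence not eventually different from all of them. So suppose toward a contradiction that $(p_0,\mathcal F_0)\Vdash$``$\dot x$ is eventually different from every $\check y$ with $y\in W\cap\baire$'', while some $(q,\mathcal G)\le_{\mathbb D}(p_0,\mathcal F_0)$ and some $y\in W\cap\baire$ satisfy $(q,\mathcal G)\Vdash\exists^\infty n\,\dot x(n)\le\check y(n)$. Let $F\in W$ be increasing with $F\ge g$ pointwise for every $g\in\mathcal G$ and $F\ge y$; then $(q,F)\le_{\mathbb D}(q,\mathcal G)$ and $(q,F)\Vdash\exists^\infty n\,\dot x(n)\le\check F(n)$, so I work below the single-function condition $(q,F)$. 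Note that every extension of $(q,F)$ has its stem growing strictly above $F$ past $|q|$, so the tree $S=\{t\in\baire^{<\omega}:t\supseteq q,\ t(i)>F(i)\text{ for }|q|\le i<|t|\}$ is exactly the tree of possible initial segments of the generic below $(q,F)$.

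Next I would carry out the name analysis. Since any two conditions in $\mathbb D$ with the same stem are compatible (take the pointwise maximum of the side functions), whenever $(t,g)\Vdash\dot x(n)=\check k$ and $(t,g')\Vdash\dot x(n)=\check k'$ with $g,g'\ge F$ we must have $k=k'$; this defines, in $W$, a partial function $\mathrm{val}\colon S\times\omega\to\omega$ by setting $\mathrm{val}(t,n)=k$ when some $(t,g)$ with $g\ge F$ forces $\dot x(n)=\check k$. For each $N$ the set $E_N$ of conditions $(t,g)\le(q,F)$ such that, for some $n\ge N$ with $|t|>n$, $(t,g)$ forces $\dot x(n)=\check k$ for some $k\le\check F(n)$, is dense below $(q,F)$: given any condition below $(q,F)$, first extend it to force $\dot x(n)\le\check F(n)$ for some large $n$ (possible since $(q,F)\Vdash\exists^\infty n\,\dot x(n)\le\check F(n)$), then extend again to decide $\dot x(n)$, then lengthen the stem past $n$. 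Meeting each $E_N$ with the generic $d$ then yields: in $W[d]$ there are infinitely many $n$ for which $x(n)=\mathrm{val}(d{\restriction}m,n)\le F(n)$ for some $m>n$.

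The crux — and the step I expect to be the real obstacle — is to replace ``$x$ agrees infinitely often with the ground-model-computed value $\mathrm{val}(d{\restriction}m,n)$ read off along the generic branch'' by ``$x$ agrees infinitely often with a single fixed $z\in W$'', which is the desired contradiction. The difficulty is genuine: $\mathrm{val}(t,n)$ depends on the whole stem $t$, not merely on $n$, and at coordinate $n$ it may range over all of $\{0,\dots,F(n)\}$, a set whose size is unbounded in $n$, so a naive pigeonhole fails. The approach I would take is the one underlying Brendle--L\"owe's treatment of Suslin ccc forcings: assign to each pair $(t,n)$ with $t\in S$, $n<|t|$, an ordinal rank measuring how far the stem $t$ is from deciding $\dot x(n)$ — with the side functions absorbed, so that a condition $(t,g)$ with $g$ very large behaves, for deciding values, like ``the stem $t$ together with a commitment to grow faster than any prescribed function'' — and then diagonalize over the countably many stems of $S$, building $z\in W$ together with a descending $W$-sequence of subtrees of $S$ so that each $z(n)$ is chosen to be a value decided by $(t,g)$ for a family of stems $t$ extending the current approximation that is ``large'' in the tree (contains a full cone), while the subtrees are pruned to keep the eventual generic branch compatible with hitting $z$ on an infinite set. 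Equivalently, one may present this by translating the $\mathbb D$-name $\dot x$ into a Borel function $\Phi\in W$ of the generic (Hechler forcing admits a form of continuous reading of names relative to its generic on $S$), noting that at the coordinates $n$ where $\Phi(d)(n)\le F(n)$ the ``dominating'' content of $d$ is not being used, so that the value there is controlled only by the Cohen-like content of $d$ modulo the ground-model bound $F(n)$, and then running the same genericity-plus-pigeonhole argument that shows Cohen forcing adds no eventually different real (as in the Cohen subsection above) to extract the fixed $z\in W$ with $x(n)=z(n)$ for infinitely many $n$. This contradicts $(p_0,\mathcal F_0)\Vdash$``$\dot x$ is eventually different from every ground-model real'', completing the proof.
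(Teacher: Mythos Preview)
The paper does not prove this statement; it is quoted verbatim as \cite[Corollary~13]{BL11} and used as a black box. So there is no ``paper's own proof'' to compare against, and your attempt must stand or fall on its own.

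Your setup is correct: the contrapositive formulation, the reduction to a single-function side condition $(q,F)$, the observation that conditions with the same stem are compatible (so $\mathrm{val}(t,n)$ is well defined), and the density of the sets $E_N$ are all fine. But you yourself flag the crux, and what follows the phrase ``the approach I would take'' is not a proof --- it is a description of two possible strategies (ordinal rank analysis on stems, or continuous reading of names plus a Cohen-style argument) without carrying either one out. The entire content of the Brendle--L\"owe result lies precisely here: the passage from ``$x(n)=\mathrm{val}(d{\restriction}m,n)\le F(n)$ for infinitely many $n$ along the generic branch'' to ``$x(n)=z(n)$ for infinitely many $n$ for a \emph{fixed} $z\in W$'' is exactly what needs a nontrivial argument, and the obstacles you name (the value at $n$ depends on the whole stem, and ranges over $F(n)+1$ many possibilities) are real. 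Your second sketch in particular --- that ``at the coordinates where $\Phi(d)(n)\le F(n)$ the dominating content of $d$ is not being used'' --- is suggestive but not literally true as stated: the Hechler generic does not factor as a Cohen part times a dominating part, and making this intuition precise is the actual work. As written, this is a plausible outline pointing at the right literature, not a proof.
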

Therefore all the reals in $W[d]$ in $\mathcal B(\neq^*)$ are automatically in $\mathcal B(\leq^*)$. As an immediate corollary the following is true.

\begin{corollary}
In the extension of $W$ by a Hechler real, $\mathcal B(\neq^*) = \mathcal B( \leq^*)$.
\end{corollary}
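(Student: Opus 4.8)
The plan is to observe that one of the two inclusions is already available from the general theory, and that the other is a direct reading of the Brendle--L\"owe theorem through the definitions of the two bounding sets.

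First, since $\leq_W$ extends $\leq_T$, Theorem \ref{propdi} applies and gives, in any model whatsoever (in particular in $W[d]$), the inclusion $\mathcal B(\leq^*) \subseteq \mathcal B(\neq^*)$. Thus it remains only to establish the reverse inclusion $\mathcal B(\neq^*) \subseteq \mathcal B(\leq^*)$ in $W[d]$.

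For this I would take an arbitrary $x \in \mathcal B_{\leq_W}(\neq^*)$ in $W[d]$. By the definition of $\mathcal B(\neq^*)$ there is a real $y \in W[x]$ that is eventually different from every real of $W$. Since $x \in W[d]$ we have $W[x] \subseteq W[d]$, so $y \in W[d] \cap \omega^\omega$ and the restated Corollary 13 of \cite{BL11} applies to $y$: it yields that $y$ eventually dominates every real of $W$. As $y \in W[x]$, this is precisely a witness that $x \in \mathcal B_{\leq_W}(\leq^*)$. Since $x$ was arbitrary, $\mathcal B(\neq^*) \subseteq \mathcal B(\leq^*)$, and together with the inclusion coming from Theorem \ref{propdi} the two sets are equal in $W[d]$.

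I do not anticipate a genuine obstacle here; the only subtlety is bookkeeping with the intermediate models, namely confirming that the eventually different witness $y$, which is produced inside $W[x]$, nonetheless lies in $W[d]$ so that the Brendle--L\"owe theorem can be invoked — and this is immediate from $W[x] \subseteq W[d]$. It is worth noting that the argument in fact shows $\mathcal B(\neq^*) \subseteq \mathcal B(\leq^*)$ in every model intermediate between $W$ and $W[d]$, not merely in $W[d]$ itself.
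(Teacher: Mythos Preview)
Your proposal is correct and matches the paper's approach: the paper states the corollary as immediate from the Brendle--L\"owe theorem together with the general inclusion $\mathcal B(\leq^*)\subseteq\mathcal B(\neq^*)$ from Theorem~\ref{propdi}. You have simply made explicit the bookkeeping step that the witness $y\in W[x]\subseteq W[d]$ is the real to which Brendle--L\"owe is applied, which the paper leaves implicit.
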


Thus, we know what happens on the left side of the diagram. For the right side of the diagram, the following fact is well known and easily verified:

\begin{fact}
Let $d$ be $\mathbb D$-generic over $W$. Then $d \; {\rm mod} \; 2$ i.e. the parity of $d$ is a Cohen generic over $W$.
\end{fact}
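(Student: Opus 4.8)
The plan is to verify the defining property of Cohen genericity directly. Let $\dot e$ denote the canonical $\mathbb D$-name for the element $e \mdf d \bmod 2$ of $2^\omega$ given by $e(n) = d(n) \bmod 2$. It suffices to show that for every dense $D \subseteq \mathbb C = 2^{<\omega}$ with $D \in W$, some initial segment of $e$ lies in $D$. So I would fix such a $D$ and an arbitrary condition $(p, \mathcal F) \in \mathbb D$, and produce $(q,\mathcal F) \leq_{\mathbb D} (p,\mathcal F)$ forcing that some initial segment of $\dot e$ lies in $\check D$; density of this collection of conditions then gives the claim in $W[d]$, and since $D$ was arbitrary this yields that $d \bmod 2$ is Cohen generic over $W$.

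A harmless normalization first: by extending $(p,\mathcal F)$ if necessary I may assume $\dom(p)$ is an initial segment $n \in \omega$. Indeed, with $k = \max(\dom p)+1$ and $b_i \mdf \max_{x \in \mathcal F} x(i)$, setting $p'(i) = p(i)$ for $i \in \dom p$ and $p'(i) = b_i + 1$ for $i \in k \setminus \dom p$ gives $(p',\mathcal F) \leq_{\mathbb D} (p,\mathcal F)$ with $\dom(p') = k$. Write $t_0 \in 2^n$ for the parity pattern of $p$, i.e.\ $t_0(i) = p(i) \bmod 2$.

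The key observation is: for every $t \in 2^{<\omega}$ with $t \supseteq t_0$ there is $q \supseteq p$ with $\dom(q) = \dom(t)$, $(q,\mathcal F) \leq_{\mathbb D} (p,\mathcal F)$, and $q(i) \bmod 2 = t(i)$ for all $i < \dom(t)$. On $[0,n)$ put $q = p$; its parity pattern is $t_0 = t \restriction n$, so this is consistent. For $n \leq i < \dom(t)$ the condition $(p,\mathcal F)$ requires only $q(i) > b_i \mdf \max_{x\in\mathcal F} x(i)$, and since integers of both parities occur above $b_i$, one may choose $q(i)$ of the prescribed parity $t(i)$ with $q(i) > b_i$. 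Then $(q,\mathcal F)$ is a Hechler condition below $(p,\mathcal F)$ with $(q,\mathcal F) \Vdash \check t \subseteq \dot e$.

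To finish, by density of $D$ pick $t \in D$ with $t \supseteq t_0$, and let $(q,\mathcal F)$ be as above; this condition forces that $\check t$ is an initial segment of $\dot e$ and $\check t \in \check D$. Since $(p,\mathcal F)$ was arbitrary, the set of conditions forcing ``some initial segment of $\dot e$ lies in $\check D$'' is dense in $\mathbb D$, so in $W[d]$ some initial segment of $d \bmod 2$ lies in $D$, as required. (Equivalently, one may repackage the key observation as the statement that $(p,\mathcal F) \mapsto t_0$ is, on the dense set of conditions whose first coordinate is an initial segment, a forcing projection of $\mathbb D$ onto $\mathbb C$, which automatically carries the generic to a generic.) There is no genuine obstacle here; the one point requiring care is exactly the observation that the finite side condition $\mathcal F$ constrains the generic only from below, so at every coordinate a value of either parity remains available.
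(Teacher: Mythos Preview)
Your proof is correct. The paper does not actually give a proof of this fact; it is stated as a well-known and easily verified fact without argument. Your direct density argument (and its repackaging as a projection $(p,\mathcal F)\mapsto p\bmod 2$) is exactly the standard verification one has in mind, and there is nothing to add.
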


Therefore Hechler forcing adds Cohen reals. Indeed, since $\mathbb C * \dot{\mathbb C}$ is forcing equivalent to $\mathbb C$, by the intermediate model theorem $\mathbb D$ can be decomposed in to $\mathbb C * \mathbb Q$ where $\mathbb Q$ is some quotient forcing. But then $\mathbb D \cong \mathbb C * \dot{\mathbb Q} \cong \mathbb C * \dot{\mathbb C} * \dot{\mathbb Q} \cong \mathbb C * \dot{\mathbb D}$. So Hechler forcing is the same as Cohen forcing, followed by Hechler forcing. The classification of subforcings of Hechler forcing is a very interesting, but somewhat delicate topic due in part to subtle differences in a variety of different \say{Hechler Forcings}. Palumbo \cite{Pal13} has solved this problem completely assuming there is a proper class of Woodin cardinals. I do not know of a full solution in the general case.

Summarizing, we have seen so far that there are at least three different sets present in the diagram for a Hechler real: $\emptyset$ for $\mathcal B(\in^*)$, the dominating reals, for $\mathcal B(\leq^*) = \mathcal B(\neq^*)$ and the reals that add Cohen real, which by the previous section, are all included in $\mathcal D(\neq^*)$ and thus the entire right column. To finish the analysis, I use the following fact, due to Palumbo.

\begin{fact}(\cite[Theorem 8.1]{Pal13})
Let $d$ be $\mathbb D$-generic over $W$ and let $M$ be an intermediate model i.e. $W \subseteq M \subseteq W[d]$. Then if $M \neq W$, there is a real $x \in M$ which is Cohen-generic over $W$.
\label{palumbo1}
\end{fact}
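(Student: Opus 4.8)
Although this statement is cited from Palumbo, here is the shape an argument for it takes. The plan is to combine the intermediate model theorem with an explicit extraction of a Cohen real out of any name for a new real. First I would invoke the intermediate model theorem: since $W \subseteq M \subseteq W[d]$ and $M \models \ZFC$, we may write $M = W[G \cap \mathbb B_0]$ for a complete subalgebra $\mathbb B_0$ of $\mathbb B := \mathrm{RO}(\mathbb D)$, and $M \neq W$ forces $\mathbb B_0$ to be nontrivial. It then suffices — after replacing $M$ by $W[y]$ for a suitable new real $y \in M \cap 2^\omega$ (a legitimate move, since $W[y]$ is itself an intermediate model hence a generic extension of $W$, and a Cohen real in $W[y]$ is one in $M$) — to prove: for every $\mathbb D$-name $\dot y$ for an element of $2^\omega$ with $\Vdash_{\mathbb D} \dot y \notin \check W$, the model $W[\dot y_d]$ contains a real Cohen over $W$. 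Here one may pass below a condition forcing $\dot y \notin \check W$ and use the homogeneity of Hechler forcing to return to the case that the top condition forces it. That $M$ contains a new real at all is part of what must be shown, but it falls out of the same argument, since $\mathbb D$ adds a Cohen real below every condition.

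Next I would build, working in $W$ by recursion on $\sigma \in 2^{<\omega}$, a splitting tree $\langle e_\sigma : \sigma \in 2^{<\omega} \rangle$ of nonzero elements of $\mathbb B$ together with integers $\langle N_\sigma \rangle$ such that $e_\emptyset = 1_{\mathbb B}$, $e_\sigma$ does not decide $\dot y(N_\sigma)$, and $e_{\sigma^{\frown}i} = e_\sigma \wedge \| \dot y(N_\sigma) = i \|$, so that $e_{\sigma^{\frown}0}$ and $e_{\sigma^{\frown}1}$ are incompatible with $e_{\sigma^{\frown}0} \vee e_{\sigma^{\frown}1} = e_\sigma$. The splitting is always available because $\dot y$ is forced to take only the values $0$ and $1$ while, since $\Vdash \dot y \notin \check W$, no nonzero element of $\mathbb B$ decides $\dot y$ in its entirety, so below any $e_\sigma$ the two value sets of $\dot y(N_\sigma)$ are jointly predense. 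The generic filter selects one child at each level, hence determines a branch $c \in 2^\omega$, and by construction $c(k) = \dot y_d(N_{c \restriction k})$ is computed recursively from $\dot y_d$; thus $c \in W[\dot y_d]$ no matter how the tree was chosen.

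The hard part — and the point at which the specific combinatorics of Hechler forcing must be used — is to carry out the recursion so that $c$ is genuinely Cohen-generic over $W$, equivalently so that the complete subalgebra generated by $\{ e_\sigma \}$ is isomorphic to the Cohen algebra. This can fail for a general forcing (for random forcing the analogous tree need not be ``exhaustive'', which is exactly why random forcing adds no Cohen real), and the leverage here is that a Hechler condition $(s,f)$ constrains the generic only through its finite stem $s$ together with an eventual-domination demand, so there is unrestricted freedom to lengthen stems. Concretely I would refine the construction so that each $e_\sigma$ is a join of Hechler conditions whose stems have length at least $|\sigma|$ — which, as no finite stem can extend infinitely many longer stems, drives the approximations along every branch down to $0$ — and, using that ccc-ness permits handling only countably many maximal antichains of $2^{<\omega}$ at a time, together with the fact that the parity of the Hechler real is already Cohen below every condition, interleave into the recursion the requirement that the $e_\sigma$ realize a dense set of stem patterns, forcing $c$ to meet every maximal antichain of $2^{<\omega}$ lying in $W$. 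Making this interleaving precise is the delicate combinatorial core of the argument; once it is in place, $c$ is the desired Cohen real in $W[\dot y_d] \subseteq M$.
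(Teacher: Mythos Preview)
The paper does not prove this statement at all; it is cited as a fact from Palumbo's thesis and used as a black box. So there is no ``paper's own proof'' to compare against, only the remark (made later, when discussing eventually different forcing) that Palumbo's argument ``uses a tree version of $\mathbb D$ that, as far as I can tell, is not available for $\mathbb E$.'' That already indicates the actual proof proceeds via the tree-Hechler presentation of $\mathbb D$, not via the Boolean-algebraic splitting construction you outline.

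Your proposal has a genuine gap at exactly the point you flag as ``the delicate combinatorial core.'' Building a splitting tree $\langle e_\sigma\rangle$ from a name $\dot y$ for a new real is routine and works for any forcing; the entire content of the theorem lies in arranging that the resulting branch $c$ is Cohen-generic, and you do not do this. Your suggestion to ``refine the construction so that each $e_\sigma$ is a join of Hechler conditions whose stems have length at least $|\sigma|$'' and to ``interleave'' density requirements is not an argument: you have not explained why, below an arbitrary $e_\sigma$, the value of $\dot y(N_\sigma)$ can be made to depend on the stem in the way required, nor why the parity-of-$d$ trick --- which produces one particular Cohen real tied to the Hechler generic itself --- helps you read a Cohen real off an \emph{arbitrary} new real $\dot y_d$. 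This is precisely the step where naive approaches fail, and where Palumbo instead exploits the tree presentation of Hechler forcing to get continuous reading of names and a minimality-style analysis. A separate smaller issue: your reduction ``replace $M$ by $W[y]$ for a new real $y\in M$'' presupposes that a nontrivial intermediate model contains a new real, which you have not justified (and which is not automatic for arbitrary forcings).
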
 

\noindent Using Fact \ref{palumbo1} I can now show the following:

\begin{corollary}
In the extension of $W$ by a Hechler real, all the new reals construct a real which is equal to the reals in $W$ infinitely often i.e. $\omega^\omega \setminus (\omega^\omega)^W = \mathcal D(\neq^*)$.
\end{corollary}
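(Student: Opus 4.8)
The plan is to prove the two inclusions separately, with all of the substance concentrated in one of them. The inclusion $\mathcal D(\neq^*) \subseteq \omega^\omega \setminus (\omega^\omega)^W$ is immediate: it is part of the chain of arrows established in Theorem \ref{propdi} (via $\mathcal D(\neq^*) \to \mathcal D(\leq^*) \to \mathcal D(\in^*) \to \omega^\omega \setminus (\omega^\omega)^W$), and can also be seen directly, since no single real $y$ can be infinitely often equal both to $y$ itself and to the shifted real $n \mapsto y(n)+1$, so if $x \in (\omega^\omega)^W$ then $W[x] = W$ contains no infinitely-often-equal real over $W$, whence $x \notin \mathcal D(\neq^*)$.

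For the reverse inclusion $\omega^\omega \setminus (\omega^\omega)^W \subseteq \mathcal D(\neq^*)$, fix $x \in W[d] \setminus W$ and consider the intermediate model $M = W[x]$, so that $W \subsetneq M \subseteq W[d]$. By Fact \ref{palumbo1} there is a real $c \in M = W[x]$ which is Cohen-generic over $W$. It then suffices to recall that any Cohen real over $W$ is infinitely often equal to every real of $W$: given $y \in \baire \cap W$ and $n \in \omega$, the set of conditions $p \in \mathbb C$ for which there is some $k > n$ with $k \in \dom(p)$ and $p(k) = y(k)$ is dense (extend any condition by deciding a fresh coordinate $k > n$ to equal $y(k)$), so by genericity $c(k) = y(k)$ for infinitely many $k$. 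Hence $c \in W[x]$ is an infinitely-often-equal real for $W$, which witnesses $x \in \mathcal D(\neq^*)$.

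There is essentially no obstacle left once Fact \ref{palumbo1} is granted: the only genuine depth in the argument is Palumbo's theorem that every proper intermediate model of a Hechler extension contains a Cohen real over the ground model, and that is quoted. The one small point I would make explicit is that the notion of Hechler forcing used here (adding a single generic real $d$, with the intermediate-model analysis applied via the intermediate model theorem) is the one to which Fact \ref{palumbo1} applies. Combining this Corollary with the inclusions $\mathcal D(\neq^*) \subseteq \mathcal D(\leq^*) \subseteq \mathcal D(\in^*) \subseteq \omega^\omega \setminus (\omega^\omega)^W$ already noted, all four sets in the right-hand column coincide, completing the proof of Theorem \ref{hechler}.
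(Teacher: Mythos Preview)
Your proof is correct and follows essentially the same approach as the paper: apply Fact \ref{palumbo1} to any $x \in W[d]\setminus W$ to obtain a Cohen real in $W[x]$, and use that Cohen reals are infinitely often equal to all ground model reals. The paper's proof is simply a terser version of what you wrote.
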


\begin{proof}
Let $x \in W[d] \setminus W$ be a real. Then by Fact \ref{palumbo1} there is $\mathbb C$-generic real over $W$ in $W[x]$ so $x \in \mathcal D(\neq^*)$.
\end{proof}

Notice that this completely determines the diagram for a Hechler real. Since $\mathcal D(\neq^*)$ is all new reals, every node in the diagram is a subset of it. Thus all nodes on the right side are equal, $\mathcal B(\in^*)$ is empty and $\mathcal B(\leq^*) = \mathcal B(\neq^*)$ form a proper subset of the $\mathcal D$'s. This finishes the proof of Theorem \ref{hechler}

\subsection{Eventually Different Forcing}
Let $\mathbb E$ be {\em eventually different forcing}, which is defined like $\mathbb D$ except that stems of extensions need simply be eventually different from the reals in the second component, not dominating. I will show that:

\begin{theorem}
Assume that every set of reals in $L(\mathbb R)$ has the Baire property (this is implied by sufficiently large cardinals). Let $e$ be an $\mathbb E$-generic real over $W$. Then in $W[e]$ the following hold: 
\begin{enumerate}
\item
$\mathcal B(\in^*) = \mathcal B(\leq^*) = \emptyset$, 
\item
$\mathcal B(\neq^*) \subsetneq \mathcal D(\neq^*) = \mathcal D(\leq^*) = \mathcal D(\in^*) = \omega^\omega \setminus (\omega^\omega)^W$.
\end{enumerate}
Thus in particular the full diagram for eventually different forcing is as shown in Figure \ref{eforcing}.

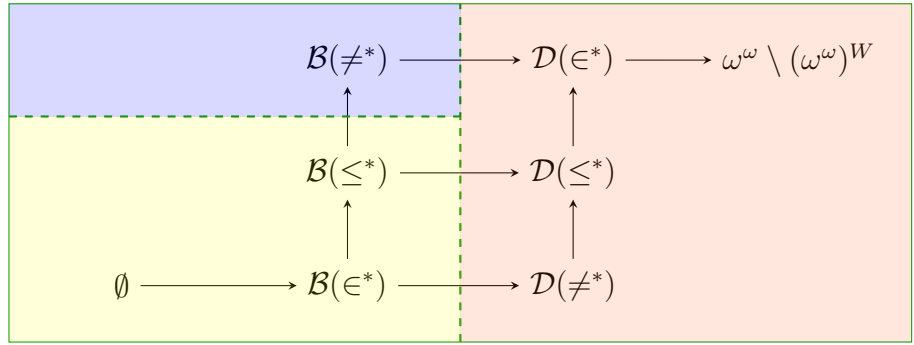
\begin{figure}[h]\label{Figure.Cichon-basic}
\centering
  \begin{tikzpicture}[scale=1.5,xscale=2]
     \draw (0,0) node (empty) {$\emptyset$}
           (1,0) node (Bin*) {$\mathcal B(\in^*)$}
           (1,1) node (Bleq*) {$\mathcal B(\leq^*)$}
           (1,2) node (Bneq*) {$\mathcal B(\neq^*)$}
           (2,0) node (Dneq*) {$\mathcal D(\neq^*)$}
           (2,1) node (Dleq*) {$\mathcal D(\leq^*)$}
           (2,2) node (Din*) {$\mathcal D(\in^*)$}
           (3,2) node (all) {$\omega^\omega\setminus(\omega^\omega)^W$}
           ;
     \draw[->,>=stealth]
            (empty) edge (Bin*)
            (Bin*) edge (Bleq*)
            (Bleq*) edge (Bneq*)
            (Bin*) edge (Dneq*)
            (Bleq*) edge (Dleq*)
            (Bneq*) edge (Din*)
            (Dneq*) edge (Dleq*)
            (Dleq*) edge (Din*)
            (Din*) edge (all)
            ;
       \draw[thick,dashed,OliveGreen] (1.5,-.5) -- (1.5,2.5);
       \draw[thick,dashed,OliveGreen] (-.5,1.5) -- (1.5,1.5);
       \draw[OliveGreen] (-.5,-.5) rectangle (3.5,2.5);
       \draw[draw=none,fill=yellow,fill opacity=.15] (-.5,-.5) rectangle (1.5,1.5);
       \draw[draw=none,fill=blue,fill opacity=.15] (-.5,1.5) rectangle (1.5,2.5);
       \draw[draw=none,fill=Orange,fill opacity=.15] (1.5,-.5) rectangle (3.5,2.5);
  \end{tikzpicture}
  \caption{After Eventually Different forcing}
\end{figure}
\label{eforcing}
\end{theorem}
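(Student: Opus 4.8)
The plan is to treat the two columns of the diagram separately. The left column (the $\mathcal{B}$'s) will follow from the fact that $\mathbb{E}$ adds no dominating real, and the right column (the $\mathcal{D}$'s) will reduce, using the inclusions already proved in Theorem \ref{propdi}, to the single statement that every new real constructs an infinitely-often-equal real over $W$. The strict inclusion $\mathcal{B}(\neq^*)\subsetneq\mathcal{D}(\neq^*)$ will come from the fact that $\mathbb{E}$ adds a Cohen real together with the analysis of Cohen forcing carried out above.

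First, the routine parts. It is standard (see \cite[Chapter 7]{BarJu95}; alternatively a short fusion argument on stems, exploiting that the finite side condition forbids only finitely many values at each coordinate) that $\mathbb{E}$ adds no dominating real; hence $W[e]$ contains no real dominating all of $(\omega^\omega)^W$, i.e.\ $\mathcal{B}(\leq^*)=\emptyset$, and then $\mathcal{B}(\in^*)=\emptyset$ by the inclusion $\mathcal{B}(\in^*)\subseteq\mathcal{B}(\leq^*)$ of Theorem \ref{propdi}. On the other hand, for each $y\in(\omega^\omega)^W$ the set of conditions $(s,F)$ with $y\in F$ is dense, so by genericity $e$ is eventually different from every ground model real; thus $e\in\mathcal{B}(\neq^*)$ and $\mathcal{B}(\neq^*)\neq\emptyset$. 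Finally, $\mathbb{E}$ adds a Cohen real $c$ over $W$ (\cite[Chapter 7]{BarJu95}), and by the analysis of Cohen forcing above $W[c]$ contains no real eventually different from all reals of $W$; hence $c\notin\mathcal{B}(\neq^*)$, and since $c\in\omega^\omega\setminus(\omega^\omega)^W$ this gives $\mathcal{B}(\neq^*)\subsetneq\omega^\omega\setminus(\omega^\omega)^W$.

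It remains to show $\omega^\omega\setminus(\omega^\omega)^W\subseteq\mathcal{D}(\neq^*)$; since $\mathcal{D}(\neq^*)\subseteq\mathcal{D}(\leq^*)\subseteq\mathcal{D}(\in^*)\subseteq\omega^\omega\setminus(\omega^\omega)^W$ by Theorem \ref{propdi}, this forces all four sets to coincide and completes the proof. The strategy is to show that every $x\in W[e]\setminus W$ has a Cohen real over $W$ in $W[x]$: granting this, a Cohen real over $W$ meets each dense open set $\bigcup_{m>n}\{z\st z(m)=y(m)\}$ coded in $W$, hence is infinitely often equal to every $y\in(\omega^\omega)^W$ (this is exactly what was established in the Cohen forcing case), so $x\in\mathcal{D}(\neq^*)$. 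To produce the Cohen real I would carry out a Palumbo-style analysis of the intermediate models of $\mathbb{E}$, in analogy with Fact \ref{palumbo1} for Hechler forcing, but with the assumption that every set of reals in $L(\mathbb{R})$ has the Baire property replacing the large-cardinal hypothesis: since $\mathbb{E}$ is a Suslin c.c.c.\ forcing whose Borel definition lies in $W$, a Solovay-style absoluteness argument identifies $W[x]$ as a generic extension of $W$ by a c.c.c.\ complete suborder of $\mathbb{E}$ that is again Borel-definable over $W$, and one then argues that every such nontrivial suborder must add a Cohen real — equivalently, that no nontrivial intermediate model of $\mathbb{E}$ is $\omega^\omega$-bounding over $W$ (here one may also use that complete suborders of the $\sigma$-centered forcing $\mathbb{E}$ are $\sigma$-centered, ruling out random-like behavior).

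The main obstacle is precisely this last step: the classification of the intermediate models of $\mathbb{E}$ and the extraction of a Cohen real in each, and it is here that the Baire-property hypothesis is genuinely consumed, in the guise of Solovay-style absoluteness for the definable c.c.c.\ forcing $\mathbb{E}$ and its complete suborders. Everything else is routine: the diagram inclusions are quoted from Theorem \ref{propdi}, the facts about the generic $e$ are elementary density arguments, and ``$\mathbb{E}$ adds no dominating real'' and ``$\mathbb{E}$ adds a Cohen real'' are standard. A secondary, milder point worth care is checking that the parity (or an analogous finitary reduct) of $e$ really is Cohen-generic, so that the Cohen real furnished by $\mathbb{E}$ is available both for the strict inclusion and, via the intermediate-model analysis, for the whole right column.
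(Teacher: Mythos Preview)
Your proposal is correct and follows essentially the same approach as the paper: show $\mathcal B(\leq^*)=\emptyset$ since $\mathbb E$ adds no dominating real, and reduce the right column to showing every nontrivial intermediate model $W\subseteq W[x]\subseteq W[e]$ contains a Cohen real over $W$, which the paper (like you) obtains by adapting Palumbo's argument for $\mathbb D$ using the Gitik--Shelah result that sufficiently well-defined $\sigma$-centered forcings add Cohen reals when the relevant filters in $L(\mathbb R)$ have the Baire property. The one small refinement is that the paper pins the key technical input specifically to the Gitik--Shelah fact \cite[Proposition 4.3]{GS93} rather than a general ``Solovay-style absoluteness'' argument; otherwise your outline matches the paper's proof.
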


To prove this I will use a series of lemmas similar to those used in the case of Hechler forcing. First, a straightforward modification of Lemma \ref{L[d], B(in)} shows that there are no dominating reals in $W[e]$:
\begin{lemma}
$W[e] \models \mathcal B(\in^*) = \mathcal B(\leq^*) = \emptyset$
\end{lemma}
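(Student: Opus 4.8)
The plan is to reduce the claim to the single assertion that $\mathbb{E}$ adds no dominating real over $W$, and then prove that assertion by adapting the argument of Lemma \ref{L[d], B(in)} to eventually different forcing.

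\emph{The reduction.} By Theorem \ref{propdi}, applied with $\sqsubseteq$ taken to be $\leq_W$ (which extends $\leq_T$), we have $\mathcal B(\in^*) \subseteq \mathcal B(\leq^*)$, so it suffices to show $W[e] \models \mathcal B(\leq^*) = \emptyset$. Now if $x \in W[e] \cap \omega^\omega$ then $W \subseteq W[x] \subseteq W[e]$ are models of $\ZFC$ with the same ordinals, so any real of $W[x]$ dominating every real of $W$ is already such a real in $W[e]$; hence $W[e] \models \mathcal B(\leq^*) = \emptyset$ is exactly the statement that $\mathbb{E}$ adds no dominating real over $W$. So I would prove: for every $\mathbb{E}$-name $\dot z$ and every condition forcing $\dot z \colon \omega \to \omega$ there is $y \in W \cap \omega^\omega$ that is not eventually dominated by $\dot z_e$; since this holds for every name, $W[e]$ contains no dominating real over $W$.

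\emph{The proof of the reduced statement.} I would mimic Lemma \ref{L[d], B(in)} almost verbatim, with the name $\dot s$ for a slalom replaced by the (variable-width) slalom $n \mapsto \{0,1,\dots,\dot z(n)\}$ associated to a candidate dominating real $\dot z$, so that $\check y \leq^* \dot z$ says precisely that $y$ is eventually captured by this slalom. Working below an arbitrary condition $(s_0,F_0)$ and enumerating the possible stems as $\{p_i : i < \omega\}$, the key structural input is the amalgamation property of $\mathbb{E}$: if $(p_i,\mathcal F)$ and $(p_i,\mathcal G)$ are conditions, then so is their common refinement $(p_i,\mathcal F\cup\mathcal G)$. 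As in Lemma \ref{L[d], B(in)} this guarantees that for each stem $p_i$ and each $n$ at most one value $k$ can be forced for $\dot z(\check n)$ by a condition with stem $p_i$. What is genuinely new compared with Hechler forcing is that in $\mathbb{E}$ a stem may be extended using arbitrarily small values — at each fresh coordinate one need only avoid the finitely many values taken there by the side condition. Using this one defines in $W$ a function $y$ that strictly dominates every value of $\dot z$ forceable at coordinate $n$ by a sufficiently low-complexity stem, and then runs the diagonalization of Lemma \ref{L[d], B(in)}: if $z = \dot z_e$ dominated $y$ in $W[e]$, some condition $(p_j,\mathcal F)$ would force $\dot z(l) \ge \check y(l)$ for all $l$ past some $k_0$, and then choosing $l$ large and extending $(p_j,\mathcal F)$ to a condition deciding $\dot z(l)$ whose stem is low-complexity forces $\dot z(l)$ to be simultaneously $\ge y(l)$ and $< y(l)$, a contradiction. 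Hence $\mathcal B(\leq^*)=\emptyset$, and a fortiori $\mathcal B(\in^*)=\emptyset$, in $W[e]$.

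The step I expect to be the main obstacle is making $y$ both well defined (total and an element of $W$) and still unbeatable: one must specify exactly which stems count as low-complexity at coordinate $n$ so that only finitely many qualify — otherwise the supremum defining $y(n)$ could be infinite — while still guaranteeing that, for arbitrarily large $l$, a deciding extension of a given condition can be found with a low-complexity stem. This is precisely where the combinatorics special to $\mathbb{E}$ (small values on fresh stem coordinates, which fails for Hechler forcing, consistently with the fact that Hechler forcing \emph{does} add a dominating real) must be used carefully; everything else is a routine transcription of the proof of Lemma \ref{L[d], B(in)}.
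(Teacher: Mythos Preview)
Your proposal is correct and takes essentially the same approach as the paper, whose entire proof is the one-line remark that ``a straightforward modification of Lemma~\ref{L[d], B(in)} shows that there are no dominating reals in $W[e]$.'' You actually go further than the paper by isolating exactly where the $\mathbb{E}$-specific combinatorics enter---namely, that stems can be extended using values bounded by the size of the side condition, which is what allows one to arrange that a deciding extension has a ``low-complexity'' stem---and by correctly flagging this coordination between well-definedness of $y$ and the existence of such extensions as the one nontrivial step.
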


To complete the analysis of the $\leq_W$-diagram after forcing with $\mathbb E$, I need the analogy of Palumbo's Fact \ref{palumbo1} for $\mathbb E$. Unfortunately, his argument uses a tree version of $\mathbb D$ that, as far as I can tell, is not available for $\mathbb E$. As such, I only know how to prove Palumbo's result for $\mathbb E$ assuming sufficient large cardinals. I conjecture that it should hold in $\ZFC$.

\begin{lemma}
Assume that every set of reals in $L(\mathbb R)$ has the property of Baire. Then in every nontrivial intermediate model between $W$ and $W[e]$ there is a real $c$ which is $\mathbb C$-generic over $W$.
\end{lemma}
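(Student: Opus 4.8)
The plan is to adapt the strategy Palumbo uses to prove Fact \ref{palumbo1} for Hechler forcing, supplying by means of the hypothesis on $L(\mathbb R)$ the one ingredient that is genuinely missing for $\mathbb E$: a tree representation admitting a fusion argument. Since it is only the case $M = W[x]$ with $x\in\omega^\omega$ that is used in Theorem \ref{eforcing}, I would first reduce to that case. Two standard facts would be recorded at the outset. First, $\mathbb E$ (as computed in $W$) is a Suslin, $\sigma$-centered, hence ccc, forcing: $\sigma$-centeredness follows because any two conditions with the same finite-function part have a common extension, exactly as in the observation opening Lemma \ref{L[d], B(in)}. Second, $\mathbb E$ adds a Cohen real over $W$: for any interval partition and pattern coded in $W$ it is dense to make the parities of the values of the generic real agree with the pattern on some interval, because the side conditions forbid only finitely many values at each coordinate, so the parity function of $e$ is Cohen over $W$; in particular, whenever $W[x]=W[e]$ the lemma already holds.

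The heart of the argument concerns a new real $x\in W[e]\setminus W$ with name $\dot x\in W$, and the goal is the dichotomy: either $x$ reconstructs $e$ over $W$, whence $W[x]=W[e]$ and we finish by the second fact, or there is a Borel function $g$ with code in $W$ such that $g(x)$ is Cohen-generic over $W$, whence $g(x)\in W[x]$ is as required. For Hechler forcing, Palumbo obtains exactly this dichotomy by passing to the tree version $\mathbb D_{\mathrm{tree}}$ and following the conditions that decide longer and longer initial segments of $\dot x$: the resulting subtree either collapses to a single branch, giving back the generic real, or retains enough branching at cofinally many levels that a Cohen real can be read off its shape. No forcing equivalent to $\mathbb E$ with such a tree structure appears to be available. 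Instead I would argue that, since $\mathbb E$ is Suslin ccc with code in $W$, the relation expressing how a generic filter $G$ determines $\dot x_G$, and the quotient $\mathbb E/\dot x$ over $W[x]$, are definable from parameters in $W$ and in $W[x]$ in a way that, under the assumption that every set of reals in $L(\mathbb R)$ has the Baire property, is computed correctly and homogeneously over $W$; a Baire-category uniformization furnished by that hypothesis then yields either a $W$-definable reconstruction of $e$ from $x$ or the Borel function $g$, by the same bookkeeping Palumbo runs against the tree.

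The step I expect to be the main obstacle is precisely this substitution: isolating exactly which consequence of ``every set of reals in $L(\mathbb R)$ has the Baire property'' is needed — a strong form of Cohen-measurability, or of Suslin-ccc forcing absoluteness, holding over the arbitrary inner model $W$ rather than over $L[r]$ — and checking that the quotient $\mathbb E/\dot x$ is tame enough (Suslin, ccc, with code in $W[x]$) for that machinery to play the role that $\mathbb D_{\mathrm{tree}}$ and its fusion play in Palumbo's proof. The remaining ingredients — $\mathbb E$ being Suslin and $\sigma$-centered, $\mathbb E$ adding a Cohen real, and the reduction to $M=W[x]$ — are routine, and for the analogues of these I would cite \cite{BarJu95} and \cite{Pal13}. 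It is exactly the lack of a ZFC tree-forcing substitute that forces the appeal to large cardinals here, and I would conjecture, with the author, that the lemma in fact holds in $\ZFC$.
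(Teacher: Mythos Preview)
Your proposal has a genuine gap at exactly the point you flag as the main obstacle. You are attempting to adapt Palumbo's \emph{ZFC} argument for Fact~\ref{palumbo1}, which runs a fusion along the tree version of $\mathbb D$, and to substitute for the missing tree representation of $\mathbb E$ some unspecified ``Baire-category uniformization'' or ``Suslin-ccc forcing absoluteness'' over $W$. But you never identify what this substitute actually is, and the dichotomy you hope to establish --- either $x$ reconstructs $e$, or a $W$-coded Borel function sends $x$ to a Cohen real --- is not obviously a consequence of the Baire property hypothesis in the form you invoke. The step where you say the hypothesis ``yields either a $W$-definable reconstruction of $e$ from $x$ or the Borel function $g$, by the same bookkeeping Palumbo runs against the tree'' is not an argument; it is a hope.

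The paper takes a different and much shorter route, and it is worth knowing because it supplies precisely the missing tool you are groping for. Palumbo has a \emph{second} argument, sketched in \cite[p.~38]{Pal13} under large cardinals, which does not use the tree version of $\mathbb D$ at all and therefore transfers verbatim to $\mathbb E$. Its engine is a result of Gitik and Shelah \cite[Proposition 4.3]{GS93}: if $\mathbb P$ is a sufficiently well-defined $\sigma$-centered forcing and certain filters on $\mathbb P$ lying in $L(\mathbb R)$ have the Baire property, then $\mathbb P$ adds a Cohen real. Under the hypothesis that every set of reals in $L(\mathbb R)$ has the Baire property, this applies to every nontrivial complete subforcing of $\mathbb E$ arising from an intermediate model, and the lemma follows. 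No dichotomy, no reconstruction of $e$, and no tree are needed; the Gitik--Shelah result is the black box that replaces all of that. Your intuition that the hypothesis should supply ``Cohen-measurability'' for the relevant quotients was on the right track, but the concrete statement you need is already in the literature.
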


A proof of this is sketched in \cite[pg 38]{Pal13} for $\mathbb D$ but the reader will notice that it goes through equally well for $\mathbb E$. Indeed the centerpiece of the argument involves a fact, due to Shelah and Gitik \cite[Proposition 4.3]{GS93} that given any sufficiently well-defined $\sigma$-centered forcing $\mathbb P$, if certain filters of $\mathbb P$ in $L(\mathbb R)$ have the property of Baire, then $\mathbb P$ will add a Cohen real. It is not hard to see from the combination of the Gitik-Shelah and the Palumbo arguments that \say{sufficiently well defined} includes all subforcings of $\mathbb E$. Thus, assuming all sets of reals have the property of Baire the result goes through.

Using this lemma, by the same argument given for $\mathbb D$, we have the proof of Theorem \ref{eforcing}.

The use of large cardinals here is unfortunate and I hope it can be improved on. Let me note however that even without large cardinals I have shown that there is a model realizing the cut determined by $\mathcal B(\in^*) = \mathcal B(\neq^*) = \emptyset$.

\subsection{Localization Forcing}
In this section I study {\em Localization forcing}, the forcing to add a generic slalom capturing all ground model reals.
\begin{definition}[Localization Forcing (cf \cite{BL11})]
The localization forcing $\mathbb{LOC}$ is defined as the set of pairs $(s, \mathcal F)$ such that $s \in ([\omega]^{<\omega})^{< \omega}$ is a finite sequence with $|s(n)| \leq n$ for all $n < |s|$ and $\mathcal F$ is a a finite family of functions in Baire space with $|\mathcal F| \leq |s|$. The order is $(t, \mathcal G) \leq_{\mathbb{LOC}} (s, \mathcal F)$ if and only if $t \supseteq s$, $\mathcal G \supseteq \mathcal F$ and $x(n) \in t(n)$ for all $x \in \mathcal F$ and all $n \in |t| \setminus |s|$.
\end{definition}
Intuitively we think of the first component as a finite approximation to a slalom we are trying to build and as such I will often refer to the length of the sequence as its \say{domain} and write ${\rm dom}(s)$. The second component is the set of functions we are promising to capture from that stage onwards.

Unfortunately I do not have a full characterization of the diagram in the case of $\mathbb{LOC}$. The following theorem summarizes the state of knowledge.
\begin{theorem}
Let $s$ be a slalom which is $\mathbb{LOC}$-generic over $W$. Then in $W[s]$ all the nodes in the diagram are nonempty (with the exception of $\emptyset$) and we have that $\mathcal B(\in^*)$ is a proper subset of $\mathcal B(\leq^*)$ and $\mathcal D(\neq^*)$. Also $\mathcal B(\leq^*) \subsetneq \mathcal B(\neq^*)$ and $\mathcal D (\leq^*) \subsetneq \mathcal D(\in^*)$. In particular, Figure \ref{locpic} is a partial diagram for $\mathbb{LOC}$. 
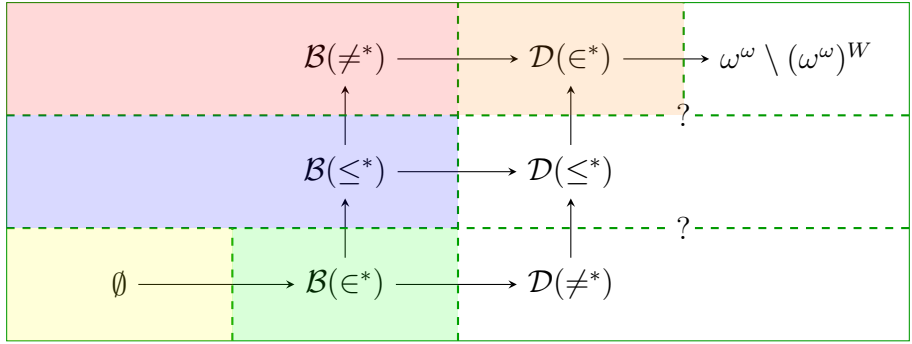
\begin{figure}[h]\label{Figure.Cichon-basic}
\centering
  \begin{tikzpicture}[scale=1.5,xscale=2]
     \draw (0,0) node (empty) {$\emptyset$}
           (1,0) node (Bin*) {$\mathcal B(\in^*)$}
           (1,1) node (Bleq*) {$\mathcal B(\leq^*)$}
           (1,2) node (Bneq*) {$\mathcal B(\neq^*)$}
           (2,0) node (Dneq*) {$\mathcal D(\neq^*)$}
           (2,1) node (Dleq*) {$\mathcal D(\leq^*)$}
           (2,2) node (Din*) {$\mathcal D(\in^*)$}
           (3,2) node (all) {$\omega^\omega\setminus(\omega^\omega)^W$}
           (2.5, .5) node[fill=white] (?) {$?$}
           (2.5, 1.5) node[fill=white] (?') {$?$}
           ;
     \draw[->,>=stealth]
            (empty) edge (Bin*)
            (Bin*) edge (Bleq*)
            (Bleq*) edge (Bneq*)
            (Bin*) edge (Dneq*)
            (Bleq*) edge (Dleq*)
            (Bneq*) edge (Din*)
            (Dneq*) edge (Dleq*)
            (Dleq*) edge (Din*)
            (Din*) edge (all)
            ;
        \draw[thick,dashed,OliveGreen] (-.5,.5) -- (2.45,.5);
        \draw[thick,dashed,OliveGreen] (2.55,.5) -- (3.5,.5);
       \draw[thick,dashed,OliveGreen] (1.5,-.5) -- (1.5,2.5);
       \draw[thick,dashed,OliveGreen] (-.5,1.5) -- (.55,1.5);
       \draw[thick,dashed,OliveGreen] (.55,1.5) -- (2.43,1.5);
       \draw[thick,dashed,OliveGreen] (2.55,1.5) -- (3.5,1.5);
       \draw[thick,dashed,OliveGreen] (.5,-.5) -- (.5,.5);
       \draw[thick,dashed,OliveGreen] (2.5,1.65) -- (2.5,2.5);
       \draw[OliveGreen] (-.5,-.5) rectangle (3.5,2.5);
       \draw[draw=none,fill=yellow,fill opacity=.15] (-.5,-.5) rectangle (.5,.5);
       \draw[draw=none,fill=green,fill opacity=.15] (.5,-.5) rectangle (1.5,.5);
       \draw[draw=none,fill=blue,fill opacity=.15] (-.5,.5) rectangle (1.5,1.5);
        \draw[draw=none,fill=red,fill opacity=.15] (-.5,1.5) rectangle (1.5,2.5);
       \draw[draw=none,fill=orange,fill opacity=.15] (1.5, 1.5) rectangle (2.5,2.5);
  \end{tikzpicture}
  \caption{Partial diagram after Localization forcing}
  \label{locpic}
\end{figure}
\label{locthm}
\end{theorem}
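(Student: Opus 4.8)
The plan is to extract from the generic slalom $s$ a small number of sub-reals, each generating a strictly smaller submodel of $W[s]$, and to read off each required non-inclusion from the behaviour of the standard forcing notions already analysed in this section (Sacks, Cohen, Hechler, eventually different). Throughout I drop the $\leq_W$ subscript, writing $\mathcal B(\cdot)$, $\mathcal D(\cdot)$.

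For non-emptiness: the generic slalom $s$ captures every real of $W$ on a cofinite set, so $s$ witnesses $\mathcal B(\in^*)\neq\emptyset$. Since $\leq_W$ extends $\leq_T$, Theorem \ref{propdi} applies; because $\mathcal B(\in^*)$ is the source of the transitive closure of every arrow except the one into $\emptyset$, each of the remaining seven nodes also contains $s$, giving the top edge of Figure \ref{locpic}. For the left column, set $g(n)=\max(s(n))+1$: every $y\in(\omega^\omega)^W$ is eventually captured by $s$, so $g$ eventually dominates every real of $W$ and $g\in\mathcal B(\leq^*)$. To separate $\mathcal B(\in^*)$ from $\mathcal B(\leq^*)$ I will show that Hechler forcing $\mathbb D$ embeds completely into $\mathbb{LOC}$ --- the assignment sending $(s,\mathcal F)$ to a variant of $(n\mapsto\max(s(n)\cup\{0\})+1,\mathcal F)$ is order preserving, and after passing to the dense set of conditions with $|\mathcal F|<|s|$ a density argument makes it a projection --- so $W[s]$ contains a $\mathbb D$-generic $h$ over $W$. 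Then $h$ is dominating, so $h\in\mathcal B(\leq^*)$, while by Lemma \ref{L[d], B(in)} the Hechler extension $W[h]$ contains no capturing slalom, so $h\notin\mathcal B(\in^*)$; hence $\mathcal B(\in^*)\subsetneq\mathcal B(\leq^*)$.

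Next let $v(n)=\min(\omega\setminus s(n))$. Since $|s(n)|\leq n$ we have $v(n)\leq n$, so $v$ is pointwise below the identity, hence bounded by a real of $W$; and since $y(n)\in s(n)$ for all large $n$ whenever $y\in(\omega^\omega)^W$, we get $v(n)\neq y(n)$ for all large $n$, i.e.\ $v$ is eventually different from every real of $W$. Thus $v\in\mathcal B(\neq^*)$, and by Lemma \ref{neqinf} $v$ is moreover eventually never captured by any slalom of $W$, so $v\in\mathcal D(\in^*)$ as well. The key point is that $v$ lies in a proper submodel: I will exhibit a projection of $\mathbb{LOC}$ onto a \emph{bounded} form of eventually different forcing --- conditions $(p,\mathcal F)$ with $p$ a finite partial function below a fixed slowly growing bound $m(n)$ and $\mathcal F$ a finite family of reals, extensions of $p$ avoiding the members of $\mathcal F$ --- induced by $(s,\mathcal F)\mapsto(n\mapsto\min(\omega\setminus s(n)),\mathcal F)$ after arranging, by controlling the growth of $\mathcal F$, that $\min(\omega\setminus s(n))\leq m(n)$. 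This bounded forcing is $\sigma$-centered and, being in effect a finitely branching tree forcing, is $\omega^\omega$-bounding, so it adds no unbounded (hence no dominating) real over $W$. Consequently $v$ lies in a submodel $W[\bar G]\subseteq W[s]$ that contains neither a dominating nor an unbounded real over $W$, so $v\in\mathcal B(\neq^*)\setminus\mathcal B(\leq^*)$ and $v\in\mathcal D(\in^*)\setminus\mathcal D(\leq^*)$, giving $\mathcal B(\leq^*)\subsetneq\mathcal B(\neq^*)$ and $\mathcal D(\leq^*)\subsetneq\mathcal D(\in^*)$. For the last separation, $\mathcal B(\in^*)\subsetneq\mathcal D(\neq^*)$, I use that $\mathbb{LOC}$ adds a Cohen real $c$ over $W$ --- standard for localization/amoeba-type $\sigma$-centered forcings, and in any case obtainable from the property-of-Baire arguments of Gitik--Shelah invoked for $\mathbb E$ above; then $c$ is infinitely often equal to every real of $W$, so $c\in\mathcal D(\neq^*)$, while Cohen forcing adds no dominating real, so $c\notin\mathcal B(\leq^*)\supseteq\mathcal B(\in^*)$.

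The hard part will be the middle step: confirming that $\mathbb{LOC}$ really does project onto Hechler forcing and onto an $\omega^\omega$-bounding bounded-eventually-different forcing. Order preservation of the natural maps is routine, but verifying they are projections --- lifting an extension of the image condition back to a condition of $\mathbb{LOC}$ --- collides with the finite slot-counting constraint $|s(n)|\leq n$: one must first pass to a suitable dense subset of $\mathbb{LOC}$, control the growth of the promised family $\mathcal F$, and thin to a sufficiently slow bound $m(n)\leq n/2$ for the lift to succeed at every coordinate. Pinning down these projections precisely, together with the $\omega^\omega$-bounding property of the bounded forcing, is the delicate point; it is also the reason that the ``?'' edges of Figure \ref{locpic}, in particular whether $\mathcal D(\neq^*)=\mathcal D(\leq^*)$, are left open here.
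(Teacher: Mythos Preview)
Your overall plan for the first two separations (nonemptiness via $s$, $\mathcal B(\in^*)\subsetneq\mathcal B(\leq^*)$ via a Hechler projection, the vertical split via a Cohen real) matches the paper. The gap is in your treatment of $\mathcal B(\leq^*)\subsetneq\mathcal B(\neq^*)$ and especially $\mathcal D(\leq^*)\subsetneq\mathcal D(\in^*)$, where you rely on the real $v(n)=\min(\omega\setminus s(n))$ and the claim that the ``bounded eventually different'' forcing adding it is $\omega^\omega$-bounding. The justification you give---``being in effect a finitely branching tree forcing''---is not correct: your poset carries arbitrary finite side conditions $\mathcal F\subseteq\omega^\omega$, so it is not a tree forcing, and $\sigma$-centeredness alone is of no help (Cohen forcing is $\sigma$-centered). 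In fact the parity map $(p,\mathcal F)\mapsto p\bmod 2$ is a projection to Cohen forcing on a dense set (as for $\mathbb D$ and $\mathbb E$), so your bounded forcing adds a Cohen and hence an unbounded real; thus $v\in\mathcal D(\leq^*)$ and your witness for $\mathcal D(\leq^*)\subsetneq\mathcal D(\in^*)$ fails. The paper itself records that the forcing adding this very real is \emph{unknown} and leaves as open whether it adds a dominating real---so you are resting the proof on a genuinely open (and, for the $\omega^\omega$-bounding part, apparently false) claim.

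The paper avoids this entirely. For $\mathcal B(\leq^*)\subsetneq\mathcal B(\neq^*)$ it exhibits an explicit projection $\mathbb{LOC}\to\mathbb E$ (using a modular-arithmetic trick on the sums $\Sigma s(n)$ to control which non-element of $s(n)$ is selected), and then quotes the earlier analysis of $\mathbb E$ to see the $\mathbb E$-generic is in $\mathcal B(\neq^*)\setminus\mathcal B(\leq^*)$. For $\mathcal D(\leq^*)\subsetneq\mathcal D(\in^*)$ it argues that any forcing adding a slalom capturing all ground-model reals makes the ground-model reals null (a cited result), hence adds a random real $r$; then $r\in\mathcal D(\in^*)\setminus\mathcal D(\leq^*)$ by the standard properties of random forcing. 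Your route via $v$ could still salvage the \emph{first} of these two separations (one only needs ``no dominating real'', obtainable from $\sigma$-centeredness by the same argument as for $\mathbb E$), but not the second.
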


Proving this theorem amounts to showing that $\mathbb{LOC}$ adds $\mathbb B$, $\mathbb D$ and $\mathbb E$ generics. I start with $\mathbb D$. Notice first that $\mathbb{LOC}$ adds a dominating real. Indeed if $s$ is a generic slalom in $W^{\mathbb{LOC}}$ then $d(n) :={\rm max} \; s (n)$ has this property. This is actually a Hechler real:
\begin{lemma}
Let $s \in W^\mathbb{LOC}$ be a generic slalom eventually capturing all ground model reals. Then, $d(n):={\rm max}\; s (n)$ is $\mathbb D$-generic over $W$.
\end{lemma}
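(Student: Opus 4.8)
The plan is to present $d$ as the generic real of a $\mathbb D$-generic filter that is the push-forward of the $\mathbb{LOC}$-generic along a projection. Define $\pi\colon\mathbb{LOC}\to\mathbb D$ by $\pi(s,\mathcal F)=(p_s,\mathcal F)$, where $p_s$ has domain $|s|$ and $p_s(n)=\max s(n)$ for $0<n<|s|$ (the coordinate $0$, where every condition has $s(0)=\emptyset$, is irrelevant and I suppress it). If $G$ is $\mathbb{LOC}$-generic over $W$ with generic slalom $s$, then $d=\bigcup\{p_t:(t,\mathcal F)\in G\}$, so $d$ is exactly the function read off from the $\pi$-images of the conditions of $G$. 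Hence, once $\pi$ is shown to be a projection, $H=\{(q,\mathcal G)\in\mathbb D:\exists(t,\mathcal F)\in G\ \pi(t,\mathcal F)\le_{\mathbb D}(q,\mathcal G)\}$ is a $\mathbb D$-generic filter over $W$; since every $\pi(t,\mathcal F)$ lies in $H$ and every $q$ occurring in $H$ extends some $p_t$, the generic real of $H$ is $d$, and since $H$ is definable from $d$ over $W$ we get $W[H]=W[d]$, so $d$ is $\mathbb D$-generic over $W$.

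The routine clauses of ``$\pi$ is a projection'' are straightforward. For order preservation, if $(t,\mathcal G)\le_{\mathbb{LOC}}(s,\mathcal F)$ then $t\supseteq s$ gives $p_t\supseteq p_s$ and $\mathcal G\supseteq\mathcal F$ is immediate, while for $n\in[|s|,|t|)$ and $x\in\mathcal F$ the capturing clause $x(n)\in t(n)$ yields $\max t(n)\ge x(n)$. For density of the range, given $(p,\mathcal F)\in\mathbb D$ I would first extend it in $\mathbb D$ to $(p',\mathcal F)$ with $\dom p'>|\mathcal F|$, then take $s$ of length $\dom p'$ with $s(n)=\{p'(n)\}$ for $n\ge 1$; this is a $\mathbb{LOC}$-condition and $\pi(s,\mathcal F)\le_{\mathbb D}(p,\mathcal F)$.

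The main step is the lifting property: given $(s,\mathcal F)\in\mathbb{LOC}$ and $(q,\mathcal G)\le_{\mathbb D}\pi(s,\mathcal F)=(p_s,\mathcal F)$, I need $(t,\mathcal H)\le_{\mathbb{LOC}}(s,\mathcal F)$ with $\pi(t,\mathcal H)\le_{\mathbb D}(q,\mathcal G)$. First extend $(q,\mathcal G)$ in $\mathbb D$ so that $\dom q>|\mathcal G|$ (note $\dom q\ge|s|$ already, since $q\supseteq p_s$). Then set $\mathcal H=\mathcal G$, let $|t|=\dom q$, put $t\restriction|s|=s$, and for $n\in[|s|,\dom q)$ let $t(n)=\{x(n):x\in\mathcal F\}\cup\{q(n)\}$. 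Since $(q,\mathcal G)\le_{\mathbb D}(p_s,\mathcal F)$ forces $q(n)>x(n)$ for all $x\in\mathcal F$ at each new coordinate $n$, we have $q(n)\notin\{x(n):x\in\mathcal F\}$ and $\max t(n)=q(n)$; so $(t,\mathcal H)$ captures $\mathcal F$ at the new coordinates, $(t,\mathcal H)\le_{\mathbb{LOC}}(s,\mathcal F)$, and, using $q\supseteq p_s$ on the old coordinates, $\pi(t,\mathcal H)=(p_t,\mathcal G)$ with $p_t=q$.

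The obstacle is the size constraint $|t(n)|\le n$ in the construction of $t$: at the first new coordinate $n=|s|$ there is room for only $|s|\ge|\mathcal F|$ elements, whereas $\{x(|s|):x\in\mathcal F\}\cup\{q(|s|)\}$ can have $|\mathcal F|+1$ of them, an overflow occurring exactly when $|\mathcal F|=|s|$ and $q(|s|)$ strictly dominates all the $x(|s|)$. I would sidestep this by working from the start with the dense subset $\mathbb{LOC}'=\{(s,\mathcal F)\in\mathbb{LOC}:|\mathcal F|<|s|\}$ (one further step of the stem always reaches it, and it carries the same generic), on which the argument above goes through unchanged, since then every new coordinate $n\ge|s|$ has $|\mathcal F|+1\le|s|\le n$. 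Finally, to match the strict inequality in the definition of $\le_{\mathbb D}$: the same computations with ``$\ge$'' in place of ``$>$'' make $\pi$ a projection onto the $\ge$-variant of $\mathbb D$, and a real generic for that variant is also $\mathbb D$-generic (given a dense set for $\mathbb D$ one obtains a dense set for the $\ge$-variant via the substitution $\mathcal F\mapsto\mathcal F\cup\{x+1:x\in\mathcal F\}$), so no generality is lost. With these adjustments $\pi$ is a projection and the lemma follows.
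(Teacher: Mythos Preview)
Your argument is correct and follows the same projection strategy as the paper, but with a different handling of the side conditions. The paper first replaces $\mathbb D$ by an equivalent version whose side condition is a single function and sets $\pi(s,\mathcal F)=(p_s,\Sigma\mathcal F)$, where $\Sigma\mathcal F$ is the pointwise sum; the lifting step then adjoins the single function $H=z-\Sigma\mathcal F$ to $\mathcal F$ so that $\Sigma(\mathcal F\cup\{H\})=z$. You instead keep the side family intact, $\pi(s,\mathcal F)=(p_s,\mathcal F)$, and in the lifting simply take $\mathcal H=\mathcal G$; this is arguably more direct and sidesteps a delicate point in the paper's order-preservation check (where one would need $\max t(n)\ge\Sigma\mathcal F(n)$ on new coordinates, which does not follow merely from $x(n)\in t(n)$ for each $x\in\mathcal F$). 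The price you pay is that your $\pi$ is only order-preserving for the $\ge$-variant of $\mathbb D$, which you then argue is forcing equivalent to the strict version; that step is correct (the conditions $(p,\mathcal F)$ with $\mathcal F$ closed under $x\mapsto x+1$ are dense in both orderings and on them the two orderings coincide). You are also explicit about the size overflow at the first new coordinate $n=|s|$, resolved by passing to the dense subset $\mathbb{LOC}'$; the paper's construction has the same edge case and treats it more loosely.
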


To prove this I will need a simplified version of $\mathbb D$: in the first component of a condition I will assume that the domain is a finite initial segment of $\omega$ and instead of having the second component of a condition of $\mathbb D$ be a finite family of functions, it will be a single function. Then $(q, y) \leq_\mathbb D (p, x)$ if and only if $q$ extends $p$, for all $n \in {\rm dom}(q) \setminus {\rm dom}(p)$, $q(n) \geq x(n)$ and for all $n \in \omega$, and $y(n) \geq x(n)$. It's not hard to see that this version of $\mathbb D$ is forcing equivalent to the original one I defined.

\begin{proof}
Recall that a {\em projection} $\pi:\mathbb P \to \mathbb Q$ between two posets is an order preserving map which sends the maximal element of $\mathbb P$ to the maximal element of $\mathbb Q$ and for all $p \in \mathbb P$ and all $q \leq \pi(p)$ there is some $\overline{p} \leq p$ such that $\pi(\overline{p}) \leq q$. If a projection exists between $\mathbb P$ and $\mathbb Q$ then the image $\pi '' G$ of a $\mathbb P$-generic filter generates a $\mathbb Q$-generic filter. Therefore to prove the lemma it suffices to show that the map $\pi:\mathbb{LOC} \to \mathbb D$ such that $\pi(s, \mathcal F) = (n \mapsto {\rm max} \; s(n), \Sigma \mathcal F)$ where $\Sigma \mathcal F$ is the pointwise sum, is a projection. To see why, note that if $(s, \mathcal F) \in \mathbb{LOC}$ and letting, for all $n \in {\rm dom} (s)$, $p(n) = {\rm max} \; s(n)$ and $x = \Sigma \mathcal F$, then the pair $(p, x)$ is a $\mathbb D$ condition and the union of all conditions such defined from elements of the $\mathbb{LOC}$ generic defining $s$ is the $d$ from the statement of the lemma. 

It is routine to check that $\pi(1_{\mathbb{LOC}}) =  1_\mathbb D$ and that the map $\pi$ is order preserving. The difficulty is in verifying the third condition of projections. To this end, let $(s, \mathcal F) \in \mathbb{LOC}$ and let $(p, x)=\pi(s, \mathcal F)$. Let $(p', x') \leq (p, x)$ and let $D \subseteq \mathbb D$ be a set of conditions which is dense below $(p', x')$. It suffices to find a strengthening $(t, \mathcal G)$ of $(s, \mathcal F)$, such that $(n \mapsto {\rm max} \; t(n), \Sigma G) \in D$. To do this, let $(q, z) \in D$ strengthen $(p', x')$ so that $|{\rm dom}(q)| > |{\rm dom}(s)| + 2$.


Now, we can build our new $\mathbb{LOC}$ condition. Define $H:\omega \to \omega$ by $H(n) = z(n) - x(n)$. Notice that since $x'(n)$ was assumed to be bigger than $x(n)$ for all $n$ and $z(n) \geq x'(n)$ since it is a strengthening it follows that $H$ is in fact always nonnegative. Moreover, $x + H = \Sigma \mathcal F + H = z$. It remains to show that there is a $t \supseteq s$ such that ${\rm dom}(t) = {\rm dom}(q)$, for all $n \in {\rm dom}(t)$, ${\rm max} \; t(n) = q(n)$ and for all $n \in {\rm dom}(t) \setminus {\rm dom}(s)$ and all $v\in \mathcal F$, $v(n) \in t(n)$. Once this has been done $(t, \mathcal F \cup \{H\})$ will be the desired condition. I claim that this is all possible. I will describe a $t$ extending $s$ be defined on the domain of $q$ (by construction, the domain of $q$ contains that of $s$). Since $|{\rm dom}(q)| > |{\rm dom}(s)| + 2$, the domain of $t$ will be large enough to accommodate the side condition $\mathcal F \cup \{H\}$. Let $|\mathcal F| = k$ and enumerate $\mathcal F = \{v_0,...,v_{k-1}\}$. Note that $k < n$ for all $n \in {\rm dom}(q) \setminus {\rm dom}(s)$. Now, for each $n \in {\rm dom}(q) \setminus {\rm dom}(s)$, let me define $t(n)$. Notice first that one must put in all $k$ numbers $\{v_0(n),...,v_{k-1}(n)\}$ and we also want ${\rm max} \; t(n) = q(n)$ so add this in too. Since $n > k$, one may add up to $n -k-1$ additional numbers $\{j_0,...,j_{n-k-2}\}$ such that each one is less than $q(n)$ and different from all numbers in the set $\{v_0(n),...,v_{k-1}(n), q(n)\}$. Let $t(n)$ be this set plus any of the additional numbers that fit. Note that the definition of $\mathbb{LOC}$ allows $|t(n)| \leq n$ so we do not need to meet this bound everywhere. What matters is that, since by construction $q(n) \geq x'(n)$ for all $n \notin {\rm dom}(p)$ and $x'(n) \geq \Sigma_{i < k} f_i(n)$ on this domain we can arrange always that $q(n)$ is the maximum of $t(n)$, which is what we needed. 
\end{proof}

Now, I show that $\mathbb{LOC}$ adds an $\mathbb E$-generic real. This fact was first told to me (without proof) in private communication with J. Brendle. I thank him for pointing it out to me.

\begin{lemma}
The forcing $\mathbb{LOC}$ adds an $\mathbb E$-generic real.
\end{lemma}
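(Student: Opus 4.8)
The plan is to follow the pattern of the previous lemma and exhibit a \emph{projection} $\pi\colon\mathbb{LOC}\to\mathbb E$: if $\pi$ is a projection then $\pi''G$, for $G$ the $\mathbb{LOC}$-generic filter, generates an $\mathbb E$-generic filter over $W$, so the associated generic real $e$ lies in $W[S]\subseteq W[G]$, where $S$ is the generic slalom, and is then automatically eventually different from every real of $W$. As before I would first pass to convenient but forcing-equivalent presentations, assuming in particular that the first coordinate $s$ of a $\mathbb{LOC}$-condition has length a left endpoint $\min I_k$ of the block decomposition fixed below, and that the stem $q$ of an $\mathbb E$-condition has domain an initial segment of $\omega$.

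The new feature, absent in the Hechler case, is that the $\mathbb E$-generic real is \emph{unbounded} over $W$ --- given $g\in\baire\cap W$ it is dense in $\mathbb E$ to enlarge the stem so that $q(n)>g(n)$ at a fresh coordinate --- whereas the generic slalom satisfies $|S(n)|\le n$, so no coordinatewise recipe $e(n)\in\omega\setminus S(n)$ can yield an unbounded real. I would therefore read $e$ off $S$ over blocks. Fix in $W$ a partition of $\omega$ into consecutive intervals $\langle I_k : k<\omega\rangle$ with $\min I_k\to\infty$, and, for each $k$, a coding $E_k$ of $\prod_{n\in I_k}[\omega]^{\le n}$ onto $\omega$ with the robustness property that for any finite sets $A_n$ ($n\in I_k$) with $|A_n|<n$ and any $m<\omega$ there is a tuple $\rho$ with $E_k(\rho)=m$ and $A_n\subseteq\rho(n)$ for all $n$; such an $E_k$ exists by a routine dovetailing. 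Set $e(k):=E_k(S\restriction I_k)$; across a full block the slalom has enough room to realise any prescribed value of $e(k)$ while still containing finitely many prescribed values, which is the point of passing to blocks. For each $x\in\baire\cap W$ also fix a ground model real $w^x$ with $w^x(n)\notin E_k^{-1}(x(k))(n)$ for all $n\in I_k$ and all $k$ (e.g.\ $w^x(n):=\min(\omega\setminus E_k^{-1}(x(k))(n))$ for $n\in I_k$, $E_k^{-1}$ a fixed section of $E_k$).

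The projection is then
\[
  \pi(s,\mathcal F) \;:=\; \bigl(\,k\mapsto E_k(s\restriction I_k)\ \ (I_k\subseteq\dom s),\ \ \{\,x\in\baire\cap W : w^x\in\mathcal F\,\}\,\bigr).
\]
Monotonicity and preservation of the largest condition are routine: a $\mathbb{LOC}$-extension leaves earlier values of the slalom fixed so the stem only grows, and if $w^x$ has been put into the $\mathbb{LOC}$-side family then every later block $I_k$ has $w^x(n)\in S(n)$, hence $S\restriction I_k\ne E_k^{-1}(x(k))$, hence $E_k(S\restriction I_k)\ne x(k)$ --- this is exactly how the ``capture'' promises of $\mathbb{LOC}$ translate into the ``eventually different'' promises of $\mathbb E$, and the block-boundary convention ensures the relevant blocks sit entirely in the newly determined part of the slalom.

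The crux, and the step I expect to be the main obstacle, is the third requirement for a projection: given $(q,\mathcal G)\le_{\mathbb E}\pi(s_0,\mathcal F_0)$ one must produce $(t,\mathcal H)\le_{\mathbb{LOC}}(s_0,\mathcal F_0)$ with $\pi(t,\mathcal H)\le_{\mathbb E}(q,\mathcal G)$. After first enlarging $q$ so that, writing $\dom q=[0,K)$, the block $I_K$ is long enough that $\min I_K$ exceeds $|\mathcal F_0|+|\mathcal G|$, one sets $\mathcal H:=\mathcal F_0\cup\{w^x:x\in\mathcal G\}$ and builds $t\supseteq s_0$ of length $\min I_K$ by choosing, on each new block $I_k$ (with $K_0\le k<K$, where $[0,K_0)$ is the domain of the stem of $\pi(s_0,\mathcal F_0)$), a tuple $\rho$ with $E_k(\rho)=q(k)$ and $\rho(n)\supseteq\{v(n):v\in\mathcal F_0\}$ for all $n\in I_k$. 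This is precisely where the robustness property of $E_k$ together with the growth of $|I_k|$ is used: it reconciles the adversarial, unbounded value $q(k)$ with the width bound $|t(n)|\le n$. The verifications $(t,\mathcal H)\le_{\mathbb{LOC}}(s_0,\mathcal F_0)$ and $\pi(t,\mathcal H)\le_{\mathbb E}(q,\mathcal G)$ are then direct from the definitions. Once $\pi$ is seen to be a projection, the generic real $e\in W[S]$ it yields is $\mathbb E$-generic over $W$, which is the assertion of the lemma.
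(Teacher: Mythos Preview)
There is a genuine gap in the step where you translate the $\mathbb{LOC}$ ``capture'' promise into an $\mathbb E$ ``eventually different'' promise. You argue: $w^x(n)\in S(n)$ for all $n\in I_k$, hence $S\restriction I_k\ne E_k^{-1}(x(k))$, hence $E_k(S\restriction I_k)\ne x(k)$. But $E_k^{-1}$ is only a \emph{section} of the non-injective surjection $E_k$, so $S\restriction I_k\ne E_k^{-1}(x(k))$ does not rule out $E_k(S\restriction I_k)=x(k)$. Worse, the robustness property you impose on $E_k$ directly manufactures a counterexample: taking $m=x(k)$ and $A_n=\{w^x(n)\}$ (so $|A_n|=1<n$ for $n\ge 2$) yields a tuple $\rho$ with $E_k(\rho)=x(k)$ and $w^x(n)\in\rho(n)$ for every $n\in I_k$. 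So capturing $w^x$ on the block $I_k$ is entirely compatible with $e(k)=x(k)$, and your map $\pi$ is not even order-preserving. The two demands you place on $E_k$ --- robust surjectivity (for the projection clause) and ``$w^x$ captured on $I_k \Rightarrow e(k)\ne x(k)$'' (for monotonicity) --- are in direct conflict. There is also a secondary problem: the side family $\{x:w^x\in\mathcal F\}$ need not be finite, since with your sample definition the map $x\mapsto w^x$ is not injective, so $\pi$ as written does not even land in $\mathbb E$.

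The paper's route is quite different: it sets $\pi(s,\mathcal F)=(p_s,\mathcal F)$ with the side family passed through \emph{unchanged}, and takes $p_s(n)$ to be a specific element of $\omega\setminus s(n)$ selected by a checksum of $s(n)$. Order preservation is then immediate --- if $x\in\mathcal F$ is captured at $n$ then $x(n)\in t(n)$ while $p_t(n)\notin t(n)$, so $p_t(n)\ne x(n)$ --- with no encoding needed to mediate between the two kinds of promise. Your observation that any such coordinatewise recipe gives a real bounded by roughly $n\mapsto 2n$, hence cannot literally be $\mathbb E$-generic, is correct and does point to a genuine subtlety in the paper's own verification of the projection clause when $q(n)$ is large; but the block-coding alternative you propose does not circumvent this, because it trades that difficulty for the incompatibility above.
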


\begin{proof}
Given a condition $(s, \mathcal F) \in \mathbb{LOC}$ define a stem for an $\mathbb E$-condition as $p_s : {\rm dom} (s) \to \omega$ by letting for all $n \in {\rm dom}(s)$ $p_s(n)$ be equal to the $k^{\rm th}$ natural number $m$ not in the set $s(n)$ where the pointwise sum $\Sigma s(n) \equiv k \; {\rm mod} \; n$. I claim that the map $\pi:\mathbb{LOC} \to \mathbb{E}$ defined by $\pi(s, \mathcal F) = (p_s, \mathcal F)$ is a projection. Clearly the maximal condition is sent to the maximal condition and this map is order preserving. Let $(s, \mathcal F) \in \mathbb{LOC}$, and let $(q, \mathcal G) \leq_\mathbb E (p_s, \mathcal F)$. We need to show that there is a strengthening of $(q, \mathcal G)$ in the image of $\pi$. To this end, note that we can assume with out loss that $|\mathcal G| < {\rm dom}(q)$ since otherwise we can strengthen to make this true. Now, define a partial slalom as follows: $s_q:{\rm dom}(q) \to [\omega]^{<\omega}$. For $n \in {\rm dom}(p)$ let $s_q(n) = s(n)$. For $n \notin {\rm dom}(p)$ let $q(n) = m$ and suppose that $m$ is the $k^{\rm th}$ not in $\{x(n) \; |\; x \in \mathcal F\}$ and suppose that this set has size $l < n$ (the $<$ follows from the fact that $(p, \mathcal F)$ is in the image of $\pi$). Then, pick $n-l$ numbers $m_{l}, m_{l+1},...,m_{n-1}$ all greater than every $x(n)$ for $x \in \mathcal F$ and not equal to $m$ so that $\Sigma_{x \in \mathcal F} x(n) + \Sigma_{i=l}^{n-1} m_i \equiv k \; {\rm mod}\; n$. This can be accomplished, for instance, as follows: if $\Sigma_{x \in \mathcal F} x(n) \equiv j \; {\rm mod} \; n$ then let $m_l \equiv k - j \; {\rm mod} \; n$ greater than all the $f(n)$'s and let all other $m_i$'s be multiples of $n$. Finally let $s_q(n) = \{x(n) \; | \; x \in \mathcal F\} \cup \{m_{l},...,m_{n-1}\}$. Then $(s_q, \mathcal G) \leq (s, \mathcal F)$ and $\pi(s_q, \mathcal G) = (q, \mathcal G)$ as needed.
\end{proof}

Finally,
\begin{lemma}
Any forcing adding a slalom eventually capturing all ground model reals adds a random real. In particular $\mathbb{LOC}$ adds a random real.
\end{lemma}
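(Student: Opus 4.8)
The plan is to reduce the statement to the following single assertion: \emph{if $V$ is the ground model and $V[G]$ contains a slalom $s$ with $|s(n)|\le n$ capturing every $x\in\baire\cap V$, then $V[G]$ contains a Borel null set $N^*$ which contains \underline{every} Borel null subset of $\cantor$ coded in $V$.} Granting this, we are done: $N^*$ is null, so $\cantor\setminus N^*\neq\emptyset$, and any $r\in\cantor\setminus N^*$ lies outside every $V$-coded null set, i.e.\ $r$ is random over $V$. The ``in particular'' is then immediate, since the $\mathbb{LOC}$-generic $s$ has $|s(n)|\le n$ by definition and, for each $x\in\baire\cap V$, the set of conditions $(t,\mathcal F)$ with $x\in\mathcal F$ is dense (given $(s,\mathcal F)$, extend $s$ to some $t$ with $|t|\ge|\mathcal F|+1$, setting $t(n)=\{y(n)\;|\;y\in\mathcal F\}$ on the new coordinates, and pass to $(t,\mathcal F\cup\{x\})$); below such a condition every extension captures $x$, so $x\in^* s$.

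To produce $N^*$ I would invoke the Galois--Tukey machinery underlying Bartoszy\'nski's equality $\mfd(\in^*)=\cof(\Null)$ (half of Fact~\ref{infact}) in its uniform, Borel form: there are absolutely defined Borel maps $\rho$ and $\eta$, where $\rho$ sends a code for a null subset of $\cantor$ to a real $\rho(x)\in\baire$ (its ``slalom demand''), and $\eta$ sends a slalom $t$ with $|t(n)|\le n$ to a null $G_\delta$ set $\eta(t)\subseteq\cantor$, such that for every null-set code $x$ and every such slalom $t$,
\[
\rho(x)\in^* t\ \Longrightarrow\ (\text{the null set coded by }x)\subseteq\eta(t).
\]
(This is precisely the morphism of relational systems used to turn an $\in^*$-dominating family of slaloms into a $\subseteq$-cofinal family of null sets; see \cite[\S2.3]{BarJu95}.) Now let $s\in V[G]$ be a slalom as in the hypothesis. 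Given any Borel null $N$ coded in $V$, fix a $V$-code $x_N$ for it; by absoluteness $\rho(x_N)\in\baire\cap V$, hence $\rho(x_N)\in^* s$, hence $N\subseteq\eta(s)$. Since $\eta$ is Borel and $s\in V[G]$, the set $N^*:=\eta(s)$ is a Borel null set coded in $V[G]$ that contains every Borel null set coded in $V$, as required.

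The only genuine work is in extracting the boxed uniform Borel form of Bartoszy\'nski's theorem, i.e.\ pinning down the maps $\rho,\eta$ and the absoluteness of the containment; this is standard and implicit in \cite{BarJu95}, but deserves a careful statement. An alternative, more concrete route is to build $r$ block by block: fix an interval partition $\langle I_n\rangle\in V$, observe that $s$ captures the $V$-real coding the block data of any given $V$-coded null set, and choose $r\restriction I_n$ to avoid the $n$-th block of every $V$-coded null set at once. The subtlety there is exactly that Bartoszy\'nski's interval partition depends on the null set, so one must either coarsen partitions uniformly or fold the partition into the code --- bookkeeping that the Galois--Tukey formulation above packages away, which is why I would prefer the first approach.
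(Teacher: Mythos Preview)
Your proposal is correct and follows essentially the same route as the paper: both reduce to producing a single Borel null set in the extension that covers every ground-model-coded null set, and then take any point outside it as the random real. The paper simply cites this covering-null-set equivalence as Corollary~3.2 of \cite{ShJdKM}, whereas you unpack it explicitly via the Galois--Tukey morphism underlying $\mfd(\in^*)=\cof(\Null)$; your added paragraph verifying that $\mathbb{LOC}$ actually adds such a slalom is a detail the paper leaves implicit.
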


\begin{proof}
By Corollary 3.2 of \cite{ShJdKM} adding a slalom eventually capturing all ground model reals is equivalent to adding a Borel null set which covers all Borel null sets coded in the ground model. Let $N \subseteq \omega^\omega$ be such a null set and let $y \notin N$. Then $y$ is not in any ground model null set so $y$ is a random real. 
\end{proof}

Combining all of these results then proves Theorem \ref{locthm} since both $\mathbb D$ and $\mathbb E$ add Cohen reals realizing the split down the middle in Figure \ref{locpic} and $\mathbb B$ adds a bounded real not caught in any old slalom so $\mathcal D(\leq^*)$ is strictly contained in $\mathcal D(\in^*)$.

As an aside notice that there seem to be other eventually different reals added by $\mathbb{LOC}$:
\begin{observation}
Let $s \in W^\mathbb{LOC}$ be a generic slalom eventually capturing all ground model reals. Let $a(n)$ be defined as the least $k \notin s (n)$. Then $a$ is a real which is eventually different from all ground model reals but is not an $\mathbb E$-generic real.
\label{evdiffloc}
\end{observation}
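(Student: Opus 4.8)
The plan is to check the two assertions about $a$ separately; both are elementary, and the second is the one with actual content.

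\textbf{$a$ is eventually different from every ground model real.} Fix $x \in W \cap \omega^\omega$. Since $s$ eventually captures $x$, there is an $N$ with $x(n) \in s(n)$ for all $n \geq N$. By construction $a(n)$ is the least natural number outside $s(n)$, so $a(n) \notin s(n)$ for every $n$; hence $a(n) \neq x(n)$ for all $n \geq N$, i.e.\ $a \neq^* x$. As $x \in W$ was arbitrary, $a$ is eventually different from all reals of $W$. (Here one uses that $\mathbb{LOC}$ does add such an $s$, which follows from the preceding lemmas, or directly from the standard density argument that for each $x \in W$ the set of conditions $(t,\mathcal F)$ with $x \in \mathcal F$ is dense.)

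\textbf{$a$ is not $\mathbb E$-generic over $W$.} The key observation is a pointwise bound: every condition in $\mathbb{LOC}$ requires $|t(n)| \leq n$, so the generic slalom satisfies $|s(n)| \leq n$, whence the least natural number not in $s(n)$ is at most $n$; that is, $a(n) \leq n$ for all $n$. Now suppose toward a contradiction that $a = e_G$ for some filter $G$ on $\mathbb E$ generic over $W$, where $e_G = \bigcup\{q : (q,\mathcal F) \in G\}$. For each $m < \omega$ set
\[ D_m = \{(q,\mathcal F) \in \mathbb E : \exists n \in \dom(q)\ (n > m \text{ and } q(n) > n)\}. \]
Each $D_m$ is dense: given $(p,\mathcal F) \in \mathbb E$, pick $n > m$ outside $\dom(p)$ and extend the stem at $n$ by a value larger than $n$ and distinct from $x(n)$ for each of the finitely many $x \in \mathcal F$ — this is a legitimate $\mathbb E$-extension. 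Since $D_m \in W$ for every $m$, the generic filter $G$ meets every $D_m$, so $e_G(n) > n$ for infinitely many $n$, contradicting $a(n) \leq n$. Hence no such $G$ exists, and $a$ is not $\mathbb E$-generic over $W$.

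\textbf{Main point.} There is no real obstacle here; the argument is bookkeeping. The only thing to be careful about is unwinding ``$\mathbb E$-generic real'' as ``the real read off from some $W$-generic filter on $\mathbb E$,'' after which the density of the sets $D_m$ does all the work. It is exactly the defining clause $|t(n)| \leq n$ of $\mathbb{LOC}$ that pins $a$ below the identity and thereby disqualifies it, in contrast to the genuine $\mathbb E$-generic real that $\mathbb{LOC}$ also adds (via the projection constructed earlier).
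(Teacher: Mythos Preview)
Your proof is correct and follows the same approach as the paper: both show that $a(n)\le n$ because $|s(n)|\le n$, and then use that an $\mathbb E$-generic real must be unbounded over $W$. The paper states the latter fact in one line (``it follows that $a$ is not an $\mathbb E$-generic real since it is not unbounded''), whereas you spell out the density argument with the sets $D_m$; this is a harmless expansion of the same idea.
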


\begin{proof}
First notice that the $a$ described in the theorem is in fact eventually different from all ground model reals since every real eventually is captured by $s$ and after that point $a$ is different from it. Moreover, notice that $a$ is not only not dominating over the ground model reals but actually not even unbounded since, given any real $f \in W$ growing faster than the identity ($n \mapsto n+2$ even), the least $k$ not in $s (n)$ must be less than $f(n)$ since $|s(n)| = n$. From this it follows that $a$ is not an $\mathbb E$-generic real since it is not unbounded. 
\end{proof}

This lemma is somewhat surprising and indeed I do not know exactly what the forcing adding the real $a$ is or if it is a previously studied notion. In particular, I don't know if this real is random over $W$, though I conjecture that it is.

\subsection{Cuts in the Diagram and the Analogy with Cardinal Characteristics}
Let me finish this section by noting that it follows from what I have shown that the ZFC($W$)-provable subset implications implied by Theorem \ref{propdi} are the only ones. In other words, Theorem \ref{cichoncomplete} is proved. Indeed a simple inspection of the diagrams above show that every implication shown in Figure 1 is consistently strict and no other implications are true in every $V$ extending $W$. This shows also that the analogue discussed in the previous section holds in a robust way with the traditional Cicho\'n diagram. In fact, we can actually show that a stronger fact is true.

\begin{theorem}
All cuts consistent with the diagram are consistent with ZFC($W$) in the following sense: Given any collection $N$ of (not $\emptyset$)-nodes in the diagram which are closed upwards under $\subseteq$ there is a proper forcing $\mathbb P$ in $W$ so that forcing with $\mathbb P$ over $W$ results in all and only the nodes in $N$ being nonempty. See Figure \ref{allcuts} for a pictorial representation
\end{theorem}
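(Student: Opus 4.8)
The plan is to realize every admissible $N$ by a finite product, formed in $W$, of the forcings analyzed in the preceding subsections. The structural point is that the non-$\emptyset$ nodes of the $\leq_W$-diagram form a seven-element poset, so an upward-$\subseteq$-closed $N$ is the union $\bigcup_{i\le k}{\uparrow}A_i$ of the principal up-sets of its finitely many $\subseteq$-minimal nodes $A_1,\dots,A_k$, and the subsection results hand us one forcing realizing each principal up-set: Sacks $\mathbb S$ for ${\uparrow}(\omega^\omega\setminus(\omega^\omega)^W)$ (the Sacks-forcing result, Figure \ref{aftersacks}), Miller's $\mathbb{PT}$ for ${\uparrow}\mathcal D(\leq^*)$ (Theorem \ref{miller}), Cohen $\mathbb C$ for ${\uparrow}\mathcal D(\neq^*)$, random $\mathbb B$ for ${\uparrow}\mathcal B(\neq^*)$ (Theorem \ref{randomthm}), Laver $\mathbb L$ for ${\uparrow}\mathcal B(\leq^*)$ (Theorem \ref{laver}), and localization $\mathbb{LOC}$ for ${\uparrow}\mathcal B(\in^*)$, i.e. the whole diagram (Theorem \ref{locthm}). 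The one principal up-set missing from this list is ${\uparrow}\mathcal D(\in^*)=\{\mathcal D(\in^*),\ \omega^\omega\setminus(\omega^\omega)^W\}$; but every non-top node other than $\mathcal D(\in^*)$ itself is $\subseteq$-below $\mathcal D(\in^*)$ in the diagram, so $\mathcal D(\in^*)$ is $\subseteq$-minimal in $N$ only when $N$ is exactly this two-element set. Thus the proof splits into the generic case, handled by products of the six forcings above, and this single exceptional cut.

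For the generic case set $\mathbb P=\prod_{i\le k}\mathbb P_{A_i}\in W$, a finite product of proper forcings, hence proper. That every node of $N$ is nonempty in $W^{\mathbb P}$ is immediate: the projection of the generic to the $i$-th coordinate is a $\mathbb P_{A_i}$-generic real $g_i\in W^{\mathbb P}$, and the property witnessing $A_i$ over $W$ — being dominating, or unbounded, or infinitely-often-equal to every real of $W$, or computing an eventually different real over $W$, or coding a slalom capturing $(\omega^\omega)^W$ — is upward absolute; so $A_i$, and hence by the arrows of Theorem \ref{propdi} all of ${\uparrow}A_i$, is nonempty, giving $N=\bigcup_i{\uparrow}A_i$. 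For the converse, attach to each non-top node $B$ the preservation property $\pi_B$ for which ``$W^{\mathbb Q}\models B=\emptyset$'' is equivalent to ``$\mathbb Q$ has $\pi_B$'': $\pi_{\mathcal D(\leq^*)}$ is $\omega^\omega$-boundedness (Definition \ref{ooboundingdef}); $\pi_{\mathcal D(\neq^*)}$ is adding no infinitely-often-equal real (equivalently, by Lemma \ref{infslalom}, no $z$-slalom catching the ground model reals infinitely often); $\pi_{\mathcal B(\neq^*)}$ is adding no eventually different real; $\pi_{\mathcal B(\leq^*)}$ is adding no dominating real; $\pi_{\mathcal B(\in^*)}$ is adding no slalom capturing all ground model reals; and $\pi_{\mathcal D(\in^*)}$ is the Sacks property (Definition \ref{sacks}). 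If $B\notin N$ then $B\notin{\uparrow}A_i$ for every $i$, so each factor $\mathbb P_{A_i}$ has $\pi_B$; since each of the six forcings is a definable forcing for which $\pi_B$ is absolute between models of $\ZFC$, $\pi_B$ passes to their finite product, so $\mathbb P$ has $\pi_B$ and $W^{\mathbb P}\models B=\emptyset$. The requisite product-preservation facts (for $\omega^\omega$-bounding, the Sacks property, and for not adding dominating, eventually different, infinitely-often-equal, or capturing reals) are standard — cf. \cite[Chapter 6]{BarJu95} — and in any event check directly for this short explicit list; the ``exactly when'' bookkeeping works out because $\mathbb P$ fails $\pi_B$ precisely when some $A_i$ is $B$ or below $B$, which is precisely when $B\in N$.

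The remaining task, and the heart of the argument, is the exceptional cut $N_0=\{\mathcal D(\in^*),\ \omega^\omega\setminus(\omega^\omega)^W\}$. Here I need a proper, $\omega^\omega$-bounding forcing $\mathbb P_0\in W$ that adds a real not eventually captured by any slalom of $W$ — so $\mathcal D(\in^*)\neq\emptyset$ — while adding neither an eventually different nor an infinitely-often-equal real over $W$ — so $\mathcal B(\neq^*)=\mathcal D(\neq^*)=\emptyset$, whence $\omega^\omega$-boundedness together with the diagram arrows forces $\mathcal D(\leq^*)$ and the whole left column empty, leaving exactly $N_0$. In classical terms this is the degree-theoretic shadow of a forcing that increases $\mathrm{cof}(\mathcal N)$ without disturbing $\mathrm{non}(\mathcal M)$, $\mathfrak d$ or $\mathrm{cov}(\mathcal M)$. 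The plan is to construct $\mathbb P_0$ as a creature/tree forcing on $\omega^\omega$ whose generic branch is forced to stay below a fixed fast ground-model function (securing $\omega^\omega$-boundedness) yet to avoid every ground-model slalom infinitely often, and then to verify by a fusion argument that it is proper and that the ground-model reals stay $\neq^*$-unbounded in both directions (the two ``no new $\neq^*$-witness'' preservation properties). That single construction is the main obstacle; everything else reduces to bookkeeping about a seven-element poset plus the standard product-preservation machinery for proper forcing.
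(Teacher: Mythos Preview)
Your product-based strategy is genuinely different from the paper's, which simply enumerates the eleven admissible cuts and names one forcing for each: the three cuts with two minimal nodes are handled not by products but by $\mathbb D$, $\mathbb E$, and the two-step \emph{iteration} $\mathbb B*\dot{\mathbb{PT}}$ respectively, and your exceptional cut $\{\mathcal D(\in^*),\,\omega^\omega\setminus(\omega^\omega)^W\}$ is realized by the infinitely-often-equal forcing $\mathbb{EE}$ of \cite[Definition~7.4.11]{BarJu95}, which is $\omega^\omega$-bounding, does not make the ground model reals meager (so adds no eventually different real), and generically adds a real evading every ground-model slalom. This case-by-case approach sidesteps the two issues below entirely.

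There are two real gaps in your argument as written. First, ``a finite product of proper forcings, hence proper'' is simply false---the product of two Souslin trees can collapse $\omega_1$---and the paper needs a bespoke Axiom~A argument (Lemma~\ref{sackslaver}) even for $\mathbb S\times\mathbb L$; you would need analogous work for $\mathbb B\times\mathbb{PT}$ in particular. Your appeal to \cite[Chapter~6]{BarJu95} for the preservation of $\pi_B$ does not help either: those results are for countable-support \emph{iterations} of proper forcing, not finite products, and product preservation of $\omega^\omega$-bounding, the Sacks property, or ``adds no infinitely-often-equal/capturing real'' is a separate and not entirely routine matter. Second, for the exceptional cut you only describe a plan for a creature/tree forcing and explicitly flag it as ``the main obstacle''; this is precisely the content that needs to be supplied, and the paper's use of $\mathbb{EE}$ replaces your proposed construction with an off-the-shelf solution whose required properties are already in the literature.
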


Note that this is slightly weaker than the sense of cuts I have been considering above since I'm making no distinction between various non-empty nodes after forcing.
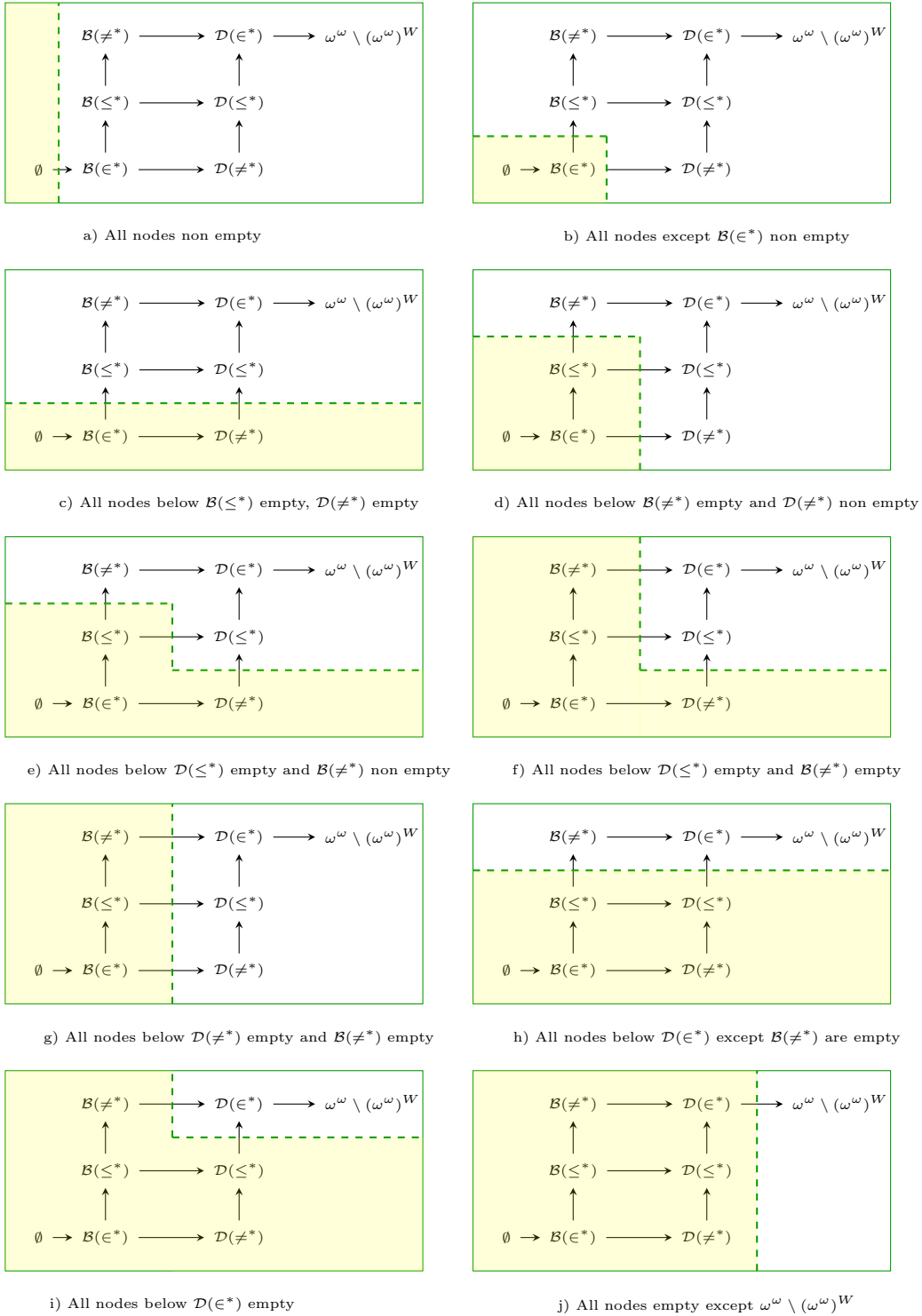
\begin{figure}\label{Figure.Cichon-basic}
\centering
  \begin{tikzpicture}
  \begin{scope}[shift={(0, 16)}]
  
     \draw (0,0) node (empty) {\tiny{$\emptyset$}}
           (1,0) node (Bin*) {\tiny{$\mathcal B(\in^*)$}}
           (1,1) node (Bleq*) {\tiny{$\mathcal B(\leq^*)$}}
           (1,2) node (Bneq*) {\tiny{$\mathcal B(\neq^*)$}}
           (3,0) node (Dneq*) {\tiny{$\mathcal D(\neq^*)$}}
           (3,1) node (Dleq*) {\tiny{$\mathcal D(\leq^*)$}}
           (3,2) node (Din*) {\tiny{$\mathcal D(\in^*)$}}
           (5,2) node (all) {\tiny{$\omega^\omega\setminus(\omega^\omega)^W$}}
           (2, -1) node (label) {\tiny{a) All nodes non empty}}
           ;
     \draw[->,>=stealth]
            (empty) edge (Bin*)
            (Bin*) edge (Bleq*)
            (Bleq*) edge (Bneq*)
            (Bin*) edge (Dneq*)
            (Bleq*) edge (Dleq*)
            (Bneq*) edge (Din*)
            (Dneq*) edge (Dleq*)
            (Dleq*) edge (Din*)
            (Din*) edge (all)
            ;
       \draw[thick,dashed,OliveGreen] (.3,-.5) -- (.3,2.5);
       \draw[OliveGreen] (-.5,-.5) rectangle (5.75,2.5);
       \draw[draw=none,fill=yellow,fill opacity=.15] (-.5,-.5) rectangle (.3,2.5);

       \end{scope}
       
        \begin{scope}[shift={(7, 0)}]
  
     \draw (0,0) node (empty) {\tiny{$\emptyset$}}
           (1,0) node (Bin*) {\tiny{$\mathcal B(\in^*)$}}
           (1,1) node (Bleq*) {\tiny{$\mathcal B(\leq^*)$}}
           (1,2) node (Bneq*) {\tiny{$\mathcal B(\neq^*)$}}
           (3,0) node (Dneq*) {\tiny{$\mathcal D(\neq^*)$}}
           (3,1) node (Dleq*) {\tiny{$\mathcal D(\leq^*)$}}
           (3,2) node (Din*) {\tiny{$\mathcal D(\in^*)$}}
           (5,2) node (all) {\tiny{$\omega^\omega\setminus(\omega^\omega)^W$}}
           (3, -1) node (label) {\tiny{j) All nodes empty except $\omega^\omega \setminus (\omega^\omega)^W$}}
           ;
     \draw[->,>=stealth]
            (empty) edge (Bin*)
            (Bin*) edge (Bleq*)
            (Bleq*) edge (Bneq*)
            (Bin*) edge (Dneq*)
            (Bleq*) edge (Dleq*)
            (Bneq*) edge (Din*)
            (Dneq*) edge (Dleq*)
            (Dleq*) edge (Din*)
            (Din*) edge (all)
            ;
       \draw[thick,dashed,OliveGreen] (3.75,-.5) -- (3.75,2.5);
       \draw[OliveGreen] (-.5,-.5) rectangle (5.75,2.5);
       \draw[draw=none,fill=yellow,fill opacity=.15] (-.5,-.5) rectangle (3.75,2.5);

       \end{scope}
       
       \begin{scope}[shift={(0, 0)}]
  
     \draw (0,0) node (empty) {\tiny{$\emptyset$}}
           (1,0) node (Bin*) {\tiny{$\mathcal B(\in^*)$}}
           (1,1) node (Bleq*) {\tiny{$\mathcal B(\leq^*)$}}
           (1,2) node (Bneq*) {\tiny{$\mathcal B(\neq^*)$}}
           (3,0) node (Dneq*) {\tiny{$\mathcal D(\neq^*)$}}
           (3,1) node (Dleq*) {\tiny{$\mathcal D(\leq^*)$}}
           (3,2) node (Din*) {\tiny{$\mathcal D(\in^*)$}}
           (5,2) node (all) {\tiny{$\omega^\omega\setminus(\omega^\omega)^W$}}
           (2, -1) node (label) {\tiny{i) All nodes below $\mathcal D(\in^*)$ empty}}
           ;
     \draw[->,>=stealth]
            (empty) edge (Bin*)
            (Bin*) edge (Bleq*)
            (Bleq*) edge (Bneq*)
            (Bin*) edge (Dneq*)
            (Bleq*) edge (Dleq*)
            (Bneq*) edge (Din*)
            (Dneq*) edge (Dleq*)
            (Dleq*) edge (Din*)
            (Din*) edge (all)
            ;
       \draw[thick,dashed,OliveGreen] (2., 1.5) -- (2,2.5);
       \draw[thick,dashed,OliveGreen] (2., 1.5) -- (5.75,1.5);
       \draw[OliveGreen] (-.5,-.5) rectangle (5.75,2.5);
       \draw[draw=none,fill=yellow,fill opacity=.15] (-.5,-.5) rectangle (2,2.5);
       \draw[draw=none,fill=yellow,fill opacity=.15] (2,-.5) rectangle (5.75,1.5);

       \end{scope}
       
       \begin{scope}[shift={(7, 4)}]
  
     \draw (0,0) node (empty) {\tiny{$\emptyset$}}
           (1,0) node (Bin*) {\tiny{$\mathcal B(\in^*)$}}
           (1,1) node (Bleq*) {\tiny{$\mathcal B(\leq^*)$}}
           (1,2) node (Bneq*) {\tiny{$\mathcal B(\neq^*)$}}
           (3,0) node (Dneq*) {\tiny{$\mathcal D(\neq^*)$}}
           (3,1) node (Dleq*) {\tiny{$\mathcal D(\leq^*)$}}
           (3,2) node (Din*) {\tiny{$\mathcal D(\in^*)$}}
           (5,2) node (all) {\tiny{$\omega^\omega\setminus(\omega^\omega)^W$}}
           (3, -1) node (label) {\tiny{h) All nodes below $\mathcal D(\in^*)$ except $\mathcal B(\neq^*)$ are empty}}
           ;
     \draw[->,>=stealth]
            (empty) edge (Bin*)
            (Bin*) edge (Bleq*)
            (Bleq*) edge (Bneq*)
            (Bin*) edge (Dneq*)
            (Bleq*) edge (Dleq*)
            (Bneq*) edge (Din*)
            (Dneq*) edge (Dleq*)
            (Dleq*) edge (Din*)
            (Din*) edge (all)
            ;
       \draw[thick,dashed,OliveGreen] (-.5,1.5) -- (5.75,1.5);
       \draw[OliveGreen] (-.5,-.5) rectangle (5.75,2.5);
       \draw[draw=none,fill=yellow,fill opacity=.15] (-.5,-.5) rectangle (5.75,1.5);

       \end{scope}
       
              \begin{scope}[shift={(7, 8)}]
  
     \draw (0,0) node (empty) {\tiny{$\emptyset$}}
           (1,0) node (Bin*) {\tiny{$\mathcal B(\in^*)$}}
           (1,1) node (Bleq*) {\tiny{$\mathcal B(\leq^*)$}}
           (1,2) node (Bneq*) {\tiny{$\mathcal B(\neq^*)$}}
           (3,0) node (Dneq*) {\tiny{$\mathcal D(\neq^*)$}}
           (3,1) node (Dleq*) {\tiny{$\mathcal D(\leq^*)$}}
           (3,2) node (Din*) {\tiny{$\mathcal D(\in^*)$}}
           (5,2) node (all) {\tiny{$\omega^\omega\setminus(\omega^\omega)^W$}}
           (3, -1) node (label) {\tiny{f) All nodes below $\mathcal D(\leq^*)$ empty and $\mathcal B(\neq^*)$ empty}}
           ;
     \draw[->,>=stealth]
            (empty) edge (Bin*)
            (Bin*) edge (Bleq*)
            (Bleq*) edge (Bneq*)
            (Bin*) edge (Dneq*)
            (Bleq*) edge (Dleq*)
            (Bneq*) edge (Din*)
            (Dneq*) edge (Dleq*)
            (Dleq*) edge (Din*)
            (Din*) edge (all)
            ;
       \draw[thick,dashed,OliveGreen] (2., .5) -- (2,2.5);
       \draw[thick,dashed,OliveGreen] (2., .5) -- (5.75,.5);
       \draw[OliveGreen] (-.5,-.5) rectangle (5.75,2.5);
       \draw[draw=none,fill=yellow,fill opacity=.15] (-.5,-.5) rectangle (2,2.5);
       \draw[draw=none,fill=yellow,fill opacity=.15] (2,-.5) rectangle (5.75,.5);

       \end{scope}
       
              \begin{scope}[shift={(0, 8)}]
  
     \draw (0,0) node (empty) {\tiny{$\emptyset$}}
           (1,0) node (Bin*) {\tiny{$\mathcal B(\in^*)$}}
           (1,1) node (Bleq*) {\tiny{$\mathcal B(\leq^*)$}}
           (1,2) node (Bneq*) {\tiny{$\mathcal B(\neq^*)$}}
           (3,0) node (Dneq*) {\tiny{$\mathcal D(\neq^*)$}}
           (3,1) node (Dleq*) {\tiny{$\mathcal D(\leq^*)$}}
           (3,2) node (Din*) {\tiny{$\mathcal D(\in^*)$}}
           (5,2) node (all) {\tiny{$\omega^\omega\setminus(\omega^\omega)^W$}}
           (3, -1) node (label) {\tiny{e) All nodes below $\mathcal D(\leq^*)$ empty and $\mathcal B(\neq^*)$ non empty}}
           ;
     \draw[->,>=stealth]
            (empty) edge (Bin*)
            (Bin*) edge (Bleq*)
            (Bleq*) edge (Bneq*)
            (Bin*) edge (Dneq*)
            (Bleq*) edge (Dleq*)
            (Bneq*) edge (Din*)
            (Dneq*) edge (Dleq*)
            (Dleq*) edge (Din*)
            (Din*) edge (all)
            ;
       \draw[thick,dashed,OliveGreen] (2., 1.5) -- (2,.5);
       \draw[thick,dashed,OliveGreen] (-.5, 1.5) -- (2,1.5);
       \draw[thick,dashed,OliveGreen] (2., .5) -- (5.75,.5);
       \draw[OliveGreen] (-.5,-.5) rectangle (5.75,2.5);
       \draw[draw=none,fill=yellow,fill opacity=.15] (-.5,-.5) rectangle (2.0,1.5);
       \draw[draw=none,fill=yellow,fill opacity=.15] (2.0,-.5) rectangle (5.75,.5);

       \end{scope}
       
              \begin{scope}[shift={(0, 4)}]
  
     \draw (0,0) node (empty) {\tiny{$\emptyset$}}
           (1,0) node (Bin*) {\tiny{$\mathcal B(\in^*)$}}
           (1,1) node (Bleq*) {\tiny{$\mathcal B(\leq^*)$}}
           (1,2) node (Bneq*) {\tiny{$\mathcal B(\neq^*)$}}
           (3,0) node (Dneq*) {\tiny{$\mathcal D(\neq^*)$}}
           (3,1) node (Dleq*) {\tiny{$\mathcal D(\leq^*)$}}
           (3,2) node (Din*) {\tiny{$\mathcal D(\in^*)$}}
           (5,2) node (all) {\tiny{$\omega^\omega\setminus(\omega^\omega)^W$}}
            (3, -1) node (label) {\tiny{g) All nodes below $\mathcal D(\neq^*)$ empty and $\mathcal B(\neq^*)$ empty}}
           ;
     \draw[->,>=stealth]
            (empty) edge (Bin*)
            (Bin*) edge (Bleq*)
            (Bleq*) edge (Bneq*)
            (Bin*) edge (Dneq*)
            (Bleq*) edge (Dleq*)
            (Bneq*) edge (Din*)
            (Dneq*) edge (Dleq*)
            (Dleq*) edge (Din*)
            (Din*) edge (all)
            ;
       \draw[thick,dashed,OliveGreen] (2,-.5) -- (2,2.5);
       \draw[OliveGreen] (-.5,-.5) rectangle (5.75,2.5);
       \draw[draw=none,fill=yellow,fill opacity=.15] (-.5,-.5) rectangle (2,2.5);

       \end{scope}
       
              \begin{scope}[shift={(7, 12)}]
  
     \draw (0,0) node (empty) {\tiny{$\emptyset$}}
           (1,0) node (Bin*) {\tiny{$\mathcal B(\in^*)$}}
           (1,1) node (Bleq*) {\tiny{$\mathcal B(\leq^*)$}}
           (1,2) node (Bneq*) {\tiny{$\mathcal B(\neq^*)$}}
           (3,0) node (Dneq*) {\tiny{$\mathcal D(\neq^*)$}}
           (3,1) node (Dleq*) {\tiny{$\mathcal D(\leq^*)$}}
           (3,2) node (Din*) {\tiny{$\mathcal D(\in^*)$}}
           (5,2) node (all) {\tiny{$\omega^\omega\setminus(\omega^\omega)^W$}}
            (3, -1) node (label) {\tiny{d) All nodes below $\mathcal B(\neq^*)$ empty and $\mathcal D(\neq^*)$ non empty}}
           ;
     \draw[->,>=stealth]
            (empty) edge (Bin*)
            (Bin*) edge (Bleq*)
            (Bleq*) edge (Bneq*)
            (Bin*) edge (Dneq*)
            (Bleq*) edge (Dleq*)
            (Bneq*) edge (Din*)
            (Dneq*) edge (Dleq*)
            (Dleq*) edge (Din*)
            (Din*) edge (all)
            ;
       \draw[thick,dashed,OliveGreen] (2,-.5) -- (2,1.5);
       \draw[thick,dashed,OliveGreen] (-.5,1.5) -- (2,1.5);
       \draw[OliveGreen] (-.5,-.5) rectangle (5.75,2.5);
       \draw[draw=none,fill=yellow,fill opacity=.15] (-.5,-.5) rectangle (2,1.5);

       \end{scope}
       
              \begin{scope}[shift={(0, 12)}]
  
     \draw (0,0) node (empty) {\tiny{$\emptyset$}}
           (1,0) node (Bin*) {\tiny{$\mathcal B(\in^*)$}}
           (1,1) node (Bleq*) {\tiny{$\mathcal B(\leq^*)$}}
           (1,2) node (Bneq*) {\tiny{$\mathcal B(\neq^*)$}}
           (3,0) node (Dneq*) {\tiny{$\mathcal D(\neq^*)$}}
           (3,1) node (Dleq*) {\tiny{$\mathcal D(\leq^*)$}}
           (3,2) node (Din*) {\tiny{$\mathcal D(\in^*)$}}
           (5,2) node (all) {\tiny{$\omega^\omega\setminus(\omega^\omega)^W$}}
            (3, -1) node (label) {\tiny{c) All nodes below $\mathcal B(\leq^*)$ empty, $\mathcal D(\neq^*)$ empty}}
           ;
     \draw[->,>=stealth]
            (empty) edge (Bin*)
            (Bin*) edge (Bleq*)
            (Bleq*) edge (Bneq*)
            (Bin*) edge (Dneq*)
            (Bleq*) edge (Dleq*)
            (Bneq*) edge (Din*)
            (Dneq*) edge (Dleq*)
            (Dleq*) edge (Din*)
            (Din*) edge (all)
            ;
       \draw[thick,dashed,OliveGreen] (-.5,.5) -- (5.75, .5);
       \draw[OliveGreen] (-.5,-.5) rectangle (5.75,2.5);
       \draw[draw=none,fill=yellow,fill opacity=.15] (-.5,-.5) rectangle (5.75,.5);

       \end{scope}
       
              \begin{scope}[shift={(7, 16)}]
  
     \draw (0,0) node (empty) {\tiny{$\emptyset$}}
           (1,0) node (Bin*) {\tiny{$\mathcal B(\in^*)$}}
           (1,1) node (Bleq*) {\tiny{$\mathcal B(\leq^*)$}}
           (1,2) node (Bneq*) {\tiny{$\mathcal B(\neq^*)$}}
           (3,0) node (Dneq*) {\tiny{$\mathcal D(\neq^*)$}}
           (3,1) node (Dleq*) {\tiny{$\mathcal D(\leq^*)$}}
           (3,2) node (Din*) {\tiny{$\mathcal D(\in^*)$}}
           (5,2) node (all) {\tiny{$\omega^\omega\setminus(\omega^\omega)^W$}}
            (3, -1) node (label) {\tiny{b) All nodes except $\mathcal B(\in^*)$ non empty}}
           ;
     \draw[->,>=stealth]
            (empty) edge (Bin*)
            (Bin*) edge (Bleq*)
            (Bleq*) edge (Bneq*)
            (Bin*) edge (Dneq*)
            (Bleq*) edge (Dleq*)
            (Bneq*) edge (Din*)
            (Dneq*) edge (Dleq*)
            (Dleq*) edge (Din*)
            (Din*) edge (all)
            ;
       \draw[thick,dashed,OliveGreen] (-.5,.5) -- (1.5,.5);
       \draw[thick,dashed,OliveGreen] (1.5,.5) -- (1.5,-.5);
       \draw[OliveGreen] (-.5,-.5) rectangle (5.75,2.5);
       \draw[draw=none,fill=yellow,fill opacity=.15] (-.5,-.5) rectangle (1.5,.5);

       \end{scope}
      
  \end{tikzpicture}
  \caption{All Possible Cuts in the $\leq_W$ Cicho\'n Diagram. Each one can be achieved by a proper forcing over $W$. White means that the node is not empty while yellow means that it is. No distinction is made between different non-empty nodes. Note that the trivial cut where all nodes remain empty is not shown.}
  \label{allcuts}
\end{figure}

\begin{proof}
There are two cuts I have yet to explicitly show. These correspond to e) and i) in Figure \ref{allcuts} below. However for completeness let me go through all cuts one at a time. 

\noindent a) All nodes are non empty: This is accomplished by $\mathbb{LOC}$.
 
\noindent b) All nodes except $\mathcal B(\in^*)$ are non empty: This is accomplished by $\mathbb D$.
 
\noindent c) All nodes below $\mathcal B(\leq^*)$ are empty and $\mathcal D(\neq^*)$ is empty: This is accomplished by $\mathbb L$.
 
\noindent d) All nodes below $\mathcal B(\neq^*)$ are empty and $\mathcal D(\neq^*)$ is non empty: This is accomplished by $\mathbb E$.
 
 \noindent e) All nodes below $\mathcal D(\leq^*)$ are empty and $\mathcal B(\neq^*)$ is non empty: This is the first case where we still have to prove something. Let $\mathbb P = \mathbb B * \dot{\mathbb{PT}}$. I claim that in $W^\mathbb P$ this cut is realized. We have seen that forcing with $\mathbb B$ adds an eventually different real and, by further forcing with $\mathbb{PT}$ over $W^{\mathbb B}$ will add a real which is unbounded by $W^{\mathbb B} \cap \omega^\omega$ and hence $W \cap \omega^\omega$. It remains therefore to see that in $W^\mathbb P$ there are no dominating or infinitely often equal reals over $W$. To show that there are no dominating reals, note that in general $\mathbb{PT}$ adds no dominating real, so in $W^\mathbb P$ there is no real which is dominating over $W^\mathbb B$. But, since $\mathbb B$ is $\omega^\omega$-bounding, it follows that there is no real dominating over $W$ in $W^\mathbb P$. To show there are no infinitely often equal reals, let us first note the following fact.
\begin{fact}[Corollary 2.5.2 of \cite{BarJu95}]
Suppose $M$ is a transitive model of a sufficiently large fragment of $\ZFC$. Then $M \cap 2^\omega \in \mathcal N$ if and only if there is a sequence $\langle F_n \subseteq 2^n \; | \; n  < \omega \rangle$ such that $\Sigma^\infty_{n=0} |F_n| 2^{-n} < \infty$ and for every $x \in M \cap 2^\omega$ there are infinitely many $n$ so that $x \upharpoonright n \in F_n$.
\end{fact}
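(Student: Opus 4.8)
The plan is to rewrite the condition ``$x\upharpoonright n\in F_n$ for infinitely many $n$'' as the single membership statement $x\in\limsup_n A_n$, where $A_n:=\bigcup_{s\in F_n}[s]$ is a clopen subset of $2^\omega$ with Lebesgue measure $\mu(A_n)=|F_n|\cdot 2^{-n}$. With this dictionary in place the Fact is just a tree-combinatorial repackaging of the convergence half of the Borel--Cantelli lemma, and I would prove the two implications separately.

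For the backward direction, suppose such a sequence $\langle F_n : n<\omega\rangle$ is given. Then $\sum_n\mu(A_n)=\sum_n|F_n|2^{-n}<\infty$, so $\mu\bigl(\bigcup_{n\ge m}A_n\bigr)\le\sum_{n\ge m}\mu(A_n)\to 0$ as $m\to\infty$, and hence $\limsup_n A_n=\bigcap_m\bigcup_{n\ge m}A_n$ is null. Since the hypothesis says precisely that $M\cap 2^\omega\subseteq\limsup_n A_n$, we conclude $M\cap 2^\omega\in\mathcal N$. Note that this direction uses nothing about $M$ beyond its being a set of reals.

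For the forward direction, assume $M\cap 2^\omega\in\mathcal N$. Using the definition of the null ideal, fix open sets $U_m\supseteq M\cap 2^\omega$ with $\mu(U_m)<4^{-m}$; in particular $\mu(U_m)\to 0$. Write each $U_m$ as a disjoint union $\bigsqcup_{s\in E_m}[s]$, where $E_m\subseteq 2^{<\omega}$ is the antichain of shortest strings whose basic clopen set lies inside $U_m$, so that $\sum_{s\in E_m}2^{-|s|}=\mu(U_m)$. Put $S:=\bigcup_m E_m$ and $F_n:=S\cap 2^n$. Then $\sum_n|F_n|2^{-n}=\sum_{s\in S}2^{-|s|}\le\sum_m\mu(U_m)<\sum_m 4^{-m}<\infty$, so the summability requirement is met. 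Finally, given $x\in M\cap 2^\omega$, for every $m$ we have $x\in U_m$, so some $s_m\in E_m$ is an initial segment of $x$. These strings have infinitely many distinct lengths: otherwise only finitely many distinct strings occur among the $s_m$, so by pigeonhole a single string $s$, an initial segment of $x$, satisfies $s\in E_m$ --- hence $[s]\subseteq U_m$ --- for infinitely many $m$, forcing $\mu(U_m)\ge 2^{-|s|}$ infinitely often and contradicting $\mu(U_m)\to 0$. Consequently, for infinitely many $n$ (the values $n=|s_m|$) we have $x\upharpoonright n=s_m\in S\cap 2^n=F_n$, which is exactly what was required.

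The one step that demands any real care --- and the place I would flag as the (mild) crux --- is the last move in the forward direction, namely ruling out the degenerate possibility that one short initial segment of $x$ already throws $x$ into $U_m$ for cofinally many $m$; this is precisely what the quantitative choice $\mu(U_m)\to 0$ buys us. I would also record that the hypothesis ``$M$ is a transitive model of a sufficiently large fragment of $\ZFC$'' is not actually used in the equivalence: it only guarantees that $M\cap 2^\omega$ is a genuine subset of $2^\omega$, and the statement holds verbatim with an arbitrary subset of $2^\omega$ in place of $M\cap 2^\omega$.
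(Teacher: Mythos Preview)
Your proof is correct. The paper does not actually prove this statement; it is cited there as a known fact from Bartoszy\'nski--Judah and simply applied, so there is no paper proof to compare against. Your argument via Borel--Cantelli for the backward direction and the open-cover decomposition for the forward direction is the standard one, and your handling of the one delicate point---ruling out a single short initial segment witnessing membership in cofinally many $U_m$---is clean and correct.
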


As a corollary of this Fact, notice that adding an infinitely often equal real on $\omega^\omega$ makes the ground model reals measure $0$. To see why, suppose $y \in \omega^\omega$ is infinitely often equal over an inner model $M$ and let $\langle \tau_k \; | \; k < \omega \rangle$ be an enumeration in $M$ of the elements of $2^{< \omega}$. Then for every $x \in 2^\omega \cap M$ let $\hat{x} : \omega \to \omega$ be defined by $\hat{x} (n) =k$ if $x \upharpoonright n = k$. Clearly if $x \in M$ the $\hat{x} \in M$ so there are infinitely many $n$ such that $\hat{x} (n) = y(n)$. But then, pulling back, let $y ' : \omega \to 2^{< \omega}$ be defined by $y ' (n) = s_k$ if $g(n) = k$ and $s_k \in 2^n$ and is trivial otherwise. Then we have that for every $x \in M \cap 2^\omega$ if $\hat{x} (n) = y(n)$ then $x \upharpoonright n = y ' (n)$ so the sequence $\langle \{y ' (n) \} \; | \; n < \omega \rangle$ witnesses that $2^\omega \cap M$ is measure $0$ by the Fact. 

From this it follows immediately that $\mathbb P$ does not add infinitely often equal reals since both $\mathbb B$ (\cite[Lemma 6.3.12]{BarJu95}) and $\mathbb{PT}$ (\cite[Theorem 7.3.47]{BarJu95}) preserve outer measure.

\noindent f) All nodes below $\mathcal D(\leq^*)$ are empty and $\mathcal B(\neq^*)$ is empty: This is accomplished by $\mathbb{PT}$.

\noindent g) All nodes below $\mathcal D(\neq^*)$ are empty and $\mathcal B(\neq^*)$ is empty: This is accomplished by $\mathbb C$.

\noindent h) All nodes below $\mathcal D(\in^*)$ except $\mathcal B(\neq^*)$ are empty: This is accomplished by $\mathbb B$.

\noindent i) All nodes below $\mathcal D(\in^*)$ are empty: This is the second cut where we still have something to prove. To achieve this one we force with the infinitely often equal forcing $\mathbb{EE}$ as defined in \cite[Definition 7.4.11]{BarJu95}. This forcing is $\omega^\omega$-bounding so it doesn't add reals to $\mathcal D(\leq^*)$, does not make the ground model reals meager (both of these facts are proved as part of \cite[Lemma 7.4.14]{BarJu95}) so it doesn't add reals to $\mathcal B(\neq^*)$ and generically adds a real which is infinitely often equal to all ground model elements of the product space $\Pi_{n < \omega} 2^n$. Let's see that $\mathbb{EE}$ adds a real to $\mathcal D(\in^*)$. Recall that this means there is a real which is not eventually captured by any ground model slalom. Let $y:\omega \to 2^{<\omega}$ be the infinitely often equal real added by the generic and fix an enumeration $\langle \tau_n \; | \; n < \omega \rangle$ (in $W$) of $2^{<\omega}$. Let $\hat{y}:\omega \to \omega$ be the function defined by $\hat{y}(n) = k$ if $y(n) = \tau_k$. I claim that this $\hat{y}$ is as needed. To see why, let $s \in W$ be a slalom. We can associate (in $W$) a function $x_s:\omega \to 2^{< \omega}$ by letting $x_s (n)$ be $\tau_k$ where $k$ is the least so that $k \notin s(n)$ and $\tau_k \in 2^n$. Note that such a $k$ exists since $|s (n)| = n$. Since $x_s \in W$ there are infinitely many $n$ so that $x_s (n) = y(n)$. Therefore there are infinitely many $n$ so that $\hat{y}(n) \notin s (n)$, as needed.

\noindent j) All nodes except $\omega^\omega \setminus (\omega^\omega)^W$ are empty: This is accomplished by $\mathbb S$.

\noindent k) All nodes are empty: This one is not pictured in Figure \ref{allcuts} since it is trivial. Let $\mathbb P$ be any forcing not adding reals, such as trivial forcing.

\end{proof}

To finish this section, let me observe one more analogue with the standard Cicho\'n diagram. Traditionally in the study of cardinal invariants of the continuum one sandwiches the nodes in Cicho\'n's diagram on one side by $\aleph_1$, the smallest possible value of any node, and on the other side by $2^{\aleph_0}$, the largest possible value of any node. One then views, for a given model $M$ of ZFC, the values of the other nodes on the diagram for $M$ as a measure of how much these two cardinals vary in $M$ with regards to substantive, mathematical applications. My diagram also naturally sandwiches itself between two invariants: the empty set, the smallest possible value of any node, and the entirety of the new reals, $\omega^\omega \setminus (\omega^\omega)^W$, the largest possible value of any node. As such, I view my diagram studied in this paper as measuring, similar to the case of the Cicho\'n diagram, the difference between the reals of the inner model $W$ and the reals of $V$. A natural question to ask, therefore, is how strong this \say{measurement} analogy is between these two diagrams. For example, in the generic extension of $W$ by more than $\aleph_1$ many Cohen reals, all nodes on the right side of the Cicho\'n diagram equal to $2^{\aleph_0}$ and all nodes on the left equal to $\aleph_1$, paralleling the situation I described for the model $W[c]$. However, in similar models studied for Hechler and eventually different forcing, the nodes in the Cicho\'n diagram still split into two cardinals, $\aleph_1$ and $\aleph_2$, whereas the diagram discussed in this paper automatically splits in three different sets of reals, as discussed. It appears that this may be necessary due to a result of Khomskii and Laguzzi in  stating that there is a canonical forcing in a certain sense to add infinitely-often-equal reals and this forcing does not add dominating reals, suggesting that perhaps there is no way that both $\mathcal B(\leq^*)$ and $\mathcal D(\neq^*)$ can be nonempty and equal.

\section{Achieving a Full Separation in the $\leq_W$-Cicho\'n Diagram and the axiom $\mathsf{CD}(\leq_W)$}
In this section building off the work done in the last section I build a model where there is complete separation between all elements in the diagram.
\begin{theorem}($\GBC$)
Given any transitive inner model $W$ of ZFC, there is a proper forcing notion $\mathbb P$, such that in $W^\mathbb P$ all the (non-$\emptyset$) nodes in the $\leq_W$-Cicho\'n diagram are distinct and every possible separation is simultaneously realized. 
\label{ConFS}
\end{theorem}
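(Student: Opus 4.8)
The plan is to take $\mathbb P$ to be a finite product of the forcing notions analysed in the previous subsections, with the factors chosen so that the generic real of each factor witnesses the failure of one or more of the inclusions that the diagram does not force. Concretely I would set \[ \mathbb P \;=\; \mathbb{LOC}\times\mathbb S\times\mathbb C\times\mathbb B\times\mathbb L\times\mathbb{PT}\times\mathbb{EE}, \] a forcing notion in $W$. Each factor is proper — indeed all of them except $\mathbb S$, $\mathbb L$, $\mathbb{PT}$ are ccc — and a finite product of proper forcings is proper (since $\mathbb P_0\times\mathbb P_1\cong\mathbb P_0\ast\check{\mathbb P}_1$ and two-step iterations of proper forcings are proper), so $\mathbb P$ is proper. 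Fix $G=G_{\mathbb{LOC}}\times G_{\mathbb S}\times\dots\times G_{\mathbb{EE}}$ generic over $W$. The reason a \emph{product} rather than an iteration is the right choice is that for each coordinate $i$ the one-step extension $W[G_i]$ is a \emph{pure} extension of $W$ by the single forcing at that coordinate, so the computations of the corresponding subsection apply to it verbatim; and — the key observation — whether a given real $x$ lies in a given node of the $\leq_W$-diagram is a statement referring only to $W$, to $x$ and to $W[x]$, so it is absolute between $W[G_i]$ and the full extension $W[G]$.

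All the inclusions drawn as arrows in the diagram hold in $W[G]$ already by Theorem \ref{propdi}, since $\leq_W$ extends $\leq_T$. It therefore suffices to produce, for each arrow $A\to B$ of the diagram a real witnessing $A\neq B$, and for each incomparable pair $\{A,B\}$ reals witnessing $A\not\subseteq B$ and $B\not\subseteq A$; all of these will be generic reals of the factors, using facts already established in the previous subsections. Write $g$ for the $\mathbb{LOC}$-generic slalom, $s$ for the Sacks real, $c$ for the Cohen real, $r$ for the random real, $l$ for the Laver real, $m$ for the Miller real, and $\hat y$ for the real derived from the $\mathbb{EE}$-generic as in the discussion of cut (i) above. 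Each of these generates the corresponding pure sub-extension (the arboreal factors are of minimal degree, and in the ccc factors the generic is coded by the distinguished real), so the relevant membership facts are decided there and transfer to $W[G]$ by absoluteness.

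The bookkeeping then goes as follows. The slalom $g$ witnesses $\emptyset\subsetneq\mathcal B(\in^*)$. The Laver real $l$ is dominating over $W$, so $l\in\mathcal B(\leq^*)\subseteq\mathcal D(\leq^*)$, while $\mathbb L$ adds no slalom capturing the ground-model reals (so $l\notin\mathcal B(\in^*)$) and, by the Laver property (Definition \ref{Laver}), no infinitely-often-equal real (so $l\notin\mathcal D(\neq^*)$); hence $l$ witnesses $\mathcal B(\in^*)\subsetneq\mathcal B(\leq^*)$, $\mathcal D(\neq^*)\subsetneq\mathcal D(\leq^*)$ and $\mathcal B(\leq^*)\not\subseteq\mathcal D(\neq^*)$. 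The random real $r$ lies in $\mathcal B(\neq^*)$, but $\mathbb B$ is $\omega^\omega$-bounding (so $r\notin\mathcal D(\leq^*)$, hence $r\notin\mathcal B(\leq^*)$) and preserves outer measure, so by the argument of cut (e) it adds no infinitely-often-equal real (so $r\notin\mathcal D(\neq^*)$); hence $r$ witnesses $\mathcal B(\leq^*)\subsetneq\mathcal B(\neq^*)$, $\mathcal B(\neq^*)\not\subseteq\mathcal D(\neq^*)$ and $\mathcal B(\neq^*)\not\subseteq\mathcal D(\leq^*)$. The Cohen real $c$ is infinitely-often-equal over $W$, so $c\in\mathcal D(\neq^*)$, while $\mathbb C$ adds no eventually different real (so $c\notin\mathcal B(\neq^*)\supseteq\mathcal B(\leq^*)\supseteq\mathcal B(\in^*)$); hence $c$ witnesses $\mathcal B(\in^*)\subsetneq\mathcal D(\neq^*)$ and $\mathcal D(\neq^*)\not\subseteq\mathcal B(\neq^*)$ (a fortiori $\mathcal D(\neq^*)\not\subseteq\mathcal B(\leq^*)$). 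The Miller real $m$ is unbounded over $W$, so $m\in\mathcal D(\leq^*)$, while $\mathbb{PT}$ adds no dominating real, no eventually different real, and (by the Laver property, \cite[Theorem 7.3.45]{BarJu95}) no infinitely-often-equal real; hence $m$ witnesses $\mathcal B(\leq^*)\subsetneq\mathcal D(\leq^*)$, $\mathcal D(\neq^*)\subsetneq\mathcal D(\leq^*)$ and $\mathcal D(\leq^*)\not\subseteq\mathcal B(\neq^*)$. The real $\hat y$ lies in $\mathcal D(\in^*)$, while $\mathbb{EE}$ is $\omega^\omega$-bounding and does not make the ground-model reals meager (both from \cite[Lemma 7.4.14]{BarJu95}), so $\hat y\notin\mathcal D(\leq^*)\cup\mathcal B(\neq^*)$; hence $\hat y$ witnesses $\mathcal B(\neq^*)\subsetneq\mathcal D(\in^*)$ and $\mathcal D(\leq^*)\subsetneq\mathcal D(\in^*)$. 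Finally the Sacks real $s$ is a new real, but by the Sacks property (Definition \ref{sacks}) no real of $W[s]$ escapes every ground-model slalom, so $s\notin\mathcal D(\in^*)$; hence $s$ witnesses $\mathcal D(\in^*)\subsetneq\omega^\omega\setminus(\omega^\omega)^W$. Running through the diagram, this covers all of its arrows and all three of its incomparable pairs, namely $\{\mathcal B(\leq^*),\mathcal D(\neq^*)\}$, $\{\mathcal B(\neq^*),\mathcal D(\neq^*)\}$ and $\{\mathcal B(\neq^*),\mathcal D(\leq^*)\}$, so in $W[G]$ all non-$\emptyset$ nodes are distinct and every consistent separation is realized simultaneously.

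The content of the argument is thus almost entirely bookkeeping, assembled from the previous subsections, and I expect the only point requiring care to be the absoluteness remark: that ``$x$ belongs to node $\mathcal N$'' is a property of $W$ and $x$ alone (equivalently, of $W$ and the intermediate model $W[x]$), so that a fact verified inside one of the pure sub-extensions $W[G_i]$ persists in the full product extension $W[G]$ — together with the routine verification that for each factor the nominated witness real does generate the corresponding sub-extension. It is also worth noting that there is no harm in enlarging $\mathbb P$ with further factors (for instance $\mathbb D$ and $\mathbb E$): the statement only asks for the listed strict inclusions and separations, with no upper bounds on the sizes of the nodes, so it is irrelevant that, say, $\mathbb{LOC}$ already renders every node of the diagram nonempty on its own.
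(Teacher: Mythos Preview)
Your overall strategy---take a finite product, use that each coordinate generic lives in a pure sub-extension $W[G_i]$, and invoke the absoluteness of ``$x\in\mathcal N$'' between $W[G_i]$ and $W[G]$---is exactly the paper's strategy, and your bookkeeping of which real witnesses which strict inclusion or non-inclusion is correct (and in fact more explicit than the paper's).

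There is, however, a genuine gap in your properness argument. The claim ``a finite product of proper forcings is proper, since $\mathbb P_0\times\mathbb P_1\cong\mathbb P_0\ast\check{\mathbb P}_1$ and two-step iterations of proper forcings are proper'' is simply false: the iteration theorem requires that $\mathbb P_0\Vdash\check{\mathbb P}_1\text{ is proper}$, and properness of $\mathbb P_1$ in $V$ does not guarantee this. There are well-known examples of proper $\mathbb P,\mathbb Q$ with $\mathbb P\times\mathbb Q$ collapsing $\omega_1$. Your seven-fold product may well be proper, but not for the reason you give, and the parenthetical remark that ``there is no harm in enlarging $\mathbb P$ with further factors'' is dangerous for exactly this reason.

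The paper avoids this by using only three factors, $\mathbb P=\mathbb S\times\mathbb L\times\mathbb{LOC}$, and proving properness carefully: Lemma~\ref{sackslaver} shows directly that $\mathbb S\times\mathbb L$ satisfies Axiom~A (hence is proper), and then $\mathbb{LOC}$, being $\sigma$-linked, is \emph{indestructibly} ccc---it remains ccc in any extension---so the full product is a two-step iteration of a proper forcing followed by a (forced-to-be) ccc forcing. The separations all go through with these three factors alone because, as the $\mathbb{LOC}$ subsection establishes, $\mathbb{LOC}$ already adds $\mathbb B$-, $\mathbb D$- and $\mathbb E$-generics (hence also Cohen generics); so your $\mathbb C$, $\mathbb B$, $\mathbb{PT}$, $\mathbb{EE}$ factors are redundant. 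To repair your argument you should either drop the extra factors and quote Lemma~\ref{sackslaver}, or else prove an analogous Axiom~A result for $\mathbb S\times\mathbb L\times\mathbb{PT}$ and check that each of your ccc factors is indestructibly ccc (this is fine for the $\sigma$-linked ones; for $\mathbb B$ one uses absoluteness of measure).
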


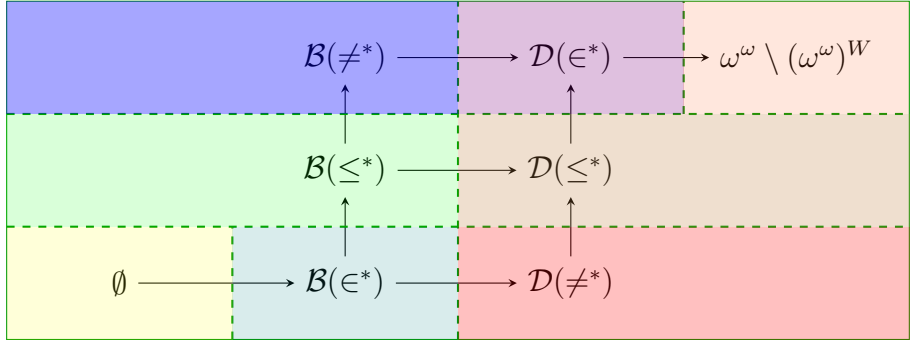
\begin{figure}[h]\label{Figure.Cichon-basic}
\centering
  \begin{tikzpicture}[scale=1.5,xscale=2]
     \draw (0,0) node (empty) {$\emptyset$}
           (1,0) node (Bin*) {$\mathcal B(\in^*)$}
           (1,1) node (Bleq*) {$\mathcal B(\leq^*)$}
           (1,2) node (Bneq*) {$\mathcal B(\neq^*)$}
           (2,0) node (Dneq*) {$\mathcal D(\neq^*)$}
           (2,1) node (Dleq*) {$\mathcal D(\leq^*)$}
           (2,2) node (Din*) {$\mathcal D(\in^*)$}
           (3,2) node (all) {$\omega^\omega\setminus(\omega^\omega)^W$}
           ;
     \draw[->,>=stealth]
            (empty) edge (Bin*)
            (Bin*) edge (Bleq*)
            (Bleq*) edge (Bneq*)
            (Bin*) edge (Dneq*)
            (Bleq*) edge (Dleq*)
            (Bneq*) edge (Din*)
            (Dneq*) edge (Dleq*)
            (Dleq*) edge (Din*)
            (Din*) edge (all)
            ;
       \draw[thick,dashed,OliveGreen] (-.5,.5) -- (3.5,.5);
       \draw[thick,dashed,OliveGreen] (1.5,-.5) -- (1.5,2.5);
       \draw[thick,dashed,OliveGreen] (-.5,1.5) -- (3.5,1.5);
       \draw[thick,dashed,OliveGreen] (.5,-.5) -- (.5,.5);
       \draw[thick,dashed,OliveGreen] (2.5,1.5) -- (2.5,2.5);
       \draw[OliveGreen] (-.5,-.5) rectangle (3.5,2.5);
       \draw[draw=none,fill=yellow, fill opacity=.15] (-.5,-.5) rectangle (.5,.5);
       \draw[draw=none,fill=teal,fill opacity=.15] (.5,-.5) rectangle (1.5,.5);
       \draw[draw=none,fill=red,fill opacity=.25] (1.5,-.5) rectangle (3.5,.5);
       \draw[draw=none,fill=green,fill opacity=.15] (-.5,.5) rectangle (1.5,1.5);
       \draw[draw=none,fill=brown,fill opacity=.25] (1.5,.5) rectangle (3.5,1.5);
       \draw[draw=none,fill=blue,fill opacity=.35] (-.5,1.5) rectangle (1.5,2.5);
       \draw[draw=none,fill=violet,fill opacity=.25] (1.5,1.5) rectangle (2.5,2.5);
       \draw[draw=none,fill=Orange,fill opacity=.15] (2.5,1.5) rectangle (3.5,2.5);
  \end{tikzpicture}
  \caption{Full Separation of the $\leq_W$-diagram}
\end{figure}

\noindent In what follows I call the axiom \say{All consistent separations of the $\leq_W$-diagram are distinct} $\mathsf{CD}(\leq_W)$ or \say{full Cicho\'n Diagram for $\leq_W$}. Thus the above theorem states that $\mathsf{CD}(\leq_W)$ can be forced over $W$ by a proper forcing. For different inner models $W$ the sentence $\mathsf{CD}(\leq_W)$ may vary but they can all be forced the same way.

Before proving this theorem I need a simple technical result about Sacks and Laver forcing.

\begin{lemma}
The product forcing $\mathbb S \times \mathbb L$ satisfies Axiom A and hence is proper.
\label{sackslaver}
\end{lemma}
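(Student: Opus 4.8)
The plan is to verify Axiom A directly for the product $\mathbb{S} \times \mathbb{L}$ by building a suitable sequence of orderings $\langle \leq_n \st n < \omega\rangle$ on the conditions, using the natural fusion-style orderings that witness Axiom A separately for Sacks forcing $\mathbb{S}$ and for Laver forcing $\mathbb{L}$. Recall that $\mathbb{S}$ satisfies Axiom A via the orderings where $T' \leq_n T$ means $T' \leq T$ and $T'$ and $T$ have the same first $n$ splitting levels (i.e.\ agree on the $n$-th splitting front), and $\mathbb{L}$ satisfies Axiom A via the orderings where $S' \leq_n S$ means $S' \leq S$ and the first $n$ splitting nodes of $S'$ above the stem (in the canonical enumeration of the splitting nodes of $S$) are preserved with all their immediate successors. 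I would then define $(T', S') \leq_n (T, S)$ iff $T' \leq_n T$ in $\mathbb{S}$ and $S' \leq_n S$ in $\mathbb{L}$.

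The two Axiom A requirements to check are: (i) a fusion property — if $\langle (T_n, S_n) \st n < \omega\rangle$ is a descending sequence with $(T_{n+1}, S_{n+1}) \leq_n (T_n, S_n)$, then there is a lower bound $(T_\omega, S_\omega)$ of the whole sequence; and (ii) a reading-off property — for every condition $(T,S)$, every $n$, and every maximal antichain $A$ (equivalently, every name for an ordinal, or every dense open set), there is $(T', S') \leq_n (T, S)$ such that only countably (in fact finitely, in the strong form, but countably suffices for Axiom A) many elements of $A$ are compatible with $(T', S')$. For (i), I would simply take $T_\omega = \bigcap_n T_n$ and $S_\omega = \bigcap_n S_n$; the stabilization of splitting fronts in each coordinate guarantees these are again a perfect tree and a Laver tree respectively, and they lie below every $(T_n, S_n)$, exactly as in the one-coordinate arguments.

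For (ii), I would run the standard "diagonal fusion" argument adapted to the product: enumerate the finitely many nodes on the relevant splitting front of $T$ crossed with the finitely many stem-extensions coming from the $n$-th splitting node of $S$, and process them one at a time, at each stage shrinking below one immediate successor choice in each coordinate to enter the dense set (or to decide the name), while only ever strengthening in the $\leq_m$ sense for $m$ large enough not to disturb earlier commitments. Amalgamating these finitely many local strengthenings — taking unions of the chosen subtrees over the splitting front in the $\mathbb{S}$-coordinate and over the successor choices in the $\mathbb{L}$-coordinate — yields the required $(T', S') \leq_n (T, S)$. Since $\mathbb{S}$ and $\mathbb{L}$ are both $\omega^\omega$-definable with conditions of size continuum but countable splitting data, this process is a countable recursion and produces a genuine condition. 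Finally, Axiom A implies properness (it implies that forcing adds no new $\omega$-sequences of ordinals below any $\leq_0$-condition fails in general, but the standard implication Axiom A $\Rightarrow$ proper is available and I would just cite it), giving the conclusion.

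The main obstacle I anticipate is the bookkeeping in step (ii): in the product one must coordinate the fronts of the two trees — the splitting front of a Sacks tree grows as a front of incomparable nodes, whereas the "front" for a Laver tree at stage $n$ is the set of immediate successors of the $n$-th splitting node, so the two coordinates have different combinatorial shapes and the diagonalization must be organized so that strengthening for one node/successor pair does not undo the density commitments already secured for earlier pairs. This is routine but needs care; the cleanest route is probably to prove a general lemma that a finite product (or even countable product with finite or countable support, suitably interpreted) of Axiom A forcings with the same index set of orderings is again Axiom A, and then apply it, rather than redoing the fusion by hand for $\mathbb{S} \times \mathbb{L}$ specifically.
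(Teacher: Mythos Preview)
Your approach is correct but takes a different route from the paper's. You propose the standard sequential front-processing argument; the paper instead adapts Groszek's method from \cite{Gro87}. Rather than enumerating front nodes, the paper considers the dense set $D$ of conditions deciding the name, lets $H_D$ be the set of node-pairs $(s,\tau)$ (one per coordinate) that extend an $n$-splitting pair in $p$ and occur as the pair of stems of some $r_{s,\tau}\in D$ below $p$, and takes the required $\leq_n$-extension to be $r=\bigcup\{r_{s,\tau}:(s,\tau)\in Min(H_D)\}$, with the countable set of possible values $A=\{a(r_{s,\tau}):(s,\tau)\in Min(H_D)\}$. This union-over-minimal-stems approach sidesteps exactly the coordination bookkeeping you flag as the main obstacle. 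One slip in your write-up: the Laver front is not finite, since each splitting node in a Laver tree has infinitely many immediate successors; you recover this when you later call it a ``countable recursion,'' but the phrase ``finitely many stem-extensions'' is wrong and the amalgamation in (ii) must handle a countably infinite set of pairs.
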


\begin{proof}
Theorem 1 of \cite{Gro87} gives a general framework for showing that certain arboreal forcing notions satisfy Axiom A (including Sacks and Laver forcings) and here I adapt the proof to the case of a product of two arboreal forcing notions. Recall that if $p, q \in \mathbb S$ and $n \in \omega$ then we let $q \leq^\mathbb S_n p$ if and only if $q \subseteq p$ and every $n^{\rm th}$ splitting node of $q$ is an $n^{\rm th}$ splitting node of $p$ i.e. if $\tau \in q$ is a splitting node with $n$ splitting predecessors in $q$ then the same is true of $\tau$ in $p$. Also, given a canonical enumeration of $\omega^{<\omega}$ in which $s$ appears before $\tau$ if $s \subseteq \tau$ and $s^\frown k$ appears before $s^\frown(k+1)$ then for $p \in \mathbb L$ one gets an enumeration of the elements of $p$ above the stem, $s_1^p,...,s_k^p,...$ and if $p, q \in \mathbb L$ and $n \in \omega$ then let $q \leq^\mathbb L_n p$ if and only if $q \subseteq p$ and $s_i^p = s_i^q$ for all $i =0,...,n$. Clearly if for every $n \in \omega$ and $(p_s, p_l) , (q_s, q_l) \in \mathbb S \times \mathbb L$ we let $(q_s, q_l) \leq_n (p_s, p_l)$ if and only if $q_s \leq_n^\mathbb S p_s$ and $q_l \leq_n^\mathbb L s_l$ then this satisfies the first requirement of Axiom A forcings. Thus, it remains to show that for every $\mathbb S \times \mathbb L$-name $\dot{a}$ and condition $(p_s, p_l) \in \mathbb S \times \mathbb L$ if $(p_s, p_l) \Vdash \dot{a} \in \check{V}$ then for every $n$ there is a $(q_s, q_l)$ and a countable set $A \in V$ such that $(q_s, q_l) \Vdash \dot{a} \in A$.

Fix such a name $\dot{a}$ and condition $p = (p_s, p_l)$. Let $D \subseteq \mathbb S \times \mathbb L$ be the set of all $(q_s, q_l) \leq p$ such that there is some $a(q) \in V$ with $(q_s, q_l) \Vdash \check{a(q)} = \dot{a}$. This set is dense below $p$ since $p$ forces $\dot{a}$ to be an element of $V$. Let $H_D \subseteq p$ be the set of all pairs $(s, \tau) \in p$ such that there is a $(s ', \tau') \subseteq (s, \tau)$ with $s '$ $n$-splitting in $p_s$ and $\tau'$ $n$-splitting in $p_l$ and there is some $r_{s, \tau} = (r_s, r_l) \leq p$ in $D$ whose stem (i.e. the pair of the stems from the two components) is $(s ,  \tau )$. Finally let $Min(H_D)$ be the set of $(s, \tau) \in H_D$ which are minimal with respect to inclusion. Note that $Min(H_D)$ is an antichain since no two elements can be comparable and both minimal. Let $r = (r_s, r_l) = \bigcup \{r_{s, \tau} \; | \; (s, \tau) \in Min(H_D)\}$. A routine check shows that the set $r$ is a condition in $\mathbb S \times \mathbb L$ and $r \leq_n p$.

Now let $A = \{a(r_{\tau, s}) \; | \; (s, \tau) \in Min(H_D)\}$. This set is countable thus to finish the lemma it suffices to show that $r \Vdash \dot{a} \in \check{A}$. To see this, suppose that $t \leq r$ and $t \Vdash \dot{a} = \check{a}$ for some $a$. By extending $t$ if necessary one may assume that the stem of $t$ is in $H_D$. But then some initial segment of the stem is in $Min(H_D)$ so $a \in A$, as needed.
\end{proof}

Now I prove Theorem \ref{ConFS}.
\begin{proof}[Proof of Theorem \ref{ConFS}]
This essentially follows from the theorems of the previous section. Given a definable forcing notion $\mathbb Q$ let me write $\mathbb Q^W$ for the version of that forcing notion as computed in $W$. Let $\mathbb P = \mathbb S^W \times \mathbb L^W \times \mathbb{LOC}^W$. Then in $W^\mathbb P$ not every new real is in an element of the diagram since Sacks reals were added. Moreover, by our arguments above the combination of $\mathbb{LOC}$ and $\mathbb{L}$ will add reals to every node of the diagram but, none of them will be equal and moreover every possible non-separation is realized as one observes by my previous arguments. 

It remains to see that $\mathbb P$ is proper. This follows from Lemma \ref{sackslaver} plus the fact that $\mathbb{LOC}$ is $\sigma$-linked and hence indestructibly ccc.
\end{proof}

Let me finish this paper by briefly studying the axiom $\mathsf{CD}(\leq_W)$. First, let me show that there are other ways to obtain it. Indeed there is another, less finegrained approach to forcing $\mathsf{CD}(\leq_W)$. To describe this, let me make the following simple observation. Recall that the {\em Maximality Principle} $\mathsf{MP}$ of \cite{Hamkins03} states that any statement which is forceably necessary or can be forced to be true in such as a way that it cannot become later forced to be false, is already true. If $\Gamma$ is a class of forcing notions then the maximality principle for $\Gamma$, $\mathsf{MP}_\Gamma$, states the same but only with respect to forcing notions in $\Gamma$.
\begin{proposition}
The axiom $\mathsf{CD}(\leq_W)$ is forceably necessary, that is once it has been forced to be true it will remain so in any further forcing extension. Thus in particular it is implied by the maximality principle, $\mathsf{MP}$.
\label{FSFN}
\end{proposition}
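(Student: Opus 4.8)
The plan is to show that $\mathsf{CD}(\leq_W)$ is what is called \emph{forceably necessary}: it can be forced (by Theorem \ref{ConFS}), and once true it stays true under all further forcing. Since the Maximality Principle $\mathsf{MP}$ asserts exactly that every forceably necessary statement is true, the second sentence is then immediate.

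The forceability half is exactly the content of Theorem \ref{ConFS}: over $W$, the proper forcing $\mathbb P = \mathbb S^W \times \mathbb L^W \times \mathbb{LOC}^W$ produces a model in which all non-$\emptyset$ nodes of the $\leq_W$-diagram are distinct and every consistent separation is simultaneously realized, i.e. $\mathsf{CD}(\leq_W)$ holds. So it remains to verify \emph{necessity}: if $V \models \mathsf{CD}(\leq_W)$ and $\mathbb Q$ is any forcing notion, then $V^{\mathbb Q} \models \mathsf{CD}(\leq_W)$. The key point is that each node $\mathcal B_{\leq_W}(R)$, $\mathcal D_{\leq_W}(R)$ is defined by an existential condition over the reals of $W[x]$ which is \emph{upward absolute}: if $x$ is (say) a dominating real over $W$ in $V$ — meaning there is $y \in W[x]$ eventually dominating every real of $W$ — then the very same $y \in W[x]$ witnesses this in any outer model $V^{\mathbb Q} \supseteq V$, since $W[x]$ is computed the same way and "eventually dominates every real of $W$" is a statement about $W[x]$ alone. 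Thus every node can only grow when we pass to an extension; no $x$ that was in a node leaves it, and no node becomes empty. Consequently, if in $V$ all the inclusions between nodes guaranteed by Theorem \ref{propdi} are strict and all consistent separations hold, i.e. the diagram looks like Figure 1, then in $V^{\mathbb Q}$ each of the eight nodes is still nonempty (indeed still contains the old witnesses), and each strict inclusion $A \subsetneq B$ from $V$ is preserved because the witness to $A \neq B$ — a real in $B \setminus A$ in $V$ — remains in $B$ and, being in $B\setminus A$ and $B\setminus A$ persisting, still exhibits $A \subsetneq B$ (here one uses that membership in a node is upward absolute, so an element \emph{not} forced out of $B$ stays in $B$, while the complementary inclusions in the diagram continue to hold by Theorem \ref{propdi}). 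Hence $V^{\mathbb Q} \models \mathsf{CD}(\leq_W)$.

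Finally, since $\mathsf{CD}(\leq_W)$ has just been shown to be forceably necessary, and $\mathsf{MP}$ (of \cite{Hamkins03}) asserts precisely that any forceably necessary sentence is already true, $\mathsf{MP}$ implies $\mathsf{CD}(\leq_W)$. The one delicate point to spell out carefully is the preservation of \emph{strictness} of an inclusion $A \subsetneq B$: a witness $x \in B \setminus A$ in $V$ stays in $B$ by upward absoluteness, but we must also check it does not accidentally enter $A$ in $V^{\mathbb Q}$ — and it cannot, because all the diagram-inclusions of Theorem \ref{propdi} hold in every model, so if $x \in A$ held in $V^{\mathbb Q}$ then every $\subseteq$-consequence of "$x \in A$" would hold, contradicting nothing directly; rather, the clean argument is to note that $x \in B \setminus A$ in $V$ is equivalent (in $V$) to a property of $W[x]$, and for the \emph{complement} direction one observes that "$x \notin A$" is downward absolute in the relevant sense because $A$-membership, being $\Sigma$ over $W[x]$, once \emph{fails} in $V$ (no witnessing real exists in $W[x]$) continues to fail in $V^{\mathbb Q}$ as $W[x]$ is unchanged. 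This is the step I expect to require the most care, but it is routine once one isolates that all nodes are defined by properties of the fixed model $W[x]$, which does not depend on the ambient universe.
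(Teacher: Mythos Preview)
Your proof is correct and follows the same approach as the paper: membership in each node depends only on the model $W[x]$, which is unchanged by forcing over $V$, so the entire configuration of the diagram is absolute. Your detour through one-sided ``upward absoluteness'' and the ensuing worry about preserving strictness is unnecessary --- as you yourself observe in the final sentence, the relation $x \in A$ is determined entirely by the fixed model $W[x]$ and hence is absolute in both directions; the paper simply states this directly and avoids the meander.
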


\begin{proof}
This is more or less immediate from the definition. Since $\mathsf{CD}(\leq_W)$ is defined relative to a fixed inner model and the diagram for $W$ concerns only the models $W[x]$ for $x \in \omega^\omega \cap V$, notice that forcing over $V$ cannot change the theories of the models $W[x]$ for $x \in V$ hence if $\mathsf{CD}(\leq_W)$ is true in $V$ it must remain so in any forcing extension. In other words absoluteness for membership in each of the various classes holds and this guarentees that forcing cannot change the relation $x \in A$ for any node $A$ of the diagram.

Since $\mathsf{CD}(\leq_W)$ is forceably necessary it follows that $MP$ implies $\mathsf{CD}(\leq_W)$.
\end{proof}

Now notice that since all the forcing notions used in Theorem \ref{ConFS} have size at most $2^{\aleph_0}$ it follows that the collapse forcing $Coll(\omega, < (2^{2^{\aleph_0}})^+)$ will add a generic making $\mathsf{CD}(\leq_W)$ true. Since $\mathsf{CD}(\leq_W)$ is forceably necessary it follows that the full collapse forcing cannot kill the generic once it is added and, as a result one obtains
\begin{corollary}
$W^{Coll(\omega, < (2^{2^{\aleph_0}})^+)} \models CD(\leq_W)$
\end{corollary}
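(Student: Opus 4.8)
The plan is to exhibit the Lévy collapse $Coll(\omega,<\lambda)^W$, where $\lambda=(2^{2^{\aleph_0}})^{+}$ is computed in $W$, as a forcing whose first step is a copy of the poset $\mathbb P$ of Theorem \ref{ConFS}, and then to read off the conclusion from Proposition \ref{FSFN}. All cardinal arithmetic below takes place in $W$, since $\mathbb P$ and the collapse lie there.

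First I would recall that the witnessing forcing in the proof of Theorem \ref{ConFS} is $\mathbb P=\mathbb S^W\times\mathbb L^W\times\mathbb{LOC}^W$, which has size $\le 2^{\aleph_0}$. Set $\theta=2^{2^{\aleph_0}}$, so that $|\mathbb P|\le 2^{\aleph_0}<\theta<\lambda$. I would then consider the product $\mathbb P\times Coll(\omega,\theta)^W$ and observe that it has size $\le 2^{\aleph_0}\cdot\theta=\theta$, is atomless, is (after passing to its separative quotient) separative, and forces $|\check\theta|=\aleph_0$ because its second factor already does. By the standard characterization of the collapsing algebra — an atomless separative poset of size $\le\theta$ that forces $\check\theta$ to be countable is forcing equivalent to $Coll(\omega,\theta)$ (see, e.g., \cite{JechST}) — it follows that $\mathbb P\times Coll(\omega,\theta)^W$ is forcing equivalent, in $W$, to $Coll(\omega,\theta)^W$. (Only $\theta\ge|\mathbb P|$ is needed for this product argument; the larger value $2^{2^{\aleph_0}}$ appearing in the statement is exactly the threshold one needs if one instead absorbs $\mathbb P$ through the iteration $\mathbb P * \dot{Coll}(\omega,\check\theta)$, since then a nice-name count forces the size up to $2^{2^{\aleph_0}}$.)

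Next I would invoke the factoring of the Lévy collapse: since $\lambda$ is regular in $W$ (being a successor cardinal there), $Coll(\omega,<\lambda)^W$ is, in $W$, the finite support product $\prod^{\mathrm{fin}}_{\alpha<\lambda}Coll(\omega,\alpha)^W$, and since $\theta<\lambda$ we may pull out the single factor indexed by the ordinal $\theta$ to obtain, in $W$, the forcing equivalences
\[ Coll(\omega,<\lambda)^W\;\cong\;Coll(\omega,\theta)^W\times\mathbb R\;\cong\;\bigl(\mathbb P\times Coll(\omega,\theta)^W\bigr)\times\mathbb R\;\cong\;\mathbb P\times\bigl(Coll(\omega,\theta)^W\times\mathbb R\bigr), \]
where $\mathbb R=\prod^{\mathrm{fin}}_{\alpha<\lambda,\ \alpha\neq\theta}Coll(\omega,\alpha)^W$. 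Hence, if $G$ is $Coll(\omega,<\lambda)^W$-generic over $W$, this chain of equivalences rewrites $G$ so that $W[G]=W[G_0][G_1]$, where $G_0$ is $\mathbb P$-generic over $W$ and $G_1$ is generic over $W[G_0]$.

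Finally I would combine the pieces: by Theorem \ref{ConFS}, $W[G_0]\models\mathsf{CD}(\leq_W)$; by Proposition \ref{FSFN} the assertion $\mathsf{CD}(\leq_W)$ is forceably necessary, hence it persists to the further extension $W[G_0][G_1]=W[G]$, which is precisely the claim. The only step that is not routine bookkeeping is the absorption step — confirming that $\mathbb P\times Coll(\omega,\theta)^W$ really does fall under the characterization of $Coll(\omega,\theta)^W$ — and since this is a well known property of the collapsing algebra, I do not anticipate a genuine obstacle here.
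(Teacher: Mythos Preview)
Your proof is correct and follows essentially the same approach as the paper: the paper's argument is simply the observation that since $\mathbb P$ has size at most $2^{\aleph_0}$, the Lévy collapse $Coll(\omega,<(2^{2^{\aleph_0}})^+)$ adds a $\mathbb P$-generic, and then forceably necessity (Proposition~\ref{FSFN}) pushes $\mathsf{CD}(\leq_W)$ to the full extension. You have spelled out the absorption step explicitly via the characterization of the collapsing algebra and the product factoring of the Lévy collapse, whereas the paper leaves this implicit; otherwise the arguments coincide.
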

Moreover, note that while the forcing described in Theorem \ref{ConFS} was proper and hence preserved $\omega_1$ the collapse forcing used above is not. Therefore the following is immediate.
\begin{corollary}
The statement \say{the reals of $W$ are countable} is independent of the theory ZFC($W$) + $\mathsf{CD}(\leq_W)$. Consequently $CD(\leq_W)$ does not imply $\mathsf{MP}$ for any sufficiently definable $W$.
\end{corollary}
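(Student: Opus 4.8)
The plan is to establish the independence by building two models of $\ZFC(W)+\mathsf{CD}(\leq_W)$, one where the reals of $W$ are uncountable and one where they are countable, and then to read off the failure of $\mathsf{MP}$ from the first.

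For the uncountable side I would invoke Theorem~\ref{ConFS}: the forcing $\mathbb P\in W$ witnessing $\mathsf{CD}(\leq_W)$ is proper, hence preserves $\omega_1$, so $\omega_1^{W^{\mathbb P}}=\omega_1^W$. Since $W\models\ZFC$, there is in $W$ an injection of $\omega_1^W$ into $(\omega^\omega)^W$, and this injection is a set of $W$, hence still an injection in $W^{\mathbb P}$. Therefore $(\omega^\omega)^W$ has size at least $\aleph_1$ in $W^{\mathbb P}$, i.e. it is uncountable there, and so $W^{\mathbb P}\models\ZFC(W)+\mathsf{CD}(\leq_W)+\neg$``the reals of $W$ are countable''.

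For the countable side I would use the corollary immediately above, which gives $W^{Coll(\omega,<\lambda)}\models\mathsf{CD}(\leq_W)$, where $\lambda=((2^{2^{\aleph_0}})^+)^W$. The L\'evy collapse $Coll(\omega,<\lambda)$ taken over $W$ collapses every $W$-ordinal below $\lambda$ to be countable; since $(2^{\aleph_0})^W<\lambda$ and $W$ carries a bijection between $(\omega^\omega)^W$ and the ordinal $(2^{\aleph_0})^W$, the set $(\omega^\omega)^W$ is countable in $W^{Coll(\omega,<\lambda)}$. Hence this model satisfies $\ZFC(W)+\mathsf{CD}(\leq_W)+$``the reals of $W$ are countable'', and together with the previous model this yields the asserted independence. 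For the consequence, let $\sigma$ abbreviate ``the reals of $W$ are countable'' (a sentence of the language with a predicate for $W$, which is the point at which one wants $W$ sufficiently definable, so that $\sigma$ is an admissible instance of $\mathsf{MP}$). Over $\ZFC(W)$ the sentence $\sigma$ is forceably necessary: from any model, forcing with $Coll(\omega,(2^{\aleph_0})^W)$ makes $\sigma$ true, and since set forcing adds no ordinals, a surjection from $\omega$ onto $(\omega^\omega)^W$ persists to all further extensions, so $\sigma$ is in fact forced to be necessary. Thus $\mathsf{MP}$ implies $\sigma$. But $W^{\mathbb P}\models\mathsf{CD}(\leq_W)+\neg\sigma$, so $W^{\mathbb P}\models\neg\mathsf{MP}$, and therefore $\mathsf{CD}(\leq_W)$ does not imply $\mathsf{MP}$.

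This is essentially bookkeeping layered on Theorem~\ref{ConFS} and the preceding corollary, so I do not expect a genuine obstacle; the only steps deserving a moment's care are checking that the L\'evy collapse really renders all of $(\omega^\omega)^W$ countable (not merely that it collapses $\omega_1^W$) and that $\sigma$ is upward absolute under set forcing, which is precisely what makes $\sigma$ forceably necessary and hence a consequence of $\mathsf{MP}$.
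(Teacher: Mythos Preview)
Your proposal is correct and follows exactly the same line as the paper, which treats the corollary as immediate from the contrast between the proper forcing of Theorem~\ref{ConFS} (preserving $\omega_1$, hence leaving $(\omega^\omega)^W$ uncountable) and the L\'evy collapse of the preceding corollary (rendering $(\omega^\omega)^W$ countable). You have simply spelled out the details the paper leaves implicit, including the forceable necessity of ``the reals of $W$ are countable'' needed for the $\mathsf{MP}$ consequence.
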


Since $\mathsf{CD}(\leq_W)$ is forceably necessary and hence cannot be killed once it is forced to be true it follows that any sentence which can be forced to be true from any model must be consistent with $\mathsf{CD}(\leq_W)$. Such examples include $CH$, $2^{\aleph_0} = \kappa$ for any $\kappa$ of uncountable cofinality, Martin's Axiom and its negation, $\diamondsuit$ and its negation, and a wide variety of forcing notions associated with the classical Cicho\'n's diagram. In particular, $\mathsf{CD}(\leq_W)$ is independent of any consistent assignment of cardinals to the nodes in the Cicho\'n diagram (cf \cite{BarJu95} for a variety of examples of such).

Let me finish now by showing the consistency of a strong version of $\mathsf{CD}(\leq_W)$, which was suggested to me by Gunter Fuchs. The idea is to iteratively force with the forcing $\mathbb P$ of Theorem \ref{ConFS} for long enough that a large collection of inner models $W$ simultaneously satisfy $\mathsf{CD}(\leq_W)$.
\begin{theorem}
Assume $V=L$. Then there is an $\aleph_2$-c.c. proper forcing extension in which $2^{\aleph_0} = \aleph_2$ and for every $\aleph_1$-sized set of reals $A$ there is an $\aleph_1$-sized set of reals $B \supseteq A$ so that $\mathsf{CD}(\leq_{L[B]})$ holds.
\label{ConIFS}
\end{theorem}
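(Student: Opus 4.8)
The plan is to carry out, over $L$, a countable-support iteration $\langle \mathbb P_\alpha, \dot{\mathbb Q}_\alpha : \alpha \le \omega_2\rangle$ in which every iterand is a name for the product forcing of Theorem \ref{ConFS} \emph{computed in the current model}; that is, $\Vdash_{\mathbb P_\alpha}$ ``$\dot{\mathbb Q}_\alpha = \mathbb S \times \mathbb L \times \mathbb{LOC}$'' as evaluated in $V^{\mathbb P_\alpha}$. Set $\mathbb P = \mathbb P_{\omega_2}$. Each $\dot{\mathbb Q}_\alpha$ is forced to be proper, by Lemma \ref{sackslaver} applied in $V^{\mathbb P_\alpha}$ together with the $\sigma$-linkedness of $\mathbb{LOC}$, exactly as in the proof of Theorem \ref{ConFS}; and it is forced to have size at most $2^{\aleph_0}$, which will be $\aleph_1$ at every stage below $\omega_2$. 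By the standard theory of countable-support iterations of proper forcings of size $\le\aleph_1$ under $\CH$ (see \cite{JechST}), $\mathbb P$ is proper, has the $\aleph_2$-c.c., $\CH$ holds in $V^{\mathbb P_\alpha}$ for every $\alpha<\omega_2$, and $2^{\aleph_0}=\aleph_2$ in $V^{\mathbb P}$ (new reals, for instance Sacks reals, being added at each of the $\omega_2$ stages while no cardinals are collapsed). In particular, by the $\aleph_2$-c.c., every $\aleph_1$-sized set of reals of $V^{\mathbb P}$ already belongs to $V^{\mathbb P_\alpha}$ for some $\alpha<\omega_2$.

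Now fix an $\aleph_1$-sized set of reals $A$ in $V^{\mathbb P}$ and pick $\alpha<\omega_2$ with $A\in V^{\mathbb P_\alpha}$; write $M=V^{\mathbb P_\alpha}$ and put $B:=(\omega^\omega)^M$. Then $B$ is a set of reals, $A\subseteq B$ (since $A\in M$ is a set of reals), and $|B|=(2^{\aleph_0})^M=\aleph_1$. Observe that $L[B]\subseteq M$ and that $L[B]$ and $M$ have exactly the same reals, namely $B$; hence for every real $x$ we have $L[B][x]\subseteq M[x]$, and the $\leq_{L[B]}$-basic reals coincide with the $\leq_M$-basic reals, both being $B$.

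The stage-$\alpha$ forcing $\mathbb Q_\alpha$ is precisely the forcing $\mathbb P$ of Theorem \ref{ConFS} taken over the ground model $M$, so that theorem applies verbatim: in $V^{\mathbb P_{\alpha+1}}=M[(s,l,\sigma)]$ the $\leq_M$-Cicho\'n diagram is fully separated. I claim this already forces $\mathsf{CD}(\leq_{L[B]})$. The arrows of the $\leq_{L[B]}$-diagram hold by Theorem \ref{propdi}. For the strictness of each remaining inclusion one revisits the proof of Theorem \ref{ConFS} and of its supporting lemmas in the earlier sections: every separating witness it produces is obtained from the generic triple $(s,l,\sigma)$ by operations definable over the ground model using only objects that may be taken in $L$ (canonical enumerations of $2^{<\omega}$ and $\omega^{<\omega}$, partitions into singletons, pointwise maxima and sums, and the generics themselves). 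Consequently, whenever such a real witnesses ``$x$ lies in a node $\mathcal B(R)$ or $\mathcal D(R)$'' for $\leq_M$, the same real lies in $L[B][x]$ and hence witnesses membership of $x$ in the corresponding $\leq_{L[B]}$-node. Conversely, each ``$x$ does not lie in node $\mathcal B(R)$ (resp.\ $\mathcal D(R)$)'' fact established for $\leq_M$ — e.g.\ that a Hechler or random extension of $M$ adds no slalom capturing all of $B$, no unbounded real, no infinitely-often-equal real, and so on — transfers down to $\leq_{L[B]}$ because $L[B][x]\subseteq M[x]$. Combining the two, every consistent separation of the $\leq_{L[B]}$-diagram is realized in $V^{\mathbb P_{\alpha+1}}$.

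Finally, $L[B]$ is a fixed inner model of $V^{\mathbb P}$, so by Proposition \ref{FSFN} the statement $\mathsf{CD}(\leq_{L[B]})$ is forceably necessary; since it holds in $V^{\mathbb P_{\alpha+1}}$ it continues to hold in the further extension $V^{\mathbb P}=V^{\mathbb P_{\omega_2}}$. Thus in $V^{\mathbb P}$, for the given $A$ we have found an $\aleph_1$-sized $B\supseteq A$ with $\mathsf{CD}(\leq_{L[B]})$, which completes the proof. The main obstacle is the descent argument of the third paragraph: one must pass carefully through the constructions of Theorem \ref{ConFS} and Section's earlier lemmas to confirm that all the separating witnesses are genuinely built ``over $B$'' and not merely ``over $M$'', and that all the negative (emptiness) clauses are genuinely assertions about the sub-extensions $M[x]$, so that replacing $M$ by the smaller model $L[B]$ with the same reals preserves the full separation.
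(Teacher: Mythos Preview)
Your iteration is exactly the one the paper uses, and your bookkeeping (properness, $\aleph_2$-c.c., $\CH$ at intermediate stages, capture of $\aleph_1$-sized sets of reals at some stage $\alpha$) matches the paper's proof. The one substantive difference is the choice of $B$.

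The paper does not take $B=(\omega^\omega)^{V^{\mathbb P_\alpha}}$. Instead it codes the generic $G_\alpha$ itself by an $\aleph_1$-sized set of reals $B$ (possible since $|\mathbb P_\alpha|=\aleph_1$ under $\CH$), arranging $A\subseteq B$, so that $L[B]=L[G_\alpha]=V^{\mathbb P_\alpha}$ on the nose. With this choice the inner model $W$ of Theorem \ref{ConFS} is literally $V^{\mathbb P_\alpha}$, so the stage-$\alpha$ iterand is exactly the forcing of that theorem over $W=L[B]$, and $\mathsf{CD}(\leq_{L[B]})$ holds in $V^{\mathbb P_{\alpha+1}}$ with no further argument; then Proposition \ref{FSFN} pushes it to $V^{\mathbb P}$. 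Your ``descent'' paragraph is thus entirely avoidable.

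Your version can be made to work, but the justification you give is more tangled than needed and a little off-target (talking about ``objects that may be taken in $L$'' rather than in $L[B]$). The clean way to run your variant is: since $L[B]$ and $M:=V^{\mathbb P_\alpha}$ have the same reals, the posets $\mathbb S,\mathbb L,\mathbb{LOC}$ coincide in the two models, and the stage-$\alpha$ generic is also generic over $L[B]$; hence Theorem \ref{ConFS} applied with $W=L[B]$ yields $\mathsf{CD}(\leq_{L[B]})$ already in $L[B][(s,l,\sigma)]\subseteq M[(s,l,\sigma)]$, and since $\mathsf{CD}(\leq_{L[B]})$ is an existential statement about reals (witnesses to non-inclusions), it persists upward to $V^{\mathbb P}$. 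Either way the paper's choice of $B$ buys you a one-line finish in place of your descent argument.
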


\begin{proof}
Assume $V=L$ and let $\vec{\mathbb P} = \langle (\mathbb P_\alpha, \dot{\mathbb Q}_\alpha) \;  | \; \alpha < \omega_2 \rangle$ be an $\omega_2$-length countable support iteration of copies of the forcing $\mathbb P$ from Theorem \ref{ConFS} (i.e. $\dot{\mathbb Q}_{\alpha +1}$  evaluates to $( \mathbb P)^{L^{\mathbb P_\alpha}}$). Clearly $\vec{\mathbb P}$ is proper. Moreover, since CH holds in the ground model and the forcing $\mathbb P$ is easily seen to be of size continuum, and does not kill CH it follows that $\vec{\mathbb P}$ has the $\aleph_2$-c.c. and every intermediate stage in the iteration preserves CH: $L^{\mathbb P_\alpha} \models {\rm CH}$ for all $\alpha < \omega_2$. However, since reals are added at every stage the final model satisfies $2^{\aleph_0} = \aleph_2$.

It remains to show that for every $\aleph_1$-sized set of reals $A$ there is a set of reals $B \supseteq A$ of size $\aleph_1$ so that $\mathsf{CD}(\leq_W)$ holds for $W = L[B]$. Let $A$ be a set of reals of size $\aleph_1$. Then, there is some $\alpha$ so that $A \in L[G_\alpha]$ for $G_\alpha$ be $\mathbb P_\alpha$-generic. Note that we can code $G_\alpha$ by a set of reals of size at most $\aleph_1$, say $B$, and without loss we can assume that $A \subseteq B$ for $L[G_\alpha] = L[B]$. Then at stage $\mathbb P_{\alpha+1}$ we added a generic witnessing that $CD(\leq_{L[B]})$ holds. Moreover, by the fact that this statement is forceably necessary, it cannot be killed by the tail end of the iteration so it holds in the final model.
\end{proof}

While it is not entirely clear what consequences we can expect from $\mathsf{CD}(\leq_W)$ for an arbitrary $W$, the stronger version obtained in Theorem \ref{ConIFS} has several low hanging fruits in this regard. Let me pluck a particularly simple one connecting the constructibility diagram to the standard Cicho\'n diagram. 
\begin{lemma}
Assume for every $\aleph_1$-sized set of reals $A$ there is an $\aleph_1$-sized set of reals $B \supseteq A$ so that $\mathsf{CD}(\leq_{L[B]})$ holds. Then all the cardinals in the Cicho\'n diagram have size at least $\aleph_2$.
\end{lemma}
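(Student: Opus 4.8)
The plan is to push everything down to the single inequality $add(\Null) \ge \aleph_2$: since $add(\Null)$ is $\ZFC$-provably below every other cardinal displayed in Figure~\ref{Figure.Cichon}, this suffices. By Fact~\ref{infact} we have $add(\Null) = \mathfrak b(\in^*)$, so what I actually want to establish is that every family $F \subseteq \omega^\omega$ of size $\aleph_1$ is $\in^*$-bounded, i.e.\ that some slalom $s$ satisfies $x \in^* s$ for all $x \in F$. (Together with the $\ZFC$ fact $add(\Null) \ge \aleph_1$, this gives $\mathfrak b(\in^*) \ge \aleph_2$: no $\in^*$-unbounded family would then have size $\le \aleph_1$.)

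So fix such an $F$, regarded as an $\aleph_1$-sized set of reals. First I would apply the hypothesis with $A = F$ to obtain a set of reals $B \supseteq F$ with $|B| = \aleph_1$ and $\mathsf{CD}(\leq_{L[B]})$; put $W = L[B]$, a transitive inner model of $\ZFC$ with $F \subseteq (\omega^\omega)^W$. The key step is to read off from $\mathsf{CD}(\leq_W)$ that the bottom-left node is nonempty, i.e.\ $\mathcal B_{\leq_W}(\in^*) \ne \emptyset$. This holds because the separation $\emptyset \subsetneq \mathcal B_{\leq_W}(\in^*)$ is consistent with ZFC($W$) — it is realized after localization forcing (Theorem~\ref{locthm}), and it is part of the full separation produced in Theorem~\ref{ConFS} — so $\mathsf{CD}(\leq_W)$ demands it. Now pick any $x \in \mathcal B_{\leq_W}(\in^*)$; by the very definition of this set there is a slalom $s \in W[x]$ that eventually captures every real of $W$. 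Since $W[x] \subseteq V$, this $s$ is a slalom of $V$, and since every member of $F$ is a real of $W$, $s$ $\in^*$-captures each element of $F$. Thus $F$ is $\in^*$-bounded, which is exactly what was needed; hence $\mathfrak b(\in^*) = add(\Null) \ge \aleph_2$, and by monotonicity of the diagram all the cardinals in it are at least $\aleph_2$.

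The argument is short, so I do not expect a serious obstacle; the one point that deserves care is the unpacking of $\mathsf{CD}(\leq_W)$ — confirming that it yields the nonemptiness of $\mathcal B_{\leq_W}(\in^*)$, not merely pairwise distinctness among the already-nonempty nodes. This is handled by invoking, as above, the consistency of the separation $\emptyset \subsetneq \mathcal B_{\leq_W}(\in^*)$ from the previous section, so that $\mathsf{CD}(\leq_W)$ genuinely forces $\mathcal B_{\leq_W}(\in^*)$ to be inhabited by some real $x$ of $V$, whose associated slalom in $W[x]$ then does all the work.
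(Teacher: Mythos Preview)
Your proof is correct and essentially identical to the paper's: both reduce to $add(\Null)\ge\aleph_2$ via Bartoszy\'nski's characterization $add(\Null)=\mathfrak b(\in^*)$, apply the hypothesis to an arbitrary $\aleph_1$-sized family $F$ to get $B\supseteq F$ with $\mathsf{CD}(\leq_{L[B]})$, and then use the nonemptiness of $\mathcal B_{\leq_{L[B]}}(\in^*)$ to produce a slalom capturing all reals of $L[B]$ and hence all of $F$. Your extra paragraph justifying why $\mathsf{CD}(\leq_W)$ forces $\mathcal B_{\leq_W}(\in^*)\neq\emptyset$ is a welcome clarification the paper leaves implicit.
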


\begin{proof}
It suffices to show that ${\rm add}(\mathcal N) \geq \aleph_2$. Towards this goal, recall Bartoszy\'nski's characterization of ${\rm add}(\mathcal N)$ as the least cardinal $\kappa$ so that there is a set of reals $X$ of size $\kappa$ so that no single slalom can capture all the reals in $X$ (Fact \ref{infact}, part 1). The result is then immediate for, given any set of reals $A$ of size $\aleph_1$, we can find a slalom $s$ eventually capturing all reals in $L[B]$ for some $B \supseteq A$ so that $CD(\leq_{L[B]})$ holds so ${\rm add}(\mathcal N) > \aleph_1$.
\end{proof}



\section{Open Questions and Further Work}
I finish this chapter by collecting the open questions that have appeared. First I ask about the Cicho\'n diagram for other reduction concepts. Recall that in the case of $\leq_T$, the sets $\mathcal B(\in^*)$ and $\mathcal B(\leq^*)$ were equal.
\begin{question}
For which reductions $(\sqsubseteq, x_0)$ on the reals is $\mathcal B_\sqsubseteq(\in^*) \subsetneq \mathcal B_\sqsubseteq(\leq^*)$?
\end{question}
The anonymous referee of \cite{Switz18} has pointed out to me that Monin (unpublished) has shown that for $\sqsubseteq$ equal to hyperarithmetic reduction, the equation $\mathcal B_\sqsubseteq(\in^*) = \mathcal B_\sqsubseteq(\leq^*)$ holds (hence it does for $\leq_A$ as well). See \cite[Fact 2.6]{Kihara17}. This question has also been considered in \cite{Kihara17}, see Problem 5.7 and the discussion preceding it. This shows that for many natural reduction concepts the answer to the question above is negative. Taking this into account it seems reasonable to ask if indeed {\em any} ``reasonable" reduction concept (whatever that means) provably does this in $\mathsf{ZFC}$? Note that if $V=L$ all $\leq_W$ relations are trivial.

Next I ask about the ZFC($W$)-provable relations between the nodes of the $\leq_W$-Cicho\'n diagram. While I have shown that there are no other implications it is entirely possible that there are other relations more generally.
\begin{question}
What other ZFC($W$)-provable relations are there between the sets in the $\leq_W$-diagram?
\end{question}

My next collection of questions concerns the subforcings of $\mathbb{LOC}$, a topic that deserves more study.
\begin{question}
What is the forcing adding the eventually different real described in Lemma \ref{evdiffloc}? Does it add a dominating real? Note that it must be ccc, in fact $\sigma$-linked and add eventually different reals which are bounded by nearly all ground model reals.
\end{question}

Similarly, one might ask whether there is a similarly exotic subforcing of $\mathbb{LOC}$ for adding a dominating real.
\begin{question}
Does every subforcing of $\mathbb{LOC}$ adding a dominating real add a $\mathbb D$-generic real?
\end{question}
\begin{question}
Does every subforcing of $\mathbb{LOC}$ add a Cohen real or a random real?
\end{question}

There are also several open questions about the axiom $CD(\leq_W)$.
\begin{question}
What statements are implied by $CD(\leq_W)$? In particular, does it imply that there are $W$-generics for the forcings to add reals we have discussed (Cohen, random, etc)?
\end{question}
\begin{question}
How does $CD(\leq_W)$ relate to standard forcing axioms? In particular does ${\rm MA}_{\aleph_1}$ imply $CD(\leq_{L[A]})$ for all $\aleph_1$-sized sets of reals $A$? Does BPFA?
\end{question}

Finally let me report on some ongoing attempts to generalize the work in this chapter further. After the results of this chapter were announced, J. Brendle pointed out to me that the setup described above can accommodate nodes explicitly corresponding to the cardinal characteristics for measure and category. Recall that for each real $y$, there is a canonical Borel null (meager) set $N_y$ ($M_y$) coded by $y$, see Lemmas 3.2 and 3.4 of \cite{BarHB}. For reals $x$ and $y$, let $x \in_\Null y$ if and only if $x \in N_y$ and $x \subseteq_\Null y$ if and only if $N_x \subseteq N_y$ and the same for $\Me$. These relations give the following bounding and non-dominating sets.
\begin{enumerate}
\item 
$\mathcal B (\subseteq_{\mathcal N})$ is the set of all reals $x$ so that there is a $y \leq_W x$ such that for each $z \in W$ we have that $z \subseteq_\mathcal N y$. In other words, in $W[x]$ the union of all of the null sets coded in $W$ is null.

\item
$\mathcal D (\subseteq_{\mathcal N})$ is the set of all reals $x$ so that there is a $y \leq_W x$ such that for each $z \in W$ we have that $y \nsubseteq_\mathcal N Z$. In other words, there is a null set coded in $W[x]$ which is not a subset of any null set coded in $W$. 

\item
$\mathcal B (\in_{\Null})$ is the set of all reals $x$ so that there is a $y \leq_W x$ so that for each $z \in W$ we have that $z \in_{\mathcal N} y$. In other words, in $W[x]$ the reals of $W$ are measure zero.

\item
$\mathcal D (\in_{\mathcal N})$ is the set of all reals $x$ so that there is a $y \leq_W x$ so that for each $z \in W$ $y \notin_{\mathcal N} z$. In other words in $W[x]$ there is a real $y$ not in any measure zero set coded in $W$. Note that $x \in \mathcal D (\in_{\mathcal N})$ if and only if in $W[x]$ there is a real $y$ which is random over $W$ i.e. $y$ is $\mathbb B^W$ generic. In the case of $\mathcal M$, the corresponding statement is the same with \say{random} replaced by \say{Cohen}.
\end{enumerate}

Working through the definitions and using the discussion from this chapter, it's straightforward to see that the following analogies hold:

\begin{enumerate}
\item
$\mathcal B (\subseteq_{\mathcal N})$ ($\mathcal B (\subseteq_{\mathcal M})$) corresponds to $add(\Null)$ ($add(\Me)$)

\item
$\mathcal D (\subseteq_{\mathcal N})$ ($\mathcal D (\subseteq_{\mathcal M})$) corresponds to $cof(\Null)$ ($cof(\Me)$)

\item
$\mathcal B (\in_{\mathcal N})$ ($\mathcal B (\in_{\mathcal M})$) corresponds to $non(\Null)$ ($non(\Me)$)

\item
$\mathcal D (\in_{\mathcal N})$ ($\mathcal D (\in_{\mathcal M})$) corresponds to $cov(\Null)$ ($cov(\Me)$)

\end{enumerate}

Moreover, using the theory of small sets worked out in \cite{PawRec95} it's not hard to show that the analogous implications hold as well. Integrating these nodes with those from the diagram for the combinatorial nodes gives one in which all of the equivalences hold from Facts \ref{infact} and \ref{neqfact} with the exception of $\mathcal D(\neq^*)$ since as previously discussed Zapletal has shown this one to be false. Thus we get the following diagram with 11 non-trivial nodes. 

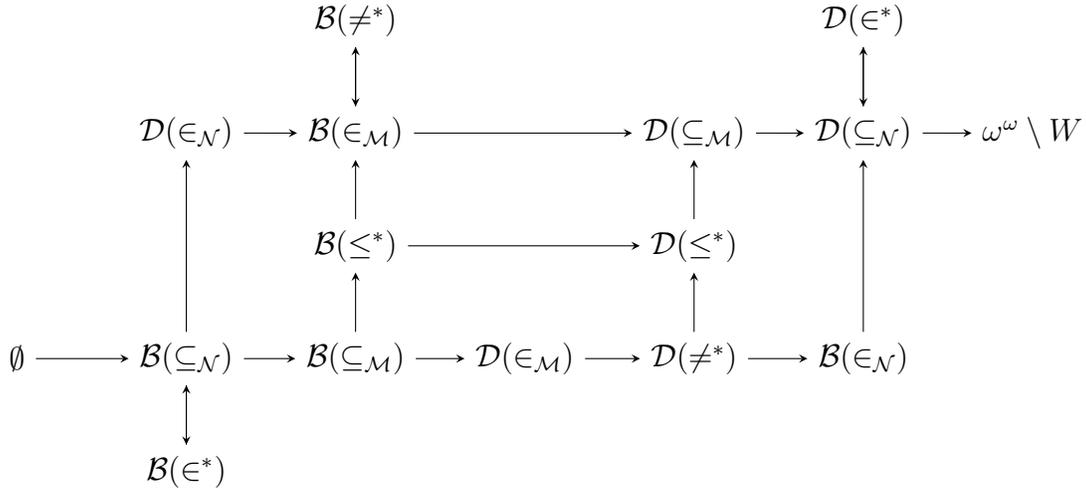
\begin{figure}[h]\label{Figure.Cichon-basic}
\centering
  \begin{tikzpicture}[scale=1.5,xscale=2]
     \draw (.75, 1) node (BsubN) {$\mathcal B(\subseteq_\mathcal N)$}
	     (1.5,1) node (Bsub) {$\mathcal B(\subseteq_\mathcal M)$}
           (1.5,3) node (Bin) {$\mathcal B(\in_\mathcal M)$}
           (2.25,1) node (Din) {$\mathcal D(\in_\mathcal M)$}
           (3,3) node (Dsub) {$\mathcal D(\subseteq_\mathcal M)$}
           (1.5,2) node (Bleq) {$\mathcal B(\leq^*)$}
           (3,2) node (Dleq) {$\mathcal D(\leq^*)$}
	     (.75, 3) node (DinN) {$\mathcal D(\in_\mathcal N)$}
           (3.75, 1) node (BinN)  {$\mathcal B(\in_\mathcal N)$}
           (3.75, 3) node (DsubN) {$\mathcal D(\subseteq_\mathcal N)$}
	    (.75, 0) node (Bslal) {$\mathcal B (\in^*)$}
	    (3.75, 4) node (Dslal) {$\mathcal D (\in^*)$}
	    (0, 1) node (empty) {$\emptyset$}
          (1.5, 4)  node (Bneq) {$\mathcal B (\neq^*)$}
          (4.5, 3) node (all) {$\omega^\omega \setminus W$}
          (3, 1) node (Dneq) {$\mathcal D(\neq^*)$}
	    
           ;
     \draw[->,>=stealth]
	      (BsubN) edge (Bsub)
            (Bsub) edge (Bleq)
            (Bleq) edge (Bin)
            (Bsub) edge (Din)
            (Bin) edge (Dsub)
            (Dleq) edge (Dsub)
            (Bleq) edge (Dleq)
	     (BsubN) edge (DinN)
            (DinN) edge (Bin)
            (BinN) edge (DsubN)
           (Dsub) edge (DsubN)
	    (empty) edge (BsubN)
           (Bslal) edge (BsubN)
           (BsubN) edge (Bslal)
           (DsubN) edge (Dslal)
           (Dslal) edge (DsubN)
           (Bin) edge (Bneq)
           (Bneq) edge (Bin)
           (DsubN) edge (all)
          (Din) edge (Dneq)
           (Dneq) edge (BinN)
          (Dneq) edge (Dleq)
            
            ;
  \end{tikzpicture}
\caption{Integrating the Combinatorial Nodes and the Nodes for Measure and Category}
\end{figure}

The following question is still open and seems related to several open problems concerning the forcing Zapletal introduces in \cite{dimtheoryandforcing}.

\begin{question}
Can the larger diagram including the nodes for measure and category be fully separated by proper (or at least $\omega_1$-preserving) forcing in the same way that the diagram with only the combinatorial nodes was in this chapter?
\end{question}
\clearpage

\chapter{Cardinal Characteristics for Sets of Functions}
As we saw in the last chapter, many cardinal characteristics on $\omega^\omega$ arise as follows: fix some relation $R \subseteq \omega \times \omega$ and let $R^* \subseteq \omega^\omega \times \omega^\omega$ be defined by $x\mathrel{R}^*y$ if and only if for all but finitely many $n$ we have $x(n) \mathrel{R} y(n)$. For instance, letting $R$ be the the usual order on $\omega$ gives the eventual domination ordering. Each such $R$ then gives rise to two cardinal characteristics, $\mathfrak{b}(R^*)$, the least size of a set $A \subseteq \omega^\omega$ with no $R^*$-bound and $\mathfrak{d}(R^*)$ the least size of a set $D \subseteq \omega^\omega$ which is $R^*$-dominating. A natural generalization of this is as follows: fix two sets $X$ and $Y$, let $\mathcal I$ be an ideal on $X$ and $R \subseteq Y \times Y$ be a binary relation on $Y$. Let $Y^X$ be the set of functions $f:X \to Y$ and consider the relation $R_\mathcal I \subseteq Y^X \times Y^X$ given by $f \mathrel{R_\mathcal I} g$ if and only if for $\mathcal I$-almost all $x$ we have $f(x) \mathrel{R} g(x)$ i.e. $\{x \in X \; | \; \neg f(x) \mathrel{R} g(x)\} \in \mathcal I$. Again we get two cardinal characteristics, this time on the set $Y^X$: $\mathfrak{b}(R_\mathcal I)$, the least size of a set $A \subseteq Y^X$ which has no $R_\mathcal I$-bound and $\mathfrak{d}(R_\mathcal I)$, the least size of a set $D \subseteq Y^X$ which is dominating with respect to $R_\mathcal I$. Note that letting $X=Y = \omega$ and $\mathcal I$ be the ideal of finite sets we recover the original setting for cardinal characteristics on Baire space and letting $Y = 2$ we recover the same for Cantor space.

Recently, much work has been done on the case of $X = \kappa$ and $Y = \kappa$ or $2$ for arbitrary $\kappa$, thus generalizing cardinal characteristics to larger cardinals, see for example the article \cite{brendle16} or the survey \cite{questionsbaire} for a list of open questions. In this case the interesting ideals are the ideal of sets of size $< \kappa$, the non-stationary ideal, and, if $\kappa$ has a large cardinal property, then potentially some ideal related to this. See \cite{CuSh95}, Theorems 6 and 8 for a particularly striking result relating cardinal invariants modulo different ideals. 

However, this framework is more flexible than just allowing one to study generalized Baire space and Cantor space. Indeed it is easy to imagine numerous new cardinal characteristics. In this chapter I consider a different generalization, based on the function space $(\omega^\omega)^{\omega^\omega}$ of functions $f:\omega^\omega \to \omega^\omega$. Since Baire space comes with ideals that are not easily defined on $\kappa^\kappa$ we get further generalizations of cardinal characteristics. Specifically I will consider the ideals $\mathcal N$, $\mathcal M$ and $\mathcal K$. The result is a \say{higher dimensional} version of several well-known cardinal characteristics. While many different generalizations are possible let me stick with the three relations we have already seen for simplicity: $\leq^*$, $\neq^*$, and $\in^*$. By considering two cardinals for each of these three relations and three ideals I end up with 18 new cardinals characteristics above the continuum. The first main theorem of this chapter is to show that these \say{higher dimensional} cardinals behave, provably under $\ZFC$, similar to their Baire space analogues (the cardinals mentioned below will be defined in detail in the next section).

\begin{theorem}
Interpreting $\to$ as $\leq$ the inequalities shown in Figures \ref{nulldia} and \ref{media} are all provable in $\ZFC$.

\begin{figure}[h]
\centering
  \begin{tikzpicture}[scale=1.5,xscale=2]
     \draw 
           (1,0) node (Bin*) {$\mfb(\in_\Null^*)$}
           (1,1) node (Bleq*) {$\mfb(\leq_\Null^*)$}
           (1,2) node (Bneq*) {$\mfb(\neq_\Null^*)$}
           (2,0) node (Dneq*) {$\mfd(\neq_\Null^*)$}
           (2,1) node (Dleq*) {$\mfd(\leq_\Null^*)$}
           (2,2) node (Din*) {$\mfd(\in_\Null^*)$}
           ;
     \draw[->,>=stealth]
            (Bin*) edge (Bleq*)
            (Bleq*) edge (Bneq*)
            (Bin*) edge (Dneq*)
            (Bleq*) edge (Dleq*)
            (Bneq*) edge (Din*)
            (Dneq*) edge (Dleq*)
            (Dleq*) edge (Din*)
;      
  \end{tikzpicture}
\caption{Higher Dimensional Cardinal Characteristics Mod the Null Ideal}
\label{nulldia}
\end{figure}

\begin{figure}[h]
\centering
  \begin{tikzpicture}[scale=1.5,xscale=2]
     \draw 
           (1,0) node (Bin*) {$\mfb(\in_\Me^*)$}
           (1,1) node (Bleq*) {$\mfb(\leq_\Me^*)$}
           (1,2) node (Bneq*) {$\mfb(\neq_\Me^*)$}
           (2,0) node (Dneq*) {$\mfd(\neq_\Me^*)$}
           (2,1) node (Dleq*) {$\mfd(\leq_\Me^*)$}
           (2,2) node (Din*) {$\mfd(\in_\Me^*)$}
	     (0,0) node (BinK*) {$\mfb(\in_\Kb^*)$}
           (0,1) node (BleqK*) {$\mfb(\leq_\Kb^*)$}
           (0,2) node (BneqK*) {$\mfb(\neq_\Kb^*)$}
           (3,0) node (DneqK*) {$\mfd(\neq_\Kb^*)$}
           (3,1) node (DleqK*) {$\mfd(\leq_\Kb^*)$}
           (3,2) node (DinK*) {$\mfd(\in_\Kb^*)$}
           ;
     \draw[->,>=stealth]
            (Bin*) edge (Bleq*)
            (Bleq*) edge (Bneq*)
            (Bin*) edge (Dneq*)
            (Bleq*) edge (Dleq*)
            (Bneq*) edge (Din*)
            (Dneq*) edge (Dleq*)
            (Dleq*) edge (Din*)
            (Dleq*) edge (DleqK*)
            (Dneq*) edge (DneqK*)
            (Din*) edge (DinK*)
            (BleqK*) edge (Bleq*)
            (BneqK*) edge (Bneq*)
            (BinK*) edge (Bin*)
            (BinK*) edge (BleqK*)
            (BleqK*) edge (BneqK*)
            (DneqK*) edge (DleqK*)
	      (DleqK*) edge (DinK*)

;
 \end{tikzpicture}
\caption{Higher Dimensional Cardinal Characteristics Mod the Meager and $\sigma$-Compact Ideals}
\label{media}
\end{figure}

\end{theorem}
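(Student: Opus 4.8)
The plan is to verify each arrow in Figures~\ref{nulldia} and~\ref{media} by the same pattern of pointwise arguments that underlies the classical Cicho\'n diagram, lifted through the ideals $\Null$, $\Me$ and $\Kb$ on the index space $\baire$. First I would record the general monotonicity principles for the cardinals $\mfb(R_{\mathcal I})$ and $\mfd(R_{\mathcal I})$: if $R \subseteq S$ pointwise (on the target pairs) then every $R_{\mathcal I}$-dominating family is $S_{\mathcal I}$-dominating, so $\mfd(S_{\mathcal I}) \le \mfd(R_{\mathcal I})$ and dually $\mfb(R_{\mathcal I}) \le \mfb(S_{\mathcal I})$; and if $\mathcal I \subseteq \mathcal J$ then an $R_{\mathcal J}$-bound is an $R_{\mathcal I}$-bound whenever it exists, giving $\mfb(R_{\mathcal I}) \le \mfb(R_{\mathcal J})$ and $\mfd(R_{\mathcal I}) \le \mfd(R_{\mathcal J})$, provided the ambient function-space and largeness conventions match. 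Since $\Kb \subseteq \Me$ (every $\sigma$-compact subset of $\baire$ is meager) this yields immediately the horizontal $\Kb \to \Me$ transfer arrows in Figure~\ref{media}, namely $\mfb(\leq_\Kb^*) \le \mfb(\leq_\Me^*)$, $\mfb(\neq_\Kb^*) \le \mfb(\neq_\Me^*)$, $\mfb(\in_\Kb^*) \le \mfb(\in_\Me^*)$ and the three dual inequalities for $\mfd$.

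Next I would handle the ``column'' arrows $\mfb(\in_{\mathcal I}^*) \to \mfb(\leq_{\mathcal I}^*) \to \mfb(\neq_{\mathcal I}^*)$ (and the dual chain among the $\mfd$'s, and the cross arrow $\mfb(\leq_{\mathcal I}^*)\to\mfd(\leq_{\mathcal I}^*)$), for each fixed $\mathcal I \in \{\Null, \Me, \Kb\}$. These are the exact analogues of the ``easy'' inclusions in Theorem~\ref{propdi}, and the point is that the witnessing transformations there were all \emph{pointwise} and \emph{uniform}: from a slalom $s$ one builds $n \mapsto \max(s(n)) + 1$; from a dominating real $y$ one gets an eventually-different real via $y$ versus $y+1$; etc. Applied coordinatewise in the second variable to a function $f : \baire \to \baire$ (or $f : \baire \to \mathcal S$), each such transformation commutes with ``for $\mathcal I$-almost all $x \in \baire$'': if $f$ is, say, $\in_{\mathcal I}^*$-dominating then the function $g$ defined by $g(x) = (n \mapsto \max(f(x)(n))+1)$ is $\leq_{\mathcal I}^*$-dominating, because the bad set for $g$ at the index $x$ is contained in the bad set for $f$ at $x$. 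So the whole bottom-right ``Cicho\'n square'' of Figure~\ref{arbcichon} transports verbatim to each of the three higher-dimensional squares. The two nontrivial diagonal arrows $\mfb(\in_{\mathcal I}^*) \to \mfd(\neq_{\mathcal I}^*)$ and $\mfb(\neq_{\mathcal I}^*) \to \mfd(\in_{\mathcal I}^*)$ will require the liftings of Lemmas~\ref{infslalom} and~\ref{neqinf} (the Bartoszy\'nski-style slalom combinatorics): again these proofs only ever manipulate the target values on a fixed partition $\{J_{n,k}\}$ of $\omega$, so running them with the index parameter $x \in \baire$ carried along shows that the bad set (computed at each $x$) is preserved, and hence that ``for $\mathcal I$-almost all $x$'' is preserved; this gives the diagonal arrows for each $\mathcal I$.

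The step I expect to be the genuine obstacle is making the ideal-transfer and diagonal arguments airtight with respect to \emph{which} function space the cardinal lives on and \emph{what counts as} an $R_{\mathcal I}$-bound when $\mathcal I = \Kb$ (which, unlike $\Null$ and $\Me$, is not a $\sigma$-ideal containing all of $\baire$ only because $\baire$ is not $\sigma$-compact, but behaves differently under the relevant coding). In particular the arrows running from the $\Me$-layer to the $\Kb$-layer among the $\mfd$'s ($\mfd(\leq_\Me^*) \to \mfd(\leq_\Kb^*)$, $\mfd(\neq_\Me^*) \to \mfd(\neq_\Kb^*)$, $\mfd(\in_\Me^*) \to \mfd(\in_\Kb^*)$) and from the $\Kb$-layer into the $\Me$-layer among the $\mfb$'s go the ``right'' way precisely because $\Kb \subseteq \Me$, but one must check that a function that is, e.g., $\leq_\Me^*$-dominating is still $\leq_\Kb^*$-dominating — this is immediate from $\Kb \subseteq \Me$ — and conversely that the monotonicity of $\mfb$ in the ideal does not secretly require a base property that $\Kb$ fails; using the fact recalled in the preliminaries that $A \in \Kb$ iff $A$ is $\leq^*$-bounded, one can reduce every $\Kb$-statement to a $\leq^*$-boundedness statement in the index variable and push it through. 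Once that bookkeeping is done, assembling the two figures is just a diagram chase combining the pointwise square (three copies), the two diagonals in each copy, and the $\Kb\leftrightarrow\Me$ transfer arrows. No consistency results are needed here; everything is a $\ZFC$ implication obtained by the pointwise-lifting principle plus ideal monotonicity.
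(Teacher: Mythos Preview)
Your proposal is correct and follows essentially the same route as the paper: the ``easy'' arrows in each six-node diagram are handled by pointwise lifting of the constructions from Theorem~\ref{propdi}, the two diagonal arrows $\mfb(\in_{\mathcal I}^*)\le\mfd(\neq_{\mathcal I}^*)$ and $\mfb(\neq_{\mathcal I}^*)\le\mfd(\in_{\mathcal I}^*)$ are obtained by running the combinatorics of Lemmas~\ref{infslalom} and~\ref{neqinf} coordinatewise in $x$, and the $\Kb\leftrightarrow\Me$ transfer arrows come from $\Kb\subseteq\Me$. One small caution: in your first paragraph the sentence ``an $R_{\mathcal J}$-bound is an $R_{\mathcal I}$-bound'' and the conclusion ``$\mfd(R_{\mathcal I})\le\mfd(R_{\mathcal J})$'' have the direction reversed (for $\mathcal I\subseteq\mathcal J$ one has $R_{\mathcal I}\Rightarrow R_{\mathcal J}$, hence $\mfd(R_{\mathcal J})\le\mfd(R_{\mathcal I})$), but you apply it correctly later when you write $\mfd(\leq_{\Me}^*)\to\mfd(\leq_{\Kb}^*)$, so this is only a local slip in the abstract statement, not in the argument itself.
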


\pagebreak

The rest of this chapter is organized as follows. In the next section I introduce the cardinals $\mathfrak{b}(R_\mathcal I)$ and $\mathfrak{d}(R_\mathcal I)$ and basic relations between them are shown. The second section investigates the relation between these higher dimensional cardinal characteristics and the standard cardinal characteristics on $\omega$. Section 3 contains a number of consistency results and introduces three new forcing notions based on generalizations of Cohen, Hechler, and localization forcing. In section 4 I list a number of open questions, as well as some extensions.

\section{Higher Dimensional Variants of A Fragment of Cicho\'n's Diagram}

In this section I define the cardinals that will be studied for the rest of the chapter. Recall that I write $\mathcal S$ for the space of slaloms.

\begin{definition}
Let $\mathcal I \in \{\mathcal N, \mathcal M, \mathcal K\}$ and $R \in \{\leq^*, \neq^*, \in^*\}$.
\begin{enumerate}
\item
$\mathfrak{b}(R_\mathcal I)$ is the least size of a set $A$ of functions from $\omega^\omega$ to $\omega^\omega$ for which there is no $g:\omega^\omega \to \omega^\omega$ ($g:\omega^\omega \to \mathcal S$ in the case of $R = \in^*$) such that for all $f \in A$ the set $\{x \in \omega^\omega \; | \; \neg f(x) \mathrel{R} g(x)\}$ is in $\mathcal I$.
\item
$\mathfrak{d}(R_\mathcal I)$ is the least size of a set $A$ of functions from $\omega^\omega$ to $\omega^\omega$ ($\baire$ to $\mathcal S$ in the case of $R = \in^*$) so that for all $g:\omega^\omega \to \omega^\omega$ there is an $f \in A$ for which the set $\{x \in \omega^\omega \; | \; \neg g(x) \mathrel{R} f(x)\}$ is in $\mathcal I$.
\end{enumerate}
\end{definition}

By varying $\mathcal I$ and $R$ this definition gives 18 new cardinals. For readability, let me give the details below for the case of the null ideal. Similar statements hold for $\Me$ and $\Kb$. First let's see explicitly what each relation $R_\mathcal I$ is. On the two lists below let $f, g:\baire \to \baire$ and $h:\baire \to \mathcal S$.

\begin{enumerate}
\item
$f \neq^*_\Null g$ if and only if for all but a measure zero set of $x \in \baire$ we have that $f (x) \neq^* g(x)$.

\item
$f \leq^*_\Null g$ if and only if for all but a measure zero set of $x \in \baire$ we have that $f(x) \leq^* g(x)$.

\item
$f \in^*_\Null h$ if and only if for all but a measure zero set of $x \in \baire$ we have that $f(x) \in^* h(x)$.
\end{enumerate}

For the cardinals now we get the following.

\begin{enumerate}

\item
$\mfb(\neq^*_\Null)$ is the least size of a $\neq^*_\Null$-unbounded set $A \subseteq \bbb$ i.e. $A$ is such that for each $f:\baire \to \baire$ there is a $g \in A$ so that the set of $\{x \; | \; \exists^\infty n \, g(x)(n) = f(x)(n)\}$ is not measure zero.

\item
$\mfd(\neq^*_\Null)$ is the least size of a $\neq^*_\Null$-dominating set $A \subseteq \bbb$ i.e. $A$ is such that for every $f:\baire \to \baire$ there is a $g \in A$ so that $\mu (\{x \; | \; f(x)\neq^* g(x)\} )= 1$.

\item
$\mfb(\leq^*_\Null)$ is the least size of a $\leq^*_\Null$-unbounded set $A \subseteq \bbb$ i.e. $A$ is such that for each $f:\baire \to \baire$ there is a $g \in A$ so that the set of $\{x \; | \; \exists^\infty n \, f(x)(n) < g(x)(n)\}$ is not measure zero.

\item
$\mfd(\leq^*_\Null)$ is the least size of a $\leq^*_\Null$-dominating set $A \subseteq \bbb$ i.e. $A$ is such that for every $f:\baire \to \baire$ there is a $g \in A$ so that $\mu (\{x \; | \; f(x)\leq^* g(x)\}) = 1$.

\item
$\mfb(\in^*_\Null)$ is the least size of a $\in^*_\Null$-unbounded set $A \subseteq \bbb$ i.e. $A$ is such that for each $f:\baire \to \mathcal S$ there is a $g \in A$ so that the set of $\{x \; | \; \exists^\infty n \, g(x)(n) \notin f(x)(n)\}$ is not measure zero.

\item
$\mfd(\in^*_\Null)$ is the least size of a $\leq^*_\Null$-dominating set $A \subseteq \bbb$ i.e. $A$ is such that for every $f:\baire \to \baire$ there is a $g \in A$ so that $\mu (\{x \; | \; f(x)\in^* g(x)\} )= 1$.

\end{enumerate}

The first goal is to prove the following theorem, which shows that for each ideal the six associated cardinals fit together as in the case of the corresponding fragment of Cicho\'n's diagram on $\omega$, note not all cardinals in the $\omega$ case have analogues here.

\begin{theorem}[The Higher Dimensional Cicho\'n diagram]
For an ideal $\mathcal I \in \{\Null, \Me, \Kb\}$ and interpreting $\to$ as \say{is $\ZFC$-provably less than or equal to} the following all hold:

\begin{figure}[h]
\centering
  \begin{tikzpicture}[scale=1.5,xscale=2]
     \draw 
           (1,0) node (Bin*) {$\mfb(\in_\mathcal I^*)$}
           (1,1) node (Bleq*) {$\mfb(\leq_\mathcal I^*)$}
           (1,2) node (Bneq*) {$\mfb(\neq_\mathcal I^*)$}
           (2,0) node (Dneq*) {$\mfd(\neq_\mathcal I^*)$}
           (2,1) node (Dleq*) {$\mfd(\leq_\mathcal I^*)$}
           (2,2) node (Din*) {$\mfd(\in_\mathcal I^*)$}
           ;
     \draw[->,>=stealth]
            (Bin*) edge (Bleq*)
            (Bleq*) edge (Bneq*)
            (Bin*) edge (Dneq*)
            (Bleq*) edge (Dleq*)
            (Bneq*) edge (Din*)
            (Dneq*) edge (Dleq*)
            (Dleq*) edge (Din*)
;
            
  \end{tikzpicture}
\end{figure}

\label{higherdimcichon}
\end{theorem}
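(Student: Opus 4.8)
The plan is to lift, uniformly, the classical proofs of the corresponding seven implications in the $\operatorname{add}(\Null)$--$\mfb$--$\operatorname{non}(\Me)$--$\operatorname{cov}(\Me)$--$\mfd$--$\operatorname{cof}(\Null)$ fragment of Cicho\'n's diagram on $\omega$ to the space $\bbb$ (and to $\mathcal S^{\baire}$ wherever a slalom bound is involved). The crucial point is that every one of these classical implications is witnessed by a transformation applied \emph{coordinatewise} in the argument $x\in\baire$: one fixes an interval partition of $\omega$, takes the relevant one-dimensional operation $T$ (e.g.\ $s\mapsto(n\mapsto\max s(n)+1)$, the shift $g\mapsto g+1$, or one of Bartoszy\'nski's block codings), and sets $\widehat{f}(x):=T(f(x))$; the verification then reduces to an inclusion of exceptional sets $\{x:\neg(\widehat f(x)\mathrel R g(x))\}\subseteq\{x:\neg(f(x)\mathrel{R'}\widehat g(x))\}$, or, in the dual (``bounding'') arguments, to finitely many $\mathcal I$-sets failing to exhaust $\baire$. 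Since $\Null$, $\Me$ and $\Kb$ are used only insofar as they are non-trivial ideals (closed under subsets and finite unions, and proper), each such one-dimensional fact relativizes verbatim, and the same diagram holds for all three ideals at once.

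Four of the arrows are pure bookkeeping and I would dispatch them first. For $\mfb(\in_\mathcal I^*)\leq\mfb(\leq_\mathcal I^*)$ and $\mfd(\leq_\mathcal I^*)\leq\mfd(\in_\mathcal I^*)$ I use $s\mapsto(n\mapsto\max s(n)+1)$ applied to the slalom value: if $f(x)\in^* g(x)$ then $f(x)$ is eventually dominated by $n\mapsto\max g(x)(n)+1$, so the exceptional set only shrinks. For $\mfb(\leq_\mathcal I^*)\leq\mfb(\neq_\mathcal I^*)$, $\mfb(\leq_\mathcal I^*)\leq\mfd(\leq_\mathcal I^*)$ and $\mfd(\neq_\mathcal I^*)\leq\mfd(\leq_\mathcal I^*)$ I use $g\mapsto g+1$: were $G$ a $\leq^*_\mathcal I$-bound for a $\neq^*_\mathcal I$-unbounded family, some member would agree with $G+1$ infinitely often on a non-$\mathcal I$ set of $x$'s, hence fail to be $\leq^*$-dominated by $G$ there; ``$\leq^*_\mathcal I$-dominating $\Rightarrow$ $\leq^*_\mathcal I$-unbounded'' is the same argument, now using properness of $\mathcal I$ to find a single $x$ outside two $\mathcal I$-sets; and ``$\leq^*_\mathcal I$-dominating $\Rightarrow$ $\neq^*_\mathcal I$-dominating'' follows since $g(x)\leq^* f(x)$ implies $g(x)\neq^*(f+1)(x)$.

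The two substantive arrows are $\mfb(\in_\mathcal I^*)\leq\mfd(\neq_\mathcal I^*)$ and $\mfb(\neq_\mathcal I^*)\leq\mfd(\in_\mathcal I^*)$, whose one-dimensional prototypes are $\operatorname{add}(\Null)\leq\operatorname{cov}(\Me)$ and $\operatorname{non}(\Me)\leq\operatorname{cof}(\Null)$ (via Facts \ref{infact}, \ref{neqfact} and \cite[Chapter 2]{BarJu95}). The combinatorial engine is the equivalence between ``a slalom capturing a family cofinitely'' and ``a real infinitely often equal to each member of the family'', and dually between ``a real eventually different from each member'' and ``a real never captured by the associated slaloms'' --- exactly the content of Lemmas \ref{infslalom} and \ref{neqinf}. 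For $\mfb(\in_\mathcal I^*)\leq\mfd(\neq_\mathcal I^*)$: given a $\neq^*_\mathcal I$-dominating $A\subseteq\bbb$, let $\widehat f(x)$ be the infinitely-often-equal real read off from $f(x)$ by the fixed block coding; if some $S:\baire\to\mathcal S$ captured every $\widehat f$ modulo $\mathcal I$, the same coding produces, pointwise, a $g_S:\baire\to\baire$ for which $\widehat f(x)\in^* S(x)$ forces $f(x)$ and $g_S(x)$ to agree infinitely often, and then choosing $f\in A$ eventually different from $g_S$ off an $\mathcal I$-set contradicts this off an $\mathcal I$-set; $\mfb(\neq_\mathcal I^*)\leq\mfd(\in_\mathcal I^*)$ is the mirror argument, running Lemma \ref{neqinf} instead.

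The only real obstacle is checking that these Bartoszy\'nski reductions can be carried out by a \emph{single} coding depending on a fixed interval partition of $\omega$ and not on $x$, so that $\widehat f$ and $g_S$ are genuine functions on Baire space and the exceptional-set inclusions hold uniformly in $x$; one also has to keep the various slalom width bounds (such as $|s(n)|\leq n$ for the slaloms witnessing $\mfb(\in^*_\mathcal I)$) coherent along the coding, exactly as in the one-dimensional arguments of \cite[Chapter 2]{BarJu95} and in the proofs of Lemmas \ref{infslalom} and \ref{neqinf}. Granting this, all seven inclusions --- for each of $\mathcal I\in\{\Null,\Me,\Kb\}$ --- follow, proving Theorem \ref{higherdimcichon}.
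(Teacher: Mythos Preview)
Your proposal is correct and follows essentially the same approach as the paper: the easy arrows are handled by the same pointwise transformations (e.g.\ $s\mapsto(n\mapsto\max s(n)+1)$ and $g\mapsto g+1$), and the two substantive arrows are obtained by lifting Bartoszy\'nski's block coding (the $\mathcal J$-partition machinery of Lemmas~\ref{infslalom} and~\ref{neqinf}) coordinatewise in $x$, with the crucial observation you flag---that the coding depends only on a fixed interval partition of $\omega$ and not on $x$---being exactly what makes the exceptional-set inclusions go through. One small remark: the paper extracts a single claim (from a $\mathcal J$-slalom $s$ one reads off a real $x_s$ such that $y'\in^* s$ forces $y$ to agree with $x_s$ infinitely often) and uses it in both directions, rather than invoking Lemma~\ref{neqinf} separately for the second inequality; your phrase ``the infinitely-often-equal real read off from $f(x)$'' is also slightly off---what you actually need there is the $\mathcal J$-coded version $f(x)'$, not an output of the slalom-to-real transformation---but the structure of the argument is right.
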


\begin{proof}
The proof of this theorem mirrors that of Theorem \ref{propdi}. Most of these implications are easy, however two are more substantial. The easy cases, exactly the same as those for Theorem \ref{propdi} are shown below in Figure \ref{easycasesfunctions} and the arguments for these are exactly identical to those outlined in that proof. For instance, if $A$ is $\leq^*_\mathcal I$-bounded, then of course it is not $\leq^*_\mathcal I$-dominating hence $\mfb(\leq^*_\mathcal I) \leq \mfd(\leq^*_\mathcal I)$. Similarly, if $A \subseteq \bbb$ is a set so that there is a function $h:\baire \to \mathcal S$ so that for all $f \in A$ $f \in^*_\mathcal I h$ then $\hat{h} (x) (n) = {\rm max} \, h (x) (n) + 1$ witnesses the $\leq^*_\mathcal I$-bound of $A$. The other easy cases follow the same lines.

\begin{figure}[h]
\centering
  \begin{tikzpicture}[scale=1.5,xscale=2]
     \draw 
           (1,0) node (Bin*) {$\mfb(\in_\mathcal I^*)$}
           (1,1) node (Bleq*) {$\mfb(\leq_\mathcal I^*)$}
           (1,2) node (Bneq*) {$\mfb(\neq_\mathcal I^*)$}
           (2,0) node (Dneq*) {$\mfd(\neq_\mathcal I^*)$}
           (2,1) node (Dleq*) {$\mfd(\leq_\mathcal I^*)$}
           (2,2) node (Din*) {$\mfd(\in_\mathcal I^*)$}
           ;
     \draw[->,>=stealth]
            (Bin*) edge (Bleq*)
            (Bleq*) edge (Bneq*)
            (Bleq*) edge (Dleq*)
            (Dneq*) edge (Dleq*)
            (Dleq*) edge (Din*)
;
            
  \end{tikzpicture}
\caption{The Easy Cases of the Higher Cicho\'n Diagram}
\label{easycasesfunctions}
\end{figure}
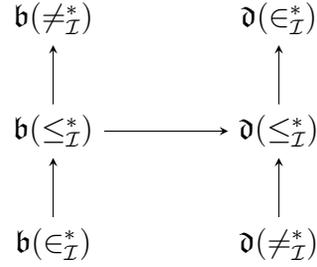

The two more substantial inequalities are $\mfb(\in^*_\mathcal I) \leq \mfd(\neq^*_\mathcal I)$ and $\mfb(\neq^*_\mathcal I) \leq \mfd(\in^*_\mathcal I)$, so I turn my attention to these. For the rest of this section, fix an ideal $\mathcal I \in\{ \Null, \Me, \Kb\}$.

The proofs of the inequalities consist of \say{lifting} the proofs for the Cicho\'n diagram to the higher dimensional case, particularly those in \cite{Bar1987} or for Lemmas \ref{infslalom} and \ref{neqinf}. Like in the proofs of those lemmas, fix finite, disjoint subsets of $\omega$ which collectively cover $\omega$, say $\mathcal J = \{J_{n, k} \; | \; k < n\}$. Let $J_n = \bigcup_{k < n} J_{n, k}$. Let's say that a $\mathcal J$-function is a function $x: \omega \to \omega^{< \omega}$ so that for every $n$ we have that $x(n)$ has domain $J_n$. Similarly a $\mathcal J$-slalom is a function $s:\omega \to [\omega^{< \omega}]^{< \omega}$ so that for each $n$ $|s(n)| \leq n$ and if $w \in s(n)$ then the domain of $w$ is $J_n$. If $x$ is a $\mathcal J$ function and $s$ a $\mathcal J$-slalom then we let $x \in^* s$ if and only if for all but finitely many $n$ $x(n) \in s(n)$. Clearly via some simple coding we can find homeomorphisms/measure isomorphisms between $\omega^\omega$ and the set of $\mathcal J$-functions (with the obvious topology) and $\mathcal S$ and the set of $\mathcal J$-slaloms. It's then routine to verify that $\mathfrak{b}(\in^*_\mathcal I)$ is the same for $\in^*$ defined on slaloms and elements of Baire space or their $\mathcal J$-versions. 

\begin{lemma}
$\mathfrak{b} (\in^*_\mathcal I) \leq \mathfrak{d}(\neq^*_\mathcal I)$
\end{lemma}

\begin{proof}
I use the version of $\mathfrak{b}(\in^*_\mathcal I)$ defined in terms of $\mathcal J$-slaloms as in the paragraph before the statement of the lemma. Let $\kappa < \mathfrak{b} (\in_\mathcal I^*)$. I need to show that $\kappa < \mathfrak{d}(\neq^*_\mathcal I)$. Fix a set $A \subseteq (\omega^\omega)^{\omega^\omega}$ of size $\kappa$. Let's see that $A$ is not $\neq^*_\mathcal I$-dominating. To be clear, a set is $\neq^*_\mathcal I$ dominating if for every function $f:\omega^\omega \to \omega^\omega$ there is a $g \in A$ so that for all $x$ save for a set in $\mathcal I$ $f(x) \neq^* g(x)$. Negating this, we need to find a function $f:\omega^\omega \to \omega^\omega$ so that for all $g \in A$ the set $\{x \; | \; \exists^\infty n \, g(x)(n) = f(x) (n) \}$ is $\mathcal I$-positive. In fact, I will show that under the assumption, such an $f$ can be found so that each such set is $\mathcal I$-measure one.

Given an element of Baire Space, $x:\omega \to \omega$ let $x'$ be the $\mathcal J$-function defined by $x' (n) = x \hook J_n$. Note that since the $J_n$'s cover $\omega$ and are disjoint the function $x \mapsto x'$ is a bijection. Given a function $f:\omega^\omega \to \omega^\omega$ let $f '$ similarly be defined by letting $f'(x) = f(x) '$. Let $A' = \{g' \; | \; g \in A\}$. Since this set has size $\kappa$ it is $\in^*_\mathcal I$-bounded i.e. there is a function $f_A$ with domain the set of $\mathcal J$-functions and range the set of $\mathcal J$-slaloms so that for all $g' \in A'$ $\{x \; | \; g'(x) \notin^* f_A(x)\} \in \mathcal I$. I need to transform $f_A$ into a function $f$ as advertized in the previous paragraph. The crux of the argument is the following claim, which will also be used in Lemma \ref{bd2} below as well.

\begin{claim}
Given a $\mathcal J$-slalom $s$, there is a function $x_s:\omega \to \omega$ so that for all $y:\omega \to \omega$ if $y'(n) \in^* s(n)$ then there are infinitely many $n < \omega$ so that $x_s(n) = y (n)$.
\end{claim}

\begin{proof}[Proof of Claim]
Fix a $\mathcal J$-slalom $s$. For each $n$ let $s(n) = \{w^n_0, ..., w^n_{n-1}\}$. Define $x_s:\omega \to \omega$ by letting for each $n$ and $k < n$ and $l \in J_{n, k}$ $x_s (l) = w^n_k(l)$. Suppose now that $y: \omega \to \omega$ is such that $y' (n) \in s (n)$ for all but finitely many $n < \omega$. Fix some $n$ so that $y' (n) \in s(n)$, say, $y' (n) = w^n_k$. Then for each $l \in J_{n, k}$ $y (l) = w^n_k (l) = x_s(l)$. Since there are cofinitely many such $n$'s there are infinitely many such $l$'s so $x_s$ is as needed.
\end{proof}

Now, returning to the proof of the lemma, let $f:\omega^\omega \to \omega^\omega$ be defined by letting $f(x)$ be the function $x_{f_A(x)}$ in the terminology of the claim. In particular, if $g:\omega^\omega \to \omega^\omega$ then for every $x \in \omega^\omega$ if $g'(x) \in^* f_A(x)$ then there are infinitely many $n$ so that $g(x) (n) = f(x)(n)$. In particular the set $\{x \; | \; g'(x) \in^* f_A (x)\}$ is contained in the set $\{x \; | \; \exists^\infty n \, g(x)(n) = f(x)(n)\}$. For $g \in A$ the former is $\mathcal I$-measure one and so the latter is as well. As a result $f$ is as needed.
\end{proof}

By essentially dualizing the proof above we get as well the following.
\begin{lemma}
$\mathfrak{b}(\neq^*_\mathcal I) \leq \mathfrak{d}(\in^*_\mathcal I)$
\label{bd2}
\end{lemma}

\begin{proof}
Suppose $\kappa < \mfb (\neq^*_\mathcal I)$ and let $A \subseteq (\mathcal S)^{\baire}$ be of size $\kappa$. I need to show that there is an $f \in \bbb$ so that for each $h \in A$ the set of $x$ so that $f(x) \notin^* h(x)$ does not have $\mathcal I$-measure one. For each $h \in A$ let $g_h \in \bbb$ be defined by letting, for each $x \in \baire$ $g_h (x) = x_{h(x)}$ as defined in the claim of the previous lemma. In particular, for each $x$ note that if $f(x) \in^* h(x)$ then $\exists^\infty n \, g_h (x) (n) = f(x)(n)$. Now let $\bar{A} = \{g_h \; | \; h \in A\}$. This set has size at most $\kappa$ so there is a $\neq^*_\mathcal I$-bound by assumption, say $f$. This means that for each $g_h \in \bar{A}$ we have that $\{x \; | \; g_h (x) \neq^* f(x)\}$ is $\mathcal I$-measure one. But now the lemma is proved since for every $x$ so that $g_h (x) \neq^* f(x)$ by the contrapositive of the implication defining $g_h$ we have that $f(x) \notin^* h(x)$.
\end{proof}

Combining the easy cases shown in Figure \ref{easycasesfunctions} with the proofs of the above two lemmas then completes the proof of Theorem \ref{higherdimcichon}. 
\end{proof}

Using the fact that every set in $\Kb$ is meager, we get the following relation between the diagrams for $\Me$ and $\Kb$.

\begin{proposition}
The following inequalities are provable in $\ZFC$:

\begin{figure}[h]
\centering
  \begin{tikzpicture}[scale=1.5,xscale=2]
     \draw 
           (1,0) node (Bin*) {$\mfb(\in_\Me^*)$}
           (1,1) node (Bleq*) {$\mfb(\leq_\Me^*)$}
           (1,2) node (Bneq*) {$\mfb(\neq_\Me^*)$}
           (2,0) node (Dneq*) {$\mfd(\neq_\Me^*)$}
           (2,1) node (Dleq*) {$\mfd(\leq_\Me^*)$}
           (2,2) node (Din*) {$\mfd(\in_\Me^*)$}
	     (0,0) node (BinK*) {$\mfb(\in_\Kb^*)$}
           (0,1) node (BleqK*) {$\mfb(\leq_\Kb^*)$}
           (0,2) node (BneqK*) {$\mfb(\neq_\Kb^*)$}
           (3,0) node (DneqK*) {$\mfd(\neq_\Kb^*)$}
           (3,1) node (DleqK*) {$\mfd(\leq_\Kb^*)$}
           (3,2) node (DinK*) {$\mfd(\in_\Kb^*)$}
           ;
     \draw[->,>=stealth]
            (Bin*) edge (Bleq*)
            (Bleq*) edge (Bneq*)
            (Bin*) edge (Dneq*)
            (Bleq*) edge (Dleq*)
            (Bneq*) edge (Din*)
            (Dneq*) edge (Dleq*)
            (Dleq*) edge (Din*)
            (Dleq*) edge (DleqK*)
            (Dneq*) edge (DneqK*)
            (Din*) edge (DinK*)
            (BleqK*) edge (Bleq*)
            (BneqK*) edge (Bneq*)
            (BinK*) edge (Bin*)
            (BinK*) edge (BleqK*)
            (BleqK*) edge (BneqK*)
            (DneqK*) edge (DleqK*)
	      (DleqK*) edge (DinK*)

;
 \end{tikzpicture}
\end{figure}

\end{proposition}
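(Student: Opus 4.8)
The plan is to verify the displayed inequalities one family at a time, observing that they come in three types: the "internal" Meager-diagram arrows, the "internal" $\Kb$-diagram arrows, and the "cross" arrows relating the two diagrams. The internal arrows for $\Me$ are already established by Theorem \ref{higherdimcichon} applied to $\mathcal I = \Me$, and likewise the internal $\Kb$-arrows follow from Theorem \ref{higherdimcichon} applied to $\mathcal I = \Kb$; so nothing new is needed for those. Thus the entire content of the proposition lies in the six cross arrows: $\mfb(\leq_\Kb^*)\to\mfb(\leq_\Me^*)$, $\mfb(\neq_\Kb^*)\to\mfb(\neq_\Me^*)$, $\mfb(\in_\Kb^*)\to\mfb(\in_\Me^*)$ on the left, and $\mfd(\leq_\Me^*)\to\mfd(\leq_\Kb^*)$, $\mfd(\neq_\Me^*)\to\mfd(\neq_\Kb^*)$, $\mfd(\in_\Me^*)\to\mfd(\in_\Kb^*)$ on the right.

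The key observation driving all six is the containment $\Kb \subseteq \Me$ (every $\sigma$-compact subset of $\baire$ is nowhere dense, hence meager). I would package the argument abstractly: whenever $\mathcal I \subseteq \mathcal J$ are ideals and $R$ is any relation, one has $\mfb(R_{\mathcal I}) \leq \mfb(R_{\mathcal J})$ and $\mfd(R_{\mathcal J}) \leq \mfd(R_{\mathcal I})$. For the $\mfb$ direction: if $A$ is an $R_{\mathcal J}$-unbounded family, then for every candidate bound $g$ some $f\in A$ has $\{x \mid \neg f(x)\mathrel{R}g(x)\}\notin\mathcal J$; since $\mathcal I\subseteq\mathcal J$, that set is also not in $\mathcal I$, so $A$ is $R_{\mathcal I}$-unbounded. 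Hence the least size of an $R_{\mathcal I}$-unbounded family is $\leq$ the least size of an $R_{\mathcal J}$-unbounded family, i.e. $\mfb(R_{\mathcal I})\leq\mfb(R_{\mathcal J})$. For the $\mfd$ direction: if $D$ is $R_{\mathcal I}$-dominating, then for every $g$ some $f\in D$ has $\{x\mid\neg g(x)\mathrel{R}f(x)\}\in\mathcal I\subseteq\mathcal J$, so $D$ is $R_{\mathcal J}$-dominating; thus $\mfd(R_{\mathcal J})\leq\mfd(R_{\mathcal I})$. Applying this with $\mathcal I=\Kb$, $\mathcal J=\Me$ and $R$ ranging over $\leq^*,\neq^*,\in^*$ yields exactly the six cross arrows, where in the $\in^*$ case one reads "functions into $\mathcal S$" in place of "functions into $\baire$" throughout, exactly as in Definition above and in Theorem \ref{higherdimcichon}.

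There is no real obstacle here — the only thing to be careful about is making the monotonicity-in-the-ideal lemma precise for the slalom-valued relation $\in^*$ (the codomain differs, but the membership set $\{x \mid \neg f(x)\in^* h(x)\}$ is still a subset of $\baire$, so the ideal comparison makes sense verbatim), and noting that $\Kb$ being a proper ideal on $\baire$ with a Borel base is what makes all the cardinals well-defined in the first place (as recorded in the introductory section). So the proof is: invoke Theorem \ref{higherdimcichon} for the two internal hexagons, then establish the abstract monotonicity lemma above and apply it three times with $\Kb\subseteq\Me$ to get the cross arrows. The mild subtlety — and the one sentence worth spelling out — is the verification that $\Kb \subseteq \Me$, which is immediate since a compact subset of $\baire$ is closed with empty interior (as $\baire$ is nowhere locally compact), hence nowhere dense, and a countable union of nowhere dense sets is meager.
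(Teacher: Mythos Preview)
Your proposal is correct and takes essentially the same approach as the paper: both rely on the containment $\Kb\subseteq\Me$ to obtain the cross arrows, with the internal hexagons handled by Theorem~\ref{higherdimcichon}. The only difference is cosmetic---you package the argument as an abstract monotonicity lemma for $\mathcal I\subseteq\mathcal J$, while the paper argues directly (showing $R_\Kb$-bounded implies $R_\Me$-bounded via ``$\leq^*$-bounded sets are meager'', and not $R_\Me$-dominating implies not $R_\Kb$-dominating via ``$\Kb$-measure one sets are comeager'').
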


\begin{proof}
Fix a relation $R$. To see that $\mfb (R_\Kb) \leq \mfb ( R_\Me)$ note that if $A \subseteq \bbb$ is $R_\Kb$-bounded, then it means that there is a function $f:\baire \to \baire$ so that for each $g \in A$ the set $\{ x \; | \; \neg g (x) \mathrel{R} f(x)\}$ is $\leq^*$-bounded by some $z \in \baire$. But this means in particular that it is meager and hence for each $g \in A$ $g \mathrel{R_\Me} f$. 

To see that $\mfd (R_\Me) \leq \mfd (R_\Kb)$, suppose that $A \subseteq \bbb$ is not $R_\Me$-dominating. This means that there is some $f:\baire \to \baire$ so that for each $g \in A$ the set $\{x \; | \; f(x) \mathrel{R} g(x)\}$ is not comeager. It follows that in particular it is not $\Kb$-measure one then (since each such set is comeager) and therefore no $g \in A$ is a $R_\Kb$ bound on $f$, so $A$ is not $R_\Kb$-dominating.
\end{proof}

\section{ Relations between the Higher Dimensional Cardinals and Standard Cardinal Characteristics}

This section concerns the relationship between provable inequalities between the cardinals introduced previously and cardinal characteristics of the continuum. I look first at the relationship between the higher dimensional cardinals and cardinal $\mfc^+$ and then I compare the diagrams for the null and meager ideals. 

I would like to argue that the standard diagonal arguments show that the cardinals defined above are greater than or equal to $\mathfrak{c}^+$, however this is not the case in $\ZFC$ alone. What is true is that this holds under additional assumptions on certain cardinal characteristics on $\omega$. For the statement of the lemma below, recall that $non (\Kb) =  \mathfrak{b}$, see Theorem 2.8 of \cite{BlassHB}.
\begin{lemma}
For each $\mathcal I \in \{\Null, \Me, \Kb\}$ and $R \in \{ \in^*, \leq^*, \neq^*\}$, if $\mathfrak{b}(R) = non (\mathcal I) = \mfc$ then $\mfc^+ \leq \mathfrak{b}(R_\mathcal I)$. In particular, if $add (\Null) = \mfc$ then all 18 cardinals introduced in the previous section are greater than $\mfc$.
\end{lemma}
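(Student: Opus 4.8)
The plan is to prove the contrapositive-flavored statement directly: assuming $\mathfrak{b}(R) = \non(\mathcal{I}) = \mathfrak{c}$, I will show that any family $A \subseteq (\baire)^{\baire}$ (or $(\mathcal S)^{\baire}$ when $R = \in^*$) of size $\leq \mathfrak{c}$ is $R_{\mathcal I}$-bounded, which gives $\mathfrak{c}^+ \leq \mathfrak{b}(R_{\mathcal I})$. Enumerate $A = \{f_\alpha \mid \alpha < \mathfrak{c}\}$ and also fix an enumeration $\{x_\beta \mid \beta < \mathfrak{c}\}$ of $\baire$. The key point is that, for a fixed input $x \in \baire$, the set of values $\{f_\alpha(x) \mid \alpha < \mathfrak{c}\}$ is a subset of $\baire$ (resp. $\mathcal S$) of size $\leq \mathfrak{c}$, so since $\mathfrak{b}(R) = \mathfrak{c}$, for each $x$ we can choose $g(x)$ to be a single $R$-bound for this entire set of values: that is, $f_\alpha(x) \mathrel{R} g(x)$ for every $\alpha < \mathfrak{c}$ simultaneously. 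Wait — that requires a $\mathfrak{c}$-sized set to be $R$-bounded, which $\mathfrak{b}(R) = \mathfrak{c}$ does not directly give (it says sets of size $< \mathfrak{c}$ are bounded).

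So the argument must be more careful and this is exactly where $\non(\mathcal I) = \mathfrak{c}$ enters. The refined plan: build $g : \baire \to \baire$ (resp. $\to \mathcal S$) by a transfinite recursion of length $\mathfrak{c}$ in which at stage $\beta$ I handle the input $x_\beta$ but only need to $R$-dominate those finitely-or-fewer $f_\alpha$'s that I have committed to so far — no, that still does not close. The correct approach is the standard diagonalization for these higher cardinals: simultaneously recurse on pairs $(\alpha, \beta) < \mathfrak{c}$ (using a bijection $\mathfrak{c} \times \mathfrak{c} \cong \mathfrak{c}$), and at the step corresponding to $(\alpha,\beta)$ define or revise $g$ at the single point $x_\beta$ so that $f_\alpha(x_\beta) \mathrel{R} g(x_\beta)$, but making sure that for each fixed $\alpha$, the set of $x$ where we fail to have $f_\alpha(x)\mathrel R g(x)$ is forced into $\mathcal I$. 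Concretely, for each $\alpha$ the "bad set" $B_\alpha = \{x \mid \neg f_\alpha(x)\mathrel R g(x)\}$ will be contained in $\{x_\beta \mid \beta < \gamma_\alpha\}$ for some $\gamma_\alpha < \mathfrak{c}$ determined by the bookkeeping; since $\non(\mathcal I) = \mathfrak{c}$ means every subset of $\baire$ of size $< \mathfrak{c}$ lies in $\mathcal I$, each $B_\alpha \in \mathcal I$, hence $f_\alpha \mathrel{R_{\mathcal I}} g$ for all $\alpha$, as desired. For $R = \leq^*$ one uses $\mathfrak{b} = \non(\mathcal K)$; the bookkeeping must ensure that when we reach input $x_\beta$, only $< \mathfrak{c}$ many constraints $f_\alpha(x_\beta) \mathrel R ?$ have been imposed so far, which is arranged by making $\gamma_\alpha < \mathfrak{c}$ and interleaving appropriately (e.g. handle $(\alpha,\beta)$ only after $\alpha, \beta$ have both appeared, via a standard $\mathfrak{c}$-length enumeration of $\mathfrak{c}\times\mathfrak{c}$ in which each initial segment has size $< \mathfrak{c}$ in each coordinate is false for $\mathrm{cf}(\mathfrak c)=\omega$ — so instead one fixes for each $\alpha$ a set $S_\alpha \subseteq \mathfrak c$ of size $<\mathfrak c$ on which $f_\alpha$ gets its constraints and uses $\mathfrak b(R)=\mathfrak c$ at each point $x_\beta$ to $R$-bound the $<\mathfrak c$ many values $\{f_\alpha(x_\beta)\mid \beta\in S_\alpha\}$).

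Let me state the clean version I would actually write. For each $\alpha < \mathfrak{c}$ pick a set $T_\alpha \subseteq \baire$ with $|T_\alpha| < \mathfrak{c}$ such that $\bigcup_{\alpha<\mathfrak c}(\baire \setminus T_\alpha)$-structure works — actually the cleanest: list $\baire = \{x_\beta\}_{\beta<\mathfrak c}$ and for each $x = x_\beta$ consider the set $V_\beta = \{f_\alpha(x_\beta) \mid \alpha < \beta\}$, which has size $|\beta| < \mathfrak{c} = \mathfrak{b}(R)$, hence is $R$-bounded; let $g(x_\beta)$ be an $R$-bound for $V_\beta$ (using $\mathfrak{b}(\in^*) = \add(\mathcal N)$ etc. for the $\in^*$ case, where the bound is a slalom). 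Then for each fixed $\alpha$, whenever $\beta > \alpha$ we have $f_\alpha(x_\beta) \mathrel{R} g(x_\beta)$, so $B_\alpha = \{x \mid \neg f_\alpha(x)\mathrel R g(x)\} \subseteq \{x_\beta \mid \beta \leq \alpha\}$, a set of size $\leq |\alpha| < \mathfrak{c}$, which by $\non(\mathcal I) = \mathfrak{c}$ lies in $\mathcal I$. Hence $g$ is an $R_{\mathcal I}$-bound for $A$, proving $\mathfrak{b}(R_{\mathcal I}) \geq \mathfrak{c}^+$. The "in particular" clause follows since $\add(\mathcal N) = \mathfrak{c}$ forces all the hypotheses $\mathfrak{b}(R) = \non(\mathcal I) = \mathfrak{c}$ for every $\mathcal I \in \{\Null,\Me,\Kb\}$ and $R \in \{\in^*,\leq^*,\neq^*\}$: indeed $\add(\mathcal N) = \mathfrak c$ implies $\mathfrak b(\in^*) = \add(\mathcal N) = \mathfrak c$ (Fact \ref{infact}), and $\add(\mathcal N) \leq \mathfrak b = \non(\mathcal K) = \mathfrak b(\leq^*) \leq \non(\mathcal M) = \mathfrak b(\neq^*) \leq \mathfrak c$ and $\add(\mathcal N) \leq \add(\mathcal M) \leq \non(\mathcal M), \non(\mathcal N) \leq \mathfrak c$ in the Cicho\'n diagram force all relevant characteristics up to $\mathfrak c$. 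The main obstacle is purely bookkeeping-theoretic: making sure the definition of $g(x_\beta)$ only ever needs to $R$-bound a set of size $< \mathfrak{c}$ (so that $\mathfrak{b}(R) = \mathfrak{c}$ applies) while still guaranteeing each bad set $B_\alpha$ has size $< \mathfrak{c}$ — the "$\alpha < \beta$" diagonal cleanly achieves both, and the only thing to double-check is the $\in^*$ case, where one must confirm that a $< \mathfrak{c}$-sized set of reals is captured by a single slalom precisely because $\mathfrak{b}(\in^*) = \add(\mathcal N) = \mathfrak{c}$.
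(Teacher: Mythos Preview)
Your ``clean version'' near the end is correct and is exactly the paper's argument: enumerate $\baire=\{x_\beta:\beta<\mfc\}$, set $g(x_\beta)$ to be an $R$-bound for $\{f_\alpha(x_\beta):\alpha<\beta\}$ (using $\mathfrak b(R)=\mfc$), and observe that each bad set $B_\alpha\subseteq\{x_\beta:\beta\le\alpha\}$ has size $<\mfc$ and hence lies in $\mathcal I$ by $\non(\mathcal I)=\mfc$. The earlier false starts could be trimmed, but the final diagonalization and the ``in particular'' justification via the Cicho\'n diagram match the paper.
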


\begin{proof}
This is essentially a generalization of the standard diagonal arguments used to show that various cardinal characteristics are uncountable. The point is that in that case, the relations (on $\omega$) under consideration are always so that every finite set has an upper bound and the ideal is always the ideal of finite sets. It is exactly because arithmetic of cardinal characteristics is not so simple that the additional hypotheses are needed.

Fix $R$ and $\mathcal I$ and assume $\mathfrak{b}(R) = non (\mathcal I) = \mfc$. Let $f_\alpha : \baire \to \baire$ for each $\alpha < \mfc$. We want to find a $g:\baire \to \baire$ (or $g:\baire \to \mathcal S$ in the case of $R = \in^*$) so that for all $\alpha$ $f_\alpha \mathrel{R_\mathcal I} g$. This is done as follows. First, list the elements of $\baire$ as $\{x_\alpha \; | \; \alpha < \mfc\}$. Next, note that for each $\beta < \mfc$, by the fact that $non (\mathcal I) = \mfc$ we have that $\{x_\alpha \; | \; \alpha < \beta\} \in \mathcal I$ and, by the fact that $\mathfrak{b}(R) = \mfc$ we have that for each $x_\gamma \in \baire$ the set $\{f_\alpha (x_\gamma) \; | \; \alpha < \beta\}$ has an $R$-bound, say $y^\gamma_\beta$. Now define $g$ so that $g(x_\alpha) = y^\alpha_\alpha$. It follows that for all $\alpha$ if $\gamma > \alpha$ then $f_\alpha (x_\gamma) \mathrel{R} g(x_\gamma)$ and since the set $\{x_\gamma \; | \; \gamma > \alpha \}$ is $\mathcal I$-measure one we're done.
\end{proof}

In ongoing joint work with J. Brendle we have since shown that the cardinals of the form $\mfd(R_\mathcal I)$ are provably at least $\mfc^+$ but the cardinals of the form $\mfb(R_\mathcal I)$ can be both consistently equal to and strictly less than $\mfc$, even $\aleph_1$ with the continuum arbitrarily large. Hopefully this will appear in print soon.

Finally in this section let me compare the cardinals for $\Me$ and $\Null$. Every argument given so far has worked equally well for each of them, and the theorem below suggests that this is not an accident.

\begin{theorem}
If $add (\Null) = cof (\Null)$ then for every relation $R \in\{ \in^*, \leq^*, \neq^*\}$ we have that $\mfb (R_\Null) = \mfb (R_\Me)$ and $\mfd (R_\Null) = \mfd (R_\Me)$.
\label{M=N}
\end{theorem}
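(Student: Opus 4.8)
The plan is to reduce the theorem to a single structural fact and then run a short Tukey-style argument. Write $\kappa := {\rm add}(\Null)$. The hypothesis ${\rm add}(\Null)={\rm cof}(\Null)$ squeezes every entry of Cicho\'n's diagram between these two, so in particular
\[ {\rm add}(\Null)={\rm add}(\Me)={\rm cov}(\Null)={\rm cov}(\Me)={\rm non}(\Null)={\rm non}(\Me)={\rm cof}(\Me)={\rm cof}(\Null)=\kappa. \]
In this situation the generalized Erd\H{o}s--Sierpi\'nski duality applies: there is a bijection $h:\baire\to\baire$ such that for every $X\subseteq\baire$ one has $X\in\Null$ if and only if $h[X]\in\Me$ (and hence, applying this to $h^{-1}[Y]$, also $Y\in\Me$ iff $h^{-1}[Y]\in\Null$). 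I would invoke this as a black box; I indicate below how it is obtained and why it is the real content.

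Granting $h$, fix $R\in\{\leq^*,\neq^*,\in^*\}$. For a pair of functions $(f,g)$ occurring in the definitions (so $f:\baire\to\baire$ and $g:\baire\to\baire$, resp. $g:\baire\to\mathcal S$, and symmetrically for the $\mfd$-versions) set $B(f,g)=\{x\in\baire:\neg\,(f(x)\mathrel{R}g(x))\}$, so that $f\mathrel{R_\Null}g\iff B(f,g)\in\Null$ and likewise for $\Me$. Define $\Phi(f):=f\circ h^{-1}$ on each of the two function spaces involved; $\Phi$ is a bijection of that space onto itself with inverse $g\mapsto g\circ h$, and it sends $\baire$-valued functions to $\baire$-valued functions and $\mathcal S$-valued functions to $\mathcal S$-valued functions. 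The key computation is the bad-set identity
\[ B(\Phi f,\Phi g)=\{x:\neg\,(f(h^{-1}x)\mathrel{R}g(h^{-1}x))\}=h\bigl[\,\{y:\neg\,(f(y)\mathrel{R}g(y))\}\,\bigr]=h[B(f,g)], \]
obtained by the substitution $y=h^{-1}(x)$. By the duality property of $h$, $B(f,g)\in\Null$ iff $h[B(f,g)]\in\Me$, i.e. $f\mathrel{R_\Null}g$ iff $\Phi f\mathrel{R_\Me}\Phi g$. Hence $\Phi$ carries $R_\Null$-unbounded families to $R_\Me$-unbounded families and $R_\Null$-dominating families to $R_\Me$-dominating families, and being a bijection it preserves cardinalities; applying the same to $\Phi^{-1}$ gives $\mfb(R_\Null)=\mfb(R_\Me)$ and $\mfd(R_\Null)=\mfd(R_\Me)$. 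This half is completely uniform in $R$: the three relations $\leq^*,\neq^*,\in^*$ never enter beyond the formal identity above.

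The substantive step, and the one I expect to require real care, is the construction of $h$ — that is, the generalized Erd\H{o}s--Sierpi\'nski theorem under ${\rm add}(\Null)={\rm cof}(\Null)$. Because all the invariants above equal $\kappa$, both $\Null$ and $\Me$ carry $\subseteq$-increasing, cofinal, covering families of Borel sets $\langle N_\alpha:\alpha<\kappa\rangle$ and $\langle M_\alpha:\alpha<\kappa\rangle$ (a cofinal family of size ${\rm cof}=\kappa$ may be taken increasing since unions of $<\kappa={\rm add}$ members stay in the ideal, and enlarged to cover since ${\rm cov}=\kappa$). One then builds $h$ by a back-and-forth recursion of length $\mfc$: after re-indexing the two chains so that the corresponding ``annuli'' $N_\alpha\setminus\bigcup_{\beta<\alpha}N_\beta$ and $M_\alpha\setminus\bigcup_{\beta<\alpha}M_\beta$ have matching sizes, at stage $\xi$ one sends the $\xi$-th point of $\baire$ lying in exactly the levels $\geq\alpha$ of $\langle N_\gamma\rangle$ to an as-yet-unused point in the $\alpha$-th annulus of $\langle M_\gamma\rangle$, and dually for $h^{-1}$; this guarantees $h[N_\alpha]\subseteq M_\alpha$ and $h^{-1}[M_\alpha]\subseteq N_\alpha$ for all $\alpha$, which together with the bijectivity of $h$ yields $X\in\Null\iff h[X]\in\Me$ for all $X$. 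The delicate part is the bookkeeping needed to keep the recursion alive — arranging matching annulus sizes and checking that at each stage a legal value is still available — and this is precisely where one uses that the relevant invariants of both ideals collapse to the single regular cardinal $\kappa$. Once $h$ is in hand the rest, as above, is immediate.
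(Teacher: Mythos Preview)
Your proposal is correct and follows essentially the same approach as the paper. The paper also invokes the Erd\H{o}s--Sierpi\'nski duality bijection (citing it as a black box from Bartoszy\'nski--Judah) and then transfers bounds via the map $g\mapsto g\circ f^{\pm 1}$, exactly as your $\Phi$ does; your write-up is slightly cleaner in isolating the bad-set identity $B(\Phi f,\Phi g)=h[B(f,g)]$ and in sketching the construction of $h$, but the argument is the same.
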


The proof of this theorem follows immediately from the following two lemmas, the first of which is well known.
\begin{lemma}[Theorem 2.1.8 of \cite{BarJu95}]
If $add(\Null) = cof(\Null)$ then there is a bijection $f:\baire \to \baire$ so that for all $A \subseteq \baire$ $f(A) \in \Null$ if and only if $A \in \Me$ and $f(A) \in \Me$ if and only if $A \in \Null$.
\label{erdosthm}
\end{lemma}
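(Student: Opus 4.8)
The plan is to prove this by the Erd\H{o}s--Sierpi\'nski method, adapted from the hypothesis $\CH$ to the weaker hypothesis $add(\Null)=cof(\Null)$. Write $\kappa=add(\Null)=cof(\Null)$; since the additivity of an ideal containing the singletons is always regular and uncountable, $\kappa$ is regular, and moreover $\kappa\le\mfc$. The two Cicho\'n arrows $add(\Null)\to add(\Me)$ and $cof(\Me)\to cof(\Null)$ force $\kappa\le add(\Me)\le cof(\Me)\le\kappa$, so $add(\Me)=cof(\Me)=\kappa$ as well. Consequently each of $\Null$ and $\Me$ carries an increasing, $\subseteq$-continuous, $\subseteq$-cofinal chain of length $\kappa$ whose union is $\baire$: well-order a cofinal family of size $\kappa$ and take its initial unions; these stay inside the ideal because its additivity is $\kappa$, continuity at limits is immediate, cofinality is preserved, and the union is all of $\baire$ because singletons lie in the ideal.

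The heart of the argument is a simultaneous recursion of length $\kappa$ producing \emph{four} such chains: on the ``domain side'' a null chain $\langle N_\alpha:\alpha<\kappa\rangle$ and a meager chain $\langle M_\alpha:\alpha<\kappa\rangle$, and on the ``range side'' a null chain $\langle N'_\alpha:\alpha<\kappa\rangle$ and a meager chain $\langle M'_\alpha:\alpha<\kappa\rangle$, all increasing, continuous, cofinal in their respective ideals, with union $\baire$. For $x\in\baire$ let its $N$-rank be the (successor or $0$) ordinal at which $x$ first enters $\langle N_\alpha\rangle$, and likewise its $M$-rank; this partitions $\baire$ into cells $C_{\beta\gamma}=\{x: x\text{ has }N\text{-rank }\beta\text{ and }M\text{-rank }\gamma\}$, with $\bigcup_{\gamma<\kappa}C_{\beta\gamma}=N_{\beta+1}\setminus N_\beta$ and $\bigcup_{\beta<\kappa}C_{\beta\gamma}=M_{\gamma+1}\setminus M_\gamma$ (reading successor layers, and similarly at $0$). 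Symmetrically, the range chains partition $\baire$ into cells $C'_{\beta\gamma}$, but with the roles of null and meager interchanged: $\bigcup_\gamma C'_{\beta\gamma}=M'_{\beta+1}\setminus M'_\beta$ and $\bigcup_\beta C'_{\beta\gamma}=N'_{\gamma+1}\setminus N'_\gamma$. I will arrange during the recursion that every cell $C_{\beta\gamma}$ and $C'_{\beta\gamma}$ has cardinality exactly $\mfc$. Granting that, define $f:\baire\to\baire$ cellwise, letting $f$ restricted to $C_{\beta\gamma}$ be an arbitrary bijection onto $C'_{\beta\gamma}$. Then $f$ is a bijection of $\baire$, and summing cells over $\gamma$ gives $f(N_{\beta+1}\setminus N_\beta)=M'_{\beta+1}\setminus M'_\beta$, hence $f(N_\alpha)=M'_\alpha$ for all $\alpha$; summing the other way, $f(M_\alpha)=N'_\alpha$. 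Cofinality of the four chains now yields the theorem: if $A\in\Me$ then $A\subseteq M_\alpha$ for some $\alpha$, so $f(A)\subseteq N'_\alpha\in\Null$; conversely if $f(A)\in\Null$ then $f(A)\subseteq N'_\alpha$, so $A\subseteq f^{-1}(N'_\alpha)=M_\alpha\in\Me$; thus $f(A)\in\Null\iff A\in\Me$, and the symmetric computation using $f(N_\alpha)=M'_\alpha$, cofinality of $\langle M'_\alpha\rangle$ in $\Me$ and of $\langle N_\alpha\rangle$ in $\Null$, gives $f(A)\in\Me\iff A\in\Null$.

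The main obstacle is the bookkeeping that keeps all grid cells at cardinality $\mfc$ throughout the recursion. The ingredients are three classical facts: (a) every Borel co-null set contains a compact perfect Lebesgue-null set (inner regularity gives a closed set of positive measure, which is uncountable closed, hence contains a perfect set, inside which a Cantor scheme of shrinking measure extracts a perfect null subset); (b) every Borel co-meager set contains a compact perfect set, and compact subsets of $\baire$ are nowhere dense, hence meager; (c) every perfect set is a disjoint union of $\mfc$ perfect sets. At a successor stage $\delta$ one must extend, say, $N_\delta$ to $N_{\delta+1}$ by adjoining a null set disjoint from $N_\delta$ that meets each of the (fewer than $\kappa$, hence at most $\delta$-many) already-created meager layers — together with the one about to be created at stage $\delta$ — in a set of size $\mfc$; one obtains this by passing to a Borel null hull of $N_\delta$, intersecting each target meager layer with its complement (still large, by an invariant saying each layer was built to contain a perfect set meeting the complement of every prior opposite-side hull in a size-$\mfc$ set, maintained via (a)--(c)), and splitting off a size-$\mfc$ perfect null piece inside each; one does the symmetric thing for $M_\delta\to M_{\delta+1}$ and for the two range chains, interleaving the bookkeeping that absorbs fixed cofinal families of size $\kappa$ in $\Null$ and $\Me$ so the chains end up cofinal. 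Limit stages are routine: the chains extend by unions, which remain in the respective ideals because $\kappa$ is regular and equals the additivity of both $\Null$ and $\Me$, and Borel hulls are re-taken so that (a) and (b) stay applicable. Checking that these choices can always be made disjointly and that the stated invariant propagates is the only genuinely technical part; everything else — that $f$ is a bijection and that the four chains certify the duality — is bookkeeping of the kind just described.
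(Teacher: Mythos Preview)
The paper does not give its own proof of this lemma: it is stated as a citation of \cite[Theorem 2.1.8]{BarJu95} and used as a black box in the proof of Theorem~\ref{M=N}. Your argument is the standard Erd\H{o}s--Sierpi\'nski duality construction (which is what \cite{BarJu95} does), so there is nothing to compare against; your outline is correct, with the bookkeeping at successor stages left somewhat impressionistic but recoverable.
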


\begin{lemma}
If there is a bijection $f:\baire \to \baire$ as in Lemma \ref{erdosthm} then for every relation $R \in \{ \in^*, \leq^*, \neq^*\}$ we have that $\mfb (R_\Null) = \mfb (R_\Me)$ and $\mfd (R_\Null) = \mfd (R_\Me)$.
\end{lemma}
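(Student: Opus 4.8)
The plan is to transport the entire structure defining the higher-dimensional cardinals through the bijection $f$ from Lemma \ref{erdosthm}. First I would fix a relation $R \in \{\in^*, \leq^*, \neq^*\}$ and recall that $\mfb(R_\Null)$ and $\mfb(R_\Me)$ (and the corresponding $\mfd$'s) are defined exactly alike, the only difference being whether the exceptional set $\{x \in \baire \st \neg F(x) \mathrel{R} G(x)\}$ is required to lie in $\Null$ or in $\Me$. The bijection $f:\baire \to \baire$ of Lemma \ref{erdosthm} swaps these two ideals: $A \in \Null \iff f(A) \in \Me$ and $A \in \Me \iff f(A) \in \Null$. So the natural move is to use $f$ to build, from any family $\calA \subseteq \bbb$ witnessing one cardinal, a family of the same size witnessing the other.

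Concretely, given $F:\baire \to \baire$ (or $F : \baire \to \mathcal S$ in the $\in^*$ case), define $F^f := F \circ f^{-1}$, i.e. $F^f(x) = F(f^{-1}(x))$. Since $f$ is a bijection, $F \mapsto F^f$ is a bijection of $\bbb$ (resp. of $(\mathcal S)^{\baire}$) onto itself. The key computation is that for any $F, G$ the exceptional set satisfies
\[
\{x \st \neg F^f(x) \mathrel{R} G^f(x)\} = f\bigl(\{y \st \neg F(y) \mathrel{R} G(y)\}\bigr),
\]
which is immediate from the substitution $x = f(y)$. Hence, by the defining property of $f$, the set on the left is in $\Me$ precisely when the set on the right is in $\Null$, and vice versa. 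Therefore $F \mathrel{R_\Null} G$ iff $F^f \mathrel{R_\Me} G^f$, and symmetrically with the roles of the ideals reversed (using that $f^{-1}$ also swaps the two ideals, which follows from the biconditionals in Lemma \ref{erdosthm}).

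From this equivalence everything follows by pushing families across the map $F \mapsto F^f$. If $\calA$ is an $R_\Null$-unbounded family of size $\mfb(R_\Null)$, then I claim $\{F^f \st F \in \calA\}$ is $R_\Me$-unbounded of the same size: given any candidate bound $H$ for the image family, its preimage $H^{f^{-1}} = H \circ f$ would have to $R_\Null$-bound $\calA$ by the equivalence above, contradicting unboundedness. This gives $\mfb(R_\Me) \le \mfb(R_\Null)$, and the symmetric argument using that $f^{-1}$ also swaps the ideals gives the reverse inequality, so $\mfb(R_\Null) = \mfb(R_\Me)$. The same bijection-chasing, with ``dominating'' in place of ``unbounded'' and quantifiers dualized, yields $\mfd(R_\Null) = \mfd(R_\Me)$: an $R_\Me$-dominating family maps to an $R_\Null$-dominating family of equal size and conversely.

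I do not expect a serious obstacle here; this is a clean transfer argument and the only point requiring minor care is the $\in^*$ case, where the ``bound'' $G$ ranges over slalom-valued functions rather than $\baire$-valued functions — but the map $G \mapsto G \circ f^{-1}$ makes equally good sense there, and the set-theoretic identity for the exceptional set is insensitive to the codomain. One should also note explicitly that the displayed identity uses only that $f$ is a bijection (so that $x$ ranges over all of $\baire$ as $y$ does), and that applying the biconditionals of Lemma \ref{erdosthm} in both directions is what lets one run the argument symmetrically; no extra hypothesis beyond the existence of such an $f$ is needed, which is exactly what the lemma asserts.
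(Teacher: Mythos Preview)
Your proposal is correct and takes essentially the same approach as the paper: both transfer families across the map $F\mapsto F\circ f^{\pm 1}$ and use the ideal-swapping property of $f$ to convert $R_\Null$-relations into $R_\Me$-relations and back. Your presentation is slightly cleaner in that you isolate the equivalence $F\mathrel{R_\Null}G \iff F^f\mathrel{R_\Me}G^f$ once via the displayed identity on exceptional sets, whereas the paper carries out the analogous computation inline (forming $A_f=\{g\circ f\st g\in A\}$, bounding it, and checking that $\bar g\circ f^{-1}$ bounds $A$); but the substance is identical.
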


\begin{proof}
Fix a relation $R \in \{ \in^*, \leq^*, \neq^*\}$ and let $f:\baire \to \baire$ be a bijection as described in Lemma \ref{erdosthm}. First, suppose that $\kappa < \mfb (R_\Null)$ and let $A \subseteq ( \baire )^{\baire}$ be a set of size $\kappa$. I claim that there is a function $g_A:\baire \to \baire$ ($g_A:\baire \to \mathcal S$ in the case $R = \in^*$) so that for all $g \in A$ $g \mathrel{R_\Me} g_A$ and hence $\kappa < \mfb (R_\Me)$. Let $A_f = \{ g \circ f \; | \; g \in A\}$. Since $f$ is a bijection $|A_f| = \kappa$. By the hypothesis, let $\bar{g}:\baire \to \baire$ be an $R_\Null$ bound on $A_f$. I claim that $g_A = \bar{g} \circ f^{-1}$ is as needed. We have that for every $g \in A$ if $f^{-1}(x) = y$ is in the measure one set for which $g(f(y)) R \bar{g} (y)$ is true then the following holds:

\begin{equation*}
g(x) =  g (f(f^{-1}(x)) \mathrel{R} \bar{g} ( f^{-1} (x)) = g_A (x)
\end{equation*}
\linebreak
Therefore $f^{-1} (\{x \; | \; \neg g(x) \mathrel{R} g_A (x)\})$ is contained in $\{x \; | \; \neg g(f(x)) \mathrel{R} \bar{g}(x)\}$, which is null by assumption and so the former is null as well. Hence by the property of $f$ it follows that $\{x \; | \; \neg g(x) \mathrel{R} g_A(x)\}$ is meager so $g_A$ is an $R_\Me$-bound as needed.

This shows that $\mfb (R_\Null) \leq \mfb (R_\Me)$ however an identical argument, flipping the roles of the meager and null sets, shows the reverse inequality so we get that $\mfb (R_\Null) = \mfb (R_\Me)$. 

An essentially dual argument works to show that $\mfd (R_\Null) = \mfd (R_\Me)$. Let me sketch it, though I leave out the details. Assuming that $\kappa < \mfd (R_\Null)$ we fix a set $A \subseteq (\baire)^{\baire}$ of size $\kappa$, define $A_f$ as before and let $\bar{g}$ be a function not dominated by any member of $A_f$. Then essentially the same argument shows that $\bar{g} \circ f^{-1}$ is a function not dominated by any member of $A$ and, again by symmetry we obtain the required equality.
\end{proof}

Again in joint work with J. Brendle we have shown how to separate these. In particular, in the random model with $\mfc = \kappa$ we have shown $\mfb(\in^*_\Null) = \aleph_1 < \mfb(\in^*_\Kb) = \aleph_2 < \mfb (\in^*_\Me) = \kappa^+$.

\section{Consistency Results}
In this section I consider consistent separations between the cardinals. For readability, I focus on the case of $\mathcal I = \Null$, however, it's routine to check that the arguments go through for $\mathcal I = \Me$ and $\mathcal I = \Kb$. Indeed the essential point will be simply that $\Null$ has a Borel base and contains the countable subsets of $\baire$. Also, I will only be considering models of $\CH$ so by Theorem \ref{M=N} any separation between nodes in the $\Null$ diagram will hold equally for the $\Me$ diagram.

From now on assume $\GCH$ holds and fix an enumeration of $\baire$ in order type $\omega_1$, say $\{x_\alpha \; | \; \alpha < \omega_1\}$. Also fix an enumeration of the Borel sets in $\Null$ in order type $\omega_1$, say $\{N_\alpha \; | \; \alpha < \omega_1\}$.  Suppose we have some forcing notion $\mathbb P$ which does not add reals. Note that in this case if $B$ is a Borel set then $\mathbb P$ forces that the name for $B$ is equal to its evaluation in the ground model. Also, since $\mathbb P$ does not add reals, it does not add any Borel sets either. This translates to the following idea, which is used in several proofs. Suppose $\dot{A}$ is a $\mathbb P$ name for a subset of $\baire$. If for some condition $p \in \mathbb P$ we have that $p \forces\mu( \dot{A} ) = \check{0}$, then we can always find a $q \leq p$ and a Borel null set in the ground model $N$ so that $q \forces \dot{A} \subseteq \check{N}$.

The following simple lemma will be used in several proofs.

\begin{lemma}
Let $\vec{N}_0 = \langle N_{0, \alpha} \; \ | \; \alpha < \omega_1\rangle$ and $\vec{N}_1 = \langle N_{1, \alpha} \; \ | \; \alpha < \omega_1\rangle$ be two sequences of null sets of length $\omega_1$. There is an enumeration in order type $\omega_1$, say $\langle(N'_{0, \alpha}, N'_{1, \alpha} ) \; | \; \alpha <\omega_1\rangle$ of the set of all pairs $(N_{0, \beta}, N_{1, \gamma})$ so that for each $\alpha < \omega_1$ we have $x_\alpha \notin N'_{0, \alpha} \cup N'_{1, \alpha}$.
\label{mainlemmafunctions}
\end{lemma}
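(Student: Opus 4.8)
The plan is to construct the required enumeration by a transfinite recursion of length $\omega_1$, the engine of which is the observation that a null set misses $\aleph_1$-many of the $x_\alpha$'s.

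First I would fix the bookkeeping data. Let $P$ be the set of all pairs $p=(N_{0,\beta},N_{1,\gamma})$; since $\vec N_0,\vec N_1$ have length $\omega_1$ and, by the conventions of this section, range over the Borel null sets, we have $|P|=\aleph_1$ under $\GCH$, so fix an enumeration $P=\{p_\xi\mid\xi<\omega_1\}$. For $p=(N_0,N_1)\in P$ set $A_p=\{\alpha<\omega_1\mid x_\alpha\notin N_0\cup N_1\}$ and for $\alpha<\omega_1$ set $P_\alpha=\{p\in P\mid\alpha\in A_p\}$, the pairs whose union avoids $x_\alpha$. Two counting facts are needed. (i) $|A_p|=\aleph_1$ for every $p$: the set $N_0\cup N_1$ is null, being a union of two null sets, so $\baire\setminus(N_0\cup N_1)$ is co-null, hence uncountable (otherwise $\baire$ would be a countable union of null sets, contradicting non-triviality of $\Null$), hence of size $\mathfrak{c}=\aleph_1$ by $\CH$; and since every element of $\baire$ is some $x_\alpha$, $A_p$ maps onto this set, so $|A_p|=\aleph_1$. (ii) $|P_\alpha|=\aleph_1$ for every $\alpha$: for each $\alpha$ there are $\aleph_1$-many $\beta$ with $x_\alpha\notin N_{0,\beta}$ (for instance all the distinct singletons $\{x_{\beta'}\}$ with $x_{\beta'}\neq x_\alpha$ occur in $\vec N_0$, as does $\emptyset$), and likewise $\aleph_1$-many $\gamma$ with $x_\alpha\notin N_{1,\gamma}$, so $P_\alpha$ contains $\aleph_1$-many pairs.

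Then I would run the recursion, maintaining a countable partial injection $e_\alpha:\omega_1\rightharpoonup P$ such that $(\delta,p)\in e_\alpha$ implies $x_\delta\notin N_0\cup N_1$ where $p=(N_0,N_1)$, starting from $e_0=\emptyset$ and taking unions at limits. At stage $\alpha$: (a) if $\alpha\notin\dom(e_\alpha)$, use $|P_\alpha|=\aleph_1>\aleph_0\geq|\ran(e_\alpha)|$ to pick $p\in P_\alpha\setminus\ran(e_\alpha)$ and adjoin $(\alpha,p)$; (b) afterwards, if $p_\alpha\notin\ran$, use $|A_{p_\alpha}|=\aleph_1$ and countability of the current domain to pick $\delta\in A_{p_\alpha}$ outside it and adjoin $(\delta,p_\alpha)$. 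Each step preserves countability, injectivity, and the relation. Let $e=\bigcup_{\alpha<\omega_1}e_\alpha$: by (a) every $\alpha$ lies in $\dom(e)$, by (b) every $p_\xi$ lies in $\ran(e)$, so $e:\omega_1\to P$ is a bijection with $x_\alpha\notin\bigcup e(\alpha)$ for all $\alpha$; writing $e(\alpha)=(N'_{0,\alpha},N'_{1,\alpha})$ gives precisely the enumeration demanded.

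The one place requiring care — the "main obstacle" — is fact (ii): the recursion must, at stage $\alpha$, find a pair avoiding $x_\alpha$ among the co-countably-many not yet used, so it is essential that the pairs avoiding a given real be not merely nonempty but of full size $\aleph_1$, which is exactly where the richness of the sequences $\vec N_0,\vec N_1$ (their containing all Borel null sets, in particular all singletons) enters. The remainder is a standard $\CH$-style back-and-forth argument.
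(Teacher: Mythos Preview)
Your argument is correct but takes a different route from the paper. The paper runs a one-sided greedy construction rather than a back-and-forth: fix any enumeration $\langle(N''_{0,\gamma},N''_{1,\gamma})\mid\gamma<\omega_1\rangle$ of the pairs, and at stage $\alpha$ choose the pair of least index $\gamma$ that has not yet been used and whose union misses $x_\alpha$. Surjectivity is then argued by contradiction: if $\gamma$ were the least index of a pair never chosen, then all pairs of index $<\gamma$ are chosen by some countable stage $\delta$, and for every $\alpha>\delta$ the pair $\#\gamma$ is the least unused one, so it would be chosen at stage $\alpha$ unless $x_\alpha$ lies in its union --- forcing cocountably many $x_\alpha$ into a null set. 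This is exactly your fact (i) doing the work.

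Your back-and-forth is more symmetric and makes both directions explicit; in particular you isolate fact (ii), which the paper's greedy argument also needs (to guarantee the process never stalls at some stage) but does not spell out. You are also right that (ii) depends on the sequences being rich --- the lemma as literally stated fails, e.g., if $N_{0,\beta}=\{x_0\}$ for all $\beta$ --- and the paper is tacitly using that the sequences run through the Borel null sets, as they do in the applications. The paper's route is shorter and puts the emphasis on fact (i); yours is longer but more careful about the hypotheses.
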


\begin{proof}
First fix any enumeration of $\vec{N}_0 \times \vec{N}_1$, say $\langle (N''_{0, \alpha}, N''_{1, \alpha}) \; | \; \alpha < \omega_1\rangle$ and define inductively for each $\alpha$ $(N'_{0, \alpha}, N'_{1, \alpha})$ to be the least $\gamma$ so that $(N''_{0, \gamma}, N''_{1, \gamma})$ has not yet been enumerated and $x_\alpha \notin  (N''_{0, \gamma}, N''_{1, \gamma})$. I need to show that every $(N''_{0, \gamma}, N''_{1, \gamma})$ gets enumerated under this procedure. Suppose not and let $\gamma$ be least so that $(N''_{0, \gamma}, N''_{1, \gamma})$ is not enumerated. Since for every $\beta < \gamma$ the pair $(N''_{0, \beta}, N''_{1, \beta})$ was enumerated, there was some countable stage by which this happened and so for cocountably many $\alpha$ it must have been the case that $x_\alpha \in N''_{0, \gamma} \cup N''_{1, \gamma}$. But this is impossible since $N''_{0, \gamma} \cup N''_{1, \gamma}$ is measure zero and hence cannot contain a cocountable set.
\end{proof}

\subsection{Generalizing Cohen Forcing}

The point of this subsection is to prove the following theorem.

\begin{theorem}[$\GCH$]
Let $\kappa$ be a regular cardinal greater than $\aleph_1$. There is a cofinality preserving forcing notion $\mathbb P_\kappa$ so that if $G \subseteq \mathbb P_\kappa$ is $V$-generic then in $V[G]$ we have $\mfc^+ = \aleph_2 = \mathfrak{b} (\neq^*_\Null) < \mfd (\neq^*_\Null) = 2^\mfc = \kappa$.
\label{cohenforcingfornull}
\end{theorem}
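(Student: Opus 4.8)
The plan is to build $\mathbb{P}_\kappa$ as a finite support product (or a suitably chosen finite support iteration) of "higher-dimensional Cohen forcings," where a single copy adds a generic function $g:\baire\to\baire$ by finite conditions. Concretely, a condition in the basic forcing should be a finite partial function $p$ whose domain is a finite subset of $\baire$ (in $V$) and which assigns to each $x\in\dom p$ a finite partial function from $\omega$ to $\omega$; the order is reverse inclusion in both coordinates. This generically produces $g:\baire\to\baire$ which, on each coordinate $x$, is a Cohen real over $V$ — in particular $g(x)$ is infinitely-often-equal to every $f(x)$ for $f\in V$, so for every $f\in(\baire)^{\baire}\cap V$ the set $\{x\mid \neg\, f(x)\neq^* g(x)\}$ is comeager, hence certainly not null, i.e. $g$ is $\neq^*_\Null$-unbounded over the old functions. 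This forcing is ccc (conditions with the same finite "stem pattern" are compatible), so it preserves cofinalities, and $\kappa$-many copies (finite support product) will make $\mfd(\neq^*_\Null)$ large.

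The first step is to verify the upper bound $\mfb(\neq^*_\Null)=\aleph_2$: I would take $\mathbb{P}_\kappa$ to add its generic functions in $\aleph_2$-many blocks — say $\mathbb{P}_\kappa$ is the finite support product of $\kappa$ copies of the single-copy forcing $\mathbb{Q}$, but with a bookkeeping that ensures the first $\aleph_1$ generic functions $\langle g_\xi\mid\xi<\omega_1\rangle$ together form a $\neq^*_\Null$-unbounded family; alternatively, and more cleanly, use that $\CH$ holds in $V$ and $\mathbb{Q}$ preserves $\CH$ up to $\aleph_2$ stages, so that after $\omega_2$ copies every function $h:\baire\to\baire$ in the extension appears at some stage $<\omega_2$ and is then "beaten" by a later generic — giving $\mfb(\neq^*_\Null)\le\aleph_2$. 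Combined with the previous section's $\mfc^+\le\mfb(\neq^*_\Null)$ (which holds here because $\CH$ is preserved, so $\mfc=\aleph_1$ and $\add(\Null)=\mfc$ fails — wait, I must be careful here), I instead argue the lower bound $\aleph_2\le\mfb(\neq^*_\Null)$ directly: any family of $\le\aleph_1$-many functions $f_\alpha:\baire\to\baire$ in $V[G]$ lies in an intermediate model added by $\aleph_1$-many coordinates (by ccc and the finite support), and a fresh generic function from a coordinate not used is a $\neq^*_\Null$-bound for all of them, using Lemma \ref{mainlemmafunctions} and a diagonalization over the $\le\aleph_1$ relevant null sets exactly as in that lemma's proof.

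The second step is the lower bound $\kappa\le\mfd(\neq^*_\Null)$: given any family $\mathcal{A}=\{h_i\mid i<\lambda\}$ of functions $\baire\to\baire$ in $V[G]$ with $\lambda<\kappa$, each $h_i$ depends (by ccc and finite support) on a countable set of coordinates, so $\mathcal{A}$ lives in $V[G\restriction S]$ for some $S$ of size $<\kappa$; pick a coordinate $\eta\notin S$ and argue that the generic function $g_\eta$ added there is \emph{not} $\neq^*_\Null$-dominated by any $h_i$ — i.e. $\{x\mid g_\eta(x)\neq^* h_i(x)\}$ is not measure one. For this I would use a mutual-genericity / product argument: over $V[G\restriction S]$, the forcing adding $g_\eta$ is still the basic higher-dimensional Cohen forcing, and a density argument shows that for densely many conditions, on a perfect (hence non-null) set of coordinates $x$ one can force $g_\eta(x)$ to agree with $h_i(x)$ infinitely often; more carefully, on the generic coordinate $x_\eta$-slices, $g_\eta(x)$ is Cohen over the model containing $h_i$, so $g_\eta(x)$ is i.o.e. $h_i(x)$ for comeager-many $x$, and a fortiori the complement of $\{x: g_\eta(x)\neq^* h_i(x)\}$ is non-meager, hence (since a comeager set would be needed) $g_\eta$ is not dominated. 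Finally, $2^{\mfc}=\kappa$ in $V[G]$ follows from a nice-names count: $\mathbb{P}_\kappa$ has size $\kappa$ (using $\GCH$ and $\kappa$ regular $>\aleph_1$), and the number of functions $\baire\to\baire$ is $(2^{\aleph_1})^{\aleph_1}\cdot\kappa=\kappa$ in the extension since $\mfc=\aleph_1$ is preserved, so $2^{\mfc}=2^{\aleph_1}$ can be read off to be exactly $\kappa$; I also note $\mfc^+=\aleph_2$ because $\CH$ is preserved.

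The main obstacle I anticipate is the "not dominated" half of the $\mfd(\neq^*_\Null)$ computation: one must show that a generic $g_\eta$ is genuinely not $\neq^*_\Null$-above a given $h_i$, and the subtlety is that $h_i$ is itself added generically (it is not a ground-model object), so I cannot simply quote "$g_\eta(x)$ is Cohen over $V$." The fix is a careful commutativity/factoring argument: fix $\eta$ and factor $\mathbb{P}_\kappa$ as (the part containing $h_i$) $\times$ (the single coordinate $\eta$) $\times$ (rest); in the intermediate model $M$ containing $h_i$, the coordinate-$\eta$ forcing is still the basic higher-dimensional Cohen poset, and I must prove the purely forcing-theoretic fact that this poset over $M$ adds a $g$ such that $\{x: g(x)\neq^* h(x)\}$ is non-comeager (indeed $\mathcal{I}$-positive-complement) for every $h\in M$. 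This reduces to a density argument inside the poset: given a condition $p$ and a name for $h$, and any Borel comeager set coded in $M$, find $q\le p$ forcing some $x\notin\dom q$ into a perfect set of "future coordinates" on which $g(x)$ will be forced to hit $h(x)$ infinitely often — which is possible because on each such slice we are running ordinary Cohen forcing and Cohen reals are i.o.e. to ground-model reals, and we have comeager-many (in particular non-null-complement) slices available. Packaging this density argument correctly, and making sure the bookkeeping for the $\aleph_2$ upper bound meshes with the finite-support product structure, is where the real work lies; everything else is a straightforward adaptation of the classical Cohen-model arguments together with Lemma \ref{mainlemmafunctions}.
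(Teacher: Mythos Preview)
Your approach has a fundamental structural problem: the forcing you describe \emph{adds reals}, and in fact blows the continuum up to $\kappa$, so the conclusion $\mfc^+=\aleph_2$ cannot hold. Concretely, your basic poset $\mathbb{Q}$ --- finite partial functions from finite subsets of $\baire$ into $\omega^{<\omega}$ --- is forcing-equivalent to adding $|\baire|$ Cohen reals; under $\CH$ a single copy adds $\aleph_1$ Cohen reals, and the finite-support $\kappa$-product therefore adds $\kappa$ Cohen reals, forcing $\mfc=\kappa$. Hence in $V[G]$ we get $\mfc^+=\kappa^+\neq\aleph_2$ (for $\kappa>\aleph_2$) and $2^{\mfc}\ge\kappa^+>\kappa$, so both cardinal-arithmetic clauses of the theorem fail. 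A related casualty is that your generic $g$ is only a function on $(\baire)^V$, not on $(\baire)^{V[G]}$; extending it arbitrarily to the new reals gives you no control, and your reflection argument (``any $\aleph_1$ functions live in an intermediate model added by $\aleph_1$ coordinates'') breaks down once a single function $h:\baire\to\baire$ in $V[G]$ already has domain of size $\kappa$.

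The paper's fix is to make the basic forcing $\sigma$-closed rather than ccc: conditions in $\mathbb{C}_\Null$ are partial functions $p:\baire\to\baire$ with \emph{Borel measure-zero domain} (and Borel graph), ordered by reverse inclusion. Countable unions of Borel null sets are Borel and null, so this is $\sigma$-closed and hence adds no reals; under $\CH$ it has size $\aleph_1$ and the $\aleph_2$-c.c. One then takes the \emph{countable} support product of $\kappa$ copies. No reals are added, so $\mfc=\aleph_1$ throughout; the generic is a total function on $\baire$; any single new $h:\baire\to\baire$ has an $\aleph_1$-sized support, which enables the $\mfb(\neq^*_\Null)=\aleph_2$ argument via Lemma~\ref{mainlemmafunctions}; and the ``not dominated'' half is a direct density argument (extend a condition by a single point outside a given null set) rather than the slice-by-slice Cohen analysis you sketch. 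The moral is that for these higher-dimensional cardinals one wants to work \emph{above} $\baire$ without disturbing it, which forces the conditions to have large (null-Borel) rather than finite domains.
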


The proof will involve an iteration of length $\kappa$ of a certain forcing notion, $\mathbb{C}_\Null$. Let me begin by introducing this forcing notion and studying its properties.
\begin{definition}
The $\Null$-Cohen forcing, denoted $\mathbb{C}_\Null$, is the set of all $p:{\rm dom} (p) \subseteq \baire \to \baire$ so that ${\rm dom}(p)$ and ${\rm graph}(p)$ are both Borel and ${\rm dom} (p)$ is measure zero. We let $p \leq q$ if and only if $p \supseteq q$.
\end{definition}

The following observations are easy but will be useful.
\begin{proposition}
The forcing $\mathbb{C}_\Null$ is $\sigma$-closed and has size $\mfc$, hence it has the $\mfc^+$-c.c. In particular, under $\CH$ all cofinalities and hence cardinalities are preserved.
\end{proposition}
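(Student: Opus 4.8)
The statement to prove is the Proposition that $\mathbb{C}_\Null$ is $\sigma$-closed, has size $\mfc$, hence satisfies the $\mfc^+$-c.c., and under $\CH$ preserves all cofinalities and cardinalities.

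\textbf{Plan.} The three assertions are essentially independent and each is short. The plan is to verify $\sigma$-closure first, then the cardinality bound, then deduce the chain condition and the preservation statement.

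For $\sigma$-closure: suppose $\langle p_n \mid n < \omega\rangle$ is a descending sequence of conditions, so $p_0 \subseteq p_1 \subseteq \cdots$ as functions (recall the order is reverse inclusion), each with Borel domain and graph, and each with measure-zero domain. Set $p = \bigcup_n p_n$. I would check that $p$ is a condition: its graph is $\bigcup_n \mathrm{graph}(p_n)$, a countable union of Borel sets hence Borel; its domain is $\bigcup_n \mathrm{dom}(p_n)$, again Borel, and a countable union of measure-zero sets, hence measure zero by countable additivity of $\mu$ (here the key point is that $\Null$ is a $\sigma$-ideal). Since $p$ extends each $p_n$, it is the desired lower bound. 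This also shows $\mathbb{C}_\Null$ adds no reals, which is the feature exploited in the subsequent arguments about Borel null sets being absolute.

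For the size: a condition is determined by a pair of Borel sets (its domain and its graph), or equivalently by a single Borel subset of $\baire \times \baire$ that happens to be the graph of a partial function with measure-zero domain. There are exactly $\mfc$ Borel subsets of a Polish space (each is coded by a real, and every real codes at most one), so $|\mathbb{C}_\Null| \le \mfc$; and there are clearly at least $\mfc$ conditions (e.g. the singleton functions $\{(x, y)\}$ for $x, y \in \baire$), so $|\mathbb{C}_\Null| = \mfc$. Any poset of size $\mfc$ trivially has the $\mfc^+$-c.c., since an antichain is in particular a subset of the poset. Finally, under $\CH$ we have $\mfc = \aleph_1$, so $\mathbb{C}_\Null$ is $\sigma$-closed (hence preserves $\omega_1$ and adds no reals) and has the $\aleph_2$-c.c. (hence preserves all cardinals $\ge \aleph_2$); combining, all cofinalities and cardinalities are preserved.

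\textbf{Main obstacle.} There is no serious obstacle here; the only point requiring a moment's care is confirming that the union of a countable descending chain is genuinely a condition, i.e. that Borelness of domain and graph is preserved under countable unions (immediate) and that measure-zeroness of the domain is preserved (this is exactly $\sigma$-additivity of the null ideal, noted in the preliminaries as the relevant closure property shared by $\Null$, $\Me$, $\Kb$). The same argument works verbatim for $\Me$ and $\Kb$ in place of $\Null$, since all that is used is that the ideal is a $\sigma$-ideal with a Borel base.
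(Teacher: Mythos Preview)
Your proof is correct and follows essentially the same approach as the paper: for $\sigma$-closure take the union of a descending chain and note that countable unions of Borel sets are Borel and countable unions of null sets are null; for size, observe that each condition is a Borel subset of $(\omega^\omega)^2$ and there are only $\mfc$ such sets. You add a bit more detail (the explicit lower bound on $|\mathbb{C}_\Null|$ and the spelled-out derivation of preservation under $\CH$), but the argument is the same.
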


\begin{proof}
First let's see that $\mathbb C_\Null$ is $\sigma$-closed. Given a descending sequence $p_0 \geq p_1 \geq p_2 ...$ let $p = \bigcup_{n < \omega} p_n$. Since the countable union of Borel sets is Borel it follows that $p$ has a Borel graph and since the countable union of null sets is null, it follows that $p$ has null domain. Thus $p$ is a condition so it is a lower bound on the sequence of $p_n$'s. 

To see that $\mathbb C_\Null$ has size $\mfc$ it suffices to note that each condition is a Borel subset of $(\omega^\omega)^2$, of which there are only $\mfc$ many.
\end{proof}

Note that since $\mathbb{C}_\Null$ adds no reals or Borel sets and every condition $p \in \mathbb{C}_\Null$ is a Borel set it follows that $\forces_{\mathbb{C}_\Null} \dot{\mathbb{C}}_\Null = \check{\mathbb{C}}_{\Null}$ and so in particular, the product and iteration of $\mathbb{C}_\Null$ are the same. Now, a straightforward density argument shows that $\mathbb{C}_\Null$ adds a function $g:\omega^\omega \to \omega^\omega$, namely the union of the generic filter. Indeed it's easy to see that if $p$ is any condition and $N$ is any Borel null set then there is a condition $q \leq p$ so that $N \subseteq {\rm dom}(q)$. I need to verify two properties of $\mathbb{C}_\Null$, given as Lemmas \ref{lemma1cohen} and \ref{lemma2cohen} below. The first will imply that in an iterated extension $\mfd(\neq^*_\Null)$ becomes large and the second will imply that $\mfb(\neq^*_\Null)$ remains small in an iterated extension. 

\begin{lemma}
If $G \subseteq \mathbb{C}_\Null$ is generic over $V$ then in $V[G]$ the set of $f \in (\baire)^{\baire} \cap V$ is not dominating with respect to the relation $\neq^*_\Null$.
\label{lemma1cohen}
\end{lemma}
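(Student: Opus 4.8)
The plan is to show that after forcing with $\mathbb{C}_\Null$, the ground model functions fail to be $\neq^*_\Null$-dominating by exhibiting a single new function $g:\baire\to\baire$ (the generic) which ``collides'' with every ground model $f$ on a non-null set of inputs. Concretely, I would argue that for every $f\in(\baire)^{\baire}\cap V$, in $V[G]$ the set $\{x\in\baire \; | \; \exists^\infty n \; g(x)(n)=f(x)(n)\}$ is non-null, indeed $\Null$-measure one, which is exactly the negation of $f\mathrel{\neq^*_\Null}g$ for the generic $g=\bigcup G$.

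First I would fix $f\in(\baire)^{\baire}\cap V$ and a condition $p\in\mathbb{C}_\Null$, and fix an enumeration $\{N_\alpha \; | \; \alpha<\omega_1\}$ of the Borel null sets (using $\GCH$, as set up before the statement). The key density fact to establish is: for any condition $p$, any real $x\in\baire\setminus\dom(p)$, and any $m\in\omega$, there is $q\leq p$ with $x\in\dom(q)$ and $q(x)(n)=f(x)(n)$ for some $n\geq m$ — in fact we can simply extend $p$ by setting $q(x)$ to be any Borel-definable function of $x$ that agrees with $f(x)$ infinitely often; the simplest choice is $q(x)=f(x)$ itself on a Borel null set containing $x$. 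Since $\dom(p)$ is null, there is always room: the complement of $\dom(p)$ is co-null, hence in particular contains all of $\baire$ minus a null set, so we may freely add $x$ to the domain. More usefully, for density of ``$g$ agrees with $f$ on a null-complement set'' I would show: given any Borel null set $N\supseteq\dom(p)$, the condition $q$ with $\dom(q)=N$, $q\restriction\dom(p)=p$, and $q(x)=f(x)$ for $x\in N\setminus\dom(p)$, is a condition below $p$; and the set of such $q$ (ranging over null sets $N$) is dense. Then a genericity argument over the $\omega_1$-many Borel null sets $N_\alpha$ shows that $\dom(g)=\baire\setminus(\text{a null set})$ — actually that $g$ is total modulo a null set — and wherever $g$ was defined to equal $f$, which happens on a co-null set of inputs by the density argument, we trivially have $g(x)(n)=f(x)(n)$ for all $n$, hence infinitely often. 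Therefore $\{x \; | \; \exists^\infty n\; g(x)(n)=f(x)(n)\}$ contains a co-null set, so it is $\Null$-positive (indeed measure one), witnessing that $f$ is not a $\neq^*_\Null$-bound on (in particular) $g$, so $V\cap(\baire)^{\baire}$ is not $\neq^*_\Null$-dominating.

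The main obstacle is book-keeping: one must be careful that the family of extensions ``$q(x)=f(x)$ on $N\setminus\dom(p)$'' actually forms a directed/dense set so that a single generic $g$ handles all $f$ — but here we only need, for a \emph{fixed} $f$, that $g$ agrees with $f$ on a co-null set, and for that the relevant dense sets are $D_\alpha^f=\{q \; | \; N_\alpha\cap(\baire\setminus\dom(q))\text{ is null, and }q(x)=f(x)\text{ for }x\in N_\alpha\setminus\dom(q)\}$; one checks these are dense (using $\sigma$-closure to absorb the countably-many constraints is not even needed, a single extension suffices) and open, and that $G$ meeting all of them forces $\mu(\{x \; | \; g(x)\neq^* f(x)\})<1$. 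The only subtlety is that meeting $D_\alpha^f$ for all $\alpha<\omega_1$ forces the union of the $N_\alpha$ on which $g$ equals $f$ to have null complement — this is where I would invoke that $\bigcup_{\alpha<\omega_1}N_\alpha$ is \emph{all} of $\baire$ is false, so instead I would argue directly: the set $\{x \; | \; g(x)\neq f(x)\text{ everywhere they're both defined}\}$ is disjoint from every $N_\alpha\setminus\dom(q_\alpha)$ for $q_\alpha\in G\cap D_\alpha^f$, and a density argument shows it is null. Since agreeing everywhere implies agreeing infinitely often, we are done.
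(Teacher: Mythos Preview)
Your approach has a genuine gap. You are trying to argue that for each ground-model $f$, the set $\{x:\;g(x)=f(x)\}$ (or at least $\{x:\;\exists^\infty n\;g(x)(n)=f(x)(n)\}$) is \emph{co-null}, by meeting dense sets that force $g$ to agree with $f$ on larger and larger null sets $N_\alpha$. But this is too strong to be true: if $f_1,f_2\in V$ disagree everywhere, the sets $\{x:g(x)=f_1(x)\}$ and $\{x:g(x)=f_2(x)\}$ are disjoint, so they cannot both be co-null. Moreover, your proposed dense sets $D_\alpha^f$ are not well-defined (``$q(x)=f(x)$ for $x\in N_\alpha\setminus\dom(q)$'' makes no sense, since $q$ is undefined there), and under any reasonable reading they are not dense: once some condition $p\in G$ has $p(x)\neq f(x)$ at a point $x\in N_\alpha$, no extension can make $g$ agree with $f$ there. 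You notice the problem yourself (``this is where I would invoke that $\bigcup_{\alpha<\omega_1}N_\alpha$ is all of $\baire$ is false''), but the attempted salvage is not an argument.

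The paper's proof is a short contradiction argument that avoids all of this. One shows the stronger fact that for each $f\in V$, the set $\{x:f(x)=g(x)\}$ is \emph{non-null} (not co-null). Suppose some $p$ forces this set to be null. Since $\mathbb{C}_\Null$ is $\sigma$-closed and hence adds no Borel sets, there is a ground-model Borel null $N$ and a $q\leq p$ forcing $\{x:f(x)=\dot g(x)\}\subseteq\check N$. Pick any $x\notin N\cup\dom(q)$ (possible since both are null) and let $q^*=q\cup\{\langle x,f(x)\rangle\}$. Then $q^*\leq q$ is a condition forcing $\dot g(x)=f(x)$ with $x\notin N$, a contradiction. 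The point is that one only needs, for each null $N$, a \emph{single} point of agreement outside $N$; one never needs to make $g$ agree with $f$ on a large set.
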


\begin{proof}
In fact a stronger statement is true, namely if $g = \bigcup G$ and $\dot{g}$ is the name for $g$, then for any $f:\omega^\omega \to \omega^\omega$ in the ground model the set $\{x \; | \; f(x) = g(x)\}$ is not measure zero. To see this, suppose that for some condition $p$ and ground model function $f$ we have that $p \forces \mu (\{x \; | \; \check{f}(x) = \dot{g}(x)\}) = \check{0}$. Since every null set is contained in a Borel null set, there is a Borel Null set $N$, necessarily in the ground model since $\mathbb{C}_\Null$ is $\sigma$-closed, and a strengthening $q \leq p$ so that $q \forces \{x \; | \; \check{f}(x) = \dot{g} (x)\} \subseteq \check{N}$. But now let $x \notin N \cup {\rm dom}(q)$ (this is possible since $N \cup {\rm dom}(q) \in \mathcal N$). It is straightforward to verify that $q^* = q \cup \{\langle x, f(x)\rangle \}$ is a condition extending $q$ but clearly $q^* \forces \{x \; | \; \check{f}(x) = \dot{g}(x)\} \nsubseteq \check{N}$, which is a contradiction. It follows in particular that for every $f \in V$ we have that on a non null set of $x$ there are infinitely many $n < \omega$ so that $f(x) (n) = g(x) (n)$. This implies the lemma.
\end{proof}

\begin{lemma}
If $G \subseteq \Pi_I \mathbb{C}_\Null$ is generic over $V$ for the countable support product of $\mathbb C_\Null$ over an index set $I$ of size at most $\aleph_1$ then in $V[G]$ the set of $f \in (\baire)^{\baire} \cap V$ is unbounded with respect to the relation $\neq^*_\Null$.
\label{lemma2cohen}
\end{lemma}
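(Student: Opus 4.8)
Lemma \ref{lemma2cohen} asserts that the countable support product $\Pi_I \mathbb{C}_\Null$, over an index set $I$ of size at most $\aleph_1$, does not add a function $g:\baire\to\baire$ which is a $\neq^*_\Null$-bound on the ground model functions; equivalently, in $V[G]$ for every $g:\baire\to\baire$ there is a ground model $f$ with $\mu(\{x : \exists^\infty n\, f(x)(n)=g(x)(n)\})>0$, in fact (as in Lemma \ref{lemma1cohen}) $\mathcal I$-positive. The plan is to argue by contradiction: suppose $\dot g$ is a name and $p$ a condition forcing that $\dot g$ is a $\neq^*_\Null$-bound for all of $(\baire)^\baire\cap V$. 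The first thing I would do is record the two structural features of the product: since each coordinate $\mathbb{C}_\Null$ is $\sigma$-closed and the support is countable, the whole product is $\sigma$-closed, hence adds no reals and no new Borel (null) sets; and since $|I|\le\aleph_1$ and $\GCH$ holds, each condition is essentially a countable object, so the product has the $\aleph_2$-c.c. (and in fact a strong fusion/amalgamation property since conditions are partial functions with Borel-null domains). In particular, any name for a real is decided by conditions with a common, fixed countable support.

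Next I would try to build, inside $V$, a single function $f:\baire\to\baire$ that defeats $\dot g$ on a positive set. The idea mirrors the proof of Lemma \ref{lemma1cohen} but has to handle the fact that $\dot g$ is a product-name rather than the canonical generic. Fix in $V$ an enumeration $\{x_\alpha : \alpha<\omega_1\}$ of $\baire$ (using $\CH$). Working recursively on $\alpha<\omega_1$, I would construct a decreasing-in-strength sequence of conditions and decide more and more of $\dot g$: at stage $\alpha$, using $\sigma$-closure to pass through countable limits, find a condition $q_\alpha\le p$ below everything built so far, together with a value, forcing $\dot g(\check x_\alpha)=\check y_\alpha$ for some $y_\alpha\in\baire\cap V$ — here I use that the product adds no reals, so $\dot g(x_\alpha)$ is (forced below $q_\alpha$ to be) a ground-model real, and that a fusion-style argument over the countable support lets these decisions cohere. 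Then set $f(x_\alpha)=y_\alpha$ (or, more carefully, set $f(x_\alpha)$ to agree with $y_\alpha$ on infinitely many coordinates in a way that is preserved — but since $y_\alpha=\dot g(x_\alpha)$ is decided, taking $f(x_\alpha)=y_\alpha$ suffices). The resulting $f$ lies in $V$, and for a final condition $q$ below the whole sequence (obtained by $\sigma$-closure along a cofinal $\omega$-chain, or by the fusion) we get $q\Vdash \forall\alpha\ \dot g(\check x_\alpha)=\check f(\check x_\alpha)$, so $q$ forces $\{x : f(x)=g(x)\}=\baire$, contradicting that $\dot g$ is a $\neq^*_\Null$-bound for $f$ (the set should be null). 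Hence no such bound exists and the ground model functions are $\neq^*_\Null$-unbounded.

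The main obstacle I anticipate is exactly the coherence of the recursion: the naive "decide $\dot g(x_\alpha)$ one point at a time and take a lower bound" runs for $\omega_1$ steps, but $\sigma$-closure only gives lower bounds for countable descending chains. So one cannot literally take $q=\bigcap_{\alpha<\omega_1} q_\alpha$. The correct device is to interleave a fusion argument adapted to the product of Borel-null-domain conditions: at each stage commit only finitely (or countably) much new information to each of the countably many active coordinates, and arrange that the union of the domains stays null — this is where $|I|\le\aleph_1$ and the fact that a countable union of null Borel sets is null get used, together with the observation that at stage $\alpha$ only the point $x_\alpha$ needs to be put into the domain, and $\bigcup_{\alpha<\omega_1}\{x_\alpha\}=\baire$ is \emph{not} null — so one must instead argue locally: it is enough to produce, for a \emph{fixed} positive-measure Borel set $B\in V$, a function $f$ and a condition forcing $f(x)=g(x)$ for all $x\in B$, since then $\{x:f(x)=g(x)\}\supseteq B$ is non-null. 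Restricting to $B$, one only needs to meet densely many requirements of the form "decide $\dot g$ on a dense subset of $B$", and since $\dot g\restriction B$ is a name for a Borel function on $B$ added by a $\sigma$-closed forcing, it is (forced by some extension to be) equal to a ground model Borel function $h:B\to\baire$; then $f:=h$ (extended arbitrarily off $B$) works immediately. I would therefore reorganize the proof around this last observation — "a $\sigma$-closed forcing adds no new Borel function on a ground-model Borel set" — which bypasses the fusion entirely and reduces Lemma \ref{lemma2cohen} to the same one-step density argument used in Lemma \ref{lemma1cohen}. The role of $|I|\le\aleph_1$ (together with $\CH$) is then only to guarantee the $\aleph_2$-c.c., which keeps $\mfc$ and $\mfc^+$ where Theorem \ref{cohenforcingfornull} wants them, and plays no part in the unboundedness argument itself.
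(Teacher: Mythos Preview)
Your final approach has a genuine gap: you assert that $\dot{g}\restriction B$ is a name for a \emph{Borel} function, and then invoke the fact that $\sigma$-closed forcing adds no new Borel codes. But nothing in the statement says $\dot{g}$ is Borel; the cardinal $\mfb(\neq^*_\Null)$ is defined over \emph{all} functions $\baire\to\baire$, and indeed the very generic object added by $\mathbb{C}_\Null$ is a new function $g:\baire\to\baire$ which is a union of $\aleph_1$ Borel partial functions and is not itself Borel. So the reduction ``$\sigma$-closed $\Rightarrow$ no new Borel function $\Rightarrow$ $\dot{g}\restriction B$ is ground-model'' breaks at the first implication's relevance: there is no reason to expect $\dot{g}$ to be Borel. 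Your earlier diagnosis was right that one cannot build a single $\omega_1$-descending chain of conditions, but the way out is not to assume regularity of $\dot{g}$.

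The paper's argument avoids both traps by a diagonalization over \emph{pairs} rather than a descending chain. Using $\CH$ and $|I|\le\aleph_1$, the product has exactly $\aleph_1$ conditions; one enumerates all pairs (Borel null set, condition) as $\langle (N_\alpha, p_\alpha):\alpha<\omega_1\rangle$ in such a way that $x_\alpha\notin N_\alpha\cup N^{p_\alpha}$ (this is Lemma~\ref{mainlemmafunctions}, using that $N^{p_\alpha}$, the union of the coordinate domains, is null by countable support). Then for each $\alpha$ \emph{separately} one strengthens $p_\alpha$ to some $r_\alpha$ deciding $\dot{h}(\check{x}_\alpha)=\check{y}_\alpha$, and sets $h^*(x_\alpha)=y_\alpha$ in $V$. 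No coherence among the $r_\alpha$ is needed: if some $p$ forces $\{x:\exists^\infty n\,\dot{h}(x)(n)=\check{h}^*(x)(n)\}\subseteq\check{N}$, find $\alpha$ with $(N,p)=(N_\alpha,p_\alpha)$; then $r_\alpha\le p$ forces $\dot{h}(x_\alpha)=h^*(x_\alpha)$ while $x_\alpha\notin N$, contradiction. Note in particular that $|I|\le\aleph_1$ is \emph{not} idle in the unboundedness argument---it is exactly what makes the condition-set small enough to enumerate alongside the null sets.
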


\begin{proof}
I need to show that in $V[G]$ there is no $h:\baire \to \baire$ so that for all $f:\baire \to \baire$ in $V$ the set of $x$ for which $f(x) \neq^* g(x)$ is measure one. Thus suppose for a contradiction that there is a condition $p$ and a name $\dot{h}$ so that $p \forces \dot{h}: \check{\baire} \to \check{\baire}$ is such a function. I need to define in $V$ a function for which this fails. 

Note that (under $\CH$) $\Pi_I \mathbb{C}_\Null$ has size $\aleph_1$. For each condition $p \in  \Pi_I \mathbb{C}_\Null$, let $N^p$ be the union of the domains of the coordinate conditions. Since $ \Pi_I \mathbb{C}_\Null$ has countable support, it follows that $N^p$ is null. Now, using Lemma \ref{mainlemmafunctions} fix an enumeration $\langle (N_{0, \alpha}, p_\alpha) \; | \; \alpha < \omega_1 \rangle$ of all pairs where $N_{0, \alpha}$ ranges over the Borel null sets $N_\alpha$ and $p_\alpha$ is a condition in $\Pi_I \mathbb{C}_\Null$, and $x_\alpha \notin N_{0, \alpha} \cup N^{p_\alpha}$. For each $\alpha$, let $r_\alpha \leq p_\alpha$ decide $\dot{h}(x_\alpha)$. Say that $r_\alpha \forces \dot{h}(\check{x}_\alpha) = \check{y}_\alpha$ for some $y_\alpha$. Let $h^* :\baire \to \baire$ be the function (defined in $V$) so that $h^*(x_\alpha) = y_\alpha$ for all $\alpha$. Suppose that there is some Borel null set $N$ and some condition $p$ which forces that $\{x \; | \; \exists^\infty n \, \dot{h} (x) (n) = \check{h}^* (x) (n) \} \subseteq \check{N}$. Let $\alpha$ be such that $(N, p) = (N_{0, \alpha}, p_\alpha)$. Then $p_\alpha \forces \{x \; | \; \exists^\infty n \, \dot{h} (x) (n) = \check{h}^* (x) (n) \} \subseteq \check{N}_{0, \alpha}$. But $r_\alpha \leq p_\alpha$ forces that $\dot{h} (x_\alpha) = \check{h}^* (x_\alpha)$ and by the choice of enumeration we had that $x_\alpha \notin N_{0, \alpha}$, which is a contradiction.
\end{proof}

I'm now ready to prove Theorem \ref{cohenforcingfornull}. In fact it follows from the following theorem, which is just a more precise statement of what will be shown.
\begin{theorem}
Let $\kappa$ be a regular cardinal greater than $\aleph_1$ and let $\mathbb P_\kappa$ be the countable support product of $\mathbb{C}_\Null$. Then $\mathbb P_\kappa$ preserves cofinalities and cardinals and if $G\subseteq \mathbb P_\kappa$ is $V$-generic then in $V[G]$ $\mfc^+ = \aleph_2 = \mathfrak{b} (\neq^*_\Null) < \mfd (\neq^*_\Null) = 2^\mfc = \kappa$.
\end{theorem}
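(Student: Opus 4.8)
The plan is to assemble the conclusion from the two structural lemmas already established, together with standard bookkeeping about countable support products of $\sigma$-closed forcing. First I would record the preservation facts. Since $\mathbb{C}_\Null$ is $\sigma$-closed and every coordinate adds no reals, the countable support product $\mathbb P_\kappa = \prod_{\alpha<\kappa}\mathbb{C}_\Null$ (with countable supports) is itself $\sigma$-closed: given a descending $\omega$-sequence of conditions, the union of supports is countable and on each coordinate we take the $\sigma$-closed lower bound from $\mathbb{C}_\Null$. Hence $\mathbb P_\kappa$ adds no reals, no Borel sets, and in particular preserves $\omega_1$. For the upper end, I would observe that $\mathbb P_\kappa$ has the $\mfc^+$-c.c.: any antichain, by a $\Delta$-system argument on the countable supports (using $\GCH$ so that the number of possible "roots" and of conditions on a fixed countable support is at most $\mfc$), has size at most $\mfc=\aleph_1$. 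Combined with $\sigma$-closure this gives that $\mathbb P_\kappa$ preserves all cofinalities and cardinals. A density count then yields $2^{\aleph_0}=\aleph_1$ is preserved (no new reals), while a nice-name counting argument over the $\kappa$ coordinates gives $2^{\mfc}=\kappa$ in $V[G]$ (here one uses $\kappa^{\aleph_1}=\kappa$, which follows from $\GCH$ and regularity of $\kappa>\aleph_1$); in particular $\mfc^+=\aleph_2$ in $V[G]$.

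Next I would compute $\mfd(\neq^*_\Null)=\kappa$ in $V[G]$. The lower bound $\mfd(\neq^*_\Null)\geq \kappa$: suppose $\mathcal A\subseteq(\baire)^{\baire}$ has size $<\kappa$ in $V[G]$. Since $\mathbb P_\kappa$ adds no reals, each element of $(\baire)^{\baire}$ in $V[G]$ is coded, coordinatewise, by a function whose relevant information appears on $\mathbb P_\mu=\prod_{\alpha<\mu}\mathbb{C}_\Null$ for some $\mu<\kappa$ — more precisely, by the $\mfc^+$-c.c. and the fact that $\mathcal A$ and each $f\in\mathcal A$ can be coded by $<\kappa$ reals, there is $\mu<\kappa$ with $\mathcal A\in V[G\restriction\mu]$. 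The remaining forcing $\mathbb P_{[\mu,\kappa)}=\prod_{\mu\leq\alpha<\kappa}\mathbb{C}_\Null$, as computed in $V[G\restriction\mu]$, has a coordinate adding a $\mathbb{C}_\Null$-generic $g$ over $V[G\restriction\mu]$; by Lemma \ref{lemma1cohen} applied in $V[G\restriction\mu]$, this $g$ is not $\neq^*_\Null$-dominated by any ground-model-of-that-stage function, in particular not by any member of $\mathcal A$. Hence $\mathcal A$ is not $\neq^*_\Null$-dominating, so $\mfd(\neq^*_\Null)\geq\kappa$. The upper bound $\mfd(\neq^*_\Null)\leq\kappa$ is immediate: the $\kappa$ generic functions $\{g_\alpha\mid\alpha<\kappa\}$ added by the coordinates are $\neq^*_\Null$-dominating, because any $f:\baire\to\baire$ in $V[G]$ lies in $V[G\restriction\mu]$ for some $\mu<\kappa$ and $g_\mu$ is generic over $V[G\restriction\mu]$, so $\{x\mid f(x)=g_\mu(x)\}$ is $\mathcal N$-positive; wait — that gives an infinitely-often-equal statement, so I should instead argue directly that for $f\in V[G\restriction\mu]$ one of the later generics $g_\nu$ ($\nu\geq\mu$) satisfies $\mu(\{x\mid f(x)\neq^* g_\nu(x)\})=1$ by a density argument over $\mathbb{C}_\Null$: it is dense to decide $g_\nu(x)$ to be eventually different from $f(x)$ off a prescribed null set, exactly the kind of extension $q\cup\{\langle x,\cdot\rangle\}$ used in Lemma \ref{lemma1cohen}. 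So $\{g_\alpha\mid\alpha<\kappa\}$ is $\neq^*_\Null$-dominating and $\mfd(\neq^*_\Null)=\kappa$.

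Finally I would compute $\mfb(\neq^*_\Null)=\aleph_2=\mfc^+$ in $V[G]$. For $\mfb(\neq^*_\Null)\geq\mfc^+$: by the lemma just before Section 3 (the diagonalization argument), it suffices to have $\mfb(\neq^*)=non(\Me)=\mfc$ in $V[G]$; but $\mathbb P_\kappa$ adds no reals, so these are exactly their ground-model values, which under $\GCH$ in $V$ are all equal to $\mfc=\aleph_1$, giving $\mfb(\neq^*_\Null)\geq\mfc^+=\aleph_2$. For the reverse, $\mfb(\neq^*_\Null)\leq\aleph_2$: I need an $\neq^*_\Null$-unbounded family of size $\aleph_2$. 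Take $\mathcal F=\{f_i\mid i<\omega_2\}$ to be the generic functions added by the coordinates $\{g_\alpha\mid\alpha<\omega_2\}$ (or rather their natural copies in $(\baire)^{\baire}$); then $\mathcal F\in V[G\restriction\omega_2]$, and I claim it is unbounded in $V[G]$. Suppose $h\in V[G]$ witnesses a bound; then $h\in V[G\restriction\mu]$ for some $\mu<\kappa$, and since $\mathbb P_{[\mu,\kappa)}$ is a countable support product, $h$ in fact lies in $V[G\restriction X]$ for a countable $X\subseteq\kappa$ together with $G\restriction\mu$ — so altogether $h$ depends only on an initial-plus-countable part $Y$ of the coordinates with $|Y|\le\aleph_1$. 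Pick some $\alpha<\omega_2$ with $\alpha\notin Y$; by Lemma \ref{lemma2cohen} applied to the product over $Y$ (an index set of size $\le\aleph_1$, with $g_\alpha$ generic over $V[G\restriction Y]$), the function $g_\alpha$ is not $\neq^*_\Null$-bounded by $h$, contradiction. Hence $\mathcal F$ is $\neq^*_\Null$-unbounded and $\mfb(\neq^*_\Null)\leq\aleph_2$, so $\mfb(\neq^*_\Null)=\aleph_2$.

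The main obstacle I anticipate is the factorization bookkeeping: making precise that an object of $V[G]$ named by a condition with countable support, together with the $\mfc^+$-c.c., genuinely lives in an intermediate extension by a sub-product over $<\kappa$ (indeed over an initial segment union a countable set of) coordinates, and that the quotient forcing is again a countable support product of $\mathbb{C}_\Null$ as computed in that intermediate model — so that Lemmas \ref{lemma1cohen} and \ref{lemma2cohen} can be invoked there. This is routine for countable support products of $\sigma$-closed, $\mfc^+$-c.c. forcing under $\GCH$, but it is the step that carries the real weight, and one must be careful that Lemma \ref{lemma2cohen}'s hypothesis "index set of size at most $\aleph_1$" is exactly what the countable-support structure of the relevant factor delivers.
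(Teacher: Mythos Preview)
Your overall structure matches the paper's, but two of your arguments are wrong as written.

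\textbf{The upper bound $\mfd(\neq^*_\Null)\le\kappa$.} Your claim that $\{g_\alpha\mid\alpha<\kappa\}$ is $\neq^*_\Null$-dominating is false, and Lemma~\ref{lemma1cohen} (which you yourself invoke) shows exactly why: for any $f$ in the ground model of $g_\nu$, the set $\{x\mid f(x)=g_\nu(x)\}$ is \emph{not} null, so $\{x\mid f(x)\neq^* g_\nu(x)\}$ is \emph{not} measure one, i.e.\ $g_\nu$ does not $\neq^*_\Null$-dominate $f$. Your attempted density fix cannot work, since extending a condition at a single $x$ (or countably many) never controls the generic on a non-null set of $x$'s. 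The paper does not build a dominating family at all; the upper bound is simply $\mfd(\neq^*_\Null)\le|(\baire)^{\baire}|=2^{\mfc}=\kappa$.

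\textbf{The upper bound $\mfb(\neq^*_\Null)\le\aleph_2$.} Your family $\{g_\alpha\mid\alpha<\omega_2\}$ does turn out to be unbounded, but you cite the wrong lemma. Lemma~\ref{lemma2cohen} says that \emph{ground-model} functions remain unbounded after an $\aleph_1$-indexed product; it says nothing about $g_\alpha$. What you actually need is Lemma~\ref{lemma1cohen} applied in $V[G\rest Y]$: with $h$ as the ground-model function and $g_\alpha$ (for $\alpha\notin Y$) as the generic, the set $\{x\mid h(x)=g_\alpha(x)\}$ is non-null, so $h$ does not $\neq^*_\Null$-bound $g_\alpha$. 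The paper's route is more direct: it takes $(\baire)^{\baire}\cap V$ itself (size $\aleph_2$ under $\GCH$) as the unbounded family, observes that any single $h\in V[G]$ is already a $\Pi_I\mathbb{C}_\Null$-name for some $|I|\le\aleph_1$, and then Lemma~\ref{lemma2cohen} applies verbatim.

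One small slip: for the lower bound $\mfb(\neq^*_\Null)\ge\mfc^+$ via the diagonal lemma, the hypothesis is $non(\Null)=\mfc$ (since $\mathcal I=\Null$), not $non(\Me)$; under $\CH$ both are $\aleph_1$ anyway, so this is harmless.
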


\begin{proof}
Fix $\kappa > \aleph_1$ regular, let $\mathbb P = \mathbb P_\kappa$ be the countable support product of $\mathbb{C}_\Null$ of length $\kappa$. Clearly $\mathbb P$ is $\sigma$-closed and a straightforward $\Delta$-system argument using $\GCH$ shows that it has the $\aleph_2$-c.c. It follows that all cardinals and cofinalities are preserved.

Also, for each $\alpha$ the $\alpha$-stage forcing $\mathbb P_\alpha$ adds a new function $g_\alpha:\baire \to \baire$ so in the extension $2^\mfc \geq \kappa$. A standard nice name counting argument, again using $\GCH$ shows that in fact $2^\mfc = \kappa$. 

It remains to show that $\aleph_2 = \mathfrak{b} (\neq^*_\Null)$ and $\mfd (\neq^*_\Null) = \kappa$. For the first of these, it suffices to see that $(\baire)^{\baire} \cap V$ is unbounded with respect to $\neq^*_\Null$. To see this, by Lemma \ref{lemma2cohen}, it suffices to note that if $\dot{f}$ is a name for a function in $\bbb$ then $\dot{f}$ is equivalent to a $\Pi_I \mathbb{C}_\Null$ for $I$ an index set of size $\aleph_1$. This latter statement is proved as follows: let, for each $x \in \baire$ $\mathcal A_x$ be an antichain of conditions deciding $\dot{f}(\check{x})$ and note that the cardinality of the supports of the elements of $\bigcup_{x \in \baire} \mathcal A_x$ has size $\aleph_1$ by $\CH$ using the countable support of the product.

Finally for $\mfd (\neq^*_\Null) = \kappa$, suppose that $A \subseteq (\baire)^{\baire}$ of size $< \kappa$. It follows that $A$ must have been added by some initial stage of the iteration, the next stage of which killed the possibility that it was dominating by Lemma \ref{lemma1cohen}.
\end{proof}

Let me reiterate that, defining $\mathbb{C}_\Me$ and $\mathbb{C}_\Kb$ in the obvious way the proofs can be repeated verbatim to obtain similar consistencies for the $\Me$ and $\Kb$ ideals. The same is true in the remaining subsections, though I won't explicitly say this again. An interesting open question though is the following.
\begin{question}
Are the forcing notions $\mathbb C_\Null$, $\mathbb C_\Me$ and $\mathbb C_\Kb$ provably forcing equivalent?
\end{question}

\subsection{Generalizing Hechler Forcing}

In this subsection I consider a generalization of Hechler forcing called $\mathbb D_\Null$ and look at two models obtained by iterating this forcing. First I consider the countable support iteration of $\mathbb D_\Null$ and then I look at a non-linear iteration of $\mathbb D_\Null$ similar to the one used in \cite{CuSh95}. In the latter case I obtain the following consistency result.
\begin{theorem}
Let $\aleph_2\leq \kappa \leq \lambda$ with $\kappa$ and $\lambda$ regular. Then there is a forcing notion $\mathbb P_{\kappa, \lambda}$ which preserves cardinals and cofinalities such that if $G \subseteq \mathbb P_\kappa$ is generic then in $V[G]$ we have that $\mfb (\leq^*_\Null) = \kappa < \mfd (\leq^*) = 2^\mfc = \lambda$.
\label{mainthmhechler}
\end{theorem}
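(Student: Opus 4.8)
The plan is to adapt the standard \emph{Cummings--Shelah} technique for realizing $\mathfrak b < \mathfrak d$ (from \cite{CuSh95}) to the higher-dimensional setting, using a non-linear (partially ordered) iteration of the forcing $\mathbb D_\Null$ indexed by a poset of size $\lambda$ whose structure encodes the value $\kappa$ for the bounding number. First I would introduce $\mathbb D_\Null$ explicitly: a condition should be a pair $(p, F)$ where $p : \dom(p) \subseteq \baire \to \baire$ has Borel graph and null domain (the ``stem''), and $F$ is a countable family of Borel functions $\baire \to \baire$ (the ``side conditions''), with extension $(q,G) \le (p,F)$ demanding $q \supseteq p$, $G \supseteq F$, and the Hechler-style domination clause: for every $x \in \baire$ and every $n \in \dom(q(x)) \setminus \dom(p(x))$ and every $h \in F$, $q(x)(n) > h(x)(n)$. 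Since no reals are added (by a $\sigma$-closure argument as for $\mathbb C_\Null$), names for Borel sets and for side-condition families reduce to ground-model objects, and the generic object $g = \bigcup G$ is a function $\baire \to \baire$ that $\le^*_\Null$-dominates all ground-model functions $\baire \to \baire$.

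Next I would set up the index poset. Following Cummings--Shelah, fix a poset $P$ of size $\lambda$ with the property that every subset of size $<\kappa$ has an upper bound but $P$ itself has no ascending chain of length $<\kappa$ that is cofinal (equivalently, $\cof(P) = \kappa$ in the relevant sense); concretely one can take $P = [\lambda]^{<\kappa}$ ordered by inclusion, whose cofinality is $\lambda$ but which is $\kappa$-directed. The forcing $\mathbb P_{\kappa,\lambda}$ is then the countable-support ``product-like'' iteration adding one $\mathbb D_\Null$-generic $g_i$ for each $i$ in an index set of size $\lambda$, but with the twist that the $i$-th coordinate forcing only promises to dominate the functions $\baire \to \baire$ that appear in the submodel generated by coordinates $\le_P i$. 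Cardinal and cofinality preservation comes from $\sigma$-closure plus an $\aleph_2$-c.c.\ $\Delta$-system argument under $\GCH$, exactly as in Theorem~\ref{cohenforcingfornull}; and $2^\mfc = \lambda$ follows from a nice-name count. The value $\mfd(\leq^*) = \lambda$ (note: the ordinary $\mfd$ on $\omega$, which will then force $\mfd(\leq^*_\Null) \ge \lambda$ as well via the diagonalization lemma, or one argues $\mfd(\leq^*_\Null)$ directly) holds because any family of $<\lambda$ functions $\baire \to \baire$ is captured by a set of coordinates of size $<\lambda$, and by the cofinality of $P$ there is a later coordinate whose generic dominates all of them mod null --- so no small family is $\leq^*_\Null$-dominating; more carefully, one shows no family of size $<\lambda$ is $\le^*$-dominating on $\baire$ and lifts this. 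Conversely $\mfb(\leq^*_\Null) = \kappa$: the lower bound $\ge \aleph_2$ (in fact one must verify $\ge \kappa$) uses that the first $\kappa$-many generics $\langle g_i : i \in I\rangle$ for an unbounded-in-$P$ but $\kappa$-directed index set form a $\le^*_\Null$-unbounded family — given a candidate bound $h$ in $V[G]$, $h$ is decided by $<\kappa$ coordinates, which have a $P$-upper bound $i$, and then $g_i$ is \emph{not} $\le^*_\Null$-bounded by $h$ by the Hechler genericity of $g_i$ relative to $h$; the upper bound $\le \kappa$ comes from the directedness witnessing that this family \emph{is} unbounded while any smaller set is bounded by a later generic.

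The main obstacle I expect is the ``preservation'' direction --- showing that in the final model the ground-model (or intermediate-model) functions $\baire \to \baire$ stay $\le^*_\Null$-unbounded, i.e.\ that the tail of the iteration past stage $i$ does not add a $\le^*_\Null$-dominating function over the functions coded by coordinates $\not\le_P i$. This is the analogue of the key preservation lemma in \cite{CuSh95} and requires an $\leq^*_\Null$-preservation theorem for countable-support iterations/products of $\mathbb D_\Null$: one needs that $\mathbb D_\Null$ (and its iterations along the index poset) ``$\le^*_\Null$-preserves'' witnesses to unboundedness, proved by a fusion argument exploiting $\sigma$-closure together with the measure-zero-domain structure of conditions, much as Lemma~\ref{lemma2cohen} handled the Cohen case. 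Concretely, given a name $\dot h$ for a function $\baire \to \baire$ and a condition $p$, one uses Lemma~\ref{mainlemmafunctions} to build a ground-model function $h^*$ such that densely many conditions force $h^*(x) \not\le^* \dot h(x)$ on a non-null set of $x$; the bookkeeping over $\omega_1$-many reals and $\omega_1$-many null sets is where the real work lies, and keeping the index-poset structure synchronized with this bookkeeping (so that the ``responsible'' coordinate for defeating $\dot h$ lies below the appropriate $P$-upper bound) is the delicate point. Once this preservation lemma is in hand, the rest of the bookkeeping and the cardinal-arithmetic computations go through routinely, and the theorem follows.
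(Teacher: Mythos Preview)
Your overall strategy --- a non-linear iteration of $\mathbb D_\Null$ along a partial order with prescribed bounding and dominating numbers, \`a la Cummings--Shelah --- is exactly the paper's, but you miss the key simplification and introduce some errors along the way.

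The definition of $\mathbb D_\Null$ is internally inconsistent: you say $p:\dom(p)\to\baire$ (so each $p(x)$ is a total real), yet your extension clause refers to ``$n\in\dom(q(x))\setminus\dom(p(x))$'' as if stem values were finite partial functions. The paper's extension condition is that for $x\in\dom(q)\setminus\dom(p)$ and $g\in\mathcal F$ one has $g(x)\leq^* q(x)$; stem values are total, which is what gives $\sigma$-closure. Side conditions are arbitrary (not necessarily Borel) functions $\baire\to\baire$. Your index poset $[\lambda]^{<\kappa}$ under inclusion is also not well-founded, but well-foundedness is needed for the recursive definition of the iteration; the paper uses $\kappa\times[\lambda]^{<\kappa}$ instead.

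More importantly, the ``preservation'' obstacle you anticipate is entirely avoided. The paper shows directly that $a\mapsto f_G^a$ is a \emph{cofinal} order-embedding of $\mathbb Q$ into $(\bbb,\leq^*_\Null)$: order-preserving, order-reflecting, and with cofinal range. Cofinality follows cheaply from the $\aleph_2$-c.c.: any $f\in(\bbb)^{V[G]}$ has a name supported on $\aleph_1$ coordinates, these are bounded in $\mathbb Q$ by some $b$ since $\mfb(\mathbb Q)\geq\aleph_2$, and then $f\leq^*_\Null f_G^b$. Order-reflection (if $a\not<b$ then $f_G^a\not\leq^*_\Null f_G^b$) is a short density argument exploiting that incomparable coordinates can be extended independently. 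Once the cofinal embedding is established, the general Cummings--Shelah lemmas (their Lemmas 3 and 5) transfer $\mfb$ and $\mfd$ from $\mathbb Q$ to $(\bbb,\leq^*_\Null)$ in one stroke --- no separate preservation-of-unboundedness theorem is needed. Incidentally, your sketch of $\mfb(\leq^*_\Null)=\kappa$ has the two bounds swapped: exhibiting an unbounded family of size $\kappa$ gives $\mfb\leq\kappa$, while ``any smaller set is bounded by a later generic'' is the $\mfb\geq\kappa$ direction, and that really requires cofinality of the range (so that arbitrary functions, not just generics, are dominated), which is precisely what the embedding argument delivers.
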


Similar to the last subsection I start by introducing the one step and studying its properties. This forcing is reminiscent of Hechler Forcing. As before, I work with the null ideal for definiteness but it's easy to see that the proofs adapt to the case of the other ideals.
\begin{definition}
The $\Null$-Hechler forcing $\mathbb{D}_\Null$ consists of the set of pairs $(p, \mathcal F)$ where $p \in \mathbb{C}_\Null$ and $\mathcal F$ is a countable set of functions $f:\baire \to \baire$. We let $(p, \mathcal F) \leq (q, \mathcal G)$ in case $p \supseteq q$, $\mathcal F \supseteq \mathcal G$ and for all $x \in {\rm dom}(p) \setminus {\rm dom}(q)$ and all $g \in \mathcal G$ $g(x) \leq^* p(x)$.
\end{definition}

If $d= (p_d, \mathcal F_d) \in \mathbb{D}_\Null$, let me call $p_d$ the {\em stem} of the condition and $\mathcal F_d$ the {\em side part}. The basic properties I will need for $\mathbb{D}_\Null$ are as follows.
\begin{proposition}
$\mathbb{D}_\Null$ is $\sigma$-closed and has the $\mfc^+$-c.c., thus assuming $\CH$, it preserves cofinalities and cardinals. Also, if $G \subseteq \mathbb D_\Null$ is $V$-generic then the union of $G$ is a function $g:\baire \to \baire$ so that for any $f:\baire \to \baire$ in the ground model the set of $x$ so that $g(x)$ does not eventually dominate $f(x)$ is null.
\end{proposition}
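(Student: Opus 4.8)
The plan is to verify the three assertions about $\mathbb{D}_\Null$ in turn, each by a direct argument that mirrors the standard treatment of Hechler forcing but carried out in the setting of Borel functions with null domains.

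First I would prove $\sigma$-closure. Given a descending chain $(p_0,\mathcal F_0)\geq(p_1,\mathcal F_1)\geq\cdots$ in $\mathbb{D}_\Null$, set $p=\bigcup_{n<\omega}p_n$ and $\mathcal F=\bigcup_{n<\omega}\mathcal F_n$. Since a countable union of Borel sets is Borel and a countable union of null sets is null, $p\in\mathbb{C}_\Null$; since a countable union of countable sets is countable, $\mathcal F$ is a legitimate side part. The one point to check is that $(p,\mathcal F)\leq(p_n,\mathcal F_n)$ for every $n$: if $x\in\dom(p)\setminus\dom(p_n)$, then $x\in\dom(p_m)\setminus\dom(p_n)$ for some $m>n$, so for every $g\in\mathcal F_n\subseteq\mathcal F_{m-1}$ we have $g(x)\leq^* p_m(x)=p(x)$ by the defining order between $(p_m,\mathcal F_{m-1})$ and $(p_n,\mathcal F_n)$ (reading through the intermediate steps of the chain). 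For the $\mfc^+$-c.c.\ it suffices to note that each condition is coded by a Borel subset of $(\omega^\omega)^2$ together with a countable subset of $(\omega^\omega)^{\omega^\omega}$; there are only $\mfc$ Borel sets and, under $\CH$, only $\mfc$ such countable sets, so $|\mathbb{D}_\Null|=\mfc$ and hence the poset trivially has the $\mfc^+$-c.c. Preservation of cofinalities and cardinals under $\CH$ then follows from $\sigma$-closure (no new $\omega$-sequences, hence $\omega_1$ preserved) together with the $\aleph_2$-c.c.

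Next I would show that $g=\bigcup G$ is a total function $\omega^\omega\to\omega^\omega$ by a density argument: given a condition $(p,\mathcal F)$ and a real $x\notin\dom(p)$, the condition $(p\cup\{\langle x,y\rangle\},\mathcal F)$ for any $y$ with $y\geq^* f(x)$ for all $f\in\mathcal F$ extends it and puts $x$ into the domain; since $\mathbb{D}_\Null$ adds no reals, $g$ is total in $V[G]$ and $\dom(g)=\omega^\omega$. For the dominating property, fix $f:\omega^\omega\to\omega^\omega$ in $V$ and suppose toward a contradiction that some $(p,\mathcal F)$ forces $\mu(\{x:\neg\, f(x)\leq^* g(x)\})=1$, equivalently forces that $\{x: f(x)\leq^* g(x)\}$ is null; since $\mathbb{D}_\Null$ is $\sigma$-closed it adds no Borel null sets, so there is a ground-model Borel null set $N$ and an extension $(q,\mathcal G)\leq(p,\mathcal F)$ forcing $\{x:f(x)\leq^* g(x)\}\subseteq N$. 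Now choose $x\notin N\cup\dom(q)$ (possible, as $N\cup\dom(q)\in\Null$) and form $q^*=q\cup\{\langle x,y\rangle\}$ where $y\in\omega^\omega$ is chosen so that $y\geq^* f(x)$ and $y\geq^* h(x)$ for every $h\in\mathcal G$ (a finite-or-countable family, so such a $\leq^*$-upper bound exists). Then $(q^*,\mathcal G)\leq(q,\mathcal G)$ is a condition forcing $f(x)\leq^* g(x)$ with $x\notin N$, contradicting the choice of $q$. Hence for every $f\in V$ the set $\{x: f(x)\leq^* g(x)\}$ has outer measure (indeed measure, being co-null in the relevant sense) one, which is exactly the claim, stated as: the set of $x$ with $g(x)$ not eventually dominating $f(x)$ is null.

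I do not expect any of these steps to be a serious obstacle; the only mildly delicate point is making sure that adding a single new value $\langle x,y\rangle$ to the stem respects the ordering relative to the \emph{side part} $\mathcal G$, i.e.\ that one can pick the value $y$ to $\leq^*$-dominate all countably many functions in $\mathcal G$ at the point $x$ simultaneously — but a countable set of reals in $\omega^\omega$ always has a $\leq^*$-upper bound, so this is immediate. The genuinely substantive facts about iterations of $\mathbb{D}_\Null$ (Theorem \ref{mainthmhechler}) are separate; the proposition stated here only records the one-step properties and follows the template already used for $\mathbb{C}_\Null$ in Lemma \ref{lemma1cohen}.
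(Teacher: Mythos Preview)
Your $\sigma$-closure argument and the density argument for totality of $g$ are fine. There are, however, two genuine gaps.

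\textbf{The $\mfc^+$-c.c.} Your counting argument does not work: a condition's side part is a countable subset of $(\omega^\omega)^{\omega^\omega}$, and there are $2^{\mfc}$ such functions and hence $2^{\mfc}$ countable sets of them, even under $\CH$. So $|\mathbb D_\Null|$ is not $\mfc$ in general. The paper instead observes that any two conditions with the \emph{same stem} are compatible (one simply takes the union of the side parts), and there are only $\mfc$ many Borel stems; this gives the $\mfc^+$-c.c.\ outright with no cardinal-arithmetic hypothesis.

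\textbf{The domination property.} Your contradiction argument is modeled on the $\mathbb C_\Null$ case, but the conclusion there was weaker: it showed only that the ``good'' set $\{x:f(x)=g(x)\}$ is \emph{not null}. Here you need the stronger statement that $\{x:f(x)\leq^*g(x)\}$ is \emph{conull}. Your setup assumes some condition forces the good set to be null, and you derive a contradiction; but the negation of ``conull'' is ``the bad set is not null'', not ``the good set is null'', so you have only ruled out one extreme. The leap in your final sentence from ``not null'' to ``co-null in the relevant sense'' is unjustified. The paper's argument avoids this entirely and is much shorter: given $f$ and a condition $(p,\mathcal F)$, pass to $(p,\mathcal F\cup\{f\})$. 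By the definition of the order, this condition forces $f(x)\leq^* g(x)$ for every $x\notin\dom(p)$, and $\dom(p)$ is null. The side part is exactly what makes $\mathbb D_\Null$ stronger than $\mathbb C_\Null$; use it.
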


\begin{proof}
That $\mathbb{D}_\Null$ is $\sigma$-closed is the same as the proof for $\mathbb{C}_\Null$. To see that it has the $\mfc^+$-c.c. it suffices to note that if two conditions have the same stem then they are compatible.

Now to see that $g$ is total is a simple density argument, noting that if $d$ is some condition and $x \notin {\rm dom}(p_d)$ then there is a $y$ dominating all of the $f(x)$ for $f \in \mathcal F_d$ since this set is countable and hence $(p_d \cup \{\langle x, y\rangle \}, \mathcal F_d)$ extends $d$ as needed. Moreover, if $f:\baire \to \baire$ is a function in the ground model and $d$ is any condition then clearly we can strengthen $d$, say to $d'$ so that $f$ is included in the side part $d'$. This strengthening forces, by the definition of the extension relation, that for all $x \notin {\rm dom}(p_{d'})$ $f(x) \leq^* \dot{g} (x)$. Since the domain of $p_{d'}$ was measure zero this proves the second part.
\end{proof}

\begin{remark}
While it's not used in any proof let me note that, unlike with $\mathbb C_\Null$ it is {\em not} the case that every condition in $\mathbb D_\Null$ can be extended to include any Borel null set in the domain of its stem. This is because given any uncountable Borel set, (at least under $\CH$) one can use a simple diagonal argument to build a function which is not dominated by any Borel function on that set. What is true however, is that the stem of any condition can be extended to include any {\em countable} set.
\end{remark}

Let me now show what happens in the generic extension by a countable support iteration of $\mathbb{D}_\Null$.
\begin{theorem}
Let $\kappa$ be regular and let $\mathbb P_\kappa$ be the countable support iteration of $D_\Null$. If $G \subseteq \mathbb P_\kappa$ is $V$-generic then in $V[G]$ $\mfb (\leq^*_\Null) = \mfd (\neq^*) = \kappa = 2^\mfc$.
\label{mainthmhechler2}
\end{theorem}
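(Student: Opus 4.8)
The first step is to record the coarse structure of $\mathbb P_\kappa$. Each iterand (the version of $\mathbb{D}_\Null$ computed in the relevant intermediate extension) is $\sigma$-closed, and a countable support iteration of $\sigma$-closed forcings is again $\sigma$-closed; hence $\mathbb P_\kappa$ adds no new reals and no new Borel sets, so $\mfc$ and the ideals $\Null,\Me,\Kb$ are computed the same in $V$ and in $V[G]$. Using $\GCH$ in $V$, a standard $\Delta$-system argument — exploiting that each iterand has size $\mfc=\aleph_1$ and that conditions have countable support — shows $\mathbb P_\kappa$ has the $\aleph_2$-c.c., so all cardinals and cofinalities are preserved; a nice-name count together with $\kappa^{\aleph_1}=\kappa$ (valid since $\kappa$ is regular above $\aleph_1$) gives $|\mathbb P_\kappa|=\kappa$, so $2^\mfc=2^{\aleph_1}=\kappa$ in $V[G]$, the lower bound being witnessed by the $\kappa$-many distinct functions $g_\alpha\colon\baire\to\baire$ added at successor stages. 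As remarked in the excerpt, since we only work under $\CH$ at each stage, by Theorem~\ref{M=N} every separation obtained for the $\Null$-diagram transfers to the $\Me$-diagram.

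\textbf{The bounding node.} Next I would prove $\mfb(\leq^*_\Null)=\kappa$. The upper bound $\mfb(\leq^*_\Null)\le 2^\mfc=\kappa$ is immediate: for any $g\colon\baire\to\baire$ the map $x\mapsto g(x)+1$ (pointwise) shows that $(\baire)^{\baire}$ is itself $\leq^*_\Null$-unbounded. For the lower bound, let $A\subseteq(\baire)^{\baire}$ in $V[G]$ with $|A|<\kappa$. By the $\aleph_2$-c.c. each $f\in A$ is decided by the $\aleph_1$-many (size-$\le\aleph_1$) antichains picking out the values $f(x)(n)$, so $f$ has support of size $\le\aleph_1$, whence $\bigcup_{f\in A}\mathrm{supp}(f)$ has size $<\kappa$ and (by regularity of $\kappa$) lies below some $\alpha<\kappa$; thus $A\in V[G_\alpha]$. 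By the basic properties of $\mathbb{D}_\Null$ recorded before Theorem~\ref{mainthmhechler2}, the stage-$\alpha$ generic $g_\alpha\colon\baire\to\baire$ satisfies that $\{x:\neg\,h(x)\le^* g_\alpha(x)\}$ is null for every $h\in(\baire)^{\baire}\cap V[G_\alpha]$; this set is Borel in the parameters $h,g_\alpha$, so it remains null in $V[G]$ since the tail adds no reals or Borel sets. Hence $g_\alpha$ is a $\leq^*_\Null$-bound for $A$, and $\mfb(\leq^*_\Null)\ge\kappa$. Feeding $\mfb(\leq^*_\Null)=\kappa$ into the higher dimensional Cicho\'n diagram (Theorem~\ref{higherdimcichon}) and using $2^\mfc=\kappa$ as the ceiling, I immediately get $\mfb(\neq^*_\Null)=\mfd(\leq^*_\Null)=\mfd(\in^*_\Null)=\kappa$ as well, and also the inequality $\mfd(\neq^*_\Null)\le\mfd(\leq^*_\Null)=\kappa$.

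\textbf{The dominating node.} It remains to establish $\mfd(\neq^*_\Null)\ge\kappa$, which is the only substantive point. Again by chain-condition reflection it suffices to show that over any stage model $V[G_\alpha]$ the single step $\mathbb{D}_\Null$ adds a function $f\colon\baire\to\baire$ such that for every $h\in(\baire)^{\baire}\cap V[G_\alpha]$ the set $\{x:\exists^\infty n\ f(x)(n)=h(x)(n)\}$ is non-null; such an $f$ is defeated (i.e.\ is an $R$-nonbound of the wrong kind) by no $g$ coming from a purported small $\neq^*_\Null$-dominating family in $V[G_\alpha]$, so no family of size $<\kappa$ can be $\neq^*_\Null$-dominating. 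The intended route mirrors the classical fact that Hechler forcing adds a Cohen real: one argues that $\mathbb{D}_\Null$ adds a $\mathbb{C}_\Null$-type generic and then applies the argument of Lemma~\ref{lemma1cohen}. Concretely, for a fixed ground-model Borel null set $N$ and fixed $h$ one wants density of conditions forcing $\{x:\exists^\infty n\ \dot f(x)(n)=h(x)(n)\}\not\subseteq\check N$, and here one uses the observation (noted in the Remark after the basic properties of $\mathbb D_\Null$) that the stem of any condition may be extended to contain an arbitrary countable set, together with the fact that a side part is countable.

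\textbf{The main obstacle.} The delicate point — and the step I expect to be the crux — is precisely the tension between the two jobs of the generic for $\mathbb{D}_\Null$: the stem value at a freshly added point $x$ must \emph{eventually dominate} the side part at $x$, which obstructs simply copying $h(x)$, so (unlike for $\mathbb{C}_\Null$) the $\mathbb{D}_\Null$-generic itself is $\neq^*_\Null$-\emph{above} every ground-model function rather than infinitely-often-equal to them. The naive parity trick $x\mapsto\langle g(x)(n)\bmod 2:n\rangle$ yields only a $2^\omega$-valued infinitely-often-equal witness, which is useless against fast-growing members of a dominating family; so the argument must extract a genuinely $\baire$-valued Cohen-like function from the generic, choosing stem values that agree with prescribed targets on an infinite set of coordinates while still eventually majorizing the (finitely many so-far-relevant) side conditions. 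Making this go through uniformly in all side parts is the real content of the lemma; once it is in hand, the reflection argument of the previous paragraph closes out $\mfd(\neq^*_\Null)=\kappa$, and the theorem follows.
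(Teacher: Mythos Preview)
Your Setup and Bounding node sections are correct and essentially match the paper.

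For the Dominating node, you take a different and harder route than the paper, and leave the key step unresolved. You propose extracting from the single step $\mathbb{D}_\Null$ a $\mathbb{C}_\Null$-style infinitely-often-equal witness, in analogy with the classical fact that Hechler forcing adds a Cohen real. As you correctly diagnose in your ``main obstacle'' paragraph, the domination requirement on stem extensions obstructs simply copying target values, and the parity trick only produces a $2^\omega$-valued function, which is useless. You then declare that ``making this go through uniformly in all side parts is the real content of the lemma'' without actually carrying it out --- so your argument is incomplete precisely at the point you yourself flag as the crux.

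The paper sidesteps this obstacle entirely with a much simpler device: it does not analyze $\mathbb{D}_\Null$ any further. Instead it uses the general fact that a countable support iteration of $\sigma$-closed forcings adds an $\mathrm{Add}(\omega_1,1)$-generic $g:\omega_1\to\omega_1$ at limit stages of cofinality $\omega_1$. Via the fixed enumeration $\{x_\alpha:\alpha<\omega_1\}$ of $\baire$ one converts $g$ to $\hat g\in(\baire)^{\baire}$ by $\hat g(x_\alpha)=x_{g(\alpha)}$. Now given any $f$ lying in an earlier stage model and any Borel null $N$, it is dense in $\mathrm{Add}(\omega_1,1)$ (working over that stage) to find a condition extending by some pair $(\alpha,\beta)$ with $x_\alpha\notin N$ and $x_\beta=f(x_\alpha)$; hence $\{x:\hat g(x)=f(x)\}$ is non-null, and no family of size ${<}\kappa$ can be $\neq^*_\Null$-dominating. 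This uses nothing about $\mathbb{D}_\Null$ beyond its $\sigma$-closure, and indeed the same argument is what drives the $\mathbb{C}_\Null$ iteration in Theorem~\ref{cohenforcingfornull} as well.
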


\begin{proof}
Let $\kappa > \aleph_2$ be regular and let $\mathbb P_\kappa$ be the countable support iteration of length $\kappa$ of $\mathbb{D}_\kappa$. That cardinals and cofinalities are preserved follows as for $\mathbb{C}_\Null$. Also, every set $A \subseteq \bbb$ of size less than $\kappa$ is added by some initial stage, after which a function bounding $A$ was added so $\mfb (\leq^*_\Null) = \kappa$. Moreover a nice name argument easily gives that $2^\mfc = \kappa$.

It remains to see that $\mfd (\neq_\Null^*) = \kappa$ in this model. For this, I use the fact that countable support iterations always add a generic for $Add (\omega_1, 1)$ at limit stages of cofinality $\omega_1$. Now, given any function $f:\omega_1 \to \omega_1$ we can think of it as a function $\hat{f}$ from $\baire$ to $\baire$ by letting $\hat{f}(x_\alpha) = x_\beta$ just in case $f(\alpha) = \beta$. Suppose $A \subseteq \bbb$ is a set of size less than $\kappa$. It must have been added by some initial stage of the iteration $\mathbb P_\kappa$ and therefore there is a later stage which adds an $Add (\omega_1, 1)$-generic function $g:\omega_1 \to \omega_1$. By density, given any $f:\baire \to \baire$ in $A$ and any Borel null set $N$ we can find an $x \notin N$ so that $\hat{g}(x) = f(x)$ and therefore, for any $f \in A$ the set of $x$ for which $\hat{g}(x) = f(x)$ is not null. Therefore in particular $A$ is not a $\neq^*_\Null$-dominating family. Thus $\mfd (\neq^*_\Null) = \kappa$.
\end{proof} 

I'm now ready to prove Theorem \ref{mainthmhechler}. This uses a version of the iteration discussed in Section 3 of \cite{CuSh95}. Let me recall the basics of what I need. Fix $\kappa \leq \lambda$ regular cardinals greater than or equal to $\aleph_2$ and let $\mathbb{Q} = (Q, \leq_\mathbb Q)$ be a well founded partial order so that $\mfb(\mathbb Q) = \kappa$ and $\mfd (\mathbb Q) = \lambda$. For example, under $\GCH$, $\kappa \times [\lambda]^{< \kappa}$ ordered by $(\alpha, \tau) \leq (\beta, \sigma)$ if and only if $\alpha < \beta$ and $\tau \subseteq \sigma$ is such an order, see Lemma 2 of \cite{CuSh95} for a proof. I need to define a $\sigma$-closed, $\aleph_2$-c.c. forcing notion $\mathbb{D} (\mathbb Q)$ so that forcing with this partial order adds a cofinal embedding of $\mathbb{Q}$ into $(\bbb, \leq^*_\Null)$. If I can do this, then by Lemmas 3 and 5 of \cite{CuSh95} it follows that in the extension by this forcing notion $\mfb (\leq^*_\Null) = \kappa$ and $\mfd (\leq^*_\Null) = \lambda$. For completeness, here are the cited lemmas.

\begin{lemma}[Lemma 3 of \cite{CuSh95}]
If $\mathbb P$ and $\mathbb Q$ are partially ordered sets and $\mathbb P$ embeds cofinally into $\mathbb Q$ then $\mfb(\mathbb P) = \mfb (\mathbb Q)$ and $\mfd (\mathbb P) = \mfd (\mathbb Q)$.
\end{lemma}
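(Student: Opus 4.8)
The plan is to deduce the four inequalities $\mfd(\mathbb{Q})\le\mfd(\mathbb{P})$, $\mfd(\mathbb{P})\le\mfd(\mathbb{Q})$, $\mfb(\mathbb{Q})\le\mfb(\mathbb{P})$ and $\mfb(\mathbb{P})\le\mfb(\mathbb{Q})$ directly from the definitions of $\mfd(\leq_{\mathbb{Q}})$ and $\mfb(\leq_{\mathbb{Q}})$ recalled in the introduction. First I would fix once and for all a cofinal embedding $e\colon\mathbb{P}\to\mathbb{Q}$, by which I mean a map that is order preserving and order reflecting (so $p\le_{\mathbb{P}}p'$ if and only if $e(p)\le_{\mathbb{Q}}e(p')$, which also makes $e$ injective by antisymmetry) and whose image $e[P]$ is $\le_{\mathbb{Q}}$-cofinal in $Q$. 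The proof then proceeds by transporting witnessing families back and forth along $e$.

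For the cofinality (dominating) numbers: starting from a dominating family $D\subseteq P$ of size $\mfd(\mathbb{P})$, I would check that $e[D]$ is dominating in $\mathbb{Q}$ — given $q\in Q$, cofinality of $e[P]$ produces $p$ with $q\le_{\mathbb{Q}}e(p)$, then $d\in D$ with $p\le_{\mathbb{P}}d$, and order preservation gives $q\le_{\mathbb{Q}}e(d)$ — so $\mfd(\mathbb{Q})\le|e[D]|\le\mfd(\mathbb{P})$. Conversely, from a dominating $E\subseteq Q$ of size $\mfd(\mathbb{Q})$, I would pick for each $r\in E$ some $p_r\in P$ with $r\le_{\mathbb{Q}}e(p_r)$ (using cofinality of $e[P]$) and argue that $\{p_r:r\in E\}$ dominates $\mathbb{P}$: for $p\in P$ there is $r\in E$ with $e(p)\le_{\mathbb{Q}}r\le_{\mathbb{Q}}e(p_r)$, hence $p\le_{\mathbb{P}}p_r$ by order reflection. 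This yields $\mfd(\mathbb{P})\le\mfd(\mathbb{Q})$, and the two together give equality.

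For the bounding numbers the argument is dual. From an unbounded $A\subseteq P$ of size $\mfb(\mathbb{P})$, I would show $e[A]$ is unbounded in $\mathbb{Q}$: a bound $q$ would, via $q\le_{\mathbb{Q}}e(p)$ for suitable $p$ and order reflection, produce a bound $p$ of $A$. Since $e$ is injective this gives $\mfb(\mathbb{Q})\le\mfb(\mathbb{P})$. Conversely, from an unbounded $B\subseteq Q$ of size $\mfb(\mathbb{Q})$ and a choice of $p_r$ with $r\le_{\mathbb{Q}}e(p_r)$ as above, I would observe that $\{p_r:r\in B\}$ is unbounded in $\mathbb{P}$, because a bound $p$ would make $e(p)$ a bound of $B$ by order preservation; hence $\mfb(\mathbb{P})\le\mfb(\mathbb{Q})$.

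I do not expect a genuine obstacle: the content is entirely the correct bookkeeping of the two halves of the embedding property. The only points that require care — and which I would state explicitly — are that the "pull-back" directions ($\mfd(\mathbb{P})\le\mfd(\mathbb{Q})$ and $\mfb(\mathbb{Q})\le\mfb(\mathbb{P})$) genuinely use order reflection (not merely order preservation), that an order embedding of posets is automatically injective so cardinalities are not lost when passing to $e[D]$ or $e[A]$, and that $\mfb$ is to be read in the usual convention when $\mathbb{Q}$ fails to be directed (which does not arise for the posets to which the lemma is applied here, where $\mathbb{Q}=(\bbb,\leq^*_{\Null})$ is directed).
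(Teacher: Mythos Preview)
Your proof is correct and is the standard argument. Note that the paper does not actually prove this lemma: it merely quotes it as Lemma 3 of \cite{CuSh95} and uses it as a black box, so there is no proof in the paper to compare against. Your write-up supplies exactly the elementary verification one would expect, and your remarks on where order reflection (as opposed to mere order preservation) is genuinely needed are accurate.
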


\begin{lemma}[Lemma 5 of \cite{CuSh95}]
Suppose $\mathbb P$ is a partial order with $\mfb (\mathbb P) = \beta$ and $\mfd (\mathbb P) = \delta$.
\begin{enumerate}
\item
If $V[G]$ is a generic extension of $V$ so that every set of ordinals of size less than $\beta$ in $V[G]$ is covered by a set of ordinals of size less than $\beta$ in $V$ then $V[G] \models \mfb (\mathbb P) = \beta$.
\item
If $V[G]$ is a generic extension of $V$ so that every set of ordinals of size less than $\delta$ in $V[G]$ is covered by a set of ordinals of size less than $\delta$ in $V$ then $V[G] \models \mfd (\mathbb P) = \delta$.
\end{enumerate}

\end{lemma}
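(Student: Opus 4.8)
The plan is to exploit the fact that $\mathbb P$ is a fixed poset sitting in $V$, so that passing to the extension $V[G]$ adds new \emph{subsets} of $\mathbb P$ but no new \emph{elements}. Consequently, for any $A \subseteq \mathbb P$ that belongs to a model $M$ containing $\mathbb P$, $\leq_{\mathbb P}$ and $A$, the two assertions ``$A$ has an upper bound in $\mathbb P$'' and ``$A$ is cofinal in $\mathbb P$'' are absolute between $M$ and any larger model with the same $\mathbb P$: they quantify only over elements of $\mathbb P$, and that range does not change. Fixing once and for all (in $V$) a bijection between the underlying set of $\mathbb P$ and an ordinal, I may regard subsets of $\mathbb P$ as sets of ordinals, so that the covering hypotheses of the lemma apply to them directly.

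For part $(1)$: first, any $V$-unbounded $A \subseteq \mathbb P$ with $|A| = \beta$ remains unbounded in $V[G]$ by the absoluteness just noted, so $\mfb(\mathbb P)^{V[G]} \le \beta$. Conversely, let $A \in V[G]$ be a subset of $\mathbb P$ with $|A|^{V[G]} < \beta$. By the covering assumption there is $B \in V$ with $A \subseteq B$ and $|B|^{V} < \beta = \mfb(\mathbb P)^{V}$; in $V$ such a $B$ has an upper bound $p \in \mathbb P$, and this same $p$ bounds $A \subseteq B$ in $V[G]$. Hence every $({<}\beta)$-sized subset of $\mathbb P$ in $V[G]$ is bounded, i.e.\ $\mfb(\mathbb P)^{V[G]} \ge \beta$, and the two inequalities give $\mfb(\mathbb P)^{V[G]} = \beta$. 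For part $(2)$, I dualize: a $V$-cofinal $D \subseteq \mathbb P$ with $|D| = \delta$ is still cofinal in $V[G]$ by absoluteness, so $\mfd(\mathbb P)^{V[G]} \le \delta$; and if some cofinal $D \in V[G]$ had $|D|^{V[G]} < \delta$, the covering assumption for $\delta$ would provide $E \in V$ with $D \subseteq E$ and $|E|^V < \delta$, whence $E$ is cofinal in $V[G]$, hence cofinal in $V$ by absoluteness, contradicting $|E|^V < \delta = \mfd(\mathbb P)^V$. So $\mfd(\mathbb P)^{V[G]} = \delta$.

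There is no deep obstacle here; the heart of the argument is the one-line absoluteness observation above, and the only point that needs a word of care is the bookkeeping of cardinalities and cofinalities of $\beta$ and $\delta$ themselves, so that ``size $< \beta$'' means the same thing in $V$ and in $V[G]$. In the intended application the forcing $\mathbb P_{\kappa,\lambda}$ preserves cardinals and cofinalities, so this is automatic; in the general statement one checks that the covering hypothesis already precludes the relevant collapse (a cofinal map witnessing a change of $\mathrm{cf}(\beta)$ would have range a small set of ordinals coverable in $V$). I would record this as a short remark rather than belabor it.
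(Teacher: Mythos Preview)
The paper does not give its own proof of this lemma: it is stated as a citation (Lemma~5 of \cite{CuSh95}) and used as a black box in the proof of Theorem~\ref{mainthmhechler}. Your argument is correct and is essentially the standard one: the key point is exactly the absoluteness you identify, namely that ``$A$ is bounded in $\mathbb P$'' and ``$A$ is cofinal in $\mathbb P$'' are absolute between $V$ and $V[G]$ because $\mathbb P$ gains no new elements, together with the covering hypothesis to push small sets in $V[G]$ down to small sets in $V$. Your closing remark that the covering hypothesis already forces $\beta$ (resp.\ $\delta$) to remain a cardinal is also correct and worth stating, since otherwise the conclusion $\mfb(\mathbb P)^{V[G]}=\beta$ would not even type-check; the argument you sketch (if $\beta$ collapsed, then $\beta$ itself would be a set of ordinals of size ${<}\beta$ in $V[G]$, hence covered by a ground-model set of size ${<}\beta$, which is impossible) is exactly right.
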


Let me define the forcing notion I need. In what follows, if $a \in \mathbb Q$ let $\mathbb Q \hook a = \{b \in \mathbb Q \; | \; b < a\}$. Similarly if $p$ is a function with domain contained in $\mathbb Q$, let $p \hook  a$ be the restriction of $p$ to $\mathbb Q \hook a$.
\begin{definition}
Let $\mathbb{Q}_{top}$ be $\mathbb{Q}$ with the addition of a top element, $top$, greater than all other elements. For each $a \in \mathbb Q_{top}$ define inductively a forcing notion $\mathbb{D}(\mathbb Q)_a$ to be the set of functions $p$ with ${\rm dom}(p) \subseteq\mathbb Q \hook a$ countable and for each $b \in {\rm dom}(p)$ $p(b)$ is a $\mathbb{D} (\mathbb Q)_b$-name for an element of $\mathbb{D}_\Null$ of the form $(\check{p}, \dot{\mathcal F})$. Let $p \leq_{\mathbb{D} (\mathbb Q)_a} q$ if and only if $p \supseteq q$ and for every $b \in {\rm dom}(q)$ we have that $p \hook b \forces_{\mathbb D (\mathbb Q)_b} p(b) \leq_{\mathbb{D}_\Null} q(b)$. Finally we let $\mathbb{D} (\mathbb Q) = \mathbb D(\mathbb Q)_{top}$.
\end{definition}

\begin{remark}
Below I show that $\mathbb D (\mathbb Q)_a$ is $\sigma$-closed for each $a \in \mathbb Q$. It follows that there is no loss in generality in insisting that for all $b$ the name for the stem of $p(b)$ is a check name, since the latter is always coded by a real and hence the set of conditions like this is dense.
\end{remark}

\begin{lemma}
For every $a \in \mathbb Q$, the partial order $\mathbb{D} (\mathbb Q)_a$ is $\sigma$-closed, and under $\GCH$, has the $\aleph_2$-c.c.. 
\end{lemma}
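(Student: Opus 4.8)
The plan is to prove $\sigma$-closure by induction on the rank of $a$ in the well-founded order $\mathbb{Q}$, and to prove the $\aleph_2$-c.c. by a $\Delta$-system argument using $\GCH$. First I would handle $\sigma$-closure. Suppose $p_0 \geq p_1 \geq p_2 \geq \cdots$ is a descending $\omega$-sequence in $\mathbb{D}(\mathbb{Q})_a$. Let $D = \bigcup_{n<\omega} {\rm dom}(p_n)$; since each ${\rm dom}(p_n)$ is a countable subset of $\mathbb{Q}\hook a$, so is $D$. For each $b \in D$ I want to define $p(b)$. Fix some $n_0$ with $b \in {\rm dom}(p_{n_0})$; then for all $n \geq n_0$ we have $b \in {\rm dom}(p_n)$ and $p_{n+1}\hook b \forces_{\mathbb{D}(\mathbb{Q})_b} p_{n+1}(b) \leq_{\mathbb{D}_\Null} p_n(b)$. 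By the induction hypothesis, $\mathbb{D}(\mathbb{Q})_b$ is $\sigma$-closed; combined with the fact (established in the preceding Proposition) that $\mathbb{D}_\Null$ itself is $\sigma$-closed, a standard argument inside the forcing extension by $\mathbb{D}(\mathbb{Q})_b$ yields a $\mathbb{D}(\mathbb{Q})_b$-name $p(b)$ for a condition in $\mathbb{D}_\Null$ that is forced to be below every $p_n(b)$ for $n \geq n_0$ (and hence below $p_n(b)$ for $n < n_0$ too, by transitivity — here one uses that $p_n \hook b \geq p_{n_0}\hook b$, so the names are compatible). One must check $p(b)$ can be taken of the required form $(\check{p}, \dot{\mathcal{F}})$; by the Remark following the definition this is no loss since such conditions are dense and $\sigma$-closure means the $\sigma$-closed-below name can be found inside that dense set. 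Setting $p = \{(b, p(b)) : b \in D\}$, one verifies ${\rm dom}(p) \subseteq \mathbb{Q}\hook a$ is countable and $p \leq_{\mathbb{D}(\mathbb{Q})_a} p_n$ for all $n$, by checking the restriction condition $p\hook b \forces p(b) \leq_{\mathbb{D}_\Null} p_n(b)$ coordinatewise.

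Next I would establish the $\aleph_2$-c.c. under $\GCH$. Suppose $\{p_\xi : \xi < \omega_2\}$ is a family of conditions in $\mathbb{D}(\mathbb{Q})_a$; I want to find $\xi \neq \eta$ with $p_\xi, p_\eta$ compatible. Each ${\rm dom}(p_\xi)$ is a countable subset of $\mathbb{Q}$, so by the $\Delta$-system lemma (valid for $\omega_2$ countable sets under $\CH$, which follows from $\GCH$) we may pass to an uncountable — indeed size $\aleph_2$ — subfamily whose domains form a $\Delta$-system with countable root $r \subseteq \mathbb{Q}\hook a$. Now I need to control, on the root $r$, the names $p_\xi(b)$ for $b \in r$. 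Here the key point is that a condition $p_\xi(b)$ is (after passing to the dense set) a $\mathbb{D}(\mathbb{Q})_b$-name of the form $(\check{q}, \dot{\mathcal{F}})$ where $q \in \mathbb{C}_\Null$ is a Borel set with null domain — coded by a real — and $\dot{\mathcal{F}}$ is a name for a countable set of functions $\baire \to \baire$. One would argue by induction on the rank of $b$ that $\mathbb{D}(\mathbb{Q})_b$ has size at most $\mfc = \aleph_1$ (using $\CH$ and the countability of domains, plus $|\mathbb{Q}| \leq \lambda$ but only countably many coordinates appear in any condition, together with the $\aleph_2$-c.c. at earlier stages to bound nice-name counts); hence there are at most $\aleph_1$ many possible values for each $p_\xi(b)$, and at most $\aleph_1$ many possibilities for the finite-root assignment $b \mapsto p_\xi(b)$. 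Since the family has size $\aleph_2$, by pigeonhole we may refine to a subfamily on which $p_\xi \hook b = p_\eta \hook b$ agree on the root for all $b \in r$. Then for $\xi \neq \eta$ in this subfamily, $p_\xi$ and $p_\eta$ are compatible: define $p = p_\xi \cup p_\eta$ on ${\rm dom}(p_\xi)\cup{\rm dom}(p_\eta)$, which is a legitimate function since the domains overlap only on $r$ where they agree; off the root the coordinates are "disjoint", and one checks the restriction condition holds coordinatewise — at a coordinate $b \notin r$ belonging to, say, ${\rm dom}(p_\xi)$, one has $p \hook b = p_\xi \hook b$ below $b$ (the root part is common, the rest of $p$ below $b$ comes from $p_\xi$ since $p_\eta$'s new coordinates below $b$ would have to be in ${\rm dom}(p_\eta) \cap \mathbb{Q}\hook b$, and the $\Delta$-system structure combined with well-foundedness needs to be invoked to see these don't interfere — this is the delicate combinatorial point).

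The main obstacle I anticipate is precisely the compatibility verification in the $\aleph_2$-c.c. argument: one needs that for two conditions agreeing on a $\Delta$-system root, the "union" is genuinely a condition, which requires showing the restriction relation $p \hook b \forces p(b) \leq_{\mathbb{D}_\Null} p_\xi(b)$ holds, and this is subtle because $p\hook b$ may contain coordinates from both $p_\xi$ and $p_\eta$. The resolution uses well-foundedness of $\mathbb{Q}$ together with the $\Delta$-system property: if $b \in {\rm dom}(p_\xi) \setminus r$, then any $c < b$ in ${\rm dom}(p_\eta)$ must lie in ${\rm dom}(p_\xi) \cap {\rm dom}(p_\eta) = r$ (otherwise $c$ would be a common non-root element below $b$, and one argues this forces $c \in r$ by the structure — actually this needs care, as $c < b$ with $c \in {\rm dom}(p_\eta)\setminus r$ and $b\in{\rm dom}(p_\xi)\setminus r$ is a priori possible), so in fact one should additionally refine the $\Delta$-system so that the "tails" ${\rm dom}(p_\xi)\setminus r$ are pairwise order-incomparable in a suitable sense, or alternatively absorb $\mathbb{Q}\hook b \cap \bigcup_\xi {\rm dom}(p_\xi)$ into the root by enlarging $r$ to be downward-closed-enough — a standard but fiddly maneuver from the Cummings–Shelah paper \cite{CuSh95} that I would adapt directly. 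Once the book-keeping is arranged so that below any coordinate of $p_\xi$ outside the root, $p$ and $p_\xi$ agree, the coordinatewise check of $\leq_{\mathbb{D}(\mathbb{Q})_a}$ goes through by the inductive hypothesis applied to $\mathbb{D}(\mathbb{Q})_b$ and the already-known compatibility structure of $\mathbb{D}_\Null$ (conditions with the same stem are compatible). I would therefore present the proof by first carefully fixing the refinement of the family so that domains form a $\Delta$-system with root $r$ closed under $\mathbb{Q}$-predecessors appearing in the family, then the pigeonhole on root-values, then the direct compatibility check, citing \cite{CuSh95} for the non-linear iteration framework.
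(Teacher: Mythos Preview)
Your $\sigma$-closure argument is fine (and in fact more careful than necessary---the paper simply takes, at each coordinate $b$ in the union of domains, a $\mathbb{D}(\mathbb{Q})_b$-name for a lower bound of the $p_n(b)$'s, using only that $\mathbb{D}_\Null$ is $\sigma$-closed).

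The $\aleph_2$-c.c. argument has a genuine gap in the counting step. You claim that $\mathbb{D}(\mathbb{Q})_b$ has size at most $\aleph_1$, and hence that there are at most $\aleph_1$ possible values for $p_\xi(b)$. This fails: the value $p_\xi(b)$ is a pair $(\check{q},\dot{\mathcal F})$ where $\dot{\mathcal F}$ is a $\mathbb{D}(\mathbb{Q})_b$-name for a countable subset of $(\omega^\omega)^{\omega^\omega}$, and under $\GCH$ the latter set has size $2^{\aleph_1}=\aleph_2$. Even granting the $\aleph_2$-c.c.~inductively, nice-name counting gives at best $\aleph_2$ many such names, not $\aleph_1$, so the pigeonhole from $\aleph_2$ conditions does not go through. (The claim that $|\mathbb{D}(\mathbb{Q})_b|\le\aleph_1$ is also wrong on its face: already the countable subsets of $\mathbb{Q}\hook b$ can number more than $\aleph_1$, since $|\mathbb{Q}|$ may be $\lambda\geq\aleph_2$.)

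The fix---and the paper's approach---is not to demand full agreement of $p_\xi(b)$ on the root, but only agreement of the \emph{stems}. The stem is a check name for an element of $\mathbb{C}_\Null$, hence an actual ground-model object, and there are only $\mathfrak{c}=\aleph_1$ of them under $\CH$; so the sequence of stems along the countable root $B$ has at most $\aleph_1^{\aleph_0}=\aleph_1$ possibilities, and pigeonhole works. Compatibility then follows immediately from the fact you already cite at the end: in $\mathbb{D}_\Null$, any two conditions with the same stem are compatible (take the union of the side parts). This also dissolves your worry about coordinates of $p_\eta$ lying below coordinates of $p_\xi$ outside the root: to check $p\le p_\xi$ you only need $p\hook b\Vdash p(b)\le_{\mathbb{D}_\Null} p_\xi(b)$ at each $b\in{\rm dom}(p_\xi)$, and since $p(b)$ is either equal to $p_\xi(b)$ (off the root) or a name for a common extension with the same stem (on the root), this is forced outright, regardless of what $p\hook b$ looks like. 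No downward-closure refinement of the $\Delta$-system is needed.
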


\begin{proof}
Fix $a \in \mathbb Q$. Suppose that $p_0 \geq p_1 \geq ...\geq p_n \geq ...$ is a decreasing sequence of elements in $\mathbb D(\mathbb Q)_a$. Let $p$ be defined as the function whose domain is the union of the domains of all of the $p_n$'s and so that for each $b \in {\rm dom} (p)$ we let $p(b)$ name a lower bound on the set of $\{p_n (b) \;  |\; n < \omega \, {\rm and} \, b \in {\rm dom} (p_n)\}$. Since $\mathbb D_\Null$ is $\sigma$-closed such a name exists. Clearly $p$ is a lower bound on the sequence so $\mathbb D (\mathbb Q)_a$ is $\sigma$-closed.

To see that $\mathbb D (\mathbb Q)_a$ has the $\aleph_2$-c.c., suppose that $A = \{p_\alpha \; | \; \alpha < \omega_2\}$ is a set of conditions. Applying the $\Delta$-system lemma (by $\GCH$) we can thin out $A$ so a $\Delta$-system so that any two conditions' domains coincide on some countable set $B \subseteq \mathbb Q \hook a$. But now, since each name for the stem in $p(b)$ is a check name, the set of all possible sequences of stems on the coordinates in $B$ has size $\omega_1^\omega = \omega_1$ (using $\CH$). Thus $A$ contains $\omega_2$ many conditions so that on $B$ the stems agree, and each such condition's overlapping domains is $B$. But this means that those conditions are all compatible.
\end{proof}

The next lemma is entirely straightforward to verify.
\begin{lemma}
Suppose $a < b \in \mathbb Q_{top}$ then $\mathbb D (\mathbb Q)_a$ completely embeds into $\mathbb D(\mathbb Q)_b$ and the map $\pi:\mathbb D (\mathbb Q)_b \to \mathbb D(\mathbb Q)_a$ defined by $\pi (p) = p\hook a$ is a projection.
\label{projection}
\end{lemma}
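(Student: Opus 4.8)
The plan is to verify the three properties directly from the definition of $\mathbb{D}(\mathbb Q)_a$ and $\mathbb{D}(\mathbb Q)_b$, proceeding by transfinite induction on the well-founded order $\mathbb Q_{\Top}$ exactly as the poset itself was defined. Fix $a < b$ in $\mathbb Q_{\Top}$. First I would define the candidate embedding $e:\mathbb D(\mathbb Q)_a \to \mathbb D(\mathbb Q)_b$ to be the identity: since $\mathbb Q\hook a \subseteq \mathbb Q\hook b$, any function $p$ with ${\rm dom}(p)\subseteq \mathbb Q\hook a$ satisfying the coherence requirements for $\mathbb D(\mathbb Q)_a$ automatically satisfies them as an element of $\mathbb D(\mathbb Q)_b$ — for each $c\in{\rm dom}(p)$, the name $p(c)$ is a $\mathbb D(\mathbb Q)_c$-name, and $\mathbb D(\mathbb Q)_c$ does not depend on whether we are working below $a$ or below $b$. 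Compatibility and incompatibility are then literally the same relation computed in either poset, and the maximal condition (the empty function) goes to the maximal condition, so $e$ is a complete embedding provided maximal antichains of $\mathbb D(\mathbb Q)_a$ remain maximal in $\mathbb D(\mathbb Q)_b$. This last point is where the projection $\pi(p) = p\hook a$ does the work: given $q\in\mathbb D(\mathbb Q)_b$ with $\pi(q) = q\hook a$, any extension of $q\hook a$ in $\mathbb D(\mathbb Q)_a$ can be glued together with $q$ restricted to the coordinates in ${\rm dom}(q)\setminus(\mathbb Q\hook a)$ to produce a common extension, which shows $e$ is a regular (complete) embedding.

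The core of the lemma is that $\pi$ is a projection, and I would check the three clauses in turn. Order-preservation is immediate from the definition of $\leq_{\mathbb D(\mathbb Q)_b}$: if $p\leq_{\mathbb D(\mathbb Q)_b} q$ then $p\supseteq q$ gives $p\hook a\supseteq q\hook a$, and for $c\in{\rm dom}(q\hook a) = {\rm dom}(q)\cap(\mathbb Q\hook a)$ the forcing statement $p\hook c\forces p(c)\leq_{\mathbb D_\Null} q(c)$ is exactly what is required for $p\hook a\leq_{\mathbb D(\mathbb Q)_a} q\hook a$ (noting that $(p\hook a)\hook c = p\hook c$ whenever $c < a$). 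That $\pi$ sends the top of $\mathbb D(\mathbb Q)_b$ to the top of $\mathbb D(\mathbb Q)_a$ is trivial since both are the empty function. For the third clause, suppose $p\in\mathbb D(\mathbb Q)_b$ and $r\leq_{\mathbb D(\mathbb Q)_a}\pi(p) = p\hook a$. Define $\bar p$ by setting ${\rm dom}(\bar p) = {\rm dom}(r)\cup({\rm dom}(p)\setminus(\mathbb Q\hook a))$, letting $\bar p(c) = r(c)$ for $c\in{\rm dom}(r)$ and $\bar p(c) = p(c)$ for $c\in{\rm dom}(p)$ with $c\not< a$. One must check that $\bar p$ is a genuine condition in $\mathbb D(\mathbb Q)_b$ — the domain is a countable subset of $\mathbb Q\hook b$, and for each $c$ in it, $\bar p(c)$ is a $\mathbb D(\mathbb Q)_c$-name of the right form; this is inherited from $r$ and $p$ respectively, using that for $c\not< a$ the restriction $\bar p\hook c$ agrees with $p\hook c$ on the relevant coordinates because $r$ lives below $a$ and $\mathbb Q$ is a partial order, so no coordinate below $a$ can lie below such a $c$ unless it is itself below $a$ — here I would spell out the case analysis on the position of $c$ relative to $a$. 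Then $\bar p\leq_{\mathbb D(\mathbb Q)_b} p$ and $\pi(\bar p) = \bar p\hook a = r$, which is even stronger than the projection requirement $\pi(\bar p)\leq r$.

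The main obstacle I anticipate is purely bookkeeping: making sure that when one splices together $r$ (a condition below $a$) and $p$ restricted to coordinates not below $a$, the coherence condition "$\bar p(c)$ is a $\mathbb D(\mathbb Q)_c$-name" and "$\bar p\hook c\forces \bar p(c)\leq q(c)$" continue to make sense, since $\bar p\hook c$ now mixes coordinates from $r$ and from $p$. The point is that $\mathbb D(\mathbb Q)_c$ is determined by $\mathbb Q\hook c$, and $\mathbb Q\hook c\subseteq \mathbb Q\hook a$ exactly when $c\le a$ (using well-foundedness and transitivity of $<$), so for $c < a$ the relevant restriction only sees $r$, and for $c\not\le a$ one has to argue that ${\rm dom}(\bar p)\cap(\mathbb Q\hook c) = {\rm dom}(p)\cap(\mathbb Q\hook c)$ — i.e., that $r$ contributes nothing below such a $c$ — which follows because any $c'\in{\rm dom}(r)$ has $c' < a$, and if also $c' < c$ this would be consistent, so in fact one needs $c$ itself to be chosen from ${\rm dom}(p)$ and then there is no constraint preventing overlap; here the clean resolution is to observe that the coherence of $\bar p$ at such $c$ follows from the coherence of $p$ at $c$ together with the fact that $\bar p\hook c$ restricted to $\mathbb Q\hook a\cap\mathbb Q\hook c$ extends $p\hook c$ restricted to the same set (because $r$ extends $p\hook a$), and $\mathbb D(\mathbb Q)_c$-names are preserved under passing to stronger conditions below $c$. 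Beyond this indexing care, every step is routine, which is presumably why the paper calls the lemma "entirely straightforward to verify."
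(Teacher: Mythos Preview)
Your proposal is correct and is exactly the routine verification the paper has in mind; the paper gives no proof beyond declaring the lemma ``entirely straightforward to verify.'' One small remark: the obstacle you anticipate in the last paragraph is less serious than you suggest, since for $c\in{\rm dom}(p)$ with $c\not< a$ you have $\bar p(c)=p(c)$, so the required inequality $\bar p\hook c\forces_{\mathbb D(\mathbb Q)_c}\bar p(c)\leq_{\mathbb D_\Null} p(c)$ is trivially forced by any condition---you only need $\bar p\hook c\in\mathbb D(\mathbb Q)_c$, and membership in $\mathbb D(\mathbb Q)_c$ is purely a condition on the form of the names at each coordinate, which is inherited directly from $r$ or $p$.
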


Now we get to the heart of the matter. Let $G \subseteq\mathbb{D} (\mathbb Q)$ be generic over $V$. For each $a \in \mathbb Q$ let $f_G^a : \baire \to \baire$ be the $\mathbb{D}_\Null$-generic function added by the $a^{\rm th}$ coordinate i.e. $f_G^a = \bigcup_{p \in G} \{ p (a)_0\}$

\begin{lemma}
The map $a\mapsto f_G^a$ is a cofinal mapping of $\mathbb{Q}$ into $(\bbb, \leq^*_\Null)$.
\label{cof}
\end{lemma}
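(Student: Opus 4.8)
The plan is to verify the two defining properties of a cofinal embedding: (1) the map $a \mapsto f_G^a$ is order-preserving in the sense that $a <_{\mathbb Q} b$ implies $f_G^a \leq^*_\Null f_G^b$; and (2) for every $h : \baire \to \baire$ in $V[G]$ there is some $a \in \mathbb Q$ with $h \leq^*_\Null f_G^a$. The first is essentially built into the definition of the iteration $\mathbb D(\mathbb Q)$: if $a <_{\mathbb Q} b$ then any condition $p$ with $b \in \dom(p)$ and $a \in \dom(p)$ forces, via the extension relation of $\mathbb D_\Null$ read at coordinate $b$, that eventually below a stem the value of $p(b)$'s stem dominates $f^a$, provided $f^a$ (or a name for it) is in the side part of $p(b)$. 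So I would first run a density argument: the conditions $p$ such that $a \in \dom(p)$ and $p \restriction b \forces$ ``$\dot f_G^a$ is in the side part of $p(b)$'' are dense below any condition having $b$ in its domain. Genericity then gives, for $\mathcal I$-almost every $x$, that $f_G^a(x) \leq^* f_G^b(x)$, using that the domains of the stems appearing in $G$ at coordinate $b$ are null and their union (over a generic filter, taking the restriction to any single $x$) determines $f_G^b(x)$ with the required domination past finitely many coordinates. Here I would invoke the $\sigma$-closure of $\mathbb D(\mathbb Q)_b$ so that the relevant null set witnessing the exceptional set is already in $V$ (or in the appropriate intermediate extension).

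For cofinality — property (2), which is really the content of the lemma — I would argue as follows. Let $\dot h$ be a $\mathbb D(\mathbb Q)$-name for a function $\baire \to \baire$, and let $p_0$ be a condition. Since $\mathbb D(\mathbb Q)$ is $\sigma$-closed, $\dot h$ adds no new reals, and moreover by a support-counting argument (using $\GCH$ and $\CH$, and the fact established above that each name for a stem may be taken to be a check-name) the name $\dot h$ can be captured by countably many coordinates at each ``slot'': more precisely, for each $x \in \baire$ one fixes a maximal antichain $\mathcal A_x$ deciding $\dot h(\check x)$, and the union $\bigcup_{x} \bigcup_{p \in \mathcal A_x} \dom(p)$ is a subset of $\mathbb Q$ of size at most $\mfc = \aleph_1$ — wait, more carefully: of size at most $\mfc$, which under $\CH$ is $\aleph_1$. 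Since $\mfd(\mathbb Q) = \lambda > \aleph_1 \geq$ that set, and $\mathbb Q$ is well-founded with $\mfb(\mathbb Q) = \kappa \geq \aleph_2$, every $\aleph_1$-sized subset of $\mathbb Q$ is bounded in $\mathbb Q$ (this is exactly $\mfb(\mathbb Q) \geq \aleph_2$). Let $a^* \in \mathbb Q$ bound all the coordinates relevant to $\dot h$. Then $\dot h$ is, up to forcing equivalence, a $\mathbb D(\mathbb Q)_{a^*}$-name, so $h \in V[G \restriction a^*]$. Now pick any $a \in \mathbb Q$ with $a^* <_{\mathbb Q} a$; the generic function $f_G^a$ at coordinate $a$ is $\mathbb D_\Null$-generic over $V[G \restriction a]\supseteq V[G\restriction a^*]$, so by the second part of the Proposition on $\mathbb D_\Null$ (that the $\mathbb D_\Null$-generic $\leq^*$-dominates every ground-model function off a null set), $h \leq^*_\Null f_G^a$. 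This gives cofinality.

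The main obstacle, and the step deserving the most care, is the support/name-capturing argument: showing that an arbitrary name $\dot h$ for an element of $\bbb$ in the iteration $\mathbb D(\mathbb Q)$ depends on only $\aleph_1$-many coordinates, so that the well-foundedness of $\mathbb Q$ together with $\mfb(\mathbb Q) \geq \aleph_2$ can be applied. This is subtle because $\dot h$ is a function on the uncountable set $\baire$, so there is a separate antichain $\mathcal A_x$ for each of $\mfc$-many reals $x$, and each antichain, while an antichain in an $\aleph_2$-c.c. poset, could a priori be of size $\aleph_1$ with supports scattered over $\mathbb Q$. The key points to nail down are: (i) each condition has countable support in $\mathbb Q$; (ii) under $\CH$, $\mfc = \aleph_1$, so there are $\aleph_1$-many reals $x$ and each $\mathcal A_x$ has size $\leq \aleph_1$, whence $\bigcup_x \bigcup_{p \in \mathcal A_x} \dom(p)$ has size $\leq \aleph_1$; and (iii) a subset of $\mathbb Q$ of size $\leq \aleph_1 < \mfb(\mathbb Q)$ has an upper bound in $\mathbb Q$ (using well-foundedness of $\mathbb Q$ to make sense of ``$\mathbb D(\mathbb Q)_{a^*}$'' and Lemma \ref{projection} to see the restriction is a complete subposet). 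I would also double-check that the order-preservation argument's exceptional null set is genuinely in the correct intermediate model, using $\sigma$-closure at coordinate $b$, exactly as in Lemmas \ref{lemma1cohen} and \ref{lemma2cohen}; once these are in place the lemma follows by combining them with Lemmas 3 and 5 of \cite{CuSh95} as indicated in the text.
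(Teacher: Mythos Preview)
Your arguments for the forward direction ($a<_{\mathbb Q}b\Rightarrow f_G^a\leq^*_\Null f_G^b$) and for cofinality are correct and match the paper's approach, including the name-capturing argument using countable supports, the $\aleph_2$-c.c., $\CH$, and $\mfb(\mathbb Q)\ge\aleph_2$.

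However, there is a genuine gap: you verify only one direction of the order relation. A \emph{cofinal embedding} in the sense required by Lemma~3 of \cite{CuSh95} demands the biconditional $a<_{\mathbb Q}b\iff f_G^a\leq^*_\Null f_G^b$; mere order preservation plus a cofinal image does not suffice to transfer $\mfb$ and $\mfd$. Concretely, to get $\mfd(\mathbb Q)\le\mfd(\leq^*_\Null)$ one takes a dominating family $E$ in $(\bbb,\leq^*_\Null)$, lifts each $h\in E$ to some $a_h\in\mathbb Q$ via cofinality, and needs $\{a_h:h\in E\}$ to be dominating in $\mathbb Q$; given $b\in\mathbb Q$ one finds $h\in E$ with $f_G^b\leq^*_\Null h\leq^*_\Null f_G^{a_h}$, but concluding $b\le_{\mathbb Q}a_h$ requires exactly the missing reverse implication.

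The paper handles this by a separate (and in fact the most delicate) density argument: given incomparable $a,b$ and a condition $p$ forcing $\{x:f_G^a(x)\nleq^* f_G^b(x)\}\subseteq N$ for some null $N$, one picks $x_\alpha\notin N\cup\dom(p(a)_0)\cup\dom(p(b)_0)$, uses $\sigma$-closure to strengthen below $a$ and below $b$ separately so as to decide the countably many values $\{f(x_\alpha):f\in\mathcal F_a\}$ and $\{f(x_\alpha):f\in\mathcal F_b\}$, and then extends the stems at coordinates $a$ and $b$ by hand to force $f_G^b(x_\alpha)+1\leq^* f_G^a(x_\alpha)$. The key point is that since $a$ and $b$ are $\mathbb Q$-incomparable, neither lies in the domain of the strengthening made below the other, so the two extensions can be chosen independently. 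This yields a strengthening of $p$ forcing the opposite relation at $x_\alpha\notin N$, a contradiction. You should add this argument.
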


\begin{proof}
I need to show that for $a, b \in \mathbb Q$, first of all that $a < b$ if and only if $f_G^a \leq^*_\Null f_G^b$ and second of all that for each $f \in \bbb$ there is an $a \in \mathbb Q$ so that $f \leq^*_\Null f_G^a$. First suppose that $a < b$. By Lemma \ref{projection} for any $p \in \mathbb D(\mathbb Q)$ we can find a  strengthening $q$ so that $q(b)$ forces that $f_G^a \leq^*_\Null f_G^b$ since $f_G^a$ is added at an earlier stage.

Now suppose that $a$ and $b$ are incomparable (the case where $b < a$ is symmetric to the above). Suppose for a contradiction that there is some condition $p \in \mathbb \mathbb D (\mathbb Q)$ so that $p \forces \{x \; | \; \dot{f}_G^a(\check{x}) \nleq^* \dot{f}_G^b (\check{x}) \} \subseteq \check{N}$, so in particular $p$ forces that $f_G^a \leq^*_\Null f_G^b$. By strengthening if necessary we can assume that $a, b \in {\rm dom}(p)$. Now choose $\alpha$ so that $x_\alpha$ is not in $N$, ${\rm dom} (p (a)_0)$ or ${\rm dom}(p(b)_0)$. Since all three are null sets, their union is null so there is such an $x_\alpha$. Now let $q_b \leq_{\mathbb D(\mathbb Q)_b} p \hook b$ be a strengthening so that if $\dot{\mathcal F}_b$ is the name of the side part of $p(b)$ then $q_b$ decides the check name values of all countably many elements of $\{\dot{f} (\check{x}_\alpha) \; | \; \dot{f} \in \dot{\mathcal F}_b\}$, this is possible by the fact that the forcing is $\sigma$-closed. Let $p_b$ be the condition obtained by letting $p_b(x) = p(x)$ if $x \notin {\rm dom}(q)$ and $p_b(x) = q(x)$ otherwise. Now let $q_a \leq p_b \hook a$ strengthen $p_b$ to decide the values $\{\dot{f} (\check{x}_\alpha) \; | \; \dot{f} \in \dot{\mathcal F}_a\}$ for $\dot{\mathcal F}_a$ the name of the side part of $p(a)$. Finally let $q$ be the condition which agrees with $q_a$ on its domain and agrees with $p_b$ otherwise. Note that since $a$ and $b$ are incomparable in $\mathbb Q$ neither $a$ nor $b$ is in the domains of $q_a$ or $q_b$ and so $q (a) = p(a)$ and $q(b) = p(b)$ and $q \leq p$. Now, let $x_b$ be a $\leq^*$-bound on the set $\{\dot{f} (\check{x}_\alpha) \; | \; \dot{f} \in \dot{\mathcal F}_b\}$ and let $x_a$ be such that $x_b + 1 \leq^* x_a$ and $x_a$ is a $\leq^*$-bound on the set $\{\dot{f} (\check{x}_\alpha) \; | \; \dot{f} \in \dot{\mathcal F}_a\}$. Finally let $q^*$ be the strengthening of $q$ so that the stem of $q^*(a)$ includes $(x_\alpha, x_a)$ and the stem of $q^*(b)$ includes $(x_\alpha, x_b)$. Then $q^* \forces \dot{f}_G^b (\check{x}_\alpha) + 1 \leq^* \dot{f}_G^a (\check{x}_\alpha)$, but this contradicts the choice of $p$ and $N$.

Finally to see that the mapping is cofinal, let $f \in \bbb$. By the $\aleph_2$-c.c. there is some name $\dot{f}$ so that $\dot{f}_G = f$ and some $X \subseteq \mathbb Q$ of size $\aleph_1$, so that the conditions needed to decide $\dot{f}$ all have supports included in $X$. Since $X$ has size $\aleph_1$, it is bounded by assumption by some $b \in \mathbb Q$ so we can conclude that $f$ is equivalent to a $\mathbb D(\mathbb Q)_b$ name. But then $f \leq^*_\Null f_G^b$ so we're done. 
\end{proof}

Putting together these lemmas, the rest of the proof of Theorem \ref{mainthmhechler} is relatively straightforward. Let me record the details below.
\begin{proof}[Proof of Theorem \ref{mainthmhechler}]
Fix $G$ as in the lemma above. By Lemma \ref{cof} in $V[G]$ there is a cofinal embedding of $\mathbb Q$ into $(\bbb, \leq^*_\Null)$ and so $\mfb(\leq^*_\Null) = \mfb (\mathbb Q)$ and $\mfd (\leq^*_\Null) = \mfd (\mathbb Q)$. By the fact that the forcing is $\sigma$-closed and has the $\aleph_2$-c.c. it follows that in $V[G]$ $\mfb (\leq^*) = \kappa$ and $\mfd (\leq^*) = \lambda$. Finally, assuming that $|\mathbb{Q}| = \lambda$, like in the example given above of $\mathbb Q = \kappa \times [\lambda]^{< \kappa}$, we can apply a nice name counting argument to also get that $2^\mfc = \lambda$. 
\end{proof}

Let me also observe that the proof of this theorem gives slightly more, in fact it gives a weakened higher dimensional version of Hechler's classical theorem on $\leq^*$, see the remark preceding Theorem 2.5 of \cite{BlassHB}.

\begin{corollary}
Assume $\GCH$ and let $\mathbb Q$ be any well-founded partial order so that $\aleph_2 \leq \mfb (\mathbb Q) \leq \mfd (\mathbb Q)$ with $\mfb (\mathbb Q)$ and $\mfd (\mathbb Q)$ regular. Then it's consistent that $\mathbb Q$ embeds cofinally into $(\bbb, \leq^*_\Null)$.
\end{corollary}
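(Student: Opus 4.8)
The statement to prove is the Corollary: assuming $\GCH$, for any well-founded partial order $\mathbb Q$ with $\aleph_2 \leq \mfb(\mathbb Q) \leq \mfd(\mathbb Q)$ both regular, it is consistent that $\mathbb Q$ embeds cofinally into $(\bbb, \leq^*_\Null)$.

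The plan is to observe that this is essentially a restatement of what the proof of Theorem \ref{mainthmhechler} already delivers, stripped of the requirement that $\mathbb Q$ have the specific form $\kappa \times [\lambda]^{<\kappa}$. First I would note that the entire construction of the forcing $\mathbb D(\mathbb Q)$ — the definition of $\mathbb D(\mathbb Q)_a$ by recursion on $a \in \mathbb Q_{top}$, the proof of $\sigma$-closure and the $\aleph_2$-c.c. under $\GCH$ (Lemma on $\sigma$-closure and $\aleph_2$-c.c.), the projection Lemma \ref{projection}, and the key Lemma \ref{cof} that $a \mapsto f_G^a$ is a cofinal embedding of $\mathbb Q$ into $(\bbb, \leq^*_\Null)$ — nowhere uses the internal structure of $\mathbb Q$ beyond the facts that $\mathbb Q$ is well-founded (needed so that the recursion defining $\mathbb D(\mathbb Q)_a$ terminates and so that the $\Delta$-system/name-counting arguments apply stage-by-stage) and that $\GCH$ holds together with $\aleph_2 \leq \mfb(\mathbb Q)$ (the latter enters only in the cofinality clause of Lemma \ref{cof}, where an $\aleph_1$-sized support set is bounded in $\mathbb Q$, which requires $\mfb(\mathbb Q) \geq \aleph_2$). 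So I would simply quote Lemma \ref{cof}: in $V[G]$, where $G \subseteq \mathbb D(\mathbb Q)$ is generic, the map $a \mapsto f^a_G$ is a cofinal order-embedding of $\mathbb Q$ into $(\bbb, \leq^*_\Null)$.

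Next I would record that $\mathbb D(\mathbb Q)$ is $\sigma$-closed and $\aleph_2$-c.c.\ (again by the cited lemma, valid for arbitrary well-founded $\mathbb Q$ under $\GCH$), hence preserves all cardinals and cofinalities; in particular $V[G] \models \GCH$ below $\aleph_2$ and the embedding witnessed there is genuine. This already gives the consistency statement: one does not even need to compute $\mfb(\bbb, \leq^*_\Null)$ or $\mfd(\bbb, \leq^*_\Null)$ in the extension — the Corollary only asserts that the cofinal embedding of $\mathbb Q$ exists, which is exactly the content of Lemma \ref{cof}. (If one wants, one can additionally invoke Lemma 3 of \cite{CuSh95} to conclude $\mfb(\leq^*_\Null) = \mfb(\mathbb Q)$ and $\mfd(\leq^*_\Null) = \mfd(\mathbb Q)$ in $V[G]$, together with Lemma 5 of \cite{CuSh95} and the $\aleph_2$-closure/chain-condition to see these values are computed correctly, but this is not required for the stated conclusion.)

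The only step that requires genuine care — and the main obstacle — is verifying that the proof of Lemma \ref{cof} truly goes through for arbitrary well-founded $\mathbb Q$ rather than the concrete order $\kappa \times [\lambda]^{<\kappa}$. I would scan that proof for any hidden appeal to the structure of the concrete order: the incomparability argument (producing $q^*$ forcing $\dot f^b_G(\check x_\alpha) + 1 \leq^* \dot f^a_G(\check x_\alpha)$) uses only that $a, b$ are incomparable nodes of $\mathbb Q$ and that $\sigma$-closure lets one decide the countably many side-part values; the cofinality argument uses only the $\aleph_2$-c.c.\ (to get an $\aleph_1$-sized support $X$) plus $\mfb(\mathbb Q) \geq \aleph_2$ (to bound $X$ by some $b \in \mathbb Q$). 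Both are structure-free. Thus the Corollary follows with essentially no new work beyond this verification. I would phrase the proof as: \emph{Let $\mathbb Q$ be as in the hypothesis. Run the construction of $\mathbb D(\mathbb Q)$ from the proof of Theorem \ref{mainthmhechler}; the lemmas there (in particular Lemma \ref{cof}) were proved for arbitrary well-founded $\mathbb Q$ under $\GCH$ with $\aleph_2 \leq \mfb(\mathbb Q)$, so in any $\mathbb D(\mathbb Q)$-generic extension $V[G]$ the map $a \mapsto f^a_G$ is a cofinal embedding of $\mathbb Q$ into $(\bbb, \leq^*_\Null)$, and since $\mathbb D(\mathbb Q)$ is $\sigma$-closed and $\aleph_2$-c.c.\ this extension preserves cardinals and cofinalities.}
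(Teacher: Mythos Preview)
Your proposal is correct and matches the paper's approach exactly: the corollary is stated immediately after the proof of Theorem \ref{mainthmhechler} with the remark that ``the proof of this theorem gives slightly more,'' and no separate argument is given. Your explicit verification that Lemma \ref{cof} and the $\sigma$-closed/$\aleph_2$-c.c.\ lemma use only well-foundedness of $\mathbb Q$, $\GCH$, and $\mfb(\mathbb Q)\ge\aleph_2$ (and nothing about the particular shape $\kappa\times[\lambda]^{<\kappa}$) is precisely the content the paper leaves implicit.
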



J. Brendle has shown that under $\CH$ $\mfb (\in^*_\Null) = \mfb (\leq^*_\Null)$ so in the models constructed in Theorems \ref{mainthmhechler} and \ref{mainthmhechler2} $\mfb(\in^*_\Null)$ increases over the iteration. I do not know whether there is a way to iterate, necessarily while adding reals, to avoid this. However, these cardinals can be different, again shown in joint work with J. Brendle. Whether one can iterate $\mathbb D_\Null$ in some way without increasing $\mfb (\in^*_\Null)$ is open, however in the one step case the following is promising.


\begin{lemma}
If $\dot{h}$ is a $\mathbb{D}_\Null$ name for a function and $d \in \mathbb D_\Null$ is a condition forcing $\dot{h}:\baire \to \mathcal S$ then there is a $d ' \leq d$ and a ground model function $f:\baire \to \baire$ so that $d ' \forces \check{f} \notin^*_\Null \dot{h}$. In particular, in the extension by $\mathbb{D}_\Null$ the set $\bbb \cap V$ remains unbounded with respect to $\in^*_\Null$.
\end{lemma}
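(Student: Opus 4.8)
The plan is to run a density/fusion argument over $\mathbb{D}_\Null$ which is the one-step analogue of the ``$\mathbb{PT}$ adds no eventually different real'' style of argument, but carried out ``pointwise in the $\baire$-coordinate'' and taking advantage of the fact that $\mathbb{D}_\Null$ is $\sigma$-closed. Fix a condition $d=(p_d,\mathcal F_d)$ forcing $\dot h:\baire\to\mathcal S$. I want to build in $V$ a function $f:\baire\to\baire$ and a single strengthening $d'\le d$ such that $d'\forces \{x\mid \check f(x)\in^* \dot h(x)\}$ is \emph{not} $\Null$-measure one; in fact I will arrange the stronger conclusion that $d'$ forces this set to be contained in a ground-model Borel null set, namely $\dom(p_{d'})$.

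First I would observe the key ``evasion at a point'' lemma: given any condition $e\le d$ and any real $x\notin\dom(p_e)$, there is a strengthening $e^*\le e$ with $x\in\dom(p_{e^*})$ and a value $v\in\omega$ such that $e^*\forces v\notin \dot h(x)(0)$ — more usefully, I can decide a finite initial segment of $\dot h(x)$ (using $\sigma$-closure to decide enough of $\dot h(x)$), read off the slalom values $\dot h(x)(0),\dots,\dot h(x)(n)$ as finite sets, and then \emph{choose} $f(x)$ to escape all of them on those coordinates; since each $\dot h(x)(k)$ is finite there is always a value not in it. The point of working over $\mathbb{D}_\Null$ rather than $\mathbb{C}_\Null$ is that extending the stem to include a single new point $x$ is always legal (this is exactly the ``countable extension'' remark in the excerpt, the failure of which for arbitrary Borel sets is noted in the Remark), and adding a point $\langle x, f(x)\rangle$ with $f(x)$ dominating $\{g(x)\mid g\in\mathcal F_e\}$ respects the order relation of $\mathbb{D}_\Null$.

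The main step is then a transfinite bookkeeping over $\baire$ of length $\omega_1$ (using $\CH$, which is in force in this section): enumerate $\baire=\{x_\alpha\mid\alpha<\omega_1\}$ and the conditions of $\mathbb{D}_\Null$ (it has size $\mfc=\aleph_1$) as $\{e_\alpha\mid\alpha<\omega_1\}$, and inductively, using Lemma~\ref{mainlemmafunctions}-style bookkeeping to match each condition with cofinally many points outside the relevant null sets, pick for each $\alpha$ a strengthening $r_\alpha\le e_{\alpha}$ deciding enough of $\dot h(x_\alpha)$ and then define $f(x_\alpha)$ to be a real that (i) dominates all ground-model functions appearing in the side part of $r_\alpha$ evaluated at $x_\alpha$, and (ii) satisfies $f(x_\alpha)(k)\notin \dot h(x_\alpha)(k)$ for all $k$ in the finite domain that $r_\alpha$ decided — in fact, since $\sigma$-closure lets me decide $\dot h(x_\alpha)\restriction m$ for any $m$ I like, I can make $f(x_\alpha)$ disagree with $\dot h(x_\alpha)$ on an arbitrarily long initial segment. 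The resulting $f\in V$ has the property that no condition can force $\check f\in^*_\Null\dot h$: if some $e$ and Borel null $N$ forced $\{x\mid \check f(x)\in^*\dot h(x)\}\subseteq\check N$, pick via the bookkeeping an index $\alpha$ with $e_\alpha$ a suitable strengthening of $e$ and $x_\alpha\notin N$; then $r_\alpha\le e_\alpha$ forces $\check f(x_\alpha)$ to differ from $\dot h(x_\alpha)$ on infinitely many coordinates — wait, more carefully, $r_\alpha$ only decided finitely much, so to get the \emph{eventual} non-capture I instead argue: $r_\alpha$ can be extended to a condition whose stem contains $\langle x_\alpha, f(x_\alpha)\rangle$ where $f(x_\alpha)$ was chosen to escape $\dot h(x_\alpha)$ on a cofinite set, contradicting $x_\alpha\notin N$. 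Since $\{x_\alpha\mid \alpha \text{ in the relevant club-like set}\}$ is co-countable for each $N$, this forces the capture-set to have co-countable complement inside no null set, i.e. the capture-set is not measure one.

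The hard part will be the bookkeeping: I need, simultaneously for all conditions $e$, that $f$ ``evades'' $\dot h$ on a set of $x$'s that is $\Null$-positive \emph{after forcing} — the subtlety is that a name $\dot h$ does not live in the ground model, so I cannot just diagonalize against all of $V$'s functions as in the standard $\aleph_1$-uncountability arguments; I must diagonalize against the \emph{names}, deciding values with $\sigma$-closure, and ensure the single function $f$ works against every $(e,N)$ pair at once. The Lemma~\ref{mainlemmafunctions}-type device — enumerating pairs (null set, condition) so that $x_\alpha$ avoids the relevant null set at stage $\alpha$ — is exactly what is needed to thread this, and is the analogue of the argument in Lemma~\ref{lemma2cohen}; I expect the proof to go through by essentially transcribing that argument with ``eventually different'' replaced by ``eventually not captured by the slalom'' and with the extra (harmless, by $\sigma$-closure) bookkeeping of deciding longer and longer initial segments of $\dot h(x_\alpha)$. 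The ``in particular'' clause is then immediate: the witness $f$ constructed lies in $\bbb\cap V$ and no function in $V[G]$ can be a $\in^*_\Null$-bound for it together with the rest of $(\bbb)^V$, since any such bound would be named by some $\dot h$ and some condition in $G$, contradicting what was just shown.
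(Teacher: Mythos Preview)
Your proposal contains a genuine gap: you claim that $\mathbb{D}_\Null$ has size $\mfc=\aleph_1$, but this is false. A condition $(p,\mathcal F)$ has a Borel stem $p$ (of which there are indeed $\mfc$ many), but the side part $\mathcal F$ is a countable subset of $\bbb$, and $|\bbb|=\mfc^{\mfc}=2^{\mfc}$. Under $\GCH$ there are thus $\aleph_2$ conditions, so you cannot enumerate them in order type $\omega_1$ and your bookkeeping collapses. This is not a cosmetic issue: your whole strategy --- extend a given condition $e_\alpha$ to some $r_\alpha$ deciding $\dot h(x_\alpha)$, then choose $f(x_\alpha)$ to escape the decided slalom --- requires that when a hypothetical bad condition $d'$ shows up you can produce an $\alpha$ with $r_\alpha\le d'$, so that $r_\alpha$'s decision about $\dot h(x_\alpha)$ is binding below $d'$. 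With $\aleph_2$ conditions and only $\aleph_1$ stages, most $d'$ are never handled. (Your mid-proof attempt to repair this by putting $\langle x_\alpha,f(x_\alpha)\rangle$ into a stem conflates the generic dominating function, which the stems control, with the arbitrary name $\dot h$, which they do not.)

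The paper's proof avoids this by enumerating only \emph{(stem, Borel null set)} pairs --- of which there are $\aleph_1$ --- and handling all side conditions for a fixed stem at once via a measure-theoretic observation you are missing. For a fixed stem $p_\beta$, the set
\[
A_\beta=\{y\in\baire\mid \exists\,\mathcal F\ (p_\beta,\mathcal F)\Vdash \check y\in^*\dot h(\check x_\alpha)\}
\]
is null: since any two conditions with stem $p_\beta$ are compatible, the set $s_\beta(n):=\{k\mid\exists\,\mathcal F\ (p_\beta,\mathcal F)\Vdash \check k\in\dot h(\check x_\alpha)(\check n)\}$ has at most $n$ elements (a common extension would force all of them into the $n$-element set $\dot h(x_\alpha)(n)$), so $s_\beta$ is a slalom and $A_\beta\subseteq\{y\mid y\in^* s_\beta\}$. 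Hence $\bigcup_{\beta\le\alpha}A_\beta$ is null, and the paper simply takes $f(x_\alpha)$ to be any real outside it --- no deciding of $\dot h$ is required. This is the idea your proposal needs but does not have.
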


\begin{proof}
Suppose not and let $d \in \mathbb{D}_\Null$ be a condition and $\dot{h}$ be a name so that $d \forces$\say{$\dot{h} : \baire \to \mathcal S$ and for all $f \in \bbb \cap V$ $\check{f} \in^*_\Null \dot{h}$}. Pick an enumeration in order type $\omega_1$ of all pairs $(p_\alpha, N'_\alpha)$ so that $p_\alpha$ is the stem of a condition extending $d$ and $N'_\alpha$ is a Borel null set. Using Lemma \ref{mainlemmafunctions} we can assume that $x_\alpha \notin N^{p_\alpha} \cup N'_\alpha$. For each $\alpha$ let $y_\alpha$ be the least $x_\gamma$ not in the set $\{x \; | \; \exists \beta \leq \alpha \exists \mathcal F \, (p_\beta, \mathcal F) \forces \check{x} \in^* \dot{h}(\check{x}_\alpha )\}$. Note that this is well defined since first of all every condition with the same stem is compatible and for any slalom the set of $x$ eventually captured by that slalom is measure zero hence the set above must be measure zero. Let $f \in \bbb$ be the function so that $f(x_\alpha) =y_\alpha$. Now suppose that there is some $d' \leq d$ and a Borel null set $N$ so that $d' \forces \{x \; | \; \check{f}(x) \notin^* \dot{h} (x) \} \subseteq \check{N}$. The stem of $d'$ and the set $N$ appeared enumerated as a pair $(p_\xi, N'_\xi)$ and $d' \forces f(x_\xi) \in^* \dot{h} (x_\xi)$ since $x_\xi \notin N'_\xi = N$. But then by definition of $f$ it must be the case that $f(x_\xi)$ is not in the set $\{x \; | \; \exists \beta \leq \alpha \exists \mathcal F \, (p_\beta, \mathcal F) \forces \check{x} \in^* \dot{h}(\check{x}_\alpha )\}$, which is a contradiction.
\end{proof}

\subsection{Generalizing $\mathbb{LOC}$-Forcing}
In the final subsection here I prove the consistency of having all the cardinals in the diagram arbitrarily large. The relevant forcing is a generalization of the $\mathbb{LOC}$-forcing. 

\begin{definition}
The $\Null$-$\mathbb{LOC}$ forcing, denoted $\mathbb{LOC}_\Null$, is the set of all pairs $(p, \mathcal F)$ so that $p:{\rm dom}(p) \subseteq \baire \to \mathcal S$ is a partial function with a Borel graph and a Borel domain which is measure zero and $\mathcal F \subseteq \bbb$ is countable. We let $(p, \mathcal F) \leq (q, \mathcal G)$ if and only if $p \supseteq q$, $\mathcal F \supseteq \mathcal G$ and for all $x \in {\rm dom}(p) \setminus {\rm dom} (q)$ we have that $g(x) \in^* p(x)$ for every $g \in \mathcal G$.
\end{definition}

Using the same template as with $\mathbb{D}_\Null$ it is straightforward to show the following.
\begin{lemma}
$\mathbb{LOC}_\Null$ is $\sigma$-closed, has the $\mfc^+$-c.c. and adds a function $h:\baire \to \mathcal S$ so that for every $f \in \bbb \cap V$ $f\in^*_\Null h$.
\end{lemma}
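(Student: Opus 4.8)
The plan is to mimic, essentially verbatim, the analysis already carried out for $\mathbb{C}_\Null$ and $\mathbb{D}_\Null$, since $\mathbb{LOC}_\Null$ has exactly the same shape: a $\sigma$-closed ``Borel-domain-measure-zero'' part paired with a countable side condition promising an $\in^*$-containment from then on. First I would verify $\sigma$-closure exactly as for $\mathbb{C}_\Null$: given a descending $\omega$-sequence $(p_n,\mathcal F_n)$, set $p=\bigcup_n p_n$ and $\mathcal F=\bigcup_n\mathcal F_n$; the graph of $p$ is a countable union of Borel sets, hence Borel, its domain is a countable union of null sets, hence null, and $\mathcal F$ is a countable union of countable sets, hence countable, and the side-condition requirement is preserved by taking unions, so $(p,\mathcal F)$ is a condition below all $(p_n,\mathcal F_n)$. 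For the $\mfc^+$-c.c., I would observe, as for $\mathbb{D}_\Null$, that any two conditions with the same stem $p$ are compatible (just union the side conditions), and there are only $\mfc$ many possible stems (each stem is a Borel subset of $\baire\times\mathcal S$, and $|\mathcal S|=\mfc$), so any antichain has size at most $\mfc$. Under $\CH$ this gives preservation of cofinalities and cardinals.

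Next I would show that the generic object $h=\bigcup_{p\in G}p$ is a total function $h:\baire\to\mathcal S$ by the usual density argument: given a condition $(p,\mathcal F)$ and $x\notin{\rm dom}(p)$, since $\mathcal F$ is countable the set $\{g(x)\st g\in\mathcal F\}$ is a countable family of points in Baire space, which is captured by a single slalom $s$ (indeed any slalom containing all $g(x)(n)$ for $g\in\mathcal F$ in its $n$-th coordinate works, using that $\mathcal F$ is countable and the identity function tends to infinity), and then $(p\cup\{\langle x,s\rangle\},\mathcal F)\leq(p,\mathcal F)$ forces $x\in{\rm dom}(\dot h)$. Finally, for the key ``domination'' property: given any $f\in\bbb\cap V$ and any condition $(p,\mathcal F)$, the condition $(p,\mathcal F\cup\{f\})$ is a legitimate extension, and by the definition of $\leq_{\mathbb{LOC}_\Null}$ it forces that $f(x)\in^*\dot h(x)$ for every $x\in{\rm dom}(\dot h)\setminus{\rm dom}(p)$. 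Since ${\rm dom}(p)\in\Null$, this means $(p,\mathcal F\cup\{f\})\forces \{x\st \neg f(x)\in^*\dot h(x)\}\subseteq\check{{\rm dom}(p)}\in\Null$, i.e.\ $f\in^*_\Null\dot h$. As this can be done below any condition, $\forces_{\mathbb{LOC}_\Null}\check f\in^*_\Null\dot h$ for every $f\in\bbb\cap V$.

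I do not expect any real obstacle here — this lemma is stated precisely because the one-step forcing behaves exactly like its companions $\mathbb{C}_\Null$ and $\mathbb{D}_\Null$, whose corresponding propositions were already proved above. The only point requiring a moment's care is the totality argument, where one must confirm that a countable family in $\baire$ is captured by a single (identity-)slalom; this is immediate since a countable union of finite sets is still contained, coordinatewise for all but finitely many $n$, in sets of size $\leq n$ once one enumerates the family with repetitions. If one wanted the tightest bound on the slalom one could invoke the standard fact (cited in Definition~\ref{sacks}) that the bounding function may be any function tending to infinity. So the proof is a direct assembly of: (i) $\sigma$-closure via Borel/null/countable closure under countable unions, (ii) same-stem compatibility plus a counting of stems for the chain condition, (iii) a density argument for totality, and (iv) the trivial observation that enlarging the side condition forces the desired $\in^*_\Null$-containment.
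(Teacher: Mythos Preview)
The proposal is correct and follows essentially the same approach as the paper, which simply states that the lemma is proved ``using the same template as with $\mathbb{D}_\Null$'' without giving any further details. Your write-up faithfully instantiates that template: $\sigma$-closure via countable unions, $\mfc^+$-c.c.\ via same-stem compatibility, totality via a density argument (here using that a countable family of reals is eventually captured by a single slalom), and the $\in^*_\Null$-bound via enlarging the side condition.
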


As a result of this lemma, using the same ideas as before we get immediately.
\begin{theorem}
Let $\kappa$ be a regular cardinal greater than $\aleph_1$ and let $\mathbb P_\kappa$ be the countable support iteration of $\mathbb{LOC}_\Null$. Then if $G \subseteq \mathbb P_\kappa$ is generic over $V$ in $V[G]$ we have $\mfb (\in^*_\Null) = \kappa = 2^\mfc$.
\end{theorem}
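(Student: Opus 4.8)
The plan is to follow the template used for Theorem \ref{mainthmhechler2}, feeding in the one-step properties of $\mathbb{LOC}_\Null$ recorded in the lemma just above. First I would observe that $\mathbb P_\kappa$, being a countable support iteration of the $\sigma$-closed forcing $\mathbb{LOC}_\Null$, is itself $\sigma$-closed, hence adds no reals and in particular preserves $\omega_1$ and $\CH$; since $\CH$ then holds at every intermediate stage, each iterand has size $\mfc^+=\aleph_2$, so a standard $\Delta$-system argument under $\GCH$ (exactly as quoted for $\mathbb{C}_\Null$ and $\mathbb{D}(\mathbb Q)_a$) shows $\mathbb P_\kappa$ has the $\aleph_2$-c.c. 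Together these give preservation of all cardinals and cofinalities. A new function $h:\baire\to\mathcal S$ is added at every stage $\alpha<\kappa$, and these are pairwise distinct, so $2^\mfc\ge\kappa$ in $V[G]$; a nice-name counting argument, again using $\GCH$ and the $\aleph_2$-c.c. just as in the earlier proofs, yields $2^\mfc=\kappa$.

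It then remains to compute $\mfb(\in^*_\Null)$. The inequality $\mfb(\in^*_\Null)\le 2^\mfc=\kappa$ is immediate once one notes that the whole space $(\mathcal S)^{\baire}$ is $\in^*_\Null$-unbounded: for any $h:\baire\to\mathcal S$ the function $x\mapsto\big(n\mapsto\max h(x)(n)+1\big)$ is not $\in^*$-captured by $h(x)$ for any $x$, so the exceptional set is all of $\baire$, which is not null. For the reverse inequality I would show that every $A\subseteq\bbb$ with $|A|<\kappa$ is $\in^*_\Null$-bounded. Fix such an $A$. Because the iteration is $\sigma$-closed, $\baire=\baire^V$, so each $f\in A$ is a map defined on a fixed set of size $\aleph_1$ whose value at each point is a real; by the $\aleph_2$-c.c. each such value is decided by an antichain of size $\le\aleph_1$, hence the union of the (countable) supports involved has size $\le\aleph_1$, so $f$ is determined by a $\mathbb P_\xi$-name for some $\xi<\kappa$. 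Since $|A|<\kappa$ and $\kappa$ is regular and $\ge\aleph_2$, the coordinates used across all of $A$ are bounded, so there is a single $\alpha<\kappa$ with $A\subseteq\bbb\cap V[G_\alpha]$. At stage $\alpha$ the iteration forces with $\mathbb{LOC}_\Null$, and by the lemma preceding this theorem its generic $h$ satisfies $f\in^*_\Null h$ for every $f\in\bbb\cap V[G_\alpha]$, in particular for every $f\in A$. Hence no family of size $<\kappa$ is $\in^*_\Null$-unbounded, so $\mfb(\in^*_\Null)\ge\kappa$, and we are done.

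The hard part will be the reflection step: verifying carefully that an arbitrary $f\in\bbb$ of the final model already belongs to $V[G_\alpha]$ for some $\alpha<\kappa$, and hence that a $(<\kappa)$-sized family does too. This rests on the interaction of three ingredients — $\sigma$-closedness (so that the domain space $\baire$ is genuinely unchanged and one really has an $\aleph_1$-indexed family of reals rather than a name for an abstract set), the $\aleph_2$-chain condition together with countable supports (so each of those $\aleph_1$ reals depends on $\le\aleph_1$ coordinates and $f$ therefore depends on $\le\aleph_1\cdot\aleph_1=\aleph_1$ coordinates), and the regularity of $\kappa\ge\aleph_2$ (so these, and then the $<\kappa$-many coordinates used by all of $A$, lie below some $\alpha<\kappa$). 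Everything after that is bookkeeping strictly parallel to Theorems \ref{mainthmhechler} and \ref{mainthmhechler2}.
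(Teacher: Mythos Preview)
Your proposal is correct and follows exactly the template the paper intends: the paper's own ``proof'' of this theorem is the single sentence ``As a result of this lemma, using the same ideas as before we get immediately'', deferring entirely to the arguments already given for $\mathbb{C}_\Null$ and $\mathbb{D}_\Null$, and you have faithfully unpacked those ideas ($\sigma$-closedness, the $\Delta$-system argument for the $\aleph_2$-c.c.\ under $\GCH$, nice-name counting for $2^{\mfc}=\kappa$, and the reflection of any ${<}\kappa$-sized family to an initial stage followed by the one-step bounding lemma). One tiny slip: when you write ``the whole space $(\mathcal S)^{\baire}$ is $\in^*_\Null$-unbounded'' you mean $\bbb$; the witnessing function $x\mapsto(n\mapsto\max h(x)(n)+1)$ you produce lives in $\bbb$, as required by the definition of $\mfb(\in^*_\Null)$.
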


\section{Conclusion and Questions}

The consistency results above barely hint at the possible constellations of the 18 cardinals considered. Restricted to one ideal, there are a number of splits I have yet to show. Nevertheless, I am willing to conjecture that every split in the diagram is consistent.
\begin{question}
Is every two valued split in the diagram for one ideal is consistent? What about simultaneously splitting all 18 cardinals?
\end{question}
Presumably this would involve developing analogues for well known forcing notions on the reals such as Sacks, Laver etc as I have done for Cohen, Hechler and $\mathbb{LOC}$. I leave this project for future research. 

Since writing this chapter, I have worked jointly on this extensively with J. Brendle and we have computed the values of these cardinals in standard models of $\neg \CH$ such as the Cohen model, the random model, the Sacks model etc. We have shown that many interesting things happen to the $\mfb (R_\mathcal I)$ cardinals, yet the $\mfd (R_\mathcal I)$ cardinals all stay $\mfc^+$. The following is an interesting line of research we are currently pursuing.
\begin{question}
Can the techniques of the previous section be woven with forcing on the reals to produce models where the $\mfb (R_\mathcal I)$ cardinals are small and the $\mfd (R_\mathcal I)$ cardinals are larger than $\mfc^+$?
\end{question}

Also, the looming problem in this subject is that no model we have found so far splits any $\mfd (R_\mathcal I)$ cardinal. 
\begin{question}
Is it consistent that any two $\mfd(R_\mathcal I)$ cardinals are different?
\end{question}

Finally let me conclude by noting that, as mentioned in the introduction, the framework introduced is very flexible and many other generalizations are possible. For instance, while I have been working with Baire space, a similar study could easily be carried out for any other uncountable Polish space. One particularly interesting possibility, which I leave for future work, is to consider variations on $\mathfrak{a}$ where $[\omega]^\omega$ is replaced by the $\mathcal I$-positive sets of some Polish space and \say{almost disjoint} means that such sets have intersection in $\mathcal I$. A generalization in this spirit for ideals on $\omega$ has been considered in \cite{HST18}.
\clearpage

\chapter{Iterating Subversion Forcing Notions and the Subcomplete Forcing Axiom}
\chaptermark{Iterating Subversion Forcing Notions}
The purpose of this chapter and the next is to study models of forcing axioms compatible with $\CH$. Traditionally such axioms have been studied only in conjunction with $\CH$, often to show that various statements were compatible with $\CH$ even if the original models of their consistency necessarily involved the failure of $\CH$. See the survey article \cite{AbrahamHB} particularly section 5 for a discussion of this point. In this chapter and the next however, I look at building models of such axioms alongside controlled failures of $\CH$, for instance keeping various cardinal characteristics small. The result is many new models of such axioms with a finer understanding of the possible behavior of combinatorics on $\omega_1$ and the reals under these axioms. In this chapter I focus on Jensen's subcomplete forcing axiom, $\SCFA$ see \cite{JensenSPSC}, and in the next I look at Shelah's Dee Complete forcing axiom, $\DCFA$, see \cite{PIPShelah} and \cite{JensenCH}. Both axioms were originally considered only in conjunction with $\CH$ but here I look at them with this assumption removed, thus giving more flexibility. 

Subproper and subcomplete forcing were originally introduced by Jensen in \cite{JensenSPSC} and many striking applications were given in a series of notes culminating in \cite{Jen14}. The PhD thesis \cite{Minden:dissertation} studied both the foundational properties of subcomplete forcing and its axioms in more detail. The forcing axiom for subcomplete forcing, $\SCFA$ is notable in that it has many of the strong consequences of $\MM$ such as failure of vaious square principles (see \cite{Jen14, FuchsFACHSP, Fuchs17, FuchsRinot})  but is consistent with $\CH$ and even $\diamondsuit$. This follows immediately from the fact that subcomplete forcing never adds reals, nor kills diamond sequences. Mixed with Theorem 3 of Chapter 4 of \cite{Jen14} which states that RCS iterations of subcomplete forcing notions are again subcomplete it follows that, if $\kappa$ is supercompact, one can force $\SCFA$ by a $\kappa$-length RCS iteration using the standard Baumgartner argument. Following Jensen I'll call this \say{the natural model of $\SCFA$}.

Every subcomplete forcing is stationary set preserving hence $\MM$ implies $\SCFA$. In particular, $\SCFA$ is compatible with $\mfc = \aleph_2$. There are therefore at least two essentially different models of $\SCFA$: the natural model where $\diamondsuit$ holds, and any model of $\MM$, where $\CH$ fails. With this fact in mind it is very easy to see that $\SCFA$ does not suffice to prove either any number of standard consequences of forcing axioms, nor their negations: for instance $\SCFA$ cannot prove there are no Souslin trees, since such trees exist under $\diamondsuit$. But this feels cheap. What one would like to know is whether the $\SCFA$ is compatible with Souslin trees even if the $\diamondsuit$ or $\CH$ obstruction is removed. These questions are what I would like to address in this chapter. More specifically I aim to answer the following question.
\begin{question}
What possible behaviors for the reals and combinatorics on $\omega_1$ are consistent with $\SCFA$, regardless of if $\CH$ holds?
\end{question}

I answer this question in a number of ways, ultimately showing the following, amongst other such results.
\begin{theorem}
Assuming the consistency of a supercompact cardinal, the following are consistent with $\SCFA + \neg \CH$.
\begin{enumerate}
\item
There are Souslin trees.
\item
$\mfd = \aleph_1 < \mfc = \aleph_2$ 
\item
$\MA_{\aleph_1}(\sigma{\rm -linked})$ holds but $\MA_{\aleph_1}$ fails.
\end{enumerate}
\label{mainthm}
\end{theorem}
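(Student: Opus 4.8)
The strategy is the standard one for producing models of forcing axioms by iteration, but applied to the (apparently) more flexible class of $\infty$-subproper forcing notions using Miyamoto's nice iterations, together with a careful choice of what to force at each stage. I would start from a supercompact cardinal $\kappa$ (with, say, a Laver function $\ell\colon\kappa\to V_\kappa$ guessing the relevant names) and build a nice iteration $\langle \P_\alpha, \dot{\Q}_\alpha \mid \alpha<\kappa\rangle$ of length $\kappa$ with the property that at each stage of the iteration, whenever $\ell(\alpha)$ codes a $\P_\alpha$-name for a subcomplete (or $\infty$-subproper) forcing $\dot{\Q}_\alpha$ and a name for a sequence of $\aleph_1$-many dense sets, we force with $\dot{\Q}_\alpha$; otherwise we force with trivial forcing or with a designated "generic" subcomplete forcing. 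By the iteration theorem for $\infty$-subproper nice iterations (proved in this chapter), $\P_\kappa$ is $\infty$-subproper, hence stationary-set-preserving and $\omega_1$-preserving, and a Baumgartner-style Löwenheim--Skolem reflection argument using supercompactness of $\kappa$ shows $\P_\kappa$ forces $\SCFA$. Since $\kappa$ becomes $\aleph_2$ in the extension, and we will be adding reals cofinally, $\neg\CH$ will hold in each case.

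\textbf{The three separate constructions.} For (1), I would additionally interleave into the iteration a cofinal set of stages at which we force with Cohen forcing (which is subcomplete---indeed completely proper---being countable) or, better, with a forcing that explicitly adds a Souslin tree and then with subcomplete forcing that cannot kill it; the key point to verify is that a sufficiently generic Souslin tree survives the remainder of the iteration. Alternatively one argues that Cohen forcing adds a Souslin tree (a classical fact) and that subcomplete forcing, adding no reals, cannot specialize an $\omega_1$-tree---this last point is exactly the kind of preservation theorem for nice iterations of $\infty$-subproper forcing advertised in the chapter abstract (preservation of Souslin trees along nice iterations). So the plan is: make $\SCFA$ true while ensuring the "generic filler" stages are ones that do not add branches to $\omega_1$-trees, add Cohen reals cofinally to get $\neg\CH$ and a Souslin tree, and invoke the Souslin-tree preservation theorem for the tail. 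For (2), I would use the fact that subcomplete forcing is $\baire$-bounding: if we take all the "filler" stages to be $\baire$-bounding $\infty$-subproper forcings, the whole iteration adds no dominating reals over the ground model's reals at the relevant stages in a way that keeps $\mfd=\aleph_1$, while still adding reals (e.g. to satisfy instances of $\SCFA$ that inject reals, or via explicit small subcomplete reals-adding-free steps that nonetheless push $\mfc$ up---here one must be slightly careful: $\mfc=\aleph_2$ forces us to add $\aleph_2$ reals total, so the reals are added by the instances of $\SCFA$-relevant forcing, and one shows a scale of length $\omega_1$ in the ground model survives because every forcing used is $\baire$-bounding). For (3), one forces $\MA_{\aleph_1}(\sigma\text{-linked})$ by iterating $\sigma$-linked (hence subcomplete? no---$\sigma$-centered/$\sigma$-linked forcings add reals, so this needs care) c.c.c.\ forcings cofinally alongside the $\SCFA$ bookkeeping, while arranging that $\MA_{\aleph_1}$ fails by, e.g., keeping a Luzin-type set or an entangled set, or by ensuring $\cov(\mathcal M)=\aleph_1$ (which contradicts $\MA_{\aleph_1}$ but is consistent with $\MA_{\aleph_1}(\sigma\text{-linked})$, since the latter implies $\mathfrak p>\aleph_1$ but not $\cov(\mathcal M)>\aleph_1$---one must pick the right separating invariant).

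\textbf{Main obstacles.} The principal difficulty is the interaction between the two kinds of forcing being interleaved: the subcomplete/$\infty$-subproper forcings that make $\SCFA$ true add no reals, while the auxiliary forcings (Cohen forcing for (1), c.c.c.\ forcings for (3), reals-adding steps for (2)) do add reals and are \emph{not} subcomplete in general. The resulting iteration is neither purely subcomplete nor purely proper/c.c.c., so the iteration theorem must be robust enough to handle a mixed iteration---this is presumably why the chapter develops nice iterations of $\infty$-subproper forcing and proves that various properties ($\baire$-bounding, the Laver/Sacks property, not adding branches to $\omega_1$-trees, preserving Souslin trees) are preserved along such iterations. For (1) the hard part is verifying that the Souslin tree one adds is not destroyed by the later subcomplete (and auxiliary) stages; for (2) it is checking that $\mfd=\aleph_1$ genuinely survives $\kappa$-many stages, i.e.\ that the ground-model scale remains dominating, which requires the $\baire$-bounding preservation theorem for the full nice iteration; for (3) the hard part is threading the needle so that $\MA_{\aleph_1}(\sigma\text{-linked})$ holds yet some consequence of $\MA_{\aleph_1}$ fails, which forces one to choose carefully which cardinal characteristic to separate and to show the failure is not repaired by either the $\SCFA$ machinery or the $\sigma$-linked bookkeeping. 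I would handle each item in its own section, reusing the common iteration-and-reflection skeleton and plugging in the relevant preservation theorem from earlier in the chapter.
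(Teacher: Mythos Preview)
Your overall architecture---nice iterations of $\infty$-subproper forcing, Laver-function bookkeeping for $\SCFA$, and weaving in auxiliary proper forcings at the non-bookkeeping stages---matches the paper exactly. But two of your three items have genuine gaps in the choice of auxiliary forcing.

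For (2), you say the reals needed to push $\mfc$ up to $\aleph_2$ will come from ``instances of $\SCFA$ that inject reals'' or ``the instances of $\SCFA$-relevant forcing.'' This cannot work: subcomplete forcing \emph{never} adds reals, so the $\SCFA$ bookkeeping stages contribute nothing to $\mfc$. You must choose a filler forcing that (i) adds reals, (ii) is $\infty$-subproper, and (iii) is $\baire$-bounding so that the preservation theorem keeps $\mfd=\aleph_1$. The paper uses random real forcing, which satisfies all three. Your plan as stated leaves the continuum at $\aleph_1$.

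For (3), your proposed witness for $\neg\MA_{\aleph_1}$---arranging $\cov(\Me)=\aleph_1$---is inconsistent with $\MA_{\aleph_1}(\sigma\text{-linked})$, since Cohen forcing is $\sigma$-centered (hence $\sigma$-linked) and $\MA_{\aleph_1}$ for Cohen forcing already gives $\cov(\Me)>\aleph_1$. The paper's route is much cleaner and reuses item (1): $\sigma$-linked forcing preserves Souslin trees (an elementary argument), and so does subcomplete forcing, so by the Souslin-preservation theorem for nice iterations the ground-model Souslin tree survives the entire iteration. A surviving Souslin tree kills $\MA_{\aleph_1}$ outright, while the $\sigma$-linked bookkeeping at the filler stages delivers $\MA_{\aleph_1}(\sigma\text{-linked})$.

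(Minor point: Cohen forcing is not subcomplete---it adds reals---but it is proper and hence $\infty$-subproper, which is all you need for the iteration theorem.)
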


The idea of the proof of Theorem \ref{mainthm} is to \say{weave} other forcing notions into the iteration to force $\SCFA$ in such a way that $\diamondsuit$ is killed but some fixed consequence remains. This involves proving new iteration theorems for subcomplete forcing notions and their relatives, this time using nice iterations in the sense of Miyamoto \cite{miyamoto}. After these theorems were proved, Gunter Fuchs observed that they apply to a much larger class of forcing notions, one that previously was not known to have an iteration theorem. Riffing off the idea of $\varepsilon$-subcompleteness, introduced by Fuchs in \cite{FuchsPSC}, we call these iterations $\infty$-subcomplete. 

The results stated in the above theorem apply equally for the $\infty$-subcomplete forcing axiom, however, somewhat awkwardly, we don't know if it's the case that every $\infty$-subcomplete forcing is forcing equivalent to a subcomplete forcing or even if $\SCFA$ is equivalent to the forcing axiom for $\infty$-subcomplete forcing notions. Still $\infty$-subcomplete forcing notions represent an exciting development in the theory of subversion forcing for a number reasons. First, the definition is simplified from that of subcomplete forcing, yet all previous applications of subcomplete forcing work with $\infty$-subcomplete forcing, hence simplifying the theory as a whole. Next, the class of $\infty$-subcomplete forcing notions appears to be much nicer structurally than that of subcomplete forcing notions because it satisfies certain closure properties we expect from a \say{well-behaved} forcing class, though again this may be cosmetic if it turns out that subcomplete forcing and $\infty$-subcomplete forcing are the same. In particular, we can show that $\infty$-subcomplete and $\infty$-subproper forcing notions are closed under forcing equivalence and factors whereas the corresponding facts for subcomplete and subproper forcing are not known. 

In the next section of this chapter, I introduce $\infty$-subcomplete and $\infty$-subproper forcing notions and prove a number of simple results about their classes. In the following section I review Miyamoto's theory of nice iterations. This sets the stage for the following section where I prove the iteration theorems. The final two sections give applications. I prove some preservation theorems and use them to build models of the statements advertized in Theorem \ref{mainthm}.

\section{$\infty$-Subversion Forcing}
The first goal is to introduce $\infty$-subcomplete and $\infty$-subproper forcing. I follow closely the presentation given in \cite[Chapters 1 and 2]{Minden:dissertation} and refer the reader there for more details. 
\begin{definition}
Given transitive structures $M, N \models \ZFC^-$ we say that $N$ is {\em regular in} $M$ if given any function $f \in M$ so that $f:x \to N$ with $x \in N$ we have that $f``x \in N$. A structure $N$ is {\em full} if it is transitive, $\omega \in N$ and there is an ordinal $\gamma$ so that $N$ is regular in $L_\gamma (N)$ where $L_\gamma (N) \models \ZFC^-$. 
\end{definition}
The idea of fullness is that $N$ is some $H_\theta$ from the point of view of $L_\gamma (N)$. In fact the following holds.
\begin{lemma}[Lemma 1.2.5 of \cite{Minden:dissertation}]
Given $M, N \models \ZFC^-$ transitive structures. $N$ is regular in $M$ if and only if $N = H_\gamma^M$ where $\gamma = {\rm height}(N)$ is a regular cardinal in $M$.
\end{lemma}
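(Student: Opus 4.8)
The plan is to prove both directions directly, writing $\gamma = \height(N) = \On\cap N$ throughout and using the standing assumption $N\subseteq M$ (needed for the equivalence to even make sense; in the applications $M = L_\delta(N)$ for a suitable $\delta$). Note that $\gamma$ is a limit ordinal above $\omega$, since $N\models\ZFCm$ is transitive and closed under ordinal successor.

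First I would treat the implication from left to right, so assume $N$ is regular in $M$. The key preliminary observation is that $\gamma$ is a regular cardinal in $M$: given any $f\in M$ with $\dom(f) = \beta < \gamma$ and $\ran(f)\subseteq\gamma$, we have $\beta\in N$ and $\gamma\subseteq N$, so $f$ is a map from an element of $N$ into $N$, and regularity gives $f``\beta\in N$; since $N\models\ZFCm$ is transitive it computes $\sup(f``\beta)$, which is therefore in $N$ and hence $<\gamma$. Thus no $f\in M$ from an ordinal below $\gamma$ maps cofinally into $\gamma$, which rules out $\gamma$ being singular or a non-cardinal in $M$. (It then follows automatically that $\On\cap H_\gamma^M = \gamma = \height(N)$.) Next I would show $N = H_\gamma^M$. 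For $N\subseteq H_\gamma^M$: if $a\in N$, transitivity of $N$ gives $\mathrm{TC}(\{a\})\subseteq N$, and $\ZFCm$ inside $N$ provides $\mathrm{TC}(\{a\})\in N$ together with an injection of it into some $\kappa\in N$; then $\kappa<\gamma$, and since that injection lies in $N\subseteq M$ we get $|\mathrm{TC}(\{a\})|^M\le\kappa<\gamma$, i.e. $a\in H_\gamma^M$. For $H_\gamma^M\subseteq N$ I would argue by $\in$-induction: if $a\in H_\gamma^M\setminus N$ is of minimal rank, then each member of $a$ has $M$-transitive-closure contained in that of $a$ (hence still $<\gamma$) and smaller rank, so lies in $N$ by minimality, and thus $a\subseteq N$; but in $M$ there is a surjection $f\colon\kappa\to a$ with $\kappa = |a|^M<\gamma$, and this $f\in M$ is a function from $\kappa\in N$ into $N$, so regularity yields $a = f``\kappa\in N$, a contradiction.

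For the converse, assume $N = H_\gamma^M$ with $\gamma$ regular in $M$. Given $f\in M$ with $f\colon x\to N$ and $x\in N = H_\gamma^M$, Replacement in $M$ gives $f``x\in M$, and $\mathrm{TC}(\{f``x\})$ is a union of fewer than $|x|^M<\gamma$ many sets, each of $M$-transitive-closure-size $<\gamma$, together with $f``x$ and $\{f``x\}$; regularity of $\gamma$ in $M$ bounds this union strictly below $\gamma$, so $f``x\in H_\gamma^M = N$, as required.

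The only genuinely nontrivial point, and the expected main obstacle, is the inclusion $H_\gamma^M\subseteq N$ — passing from ``$a$ has small transitive closure as computed in $M$, and every member of $a$ already lies in $N$'' to ``$a\in N$''. This is precisely what the definition of regularity in $M$ is designed to deliver, applied to a collapsing surjection of an ordinal $\kappa\in N$ onto $a$; everything else is bookkeeping with cardinal arithmetic inside $M$, taking care to track which model each map and cardinality computation belongs to, all of it available because $M$ and $N$ are transitive models of $\ZFCm$.
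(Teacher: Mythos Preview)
The paper does not supply its own proof of this lemma; it is quoted as Lemma 1.2.5 of \cite{Minden:dissertation} and used as a black box. So there is no argument in the paper to compare against.

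Your proposal is a correct and standard proof. The main steps --- showing $\gamma$ is regular in $M$ by feeding cofinal maps into the regularity hypothesis, the inclusion $N\subseteq H_\gamma^M$ via the well-ordering available in $N\models\ZFCm$, the reverse inclusion by $\in$-induction using a collapsing surjection from an ordinal below $\gamma$, and the converse by a transitive-closure cardinality estimate --- are all sound. Your explicit acknowledgment that $N\subseteq M$ must be assumed (as it is in the intended application $M=L_\delta(N)$) is appropriate, since without it the statement is not even well-posed.
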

I will be using full models to define $\infty$-subcomplete and $\infty$-subproper forcing notions. The relevant fact is the following.
\begin{lemma}[Lemma 1.2.7 of \cite{Minden:dissertation}]
If $M$ is countable and full then $M$ is not pointwise definable.
\end{lemma}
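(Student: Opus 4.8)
The plan is to argue by contradiction, exploiting the fact that fullness embeds $M$ inside a $\ZFC^-$-model that can see $M$ as a set. So suppose towards a contradiction that $M$ is countable, full, and pointwise definable, i.e.\ every element of $M$ is definable without parameters over $(M,\in)$. First I would invoke fullness together with Lemma 1.2.5 from the excerpt: fix an ordinal $\gamma$ such that $L_\gamma(M) \models \ZFC^-$ and $M$ is regular in $L_\gamma(M)$, so that $M = H_\delta^{L_\gamma(M)}$, where $\delta = \mathrm{height}(M)$ is a regular cardinal in $L_\gamma(M)$. Since $M \models \ZFC^-$ (in particular the Axiom of Infinity), we have $\delta > \omega$, and a regular ordinal above $\omega$ is uncountable in $L_\gamma(M)$.

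The central step is to transfer pointwise definability of $M$ into $L_\gamma(M)$. Because $M$ is an element of $L_\gamma(M)$ and $L_\gamma(M)$ satisfies enough of $\ZFC^-$ to run Tarski's recursion, the satisfaction relation $\mathrm{Sat}_M$ is definable over $L_\gamma(M)$ with parameter $M$; consequently so is the function $d$ that sends a G\"odel code of a formula $\varphi(v)$ to the unique $a \in M$ with $M \models \varphi(a)$ when such an $a$ exists and is unique, and to $\emptyset$ otherwise. This $d$ has domain $\omega$ and range contained in $M$, so by Replacement inside $L_\gamma(M)$ we get $d \in L_\gamma(M)$. By the pointwise-definability assumption, $d$ is surjective onto $M$, whence $L_\gamma(M) \models$ ``$M$ is countable''.

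This is the desired contradiction: in $L_\gamma(M)$ every subset of $\omega$ has hereditarily countable transitive closure, and hence (using $\delta > \omega$) lies in $H_\delta^{L_\gamma(M)} = M$, so $\mathcal{P}(\omega)^{L_\gamma(M)} \subseteq M$; but $\mathcal{P}(\omega)^{L_\gamma(M)}$ is uncountable in $L_\gamma(M)$ by Cantor's theorem, so $M$ is uncountable there. I expect the only delicate point to be the bookkeeping showing that $d$ is a genuine member of $L_\gamma(M)$ rather than merely a definable class — this rests on $M \in L_\gamma(M)$, on $L_\gamma(M) \models \ZFC^-$ having the instances of Separation and Replacement needed to build $\mathrm{Sat}_M$ and then $d$, and on the convention that the $\ZFC^-$ in play includes Infinity (so that $\delta > \omega$ is forced). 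Note that countability of $M$ is used only to make the conclusion non-vacuous; the argument in fact shows that no full structure is pointwise definable.
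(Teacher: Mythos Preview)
The paper does not supply its own proof of this lemma; it is quoted verbatim from \cite{Minden:dissertation} as background, so there is no paper argument to compare against. Your proof is correct and is the natural one: pointwise definability of $M$ would yield, inside $L_\gamma(M)$, a definable surjection $\omega\to M$, contradicting $M=H_\delta^{L_\gamma(M)}$ for $\delta$ an uncountable regular cardinal of $L_\gamma(M)$.

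Two minor remarks. First, you justify $\delta>\omega$ via ``$M\models\ZFC^-$ (in particular Infinity)''; this works, but the cleaner route is to observe that the definition of \emph{full} already requires $\omega\in M$, hence $\delta=\mathrm{height}(M)>\omega$ immediately, and then regularity in $L_\gamma(M)$ gives $\delta\ge\omega_1^{L_\gamma(M)}$. Second, your closing observation is right: countability of $M$ plays no role, so fullness alone rules out pointwise definability; the hypothesis is stated only because that is the situation in which the lemma gets applied (to the collapse $\bar N$ in the standard setup).
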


Let $\mathbb P$ be a forcing notion. Following {\cite[Definition 2.1.3]{Minden:dissertation}} I recall the {\em standard setup}.
\begin{definition}
Let $\theta$ be a cardinal. We say that $\mathbb P, N, \sigma, \bN$ and $\theta$ are {\em in the standard setup} if the following all hold.
\begin{enumerate}
\item
$\mathbb P \in H_\theta \subseteq N = L_\tau[A] \models \ZFC^-$ for some $\tau > \theta$ and $A \subseteq \tau$.
\item
$\sigma:\bN \cong X \prec N$ where $X$ is countable and $\bN$ is full
\item
$\sigma(\bar{\theta}, \bar{\mathbb P}, \bs) = \theta, \mathbb P, s$ for any fixed parameter $s \in N$.
\end{enumerate}
\end{definition}

Given an embedding $\sigma:\bN \prec N$ as in the standard setup I employ the following convention: if $a \in {\rm range}(\sigma)$ then I will often write $\bar{a}$ for the preimage of $a$. Often this will not even be explicitly mentioned. The standard setup is used to define subcomplete and subproper forcing. Below, $\delta (\mathbb P)$ is the {\em weight} of $\mathbb P$ i.e. the least size of a dense subset.
\begin{definition}[Subproperness]
A forcing notion $\mathbb P$ is {\em subproper} if all sufficiently large $\theta$ verify the subproperness of $\P$, meaning that if $\mathbb P$, $N$, $\sigma$, $\bN$ and $\theta$ in the standard setup, $s \in N$ any parameter fixed in advance and $\bar{p} \in \bar{\mathbb P}$ then there is a condition $q \leq p$ so that if $G$ is any $\mathbb P$-generic containing $q$ there is an embedding $\sigma ' \in V[G]$ so that the following hold:
\begin{enumerate}
\item
$\sigma ' : \bar{N} \prec N$
\item
$\sigma ' (\bar{\theta}, \bar{\mathbb P}, \bar{s}, \bar{p} ) = \theta, \mathbb P, s, p$
\item
$\bar{G} : = (\sigma '{}^{-1}) `` G$ is a $\bar{\mathbb P}$-generic filter over $\bar{N}$
\item
${\rm Hull}^N (\delta (\mathbb P) \cup {\rm range} (\sigma ')) = {\rm Hull}^N (\delta (\mathbb P) \cup {\rm range}(\sigma))$
\end{enumerate}
\end{definition}

Note that if $\mathbb P$ is subproper then the embedding $\sigma '$ induces an embedding $\hat{\sigma} ':\bN[\bG] \prec N[G]$ by specifying $\hat{\sigma} (\bG) = G$ and extending by elementarity. Also note that subproperness is a weakening of properness, the latter being the case where $\sigma = \sigma '$, see Proposition 2.1.2 of \cite{Minden:dissertation} for a proof of this fact. Such a weakening for a general cass of forcing notions $\Gamma$ gives a new class, which we call {\em sub}-$\Gamma$. We also describe this process as {\em subverting} $\Gamma$. The subversion of $\sigma$-closed posets, which uses the forcing equivalent definition of completeness, is the most interesting example of a subverted forcing class. 
\begin{definition}[Subcompleteness]
A forcing notion $\mathbb P$ is {\em subcomplete} if all sufficiently large $\theta$ verify the subcompleteness of $\P$, meaning that if $\mathbb P$, $N$, $\sigma$, $\bN$ and $\theta$ in the standard setup, $s \in N$ any parameter fixed in advance, and $\bar{p} \in \bar{G} \subseteq \bar{\mathbb P}$ is $\bar{\P}$-generic over $\bar{N}$ then there is a $q \in \P$ so that if $G \ni q$ is $\P$-generic over $V$ then there is an embedding $\sigma ' \in V[G]$ so that the following hold:
\begin{enumerate}
\item
$\sigma ' : \bar{N} \prec N$
\item
$\sigma ' (\bar{\theta}, \bar{\mathbb P}, \bar{s}, \bar{p} ) = \theta, \mathbb P, s, p$ for any fixed parameter $s \in N$
\item
$\sigma ' ``\bar{G} \subseteq G$
\item
${\rm Hull}^N (\delta (\mathbb P) \cup {\rm range} (\sigma ')) = {\rm Hull}^N (\delta (\mathbb P) \cup {\rm range}(\sigma))$
\end{enumerate}

\end{definition}

In both the definition of subproper and subcomplete forcing the most mysterious condition is the third one, which I call the \say{Hulls condition}. This condition is not needed in any application of subcomplete or subproper forcing, but it is needed to prove the iteration theorem for both classes. One of the main contributions of this chapter is to show that in fact the Hulls condition is not necessary for iterating subversion forcing classes. The two more general iterable classes of forcing notions are given below for completeness, however note that these are the same with the Hulls condition removed.

\begin{definition}[$\infty$-Subproperness]
A forcing notion $\mathbb P$ is {\em $\infty$-subproper} if all sufficiently large $\theta$ verify the subproperness of $\P$, meaning that if $\mathbb P$, $N$, $\sigma$, $\bN$ and $\theta$ in the standard setup, $s \in N$ any parameter fixed in advance and $\bar{p} \in \bar{\mathbb P}$ then there is a condition $q \leq p$ so that if $G$ is any $\mathbb P$-generic containing $q$ there is an embedding $\sigma ' \in V[G]$ so that the following hold:
\begin{enumerate}
\item
$\sigma ' : \bar{N} \prec N$
\item
$\sigma ' (\bar{\theta}, \bar{\mathbb P}, \bar{s}, \bar{p} ) = \theta, \mathbb P, s, p$
\item
$\bar{G} : = (\sigma '{}^{-1}) `` G$ is a $\bar{\mathbb P}$-generic filter over $\bar{N}$
\end{enumerate}
\end{definition}

\begin{definition}[$\infty$-Subcompleteness]
A forcing notion $\mathbb P$ is {\em $\infty$-subcomplete} if all sufficiently large $\theta$ verify the subcompleteness of $\P$, meaning that if $\mathbb P$, $N$, $\sigma$, $\bN$ and $\theta$ in the standard setup, $s \in N$ any parameter fixed in advance, and $\bar{p} \in \bar{G} \subseteq \bar{\mathbb P}$ is $\bar{\P}$-generic over $\bar{N}$ then there is a $q \in \P$ so that if $G \ni q$ is $\P$-generic over $V$ then there is an embedding $\sigma ' \in V[G]$ so that the following hold:
\begin{enumerate}
\item
$\sigma ' : \bar{N} \prec N$
\item
$\sigma ' (\bar{\theta}, \bar{\mathbb P}, \bar{s}, \bar{p} ) = \theta, \mathbb P, s, p$ for any fixed parameter $s \in N$
\item
$\sigma ' ``\bar{G} \subseteq G$
\end{enumerate}

\end{definition}

The name $\infty$-subversion refers to the $\varepsilon$-subcomplete forcings defined in \cite{FuchsPSC}. In that article a forcing notion $\mathbb P$ is said to be $\varepsilon$-subcomplete if it satisfies the definition of subcompleteness with $\delta(\mathbb P)$ being replaced by a given ordinal $\varepsilon$. Clearly as $\varepsilon$ increases, being $\varepsilon$-subcomplete becomes easier to satisfy. Since $\infty$-subcomplete forcings are defined without mention of the $\delta(\mathbb P)$ there are the weakest of these classes, and for any $\varepsilon \in ORD$ an $\varepsilon$-subcomplete forcing is $\infty$-subcomplete, hence the name. Let me note that in \cite{FuchsPSC} it's shown that the forcing notions which are $\varepsilon$-subcomplete for some $\varepsilon$ collectively form the closure of the subcomplete forcing notions under closure by forcing equivalence. I do not know if the same is true of the $\infty$-subcomplete forcing notions.

The main utility of working with $\infty$-subversions of forcing notions as opposed to their \say{non-infinity} counterparts is that these are easier to keep track of since they have less conditions to check, while at the same time it does not seem that they have any substantially different properties. Indeed essentially all of the standard facts about subcomplete and subproper forcing also hold for the infinity versions since the hulls condition is not invoked in any of these proofs. Let me list some key ones below as an observation. 

\begin{observation}
Let $\P$ be $\infty$-subcomplete.
\begin{enumerate}
	\item $\P$ preserves stationary subsets of $\omega_1$.
	\item $\P$ preserves Souslin trees.
	\item $\P$ preserves the principle $\diamondsuit$.
	\item $\P$ does not add reals.
\end{enumerate}
\end{observation}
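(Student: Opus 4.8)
The plan is to observe that each of the four listed properties is established for subcomplete forcing by an argument that never invokes the Hulls condition, so the same argument goes through verbatim for $\infty$-subcomplete forcing. I would structure the proof as four short paragraphs, one per item, each pointing to the relevant fact in \cite{Minden:dissertation} or \cite{Jen14} and noting which ingredients are used.

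For item (4), that $\P$ does not add reals, the cleanest route is this. Suppose toward a contradiction that $p \forces \dot{x}\colon\check\omega\to\check\omega$ with $\dot x \notin V$. Fix $\theta$ large enough to verify $\infty$-subcompleteness, put $\mathbb P, N, \sigma, \bN, \theta$ into the standard setup with $s$ coding $\langle p, \dot x\rangle$, and work inside the countable full $\bN$. In $\bN$ build a decreasing sequence of conditions deciding $\bar{\dot x}$ on larger and larger initial segments; since $\bN$ is countable this yields a $\bar{\mathbb P}$-generic $\bar G \ni \bar p$ over $\bN$, and $\bar{\dot x}_{\bar G}$ is a specific real $y \in \bN \subseteq V$. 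Apply $\infty$-subcompleteness: there is $q \le p$ such that whenever $G \ni q$ is generic there is $\sigma'\colon\bN\prec N$ with $\sigma'(\bar\theta,\bar{\mathbb P},\bar s,\bar p)=\theta,\mathbb P,s,p$ and $\sigma'``\bar G \subseteq G$. Then $q$ forces $\dot x = \sigma'(\bar{\dot x}_{\bar G}) = \check y$ (using that $\sigma'$ fixes $\omega$ and its elements, hence fixes the real $y$, together with $\sigma'``\bar G\subseteq G$), contradicting $\dot x \notin V$. The point to emphasize is that this is exactly Jensen's argument (see \cite[Chapter 1]{Jen14} or the corresponding lemma in \cite{Minden:dissertation}) and the Hulls condition plays no role.

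For items (1)--(3) I would cite the analogous facts for subcomplete forcing and note that their proofs likewise use only: the standard setup, fullness of $\bN$, existence of the generic $\bar G$ over $\bN$, and the embedding $\sigma'$ with clauses (1)--(3) of $\infty$-subcompleteness. For stationary set preservation, given a name $\dot C$ for a club in $\omega_1$ and a stationary $S$, one picks $X \prec H_\theta$ with $\sup(X\cap\omega_1)=:\delta\in S$, reflects to $\bN$, builds $\bar G$ deciding enough of $\bar{\dot C}$ so that $\delta$ is forced into $\dot C$, and transfers via $\sigma'$; $\delta$ is preserved because $\sigma'$ has critical point above $\delta$ by the standard setup arrangement. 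For Souslin tree preservation one runs the analogous argument with $\dot A$ a name for a maximal antichain in the tree $T$, showing the antichain is already in $\bN$ hence countable; and for $\diamondsuit$ preservation one uses that $\P$ adds no reals (item 4) together with the fact that a $\diamondsuit$-sequence of $V$ remains one, since any new subset of $\omega_1$ is approximated by the argument just sketched. In each case I would write ``the proof is identical to that of \cite[\ldots]{Minden:dissertation} (resp.\ \cite{Jen14}), noting that the Hulls condition is not used.''

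The main obstacle, such as it is, is purely expository rather than mathematical: one must be careful to verify that the cited proofs really do not secretly rely on clause (4) of subcompleteness (the Hulls condition) — for instance in any place where one wants to conclude that $\mathrm{Hull}^N(\delta(\mathbb P)\cup\mathrm{range}(\sigma'))$ equals something controlled. Scanning the arguments for stationary preservation, non-addition of reals, and Souslin/$\diamondsuit$ preservation, the Hulls condition is invoked only in the iteration theorem and never in these direct preservation arguments, so the transfer is clean. I would therefore present this as an \emph{observation} with a one-paragraph proof for (4) spelled out as above and one-line citations for (1)--(3).
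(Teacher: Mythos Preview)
Your proposal is correct and matches the paper's approach exactly: the paper does not give a formal proof of this Observation at all, merely stating that ``essentially all of the standard facts about subcomplete and subproper forcing also hold for the infinity versions since the hulls condition is not invoked in any of these proofs.'' Your write-up actually provides more detail than the paper, spelling out the argument for item (4) and sketching (1)--(3), which is entirely appropriate for an expository treatment.
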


Also $\infty$-subcomplete and $\infty$-subproper are slightly more \say{natural} in the sense that they are closed under forcing equivalence and subforcing, whereas the corresponding facts for subcomplete and subproper forcings are less clear. Note that both propositions below are nearly identical to the corresponding facts for $\varepsilon$-subcomplete forcings as shown in \cite{FuchsPSC}. I repeat them here for completeness.

\begin{proposition}[Essentially Lemma 2.3 of \cite{FuchsPSC}]
$\infty$-subcomplete and $\infty$-subproper forcings are closed under forcing equivalence.
\end{proposition}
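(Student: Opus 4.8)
\textit{Proof proposal.} I will treat the $\infty$-subcomplete case; the $\infty$-subproper case is entirely parallel. The plan is to adapt the argument of \cite[Lemma 2.3]{FuchsPSC}. Suppose $\mathbb{P}$ is $\infty$-subcomplete and $\mathbb{Q}$ is forcing equivalent to $\mathbb{P}$, witnessed by an isomorphism $e$ of the Boolean completions $\mathrm{RO}(\mathbb{Q})\cong\mathrm{RO}(\mathbb{P})$. First I would fix $\theta$ large enough that $\mathbb{P}$, $e$, $\mathrm{RO}(\mathbb{P})$, $\mathrm{RO}(\mathbb{Q})$ all lie in $H_\theta$ and that $\theta$ verifies $\infty$-subcompleteness of $\mathbb{P}$. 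Given $\mathbb{Q}$, $N$, $\sigma$, $\bN$, $\theta$ in the standard setup with parameter $s$, I would replace $s$ by the tuple $\langle s,\mathbb{P},e\rangle$ — harmless, since the definition allows an arbitrary parameter — so that $\sigma$ also maps the corresponding components $\bar{\mathbb{P}},\bar e$ of $\bar s$ to $\mathbb{P},e$; thus $\mathbb{P},N,\sigma,\bN,\theta$ is in the standard setup as well. Now let $\bar p\in\bar G\subseteq\bar{\mathbb{Q}}$ with $\bar G$ $\bar{\mathbb{Q}}$-generic over $\bN$. Working inside $\bN$, the filter $\bar G$ generates an $\mathrm{RO}(\bar{\mathbb{Q}})$-generic ultrafilter $\bar U$ over $\bN$; then $\bar e``\bar U$ is $\mathrm{RO}(\bar{\mathbb{P}})$-generic over $\bN$, and $\bar H:=\bar{\mathbb{P}}\cap\bar e``\bar U$ is $\bar{\mathbb{P}}$-generic over $\bN$ (a dense suborder meets every generic ultrafilter in a generic filter). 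By density pick $\bar p'\in\bar H$ with $\bar p'\le\bar e(\bar p)$.

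Next I would apply $\infty$-subcompleteness of $\mathbb{P}$ to $\mathbb{P},N,\sigma,\bN,\theta$, parameter $s$, and designated condition $\bar p'\in\bar H$, obtaining $q'\in\mathbb{P}$ such that every $\mathbb{P}$-generic $H\ni q'$ admits $\sigma'\in V[H]$ with $\sigma':\bN\prec N$, $\sigma'$ agreeing with $\sigma$ on $\bar\theta,\bar{\mathbb{P}},\bar s,\bar p'$ (hence on $\bar{\mathbb{Q}}$ and $\bar e$, which are coded into $\bar s$), and $\sigma'``\bar H\subseteq H$. Let $q\in\mathbb{Q}$ be any condition with $e(q)\le q'$; this exists because $\mathbb{Q}$ is dense in $\mathrm{RO}(\mathbb{Q})$ and $e^{-1}(q')\ne 0$. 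I claim $q$ witnesses $\infty$-subcompleteness of $\mathbb{Q}$ for the given data. Given a $\mathbb{Q}$-generic $G\ni q$ over $V$, it generates an $\mathrm{RO}(\mathbb{Q})$-generic ultrafilter $U$, and $H:=\mathbb{P}\cap e``U$ is $\mathbb{P}$-generic over $V$ with $q'\in H$ (since $e(q)\le q'$ and $e(q)\in e``U$), and $V[G]=V[H]$. The returned $\sigma'\in V[H]=V[G]$ then works: for $\bar r\in\bar G$ choose, in $\bN$, some $\bar r'\in\bar H$ with $\bar r'\le\bar e(\bar r)$; then $\sigma'(\bar r')\in H\subseteq e``U$, and since $\sigma'$ fixes $\bar e$ and is order-preserving, $\sigma'(\bar r')\le\sigma'(\bar e(\bar r))=e(\sigma'(\bar r))$, so $e^{-1}(\sigma'(\bar r'))\le\sigma'(\bar r)$ with $e^{-1}(\sigma'(\bar r'))\in U$; hence $\sigma'(\bar r)\in U\cap\mathbb{Q}=G$. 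Thus $\sigma'``\bar G\subseteq G$, as required.

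The one genuinely delicate point — and the step I expect to be the main obstacle — is the clause asking $\sigma'(\bar p)=p$ with $p=\sigma(\bar p)$: the transfer above only yields $\sigma'(\bar p)\in G$. Since that clause together with $\sigma'``\bar G\subseteq G$ forces $p\in G$ for every generic $G\ni q$, i.e.\ effectively $q\le p$, I would handle it by first passing to $\mathbb{Q}\restrict p$. This forcing is again forcing equivalent to an $\infty$-subcomplete forcing — restrictions of $\infty$-subcomplete forcings to a condition are $\infty$-subcomplete, by a routine check analogous to the easy facts listed above — and its largest condition is canonical, so running the argument of the preceding paragraphs for $\mathbb{Q}\restrict p$ with designated condition $1_{\mathbb{Q}\restrict p}$ makes the clause automatic. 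The remaining work is bookkeeping: verifying that "in the standard setup" is preserved as we pass among $\mathbb{Q}$, $\mathbb{P}$, and their restrictions, and that the parameter coding everything in sight keeps $\sigma$ and $\sigma'$ synchronized on $\bar{\mathbb{Q}},\bar{\mathbb{P}},\bar e$ — precisely the sort of bookkeeping that the $\infty$-versions, lacking the Hulls condition, render painless. For the $\infty$-subproper case one transfers a generic filter on $\mathbb{Q}$ through $e$ in both directions in the same way, and the designated condition is no trouble there since the definition already comes with a "$q\le p$" requirement that one simply arranges via $e(q)\le q'$ as above.
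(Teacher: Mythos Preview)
Your approach is essentially the paper's: convert the given $\bar{\mathbb Q}$-generic to a $\bar{\mathbb P}$-generic via the equivalence, invoke $\infty$-subcompleteness of $\mathbb P$, and transfer back. The paper phrases the transfer more abstractly --- having obtained $\sigma'$ with $\sigma'{}``\bar H\subseteq G$, it simply lifts $\sigma'$ to $\bar N[\bar H]\prec N[G]=N[H]$ and observes that by elementarity the lift sends the $\bar{\mathbb Q}$-generic to the $\mathbb Q$-generic, then pulls back a witnessing condition in $\mathbb Q$ --- rather than chasing individual conditions through $e$ as you do; but the content is the same.

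Your detour through $\mathbb Q\restrict p$ to secure $\sigma'(\bar p)=p$ is correct but unnecessary. Since $p=\sigma(\bar p)$ already lies in $\ran(\sigma)$, you can simply fold $p$ (and $\mathbb Q$) into the parameter $s^*$ you hand to the $\infty$-subcompleteness of $\mathbb P$, exactly as you folded $\mathbb P$ and $e$ into it; the conclusion $\sigma'(\bar s^*)=s^*$ then yields $\sigma'(\bar p)=p$ directly. One point both you and the paper elide: writing $s^*=\langle s,\mathbb P,e,\ldots\rangle$ presupposes $\mathbb P,e\in\ran(\sigma)$, which the standard setup for $\mathbb Q$ does not guarantee. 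This is where the ``essentially Lemma 2.3 of \cite{FuchsPSC}'' caveat carries weight; a small extra maneuver (for instance, exploiting the definable well-order of $N=L_\tau[A]$ to choose the equivalence canonically, or reducing first to dense suborders of the $N$-definable object $\mathrm{RO}(\mathbb Q)$) is needed to arrange it.
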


\begin{proof}
I do the case of $\infty$-subcomplete forcing, the other being similar. Let $\mathbb P$ be $\infty$-subcomplete and $\mathbb Q$ be forcing equivalent to $\mathbb P$. Let $\theta$ be large enough to verify the $\infty$-subcompleteness of $\mathbb P$ and assume $\mathcal P(\mathbb Q \cup \mathbb P) \in H_\theta$. Let $\barN, N, \sigma$, etc be as in the definition of $\infty$-subcompleteness. Then there is a condition $p \in \mathbb P$ so that if $p \in G$ is $\mathbb P$-generic over $V$ then in $V[G]$ there is an embedding $\sigma ' : \barN \prec N$ as in the definition of $\infty$-subcompleteness. But then by forcing equivalence there are $\bar{H}$ and $H$ which are $\bar{\mathbb Q}$ and $\mathbb Q$-generic over $\barN$ and $V$ (and hence $N$) respectively and $\sigma '$ lifts to an embedding $\sigma ' _H: \barN[\bar{H}] \prec N[H]$, which by elementarity has the property that $\sigma '_H (\bar{H}) = H$ and is therefore generic over $V$. Moreover this implies that $G \in V[H]$ so $\sigma ' \in V[H]$. Thus there must be some $q' \in \mathbb Q$ forcing this situation so we are done.
\end{proof}

\begin{proposition}[Essentially Theorem 2.7 of \cite{FuchsPSC}]
$\infty$-subcomplete and $\infty$-subproper forcings are closed under factors. In other words, if $\mathbb P$ is a poset, $\dot{\mathbb Q}$ is a $\mathbb P$-name for a poset and $\mathbb P * \dot{\mathbb Q}$ is $\infty$-subcomplete ($\infty$-subproper) then $\mathbb P$ is $\infty$-subcomplete ($\infty$-subproper). 
\end{proposition}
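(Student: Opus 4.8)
The plan is to mimic the proof of the analogous fact for $\varepsilon$-subcomplete forcing (Theorem 2.7 of \cite{FuchsPSC}), observing that the hulls condition plays no role in that argument. So suppose $\mathbb P * \dot{\mathbb Q}$ is $\infty$-subcomplete, and fix $\theta$ large enough to verify the $\infty$-subcompleteness of $\mathbb P * \dot{\mathbb Q}$, with $\mathcal P(\mathbb P * \dot{\mathbb Q}) \in H_\theta$. I would then set up the standard configuration for $\mathbb P$: take $\sigma : \bar N \cong X \prec N$ with $\bar N$ countable and full, $N = L_\tau[A] \models \ZFC^-$, $\sigma(\bar\theta, \bar{\mathbb P}, \bar s) = \theta, \mathbb P, s$, and fix $\bar p \in \bar G \subseteq \bar{\mathbb P}$ with $\bar G$ being $\bar{\mathbb P}$-generic over $\bar N$. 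The goal is to produce $q \in \mathbb P$ so that whenever $q \in G$ is $\mathbb P$-generic over $V$, in $V[G]$ there is $\sigma' : \bar N \prec N$ with $\sigma'(\bar\theta, \bar{\mathbb P}, \bar s, \bar p) = \theta, \mathbb P, s, p$ and $\sigma' {}`` \bar G \subseteq G$.

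The key step is to convert $\bar G$ into a generic for the iteration $\bar{\mathbb P} * \dot{\bar{\mathbb Q}}$ over $\bar N$ and apply the hypothesis. Since $\bar N$ is countable, I can work inside $\bar N[\bar G]$ (or a suitable generic extension thereof) to find a filter $\bar H \subseteq \dot{\bar{\mathbb Q}}^{\bar G}$ that is $\dot{\bar{\mathbb Q}}^{\bar G}$-generic over $\bar N[\bar G]$; then $\bar G * \bar H$ is $\bar{\mathbb P} * \dot{\bar{\mathbb Q}}$-generic over $\bar N$, and one may pick any $\bar p' = (\bar p, \dot{\bar q}) \in \bar G * \bar H$ below the chosen $\bar p$ in the first coordinate. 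Apply $\infty$-subcompleteness of $\mathbb P * \dot{\mathbb Q}$ to this data: there is a condition $(q, \dot r) \in \mathbb P * \dot{\mathbb Q}$ such that whenever $(q,\dot r) \in G * H$ is generic over $V$, there is $\sigma' \in V[G*H]$ with $\sigma' : \bar N \prec N$, $\sigma'(\bar\theta, \overline{\mathbb P * \dot{\mathbb Q}}, \bar s, \bar p') = \theta, \mathbb P * \dot{\mathbb Q}, s, p'$ and $\sigma' {}`` (\bar G * \bar H) \subseteq G * H$. Now I claim $q$ is the condition we want for $\mathbb P$ alone: given $G \ni q$ which is $\mathbb P$-generic over $V$, force below $\dot r^G$ with $\dot{\mathbb Q}^G$ over $V[G]$ to get $H$, so that $\sigma'$ exists in $V[G][H]$. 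From $\sigma' {}`` (\bar G * \bar H) \subseteq G * H$ one reads off, by projecting to first coordinates, that $\sigma' {}`` \bar G \subseteq G$; and since $\mathbb P * \dot{\mathbb Q}$ does not add reals (as it is $\infty$-subcomplete) and $\bar N$ is countable, $\sigma'$ is coded by a real, which must already lie in $V[G]$ rather than genuinely requiring $H$ --- here I would invoke the standard argument that a countable-in-$V[G]$ object added by $\dot{\mathbb Q}^G$, an $\infty$-subcomplete (hence reals-preserving) forcing over $V[G]$, is in $V[G]$. Hence $\sigma' \in V[G]$ and all the required clauses hold there, so $\mathbb P$ is $\infty$-subcomplete. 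The $\infty$-subproper case is entirely parallel, replacing generic filters $\bar G$ over $\bar N$ by the weaker bookkeeping (no fixed $\bar G$ in advance, $\bar G := \sigma'^{-1}{}`` G$ generic over $\bar N$), and the density-of-$q$ formulation of the conclusion.

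The main obstacle I anticipate is the last point: ensuring $\sigma' \in V[G]$ and not merely in $V[G][H]$. This is exactly where one needs that $\dot{\mathbb Q}^G$ adds no new countable sequences of ordinals over $V[G]$ --- in particular no new reals --- which follows because $\mathbb P * \dot{\mathbb Q}$ being $\infty$-subcomplete forces $\dot{\mathbb Q}^G$ to be reals-preserving in $V[G]$ (one can argue this via the factor lemma applied to generics, or directly from the no-new-reals property of the iteration together with the fact that $\mathbb P$ adds no reals). A careful statement and verification of this absoluteness of $\sigma'$ is the crux; once it is in hand, everything else is bookkeeping with the standard setup. I would also take care that the parameter $s$ for $\mathbb P$ can be recovered from (or absorbed into) the parameter used for $\mathbb P * \dot{\mathbb Q}$, so that clause (2) transfers cleanly --- this is routine since $\dot{\mathbb Q}$ itself can be folded into the fixed parameter.
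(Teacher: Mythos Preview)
Your overall strategy matches the paper's: extend $\bar G$ to a $\bar{\mathbb P}*\dot{\bar{\mathbb Q}}$-generic $\bar G*\bar H$ over $\bar N$, apply the $\infty$-subcompleteness of $\mathbb P*\dot{\mathbb Q}$ to obtain a suitable $\sigma'$ in $V[G*H]$, and then descend to $V[G]$. The gap is precisely at the descent step, which you yourself flag as the crux.

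Your claim that $\sigma'$ is ``coded by a real'' is not right: $\sigma'$ maps the countable $\bar N$ into the (typically uncountable) $N$, so it is an $\omega$-sequence of elements of $N$, not a real. Your fallback, that $\dot{\mathbb Q}^G$ is $\infty$-subcomplete over $V[G]$ and therefore adds no countable sequences, fails on two counts. First, nothing in the hypotheses tells you $\dot{\mathbb Q}^G$ is $\infty$-subcomplete; you only know the two-step iteration is. Second, even granting it, $\infty$-subcomplete forcings can add new $\omega$-sequences of ordinals (Prikry forcing is subcomplete and does exactly this), so ``reals-preserving'' does not give you what you need.

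The paper handles the descent by an absoluteness argument that sidesteps any claim about what $\dot{\mathbb Q}^G$ adds. In $V[G]$, using a fixed enumeration of $\bar N$, form the tree $T_G$ of finite initial segments of elementary embeddings $\sigma_0:\bar N\prec N$ that move the required parameters correctly and satisfy $\sigma_0{}``\bar G\subseteq G$. This tree is definable from data in $V[G]$, so $T_G\in V[G]$. In $V[G*H]$ the embedding $\sigma'$ produces an infinite branch, so $T_G$ is ill-founded there; by absoluteness of well-foundedness between transitive models, $T_G$ is already ill-founded in $V[G]$, and any branch in $V[G]$ is an embedding with all the required properties. This is the missing idea in your argument.
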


The following proof, which greatly simplifies my original one (itself, essentially copied from that of \cite[Theorem 2.7]{FuchsPSC}), was suggested to me by Gunter Fuchs.

\begin{proof}
Again, I do the case of $\infty$-subcomplete forcings, the other being similar. Let $\theta$ be large enough to verify that $\mathbb P * \dot{\mathbb Q}$ is $\infty$-subcomplete. We claim that it is also large enough to verify that $\mathbb P$ is subcomplete. To see this, let $N = L_\tau[A]$, be a ZFC$^-$ model with $\tau > \theta$ regular, and $H_\theta \subseteq N$. Fix a parameter $s \in N$ and let $\sigma:\bN \prec N$ with $\bN$ countable, transitive and full so that $\sigma (\bar{\P}, \dot{\bar{\mathbb Q}}, \bar{\theta}, \bar{s}) = \mathbb P, \dot{\mathbb Q}, \theta, s$. Let $\bar{G} \subseteq \bar{\P}$ be generic over $\bN$. Let $(p, \dot{q})$ be a condition witnessing the $\infty$-subcompleteness of $\mathbb P * \dot{\mathbb Q}$ and let $(p, \dot{q}) \in G * H$ be $\mathbb P * \dot{\mathbb Q}$-generic over $\V$. Work in $\V[G*H]$ and let $\sigma ':\bar{N} \prec N$ be an embedding so that $\sigma ' (\bar{\P}, \dot{\bar{\mathbb Q}}, \bar{\theta}, \bar{s}) = \mathbb P, \dot{\mathbb Q}, \theta, s$ and $\sigma ' ``\bar{G} \subseteq G$. Fixing an enumeration of $\bN$ in order type $\omega$, we can consider the tree $T_{G}$ of finite initial segments of an elementary embedding $\sigma_0  : \bN \prec N$ with $\sigma_0 (\bar{\P}, \dot{\bar{\mathbb Q}}, \bar{\theta}, \bar{s}) = \mathbb P, \dot{\mathbb Q}, \theta, s$ and so that $\sigma_0 ' `` \bar{G} \subseteq G$. Note that this tree is in fact in $\V[G]$. Moreover, in $\V[G*H]$ it's ill-founded since $\sigma '$ generates an infinite branch. But then by the absoluteness of ill-foundedness, $T_G$ is ill-founded in $\V[G]$. So there is an infinite branch in $\V[G]$ and this branch witnesses that $\mathbb P$ is $\infty$-subcomplete.
\end{proof}

\section{A Subpar Primer on Nice Iterations}

I will prove iteration theorems for $\infty$-subcomplete and $\infty$-subproper forcing notions in the next section. The limit construction used in the iteration theorems is that of nice iterations in the sense of Miyamoto, \cite{miyamoto}. To begin, I collect here first the relevant facts and definitions from \cite{miyamoto}. For a more in depth discussion, including proofs, see that article. For basic notions of projection etc, see the introduction there. For a sequence $x$ I denote its length by $l(x)$.
\begin{definition}[Iterations]
Let $\nu$ be a limit ordinal. A sequence of separative partial preorders%
\footnote{Here, $(\P,\le,1)$ is a partial preorder if $(\P,\le)$ is reflexive and transitive, and if $1$ is a greatest element. There may be several such greatest elements since $\P$ is not required to be antisymmetric. If $p\le q$ and $q\le p$, then I write $p\equiv q$.}
of length $\nu$, $\langle (\P_\alpha, \leq_\alpha, 1_\alpha)\; | \; \alpha < \nu\rangle$ is called a {\em general iteration} if and only if for any $\alpha \leq \beta < \nu$ the following holds
\begin{enumerate}
\item
For any $p \in \P_\beta$, $p \upharpoonright \alpha \in \P_\alpha$ and $1_\beta \upharpoonright \alpha = 1_\alpha$.
\item
For any $p \in \P_\alpha$ and any $q \in \P_\beta$, if $p \leq_\alpha q \upharpoonright \alpha$ then $p^\frown q \upharpoonright [\alpha, \beta) \in \P_\beta$ and $p^\frown q \upharpoonright [\alpha, \beta) \leq_\beta q$
\item
For any $p, q \in \P_\beta$, if $p \leq_\beta q$ then $p \hook \alpha \leq_\alpha q \hook \alpha$ and $p\leq_\beta p \hook \alpha^\frown q\hook[\alpha, \beta)$.
\item
If $\beta$ is a limit ordinal and $p, q \in \P_\beta$, $p \leq_\beta q$ if and only if for all $\alpha < \beta$ $p \hook \alpha \leq_\alpha q \hook \alpha$.
\end{enumerate}
A general iteration $\langle (\P_\alpha, \leq_\alpha, 1_\alpha)\; | \; \alpha < \nu\rangle$ is an \emph{iteration} iff for every limit ordinal $\beta<\nu$ and all $p,q\in \P_\beta$, $p\le_\beta q$ iff for all $\alpha<\beta$, $p\rest\alpha\le_\alpha q\rest\alpha$.
\end{definition}

I will use the following fact (and the notation introduced there).

\begin{fact}[see {\cite[Prop.~1.3]{miyamoto}}]
\label{fact:FactorsAndQuotients}
Let $\langle (\P_\alpha, \leq_\alpha, 1_\alpha)\; | \; \alpha < \nu\rangle$ be a general iteration, and let $\alpha\le\beta<\nu$. Then
\begin{enumerate}[label=(\arabic*)]
  \item Let $G_\beta$ be $\P_\beta$-generic over $\V$. Set $G_\beta\rest\alpha=\{p\rest\alpha\st p\in G_\beta\}$, $G_\beta\rest[\alpha,\beta)=\{p\rest[\alpha,\beta)\st p\in G_\beta \}$, and let $\P_{\alpha,\beta}=\P_\beta/(G_\beta\rest\alpha)=\{p\rest[\alpha,\beta)\st p\in \P_\beta\ \text{and}\ p\rest\alpha\in G_\beta\rest\alpha\}$ be equipped with the ordering $p\le_{\alpha,\beta}q$ iff there is an $r\in G_\beta\rest\alpha$ such that $r\verl p\le_\beta r\verl q$.
      Then $G_\beta\rest\alpha$ is $\P_\alpha$-generic over $\V$ and $G_\beta \rest [\alpha, \beta)$ is $\P_{\alpha,\beta}$-generic over $\V[G_\beta\rest\alpha]$.
  \item If $G_\alpha$ is $\P_\alpha$-generic over $\V$ and $H$ is $\P_{\alpha,\beta}=\P_\beta/G_\alpha$-generic over $\V[G_\alpha]$, then $G_\alpha*H=\{p\in \P_\alpha\st p\rest\alpha\in G_\alpha\ \text{and}\ p\rest[\alpha,\beta)\in H\}$ is $\P_\beta$-generic over $\V$, $(G_\alpha*H)\rest\alpha=G_\alpha$ and $(G_\alpha*H)\rest[\alpha,\beta)=H$.
  \item
  \label{item:CriterionForBeingInGeneric}
  Let $G_\beta$ be $\P_\beta$-generic over $\V$. Then a condition $p\in \P_\beta$ is in $G_\beta$ iff $p\rest\alpha\in G_\beta\rest\alpha$ and $p\rest[\alpha,\beta)\in G_\beta\rest[\alpha,\beta)$.
\end{enumerate}
\label{factPij}
\end{fact}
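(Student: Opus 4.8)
The statement to prove is Fact \ref{factPij} (the three-part lemma about factors and quotients of general iterations, cited from Miyamoto). Since this is cited from \cite{miyamoto}, the plan is essentially to reconstruct the standard argument; let me sketch how I would carry it out.

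\medskip

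The plan is to proceed item by item, using the four defining clauses of a general iteration as the only tools. \textbf{For (1):} First I would verify that $G_\beta\rest\alpha$ is a filter on $\P_\alpha$ — upward closure uses clause (1) (restriction sends $\P_\beta$ into $\P_\alpha$ and is monotone by clause (3)), and downward directedness is inherited from $G_\beta$ together with the observation that if $p,q\in\P_\alpha$ are compatible in $\P_\beta$ via a common lower bound $r$, then $r\rest\alpha$ works in $\P_\alpha$. Genericity of $G_\beta\rest\alpha$: given $D\subseteq\P_\alpha$ dense, I would show $\widehat D=\{p\in\P_\beta\st p\rest\alpha\in D\}$ is dense in $\P_\beta$, using clause (2) to extend any $q\in\P_\beta$ by concatenating a witness $p\le_\alpha q\rest\alpha$ from $D$ with $q\rest[\alpha,\beta)$; then $q\rest\alpha\in G_\beta\rest\alpha\cap D$. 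For the quotient part I would check $\P_{\alpha,\beta}$ with the given ordering is a separative preorder in $\V[G_\beta\rest\alpha]$, that $G_\beta\rest[\alpha,\beta)$ is a filter on it, and its genericity: given $\dot E$ a $\P_\alpha$-name for a dense subset of $\P_{\alpha,\beta}$ and $r\in G_\beta\rest\alpha$ forcing this, the set of $p\in\P_\beta$ with $p\rest\alpha\le r$ and $p\rest\alpha\Vdash p\rest[\alpha,\beta)\in\dot E$ is dense below any extension of $r$ in $\P_\beta$ — here clause (2) again lets me glue an $\alpha$-part and a tail together.

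\medskip

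\textbf{For (2):} I would show $G_\alpha*H$ is a filter using clauses (2) and (3): upward closure — if $q\in\P_\beta$ and $q\ge p\in G_\alpha*H$ then $q\rest\alpha\ge p\rest\alpha\in G_\alpha$ so $q\rest\alpha\in G_\alpha$, and $q\rest[\alpha,\beta)\ge_{\alpha,\beta} p\rest[\alpha,\beta)$ (witnessed by $p\rest\alpha\in G_\alpha$) so $q\rest[\alpha,\beta)\in H$; directedness — given $p,p'$ in $G_\alpha*H$, find $r\in G_\alpha$ below both $\alpha$-parts, then use that $H$ is a filter to find a tail below $r\verl p\rest[\alpha,\beta)$ and $r\verl p'\rest[\alpha,\beta)$, and reassemble via clause (2). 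Genericity: given $D\subseteq\P_\beta$ dense, the set $D'=\{p\rest[\alpha,\beta)\st p\in D,\ p\rest\alpha\in G_\alpha\}$ is (forced by some condition in $G_\alpha$ to be) dense in $\P_{\alpha,\beta}$, so meeting $D'$ with $H$ and pulling back gives a condition of $G_\alpha*H$ in $D$. The identities $(G_\alpha*H)\rest\alpha=G_\alpha$ and $(G_\alpha*H)\rest[\alpha,\beta)=H$ follow: $\subseteq$ is immediate from the definition, and $\supseteq$ uses clause (2) to note that for $p\in G_\alpha$, $p\verl 1_{[\alpha,\beta)}\in G_\alpha*H$, and for $h\in H$, there is $r\in G_\alpha$ with $r\verl h\in G_\alpha*H$.

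\medskip

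\textbf{For (3):} The forward direction is trivial since restriction is order-preserving (clause (3)) and $G_\beta$ is upward closed. For the converse, suppose $p\rest\alpha\in G_\beta\rest\alpha$ and $p\rest[\alpha,\beta)\in G_\beta\rest[\alpha,\beta)$; pick $q,q'\in G_\beta$ with $q\rest\alpha\le p\rest\alpha$ (possible since $G_\beta\rest\alpha$ is a filter containing $p\rest\alpha$ and $q\rest\alpha$ for some $q\in G_\beta$ — actually directedness gives a common lower bound) and $q'\rest[\alpha,\beta)\le_{\alpha,\beta}p\rest[\alpha,\beta)$; take $r\in G_\beta$ below $q$ and $q'$; then $r\rest\alpha\le p\rest\alpha$ and (shrinking $r$ below the witness in $G_\beta\rest\alpha$ for the quotient-order inequality) $r\rest\alpha\verl r\rest[\alpha,\beta)\le_\beta r\rest\alpha\verl p\rest[\alpha,\beta)$; now clause (3) gives $r\le_\beta r\rest\alpha\verl p\rest[\alpha,\beta)$, and composing with $r\rest\alpha\le_\alpha p\rest\alpha$ via clause (2) (monotonicity of concatenation in the $\alpha$-part) yields $r\le_\beta p$, so $p\in G_\beta$ by upward closure.

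\medskip

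The main obstacle I expect is not any single step but the bookkeeping in part (1) around the quotient: carefully checking that $\P_{\alpha,\beta}$ with the relation "$p\le_{\alpha,\beta}q$ iff $\exists r\in G_\beta\rest\alpha$ with $r\verl p\le_\beta r\verl q$" is genuinely a separative preorder in $\V[G_\beta\rest\alpha]$ and matching its generic filters up with names, since the "$\exists r\in G_\beta\rest\alpha$" quantifier interacts delicately with clause (3) (which controls how $\le_\beta$ restricts and concatenates). Everything else is a routine unwinding of the iteration axioms. Since the result is quoted verbatim from \cite{miyamoto}, in the actual write-up I would likely compress this to a citation plus a remark that the verifications are direct from the definition of general iteration.
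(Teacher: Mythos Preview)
The paper does not give a proof of this statement at all: it is recorded as a \emph{Fact} with a citation to \cite[Prop.~1.3]{miyamoto} and no argument is supplied. Your sketch is a correct reconstruction of the standard verification from the iteration axioms, and your closing remark that in the actual write-up you would compress this to a citation is exactly what the paper does.
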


In what follows I suppress the notation $\leq_\alpha, 1_\alpha$ and associate a partial preorder with its underlying set. The following proposition is useful.
\begin{proposition}[Proposition 1.7 of \cite{miyamoto}]
Let $\langle \P_\alpha \; | \; \alpha < \nu\rangle$ be an iteration and $\beta < \nu$ limit. Then for any $p \in \P_\beta$ and any $\P_\beta$-generic $G_\beta$ we have $p \in G_\beta$ if and only if for all $\alpha < \beta$, $p \hook \alpha \in G_\beta \hook \alpha$.
\label{generic}
\end{proposition}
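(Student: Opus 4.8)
\textbf{Proof proposal for Proposition \ref{generic}.}

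The plan is to prove both directions of the biconditional, the forward direction being trivial and the backward direction being the substantive one. For the forward direction: if $p \in G_\beta$, then for each $\alpha < \beta$ we have $p \restriction \alpha \in G_\beta \restriction \alpha$ simply because $G_\beta \restriction \alpha = \{ q \restriction \alpha \mid q \in G_\beta \}$ by definition, so taking $q = p$ suffices. (Alternatively, this is exactly the $\alpha \le \beta$ instance of Fact \ref{factPij}\ref{item:CriterionForBeingInGeneric} combined with the trivial observation that $p\restriction[\alpha,\beta)\in G_\beta\restriction[\alpha,\beta)$.)

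For the backward direction, suppose $p \in \P_\beta$ and $p \restriction \alpha \in G_\beta \restriction \alpha$ for every $\alpha < \beta$; I must show $p \in G_\beta$. Since $G_\beta$ is a filter, it suffices to show that $p$ is compatible with every element of $G_\beta$ — or, equivalently via genericity, that the set $D = \{ r \in \P_\beta \mid r \le_\beta p \text{ or } r \perp p \}$ is dense and that no $r \in G_\beta$ is incompatible with $p$; then any $r \in G_\beta \cap D$ must satisfy $r \le_\beta p$, whence $p \in G_\beta$. So fix $r \in G_\beta$ and assume toward a contradiction that $r \perp_\beta p$. The key tool is clause (4) in the definition of an iteration: for $q_0, q_1 \in \P_\beta$ with $\beta$ a limit ordinal, $q_0 \le_\beta q_1$ holds if and only if $q_0 \restriction \alpha \le_\alpha q_1 \restriction \alpha$ for all $\alpha < \beta$. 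I want to produce a common extension of $r$ and $p$. For each $\alpha < \beta$, since $p \restriction \alpha \in G_\beta \restriction \alpha$ and $r \restriction \alpha \in G_\beta \restriction \alpha$ and $G_\beta \restriction \alpha$ is $\P_\alpha$-generic (hence its preimage is a filter on $\P_\alpha$ by Fact \ref{factPij}(1)), the conditions $r \restriction \alpha$ and $p \restriction \alpha$ are compatible in $\P_\alpha$, indeed have a common extension inside $G_\beta \restriction \alpha$.

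The main obstacle is to assemble these level-by-level common extensions into a single condition in $\P_\beta$ extending both $r$ and $p$; this requires a coherence argument using clauses (2) and (3) of the iteration definition (the ``amalgamation'' properties $p^\frown q\restriction[\alpha,\beta)\in\P_\beta$ and $p\le_\beta p\restriction\alpha{}^\frown q\restriction[\alpha,\beta)$) to splice a tail of $r$ onto an initial segment witnessed in $G_\beta$. Concretely, the cleanest route is: pick $\alpha < \beta$ large enough — or rather, observe that if $r \perp_\beta p$ then by clause (4) there must exist some $\alpha < \beta$ with $r \restriction \alpha \perp_\alpha p \restriction \alpha$; but this contradicts the fact that both $r\restriction\alpha$ and $p\restriction\alpha$ lie in $G_\beta\restriction\alpha$, a filter. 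Hence no such $\alpha$ exists, i.e. $r \restriction \alpha$ and $p \restriction \alpha$ are compatible for every $\alpha<\beta$; then using clause (2) to build $s_\alpha = (r\restriction\alpha \wedge p \restriction\alpha)^\frown p\restriction[\alpha,\beta)$ and checking via clause (4) that a suitable direct limit / eventual stabilization yields a common extension of $r$ and $p$ in $\P_\beta$. This contradicts $r \perp_\beta p$. Therefore every $r \in G_\beta$ is compatible with $p$, density gives $r \le_\beta p$ for some $r \in G_\beta$, and $p \in G_\beta$ follows. The delicate point to get right is precisely that ``incompatibility at level $\beta$ reflects to incompatibility at some level $\alpha < \beta$,'' which is the contrapositive reading of clause (4); once that is pinned down the rest is bookkeeping with the projection/amalgamation clauses.
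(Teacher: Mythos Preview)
The paper does not prove this proposition; it is quoted from Miyamoto without proof. So there is no ``paper's proof'' to compare against, and your attempt must stand on its own. It does not quite.

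The gap is in your claimed reflection of incompatibility. You write that ``if $r \perp_\beta p$ then by clause (4) there must exist some $\alpha < \beta$ with $r\rest\alpha \perp_\alpha p\rest\alpha$,'' and later call this ``the contrapositive reading of clause (4).'' But clause (4) says $q \le_\beta p$ iff $q\rest\alpha \le_\alpha p\rest\alpha$ for all $\alpha$; its contrapositive tells you that $q \not\le_\beta p$ reflects to $q\rest\alpha \not\le_\alpha p\rest\alpha$ for \emph{some} $\alpha$. That is a statement about $\not\le$, not about $\perp$. Level-by-level compatibility of $r$ and $p$ (which you correctly establish from both restrictions lying in the filter $G_\beta\rest\alpha$) does \emph{not} in general imply compatibility in $\P_\beta$: there is no inverse-limit mechanism available here to assemble the witnesses $s_\alpha$ into a single condition, and your ``suitable direct limit / eventual stabilization'' is exactly the step that fails for an arbitrary iteration.

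The fix is to change the dense set. Use
\[
D' = \{q \in \P_\beta : q \le_\beta p\} \cup \{q \in \P_\beta : \exists\,\alpha<\beta\ \ q\rest\alpha \perp_\alpha p\rest\alpha\}.
\]
Density of $D'$ is where the real work happens: given $q \not\le_\beta p$, clause (4) gives $\alpha$ with $q\rest\alpha \not\le_\alpha p\rest\alpha$; now \emph{separativity} of $\P_\alpha$ (built into the definition of iteration) yields $r_\alpha \le_\alpha q\rest\alpha$ with $r_\alpha \perp_\alpha p\rest\alpha$; finally clause (2) gives $r_\alpha{}^\frown q\rest[\alpha,\beta) \in \P_\beta$, which extends $q$ and lies in $D'$. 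Once $D'$ is dense, any $q \in G_\beta \cap D'$ cannot be in the second piece (since $q\rest\alpha$ and $p\rest\alpha$ both lie in the filter $G_\beta\rest\alpha$), so $q \le_\beta p$ and $p \in G_\beta$. The ingredient you were missing is separativity, which converts the $\not\le$ you actually get from clause (4) into the $\perp$ you need at level $\alpha$.
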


From now on I always assume that the sequence $\vec{\P} = \langle \P_\alpha \; | \; \alpha < \nu\rangle$ is an iteration. Let me define the following building blocks of nice iterations: nested antichain, $S \hooks T$, mixtures and the property of being $(T, \beta)$-nice. Again, the reader is referred to \cite{miyamoto} for more information about these ideas and their significance.

\begin{definition}[The machinery of Nice Iterations]
\label{definition:NestedAC-Mixture-beta-Nice-hooks}
A {\em nested antichain} in $\vec{\P}$ is a triple $\langle T, \langle T_n \; | \; n < \omega\rangle, \langle {\rm suc}^n_T \; | \; n < \omega\rangle \rangle$ so that
\begin{enumerate}
\item
$T = \bigcup_{n < \omega} T_n$
\item
$T_0$ consists of a unique element of some $\P_\alpha$ for $\alpha < \nu$

\noindent For each $n < \omega$ we have that

\item $T_n \subseteq \bigcup\{\P_\alpha \; | \; \alpha < \nu\}$ and ${\rm suc}^n_T: T_n \to \mathcal P(T_{n+1})$

\item For $a \in T_n$ and $b \in {\rm suc}^n_T (a)$, $l(a) \leq l(b)$ and $b \hook l(a) \leq a$

\item For $a \in T_n$ the set of all $b\hook l(a)$ so that $b \in {\rm suc}^n_T(a)$ forms a maximal antichain below $a$ in $\P_{l(a)}$. In particular any two elements in this set are incompatible and it is non-empty.

\item $T_{n+1} = \bigcup \{ {\rm suc}^n_T (a) \; | \; a \in T_n\}$
\end{enumerate}

Given such a nested antichain $T$ in $\vec{\P}$ with $\{a_0\} = T_0$, for a condition $p \in \P_\beta$ with $\beta < \nu$ we say that $p$ is {\em a mixture of $T$ up to }$\beta$ if for all $i < \beta$ the condition $p \hook i$ forces that
\begin{enumerate}
\item
$p \hook [i, i+1) \equiv a_0 \hook [i, i+1)$ if $i < l(p_0)$ and $a_0 \hook i \in \dot{G}_i$, the canonical name for the $\P_i$-generic.
\item
$p \hook [i, i+1) \equiv s \hook [i, i+1)$ if there is $(r, s)$ so that $r, s \in T$ with $s \in {\rm suc}^n_T(r)$ for some $n$ and $l(r) \leq i < l(s)$ and $s\hook i \in \dot{G}_i$.
\item
$p \hook [i, i+1) \equiv 1_{i+1} \hook [i, i+1)$ if there is a sequence $\langle a_n \; | \; n < \omega \rangle$ so that $a_0 \in T_0$ and for all $n < \omega$ $a_{n+1} \in {\rm suc}^n_T(a_n)$ and $l(a_n) \leq i$ and $a_n \in \dot{G}_i \hook l(a_n)$.
\end{enumerate}

If $\beta$ is a limit ordinal we say that a sequence $p$ of length $\beta$ (not necessarily in $\mathbb P_\beta$) is $(T, \beta)$-{\em nice} if for all $\alpha < \beta$, $p \hook \alpha$ is a mixture of $T$ up to $\alpha$.

Finally, given two nested antichains $S$ and $T$ in $\vec{\P}$ we define $S \hooks T$ (``$S$ hooks $T$'') if for every $n < \omega$ and all $b \in S_n$ there is a $a \in T_{n+1}$ so that $l(a) \leq l(b)$ and $b\hook l(a) \leq a$.
\end{definition}

I will need the following characterization of mixtures.

\begin{fact}[see {\cite[Prop.~2.5]{miyamoto}}]
\label{fact:CharacterizationOfMixtures}
Let $T$ be a nested antichain in an iteration $\seq{\P_\alpha}{\alpha<\nu}$, $\beta<\nu$ and $p\in \P_\beta$. Then $p$ is a mixture of $T$ up to $\beta$ iff the following hold:
\begin{enumerate}[label=(\arabic*)]
    \item
    \label{item:Root}
    Let $T_0=\{a_0\}$ and $\mu=\min(l(a_0),\beta)$. Then $a_0\rest\mu\equiv p\rest\mu$.
    \item
    \label{item:General}
    For any $a\in T$, letting $\mu=\min(l(a),\beta)$, we have that $a\rest\mu\le p\rest\mu$.
    \item
    \label{item:HigherUp}
    If $n<\omega$, $a\in T_n$, $b\in\suc_T^n(a)$ and $l(a)\le\beta$, then, letting $\mu=\min(\beta,l(b))$, we have that $b\rest\mu\equiv b\rest l(a)\verl p\rest[l(a),\mu)$.
    \item
    \label{item:Chain}
    For any $i\le\beta$ and any $q\in \P_i$ with $q\le_i p\rest i$, if $q$ forces with respect to $\P_i$ that there is a sequence $\seq{a_n}{n<\omega}$ such that $a_0\in T_0$, and for all $n<\omega$, $a_{n+1}\in\suc_T^n(a_n)$, $l(a_n)\le i$ and $a_n\in\dot{G}_i\rest l(a_n)$, then $q\verl 1_\beta\rest[i,\beta)\equiv q\verl p\rest[i,\beta)$.
\end{enumerate}
\end{fact}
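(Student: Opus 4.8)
The plan is to prove the two directions of the equivalence by transfinite recursion on the coordinates, passing back and forth between the three ``local'' clauses in Definition~\ref{definition:NestedAC-Mixture-beta-Nice-hooks} that define a mixture of $T$ up to $\beta$ and the four ``global'' clauses \ref{item:Root}--\ref{item:Chain}. The device that drives every step, and which I would isolate first as a bookkeeping lemma, is the following elementary fact about separative iterations: if $q,q'\in\P_{i+1}$ satisfy $q\rest i\equiv q'\rest i$, then $q\le_{i+1}q'$ iff $q\rest i$ forces with respect to $\P_i$ that $q\rest[i,i+1)\le q'\rest[i,i+1)$, and hence $q\equiv q'$ iff $q\rest i\Vdash q\rest[i,i+1)\equiv q'\rest[i,i+1)$. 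Combined with the limit clause for iterations (coordinatewise $\le$ characterizes $\le_\beta$ at limit $\beta$), this lets one assemble inequalities and equivalences of conditions one coordinate at a time and carry them through limit coordinates for free.

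Suppose first that $p$ is a mixture of $T$ up to $\beta$. For \ref{item:Root} I would induct on $i\le\mu:=\min(l(a_0),\beta)$ proving $a_0\rest i\equiv p\rest i$: at a successor step the first mixture clause says $p\rest i$ forces ``if $a_0\rest i\in\dot G_i$ then $p\rest[i,i+1)\equiv a_0\rest[i,i+1)$'', and the induction hypothesis $p\rest i\equiv a_0\rest i$ makes the antecedent automatic, so $p\rest i\Vdash p\rest[i,i+1)\equiv a_0\rest[i,i+1)$ and the bookkeeping lemma gives $a_0\rest(i+1)\equiv p\rest(i+1)$. Clause \ref{item:General} is the same induction carried along a descending path $a_0,a_1,\dots,a_n=a$ through $T$ (so $a\rest l(a_k)\le a_k$ for each $k$): on the block $[l(a_{k-1}),l(a_k))$ one uses the second mixture clause with $(r,s)=(a_{k-1},a_k)$ to see that below $a\rest i$ the coordinate $p\rest[i,i+1)$ dominates $a\rest[i,i+1)$. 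Clause \ref{item:HigherUp} is that computation localized to the block $[l(a),l(b))$ for $b\in\suc^n_T(a)$, using the second mixture clause with $(r,s)=(a,b)$ and \ref{item:General} for $b$ to know $b\rest i\le p\rest i$ on that block. Finally \ref{item:Chain} is essentially a restatement of the third mixture clause: given $i\le\beta$ and $q\le_i p\rest i$ forcing the existence of a full branch $\seq{a_n}{n<\omega}$ with $l(a_n)\le i$, for every $j\in[i,\beta)$ the condition $q\verl p\rest[i,j)$ still forces that branch to exist, so the third clause forces $p\rest[j,j+1)\equiv 1_{j+1}\rest[j,j+1)$ below it, and assembling these across $[i,\beta)$ yields $q\verl 1_\beta\rest[i,\beta)\equiv q\verl p\rest[i,\beta)$.

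For the converse I would assume \ref{item:Root}--\ref{item:Chain}, fix $i<\beta$, and verify the three mixture clauses at coordinate $i$. If $i<l(a_0)$ then $i+1\le\min(l(a_0),\beta)$, so restricting the equivalence in \ref{item:Root} to $i+1$ and reading it through the bookkeeping lemma gives $p\rest i\equiv a_0\rest i$ and $p\rest i\Vdash p\rest[i,i+1)\equiv a_0\rest[i,i+1)$, which is more than the first clause needs. If there is a pair $(r,s)$ with $s\in\suc^n_T(r)$ and $l(r)\le i<l(s)$, then $i+1\le\min(\beta,l(s))$, and restricting \ref{item:HigherUp} for $a=r$, $b=s$ to $i+1$ gives $s\rest(i+1)\equiv s\rest l(r)\verl p\rest[l(r),i+1)$; the bookkeeping lemma then yields $s\rest i\Vdash p\rest[i,i+1)\equiv s\rest[i,i+1)$, and with \ref{item:General} (giving $s\rest i\le p\rest i$) this is the second clause. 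For the third clause, suppose $G_i\ni p\rest i$ and in $V[G_i]$ there is a full branch $\seq{a_n}{n<\omega}$ with $l(a_n)\le i$; pick $q\in G_i$, $q\le_i p\rest i$, forcing this, apply \ref{item:Chain} to get $q\verl 1_\beta\rest[i,\beta)\equiv q\verl p\rest[i,\beta)$, restrict to $i+1$, and pass to $V[G_i]$ to conclude $p\rest[i,i+1)\equiv 1_{i+1}\rest[i,i+1)$, as required.

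The step I expect to be the real work is the first one: stating and verifying the coordinatewise bookkeeping lemma at the right level of generality for arbitrary (not finite support) separative iterations, and then managing the transfinite inductions so they genuinely pass through limit coordinates; once that is in hand, each of \ref{item:Root}--\ref{item:Chain} reduces to a mechanical block-by-block computation with the mixture clauses. This is in effect Miyamoto's argument for \cite[Prop.~2.5]{miyamoto}, so in the paper one may simply cite it rather than reproduce the bookkeeping.
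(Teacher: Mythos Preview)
The paper does not prove this statement at all: it is recorded as a \emph{Fact} with a citation to \cite[Prop.~2.5]{miyamoto}, and no argument is given. Your proposal is a reasonable reconstruction of Miyamoto's coordinatewise argument, and you yourself note in your final paragraph that one may simply cite Miyamoto rather than reproduce the bookkeeping---which is exactly what the paper does.
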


This previous definition combines Definitions 2.0, 2.4 and 2.10 of \cite{miyamoto}. The following is Definition 3.6 in Miyamoto's article.

\begin{definition}[Nice Iterations]
\label{def:NiceIterations}
An iteration $\langle \P_\alpha \; | \; \alpha < \nu\rangle$ is called {\em nice} if
\begin{enumerate}
\item
For any $i$ such that $i + 1 < \nu$ if $p \in \P_i$ and $\tau$ is a $\P_i$ name such that $p \forces_i$``$\tau \in \P_{i+1}$ and $\tau \hook i \in \dot{G_i}$'' then there is a $q \in \P_{i +1}$ so that $q \hook i = p$ and $p \forces_i\tau\hook[i, i+1) \equiv q \hook [i, i+1)$.
\item
For any limit ordinal $\beta < \nu$ and any sequence $x$ of length $\beta$, $x \in \P_\beta$ if and only if there is a nested antichain $T$ in $\langle \P_\alpha \; | \; \alpha < \beta\rangle$ such that $x$ is $(T, \beta)$-nice.
\end{enumerate}
\end{definition}

I will use the following facts.

\begin{lemma}[Lemma 2.7 of \cite{miyamoto}]
Let $\nu$ be a limit ordinal and $A \subseteq \nu$ be cofinal. Suppose that $T$ is a nested antichain in an iteration $\langle \P_\alpha \; | \; \alpha < \nu\rangle$ and $p$ is a sequence of length $\nu$ such that $p$ is $(T, \nu)$-nice. Then for any $\beta < \nu$ and any $s \in \P_\beta$ strengthening $p \hook \beta$ we get a nested antichain $S$ so that
\begin{enumerate}
\item
If $T_0 = \{a_0\}$ and $S_0 = \{b_0\}$ then $l(b_0) \in A$ and $l(a_0), \beta \leq l(b_0)$.
\item
For any $b \in S$, $l(b) \in A$.
\item
$r = s^\frown p \hook [\beta, \nu)$ is $(S, \nu)$-nice.
\end{enumerate}
\label{2.7}
\end{lemma}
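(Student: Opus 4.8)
The plan is to re-root the nested antichain $T$ along the condition $s$ below $\beta$ and along the sequence $p$ above $\beta$, building the new nested antichain $S$ by recursion on its levels $n<\omega$ and choosing every node length from the cofinal set $A$. For the root, use the cofinality of $A$ to pick $\xi_0\in A$ with $\xi_0\geq\max(\beta,l(a_0))$, and set $b_0 = s\,{}^\frown\, p\hook[\beta,\xi_0)$. Since $p$ is $(T,\nu)$-nice, $p\hook\xi_0\in\P_{\xi_0}$ (it is a mixture of $T$ up to $\xi_0$), and since $s\le_\beta p\hook\beta$, axiom~(2) of general iterations gives $b_0\in\P_{\xi_0}$ with $b_0\le_{\xi_0}p\hook\xi_0$. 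Put $S_0=\{b_0\}$; this already yields clause~1.

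For the successor step I maintain the invariant that every $b\in S_n$ comes equipped with a $T$-node $a_b\in T$ satisfying $l(a_b)\le l(b)$, $b\hook l(a_b)\le a_b$, and $b\le b\hook l(a_b)\,{}^\frown\,p\hook[l(a_b),l(b))$ — i.e.\ $b$ ``sits on'' $a_b$ and then copies $p$ up to $l(b)$. Given such a $b$, one works in $\P_{l(b)}$ below $b$: applying the characterization of mixtures (Fact~\ref{fact:CharacterizationOfMixtures}) to the relevant initial segment of $p$, one extracts a maximal antichain below $b$ whose elements each either (i) force that some $c\in{\rm suc}^m_T(a_b)$ has $c\hook l(a_b)\in\dot G_{l(a_b)}$ (for the appropriate level $m$), or (ii) force that the generic is already committed to an infinite $T$-branch through $a_b$. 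For a type-(i) element $e$, after shrinking $e$ if necessary, choose $\eta_e\in A$ with $\eta_e\geq\max(l(b),l(c))$ and place $e\,{}^\frown\,p\hook[l(b),\eta_e)$ into ${\rm suc}^n_S(b)$, with associated $T$-node $c$; type-(ii) elements are handled analogously, their $S$-successors being the ``dead'' nodes reflecting dead branches of $T$. The maximal-antichain clause in the definition of a nested antichain is inherited from the maximality of the antichain chosen in $\P_{l(b)}$, and clause~2 ($l(b)\in A$ for all $b\in S$) is built into the construction by taking $\eta_e\in A$ at every step.

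It remains to verify clause~3, namely that $r = s\,{}^\frown\,p\hook[\beta,\nu)$ is $(S,\nu)$-nice, which by Fact~\ref{fact:CharacterizationOfMixtures} amounts to checking the Root, General, HigherUp, and Chain clauses for $r\hook\alpha$ against $S$ for each $\alpha<\nu$. The Root clause is immediate since $r\hook\xi_0 = b_0$; the General and HigherUp clauses follow because every $S$-node is, by construction, a strengthening of a condition concatenated with a segment of $p$, and $p\hook\alpha$ is itself a mixture of $T$ up to $\alpha$, so the required $T$-domination transfers through $r$. The Chain clause is the crux: if $q\le_i r\hook i$ forces the generic to run along an infinite branch $\langle b_n\; | \; n<\omega\rangle$ of $S$ with all $l(b_n)\le i$, one must conclude $q\,{}^\frown\,1_\nu\hook[i,\nu)\equiv q\,{}^\frown\,r\hook[i,\nu)$. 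Here the point is that such an $S$-branch is glued from the $T$-nodes $a_{b_n}$ along a genuine infinite $T$-branch (or else terminates at a type-(ii) dead node), and in either case the $(T,\nu)$-niceness of $p$ already forces $p$ to copy $1_\nu$ past that point; since $r$ agrees with $p$ past $\beta$ and hence past $i\geq\beta$, the same holds of $r$.

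The main obstacle is precisely this branch-correspondence feeding into the Chain clause: one must rule out the pathology where an $S$-branch is assembled from infinitely many refinement steps that each advance only finitely far up $T$, so that the $T$-nodes $a_{b_n}$ stall below some fixed $l(a)$ while the $S$-branch still appears infinite. The choices $\eta_e\geq l(c)$ at each step, together with the cofinality of $A$, are exactly what forces the lengths $l(b_n)$ to be unbounded in $\nu$ if and only if the associated $T$-node lengths are, securing the correspondence between infinite branches of $S$ and of $T$. Once that is pinned down, clause~3 reduces to the hypothesis that $p$ is $(T,\nu)$-nice, and the lemma follows.
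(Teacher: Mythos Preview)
The paper does not prove this lemma; it is quoted verbatim as Lemma~2.7 of \cite{miyamoto} and used as a black box. So there is no in-paper argument to compare against, and your sketch is being measured against Miyamoto's original construction rather than anything in the thesis.

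That said, your outline follows the expected shape: re-root at $s\,{}^\frown\,p\hook[\beta,\xi_0)$ with $\xi_0\in A$ above $\beta$ and $l(a_0)$, then grow $S$ level by level while tagging each $S$-node with a $T$-node it refines, choosing all lengths from $A$. The verification of $(S,\nu)$-niceness via Fact~\ref{fact:CharacterizationOfMixtures} is the right checklist.

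One point where the sketch wobbles is the ``type-(ii)'' case. In a nested antichain every node has successors by definition, so there are no dead nodes in $T$ to mirror; the infinite-branch scenario is purely a feature of the \emph{mixture} clauses, not of the antichain construction. What you actually need at the successor step is: below $b\in S_n$ with tag $a_b\in T_m$, the set $\{c\hook l(a_b)\st c\in\suc_T^m(a_b)\}$ is a maximal antichain below $a_b$ in $\P_{l(a_b)}$, and since $b\hook l(a_b)\le a_b$ you can refine this to a maximal antichain below $b$ in $\P_{l(b)}$, then extend each piece by a tail of $p$ out to some length in $A$. The Chain clause for $r$ then reduces cleanly to the Chain clause for $p$ because an infinite $S$-branch projects to an infinite $T$-branch (your tags $a_{b_n}$ advance at every step, not just eventually). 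With that cleanup the argument goes through.
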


\begin{lemma}[Lemma 2.11 of \cite{miyamoto}]
Let $\langle \P_\alpha \; | \; \alpha < \nu\rangle$ be an iteration with limit ordinal $\nu$ and $A \subseteq \nu$ a cofinal subset of $\nu$. If $(T, U, p, q, r)$ satisfy the following: $T$ and $U$ are nested antichains, $p$ and $q$ are sequences of length $\nu$ with $p$ $(T, \nu)$-nice and $q$ $(U, \nu)$-nice and $r \in T_1$ so that $q \hook l(r) \leq r$ and for all $\alpha \in [l(r), \nu)$, $q \hook \alpha \leq r^\frown p\hook[l(r), \alpha)$; then there is a nested antichain $S$ in $\langle \P_\alpha \; | \; \alpha \leq \nu\rangle$ so that $q$ is $(S, \nu)$-nice, if $\{b_0\} = S_0$ then $l(r) \leq l(b_0)$ and $b_0 \hook l(r) \leq r$, for all $s \in S$, $l(s) \in A$ and $S \hooks T$.
\label{2.11}
\end{lemma}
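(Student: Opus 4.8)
The plan is to build the required nested antichain $S$ by a recursion on its levels, reducing first to a cleaner situation. Write $\{a_0\}=T_0$, and let $\widetilde{T}$ be the sub-nested-antichain of $T$ rooted at $r\in T_1$: so $\widetilde{T}_0=\{r\}$, $\widetilde{\mathrm{suc}}^{\,n}_T=\mathrm{suc}^{n+1}_T\restriction\widetilde{T}_n$, and $\widetilde{T}_{n+1}=\bigcup\{\widetilde{\mathrm{suc}}^{\,n}_T(a)\st a\in\widetilde{T}_n\}$. The first step is to verify, using the characterization of mixtures in Fact~\ref{fact:CharacterizationOfMixtures}, that the sequence $w:=r{}^\frown p\restriction[l(r),\nu)$ is $(\widetilde{T},\nu)$-nice. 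Condition \ref{item:Root} is immediate since $w\restriction\alpha=r\restriction\alpha$ for $\alpha\le l(r)$; conditions \ref{item:General} and \ref{item:HigherUp} transfer from the $(T,\nu)$-niceness of $p$ once one notes the instance $r\le p\restriction l(r)$ of \ref{item:General} for $a=r$, together with the fact that every node of $\widetilde{T}$ is a descendant of $r$ in $T$; and condition \ref{item:Chain} reduces to \ref{item:Chain} for $p$ and $T$ by prepending $a_0$ to any $\widetilde{T}$-branch (using $w\restriction i\le p\restriction i$, which follows from $r\le p\restriction l(r)$ and the iteration axioms). Since the hypotheses give $q\restriction\alpha\le w\restriction\alpha$ for \emph{every} $\alpha<\nu$ — for $\alpha\le l(r)$ this is $q\restriction\alpha\le r\restriction\alpha$, obtained from $q\restriction l(r)\le r$ by order-preservation of restriction, and for $\alpha>l(r)$ it is exactly the displayed hypothesis — we have reduced to the following: $q$ is a length-$\nu$ sequence that is $(U,\nu)$-nice and lies coordinatewise below the $(\widetilde{T},\nu)$-nice sequence $w$, and we must produce $S$ with $q$ being $(S,\nu)$-nice, with root $b_0$ satisfying $l(b_0)\ge l(r)$ and $b_0\restriction l(r)\le r$, with all nodes having length in $A$, and with $S\hooks\widetilde{T}$. (Any such $S$ satisfies $S\hooks T$, since every node of $\widetilde{T}_{n+1}=T_{n+2}$ lies below a node of $T_{n+1}$.)

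For the base case, apply Lemma~\ref{2.7} to $U$, $q$, $\beta=l(r)$ and $s=q\restriction l(r)$ to obtain a nested antichain whose root $b_0$ has $l(b_0)\in A$, $l(b_0)\ge l(r)$, all of whose nodes have length in $A$, and for which $q$ is again nice; by Fact~\ref{fact:CharacterizationOfMixtures}\ref{item:General} we then have $b_0\le q\restriction l(b_0)\le w\restriction l(b_0)$, whence $b_0\restriction l(r)\le r$, and (after one more application of Lemma~\ref{2.7} if needed to lengthen it) $b_0$ can be refined to sit below a node $a^{b_0}\in\widetilde{T}_1$. For the recursive step, suppose $b\in S_n$ has been produced together with $a^b\in\widetilde{T}$ with $l(a^b)\le l(b)$, $b\restriction l(a^b)\le a^b$ and $b\le q\restriction l(b)\le w\restriction l(b)$. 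To form $\mathrm{suc}^n_S(b)$ I will choose a maximal antichain $\{b'\restriction l(b)\st b'\in\mathrm{suc}^n_S(b)\}$ below $b$ in $\P_{l(b)}$ that simultaneously refines (i) the partition of conditions below $a^b$ by which element of $\widetilde{\mathrm{suc}}(a^b)$ they are compatible with — this is possible precisely because that set of restrictions is a maximal antichain below $a^b$ — and (ii) a partition coming from the next level of $U$, so that the branches of $S$ continue to capture the support of $q$ where $w$ has gone trivial; each $b'$ is then extended to some length in $A$ above $l(b)$ and equipped with a node $a^{b'}\in\widetilde{T}_{n+1}$ with $b'\restriction l(a^{b'})\le a^{b'}$ (using that $b'\le w\restriction l(b')$ sits below the relevant chunk of the mixture $w$ once it decides the branch through $a^b$), which is the hook $S\hooks\widetilde{T}$ at level $n+1$.

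Finally one checks that $S$ is a genuine nested antichain, that all its nodes have length in $A$ and that $S\hooks\widetilde{T}$ (all built in), and — the real content — that $q$ is $(S,\nu)$-nice, i.e.\ that $q\restriction\alpha$ is a mixture of $S$ up to $\alpha$ for all $\alpha<\nu$. Conditions \ref{item:Root}--\ref{item:HigherUp} of Fact~\ref{fact:CharacterizationOfMixtures} hold because every node $b\in S$ was arranged with $b\le q\restriction l(b)$ and below its predecessor's relevant chunk. I expect the main obstacle to be condition \ref{item:Chain}: along any infinite branch $\langle b_n\rangle$ of $S$ one must show that, on the generic filter running through that branch, $q$ coincides with the trivial condition beyond $\sup_n l(b_n)$. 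This is where the two jobs of the recursion must be reconciled: the branches of $S$ have to refine enough of $\widetilde{T}$ to deliver the hooks while also ``using up'' enough of the $U$-structure to inherit the branch-triviality of $q$ from its $(U,\nu)$-niceness. The fix is to organize the recursion with a fairness/bookkeeping schedule over the countably many levels so that every node of $U$ is met along some branch of $S$; then Fact~\ref{fact:CharacterizationOfMixtures}\ref{item:Chain} for $q$ and $U$ transfers to $S$, and all four conclusions of the lemma follow.
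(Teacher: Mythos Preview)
The paper does not give a proof of this lemma: it is quoted verbatim as \emph{Lemma 2.11 of \cite{miyamoto}} and used as a black box in the iteration theorems that follow. There is therefore no in-paper argument to compare your attempt against; the intended reference is Miyamoto's original article.

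As for your outline itself, the reduction to the sub-nested-antichain $\widetilde{T}$ rooted at $r$ and the observation that $S\hooks\widetilde{T}$ implies $S\hooks T$ are correct and clean up the picture. The recursive construction of $S$ is the right shape, and you have correctly isolated condition \ref{item:Chain} of Fact~\ref{fact:CharacterizationOfMixtures} as the real content. One point to tighten: your proposed ``fairness/bookkeeping schedule'' for meeting $U$ must be interleaved with the $\widetilde{T}$-hooking \emph{at the same level of $S$}, not at alternating levels, since the definition of $S\hooks\widetilde{T}$ ties $S_n$ to $\widetilde{T}_{n+1}$ with matching indices; concretely, when building $\suc_S^n(b)$ you should choose a maximal antichain below $b$ that simultaneously refines both the $\widetilde{T}$-successor partition and the $U$-successor partition, rather than handling them in separate rounds. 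With that adjustment, any infinite $S$-branch determines an infinite $U$-branch with the same supremum of lengths, and the Chain condition for $q$ transfers from $U$ to $S$ as you intend.
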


I also recall the definition of a fusion structure.

\begin{definition}[Fusion Structure]
Let $\vec{\P} = \langle \P_\alpha \; | \; \alpha < \nu\rangle$ be an iteration with limit ordinal $\nu$. Given a nested antichain $T$ in $\vec{\mathbb P} \hook \nu$ we call a structure $\langle q^{(a, n)}, T^{(a, n)} \; | \; a \in T_n, n < \omega \rangle$ a {\em fusion structure} if for all $n < \omega$ and $a \in T_n$ the following hold:
\begin{enumerate}
\item
$T^{(a, n)}$ is a nested antichain in $\langle \P_\alpha \; | \; \alpha < \nu\rangle$.
\item
$q^{(a, n)} \in \P_\nu$ is a mixture of $T^{(a, n)}$ up to $\nu$.
\item
$a \leq q^{(a, n)} \hook l(a)$ and if $\{p_0\} = T_0^{(a, n)}$ then $l(a) = l(p_0)$.
\item
For any $b \in {\rm suc}^n_T(a), T^{(b, n+1)} \hooks T^{(a, n)}$ so $q^{(b, n+1)} \leq q^{(a, n)}$.
\end{enumerate}
If $p \in \P_\nu$ is a mixture of $T$ up to $\nu$ then we call $p$ the {\em fusion} of the fusion structure.
\end{definition}

\begin{proposition}[Proposition 3.5 of \cite{miyamoto}]
Let $\langle \P_\alpha \; | \; \alpha \leq \nu\rangle$ be an iteration with limit ordinal $\nu$. If $p \in \P_\nu$ is a fusion of a fusion structure $\langle p^{(a, n)}, T^{(a, n)} \; | \; a \in T_n, n < \omega \rangle$ then there is a sequence $\langle a_n \; | \; n < \omega\rangle$ such that $p$ forces the following hold:
\begin{enumerate}
\item
$a_0 \in T_0$, and for all $n < \omega$ $a_{n+1} \in {\rm suc}^n_T(a_n)$, $q_n \in \dot{G}_\nu\hook l(a_n)$ and $p^{(a_n, n)} \in \dot{G}_\nu$.
\item
If $\beta = {\rm sup}\{l(a_n) \; | \; n < \omega \}$ then $p^{(a_n, n)} \hook \beta \in \dot{G}_\nu \hook \beta$ and $p^{(a_n, n)} \hook [\beta, \nu) \equiv 1_\nu \hook [\beta, \nu)$.
\end{enumerate}
\label{fusion}
\end{proposition}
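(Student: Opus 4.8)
The plan is to prove the statement semantically: fix an arbitrary $\P_\nu$-generic filter $G_\nu$ with $p\in G_\nu$, work in $V[G_\nu]$, and recursively read the thread $\langle a_n\st n<\omega\rangle$ off of $G_\nu$; since $G_\nu$ was arbitrary this shows $p$ forces the conclusion, and a name for the sequence can then be extracted by the usual forcing bookkeeping. The engine of the recursion is Fact~\ref{fact:CharacterizationOfMixtures}: since $p$ is a mixture of $T$ up to $\nu$, for each coordinate $i<\nu$ the condition $p\hook i$ decides which of the three clauses in the definition of ``$p$ is a mixture of $T$ up to $\nu$'' (Definition~\ref{definition:NestedAC-Mixture-beta-Nice-hooks}) governs $p\hook[i,i+1)$, and this decision is made inside $G_\nu$. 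Thus $G_\nu$ determines a canonical itinerary through $T$, whose nodes will be the $a_n$.

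For the base case I would first verify $a_0\hook l(a_0)\in G_\nu\hook l(a_0)$, where $\{a_0\}=T_0$. Since $a_0$ is precisely the root $p_0$ appearing in clause~(1) of the definition of mixture, that clause is the one active at coordinates $i<l(a_0)$. An induction on $i\le l(a_0)$ then works: at a successor step, if $a_0\hook i\in G_i$ then clause~(1) forces $p\hook[i,i+1)\equiv a_0\hook[i,i+1)$, and since $p\hook(i+1)\in G_{i+1}$ the iteration axioms (Fact~\ref{factPij}) give $a_0\hook(i+1)\in G_{i+1}$; at a limit step one uses Proposition~\ref{generic}. Having put $a_0$ on the thread, I then need $p^{(a_0,0)}\in G_\nu$. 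Here the fusion data enter: by the defining clauses of a fusion structure, $a_0\le p^{(a_0,0)}\hook l(a_0)$ and $l(a_0)$ equals the length of the root of $T^{(a_0,0)}$, so $p^{(a_0,0)}\hook l(a_0)$ is weaker than $a_0$ and hence lies in $G_\nu\hook l(a_0)$; above $l(a_0)$ one argues that on the part of $G_\nu$ threading through $a_0$ the mixture pattern of $T$ refines that of $T^{(a_0,0)}$ — this is exactly why the fusion structure demands $T^{(b,n+1)}\hooks T^{(a,n)}$ — so that clause~(2)/(3) for $p$ relative to $T$ forces the matching clause for $p^{(a_0,0)}$ relative to $T^{(a_0,0)}$, giving $p^{(a_0,0)}\in G_\nu$.

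The recursion step repeats this one level up. Given $a_n\in T_n$ with $a_n\hook l(a_n)\in G_\nu\hook l(a_n)$ and $p^{(a_n,n)}\in G_\nu$, I would inspect coordinates $i\ge l(a_n)$ and split according to which mixture clause for $T$ activates. If clause~(2) fires with a pair $(a_n,b)$ for some $b\in\suc^n_T(a_n)$ — and exactly one such $b$ does, since $\{b\hook l(a_n)\st b\in\suc^n_T(a_n)\}$ is a maximal antichain below $a_n$ in $\P_{l(a_n)}$ and $a_n\hook l(a_n)\in G_\nu\hook l(a_n)$ — then I set $a_{n+1}=b$ and rerun the coordinate induction (with clause~(2) in place of clause~(1)), obtaining $a_{n+1}\hook l(a_{n+1})\in G_\nu\hook l(a_{n+1})$ and then, as above, $p^{(a_{n+1},n+1)}\in G_\nu$. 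If instead clause~(3) fires at some coordinate $i$, then by the literal content of that clause there is already an infinite sequence $\langle a_m\st m<\omega\rangle$ in $T$ with all $l(a_m)\le i$ and $a_m\in G_\nu\hook l(a_m)$; this must be the thread constructed so far, and with $\beta=\sup_m l(a_m)\le i$ clause~(3) yields $p\hook[\beta,\nu)\equiv 1_\nu\hook[\beta,\nu)$. The fusion-structure coherence ($p^{(a_{m+1},m+1)}\le p^{(a_m,m)}$, each a mixture whose trivial tail lies above $l(a_{m+1})$) then gives $p^{(a_m,m)}\hook[\beta,\nu)\equiv 1_\nu\hook[\beta,\nu)$ and $p^{(a_m,m)}\hook\beta\in G_\nu\hook\beta$, which is item~(2) of the proposition; item~(1) follows since at every coordinate one of the three clauses fires, so the itinerary is everywhere defined.

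The step I expect to be the main obstacle is the transfinite bookkeeping linking $p$ — a mixture of the single nested antichain $T$ — to the auxiliary mixtures $p^{(a,n)}$ of the distinct nested antichains $T^{(a,n)}$: one must show that whenever $G_\nu$ threads through a node $a_n$, the behaviour of $p$ on the (possibly very long) block of coordinates above $l(a_n)$ is forced to agree closely enough with that of $p^{(a_n,n)}$ that $p^{(a_n,n)}\in G_\nu$ follows. This is precisely where the hook relations $T^{(b,n+1)}\hooks T^{(a,n)}$ of the fusion structure get unwound alongside Fact~\ref{fact:CharacterizationOfMixtures}; by contrast the coordinate inductions, the appeal to Proposition~\ref{generic} at limits, and the maximal-antichain selection of the unique successor are all routine once that link is secured.
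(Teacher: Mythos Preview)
The paper does not prove this statement; it is quoted verbatim as Proposition~3.5 of \cite{miyamoto} and used as a black box in the iteration theorems. So there is no proof in the paper to compare your proposal against.

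That said, your plan is the natural one and is on the right track. The construction of the thread $\langle a_n\rangle$ from $G_\nu$ via the maximal-antichain clause of Definition~\ref{definition:NestedAC-Mixture-beta-Nice-hooks}(5) and Fact~\ref{fact:CharacterizationOfMixtures} is correct. One simplification you may be overcomplicating: once you have $a_m\in G_\nu\hook l(a_m)$ for every $m$, the fusion-structure clause $a_m\le p^{(a_m,m)}\hook l(a_m)$ together with the monotonicity $p^{(a_m,m)}\le p^{(a_n,n)}$ for $m\ge n$ gives $p^{(a_n,n)}\hook l(a_m)\in G_\nu\hook l(a_m)$ for all $m\ge n$ \emph{without} any coordinate-by-coordinate analysis of the mixture clauses. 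If $\sup_m l(a_m)=\nu$, Proposition~\ref{generic} then yields $p^{(a_n,n)}\in G_\nu$ immediately; if $\beta:=\sup_m l(a_m)<\nu$, you get $p^{(a_n,n)}\hook\beta\in G_\nu\hook\beta$, which is half of item~(2). The genuinely nontrivial residue is the trivial-tail assertion $p^{(a_n,n)}\hook[\beta,\nu)\equiv 1_\nu\hook[\beta,\nu)$: to invoke Fact~\ref{fact:CharacterizationOfMixtures}\ref{item:Chain} for $p^{(a_n,n)}$ you need a thread through $T^{(a_n,n)}$, not through $T$, all of whose nodes lie in $G_\nu$ below level $\beta$. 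This is exactly where the iterated relations $T^{(a_{m+1},m+1)}\hooks T^{(a_m,m)}$ are unwound (each root of $T^{(a_{m+1},m+1)}$ produces a level-$(m{-}n{+}1)$ node of $T^{(a_n,n)}$ below it, and these cohere into a thread), so your diagnosis of where the work lies is accurate, though your sketch of that step would need to be made precise.
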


\section{Nice Iterations of $\infty$-Subversion Forcing}

In this section I prove first that $\infty$-subcomplete forcing is preserved by nice iterations, then I prove the same for $\infty$-subproper forcing notions. I use the following notational convention: if $\langle \P_\alpha \; | \; \alpha \leq \nu\rangle$ is an iteration then for $i \leq j \leq \nu$ the poset $\P_{i, j}$, which is defined in Fact \ref{factPij}, depends on the $\P_i$-generic chosen so I will identify it with its $\P_i$ name $\P_j / \dot{G}_i$.

The special case $i=0$ and $j=\nu$ of following theorem implies that if every successor stage of a nice iteration is forced to be $\infty$-subcomplete, then so is the iteration.

\begin{theorem}
\label{thm:NiceIterationsOfSCforcingAreSC}
Let $\vec{\mathbb P}= \langle \P_\alpha \; | \; \alpha \leq \nu\rangle$ be a nice iteration so that $\P_0 = \{1_0\}$ and for all $i$ with $i + 1 < \nu$,  $\forces_i \P_{i, i+1}$ is $\infty$-subcomplete. Then for all $j\le\nu$ the following statement $\phi(j)$ holds:

{\bf if} $i\le j$, $p\in \P_i$, $\dot{\sigma}\in\V^{\P_i}$, $\theta$ is a sufficiently large cardinal, $\tau$ is an ordinal, $H_\theta\sub N=L_\tau[A]\models\ZFC^-$, $\bN$ is a transitive model, $\bs,\bar{\vec{\P}},\bar{i},\bar{j}\in\bN$, $\bar{G}_{\bar{i}},\bar{G}_{\bar{i},\bar{j}}\sub\bN$, and $p$ forces with respect to $\P_i$ that the following assumptions hold:
\begin{enumerate}[label=(A\arabic*)]
\item
  \label{item:FirstAssumption}
  $\dot{\sigma}(\check{\bar{\vec{\P}}},\check{\bar{i}},\check{\bar{j}},\check{\btheta},\check{\bG}_{\bar{i}})=\check{\vec{\P}},\check{i},\check{j},\check{\theta},\dot{G}_i$, 
  \item
  \label{item:StuffMovedRight}
  the following holds in $\V$:
  $\bar{G}_{\bar{i}}$ is $\bar{\P}_{\bar{i}}$-generic over $\bN$ and
  $\bar{G}_{\bar{i},\bar{j}}$ is $\bar{\P}_{\bar{i},\bar{j}}$-generic over $\bN[\bar{G}_{\bar{i}}]$, where $\bar{\P}_{\bar{i},\bar{j}}=\bar{\P}_j/\bar{G}_{\bar{i}}$,
  \item 
  \label{item:Lastassumption}
  $\dot{\sigma}:\check{\bN}[\check{\bG}_{\bar{i}}]\prec \check{N}[\dot{G}_i]$ is countable, transitive and full.
\end{enumerate}
{\bf then} there is a condition $p^*\in \P_j$ such that 
$p^*\rest i=p$ and whenever $G_j\ni p^*$ is $\P_j$-generic, then in $\V[G_j]$, there is a $\sigma'$ such that, letting $\sigma=\dot{\sigma}^{G_i}$, the following hold:
\begin{enumerate}[label=(\alph*)]
  \item 
  \label{item:FirstConclusion}
  $\sigma'(\bs,\bar{\vec{\P}},\bar{i},\bar{j},\btheta,\bG_{\bar{i}})=\sigma(\bs,\bar{\vec{\P}},\bar{i},\bar{j},\btheta,\bG_{\bar{i}})$,
  \item
  \label{item:Sigma'MovesEverythingCorrectly}
  $(\sigma')``\bar{G}_{\bar{i},\bar{j}}\sub G_{i,j}$,
  \item
  \label{item:LastConclusionSC}
  $\sigma':\bN[\bG_{\bar{i}}]\prec N[G_i]$.
\end{enumerate}
\label{sciteration}
\end{theorem}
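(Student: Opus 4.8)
The proof will proceed by induction on $j\le\nu$, establishing $\phi(j)$ at each stage. There are three cases to consider: $j$ a successor, $j$ a limit, and the trivial base case $j=0$ (where $i=0$ forces $\P_0$ to be trivial so $\sigma'=\sigma$ works with $p^*=1_0$). The successor case $j=k+1$ will combine the inductive hypothesis $\phi(k)$ with the hypothesis that $\forces_k\P_{k,k+1}$ is $\infty$-subcomplete: first apply $\phi(k)$ to pull $i$ up to $k$ and obtain a condition $p^*\rest k$ and a name for an embedding $\sigma'$ defined after forcing with $\P_k$; then, working below $p^*\rest k$ in $V^{\P_k}$, apply the $\infty$-subcompleteness of $\P_{k,k+1}$ to the model $\bar N[\bar G_{\bar i}]$ (which is countable, transitive and full, as required, by assumption \ref{item:Lastassumption} and the closure of fullness) together with the condition coming from $\bar G_{\bar i,\bar j}\rest[\bar k,\bar j)$, producing a further condition in $\P_{k,k+1}$ and an embedding $\sigma'$ with $\sigma'``\bar G_{\bar i,\bar j}\sub G_{i,j}$ and $\sigma':\bar N[\bar G_{\bar i}]\prec N[G_i]$. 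Concatenating yields $p^*\in\P_{k+1}$. One subtlety: the standard setup for the successor-stage application needs $\theta$ chosen large enough and the relevant parameters coded into $\bar s$; this is arranged by absorbing $\bar{\vec\P},\bar i,\bar j,\bar G_{\bar i}$ into the fixed parameter $s$, which the definition of $\infty$-subcompleteness permits.

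\textbf{The limit case.} This is where the machinery of nice iterations does the real work, and it is the main obstacle. For $j$ a limit ordinal, I will build $p^*\in\P_j$ as the \emph{fusion} of a fusion structure over a suitable nested antichain $T$, using Proposition \ref{fusion} and the associated lemmas \ref{2.7} and \ref{2.11}. The strategy mirrors Miyamoto's iteration theorem for proper forcing but adapted to the subversion setting: one enumerates $\bar N[\bar G_{\bar i}]$ in order type $\omega$ and constructs, by recursion on $n<\omega$, for each node $a\in T_n$ a condition $q^{(a,n)}$ and a nested antichain $T^{(a,n)}$, at each step applying the inductive hypothesis $\phi(\alpha)$ at a strictly larger stage $\alpha<j$ to catch one more dense set of $\bar\P_{\bar i,\bar j}$ (so that $\bar G_{\bar i,\bar j}$ genuinely remains generic over $\bar N[\bar G_{\bar i}]$) while maintaining a commuting system of embeddings $\sigma_n:\bar N[\bar G_{\bar i}]\prec N[G_i]$ that agree on $\bar s$ and the designated parameters. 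The point of working with $\infty$-subversion rather than subversion is precisely that we do \emph{not} need to track the Hulls condition through this recursion, so the embeddings $\sigma_n$ at different stages need only cohere as maps, not preserve any hull; this is what makes the limit construction go through without the extra bookkeeping. The characterization of mixtures in Fact \ref{fact:CharacterizationOfMixtures} and Proposition \ref{generic} are used to verify that the fused condition $p^*$ forces the branch $\langle a_n\st n<\omega\rangle$ through $T$ into the generic, and hence that the limit embedding $\sigma'=\bigcup_n\sigma_n$ (or rather the embedding read off from the branch via Proposition \ref{fusion}) lands in $V[G_j]$ and satisfies \ref{item:FirstConclusion}--\ref{item:LastConclusionSC}.

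\textbf{Assembling the pieces.} Having $\phi(j)$ for all $j\le\nu$, the theorem follows by instantiating $i=0$, $j=\nu$: since $\P_0=\{1_0\}$ there is a unique generic, assumptions \ref{item:FirstAssumption}--\ref{item:Lastassumption} reduce to the ordinary standard setup for $\P_\nu$ (with $\bar G_{\bar i}$ trivial), and the conclusion is exactly that $\P_\nu$ verifies $\infty$-subcompleteness at $\theta$. Since $\theta$ was an arbitrary sufficiently large cardinal, $\P_\nu$ is $\infty$-subcomplete. I expect the limit-stage recursion to be the technically heaviest part: one must carefully interleave (i) catching dense sets to preserve genericity of $\bar G_{\bar i,\bar j}$, (ii) the fusion bookkeeping of lengths $l(a_n)$ cofinal in $j$ via a cofinal set $A\sub j$, and (iii) the coherence of the embeddings — but no single ingredient is new, the novelty being that dropping the Hulls condition both simplifies (iii) and is harmless for (i) and (ii). The $\infty$-subproper analogue will be proved by the same induction, replacing each appeal to $\infty$-subcompleteness of a successor factor with $\infty$-subproperness (so the condition $q\le p$ is chosen first and the embedding is found in $V[G]$ for any generic $G\ni q$), and is otherwise identical.
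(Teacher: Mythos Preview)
Your proposal is correct and takes essentially the same approach as the paper: induction on $j$, with the successor case $j=k+1$ handled by composing $\phi(k)$ with the $\infty$-subcompleteness of $\P_{k,k+1}$, and the limit case handled by constructing a nested antichain $T$ together with a fusion structure $\langle q^{(a,n)},T^{(a,n)}\rangle$ and a system of $\P_{l(a)}$-names $\dot\sigma^{(a,n)}$ for embeddings, then reading $\sigma'$ off the branch through $T$ produced by Proposition~\ref{fusion}. One small correction to your limit-stage bookkeeping: in the subcomplete case $\bar G_{\bar i,\bar j}$ is already fixed in $\V$, so the paper enumerates the \emph{conditions} $\bar p_n\in\bar G_{\bar i}*\bar G_{\bar i,\bar j}$ and arranges $q^{(a_n,n)}\le\sigma_n(\bar p_n)$ to secure $(\sigma')``\bar G_{\bar i,\bar j}\sub G_{i,j}$, rather than enumerating dense sets (that is the subproper variant, where genericity of the preimage is the goal).
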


Let me stress that my proof is similar to that of \cite[Lemma 4.3]{miyamoto}, adapting for the case of $\infty$-subcomplete forcing notions in place of semiproper forcing notions.

\begin{proof}
The proof is by induction on $j$. So let us assume that $\phi(j')$ holds for every $j'<j$.
Fix some $i\le j$.
Since nothing is to be shown when $i=j$, let $i<j$. In particular, the case $j=0$ is trivial.

Fix $p\in \P_i$, $\dot{\sigma}\in\V^{\P_i}$, 
$\theta$, $\tau$, $A$, $N$, $\bN$, $\bs,\bar{\vec{\P}},\bar{i},\bar{j}\in\bN$, $\bar{G}_{\bar{i}},\bar{G}_{\bar{i},\bar{j}}\sub\bN$ so that assumptions \ref{item:FirstAssumption}-\ref{item:Lastassumption} hold.



\noindent{Case 1:} $j$ is a limit ordinal.

Let $\{t_n \; | \; n < \omega \}$ enumerate the elements of $\bN$. Throughout this proof I will identify the $t_n$'s with their check names when it causes no confusion. Without loss of generality we may assume that $t_0 = \emptyset$. Also let $\{\bar{p}_n \; | \; n <\omega \}$ enumerate the elements of $\bar{G}_{\bar{i}}*\bar{G}_{\bar{i},\bar{j}}$, where we assume that $\bar{p}_0=1_{\bar{P}_{\bar{j}}}$. It follows then that $p$ forces that $\dot{\sigma}(\bar{p}_0)=\check{1}_j$, so I write $p_0=1_j$.

Note that since in some forcing extension, there is an elementary embedding from $\bN$ to $N = L_\tau [A]$, we may assume that there is a definable well order of $\bN$, call it $\leq_\bN$. 
As noted in \cite{miyamoto}, by Lemma \ref{2.7}, in $\bN$, $\bar{p}_0$ is a mixture up to $\bar{j}$ of some nested antichain in $\bar{\vec{\P}}\rest{\bar{j}}$ whose root has length $\bar{i}$. Letting $\bar{W}$ be the $\leq_\bN$-least one, we know that $p$ forces that $\dot{\sigma}(\check{\bar{W}})$ is the $L_\tau[A]$-least nested antichain $W$ in $\vec{\P}\rest j$ such that $p_0$ is a mixture of $W$ up to $j$, since we know that $p$ forces that $\bar{p}_0,\bar{i},\bar{j}$ are mapped to $p_0,i,j$ by $\dot{\sigma}$, respectively.

I will define a nested antichain $\langle T, \langle T_n \; | \; n < \omega \rangle, \langle \suc^n_T \; | \; n < \omega\rangle \rangle$, a fusion structure $\seq{\kla{q^{(t, n)}, T^{(a, n)}}}{n < \omega, a\in T_n}$ in $\seq{\P_\alpha}{\alpha \leq j}$ and a sequence $\langle \dot{\sigma}^{(a, n)} \; | \; n < \omega , a \in T_n\rangle$ so that the following conditions hold.

\begin{enumerate}[label=(\arabic*)]
\item
\label{claim:Start}
$T_0 = \{p\}$, $q^{(p,0)}=p_0$, $T^{(p,0)}=W$ and $\dot{\sigma}^{(p,0)} = \dot{\sigma}$.
\end{enumerate}
Further, for any $n<\omega$ and $a\in T_n$:
\begin{enumerate}[label=(\arabic*)]
\setcounter{enumi}{1}
\item 
\label{item:middlepart-beginning}
$q^{(a, n)}\in \P_j$ and $\dot{\sigma}^{(a, n)}$ is a $\P_{l(a)}$-name,
\item 
\label{item:sigma(a,n)}
$a$ forces the following statements with respect to $\P_{l(a)}$:
\begin{enumerate}[label=(\alph*)]
\item 
\label{subitem:iselementary}
$\dot{\sigma}^{(a, n)} : \bN[\bG_{\bar{i}}]\prec N[\dot{G}_i]$,
\item 
\label{subitem:moveseverythingcorrectly}
$\dot{\sigma}^{(a, n)}(\bar{\theta}, \bar{\vec{\P}}, \bar{s}, \bar{\nu}, \bar{i},\bar{j},\bar{G}_{\bar{i}}) = \theta, \vec{\P}, \tau,\dot{\sigma}(\bs), \nu, i, j, \dot{G}_i$,
\item 
\label{subitem:hasx(a,n)initsrange}
$q^{(a, n)} \leq_j \dot{\sigma}^{(a, n)}(\bar{p}_n)$, and $q^{(a,n)}\in\ran(\dot{\sigma}^{(a,n)})$.
\end{enumerate}
\item 
\label{item:middlepart-end}
for some $\bar{q}^{(a,n)}\in\bar{\P}_{\bar{j}}$ and $\bar{T}^{(a,n)}\in\bN$, we have that $\bar{q}^{(a, n)} \in \bG_{\bar{i}}*\bar{G}_{\bar{i},\bar{j}}$, $\bar{q}^{(a, n)} \leq \bar{p}_n$ and \\
$a \forces \dot{\sigma}^{(a, n)} (\kla{\bar{q}^{(a, n)}, \bar{T}^{(a, n)}, l(\bar{a})}) =\kla{q^{(a, n)}, T^{(a, n)}, l(a)}$.
\end{enumerate}
\noindent If $b \in {\rm suc}^n_T (a)$ and $m\le n$, then
\begin{enumerate}[label=(\arabic*)]
\setcounter{enumi}{4}
\item 
\label{item:coherence}
$b \forces_{l(b)} \dot{\sigma}^{(b, n+1)}(t_m) = \dot{\sigma}^{(a, n)}(t_m)$ and $\dot{\sigma}^{(b, n+1)}(\bar{p}_m) = \dot{\sigma}^{(a, n)}(\bar{p}_m)$.

\item 
\label{claim:LastOfFirstList}
$b\rest l(a)\forces_{l(a)}q^{(b,n+1)},T^{(b,n+1)}\in\ran(\dot{\sigma}^{(a,n)})$.
\end{enumerate}
First let's see that constructing such objects is sufficient to prove the existence of a condition $p^*$ as in the statement of the theorem. So suppose that we have constructed a nested antichain $\langle T, \langle T_n \; | \; n < \omega \rangle, \langle \suc^n_T \; | \; n < \omega\rangle \rangle$, a fusion structure $\seq{\kla{q^{(t, n)}, T^{(a, n)}}}{n < \omega, a\in T_n}$ in $\seq{\P_\alpha}{\alpha \leq j}$ and a sequence $\langle \dot{\sigma}^{(a, n)} \; | \; n < \omega , a \in T_n\rangle$, so that \ref{claim:Start} through \ref{claim:LastOfFirstList} above are satisfied. Let $q^*\in \P_j$ be a fusion of the fusion structure, and let $p^*=p\verl q^*\rest[i,j)$. By \ref{claim:Start}, we have that $q^*\rest i\equiv p$, so $p^*\equiv q^*$ and $p^*\rest i=p$, as required.
To see that $p^*$ is as wished, let $G_j$ be $\P_j$-generic over $\V$ with $p^* \in G_j$. We have to show that in $\V[G_j]$, there is a $\sigma'$ so that conclusions \ref{item:FirstConclusion}-\ref{item:LastConclusion} are satisfied.
Since $p^*\equiv q^*$, we have that $q^*\in G_j$. Work in $\V[G_j]$. By Proposition \ref{fusion} there is a sequence $\langle a_n \; | \; n < \omega\rangle \in \V[G_j]$ so that for all $n < \omega$, $a_{n+1} \in {\rm suc}^n_T(a_n)$, $a_n \in G_j \hook l(a_n)$ and $q^{(a_n, n)} \in G_j$. Let $\sigma_n$ be the evaluation of $\dot{\sigma}^{(a_n, n)}$ by $G_j$. Then we define $\sigma': \bN \to N$ to be the map such that $\sigma ' (t_n) = \sigma_n (t_n)$. 
We claim that $\sigma'$ satisfies the conclusions \ref{item:FirstConclusion}-\ref{item:LastConclusionSC}.

Condition \ref{item:FirstConclusion} says that $\sigma'$ moves the parameters $\bar{s},\bar{\vec{\P}},\bar{i},\bar{j},\btheta$ and $\bG_{\bar{i}}$ the same way $\sigma=\dot{\sigma}^{G_i}$ does. But this is true of every $\sigma_n$, hence also of $\sigma'$.

Condition \ref{item:Sigma'MovesEverythingCorrectly} says that $\sigma'``\bG_{\bar{i},\bar{j}}\sub G_{i,j}$. So let $\bp\in\bG_{\bar{i},\bar{j}}$. I have to show that $\sigma'(\bp)\in G_{i,j}$.
Recall $\bG_{\bar{i}}*\bG_{\bar{i},\bar{j}} = \{\bar{p}_n \; | \; n < \omega \}$. Let $n$ be such that $\bp=\bp_n\rest[\bar{i},\bar{j})$. By \ref{item:sigma(a,n)}\ref{subitem:hasx(a,n)initsrange}, we have that $q^{(a_n, n)}\leq_j\sigma_n(\bar{p}_n)$, so since $q^{(a_n, n)}\in G_j$, it follows that $\sigma_n(\bar{p}_n)\in G_j$ as well. By \ref{item:coherence} and the definition of $\sigma'$,  we have that $\sigma_n(\bar{p}_n) = \sigma'(\bar{p}_n)$.
It follows that  $\sigma'(\bp)=\sigma'(\bp_n\rest[\bar{i},\bar{j}))=\sigma'(\bp_n)\rest[i,j)\in G_{i,j}$, as claimed.

Condition \ref{item:LastConclusionSC} says that $\sigma':\bN\to N$ is elementary.
Since any one formula can only use finitely many parameters, and $\sigma'\rest\{t_0,\ldots,t_n\}=\sigma_n\rest\{t_0,\ldots,t_n\}$, this is true by \ref{item:coherence}.

Therefore it remains to show that the construction described above can actually be carried out. This is done by recursion on $n$. The recursion proceeds as follows. At stage $n+1$ of the construction, we
assume that $T_m$, $T^{(a,m)}$, $q^{(a,m)}$ and $\dot{\sigma}^{(a,m)}$ have been defined, for all $m\le n$ and all $a\in T_m$. Also, for $m<n$ and $a\in T_m$, we assume that $\suc_T^m(a)$ has been defined. Our inductive hypothesis is that for all $m\le n$ and all $a\in T_m$, conditions 
\ref{item:middlepart-beginning}-\ref{item:middlepart-end}
hold, and that for all $m<n$, all $a\in T_m$ and all $b\in\suc_T^m(a)$, conditions \ref{item:coherence}-\ref{claim:LastOfFirstList} are satisfied. In order to define $T_{n+1}$, I will specify $\suc_T^n(a)$, for every $a\in T_n$, which implicitly defines $T_{n+1}=\bigcup_{a\in T_n}\suc_T^n(a)$. Simultaneously, I will define, for every such $a$ and every $b\in\suc_T^n(a)$, the objects $T^{(b,n+1)}$, $q^{(b,n+1)}$ and $\dot{\sigma}^{(b,n+1)}$ in such a way that whenever $a\in T_n$ and $b\in\suc_T^n(a)$, \ref{item:coherence}-\ref{claim:LastOfFirstList} are satisfied by $a$ and $b$, and \ref{item:middlepart-beginning}-\ref{item:middlepart-end} are satisfied by $b$ and $n+1$ (instead of $a$ and $n$).

For stage 0 of the construction, notice that \ref{claim:Start} gives the base case where $n =0$ and in this case \ref{item:middlepart-beginning}-\ref{item:middlepart-end} are satisfied, $p$ forces that $q^{(p, 0)}=p_0=\dot{\sigma}(\bar{p}_0)$ and $p$ forces that $W=T^{(p,0)}$ has a preimage under $\dot{\sigma}$, namely $\bar{W}$.

At stage $n+1$ of the construction, work under the assumptions described above. Fixing $a\in T_n$, I have to define $\suc_T^n(a)$.
To this end let $D$ be the set of all conditions $b$ for which there are a nested antichain $S$ in $\vec{\P}\rest j$, and objects $\dot{\sigma}^b$, $u$, $\bar{u}$ and $\bar{S}$ satisfying the following:

\begin{enumerate}[label=(D\arabic*)]
\item 
\label{item:FirstConditionDefiningPredenseSet}
$b \in \P_{l(b)}$ and $l(b) < j$.
\item 
$l(a) \leq l(b)$ and $b \hook l(a) \leq a$.
\item 
$S \hooks T^{(a, n)}$, $\bS\in\bN$, $S\in N$.
\item 
$u\in \P_j$, $u \leq q^{(a,n)}$ and $u$ is a mixture of $S$ up to $j$.
\item 
$\bar{u} \in \bG_{\bar{i}}*\bG_{\bar{i},\bar{j}}$, and $\bar{u} \leq \bar{p}_{n+1}$.
\item 
\label{item:InitialSegmentOfbForcesStuffIntoRangeOfTheOldEmbedding}
$b\rest l(a)\forces_{l(a)} S, u, l(b) \in\ran (\dot{\sigma}^{(a,n)})$.
\item 
\label{item:LastConditionDefiningPredenseSet}
$b$ forces the following statements with respect to $P_{l(b)}$: \begin{enumerate}
\item $\dot{\sigma}^b (\bar{\theta},\bar{i},\bar{j},\bar{\vec{\P}},\bG_{\bar{i}}, \bar{s},\bar{u},\bS) = \theta,i,j,\vec{\P},\dot{G}_i,\dot{\sigma}(\bs), u,S$.
\item $\forall m \leq n$ $\dot{\sigma}^b(t_m) = \dot{\sigma}^{(a, n)}(t_m)$ and $\dot{\sigma}^b(\bar{p}_m) = \dot{\sigma}^{(a, n)}(\bar{p}_m)$,
\item $\dot{\sigma}^b: \bN[\bG_{\bar{i}}] \prec N[\dot{G}_i]$,
\end{enumerate}
\end{enumerate}

Note that if $b\in D$ and $b'\le_{l(b)}b$, then $b'\in D$ as well. It follows that $D\rest l(a):=\{b\rest l(a)\st b\in D\}$ is open in $P_{l(a)}$.
Thus, it suffices to show that $D\rest l(a)$ is predense below $a$ in $\P_{l(a)}$. For if we know this, $D\rest l(a)$ is dense below $a$, and we may choose a maximal antichain $A\sub D\rest l(a)$ (with respect to $\P_{l(a)}$), which then is a maximal antichain in $\P_{l(a)}$ below $a$. Thus, for every $c\in A$, we may pick a condition $b(c)\in D$ such that $b(c)\rest l(a)=c$, and define $\suc_T^n(a)=\{b(c)\st c\in A\}$ (in order to satisfy Definition \ref{definition:NestedAC-Mixture-beta-Nice-hooks}, part $(5)$). Now, for every $b\in\suc_T^n(a)$, let $S$, $\dot{\sigma}^b$, $u$ and $\bar{u}$ witness that $b\in D$, i.e., let them be chosen in such a way that \ref{item:FirstConditionDefiningPredenseSet}-\ref{item:LastConditionDefiningPredenseSet} hold. Set $T^{(b,n+1)}=S$, $\dot{\sigma}^{(b, n+1)}=\dot{\sigma}^b$, $q^{(b, n+1)}=u$, $\bar{q}^{(b,n+1)}$ and $\bar{T}^{(b,n+1)}=\bS$. Then $a$, $b$ satisfy \ref{item:coherence}-\ref{claim:LastOfFirstList} at stage $n$, and $b$ satisfies \ref{item:middlepart-beginning}-\ref{item:middlepart-end} at stage $n+1$.

To see that $D\rest l(a)$ is predense below $a$, let
$G_{l(a)}$ be $\P_{l(a)}$-generic over $V$ with $a \in G_{l(a)}$. We have to find a $b\in D$ so that $b \hook l(a) \in G_{l(a)}$. Work in $V[G_{l(a)}]$. Let $\sigma_n = (\dot{\sigma}^{(a, n)})_{G_{\l(a)}}$. Since \ref{item:sigma(a,n)} holds at stage $n$, we have that $\sigma_n:\bN[\bG_{\bar{i}}]\prec N[G_i]$ 
and $\sigma_n(\bar{\theta}, \bar{\vec{\P}},\bs,\bar{\nu},\bar{i},\bar{j},\bG_{\bar{i}})
=\theta,\vec{\P},\sigma(\bs),\nu,i,j,G_i$.
%
We also have objects $\bar{q}^{(a,n)},\bT^{(a,n)},l(\bar{a})$ satisfying condition \ref{item:middlepart-end}, so that $\bar{q}^{(a,n)}\in\bG_{\bar{i}}*\bG_{\bar{i},\bar{j}}$, $\bT^{(a,n)}\in\bN$, $\bar{q}^{(a,n)}\le\bp_n$, $\sigma_n(\bar{q}^{(a,n)},\bT^{(a,n)})=q^{(a,n)},T^{(a,n)}$ and $\sigma_n(l(\bar{a}))=l(a)$.

First I find the requisite $u$ and $\bar{u}$. By elementarity, $\bar{q}^{(a, n)}$ is a mixture of $\bar{T}^{(a, n)}$ up to $\bar{j}$. Recall that $\sigma_n:\kla{L_{\btau}[\bA][\bG_{\bar{i}}],\in,\bA}\prec\kla{L_\tau[A][G_i],\in,A}$, so in particular, $\bsigma:=\sigma_n\rest L_\btau[\bA]:\kla{L_\btau[\bA],\in,\bA}\prec\kla{L_\tau[A],\in,A}$, and $\bsigma(\bar{q}^{(a,n)},\bT^{(a,n)})=q^{(a,n)},T^{(a,n)}$. Clearly, in $L_\tau[A]$, it is true that $q^{(a,n)}$ is a mixture of $T^{(a,n)}$ up to $j$, so it is true in $L_\btau[\bA]$ that $\bar{q}^{(a,n)}$ is a mixture of $\bT^{(a,n)}$ up to $\bar{j}$, and by absoluteness, this it true in $\V$ as well.

Let $\bT_0^{(a,n)}=\{\ba_0\}$. Let's write $\bG_{\bar{j}}=\bG_{\bar{i}}*\bG_{\bar{i},\bar{j}}$, and for $k\le\bar{j}$, let's set $\bG_k=\bG_{\bar{j}}\rest k$. By Fact \ref{fact:CharacterizationOfMixtures}.\ref{item:Root}, $\ba_0\equiv\bar{q}^{(a,n)}\rest l(\ba_0)\in\bG_{l(\ba_0)}$, since $l(\bar{q}^{(a,n)})=\bar{j}>l(\ba_0)$. So $\ba_0\in\bG_{l(\ba_0)}$.
Let $\bar{r}\in\bT_1^{(\ba,n)}$, that is, $\bar{r}\in\suc_{\bT^{(\ba,n)}}^0(\ba_0)$, be such that $\bar{r}\rest l(a_0)\in\bar{G}_{l(\ba_0)}$. There is such a $\bar{r}$ by Definition \ref{definition:NestedAC-Mixture-beta-Nice-hooks}(5). By Fact \ref{fact:CharacterizationOfMixtures}.\ref{item:HigherUp}, again since $l(\bar{r})<\bar{j}=l(\bar{q}^{(a,n)})$, it follows that $\bar{r}\equiv\bar{r}\rest l(\ba_0)\verl\bar{q}^{(a,n)}\rest[l(\ba_0),l(\bar{j}))$.
Since $\bar{r}\rest l(\ba_0)\in\bG_{l(\ba_0)}$, this implies that $\bar{r}\rest[l(\ba_0),l(\bar{r}))\equiv\bar{q}^{(a,n)}\rest[l(\ba_0),l(\bar{r}))$ (in the partial order $\bar{\P}_{l(\ba_0),l(\bar{r})}=\bar{\P}_{l(\bar{r})}/\bG_{l(\ba_0)}$), and $\bar{q}^{(a,n)}\rest[l(\ba_0),l(\bar{r}))\in\bG\rest[l(\ba_0),l(\bar{r}))$.
So we have that $\bar{r}\rest l(\ba_0)\in\bG_{l(\ba_0)}$ and $\bar{r}\rest[l(\ba_0),l(\bar{r}))\in\bG\rest[l(\ba_0),l(\bar{r}))$. By Fact \ref{fact:FactorsAndQuotients}.\ref{item:CriterionForBeingInGeneric}, this implies that $\bar{r}\in\bG_{l(\bar{r})}$.

It follows that $\bar{r}^{\frown}\bar{q}^{(a, n)} \hook [\bar{l(r)}, \bar{j}) \in \bar{G}_{\bar{j}}$, again using Fact \ref{fact:FactorsAndQuotients}.\ref{item:CriterionForBeingInGeneric}.
Let $\bar{u} \in \bar{G}_{\bar{j}}$ strengthen both $\bar{r}^{\frown}\bar{q}^{(a, n)} \hook [\bar{l(r)},\bar{j})$ and $\bar{p}_{n+1}$.
By Lemma \ref{2.11}, applied in $\bar{N}$, there is a nested antichain $\bar{S} \hooks \bar{T}^{(a, n)}$ such that $\bar{u}$ is a mixture of $\bS$ up to $\bar{j}$ and such that letting $\bS_0 = \{\bd_0\}$, we have that  $l(\bar{r}) \leq l(\bd_0)$ and $\bd_0 \rest l(\bar{r}) \leq \bar{r}$. Let $S,d_0,u=\sigma_n(\bS,\bd_0,\bu)$, and let $w\in G_{l(a)}$ force this. Since $a\in G_{l(a)}$, we may choose $w$ so that $w\le a$.

Note that $S,d_0,u$ are in $N$ (and hence in $\V$), since $\bS,\bd_0,\bu\in\bN$.

I am going to apply our inductive hypothesis $\phi(l(d_0))$, noting that $l(d_0)<j$, to $i =l(a) \leq l(d_0)$, the filters $\bG_{l(\bar{a})}$, $\bG_{{l(\bar{a})},l(\bd_0)}$, the models $\bN$, $N$, the condition $w$ (in place of $p$), the name $\dot{\sigma}^{(a,n)}$ (in place of $\dot{\sigma}$ and the parameter $\bs'\in\bN$ which I will specify below).
No matter which $\bs'$ we choose, by the inductive hypothesis, there is a condition $w^*\in\P_{l(d_0)}$ with $w^*\rest l(a)=w$ and a name $\dot{\sigma}'$ such that $w^*$ forces with respect to $\P_{l(a)}$:
\begin{enumerate}[label=(\alph*)]
  \item
  $\dot{\sigma}'(\check{\bs}',\check{\bar{\vec{\P}}},\check{l(\bar{a})},\check{l(\bar{d}_0)},\check{\btheta},\dot{\bG}_{l(\bar{a})})=\dot{\sigma}^{(a,n)}(\check{\bs}',\check{\bar{\vec{\P}}},\check{l(\bar{a})},\check{l(\bar{d}_0)},\check{\btheta},\dot{\bG}_{l(\bar{a})})$,
  \item
  $(\dot{\sigma}')``\dot{\bar{G}}_{l(\bar{a}),l(\bd_0)}\sub \dot{G}_{l(a),l(d_0)}$,
  \item
  $\dot{\sigma}':\check{\bN}[\dot{\bG}_{l(\bar{a})}]\prec \check{N}[\dot{G}_{l(a)}]$.
\end{enumerate}

By choosing $\bs'$ appropriately, and temporarily fixing $H$ as above, we may insure that it is forced that $\dot{\sigma}'$ moves any finite number of members of $\bN$ the same way $\dot{\sigma}^{(a,n)}$ does.
Thus, we may insist that $w^*$ forces that $\dot{\sigma}'(\bu,\bd_0,\bS)=\dot{\sigma}^{(a,n)}(\bu,\bd_0,\bS)$. Recall that $w$ forced that $\dot{\sigma}^{(a,n)}(\bu,\bd_0,\bS)=u,d_0,S$.
Hence, since $w^*\rest l(a)=w$, we get that $w^*$ forces that $\dot{\sigma}'(\bu,\bd_0,\bS)=u,d_0,S$ as well.

In addition, we may insist that $\sigma'$ moves the parameters $\bar{i},\bar{j},\bar{\vec{\P}},\btheta,\bs,\bp_0,\ldots,\bp_n,t_0,\ldots,t_n$ the same way $\dot{\sigma}^{(a,n)}$ does. Note that already $a$ forced with respect to $\P_{l(a)}$ that $\bar{i},\bar{j},\bar{\vec{\P}},\btheta$ are mapped to $i,j,\vec{\P},\theta$ by $\dot{\sigma}^{(a,n)}$.



Now, setting $b=w^*$, $\dot{\sigma}^b=\dot{\sigma}'$, conditions \ref{item:FirstConditionDefiningPredenseSet}-\ref{item:LastConditionDefiningPredenseSet} are satisfied, that is, $b\in D$. Most of these are obvious; let me just remark that $b$ forces that $\dot{\sigma}^b(\bG_{\bar{i}})=\dot{G}_i$ because it forces that $\dot{\sigma}^b(\bar{i})=i$ and $\dot{\sigma}^b(\bG_{l(\bar{a})})=\dot{G}_{l(a)}$. Condition \ref{item:InitialSegmentOfbForcesStuffIntoRangeOfTheOldEmbedding} holds because $b\rest l(a)=w$. 
For the same reason, we have that $b\rest l(a)\in G_{l(a)}$, completing the proof that $D\rest l(a)$ is predense below $a$. This concludes the treatment of case 1.

\noindent\emph{Case 2:} $j$ is a successor ordinal.

Let $j=k+1$. Since we assumed $i<j$, it follows that $i\le k$. Inductively, we know that $\phi(k)$ holds. Note that $\bar{j}$ is of the form $\bk+1$, where $p$ forces with respect to $\P_i$ that $\dot{\sigma}(\bk)=k$, and if we let $\bG_{\bk}=\bG_\bj\rest\bk$, then the assumptions \ref{item:FirstAssumption}-\ref{item:LastAssumption} are satisfied by 
$p\in \P_i$, $\dot{\sigma}\in\V^{P_i}$, 
$\theta$, $\tau$, $A$, $N$, $\bN$, $\bs,\bar{\vec{\P}},\bar{i},\bar{k}\in\bN$, $\bar{G}_{\bar{i}},\bar{G}_{\bar{i},\bar{k}}\sub\bN$ and $k$. By $\phi(k)$, we obtain a condition $p^{**}\in\P_k$ with $p^{**}\rest i=y$ and a $\P_k$-name $\dot{\bsigma}$ such that $p^{**}$ forces
\begin{enumerate}[label=(\alph*1)]
  \item
  $\dot{\bsigma}(\check{\bs},\check{\bar{\vec{\P}}},\check{\bar{i}},\check{\bk},\check{\bar{j}},\check{\btheta},\check{\bG}_{\bar{i}})=\sigma(\check{\bs},\check{\bar{\vec{\P}}},\check{\bar{i}},\check{\bk},\check{\bar{j}},\check{\btheta},\check{\bG}_{\bar{i}})$,
  \item
  $\dot{\bsigma}``\check{\bar{G}}_{\bar{i},\bar{k}}\sub\dot{G}_{i,k}$,
  \item
  $\dot{\bsigma}:\check{\bN}[\check{\bG}_{\bar{i}}]\prec \check{N}[\dot{G}_i]$.
\end{enumerate}
It follows then that $p^{**}$ forces that $\dot{\bsigma}``\bG_\bk\sub\dot{G}_k$, and hence that $\dot{\bsigma}$ lifts to an elementary embedding from $\bN[\bG_\bk]\prec N[\dot{G}_k]$ that maps $\bG_\bk$ to $\dot{G}_k$. Let $\dot{\tsigma}$ be a $\P_k$-name such that $p^{**}$ forces that $\dot{\tsigma}$ is that lifted embedding.

Temporarily fix a $\P_k$-generic filter $H$ that contains $p^{**}$. In $\V[H]$, the forcing $\P_{k,k+1}=\P_{k,j}=\P_j/H$ is $\infty$-subcomplete. Letting $\tsigma=\dot{\tsigma}^H$, we have that $\tsigma:\bN[\bG_\bk]\prec N[H]$, and thus, since $\bN[\bG_\bk]$ is full, there is a condition $q$ in $\P_{k,j}$ such that $q$ forces the existence of an elementary embedding $\sigma'$ with
\begin{enumerate}[label=(\alph*2)]
  \item
  $\sigma'(\check{\bs},\check{\bar{\vec{\P}}},\check{\bar{i}},\check{\bk},\check{\bar{j}},\check{\btheta},\check{\bG}_{\bar{i}},\check{\bG}_{\bk})=\tsigma(\check{\bs},\check{\bar{\vec{\P}}},\check{\bar{i}},\check{\bk},\check{\bar{j}},\check{\btheta},\check{\bG}_{\bar{i}},\check{\bG}_{\bk})$,
  \item
  $(\sigma')``\check{\bar{G}}_{\bar{k},\bar{j}}\sub\dot{G}_{k,j}$,
  \item
  $\sigma':\check{\bN}[\check{\bG}_{\bar{k}}]\prec \check{N}[\dot{H}]$.
\end{enumerate}
Since this holds in $\V[H]$ whenever $p^{**}\in H$, there is a $\P_k$-name $\tau$ which is essentially a name for $q$ above - more precisely, $\tau$ is such that $p^{**}$ forces that $\tau\in \P_j$, $\tau\rest k\in\dot{G}_k$ and $\tau\rest[k,j)$ has the properties of $q$, as listed above.
Since the iteration is nice, there is a condition $p^*\in P_j$ such that $p^*\rest k=p^{**}$ and $p^*$ forces that $\tau\rest[k,j)\equiv p^*\rest[k,j)$; see Definition \ref{def:NiceIterations}, part (1). I claim that $p^*$ is as wished.

First, note that $p^*\rest i=(p^*\rest k)\rest i=p^{**}\rest i=p$. Now, let $G_j$ be a $\P_j$-generic filter with $p^*\in G_j$. I have to show that in $\V[G_j]$, there is a $\sigma'$ such that, letting $\sigma=\dot{\sigma}^{G_i}$, the following hold:
\begin{enumerate}[label=(\alph*)]
  \item
  $\sigma'(\bs,\bar{\vec{\P}},\bar{i},\bar{j},\btheta,\bG_{\bar{i}})=\sigma(\bs,\bar{\vec{\P}},\bar{i},\bar{j},\btheta,\bG_{\bar{i}})$,
  \item
  $(\sigma')``\bar{G}_{\bar{i},\bar{j}}\sub G_{i,j}$,
  \item
  $\sigma':\bN[\bG_{\bar{i}}]\prec N[G_i]$.
\end{enumerate}
But this follows, because $\V[G_j]=\V[G_k][G_{k,j}]$, where $p^*\rest[k,j)\in G_{k,j}$, where $p^{**}\in G_k$, writing $H$ for $G_k$, puts us in the situation described above. Moreover, $p^*\rest[k,j)\in G_{k,j}$ and $p^*\rest[k,j)\equiv\dot{q}^{G_j}$, where $\dot{q}$ is a name for the condition $q$ mentioned above. Thus, there is a $\sigma'$ in $\V[G_j]$ such that the conditions (a2)-(c2) listed above hold in $\V[G_j]$. Remembering that $\tsigma$ lifts $\bsigma$ and $\bsigma$ moves the required parameters as prescribed (by (a1)-(c1)), it follows that (a)-(c) are satisfied.

%
\end{proof}

Next I prove a similar theorem for $\infty$-subproper forcings. After having proved Theorem \ref{thm:NiceIterationsOfSPforcingAreSP} I learned that Miyamoto (unpublished) had also proved this result earlier. What I am calling $\infty$-subproper here, Miyamoto calls \say{preproper}.

\begin{theorem}
\label{thm:NiceIterationsOfSPforcingAreSP}
Let $\vec{\P} = \langle \P_\alpha \; | \; \alpha \leq \nu\rangle$ be a nice iteration so that $\P_0 = \{1_0\}$ and for all $i$ with $i + 1 < \nu$,  $\forces_i \P_{i, i+1}$ is $\infty$-subproper. Then for all $j\le\nu$ the following statement $\phi(j)$ holds:

if $i\le j$, $p\in \P_i$, $\dot{\sigma}, \dot{\bG}_{\bar{i}} \in\V^{\P_i}$, $q\in P_j$,
$\theta$ is a sufficiently large cardinal, $\tau$ is an ordinal, $H_\theta\sub N=L_\tau[A]\models\ZFC^-$
$\bN$ is a countable, full, transitive model which elementarily embeds into $N$ so that $\bs,\bar{\vec{\P}},\bar{i},\bar{j} \in \bN$ and $p$ forces with respect to $\P_i$ that the following assumptions hold:
\begin{enumerate}[label=(A\arabic*)]
  \item
  \label{item:FirstAssumption}
  $\dot{\sigma}(\check{\bs}, \check{\bar{\vec{\P}}},\check{\bar{i}},\check{\bar{j}},\check{\btheta}, \check{\bar{q}})=\check{s}, \check{\vec{\P}},\check{i},\check{j},\check{q}$
  \item
  \label{item:Generics}
   $\dot{\bG}_{\bar{i}}$ is the pointwise image of the generic under $\dot{\sigma}$ and is $\bar{\P}_i$-generic over $\check{\bN}$
  \item 
  \label{item:LastAssumptionSP}
  $\dot{\sigma}:\check{\bN}[\dot{\bG}_{\bar{i}}]\prec \check{N}[\dot{G}_i]$ is countable, transitive and full.
\end{enumerate}
{\bf then} there is a condition $p^*\in P_j$ such that $p^*\hook [i, j)\le q \hook [i, j)$,
$p^*\rest i=p$ and whenever $G_j\ni p^*$ is $\P_j$-generic, then in $\V[G_j]$, there is a $\sigma'$ such that, letting $\sigma=\dot{\sigma}^{G_i}$, the following hold:
\begin{enumerate}[label=(\alph*)]
  \item 
  \label{item:FirstConclusion}
  $\sigma'(\bs,\bar{\vec{\P}},\bar{i},\bar{j},\btheta,\bG_{\bar{i}}, \bar{q})=\sigma(\bs,\bar{\vec{\P}},\bar{i},\bar{j},\btheta,\bG_{\bar{i}}, q),$
  \item
  \label{item:Sigma'MovesEverythingCorrectly}
  $(\sigma')^{-1} G_{i,j} : = \bG_{i, j}$ is $\P_{i, j}$-generic over $\bN[\bG_{\bar{i}}]$,
  \item
  \label{item:LastConclusionSP}
  $\sigma':\bN[\bG_{\bar{i}}]\prec N[G_i]$.
\end{enumerate}
\label{spiteration}
\end{theorem}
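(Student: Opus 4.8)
The plan is to run an induction on $j$ exactly parallel to the proof of Theorem \ref{thm:NiceIterationsOfSCforcingAreSC}, adapting the bookkeeping to track, in addition to the embeddings $\sigma'$, the generic filters $\bar G_{\bar i, \bar j}$ that must be produced (rather than given in advance as in the subcomplete case). Since nothing is to be shown when $i = j$, assume $i < j$. The case $j = 0$ is vacuous, so there are two cases: $j$ a limit ordinal and $j = k + 1$ a successor. This mirrors the structure of the subcomplete proof, and indeed I expect most of the limit-case machinery — the nested antichain $T$, the fusion structure $\langle q^{(a,n)}, T^{(a,n)}\rangle$, the auxiliary names $\dot\sigma^{(a,n)}$, the dense set $D$ of conditions $b$ below which everything can be reflected into $\mathrm{range}(\dot\sigma^{(a,n)})$ — to carry over essentially verbatim. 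The one genuine change is that condition \ref{item:Sigma'MovesEverythingCorrectly} now asks that $(\sigma')^{-1}$ pushes the generic $G_{i,j}$ forward to a \emph{filter} $\bar G_{\bar i, \bar j}$ that is $\bar{\P}_{\bar i, \bar j}$-generic over $\bN[\bar G_{\bar i}]$, rather than asking that $\sigma'$ map a pre-given $\bar G_{\bar i, \bar j}$ into $G_{i,j}$. This is precisely the difference between the $\sigma \subseteq \sigma'$-style conclusion of subcompleteness and the ``$\bar G$ is the preimage of $G$'' conclusion of subproperness, and it is handled at the base of the fusion structure by $q$ (the target condition that $p^*$ must strengthen below stage $i$): the inductive hypothesis $\phi(j')$ at lower $j'$ already delivers preimage-genericity on the relevant factor, and the niceness of the iteration lets us propagate it through the fusion.

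First I would set up the limit case: enumerate $\bN = \{t_n : n < \omega\}$ with $t_0 = \emptyset$, and in place of enumerating a given generic, work with the name $\dot{\bar q}$ for the target condition $\bar q \in \bar{\P}_{\bar j}$. By Lemma \ref{2.7}, in $\bN$ the condition $\bar q$ is a mixture up to $\bar j$ of a $\leq_{\bN}$-least nested antichain $\bar W$ whose root has length $\bar i$, and $p$ forces that $\dot\sigma(\bar W)$ is the corresponding $L_\tau[A]$-least nested antichain $W$ with $q$ a mixture of $W$ up to $j$. I would then build, by recursion on $n$, a nested antichain $T$, a fusion structure, and auxiliary names $\dot\sigma^{(a,n)}$ satisfying analogues of conditions \ref{claim:Start}--\ref{claim:LastOfFirstList}: at each stage the dense set $D$ consists of conditions $b$ carrying a nested antichain $S \hooks T^{(a,n)}$, a mixture $u \leq q^{(a,n)}$ of $S$ up to $j$, a \emph{name} $\dot{\bar u}$ forced to be a $\bar{\P}_{\bar j}$-condition strengthening the relevant tail of the target and below $\bar p_{n+1}$ (here $\bar p_{n+1}$ enumerates a dense set of $\bar{\P}_{\bar j}$ in $\bN$, so that the eventual preimage filter is generic), a name $\dot\sigma^b$ reflecting $S, u, l(b)$ into the old range, and the appropriate elementarity. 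Predensity of $D \restriction l(a)$ below $a$ is shown exactly as before: pass to a $\P_{l(a)}$-generic $G_{l(a)} \ni a$, use that $\sigma_n := (\dot\sigma^{(a,n)})_{G_{l(a)}}$ is elementary and moves the parameters correctly, reflect the mixture structure of $q^{(a,n)}$ down via the restriction $\bar\sigma := \sigma_n \restriction L_{\bar\tau}[\bar A]$, apply Lemma \ref{2.11} inside $\bN$ to get $\bar S \hooks \bar T^{(a,n)}$ with $\bar u$ a mixture of $\bar S$, image these under $\sigma_n$, and finally apply the inductive hypothesis $\phi(l(d_0))$ with $l(d_0) < j$ to obtain the required $b = w^*$ together with $\dot\sigma^b = \dot\sigma'$. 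A fusion $p^*$ of the resulting fusion structure then works: given $G_j \ni p^*$, Proposition \ref{fusion} yields the thread $\langle a_n\rangle$, the $\sigma_n$'s cohere by \ref{item:coherence}, and their union $\sigma'$ is elementary; conclusion \ref{item:Sigma'MovesEverythingCorrectly} holds because the preimages $\bar u$ under the $\sigma_n$ were chosen to strengthen the members of a dense subset of $\bar{\P}_{\bar j}$ in $\bN$, so $(\sigma')^{-1}G_{i,j}$ meets every such dense set and is generic over $\bN[\bG_{\bar i}]$; and $p^* \restriction [i,j) \leq q \restriction [i,j)$ because $q^{(p,0)} = q$ sits at the root of the fusion structure.

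For the successor case $j = k+1$: since $i < j$ we have $i \leq k$, and $\bar j = \bar k + 1$ with $p \Vdash \dot\sigma(\bar k) = k$. Applying $\phi(k)$ to the data restricted to level $k$ (with target $q \restriction k$) yields a condition $p^{**} \in \P_k$ with $p^{**}\restriction i = p$, $p^{**}\restriction[i,k) \leq q\restriction[i,k)$, and a $\P_k$-name $\dot{\bar\sigma}$ whose induced lift $\dot{\tilde\sigma}$ maps $\bar G_{\bar k}$ onto $\dot G_k$; since $\bN[\bG_{\bar k}]$ is full, and $\P_{k,j}$ is forced to be $\infty$-subproper, $\infty$-subproperness applied in $\V[G_k]$ below $q\restriction[k,j)$ produces a name $\tau$ for a condition in $\P_j/\dot G_k$ strengthening $q\restriction[k,j)$ such that, below it, there is $\sigma'$ with the preimage of $G_{k,j}$ generic over $\bN[\bG_{\bar k}]$ and $\sigma'$ elementary; by niceness (Definition \ref{def:NiceIterations}(1)) there is $p^* \in \P_j$ with $p^*\restriction k = p^{**}$ and $p^* \Vdash \tau\restriction[k,j) \equiv p^*\restriction[k,j)$. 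Then $p^*\restriction i = p$, $p^*\restriction[i,j) \leq q\restriction[i,j)$, and unwinding $\V[G_j] = \V[G_k][G_{k,j}]$ gives the required $\sigma'$, where preimage-genericity on the full factor $\P_{i,j}$ follows by composing the genericity on $\P_{i,k}$ (from $\phi(k)$) with the genericity on $\P_{k,j}$ (from $\infty$-subproperness), using Fact \ref{fact:FactorsAndQuotients}. The main obstacle, and the step deserving the most care, is verifying clause \ref{item:Sigma'MovesEverythingCorrectly} in the limit case — ensuring that the preimage filter one reads off the fusion is genuinely $\bar{\P}_{\bar i, \bar j}$-generic over $\bN[\bG_{\bar i}]$ — since here, unlike in the subcomplete case where the generic was handed to us, we must arrange genericity by threading through the $\bar p_n$-enumeration of dense sets simultaneously with the fusion construction, and the interaction of this requirement with the $\hooks$-refinement at each stage of $D$ is exactly where the argument is most delicate.
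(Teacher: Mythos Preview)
Your proposal is correct and follows essentially the same route as the paper. The paper's proof is structurally identical to the subcomplete iteration theorem, with the single substantive change being that in place of enumerating a pre-given generic $\{\bar p_n : n<\omega\}$, one enumerates the names in $\bN$ for dense open subsets of $\bar\P_{\bar i,\bar j}$ as $\{\bar D_n : n<\omega\}$, and the bookkeeping condition \ref{item:sigma(a,n)}\ref{subitem:hasx(a,n)initsrange} becomes ``the preimage of $q^{(a,n)}$ under $\dot\sigma^{(a,n)}$ lies in $\bar D_n$.'' At the inductive step one then chooses $\bar u$ to strengthen $\bar r{}^\frown\bar q^{(a,n)}\hook[l(\bar r),\bar j)$ \emph{and} lie in $\bar D_{n+1}$. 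Your phrasing ``$\bar p_{n+1}$ enumerates a dense set'' and ``below $\bar p_{n+1}$'' is a bit garbled---you want $\bar u\in\bar D_{n+1}$, not $\bar u\le\bar p_{n+1}$---but the idea you describe (thread through an enumeration of the dense sets of $\bN$ simultaneously with the fusion) is exactly what the paper does, and you correctly identify this as the one delicate point.
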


Many parts of this proof are verbatim the same as in Theorem \ref{sciteration} but I repeat them for the convenience of the reader.

\begin{proof}
Like in the previous proof, the idea is to induct on $j$. So let me assume that $\phi(j')$ holds for every $j'<j$.
Fix some $i\le j$.
Since nothing is to be shown when $i=j$, let $i<j$. In particular, the case $j=0$ is trivial.

Let me fix $p\in \P_i$, $\dot{\sigma}, \dot{\bG}_{\bar{i}}\in\V^{\P_i}$, $q\in P_j$, and without loss suppose $p\le q\rest i$. Also me fix $\theta$, $\tau$, $A$, $N$, $\bN$, $\bs,\bar{\vec{\P}},\bar{i},\bar{j}\in\bN$, $\bar{G}_{\bar{i}},\bar{G}_{\bar{i},\bar{j}}\sub\bN$ so that assumptions \ref{item:FirstAssumption}-\ref{item:LastAssumptionSP} hold.


\noindent{Case 1:} $j$ is a limit ordinal.

Let $\{t_n \; | \; n < \omega \}$ enumerate the $\bar{\P}_i$-names in $\bN$. Without loss of generality we may assume that $t_0$ is the check name for $\emptyset$. Also let $\{\bar{D}_n \; | \; n <\omega \}$ enumerate the names in $\bN$ for the dense open subsets of $\bar{\P}_{i, j}$. Without loss assume that $\bar{q}$ is forced to be in $\bar{D}_0$.

As before we may assume that there is a definable well ordering of the universe, $\leq_\bN$, of $\bN$.
As noted in \cite{miyamoto}, by Lemma \ref{2.7}, in $\bN$, $\bar{p}_0$ is a mixture up to $\bar{j}$ of some nested antichain in $\bar{\vec{\P}}\rest{\bar{j}}$ whose root has length $\bar{i}$. Letting $\bar{W}$ be the $\leq_\bN$-least one, we know that $p$ forces that $\dot{\sigma}(\check{\bar{W}})$ is the $L_\tau[A]$-least nested antichain $W$ in $\vec{\P}\rest j$ such that $p_0$ is a mixture of $W$ up to $j$, since we know that $p$ forces that $\bar{p}_0,\bar{i},\bar{j}$ are mapped to $p_0,i,j$ by $\dot{\sigma}$, respectively.

I will define a nested antichain $\langle T, \langle T_n \; | \; n < \omega \rangle, \langle \suc^n_T \; | \; n < \omega\rangle \rangle$, a fusion structure $\seq{\kla{q^{(a, n)}, T^{(a, n)}}}{n < \omega, a\in T_n}$ in $\seq{\P_\alpha}{\alpha \leq j}$ and a sequence $\langle \dot{\sigma}^{(a, n)} \; | \; n < \omega, a \in T_n\rangle$ so that the following conditions hold.

\begin{enumerate}[label=(\arabic*)]
\item
\label{claim:Start}
$T_0 = \{p\}$, $q^{(p,0)}=x$, $T^{(p,0)}=W$ and $\dot{\sigma}^{(p,0)} = \dot{\sigma}$.
\end{enumerate}
Further, for any $n<\omega$ and $a\in T_n$:
\begin{enumerate}[label=(\arabic*)]
\setcounter{enumi}{1}
\item 
\label{item:middlepart-beginning}
$q^{(a, n)} \leq q$ and 
$q^{(a, n)}\in \P_j$ and $\dot{\sigma}^{(a, n)}$ is a $\P_{l(a)}$-name,
\item 
\label{item:sigma(a,n)}
$a$ forces the following statements with respect to $\P_{l(a)}$:
\begin{enumerate}[label=(\alph*)]
\item 
\label{subitem:iselementary}
$\dot{\sigma}^{(a, n)} : \check{\bN}[\dot{\bG}_{\bar{i}}]\prec \check{N}[\dot{G}_i]$,
\item 
\label{subitem:moveseverythingcorrectly}
$\dot{\sigma}^{(a, n)}(\check{\bar{\theta}}, \check{\bar{\vec{\P}}}, \bar{s}, \check{\bar{i}},\check{\bar{j}},\dot{\bar{G}}_{\bar{i}}, \check{\bar{q}}) = \check{\theta}, \check{\vec{\P}}, \check{s}, \check{i}, \check{j}, \dot{G}_i, \check{q}$
\item 
\label{subitem:hasx(a,n)initsrange}
 $q^{(a,n)}\in\ran(\dot{\sigma}^{(a,n)})$ and its preimage is in $\bar{D}_n$.
\end{enumerate}
\item 
\label{item:middlepart-end}
for some $\bar{q}^{(a,n)}\in\bar{\P}_{\bar{j}}$ and $\bar{T}^{(a,n)}\in\bN$, we have that $\bar{q}^{(a, n)} \in \bG_{\bar{i}}*\bar{G}_{\bar{i},\bar{j}}$, $\bar{q}^{(a, n)} \in \bar{D}_n$ and \\
$a \forces \dot{\sigma}^{(a, n)} (\kla{\bar{q}^{(a, n)}, \bar{T}^{(a, n)}, l(\bar{a})}) =\kla{q^{(a, n)}, T^{(a, n)}, l(a)}$.
\end{enumerate}
\noindent If $b \in {\rm suc}^n_T (a)$ and $m\le n$, then
\begin{enumerate}[label=(\arabic*)]
\setcounter{enumi}{4}
\item 
\label{item:coherence}
$b \forces_{l(b)} \dot{\sigma}^{(b, n+1)}(t_m) = \dot{\sigma}^{(a, n)}(t_m)$ and $\dot{\sigma}^{(b, n+1)}(\bar{D}_m) = \dot{\sigma}^{(a, n)}(\bar{D}_m)$.

\item 
\label{claim:LastOfFirstList}
$b\rest l(a)\forces_{l(a)}q^{(b,n+1)},T^{(b,n+1)}\in\ran(\dot{\sigma}^{(a,n)})$.
\end{enumerate}
First let's see that constructing such objects is sufficient to prove the existence of a condition $p^*$ as in the statement of the theorem. Suppose that we have constructed sequences satisfying \ref{claim:Start} through \ref{claim:LastOfFirstList} above. Let $q^*\in \P_j$ be a fusion of the fusion structure, and let $p^*=p\verl q^*\rest[i,j)$. By \ref{claim:Start}, we have that $q^*\rest i\equiv p$, so $p^*\equiv q^*$ and $p^*\rest i=p$, as required.
To see that $p^*$ is as wished, let $G_j$ be $\P_j$-generic over $\V$ with $p^* \in G_j$. I have to show that in $\V[G_j]$, there is a $\sigma'$ so that conclusions \ref{item:FirstConclusion}-\ref{item:LastConclusionSP} are satisfied.
Since $p^*\equiv q^*$, we have that $q^*\in G_j$. Work in $\V[G_j]$. By Proposition \ref{fusion} there is a sequence $\langle a_n \; | \; n < \omega\rangle \in V[G_j]$ so that for all $n < \omega$, $a_{n+1} \in {\rm suc}^n_T(a_n)$, $a_n \in G_j \hook l(a_n)$ and $q^{(a_n, n)} \in G_j$. Let $\sigma_n$ be the evaluation of $\dot{\sigma}^{(a_n, n)}$ by $G_j$. Then, as in the iteration theorem for subcompleteness, I define $\sigma': \bN \to N$ to be the map such that $\sigma ' (t_n) = \sigma_n (t_n)$. 
I claim that $\sigma'$ satisfies the conclusions \ref{item:FirstConclusion}-\ref{item:LastConclusionSP}. Indeed the verification of this fact exactly mirrors the case of subcompleteness with one difference: we need to ensure that the pointwise preimage of $G_{i, j}$ is $\bar{\P}_{i, j}$-generic over $\bN[\bG_{\bar{i}}]$. However this requirement is taken care of in the construction since the pre-image of $q^{(a, n)}$ is in the evaluation of $\bar{D}_n$ by \ref{item:sigma(a,n)} \ref{subitem:hasx(a,n)initsrange}.

Thus it remains to see that such a construction can be carried out. This is done by induction on $n$, in a manner similar to the previous proof. Like last time, at stage $n+1$ of the construction, assume that $T_m$, $T^{(a,m)}$, $q^{(a,m)}$ and $\dot{\sigma}^{(a,m)}$ have been defined, for all $m\le n$ and all $a\in T_m$. Also, for $m<n$ and $a\in T_m$, we assume that $\suc_T^m(a)$ has been defined. The inductive hypothesis is that for all $m\le n$ and all $a\in T_m$, conditions 
\ref{item:middlepart-beginning}-\ref{item:middlepart-end}
hold, and that for all $m<n$, all $a\in T_m$ and all $b\in\suc_T^m(a)$, conditions \ref{item:coherence}-\ref{claim:LastOfFirstList} are satisfied. In order to define $T_{n+1}$, I will specify $\suc_T^n(a)$, for every $a\in T_n$, which implicitly defines $T_{n+1}=\bigcup_{a\in T_n}\suc_T^n(a)$. Simultaneously, I will define, for every such $a$ and every $b\in\suc_T^n(a)$, the objects $T^{(b,n+1)}$, $q^{(b,n+1)}$ and $\dot{\sigma}^{(b,n+1)}$ in such a way that whenever $a\in T_n$ and $b\in\suc_T^n(a)$, \ref{item:coherence}-\ref{claim:LastOfFirstList} are satisfied by $a$ and $b$, and \ref{item:middlepart-beginning}-\ref{item:middlepart-end} are satisfied by $b$ and $n+1$ (instead of $a$ and $n$).

For stage 0 of the construction, notice that \ref{claim:Start} gives the base case where $n =0$ and in this case \ref{item:middlepart-beginning}-\ref{item:middlepart-end} are satisfied, $p$ forces that $q^{(p, 0)}=q=\dot{\sigma}(\bar{q}) \in \bar{D}_0$ and $p$ forces that $W=T^{(p,0)}$ has a preimage under $\dot{\sigma}$, namely $\bar{W}$.

At stage $n+1$ of the construction, work under the assumptions described above. Fixing $a\in T_n$, we have to define $\suc_T^n(a)$.
To this end let $D$ be the set of all conditions $b$ for which there are a nested antichain $S$ in $\vec{\P}\rest j$, and objects $\dot{\sigma}^b$, $u$, $\bar{u}$ and $\bar{S}$ satisfying the following:

\begin{enumerate}[label=(D\arabic*)]
\item 
\label{item:FirstConditionDefiningPredenseSet}
$b \in \P_{l(b)}$ and $l(b) < j$.
\item 
$l(a) \leq l(b)$ and $b \hook l(a) \leq a$.
\item 
$S \hooks T^{(a, n)}$, $\bS\in\bN$, $S\in N$.
\item 
$u\in \P_j$, $u \leq x^{(a,n)}$ and $u$ is a mixture of $S$ up to $j$.
\item 
$\bar{u} \hook \bar{i} \in \bG_{\bar{i}}$, and $\bar{u} \in \bar{D}_{n+1}$ (in $\bN[\bG_{\bar{i}}]$).
\item 
\label{item:InitialSegmentOfbForcesStuffIntoRangeOfTheOldEmbedding}
$b\rest l(a)\forces_{l(a)} S, u, l(b) \in\ran (\dot{\sigma}^{(a,n)})$.
\item 
\label{item:LastConditionDefiningPredenseSet}
$b$ forces the following statements with respect to $\P_{l(b)}$: \begin{enumerate}
\item $\dot{\sigma}^b (\check{\bar{\theta}},\check{\bar{i}},\check{\bar{j}},\check{\bar{\vec{\P}}},\dot{\bG}_{\bar{i}}, \check{\bar{s}},\check{\bar{u}},\check{\bS}, \check{\bar{q}}) = \check{\theta},\check{i},\check{j},\check{\vec{\P}},\dot{G}_i,\dot{\sigma}(\bs), \check{u},\check{S}, \check{q}$.
\item $\forall m \leq n$ $\dot{\sigma}^b(t_m) = \dot{\sigma}^{(a, n)}(t_m)$ and $\dot{\sigma}^b(\bar{D}_m) = \dot{\sigma}^{(a, n)}(\bar{D}_m)$,
\item $\dot{\sigma}^b: \check{\bN}[\dot{\bG}_{\bar{i}}] \prec \check{N}[\dot{G}_i]$,
\end{enumerate}
\end{enumerate}

Note that if $b\in D$ and $b'\le_{l(b)}b$, then $b'\in D$ as well. It follows that $D\rest l(a):=\{b\rest l(a)\st b\in D\}$ is open in $\P_{l(a)}$.
Thus, it suffices to show that $D\rest l(a)$ is predense below $a$ in $\P_{l(a)}$. For if we know this, $D\rest l(a)$ is dense below $a$, and we may choose a maximal antichain $A\sub D\rest l(a)$ (with respect to $\P_{l(a)}$), which then is a maximal antichain in $\P_{l(a)}$ below $a$. Thus, for every $c\in A$, we may pick a condition $b(c)\in D$ such that $b(c)\rest l(a)=c$, and define $\suc_T^n(a)=\{b(c)\st c\in A\}$. Now, for every $b\in\suc_T^n(a)$, let $S$, $\dot{\sigma}^b$, $u$ and $\bar{u}$ witness that $b\in D$, i.e., let them be chosen in such a way that \ref{item:FirstConditionDefiningPredenseSet}-\ref{item:LastConditionDefiningPredenseSet} hold. Set $T^{(b,n+1)}=S$, $\dot{\sigma}^{(b, n+1)}=\dot{\sigma}^b$, $q^{(b, n+1)}=u$, $\bar{q}^{(b,n+1)}$ and $\bar{T}^{(b,n+1)}=\bS$. Then $a$, $b$ satisfy \ref{item:coherence}-\ref{claim:LastOfFirstList} at stage $n$, and $b$ satisfies \ref{item:middlepart-beginning}-\ref{item:middlepart-end} at stage $n+1$.

To see that $D\rest l(a)$ is predense below $a$, let
$G_{l(a)}$ be $\P_{l(a)}$-generic over $V$ with $a \in G_{l(a)}$. I have to find a $b\in D$ so that $b \hook l(a) \in G_{l(a)}$. Work in $\V[G_{l(a)}]$. Let $\sigma_n = (\dot{\sigma}^{(a, n)})_{G_{\l(a)}}$. Since \ref{item:sigma(a,n)} holds at stage $n$, we have that $\sigma_n:\bN[\bG_{\bar{i}}]\prec N[G_i]$ 
and $\sigma_n(\bar{\theta}, \bar{\vec{\P}},\bs,\bar{\nu},\bar{i},\bar{j},\bG_{\bar{i}})
=\theta,\vec{\P},\sigma(\bs),\nu,i,j,G_i$.

We also have objects $\bar{q}^{(a,n)},\bT^{(a,n)},l(\bar{a})$ satisfying condition \ref{item:middlepart-end}, so that $\bar{q}^{(a,n)}\in\bG_{\bar{i}}*\bG_{\bar{i},\bar{j}}$, $\bT^{(a,n)}\in\bN$, $\bar{q}^{(a,n)}\le\bp_n$, $\sigma_n(\bar{q}^{(a,n)},\bT^{(a,n)})=q^{(a,n)},T^{(a,n)}$ and $\sigma_n(l(\bar{a}))=l(a)$.

Let's first find $u$ and $\bar{u}$ again. By elementarity, $\bar{q}^{(a, n)}$ is a mixture of $\bar{T}^{(a, n)}$ up to $\bar{j}$. We have that $\sigma_n:\kla{L_{\btau}[\bA][\bG_{\bar{i}}],\in,\bA}\prec\kla{L_\tau[A][G_i],\in,A}$, so in particular, $\bsigma:=\sigma_n\rest L_\btau[\bA]:\kla{L_\btau[\bA],\in,\bA}\prec\kla{L_\tau[A],\in,A}$, and $\bsigma(\bar{q}^{(a,n)},\bT^{(a,n)})=q^{(a,n)},T^{(a,n)}$. Clearly, in $L_\tau[A]$, it is true that $q^{(a,n)}$ is a mixture of $T^{(a,n)}$ up to $j$, so it is true in $L_\btau[\bA]$ that $\bar{q}^{(a,n)}$ is a mixture of $\bT^{(a,n)}$ up to $\bar{j}$, and by absoluteness, this it true in $\V$ as well.

Let $\bT_0^{(a,n)}=\{\ba_0\}$. Since we're working in $\V[G_{l(a)}]$, we have access to all $\bG_{k}$ for $k \leq l(a)$ by considering the pointwise pre image of $\sigma^{(a, n)}$. By induction these are all generic over $\bN[G_{\bar{i}}]$. Moreover, by Fact \ref{fact:CharacterizationOfMixtures}.\ref{item:Root}, $\ba_0\equiv\bar{q}^{(a,n)}\rest l(\ba_0)\in\bG_{l(\ba_0)}$, since $l(\bar{q}^{(a,n)})=\bar{j}>l(\ba_0)$. So $\ba_0\in\bG_{l(\ba_0)}$.
Let $\bar{r}\in\bT_1^{(\ba,n)}$, that is, $\bar{r}\in\suc_{\bT^{(\ba,n)}}^0(\ba_0)$, be such that $\bar{r}\rest l(a_0)\in\bar{G}_{l(\ba_0)}$. There is such a $\bar{r}$ by Definition \ref{definition:NestedAC-Mixture-beta-Nice-hooks}(5). By Fact \ref{fact:CharacterizationOfMixtures}.\ref{item:HigherUp}, again since $l(\bar{r})<\bar{j}=l(\bar{q}^{(a,n)})$, it follows that $\bar{r}\equiv\bar{r}\rest l(\ba_0)\verl\bar{q}^{(a,n)}\rest[l(\ba_0),l(\bar{j}))$.
Since $\bar{r}\rest l(\ba_0)\in\bG_{l(\ba_0)}$, this implies that $\bar{r}\rest[l(\ba_0),l(\bar{z}))\equiv\bar{q}^{(a,n)}\rest[l(\ba_0),l(\bar{r}))$ (in the partial order $\bar{P}_{l(\ba_0),l(\bar{r})}=\bar{P}_{l(\bar{r})}/\bG_{l(\ba_0)}$), and $\bar{q}^{(a,n)}\rest[l(\ba_0),l(\bar{r}))\in\bG\rest[l(\ba_0),l(\bar{r}))$.
So we have that $\bar{r}\rest l(\ba_0)\in\bG_{l(\ba_0)}$ and $\bar{r}\rest[l(\ba_0),l(\bar{r}))\in\bG\rest[l(\ba_0),l(\bar{r}))$. By Fact \ref{fact:FactorsAndQuotients}.\ref{item:CriterionForBeingInGeneric}, this implies that $\bar{r}\in\bG_{l(\bar{r})}$.

Now let $\bar{u}$ strengthen $\bar{r}^{\frown}\bar{r}^{(a, n)} \hook [\bar{l(r)}, \bar{j})$ so that $\bar{u} \in \bar{D}_{n+1}$.
By Lemma \ref{2.11}, applied in $\bN$, there is a nested antichain $\bar{S} \hooks \bar{T}^{(a, n)}$ such that $\bar{u}$ is a mixture of $\bS$ up to $\bar{j}$ and such that letting $\bS_0 = \{\bd_0\}$, we have that  $l(\bar{r}) \leq l(\bd_0)$ and $\bd_0 \rest l(\bar{r}) \leq \bar{r}$. Let $S,d_0,u=\sigma_n(\bS,\bd_0,\bu)$, and let $w\in G_{l(a)}$ force this. Since $a\in G_{l(a)}$, we may choose $w$ so that $w\le a$.

Note that $S,d_0,u$ are in $N$ (and hence in $\V$), since $\bS,\bd_0,\bu\in\bN$.

I will apply our inductive hypothesis $\phi(l(d_0))$, noting that $l(d_0)<j$, to $i =l(a) \leq l(d_0)$, the filters $\bG_{l(\bar{a})}$, $\bG_{{l(\bar{a})},l(\bd_0)}$, the models $\bN$, $N$, the condition $w$ (in place of $p$), the name $\dot{\sigma}^{(a,n)}$ (in place of $\dot{\sigma}$ and the parameter $\bs'\in\bN$ which I will specify below.
No matter which $\bs'$ chosen, by the inductive hypothesis, there is a condition $w^*\in\P_{l(d_0)}$ with $w^*\rest l(a)=w$ and a name $\dot{\sigma}'$ such that $w^*$ forces with respect to $\P_{l(a)}$:
\begin{enumerate}[label=(\alph*)]
  \item
  $\dot{\sigma}'(\check{\bs}',\check{\bar{\vec{\P}}},\check{l(\bar{a})},\check{l(\bar{d}_0)},\check{\btheta},\dot{\bG}_{l(\bar{a})})=\dot{\sigma}^{(a,n)}((\check{\bs}',\check{\bar{\vec{\P}}},\check{l(\bar{a})},\check{l(\bar{d}_0)},\check{\btheta},\dot{\bG}_{l(\bar{a})})$,
  \item
  $(\dot{\sigma}'{}^{-1})`` \dot{G}_{l(a),l(d_0)}$ is generic over $\check{\bN}[\dot{\bG}_{l(\bar{a})}]$, and
  \item
  $\dot{\sigma}':\check{\bN}[\dot{\bG}_{l(\bar{a})}]\prec \check{N}[\dot{G}_{l(a)}]$.
\end{enumerate}

By choosing $\bs'$ appropriately, and temporarily fixing $H$ as above, we may insure that it is forced that $\dot{\sigma}'$ moves any finite number of members of $\bN$ the same way $\dot{\sigma}^{(a,n)}$ does.
Thus, we may insist that $w^*$ forces that $\dot{\sigma}'(\bu,\bd_0,\bS)=\dot{\sigma}^{(a,n)}(\bu,\bd_0,\bS)$. Recall that $w$ forced that $\dot{\sigma}^{(a,n)}(\bu,\bd_0,\bS)=u,d_0,S$.
Hence, since $w^*\rest l(a)=w$, we get that $w^*$ forces that $\dot{\sigma}'(\bu,\bd_0,\bS)=u,d_0,S$ as well.

In addition, we may insist that $\sigma'$ moves the parameters $\bar{i},\bar{j},\bar{\vec{\P}},\btheta,\bs,\bp_0,\ldots,\bp_n,t_0,\ldots,t_n$ the same way $\dot{\sigma}^{(a,n)}$ does. Note that already $a$ forced with respect to $\P_{l(a)}$ that $\bar{i},\bar{j},\bar{\vec{\P}},\btheta$ are mapped to $i,j,\vec{\P},\theta$ by $\dot{\sigma}^{(a,n)}$.



Now, set $b=w^*$, $\dot{\sigma}^b=\dot{\sigma}'$. It follows that the conditions \ref{item:FirstConditionDefiningPredenseSet}-\ref{item:LastConditionDefiningPredenseSet} are satisfied, that is, $b\in D$. Most of these are straightforward to verifty; let me just remark that $b$ forces that $\dot{\sigma}^b(\bG_{\bar{i}})=\dot{G}_i$ because it forces that $\dot{\sigma}^b(\bar{i})=i$ and $\dot{\sigma}^b(\bG_{l(\bar{a})})=\dot{G}_{l(a)}$. Condition \ref{item:InitialSegmentOfbForcesStuffIntoRangeOfTheOldEmbedding} holds because $b\rest l(a)=w$. 
For the same reason, we have that $b\rest l(a)\in G_{l(a)}$, completing the proof that $D\rest l(a)$ is predense below $a$. This concludes the treatment of case 1.

\noindent\emph{Case 2:} $j$ is a successor ordinal.

Let $j=k+1$. Since we assumed $i<j$, it follows that $i\le k$. Inductively, we know that $\phi(k)$ holds. Note that $\bar{j}$ is of the form $\bk+1$, where $p$ forces with respect to $\P_i$ that $\dot{\sigma}(\bk)=k$, and if we let $\bG_{\bk}=\bG_\bj\rest\bk$, then the assumptions \ref{item:FirstAssumption}-\ref{item:LastAssumption} are satisfied by 
$p\in \P_i$, $\dot{\sigma}\in\V^{P_i}$, $q\in P_j$, $y\le q\rest i$,
$\theta$, $\tau$, $A$, $N$, $\bN$, $\bs,\bar{\vec{\P}},\bar{i},\bar{k}\in\bN$, and $k$. By $\phi(k)$, we obtain a condition $p^{**}\in\P_k$ with $p^{**}\rest i=p$ and a $\P_k$-name $\dot{\bsigma}$ such that $p^{**}$ forces
\begin{enumerate}[label=(\alph*1)]
  \item
  $\dot{\bsigma}(\check{\bs},\check{\bar{\vec{\P}}},\check{\bar{i}},\check{\bk},\check{\bar{j}},\check{\btheta},\check{\bar{q}})=\sigma(\check{\bs},\check{\bar{\vec{\P}}},\check{\bar{i}},\check{\bk},\check{\bar{j}},\check{\btheta},\check{\bar{q}})$,
  \item
  $\dot{\bsigma}^{-1}``\dot{G}_{i,k}$ is generic over $\check{\bN}[\dot{\bG}_{\bar{i}}]$
  \item
  $\dot{\bsigma}:\check{\bN}[\dot{\bG}_{\bar{i}}]\prec \check{N}[\dot{G}_i]$.
\end{enumerate}
It follows then that $p^{**}$ forces that $\dot{\bsigma}^{-1}``\dot{G}_k$ is generic over $\bN$, and hence that $\dot{\bsigma}$ lifts to an elementary embedding from $\bN[\bG_\bk]\prec N[\dot{G}_k]$ that maps $\bG_\bk$ to $\dot{G}_k$. Let $\dot{\tsigma}$ be a $\P_k$-name such that $p^{**}$ forces that $\dot{\tsigma}$ is that lifted embedding.

Temporarily fix a $\P_k$-generic filter $H$ that contains $p^{**}$. In $\V[H]$, the forcing $\P_{k,k+1}=\P_{k,j}=\P_j/H$ is $\infty$-subproper. Letting $\tsigma=\dot{\tsigma}^H$, we have that $\tsigma:\bN[\bG_\bk]\prec N[H]$, and thus, since $\bN[\bG_\bk]$ is full, there is a condition $r$ in $\P_{k,j}$ such that $r$ forces the existence of an elementary embedding $\sigma'$ with
\begin{enumerate}[label=(\alph*2)]
  \item
  $\sigma'(\check{\bs},\check{\bar{\vec{\P}}},\check{\bar{i}},\check{\bk},\check{\bar{j}},\check{\btheta},\check{\bar{q}},\dot{\bG}_{\bar{i}},)=\tsigma(\check{\bs},\check{\bar{\vec{\P}}},\check{\bar{i}},\check{\bk},\check{\bar{j}},\check{\btheta},\check{\bar{q}},\dot{\bG}_{\bar{i}},)$,
  \item
  $(\sigma')^{-1}``\dot{G}_{k,j}$ is generic over $\check{\bN}[\dot{\bG}_{\bar{k}}]$
  \item
  $\sigma':\check{\bN}[\dot{\bG}_{\bar{k}}]\prec N[H]$.
\end{enumerate}
Since this holds in $\V[H]$ whenever $p^{**}\in H$, there is a $\P_k$-name $\tau$ which is essentially a name for $r$ above - more precisely, $\tau$ is such that $p^{**}$ forces that $\tau\in \P_j$, $\tau\rest k\in\dot{G}_k$ and $\tau\rest[k,j)$ has the properties of $r$, as listed above.
Since the iteration is nice, there is a condition $p^*\in \P_j$ such that $p^*\rest k=p^{**}$ and $p^*$ forces that $\tau\rest[k,j)\equiv p^*\rest[k,j)$; see Definition \ref{def:NiceIterations}, part (1). I claim that $p^*$ is as wished.

First, note that $p^*\rest i=(p^*\rest k)\rest i=p^{**}\rest i=p$. Now, let $G_j$ be a $\P_j$-generic filter with $p^*\in G_j$. We have to show that in $\V[G_j]$, there is a $\sigma'$ such that, letting $\sigma=\dot{\sigma}^{G_i}$, the following hold:
\begin{enumerate}[label=(\alph*)]
  \item
  $\sigma'(\bs,\bar{\vec{\P}},\bar{i},\bar{j},\btheta,\bar{q})=\sigma(\bs,\bar{\vec{\P}},\bar{i},\bar{j},\btheta,\bar{q})$,
  \item
  $(\sigma ' {}^{-1})``G_{i,j}$ is generic over $\bN[\bG_{\bar{i}}]$,
  \item
  $\sigma':\bN[\bG_{\bar{i}}]\prec N[G_i]$.
\end{enumerate}
But this follows, because $\V[G_j]=\V[G_k][G_{k,j}]$, where $p^*\rest[k,j)\in G_{k,j}$, where $p^{**}\in G_k$, so writing $H$ for $G_k$, this is the situation described above. Moreover, $p^*\rest[k,j)\in G_{k,j}$ and $p^*\rest[k,j)\equiv\dot{r}^{G_j}$, where $\dot{r}$ is a name for the condition $r$ mentioned above. Thus, there is a $\sigma'$ in $\V[G_j]$ such that the conditions (a2)-(c2) listed above hold in $\V[G_j]$. Remembering that $\tsigma$ lifts $\bsigma$ and $\bsigma$ moves the required parameters as prescribed (by (a1)-(c1)), it follows that (a)-(c) are satisfied.
\end{proof}

This completes the iteration theorems for $\infty$-subcomplete and $\infty$-subproper forcing notions. Let me make one strengthening of these theorems that will be useful in applications. The key step in both proofs was the construction of the fusion sequence in the limit stage and in particular the construction of the conditions $u$ and $\bar{u}$. However, in both proofs $u$ only needed to be stronger than a certain condition and it could have been strengthened it further if needed. Thus the following theorem comes from the proofs above.
\begin{theorem}
Let $\langle \P_\alpha \; | \; \alpha \leq \nu \rangle$, $\theta$, $N$, etc be as in either Theorem \ref{sciteration} or Theorem \ref{spiteration} with $\nu$ limit and suppose for all $n < \omega$ we have that $E_n \subseteq P_\nu$ satisfies the following: for every $p \in \mathbb P_\nu$ and every $\alpha < \nu$ if in $\V^{P_\alpha}$ there is a name for an embedding $\dot{\sigma} : \bN \prec N$ with $p$ forced to be in the range of $\dot{\sigma}$ and for any $u \leq p$ in the range of $\dot{\sigma}$ with $u \hook \alpha \in \dot{G}_\alpha$ then there is an $s \leq u$ in the range of $\dot{\sigma}$ so that $s \hook \alpha \in \dot{G}_\alpha$ and $s \in E_n$. Then there is a $q \leq p$ forcing that there is a decreasing sequence $q_0 \geq q_1 \geq ... \geq q_n \geq ...$ all in $G$ so that $q_{n+1} \in E_n$. In particular, $q$ forces $G \cap E_n \neq \emptyset$ for all $n < \omega$.
\label{thmEn}
\end{theorem}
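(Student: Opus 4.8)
The plan is to revisit the proof of Case 1 of the two iteration theorems (Theorems \ref{sciteration} and \ref{spiteration}) and observe that it requires essentially no modification: the construction there builds a nested antichain $T$, a fusion structure $\langle \kla{q^{(a,n)}, T^{(a,n)}}\st n<\omega, a\in T_n\rangle$ and names $\langle \dot{\sigma}^{(a,n)}\st n<\omega, a\in T_n\rangle$ by recursion on $n$, and the only place where the choice of the conditions $u$ (and its preimage $\bar{u}$) matters is in verifying that $u$ strengthens the relevant condition $\bar{r}^\frown \bar{q}^{(a,n)}\rest[l(\bar{r}),\bar{j})$ and $\bar{p}_{n+1}$. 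Since $u$ was allowed to be \emph{any} sufficiently strong condition below $q^{(a,n)}$ (indeed it was built by strengthening $\bar{u}$ arbitrarily, then applying Lemma \ref{2.11}), I may at stage $n$ additionally demand that the preimage $\bar{u}$ land in whatever dense-below-$\bar{p}_{n+1}$ set is required, and in particular — this is the new content — that $u$ itself lie in $E_n$.

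First I would set up the standard machinery: fix $\P_\nu$, $\theta$, $N$, $\bN$, $\sigma\colon\bN\prec N$, and a condition $p$ as in the theorem; without loss of generality, by applying the iteration theorem itself (with $i=0$), there is already a name $\dot\sigma^{(p,0)}$ for an embedding with $p$ in its range and everything moved correctly. The hypothesis on $E_n$ says precisely that the property ``there is an extension of the current condition, still in the range of the embedding and still generic-compatible below the relevant stage $\alpha$, which additionally lies in $E_n$'' is dense in the appropriate sense. So when carrying out the recursion, at step $n+1$, after producing the strengthening $\bar u\in\bar D_{n+1}$ (respectively $\bar u\le\bar p_{n+1}$ in the subcomplete case) as in the original proof, I use the $E_n$-hypothesis applied inside $\V[G_{l(a)}]$ to the condition we have so far and the stage $\alpha=l(a)$ to further strengthen $u=\sigma_n(\bar u)$ into $E_n$, while pulling the strengthening back under $\sigma_n$ to keep a preimage in $\bN$; then Lemma \ref{2.11} gives the nested antichain $\bar S\hooks\bar T^{(a,n)}$ of which $\bar u$ is a mixture, and the rest of the verification of (D1)--(D7) (or its subproper analogue) goes through unchanged, with the extra clause ``$q^{(b,n+1)}=u\in E_n$'' added to the data recorded at each node. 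Then, letting $q^*\in\P_\nu$ be the fusion of the resulting fusion structure and $p^*=p\verl q^*\rest[0,\nu)$, Proposition \ref{fusion} yields, for any generic $G\ni p^*$, a sequence $\langle a_n\st n<\omega\rangle$ with $a_{n+1}\in\suc_T^n(a_n)$ and $q^{(a_n,n)}\in G$; since $q^{(a_{n+1},n+1)}\le q^{(a_n,n)}$ and $q^{(a_{n+1},n+1)}\in E_n$, the sequence $q_n:=q^{(a_n,n)}$ is as required, and in particular $G\cap E_n\neq\emptyset$ for every $n$.

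The main obstacle I anticipate is purely bookkeeping rather than conceptual: I must check that the $E_n$-strengthening step does not disturb any of the carefully balanced requirements in the definition of the predense set $D$ — specifically that after replacing $u$ by a stronger $u'\in E_n$ (with a preimage $\bar u'\le\bar u$ inside $\bN$), the condition $\bar u'$ is still a mixture of a nested antichain $\bar S'\hooks\bar T^{(a,n)}$ of the required shape, which is exactly the content of Lemma \ref{2.11} applied to $\bar u'$ in place of $\bar u$, so it causes no trouble; and that the names $\dot\sigma^b$ can still be arranged to move the now slightly larger parameter list ($\bar u'$, $\bar S'$, together with all the $t_m$, $\bar p_m$ or $\bar D_m$ for $m\le n$) the same way $\dot\sigma^{(a,n)}$ does, which follows just as before by choosing the auxiliary parameter $\bar s'$ appropriately when invoking the inductive hypothesis $\phi(l(d_0))$. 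Consequently this is genuinely a ``read-off'' from the two proofs, and I would phrase it as such: rerun the argument of Case 1, inserting the one extra density demand at each successor step of the recursion, and note that the successor-ordinal Case 2 is not needed here since $\nu$ is assumed to be a limit.
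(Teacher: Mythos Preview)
Your proposal is correct and matches the paper's approach exactly. The paper's own proof is in fact much terser than yours: it simply observes (in the paragraph preceding the theorem) that in both iteration proofs the condition $u$ ``only needed to be stronger than a certain condition and it could have been strengthened further if needed,'' and then defers all details to Miyamoto's \cite[Lemma 4.3]{miyamoto}, noting that ``making the exact same modification he makes in that case to my proofs of Theorems \ref{sciteration} and \ref{spiteration} works here.'' Your write-up is precisely the modification the paper has in mind.
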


\begin{proof}
In the case of semiproper forcing this is checked in detail by Miyamoto as \cite[Lemma 4.3]{miyamoto}. Making the exact same modification he makes in that case to my proofs of Theorems \ref{sciteration} and \ref{spiteration} works here. The reader is referred to Miyamoto's paper for the details. 
\end{proof}

\section{Trees}
In this section I lift some results about preservation of properties of trees from \cite{miyamoto} to the context of $\infty$-subproper forcing.

\begin{lemma}
Let $S = (S, \leq_S)$ be a Souslin tree and $\mathbb P = \langle \P_\alpha \; | \; \alpha \leq \nu\rangle$ be a nice iteration of $\infty$-subproper forcings such that for each $i$ with $i + 1 \leq \nu$, $\forces_i \P_{i, i+1} \; {\rm preserves \; } \check{S}$ then $\P_\nu$ preserves $S$.
\label{souslinpre}
\end{lemma}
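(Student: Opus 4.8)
The plan is to reduce the preservation of Souslinity of $S$ along the iteration to the assertion that $S$ remains Souslin in the final extension, which in turn reduces to showing that every antichain of $S$ in $V^{\P_\nu}$ is contained in a \emph{bounded} antichain --- equivalently, that no new cofinal branch is added and no new uncountable antichain is added. Since $\infty$-subproper forcing preserves $\omega_1$ (by the Observation that every $\infty$-subproper forcing preserves stationary subsets of $\omega_1$, hence in particular $\omega_1$), the tree $S$ retains height $\omega_1$ and all levels countable in $V^{\P_\nu}$. So the real content is: if $\dot{A}$ is a $\P_\nu$-name forced to be a maximal antichain in $\check{S}$, then some condition forces $\dot{A}$ to be countable. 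The standard trick (going back to the usual Souslin-tree preservation arguments, and adapted by Miyamoto to the semiproper case) is to absorb $\dot{A}$ into an elementary submodel via the iteration theorem and then use a node of $S$ at the model's level $\delta = \bN \cap \omega_1$ to ``seal'' $A$.

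First I would invoke the standard setup: fix a sufficiently large regular $\theta$ verifying the relevant instances, a model $N = L_\tau[A]$ with $H_\theta \subseteq N$ containing $\P_\nu$, $S$, and a name $\dot{A}$ for a maximal antichain in $S$, and an embedding $\sigma : \bN \prec N$ with $\bN$ countable, transitive and full, with everything relevant in the range of $\sigma$. Let $\delta = \bN \cap \omega_1 = \crit(\sigma)$ (using Lemma 1.2.7 of \cite{Minden:dissertation} that full countable models are not pointwise definable, so $\sigma$ is genuinely non-trivial on ordinals below $\omega_1$). Given any condition $\bar p \in \bar{\P}_\nu$ and any node $t \in S$ on level $\delta$ --- note $t \notin \bN$ since level $\delta$ is empty in $\bN$, but the set of $\bar{\P}_{\bar\nu}$-generics-below-$\bar{p}$-deciding-elements-of-$\dot{\bar A}$ is in $\bN$ --- I would work below $t$ in $S$: since $S$ is Souslin (in $V$), $t$ is below some element in every maximal antichain of $S$ that lies in a generic extension by a small forcing, and in particular I can arrange a $\bar{\P}_{\bar\nu}$-generic filter $\bar G$ over $\bN$ containing $\bar p$ such that in $\bN[\bG]$ some element $s$ of the realized antichain $\dot{\bar A}^{\bG}$ satisfies $s <_S t$ (here one uses that $\bN$ is full and that $S$ being Souslin means that any countable attempt to build a maximal antichain through $t$ succeeds, so by genericity over $\bN$ the realized antichain is predense below $t$). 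Then by the iteration theorem (Theorem \ref{spiteration}, applied with $i = 0$, $j = \nu$), there is $q \in \P_\nu$ forcing an embedding $\sigma' : \bN[\bG] \prec N[G]$ with $\sigma'$ moving the generic correctly; and by elementarity $\sigma'(s) = s$ (as $s \in S$ is below level $\delta$, $\sigma'$ fixes it) and $\sigma'$ of ``$\dot{\bar A}^{\bG}$ is a maximal antichain in $S$'' gives that $\dot{A}^{G} \cap \bN[\bG] = \dot{\bar A}^{\bG}$ is a maximal antichain of $S^{\bN[\bG]}$, hence actually maximal in $S$ below $t$, hence $q$ forces $\dot A \subseteq S \restriction \delta$ --- so $\dot A$ is countable.

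The main obstacle, and the step I would spend the most care on, is arranging the $\bar{\P}_{\bar\nu}$-generic $\bG$ over $\bN$ so that in $\bN[\bG]$ the realized antichain $\dot{\bar A}^{\bG}$ is predense below the external node $t$ of $S$ at level $\delta$. This is precisely where the hypothesis ``$\forces_i \P_{i,i+1}$ preserves $\check S$'' is needed, iterated up: one needs to know that no stage of the iteration over $\bN$ (equivalently, $\bar{\P}_{\bar\nu}$ over $\bN$ as a whole) adds a cofinal branch through $S^{\bN}$ or a new maximal antichain, so that the generic realization of $\dot{\bar A}$ restricted below $t$ is still a maximal antichain in the $V$-tree $S$ restricted below $t$. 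Since $\bN$ is countable, $S^{\bN} = S \restriction \delta$ is a countable subtree, and Souslinity of $S$ in $V$ guarantees that any maximal antichain of $S \restriction \delta$ that we can force into $\bN[\bG]$ --- using that $\bar{\P}_{\bar\nu}$ preserves $\check S$ in $\bN$ --- remains predense below $t$ in $S$. I would either cite the corresponding argument in \cite{miyamoto} for semiproper nice iterations (noting that the hulls condition plays no role, only the existence of $\sigma'$ with the correct action on the generic, which is exactly what Theorem \ref{spiteration} delivers), or write it out via a density argument inside $\bN$: below $\bar p$, densely many conditions decide enough of $\dot{\bar A}$ to put an element below a prescribed node of $S^{\bN}$, by maximality of $\dot{\bar A}$ and preservation of $\check S$. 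The remaining bookkeeping --- verifying that $\sigma'$ fixes $S \restriction \delta$ pointwise, that $\delta \in N[G]$ is still $\omega_1$, and that $\dot A^G \cap \bN[\bG]$ being maximal in $S \restriction \delta$ forces $\dot A^G \subseteq S \restriction \delta$ --- is routine.
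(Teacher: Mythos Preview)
Your outline has a real gap, and it stems from treating Theorem \ref{spiteration} as if it were the $\infty$-subcomplete iteration theorem. In the subproper version you \emph{do not} get to choose the $\bar{\P}_{\bar\nu}$-generic $\bG$ over $\bN$ in advance and then find a $q$ with $\sigma'{}``\bG\subseteq G$; what you get is a $p^*$ such that for any $G\ni p^*$, the set $\bG:=(\sigma')^{-1}{}``G$ is \emph{some} generic over $\bN$. So the step ``I can arrange a $\bar{\P}_{\bar\nu}$-generic filter $\bG$ over $\bN$ such that some $s\in\dot{\bar A}^{\bG}$ satisfies $s<_S t$'' is not available to you. You would instead need to know that \emph{every} $\bar{\P}_{\bar\nu}$-generic over $\bN$ yields a $\dot{\bar A}^{\bG}$ that is bounded in $\bar S$, i.e.\ that $\bN$ believes $\bar{\P}_{\bar\nu}$ preserves the Souslinity of $\bar S$. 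By elementarity that is exactly the statement you are trying to prove, so the argument is circular. Your closing remark (``iterated up \ldots\ $\bar{\P}_{\bar\nu}$ over $\bN$ as a whole'') names the obstacle correctly but does not dissolve it.

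The paper avoids both problems by proceeding by induction on $\nu$ and by invoking not the bare iteration theorem but its strengthening, Theorem \ref{thmEn}. One enumerates the $\delta$-th level of $S$ as $\langle s_n\mid n<\omega\rangle$ and sets $E_n=\{r\in\P_\nu\mid\exists s<_S s_n\ r\Vdash\check s\in\dot A\}$. The fusion machinery then produces a single $q\le p$ forcing $G\cap E_n\neq\emptyset$ for every $n$, which immediately bounds $\dot A$ below level $\delta$. The predensity condition of Theorem \ref{thmEn} is checked \emph{at an intermediate stage} $\alpha<\nu$: one forms, in $\bN[\bG_\alpha]$, the set $D$ of nodes $s\in\bar S$ for which some $\bar r\le\bar u$ with $\bar r\restriction\bar\alpha\in\bG_\alpha$ forces $s\in\dot{\bar A}$; maximality of $\dot A$ makes $D$ predense in $\bar S$, and now the \emph{inductive hypothesis} (that $\P_\alpha$ preserves $S$, hence by elementarity $\bar S$ is still Souslin in $\bN[\bG_\alpha]$) gives an $s\in D$ below $s_n$. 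The point is that Souslinity is only ever used after forcing with $\P_\alpha$ for $\alpha<\nu$, never after the full $\P_\nu$, and the sets $E_n$ are woven in one at a time by the fusion rather than all at once by a single application of the iteration theorem.
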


Let me stress that the proof of this theorem is similar to that of Lemma 5.0 and Theorem 5.1 of \cite{miyamoto}.

\begin{proof}
This is proved by induction on $\nu$. The successor stage is by hypothesis so I focus on the limit case and the inductive assumption is not just that $P_{i, i+1}$ preserves $S$ but in fact $P_i$ preserves $S$ for all $i < \nu$.  From now on assume $\nu$ is a limit ordinal. Let $\dot{A}$ be a $\P_\nu$ name for an antichain of $S$ and let $p \in \P_\nu$ force that $\dot{A}$ is maximal. I need to find a $q \leq p$ forcing that $\dot{A}$ is countable. Fix $\theta$ sufficiently large that $\dot{A}, \mathbb P, S \in H_\theta$ and fix $\sigma:\bN \prec N$ as in the standard setup. Denote by $\delta = \omega_1 \cap \bN$. Note that for all $\alpha < \delta$ and all $s \in S_\alpha$ we may assume that $\sigma (\bar{s}) = s$ since we may assume $S \subseteq H_{\omega_1}$. Enumerate the $\delta^{\rm th}$ level of $S$ as $\langle s_n \; | \; n < \omega\rangle$. For each $n$, define $E_n = \{r \in \P_\nu \; | \; \exists s \in S \; {\rm such \; that} \; s <_S s_n \; {\rm and} \; r \forces s \in \dot{A}\}$. We need to check that the $E_n$'s satisfy the predensity condition stipulated in Theorem \ref{thmEn}. If we can do this then it follows there is a $q$ forcing that $\dot{G} \cap E_n \neq \emptyset$ for all $n< \omega$ and hence that the maximal antichain is bounded below $\delta$ so countable.

To check the predensity condition, fix $u \leq p$, in the range of $\sigma$, $\alpha < \nu$ and a $P_\alpha$-name $\dot{\sigma}_\alpha$ which will evaluate to an embedding witnessing the $\infty$-subproperness of $P_\alpha$. Let $G_\alpha$ be $\P_\alpha$-generic over $V$, $\sigma_\alpha = (\dot{\sigma}_\alpha)^{G_\alpha}$ and $\sigma_\alpha(\bar{u}, \bar{\alpha}, \bar{S}) = u, \alpha, S$. I want to find a $\bar{r} \in \bN \cap \bar{P}_\nu$ so that $\bar{r} \leq \bar{u}$, $\sigma_\alpha (\bar{r} \hook \bar{\alpha}) = r \hook \alpha \in G_\alpha$ and $r \in E_n$. Let $D$ be the set of $s \in \bar{S}$ for which there is a condition $\bar{r} \in \bar{\P_\nu}$ which strengthens $\bar{u}$ and so that $\bar{r}\hook \bar{\alpha} \in \bar{G}_\alpha$ and $\bar{r} \forces s \in \bar{\dot{A}}$. In symbols $D = \{s \in \bar{S} \; | \; \exists \bar{r} \in \bar{P_\nu} \;  \bar{r} \leq \bar{u} \; \bar{r} \hook \bar{\alpha} \in \bG_\alpha \; {\rm and} \; \bar{r} \forces s \in \bar{\dot{A}}\}$ where $\bG_\alpha := \sigma_\alpha^{-1} ``G_\alpha$. Note that since $\sigma_\alpha$ is an $\infty$-subcompleteness embedding it lifts to an embedding $\sigma '_\alpha : \bN[\barG_\alpha] \prec N[G_\alpha]$ and this set $D$ is in $\bN[\barG_\alpha]$. Moreover $D$ is a predense subset of $\bar{S}$ in $\bN[G_\alpha]$ since, by the maximality of $\dot{A}$ for any $t \in S$ there is densely many conditions forcing some $s \in \dot{A}$ compatible with $t$. Finally since $S$ remains Souslin in $\V[G_\alpha]$ by hypothesis and thus $\bar{S}$ remains Souslin in $\bN[\barG_\alpha]$ there is an $s \in D \cap \bN[\barG_\alpha]$ below $s_n$. Letting $\bar{r} \in \bN[\barG_\alpha]$ be the witness for this $s$ completes the proof.
\end{proof}

A similar modification of Lemma 5.2 and Theorem 5.3 of \cite{miyamoto} can be used to prove the preservation of \say{not adding uncountable branches through trees}.

\begin{lemma}
Let $T$ be an $\omega_1$-tree and let  $\mathbb P = \langle \P_\alpha \; | \; \alpha \leq \nu\rangle$ be a nice iteration of $\infty$-subproper forcings such that for each $i$ with $i + 1 \leq \nu$, $\forces_i ``\P_{i, i+1}$ does  not add an uncountable branch through $\check{T}$" then $\P_\nu$ does not add an uncountable branch through $T$.
\end{lemma}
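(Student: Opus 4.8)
The plan is to mirror the structure of the proof of Lemma~\ref{souslinpre}, replacing the role of ``$\dot A$ is a maximal antichain of the Souslin tree $S$'' by ``$\dot b$ is a name for an uncountable branch through the $\omega_1$-tree $T$''. As in the Souslin case, I would argue by induction on $\nu$, with the successor step being exactly the hypothesis, so the real content is the limit stage $\nu$. Assume then that $\P_i$ adds no uncountable branch through $T$ for all $i<\nu$, that $\nu$ is a limit ordinal, and suppose toward a contradiction that some $p\in\P_\nu$ forces $\dot b$ to be an uncountable branch through $T$ which is not in the ground model. Fix a sufficiently large $\theta$ with $T,\mathbb P,\dot b\in H_\theta$ and put $\sigma\colon\bN\prec N$ in the standard setup with $\delta=\omega_1\cap\bN$; as in Lemma~\ref{souslinpre} we may assume $T\subseteq H_{\omega_1}$ so that $\sigma$ fixes every node of $T$ of height $<\delta$ and $\sigma(\bar T)=T$.

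\textbf{Key step: defining the sets $E_n$.} The crucial idea (following Miyamoto's Lemma~5.2 and Theorem~5.3) is that since $\dot b$ is forced to be a \emph{new} uncountable branch, $p$ must force that $\dot b$ splits away from the ground model; concretely, for any two distinct nodes $s,t\in T_\delta$ (the $\delta$-th level of $T$), it is dense below $p$ to decide which of $s,t$ lies below $\dot b$, and a branch is determined by more than one node. So, enumerating $T_\delta=\langle s_n\mid n<\omega\rangle$ and picking for each pair a distinguishing node, I would set, roughly, $E_n=\{r\in\P_\nu\mid \exists t\in T\ \ t<_T s_n\text{ and } r\forces t\notin \dot b\}$ --- i.e. $r$ forces $\dot b$ to avoid the cone below $s_n$. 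Because $\dot b$ is forced to be an actual branch and $|T_\delta|\ge 1$ is wrong --- rather, because $\dot b$ is forced not to be any ground-model branch, for each $n$ the set of conditions forcing $\dot b$ to avoid $s_n$'s cone is dense below $p$ (if it weren't, some condition would force $\dot b$ to pass through $s_n$, pinning the branch below $\delta$ down to a ground-model object on a club, contradicting newness). I would then verify that the $E_n$ satisfy the predensity hypothesis of Theorem~\ref{thmEn}: given $u\le p$ in the range of an $\infty$-subproperness name $\dot\sigma_\alpha$ over $\P_\alpha$, work below $G_\alpha$, lift $\sigma_\alpha$ to $\sigma'_\alpha\colon\bN[\barG_\alpha]\prec N[G_\alpha]$, and consider in $\bN[\barG_\alpha]$ the set $D$ of nodes $t\in\bar T$ below $\bar s_n$ for which some $\bar r\le\bar u$ with $\bar r\rest\bar\alpha\in\barG_\alpha$ forces $t\notin\bar{\dot b}$. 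By elementarity and the inductive hypothesis that $\bar\P_\nu$ adds no uncountable branch through $\bar T$ over $\bN[\barG_\alpha]$, $D$ is predense below $\bar s_n$ in $\bar T$, hence meets any one-point cone; in particular we find $t<_{\bar T}\bar s_n$ in $D$, and its witness $\bar r$ gives the required strengthening. Applying Theorem~\ref{thmEn} yields $q\le p$ forcing $G\cap E_n\neq\emptyset$ for every $n$, so $q$ forces that $\dot b$ avoids the cone below every $s_n\in T_\delta$, i.e.\ $\dot b\cap T_\delta=\emptyset$, which is absurd for a branch of length $>\delta$. This contradiction shows $\P_\nu$ adds no uncountable branch through $T$.

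\textbf{Main obstacle.} The delicate point is the \emph{predensity of $D$ inside $\bN[\barG_\alpha]$}: one must know that in the small model, over its version of the generic, the tail iteration $\bar\P_\nu$ does not add a branch through $\bar T$, so that the ``avoiding'' conditions remain dense at each node. This requires that the inductive hypothesis be applied not in $V$ but inside $\bN[\barG_\alpha]$, which is legitimate because $\bar\P_\nu\in\bN$, $\bN\models\ZFC^-$ is full, and $\barG_\alpha$ is $\bar\P_\alpha$-generic over $\bN$ by the $\infty$-subproperness setup; still, one has to be careful that ``$T$ is an $\omega_1$-tree'' is absolute enough between $N$ and $\bN$ and that the tree structure below height $\delta$ is literally preserved by $\sigma$, exactly as arranged in Lemma~\ref{souslinpre}. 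Once that bookkeeping is in place, the combinatorics of choosing the $s_n$ and defining the $E_n$ is routine, and the argument closes as above.

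\begin{proof}
The proof is by induction on $\nu$, following the pattern of Lemma \ref{souslinpre}. The successor step is immediate from the hypothesis, so assume $\nu$ is a limit ordinal, and, inductively, that $\P_i$ adds no uncountable branch through $T$ for all $i<\nu$. Suppose toward a contradiction that some $p\in\P_\nu$ forces that $\dot b$ is an uncountable branch through $T$ with $\dot b\notin\V$. Fix $\theta$ large enough that $T,\mathbb P,\dot b\in H_\theta$ and let $\sigma\colon\bN\prec N$ be as in the standard setup; put $\delta=\omega_1\cap\bN$. As in Lemma \ref{souslinpre} we may assume $T\subseteq H_{\omega_1}$, so $\sigma$ fixes every node of $T$ of height $<\delta$ and $\sigma(\bar T)=T$. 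Enumerate the $\delta$-th level $T_\delta=\langle s_n\mid n<\omega\rangle$.

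For each $n<\omega$ set
\[
E_n=\{r\in\P_\nu\mid \exists t\in T\ (t<_T s_n\ \text{and}\ r\forces t\notin\dot b)\}.
\]
First, $E_n$ is dense below $p$: if not, some $r\le p$ forces that $\dot b$ passes through every node below $s_n$, hence that $\dot b\rest\delta$ is the branch $\{t\in T\mid t<_T s_n\}$, a ground model object, and then $\dot b$ is determined over $\V$ by its value at a single level $\ge\delta$, contradicting $\dot b\notin\V$ together with genericity.

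Now I check that the $E_n$ satisfy the predensity hypothesis of Theorem \ref{thmEn}. Fix $u\le p$ in the range of a $\P_\alpha$-name $\dot\sigma_\alpha$ for an $\infty$-subproperness embedding, $\alpha<\nu$. Let $G_\alpha$ be $\P_\alpha$-generic over $\V$, $\sigma_\alpha=(\dot\sigma_\alpha)^{G_\alpha}$, $\sigma_\alpha(\bar u,\bar\alpha,\bar T,\bar s_n)=u,\alpha,T,s_n$, and $\barG_\alpha=\sigma_\alpha^{-1}``G_\alpha$, which is $\bar\P_\alpha$-generic over $\bN$. Since $\sigma_\alpha$ is an $\infty$-subproperness embedding it lifts to $\sigma'_\alpha\colon\bN[\barG_\alpha]\prec N[G_\alpha]$. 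Work in $\bN[\barG_\alpha]$ and let
\[
D=\{t\in\bar T\mid t<_{\bar T}\bar s_n\ \text{and}\ \exists\bar r\le\bar u\ (\bar r\rest\bar\alpha\in\barG_\alpha\ \text{and}\ \bar r\forces t\notin\bar{\dot b})\}.
\]
By elementarity, $\bar u$ forces over $\bN[\barG_\alpha]$ that $\bar{\dot b}$ is an uncountable branch through $\bar T$ not in $\bN[\barG_\alpha]$; by the inductive hypothesis applied inside $\bN[\barG_\alpha]$ to the tail iteration $\bar\P_\nu$ (which is legitimate as $\bar\P_\nu\in\bN$ and $\bN$ is full), the same density argument as above shows that $D$ is predense below $\bar s_n$ in $\bar T$. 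Since $\bar s_n$ has a successor in $\bar T$, $D$ meets some one-point cone, so there is $t<_{\bar T}\bar s_n$ with $t\in D$; let $\bar r$ be the corresponding witness. Then $\bar r\in\bar\P_\nu$, $\bar r\le\bar u$, $\sigma_\alpha(\bar r\rest\bar\alpha)=r\rest\alpha\in G_\alpha$ for $r=\sigma'_\alpha(\bar r)$, and $r\forces \sigma_\alpha(t)\notin\dot b$ with $\sigma_\alpha(t)<_T s_n$, so $r\in E_n$. This verifies the hypothesis of Theorem \ref{thmEn}.

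By Theorem \ref{thmEn} there is a $q\le p$ forcing that $G\cap E_n\neq\emptyset$ for every $n<\omega$. Thus $q$ forces that for each $n$ there is $t_n<_T s_n$ with $t_n\notin\dot b$, i.e.\ $\dot b$ avoids the cone below $s_n$, hence $\dot b\cap T_\delta=\emptyset$. But $\dot b$ is forced to be an uncountable branch, so it meets $T_\delta$, a contradiction. Therefore $\P_\nu$ adds no uncountable branch through $T$.
\end{proof}
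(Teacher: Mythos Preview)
Your overall architecture (induction on $\nu$, define sets $E_n$ indexed by $T_\delta$, invoke Theorem~\ref{thmEn}) matches the paper, and your $E_n$ is essentially equivalent to the paper's. But the verification of the hypothesis of Theorem~\ref{thmEn} has a real gap.

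The problem is that you try to argue \emph{inside} $\bN[\barG_\alpha]$ using $s_n$ (you even write $\sigma_\alpha(\bar s_n)=s_n$). However $s_n\in T_\delta$ with $\delta=\omega_1^{\bN}$, so $s_n$ is not in the range of $\sigma$ and there is no $\bar s_n\in\bN$. More to the point, the cofinal branch $\{t\in T:t<_T s_n\}$ through $\bar T$ need not be an element of $\bN$ (or of $\bN[\barG_\alpha]$): each node is in $\bN$, but the \emph{set} is a cofinal branch through what $\bN$ regards as an $\omega_1$-tree, and $\bN$ has no reason to contain it. So your set $D$ cannot be formed in $\bN[\barG_\alpha]$, and your ``same density argument as above'' cannot be run there. (Separately, the preliminary claim that each $E_n$ is dense below $p$ is false --- the branch has to pass through \emph{some} $s_m$, so $E_m$ may well be empty below a condition deciding this --- and your justification via ``$\dot b$ is determined by its value at a single level $\ge\delta$'' does not hold; fortunately Theorem~\ref{thmEn} does not need density, only the relativized predensity condition.)

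The paper sidesteps this by never mentioning $s_n$ inside $\bN$. Instead it argues: since $\P_\alpha$ adds no new uncountable branch (inductive hypothesis), in $N[G_\alpha]$ the tail name $\dot B$ is not decided below $u$, so there exist $u_1,u_2\le u$ with $u_1\!\rest\!\alpha=u_2\!\rest\!\alpha$ forcing incomparable nodes $t_1,t_2$ into $\dot B$. By elementarity (via the lifted $\sigma'_\alpha$) the same holds in $\bN[\barG_\alpha]$, yielding $\bar u_1,\bar u_2\le\bar u$ with $\bar u_i\!\rest\!\bar\alpha\in\barG_\alpha$ and incomparable $\bar t_1,\bar t_2\in\bN$ (hence of level $<\delta$). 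Only \emph{now}, back in $V$, does one look at $t_n$: at most one of the incomparable $\bar t_1,\bar t_2$ can lie below $t_n$, so the other $\bar u_i$ (mapped up by $\sigma_\alpha$) lands in $E_n$. This two-step ``split first inside $\bN$, compare with $t_n$ afterwards in $V$'' is exactly the missing idea.
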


\begin{proof}
The lemma proceeds by induction on $\nu$ and is by contradiction. Since the successor case is by assumption, the inductive hypothesis is that, for all $i < j$ $\P_i$ adds no new cofinal branch through $T$.  Let $\dot{B}$ be a $\P_\nu$ name for a branch through $T$ and, towards a contradiction, let $p \in \P_\nu$ force that $\dot{b}$ is uncountable. I need to find a $q \leq p$ forcing that actually $\dot{b}$ is countable. Fix $\theta$ sufficiently large that $\dot{b}, \mathbb P, T \in H_\theta$ and fix $\sigma:\bN \prec N$ as in the standard setup. Denote by $\delta = \omega_1 \cap \bN$. Note that for all $\alpha < \delta$ and all $t \in T_\alpha$ we may assume that $\sigma (\bar{t}) = t$ since we may assume $T \subseteq H_{\omega_1}$. Enumerate the $\delta^{\rm th}$ level of $T$ as $\langle t_n \; | \; n < \omega\rangle$. For each $n$, define $E_n = \{r \in P_\nu \; | \; \exists t \in T_{<\delta} \; {\rm such \; that} \; t \nleq_T t_n \; {\rm and} \; r \forces \check{t} \in \dot{B}\}$. I need to check that the $E_n$'s satisfy the predensity condition stipulated in Theorem \ref{thmEn}. If I can do this then it follows there is a $q \leq p$ forcing that $\dot{G} \cap E_n \neq \emptyset$ for all $n< \omega$ and hence the branch is bounded below $\delta$ so it cannot be uncountable.

Fix $u \leq p$, in the range of $\sigma$, $\alpha < \nu$ and a $\P_\alpha$-name $\dot{\sigma}_\alpha$ which will evaluate to an embedding witnessing the $\infty$-subproperness of $\P_\alpha$. Let $G_\alpha$ be $\P_\alpha$-generic over $\V$, $\sigma_\alpha = (\dot{\sigma}_\alpha)_{G_\alpha}$ and $\sigma_\alpha(\bar{u}, \bar{\alpha}, \bar{T}) = u, \alpha, T$. I want to find a $\bar{r} \in \bN \cap \bar{P}_\nu$ so that $\bar{r} \leq \bar{u}$, $\sigma_\alpha (\bar{r} \hook \bar{\alpha}) = r \hook \alpha \in G_\alpha$ and $r \in E_n$. Note that since $\P_\alpha$ didn't add an uncountable branch to $T$, there are incomparable conditions $u_1, u_2$ whose restrictions to $\alpha$ are the same but force incompatible elements $t_1$ and $t_2$ respectively to be in $\dot{B}$. By elementarity, this situation is true as well in $\bN$ using $\bar{u}$ and therefore there are $\bar{u}_1, \bar{u}_2 \leq \bar{u}$ in $\bar{\P}_\nu$ so that $\bar{u}_1 \hook \alpha = \bar{u}_2 \hook \alpha \in \bG_\alpha$ but $\bar{u}_1$ and $\bar{u}_2$ force incomparable elements $\bar{t}_1$ and $\bar{t}_2$ to be in $\bar{\dot{B}}$. At least one of these elements is incomparable with $t_n$ and both of them are of level less than $\delta$ (since they're in $\bN$). Therefore at least one of $\bar{u}_1$, $\bar{u}_2$ works.
\end{proof}

Using these preservation results I can now provide new models of $\SCFA$, providing a proof to part of Theorem \ref{mainthm}.

\begin{theorem}
Assume $\kappa$ is supercompact. Then there is a $\kappa$-length nice iteration of $\infty$-subproper forcing notions forcing $\SCFA$ so that in the extension $2^{\aleph_0} = \aleph_2 = \kappa$ and there are Souslin trees.
\label{conSCFAcohen}
\end{theorem}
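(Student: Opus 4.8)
The plan is to run the usual Baumgartner-style argument for forcing a forcing axiom, but using Miyamoto's nice iterations in place of RCS iterations and interleaving Cohen forcing to destroy $\diamondsuit$ and $\CH$ while keeping Souslin trees alive. First I would fix a Laver-type function $f:\kappa\to V_\kappa$ guessing the hereditary sizes of $\infty$-subproper forcing names, as is standard when $\kappa$ is supercompact. Then I define a nice iteration $\vec{\P}=\langle \P_\alpha,\dot{\Q}_\alpha \mid \alpha<\kappa\rangle$ of length $\kappa$ where, at each stage, $\dot{\Q}_\alpha$ is chosen by the usual bookkeeping to be either (a) the $\infty$-subproper (in fact $\infty$-subcomplete, or more precisely a subcomplete/$\infty$-subcomplete poset) forcing named by $f(\alpha)$ when that name is forced to be $\infty$-subproper, or (b) Cohen forcing $\Add(\omega,1)$ (or $\Add(\omega,\aleph_2)$ at suitably many stages) to blow up the continuum and kill $\diamondsuit$. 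Cohen forcing is trivially $\infty$-subproper (it is proper, hence $\infty$-subproper by the remark that properness is the special case $\sigma=\sigma'$), so every successor step of the iteration is $\infty$-subproper; by Theorem \ref{spiteration}, $\P_\kappa$ is $\infty$-subproper, hence stationary-set preserving and $\omega_1$-preserving. A $\Delta$-system / chain-condition argument using the supercompactness (GCH below $\kappa$ can be arranged, or the standard reflection of the chain condition) shows $\P_\kappa$ has the $\kappa$-c.c., so $\kappa$ becomes $\aleph_2$ and cardinals above $\kappa$ are preserved; since Cohen reals are added cofinally, $2^{\aleph_0}=\aleph_2=\kappa$ in the extension. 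The standard Baumgartner reflection argument, using that $f$ is a Laver function for $\infty$-subproper forcing and that $\infty$-subproper forcings are closed under factors (the Proposition that $\mathbb P$ is $\infty$-subproper whenever $\mathbb P*\dot{\mathbb Q}$ is) together with the iteration theorem, shows that $\SCFA$ holds in $V[G_\kappa]$: given any $\infty$-subcomplete (in particular subcomplete) $\Q$ and $\aleph_1$ many dense sets, reflect to a stage $\alpha$ where $f(\alpha)$ names $\Q$ and the tail forcing is still $\infty$-subproper, so the generic for $\Q$ at stage $\alpha$ meets all the dense sets. (If one wants literally $\SCFA$ rather than the forcing axiom for $\infty$-subcomplete forcing, one notes subcomplete $\subseteq$ $\infty$-subcomplete and runs the bookkeeping over subcomplete posets, which suffices.)

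The remaining and most delicate point is producing Souslin trees in the final model. Here I would build a single Souslin tree $S$ by a $\diamondsuit$-construction in the ground model $V$ (using $V=L$ or at least $\diamondsuit_{\omega_1}$, which holds since we may assume GCH below $\kappa$), and arrange the bookkeeping so that \emph{every} forcing $\dot{\Q}_\alpha$ appearing in the iteration — both the $\infty$-subcomplete stages and the Cohen stages — is forced to preserve $S$. Cohen forcing preserves Souslin trees (it is $S$-preserving, indeed it is a classical fact that ccc forcing need not preserve Souslinity, so here one must be careful: in fact one should take the Cohen stages to be $\sigma$-closed-in-disguise or else use a version of Cohen forcing known to preserve a fixed Souslin tree — cleanly, one uses that $\Add(\omega,1)$ \emph{does} preserve a Souslin tree $S$ provided $S$ is built generically, or one works with the tree $S$ specifically engineered to be $\Add(\omega,1)$-preservable). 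More robustly: by the Observation in the excerpt, every $\infty$-subcomplete forcing preserves Souslin trees and preserves $\diamondsuit$; and one arranges the iteration so that the Cohen reals are added \emph{before} $S$ is needed to be destroyed, or one simply uses Jensen's fact that a ground-model Souslin tree can be made indestructible under a prescribed class of forcings. Then Lemma \ref{souslinpre} (preservation of Souslin trees along nice iterations of $\infty$-subproper forcings, given successor-stage preservation) applies directly: since each $\forces_i \P_{i,i+1}$ preserves $\check S$, the full iteration $\P_\kappa$ preserves $S$, so $S$ remains Souslin in $V[G_\kappa]$. Thus $V[G_\kappa]\models \SCFA + 2^{\aleph_0}=\aleph_2=\kappa + \text{"there is a Souslin tree"}$, and in particular $\neg\CH$ and $\neg\diamondsuit$ hold there.

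The main obstacle I expect is exactly the interaction between the Cohen stages and Souslin-tree preservation: naively interleaved Cohen forcing need not preserve an arbitrary ground-model Souslin tree, so the construction must either (i) fix $S$ to be a Souslin tree that is provably preserved by finite-support-style Cohen forcing (e.g. a tree with enough \emph{topological} rigidity, or built by a $\diamondsuit$-construction that anticipates antichains in $S\times\Add(\omega,1)$), or (ii) replace the Cohen stages by a different $\infty$-subproper poset that still kills $\CH$ and $\diamondsuit$ but is known to preserve $S$ — but $\infty$-subproper forcing that adds reals and preserves a Souslin tree is precisely the kind of object whose existence one must verify. The cleanest route, which I would pursue, is to first force $S$ to be Souslin-indestructible under ccc forcing (a standard preparatory forcing in $L$, cf. work on Souslin-tree-preserving forcing axioms), so that thereafter each $\Add(\omega,1)$ step preserves $S$; then the successor-stage hypothesis of Lemma \ref{souslinpre} is met uniformly (the $\infty$-subcomplete stages preserve $S$ by the Observation, the Cohen stages by indestructibility), and the nice-iteration preservation lemma closes the argument. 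A secondary, purely bookkeeping, subtlety is making a single wellorder of length $\kappa$ handle simultaneously the $\SCFA$ bookkeeping (all pairs $(\text{poset},\text{sequence of dense sets})$ of the right hereditary size) and the requirement that cofinally many stages are Cohen forcing; this is routine but must be stated.
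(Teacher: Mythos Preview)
Your overall architecture matches the paper's proof exactly: fix a Souslin tree $S$ in the ground model, run a $\kappa$-length nice iteration guided by a Laver function, at each stage force with the subcomplete poset named by the Laver function when appropriate and with Cohen forcing otherwise, invoke Lemma~\ref{souslinpre} for Souslin preservation, and finish with the standard $\kappa$-c.c.\ and Baumgartner reflection arguments.

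The one place you go astray is the part you flag as ``the main obstacle'': there is no obstacle. Cohen forcing \emph{always} preserves Souslin trees. It is countable, hence $\sigma$-centered, hence $\sigma$-linked, and Proposition~\ref{sigmalinkedsouslin} in the paper gives exactly this: $\sigma$-linked forcing preserves any Souslin tree. (Alternatively: Cohen forcing is Knaster, so its product with any ccc poset---in particular with $S$---is ccc, which means $S$ has no uncountable antichain in the extension.) Your worry that ``ccc forcing need not preserve Souslinity'' is correct in general---specializing forcing is the obvious counterexample---but Cohen forcing is not an instance of this failure. Consequently the indestructibility preparation, the $\diamondsuit$-construction anticipating $S\times\Add(\omega,1)$-antichains, and the search for an alternative real-adding $\infty$-subproper poset are all unnecessary. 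The paper simply says ``subcomplete forcing doesn't kill Souslin trees, and neither does Cohen forcing,'' and applies Lemma~\ref{souslinpre} directly.
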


The idea of the proof is to use what I call {\em weaving constructions}: weaving other forcing notions into the standard iteration that forces $\SCFA$.

\begin{proof}
By forcing if necessary, assume first that in $V$ that there is a Souslin tree $S$. Let $\kappa$ be supercompact. I will define the following $\kappa$-length nice iteration, $\mathbb P_\kappa$ as follows: let $f: \kappa \to \V_\kappa$ be a Laver function. At stage $\alpha$ if $f(\alpha) = (\dot{\mathbb P}, \mathcal D)$ is a pair of $\mathbb P_\alpha$ names such that $\dot{\mathbb P}$ is a subcomplete forcing notion and $\mathcal D$ is a $\gamma$-sequence of dense subsets of $\dot{\mathbb P}$ for some $\gamma < \kappa$ then let $\dot{\mathbb Q}_\alpha = \dot{\mathbb P}$ otherwise add a Cohen real. Since every iterand is either subcomplete or proper, the entire iteration is $\infty$-subproper. Moreover, since subcomplete forcing doesn't kill Souslin trees, and neither does Cohen forcing, the entire iteration doesn't kill $S$ by Lemma \ref{souslinpre}. 

A standard $\Delta$-system argument shows that $\mathbb P_\kappa$ has the $\kappa$-c.c. However $\mathbb P_\kappa$ collapses everything in between $\omega_1$ and $\kappa$ so in the extension $\kappa = \omega_2$. Also Cohen reals are added unboundedly often there are $\kappa$ reals in the extension so $\kappa = 2^{\aleph_0}$. Finally the standard Baumgartner argument shows that $\SCFA$ must be forced as well.
\end{proof}

Observe that the only properties of Cohen forcing used in the previous proof was that it is proper, adds a real, and preserves $S$. Using this observation, the above method can be used to obtain a stronger consistency result. Recall that a forcing notion $\mathbb P$ is $\sigma$-linked if it can be written as the countable union, $\mathbb P = \bigcup_{n < \omega} \mathbb P_n$ where for each $n < \omega$ $\mathbb P_n$ consists of pairwise compatible elements. Note that $\sigma$-linked forcing notions are ccc. The following is well known.

\begin{proposition}
If $S$ is a Souslin tree and $\mathbb P$ is $\sigma$-linked then forcing with $\mathbb P$ preserves $S$.
\label{sigmalinkedsouslin}
\end{proposition}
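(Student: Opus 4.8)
The plan is to write $\mathbb P$ as a countable union of linked pieces $\mathbb P = \bigcup_{n<\omega}\mathbb P_n$ and then run the standard argument that a maximal antichain of $S$ stays countable. So suppose toward a contradiction that $\mathbb P$ is $\sigma$-linked but that for some $p \in \mathbb P$, $p \Vdash$ ``$\dot A$ is a maximal antichain in $\check S$ which is uncountable''. Fix a sufficiently large regular $\theta$ and an elementary submodel $M \prec H_\theta$ with $S, \mathbb P, \langle \mathbb P_n \mid n<\omega\rangle, \dot A, p \in M$, $M$ countable, and $\delta := M\cap\omega_1 \in \omega_1$. Since $S$ is Souslin it has no uncountable antichains in $V$, but that alone does not finish the argument; the key point is that $S_\delta \ne \emptyset$ (as $S$ is a normal $\omega_1$-tree) and every node below level $\delta$ lies in $M$.

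First I would observe, as usual, that because $\dot A$ is forced to be a \emph{maximal} antichain, for every node $t \in S$ the set of conditions forcing some element of $\dot A$ compatible (in $S$) with $t$ is dense; and by elementarity, for $t \in S$ of level $<\delta$, this density statement holds inside $M$. Now fix any node $x \in S_\delta$. For each $n<\omega$ consider the set
\[
 D_n \;=\; \{\, q \in \mathbb P_n \cap M \;\mid\; \exists\, s \le_S x,\ \mathrm{lev}(s)<\delta,\ q \Vdash \check s \in \dot A \,\}.
\]
The heart of the argument is to show that $\bigcup_n D_n$ is predense below $p$. Given $r \le p$, pick $n$ with $r \in \mathbb P_n$; extending $r$ if necessary inside the density set above, we may assume $r$ forces some $\check s \in \dot A$ with $s$ compatible with $x$ in $S$, and since $x$ has level $\delta$ this forces $s <_S x$, hence $\mathrm{lev}(s)<\delta$. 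The subtlety is that $r$ need not be in $M$; but the statement ``there exist $q\in\mathbb P_n$ and $s<_S x'$ with $\mathrm{lev}(s)<\delta$ and $q$ compatible with $r$ and $q\Vdash \check s\in\dot A$'' — suitably relativized with $x$ replaced by the level-$<\delta$ predecessors of $x$, which are in $M$ — can be reflected into $M$ using that $r \in \mathbb P_n$ and all elements of $\mathbb P_n$ are pairwise compatible: any $q \in \mathbb P_n \cap M$ witnessing the $M$-version is automatically compatible with $r$ because $\mathbb P_n$ is linked. This is exactly the place where $\sigma$-linkedness, as opposed to $\sigma$-centeredness or mere ccc, is used, and it is the main obstacle to get completely right — one must phrase the reflected statement using only parameters in $M$ (the $<\delta$ predecessors of $x$, which form a branch through $S\restriction\delta$ definable in $M$ from $\langle s_m\mid m<\omega\rangle$ enumerating $S_\delta$ once we fix such an enumeration, or more simply work with a fixed cofinal branch below $x$ lying in $M$).

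Granting predensity of $\bigcup_n D_n$ below $p$, let $G$ be $\mathbb P$-generic with $p \in G$. Then $G$ meets some $D_n$, so there is $s <_S x$ of level $<\delta$ with $s \in \dot A^G$. Doing this for every $x \in S_\delta$ shows that every element of $S_\delta$ lies $S$-above some member of $A := \dot A^G$ of level $<\delta$. Hence $A$ is contained in $S\restriction\delta$ up to the equivalence of being below some fixed element — more precisely, since $A$ is an antichain and every node of level $\ge\delta$ extends some node of $A$ of level $<\delta$, in fact $A \subseteq S\restriction\delta$, which is countable, contradicting $p \Vdash$ ``$\dot A$ uncountable''. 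Therefore no such $p$ exists, $\dot A$ is forced countable, and since $S$ remains a tree of height $\omega_1$ with countable levels (no new levels can have uncountable size as $\mathbb P$ is ccc), $S$ is still Souslin in $V[G]$. I would present the reflection step carefully and keep the rest terse, since the bookkeeping with the enumeration of $S_\delta$ and the passage from ``every node of $S_\delta$ is above some $A$-node of level $<\delta$'' to ``$A\subseteq S\restriction\delta$'' is routine.
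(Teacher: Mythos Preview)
Your argument has a genuine gap at the step ``since $x$ has level $\delta$ this forces $s <_S x$.'' When you extend $r$ to force that some $\check s\in\dot A$ is compatible with $x$, nothing prevents $s$ from having level $\ge\delta$: the unique element of the maximal antichain $\dot A$ compatible with $x$ could perfectly well be $x$ itself or a node above it. Ruling that out is precisely the content of what you are trying to prove, so the step is circular. (There is also a smaller slip: after you extend $r$ ``inside the density set,'' the extension need not lie in the $\mathbb P_n$ you fixed beforehand, so your linkedness argument for compatibility with the reflected $q$ does not apply as written. That one is fixable by choosing $n$ only after extending; the first problem is not, at least not without essentially reproducing the argument below.) You flag the reflection step as the ``main obstacle,'' but the real obstacle is earlier.

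The paper's proof is much shorter and uses no elementary submodels. For each $n$ set
\[
A_n \;=\; \{\, s \in S : \exists\, q \le p,\ q \in \mathbb P_n,\ q \Vdash \check s \in \dot A \,\}.
\]
Because $\mathbb P_n$ is linked, any two witnessing conditions are compatible, so two comparable nodes $s_1,s_2$ cannot both lie in $A_n$ (a common extension would force both into the antichain $\dot A$). Thus each $A_n$ is an antichain of $S$ in $V$, hence countable since $S$ is Souslin. But $\bigcup_n A_n$ is exactly the set of nodes that some condition below $p$ forces into $\dot A$, and if $p$ forces $\dot A$ to be uncountable this set is uncountable --- contradiction. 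The point you are missing is to pull the problem back into $V$ via these ground-model antichains and invoke Souslin-ness of $S$ there; $\sigma$-linkedness is used solely to make each $A_n$ an antichain.
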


\begin{proof}
Let $\mathbb P$ and $S$ be as in the statement and since $\mathbb P$ is $\sigma$-linked it can be written as $\bigcup_{n < \omega} \mathbb P_n$. Now suppose $\dot{A}$ names a maximal antichain in $S$ and suppose $p \in \mathbb P$ forces that $\dot{A}$ is uncountable. For each $n < \omega$ let $A_n = \{s \in S \; | \; \exists q \leq p \, q \in \mathbb P_n \, q\forces \check{s} \in \dot{A}\}$. Since each $\mathbb P_n$ consists of pairwise compatible elements, it follows that each $A_n$ is an antichain (in $\V$). Therefore it's countable. But that means that $\bigcup_{n < \omega} A_n$ is countable i.e. the set of all $s\in S$ so that there is some condition stronger than $p$ forcing $s$ to be in $\dot{A}$ is countable, which contradicts the fact that $p$ forced $\dot{A}$ to be uncountable.
\end{proof}

Using this fact and substituting arbitrary $\sigma$-linked posets for Cohen forcing in the proof of Theorem \ref{conSCFAcohen} the following can be shown as well.

\begin{theorem}
Assume $\kappa$ is supercompact. Then there is a $\kappa$-length nice iteration of $\infty$-subproper forcing notions forcing $\SCFA$ so that in the extension $2^{\aleph_0} = \aleph_2 = \kappa$, there are Souslin trees and $\MA_{\aleph_1}(\sigma{\rm -linked})$ holds.
\end{theorem}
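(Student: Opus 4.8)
The plan is to run the same $\kappa$-length nice iteration construction as in the proof of Theorem~\ref{conSCFAcohen}, but now at the stages governed by the Laver function $f$ that are \emph{not} used to deal with a subcomplete forcing and its dense sets, instead of adding a Cohen real we force with an arbitrary $\sigma$-linked poset of size less than $\kappa$. More precisely, fix a Laver function $f:\kappa\to V_\kappa$ and build $\vec{\mathbb P}=\langle\mathbb P_\alpha,\dot{\mathbb Q}_\alpha\mid\alpha<\kappa\rangle$ as a nice iteration so that: if $f(\alpha)$ names a pair $(\dot{\mathbb P},\mathcal D)$ with $\dot{\mathbb P}$ subcomplete and $\mathcal D$ a ${<}\kappa$-sequence of dense subsets, set $\dot{\mathbb Q}_\alpha=\dot{\mathbb P}$; if instead $f(\alpha)$ names a pair $(\dot{\mathbb Q},\mathcal E)$ with $\dot{\mathbb Q}$ a $\sigma$-linked poset and $\mathcal E$ an $\aleph_1$-sequence of dense subsets, set $\dot{\mathbb Q}_\alpha=\dot{\mathbb Q}$; in all remaining cases let $\dot{\mathbb Q}_\alpha$ add a Cohen real (so that reals keep being added cofinally). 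Every iterand is either subcomplete or $\sigma$-linked (hence ccc, hence proper), so every iterand is $\infty$-subproper, and by Theorem~\ref{souslinpre} together with Proposition~\ref{sigmalinkedsouslin} --- noting that Cohen forcing is itself $\sigma$-linked --- the whole iteration preserves the ground-model Souslin tree $S$ (which we arrange exists by a preliminary forcing if necessary, as in Theorem~\ref{conSCFAcohen}).

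The cardinal arithmetic and $\SCFA$ are handled exactly as before: a $\Delta$-system argument gives that $\mathbb P_\kappa$ has the $\kappa$-c.c., the iteration collapses every cardinal strictly between $\omega_1$ and $\kappa$ so that $\kappa=\omega_2$ in the extension, Cohen reals are added unboundedly often so $2^{\aleph_0}=\kappa$, and the standard Baumgartner-style reflection argument using the Laver function $f$ and the supercompactness of $\kappa$ shows that every subcomplete forcing together with $\aleph_1$-many (indeed ${<}\kappa$-many) dense sets is caught at some stage, so $\SCFA$ holds. The new ingredient is that, by the same Baumgartner argument applied to the $\sigma$-linked branch of the construction, every $\sigma$-linked poset of size $<\kappa$ together with $\aleph_1$-many dense sets is also caught at some stage $\alpha$ (the Laver function guesses $(\dot{\mathbb Q},\mathcal E)$ correctly, and then the generic filter added at stage $\alpha$ meets all $\aleph_1$ dense sets in $\mathcal E$); since $2^{\aleph_0}=\aleph_2$ in the extension, every $\sigma$-linked poset and every ${<}2^{\aleph_0}$-sized family of dense sets arises this way, giving $\MA_{\aleph_1}(\sigma\text{-linked})$.

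The one point that needs a moment's care is the \emph{genericity} at the $\sigma$-linked stages: we must check that when $\dot{\mathbb Q}_\alpha$ is a $\sigma$-linked poset, the restriction $\mathbb P_{\alpha,\alpha+1}$ of the nice iteration really is forcing-equivalent to (a dense subset of) $\dot{\mathbb Q}_\alpha$, so that the generic added genuinely meets the prescribed dense sets $\mathcal E$ --- this is routine for nice iterations at successor steps (Definition~\ref{def:NiceIterations}(1)), but should be stated. One must also confirm that $\sigma$-linkedness (being absolutely ccc) is preserved in the iteration so that earlier ccc/$\sigma$-linked iterands stay well-behaved; since $\sigma$-linked is witnessed by a single $\omega$-partition, this is upward absolute through any forcing extension. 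I expect the main --- though still entirely routine --- obstacle to be the bookkeeping: making sure a single Laver function simultaneously does triple duty (subcomplete catching for $\SCFA$, $\sigma$-linked catching for $\MA_{\aleph_1}(\sigma\text{-linked})$, and filler Cohen stages), and that the three roles do not interfere, in particular that interleaving $\sigma$-linked forcing does not disturb the subcompleteness reflection argument. Since all non-subcomplete iterands are proper (indeed ccc), the $\infty$-subproperness iteration theorem (Theorem~\ref{spiteration}) applies verbatim, and the subcompleteness reflection goes through unchanged, so no genuine new difficulty arises.
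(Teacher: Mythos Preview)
Your proposal is correct and takes essentially the same approach as the paper, which simply says to substitute arbitrary $\sigma$-linked posets for Cohen forcing in the proof of Theorem~\ref{conSCFAcohen}, using Proposition~\ref{sigmalinkedsouslin} to guarantee Souslin tree preservation. Your write-up is in fact more detailed than the paper's one-line proof; the only cosmetic difference is that you keep Cohen forcing as a separate ``filler'' case, whereas the paper just absorbs it into the $\sigma$-linked bookkeeping (Cohen forcing being $\sigma$-linked), but this makes no difference to the argument.
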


\section{Preservation Theorems for the Reals}
In this section I prove a general preservation theorem for nice iterations of $\infty$-subproper forcing notions related to controlling the reals. The result implies the analogue of several preservation results for proper forcing and allows one to obtain the consistency of several constellations of the Cicho\'n diagram with $\SCFA$. Moreover in many cases one can arrange to either have Souslin trees or not. First I need a few notions about preservation of the reals, see Chapter 6 of \cite{BarJu95} for more details.

\begin{definition}[Definition 6.1.9 of \cite{BarJu95}]
Suppose that $\mathbb P$ is a forcing notion, and $\dot{f}$ is a name for an element of Baire space. We say that a decreasing sequence of conditions $\langle p_n \; | \; n < \omega \rangle$ {\em interprets} $\dot{f}$ if for all $n \in \omega$ $p_n$ decides $\dot{f} \hook n$.
\end{definition}

The basic setup for the preservation theorem is as follows: let $R \subseteq \omega \times \omega$ be a relation with a definition that is absolute between forcing notions (all the interesting examples are arithmetic, so this is not much to ask). Assume moreover that it is non-trivial in the sense that for each $n$ there are infinitely many $k$ so that $n\mathrel{R}k$. Let $R_n \subseteq (\omega^\omega)^2$ then be the relation defined by $x\mathrel{R_n} y$ if and only if $\forall k \geq n$  $x(k) \mathrel{R} y(k)$. Finally let $R^* = \bigcup_{n < \omega} R_n$. In other words $R^*$ is the \say{eventually $R$} relation: $x \mathrel{R^*} y$ if and only if for all but finitely many $n$, we have that $x(n) \mathrel{R} y(n)$. From now on fix some such $R$. 

\begin{definition}[Defintion 6.1.10 of \cite{BarJu95}]
Let $\mathbb P$ be a subproper forcing notion. We say $\mathbb P$ is $R^*$-{\em preserving} if given any $\sigma: \bN \prec N = L_\tau[A] \models \ZFC^-$ etc in the standard setup with $\dot{f}$ a $\mathbb P$-name for an element of Baire space in the range of $\sigma$ and every descending sequence of conditions in $\mathbb P$ $\langle p_n \; | \; n< \omega \rangle$, also in the range of $\sigma$, interpreting $\dot{f}$, say as some $f_0$, and some $g$ which is $R^*$ above every element of $\bN$, there is some $q \leq p_0$ so that, if $G \ni q$ is generic over $V$ and $\bG$ is the inverse image of $G$ under some $\sigma '$ as in the definition of subproperness then $g$ is $R^*$ above every element of $\bN[\bG]$ and for all $n < \omega$ $q \forces (\check{f}_0 \mathrel{R_n} g \to \dot{f} \mathrel{R_n} g)$.
\end{definition}

Let $\mfb (R^*)$ denote the bounding number for $R^*$ i.e. the least size of a set $A \subseteq \baire$ so that no $x \in \baire$ is $R^*$ above every $y \in A$. The following theorem is what is actually proved, though admittedly it is a little long-winded to write. In English, it essentially says that if $\mfb(R^*) > \aleph_0$ and $R^*$-preserving forcing notions are closed under two step iterations, then nice iterations of $\infty$-subproper, $R^*$-preserving forcing notions are again $R^*$-preserving.


\begin{theorem}
Fix $R$ as described in the previous paragraph and assume $\mfb (R^*) > \aleph_0$. Let $\langle \P_\alpha \; | \; \alpha \leq \nu\rangle$ be a nice iteration so that $\P_0 = \{1_0\}$ and for all $i$ with $i + 1 < \nu$ $\forces_i \P_{i, i+1}$ is $\infty$-subproper and $R^*$-preserving. Assume moreover that $R^*$-preservation is preserved by two step iterations (this will be true in applications, but not in general). Then for all $j\le\nu$ the following statement $\phi(j)$ holds:

{\bf If} $i\le j$, $p\in \P_i$, $\dot{\sigma}, \dot{\bG}_{\bar{i}} \in\V^{\P_i}$, $q\in \P_j$ with $q \hook i = p$,
$\theta$ is a sufficiently large cardinal, $\tau$ is an ordinal, $\dot{x}$ is a $\mathbb\P_j$-name, $H_\theta\sub N=L_\tau[A]\models\ZFC^-$
$\bN$ is a countable, full, transitive model which elementarily embeds into $N$ so that $\bs,\bar{\vec{\P}},\bar{i},\bar{j}, \bar{\dot{x}} \in \bN$ and $p$ forces with respect to $\P_i$ that the following assumptions hold:
\begin{enumerate}[label=(A\arabic*)]
  \item
  \label{item:FirstAssumption}
  $\dot{\sigma}(\check{\bs}, \check{\bar{\vec{\P}}},\check{\bar{i}},\check{\bar{j}},\check{\btheta}, \check{\bar{q}}, \bar{\dot{x}})=\check{s}, \check{\vec{\P}},\check{i},\check{j},\check{q}, \dot{x}$
  \item
  \label{item:Generics}
   $\dot{\bG}_{\bar{i}}$ is the pointwise preimage of the generic under $\dot{\sigma}$ and is $\bar{\P_i}$-generic over $\bN$
\item 
$q$ forces that $\dot{x}$ is a name for an element of Baire space
  \item 
  \label{item:LastAssumption}
  $\dot{\sigma}:\check{\bN}[\dot{\bG}_{\bar{i}}]\prec \check{N}[\dot{G}_i]$ is countable, transitive and full.
\end{enumerate}
{\bf then} there is a condition $p^*\in \P_j$ such that $p^*\hook [i, j)\le q \hook [i, j)$,
$p^*\rest i=p$ and there is a real $y \in \omega^\omega$ so that whenever $G_j\ni p^*$ is $\P_j$-generic, then in $\V[G_j]$, there is a $\sigma'$ such that, letting $\sigma=\dot{\sigma}^{G_i}$, and $\bG_{\bar{i}} = \dot{\bG}_{\bar{i}}^{G_i}$ the following hold:
\begin{enumerate}[label=(\alph*)]
  \item 
  \label{item:FirstConclusion}
  $\sigma'(\bs,\bar{\vec{\P}},\bar{i},\bar{j},\btheta,\bG_{\bar{i}}, \bar{q}, \bar{\dot{x}})=\sigma(\bs,\bar{\vec{\P}},\bar{i},\bar{j},\btheta,\bG_{\bar{i}}, \bar{q}, \bar{\dot{x}}),$
  \item
  \label{item:Sigma'MovesEverythingCorrectly}
  $(\sigma ' {}^{-1}) `` G_{i,j} : = \bG_{\bar{i},\bar{j}}$ is $\bar{\P_{i, j}}$-generic over $\bN[\bG_{\bar{i}}]$,
\item
$p^* \forces \dot{x} \mathrel{R_0} \check{y}$
  \item
  \label{item:LastConclusion}
  $\sigma':\bN[\bG_{\bar{i}}]\prec N[G_i]$.
\end{enumerate}
\label{RRiteration}
\end{theorem}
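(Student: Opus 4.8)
\textbf{Plan for the proof of Theorem \ref{RRiteration}.} The proof proceeds by induction on $j$, running in parallel with the iteration theorem for $\infty$-subproper forcing (Theorem \ref{spiteration}), and the overall structure mirrors that argument with one additional layer of bookkeeping devoted to the real $\dot x$ and the relation $R^*$. The base case $j=0$ and the case $i=j$ are trivial, so fix $i<j$. The successor case $j=k+1$ is handled exactly as in Theorem \ref{spiteration}: apply $\phi(k)$ to obtain $p^{**}\in \P_k$ and a lifted embedding $\dot{\tsigma}$, fix a $\P_k$-generic $H\ni p^{**}$, and then use that $\P_{k,k+1}=\P_j/H$ is both $\infty$-subproper \emph{and} $R^*$-preserving. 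Here is where the hypothesis that $R^*$-preservation is closed under two-step iterations is first used implicitly: one knows by induction (via the parallel preservation statement one is really proving) that $\P_k$ is $R^*$-preserving, hence $\P_k * \P_{k,k+1}$ is, so in $\V[H]$ there is a condition in $\P_{k,k+1}$ witnessing both $\infty$-subproperness of the quotient (as in Theorem \ref{spiteration}) and, using the $R^*$-preservation definition applied to the fullness of $\bN[\bG_\bk]$ and an interpretation of $\dot x$ chosen along $\dot{\tsigma}``\bG_\bk$, a real $y$ with $\dot x \mathrel{R_0}\check y$ forced. One then names this condition by some $\tau$, uses niceness (Definition \ref{def:NiceIterations}(1)) to reflect it to a $p^*\in \P_j$ with $p^*\rest k=p^{**}$, and checks the conclusions as in the previous theorem.

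The limit case $j$ of cofinality $\omega$ (the genuinely new content) is where the hypothesis $\mfb(R^*)>\aleph_0$ enters, and this is the step I expect to be the main obstacle. One runs the same fusion construction as in the proof of Theorem \ref{spiteration}: build a nested antichain $T$, a fusion structure $\langle q^{(a,n)}, T^{(a,n)}\rangle$, and a coherent system of embedding-names $\dot\sigma^{(a,n)}$, using the dense sets $D$ defined coordinatewise and invoking the inductive hypothesis $\phi(l(d_0))$ at shorter lengths. The new ingredient is that along each branch $\langle a_n\mid n<\omega\rangle$ of the eventual fusion one must, in addition, carry a decreasing sequence of conditions interpreting $\dot x$ — that is, at stage $n$ one strengthens $q^{(a,n)}$ further so that $q^{(a,n)}$ decides $\dot x\rest n$, yielding an interpretation $f_0\in V$ of $\dot x$ along the branch — and one must arrange that the witnessing embeddings $\sigma^{(a_n,n)}$ pull $f_0$ back appropriately so that the $R^*$-preservation clause can be applied at each step. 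This is legitimate because, as noted in the remark preceding Theorem \ref{thmEn}, the conditions $u$ in the fusion construction only ever need to be \emph{strengthened}, so one can fold the requirement ``$u$ decides $\dot x\rest n$'' into the definition of the dense set $D$ without disturbing predensity. At the fusion, $p^*$ forces that $\dot x$ is interpreted by some $f_0\in V$ along the generic branch, and at each stage $n$ the $R^*$-preservation of the relevant quotient gives that if $f_0\mathrel{R_n}g$ then $\dot x\mathrel{R_n}g$ for the relevant local bound $g$; the point of $\mfb(R^*)>\aleph_0$ is that the countably many reals $\{$ values witnessing the local $R^*$-domination at each stage $\}$, together with $f_0$, admit a single real $y\in\omega^\omega$ that is $R^*$ above all of them, and one checks $p^*\forces \dot x\mathrel{R_0}\check y$ by combining the interpretation with the finitely-many-exceptions slack in $R^*$. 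The coherence of the $\sigma^{(a_n,n)}$ yields $\sigma'$ exactly as in Theorem \ref{spiteration}, giving conclusions \ref{item:FirstConclusion}, \ref{item:Sigma'MovesEverythingCorrectly}, and \ref{item:LastConclusion}, and the extra bookkeeping gives conclusion (c).

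The limit case of uncountable cofinality reduces to the countable-cofinality case and the inductive hypothesis in the standard way (any $\P_j$-name for a real is, by a chain-condition argument using niceness, essentially a $\P_\alpha$-name for some $\alpha<j$ of cofinality $\omega$ or a successor, and the cofinal branch / fusion machinery of Miyamoto localizes it), so no new ideas are needed there. The main obstacle, to reiterate, is verifying that one can simultaneously (i) carry out Miyamoto's fusion for $\infty$-subproperness, (ii) thread the interpretation of $\dot x$ through the branch-conditions $q^{(a,n)}$, and (iii) invoke $R^*$-preservation of the quotients along the way — all while keeping the dense sets $D$ predense below each $a\in T_n$; the fact that $D$ is only required to be \emph{dense below} and that conditions are only ever strengthened is what makes this go through, essentially as in Miyamoto's treatment of the analogous preservation theorem for semiproper forcing, and the reader should be referred there for the routine verifications. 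Finally one extracts the promised preservation statement: taking $i=0$, $j=\nu$, the theorem shows that a nice iteration of $\infty$-subproper, $R^*$-preserving forcings (with $R^*$-preservation closed under two-step iteration, which holds in the intended applications) is again $\infty$-subproper and $R^*$-preserving.
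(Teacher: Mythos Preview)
Your overall architecture is right---induction on $j$, successor step via the two-step closure hypothesis, limit step via a fusion carrying extra data interpreting $\dot x$---and this matches the paper. But there is a genuine gap in your limit case concerning when and how the real $y$ is chosen, and this is the heart of the argument.

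You propose to run the fusion, obtain along the generic branch $\langle a_n\rangle$ an interpretation $f_0$ of $\dot x$, and then invoke $\mfb(R^*)>\aleph_0$ to find a single $y$ which is $R^*$ above $f_0$ and ``countably many'' auxiliary reals. This cannot work as stated: the branch $\langle a_n\rangle$ is only determined by the generic $G_j$, so $f_0$ is not available in $\V$; and the nested antichain $T$ has no size bound---each $T_n$ is built from maximal antichains and may be uncountable---so the set of branch-interpretations is not countable. You therefore cannot choose a single $y\in\V$ after the construction that works for $p^*$ uniformly.

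The paper instead fixes $y$ \emph{before} the fusion begins. Working in $\V[G_i]$ with $\sigma:\bN[\bG_{\bar i}]\prec N[G_i]$, one uses $\mfb(R^*)>\aleph_0$ to pick $y\in\V$ which is $R^*$ above every real in the countable model $\bN[\bG_{\bar i}]$ (and $R_0$ above the initial interpretation $x_0$). The inductive hypothesis $\phi(l)$ is then strengthened to include the clause that $y$ remains $R^*$ above every real of $\bN[\bG_{\bar i}][\bG_{\bar i,\bar l}]$; this is exactly what $R^*$-preservation of $\P_{i,l}$ says. The fusion carries not just decisions about $\dot x\rest n$ but entire \emph{names} $\dot x^{(a,n)}$ for interpretations and decreasing sequences $\langle \dot s_k^{(a,n)}\mid k<\omega\rangle$ of conditions realizing them, all living in the range of $\dot\sigma^{(a,n)}$ and hence, after pulling back, in $\bN[\bG_{l(\bar a)}]$. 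When one applies the inductive hypothesis $\phi(l(d_0))$ at each step, the $R^*$-preservation clause guarantees $w^*\forces \dot x_1\mathrel{R_0}\check y$ for the new interpretation $\dot x_1$, and since $u$ is built to decide $\dot x\rest n$ compatibly with this interpretation, one gets the required condition $q^{(b,n+1)}\forces \dot x\rest n\mathrel{R_0}\check y\rest n$. The fusion $p^*$ then forces $\dot x\mathrel{R_0}\check y$ directly, with the same $y$ for every branch.

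A minor point: your separate treatment of the uncountable-cofinality limit case via a chain-condition localization is unnecessary; Miyamoto's fusion machinery handles all limit ordinals uniformly, and the paper makes no such case distinction.
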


\begin{proof}
As always with these proofs we induct on $j$. Thus fix $j$ and assume $\phi(j')$ holds for all $j ' \leq j$. Again, we may assume $i < j$ since the case $i = j$ is trivial. Since we have already seen that the iteration is subproper I focus on proving $R^*$-preservation. Note that the case where $j$ is a successor is by assumption so the case to show is when $j$ is a limit ordinal. 

Let $\dot{x}$ be a $\P_j$-name and fix $p, q, ...$ etc as in the statement of the theorem. In particular, assume that $p \in \P_i$ and $q \in \P_j$ is such that $q \hook i = p$ and $q \forces_j \dot{x}:\omega \to \omega$. I need to find a $p^* \leq q$ as in the statement of the theorem and a real $y \in \omega^\omega \cap \V$ so that $p^* \forces_j \dot{x} \mathrel{R_0} \check{y}$. In fact I'll show something stronger: that $\P_j$ is $R^*$-preserving. Let $p \in G_i \subseteq \mathbb P_i$ be generic over $\V$ and, working briefly in $\V[G_i]$, let $\sigma: \barN \prec N = L_\tau[A] \models {\rm ZFC}^-$ with $H_\theta \subseteq N$ for $\theta$ large enough, $\barN$ countable, transitive and full etc as in the statement of the theorem (essentially letting $p$ force that these objects are in the standard set up). Back in $\V$, let $\langle t_n \; | \; n < \omega \rangle$ enumerate the elements of $\barN$, and let $W$ be the $L_\tau[A]$-least nested antichain so that $q$ is a mixture of $W$ up to $j$ with $T_0 = \{1_0\}$. As in the previous proofs, we may assume that $W$ is definable and hence in the range of any embedding we will discuss and $\bar{W}$ is its preimage in $\bN$. Also let $\{\bar{D}_n \; | \; n <\omega \}$ enumerate the names in $\bN$ for the dense open subsets of $\bar{\P}_{i, j}$. Without loss assume that $\bar{q}\hook [\bar{i}, \bar{j})$ is forced to be in $\bar{D}_0$ (for instance we can make $\bar{D}_0$ be the canonical name for the whole poset). By assumption we have that $\sigma(\bar{q}, \bar{\P}_j, \dot{\bar{x}}, \bar{W}) = q, \P_j, \dot{x}, W$. Note that by elementarity $\bar{q}$ forces $\dot{\bar{x}}$ to be a real and is a mixture of $\bar{W}$ up $\bar{j}$ (defined in $\barN$ as the length of the iteration $\bar{\P}_j$). 

Note that for any real $z \in \omega^\omega \cap \barN$ we get that $\sigma(z) = z$ by the absoluteness of $\omega$. Therefore we do not need the \say{bar} convention to describe reals. Fix some strictly decreasing sequence of conditions $\vec{s} = \langle \bar{s}_k \; | \; k < \omega \rangle $ so that for all $k < \omega$ $\bar{s}_k \in \bar{\P}_j$ and $\sigma(\bar{s}_k) = s_k \in \P_j$ and some real $x_0 \in (\omega^\omega)^{\bN}$ with $s_k \forces_j \check{x_0} \hook \check{k} = \dot{x} \hook \check{k}$ and $s_0 \leq q$. Note that by elementarity the \say{bar} versions of the $s$'s force the same to be true of the name $\bar{\dot{x}}$. Finally in $\V[G_i]$ fix some $y \in (\omega^\omega)^\V$ so that for all $z \in \omega^\omega \cap \barN[\bar{G}_i]$ $z \mathrel{R^*} y$ (such a $y$ exists by assumption) and in particular $x_0 \mathrel{R_0} y$ (by finite modifications this can be arranged, using the non-triviality of $R$) where $\bG_i$ is the pointwise preimage of $G_i$ under $\sigma$. This is generic over $\bN$ by our inductive hypothesis. An important point is that we can find $y \in V$ (as opposed to $\V[G_i]$) exactly by the inductive hypothesis that $\P_i$ is $R^*$-preservation. In fact, fixing this $y$, the inductive hypothesis on $l < j$ is that $y$ is $R^*$ above $\bN[\bG_i][\bG_{i, l}]$.

Much like in the iteration theorem the goal is to build a fusion structure and a sequence of names and show that doing so preserves the required properties. Specifically here I want a fusion structure and a collection of names $\langle q^{(a,n)}, T^{(a, n)}, \dot{x}^{(a, n)}, \langle \dot{s}_k^{(a, n)} \;  | \; k < \omega \rangle, \dot{\sigma}^{(a, n)}\; | \; n < \omega \; {\rm and} \; a \in T_n\rangle$ so that the following all hold:

{\em 1.} $q^{(1_0, 0)}= q$, $T^{(1_0, 0)} =  W$,  $\check{\sigma}^{(1_0, 0)}$, $\dot{x}^{(1_0, 0)} = \check{x}_0$ and $\langle \dot{s}_k^{(1_0, 0)} \;  | \; k < \omega \rangle = \langle \check{s}_k \;  | \; k < \omega \rangle$. 

\noindent For all $a \in T_n$ we have 

{\em 2.} $a \forces_{l(a)}$ \say{$\dot{\sigma}^{(a, n)} : \check{\bN} \prec N$ and $\dot{\sigma}^{(a, n)}(\bar{\theta}, \bar{\P_j}, \bar{\dot{x}},\bar{\langle \bar{\dot{s}}_k \; | \; k < \omega \rangle } ) = \theta, \P_j, \dot{x}, \langle \dot{s}_k \; | \; k < \omega \rangle$ and $l(a), T^{(a, n)}, q^{(a, n)}, \dot{x}^{(a, n)}, \langle \bar{\dot{s}}^{(a, n)}_k \; | \; k < \omega \rangle$ are in the range of $\dot{\sigma}^{(a, n)}$ and $(\dot{\sigma}^{(a, n)})^{-1}(q^{(a, n)}) \in \bar{D}_n$.} and $q^{(a, n)} \leq q$. 

{\em 3.} For each $k < \omega$ $\dot{s}_k^{(a, n)}$ is a $l(a)$-name for an element of $\P_j$ so that $q^{(a, n)} \hook l(a) \forces_{l(a)} q^{(a, n)} = \dot{s}_0^{(a, n)}$ and $\dot{s}_{k+1}^{(a, n)} \leq \dot{s}_{k}^{(a, n)}$ and $\dot{s}_k^{(a, n)} \hook l(a) \in \dot{G}\hook l(a)$

{\em 4.} $\dot{x}^{(a, n)}$ is an $l(a)$ name that $q^{(a, n)} \hook l(a)$ forces to be an element of Baire space and for each $k < \omega$ $q^{(a, n)} \hook l(a) \forces_{l(a)}$\say{$\dot{s}_k^{(a, n)} \forces \dot{x} \hook \check{k} = \dot{x}^{(a, n)} \hook \check{k}$}.

\noindent For every $b \in {\rm suc}^T_n(a)$ we have that 

{\em 5.} For all $k \leq n$ $b \forces_{l(b)} \dot{\sigma}^{(b, n+1)}(t_k) = \dot{\sigma}^{(a, n)}(t_k)$ and  $b \forces_{l(b)} (\dot{\sigma}^{(b, n+1)} )^{-1}(\bar{q}^{(a_m, m)}) = (\dot{\sigma}^{(a, n)})^{-1}(\check{q}^{(a_m, m)})$ for all $m \leq n$ and all $a_m \in T_m$ extended by $a$. 

{\em 6.} $q^{(b, n+1)} \forces \dot{x} \hook n \mathrel{R_0} \check{y} \hook n$. Where for $z \hook n \mathrel{R_0} y \hook n$ means that for each $i < n$ $z(i) \mathrel{R} y(i)$.

Supposing we can construct such a sequence and letting $p^*$ be the fusion of the fusion sequence it follows almost immediately that $p^* \forces \dot{x} \mathrel{R_0} \check{y}$ so we would be done. Thus it suffices to show that such a sequence can be constructed. This is done by recursion on $n< \omega$. The case $n=0$ is given by $1$ and it's routine to check that the parameters given there satisfy $2$, $3$ and $4$. For the inductive step, suppose for some $n < \omega$ we have constructed  $q^{(a,n)}, T^{(a, n)}, \dot{x}^{(a, n)}, \langle \dot{s}_k^{(a, n)} \; | \; k < \omega \rangle$ and $\dot{\sigma}^{(a, n)}$ for some $a \in T_n$ satisfying $1$ to $4$ and ${\rm suc}^T_m$ has been defined for all $m \leq n$. Assume also $q^{(a, n)}$ decides $\dot{x} \hook n -1$ (note that we get this for free at stage $0$). I have to define $\suc_T^n(a)$. To this end, let $D$ be the set of all $b$ of length longer than $l(a)$ so that $l(b) < j$ and 

{\em 7.} $b \hook l(a) \leq a$

\noindent There are $\dot{\sigma}^b$, $u$ and $\langle \dot{r}_k^b \; | \; k < \omega \rangle$, $\dot{x}^b$ and and nested antichain $S$ so that

{\em 8.} $S \angle T^{(a, n)}$

{\em 9.} $u \leq q^{(a, n)}$ is a mixture of $S$ up to $j$

{\em 10.} $b$ decides $\dot{\sigma}^b(t_{n+1})$, $b \forces_{l(b)} S, u, l(b) \in range (\dot{\sigma}^b)$ and $b \forces_{l(b)} \dot{\sigma}^b: \check{\bN} \prec N$ and $\forall m \leq n$ $\dot{\sigma}^b(t_m) = \dot{\sigma}^{(a, n)}(t_m)$ and $\dot{\sigma}^b (\bar{\theta}, \bar{\P_{\bar{j}}}, \bar{j}) = \theta, \P_j, j$ and there is a $\bar{u} \in \bar{D}_{n+1}$ such that $b \forces_{l(b)} \dot{\sigma}^b(\bar{u}) = u$.

{\em 11.} $3$, $4$, and $6$ all hold with $u$ replacing $q^{(b, n+1)}$, $\langle \dot{r}_k^b \; | \; k < \omega \rangle$, replacing $\langle \dot{s}_k^{(b, n+1)} \; | \; k < \omega \rangle$, and $\dot{x}^b$ replacing $\dot{x}^{(b, n+1)}$.

As in the proofs of the two iteration theorems, note that if $b\in D$ and $b'\le_{l(b)}b$, then $b'\in D$ as well. It follows that $D\rest l(a):=\{b\rest l(a)\st b\in D\}$ is open in $\P_{l(a)}$.
As a result, again it suffices to show that $D\rest l(a)$ is predense below $a$ in $P_{l(a)}$. For if we know this, $D\rest l(a)$ is dense below $a$, and we may choose a maximal antichain $A\sub D\rest l(a)$ (with respect to $P_{l(a)}$), which then is a maximal antichain in $P_{l(a)}$ below $a$. Thus, for every $c\in A$, we may pick a condition $b(c)\in D$ such that $b(c)\rest l(a)=c$, and define $\suc_T^n(a)=\{b(c)\st c\in A\}$ with $\dot{\sigma}^b =\dot{\sigma}^{(b(c), n+1)} , u = q^{(b(c), n+1)}, \langle \dot{r}^b_k \; | \; k < \omega \rangle = \langle \dot{s}^{(b(c), n+1)}_k \; | \; k < \omega \rangle$, and $\dot{x}^b = \dot{x}^{(b(c), n+1)}$.

To prove that $D\hook l(a)$ is dense, let $G_{i, l(a)}$ be a $\P_{i, l(a)}$ generic filter over $\V[G_i]$ and assume $a \in G_i * G_{i, l(a)}$. Work in $\V[G_i][G_{i, {l(a)}}]$. For readability I write $G_{l(a)}$ for $G_i * G_{i, l(a)}$. Let $\sigma_n$ be the evaluation of $\dot{\sigma}^{(a, n)}$ by $G_{l(a)}$. We need to argue that $(D \hook l(a)) \cap G_{l(a)}$ is non-empty. There are two steps to this. First we will find conditions that ensure $\infty$-subproperness is preserved, as in the previous iteration theorems. Then we will use them to construct conditions ensuring $R^*$-preservation is preserved. The following 3 paragraphs, which describe how to find the $u$, are essentially verbatim from the iteration theorem for $\infty$-subproper forcing and I repeat them for the reader's convenience. Once this is found, I will argue further to find the name $\dot{x}^b$ and the $\dot{r}_k$'s.

Let's first find $u$ and $\bar{u}$ again. By elementarity, $\bar{q}^{(a, n)}$ is a mixture of $\bar{T}^{(a, n)}$ up to $\bar{j}$. We have that $\sigma_n:\kla{L_{\btau}[\bA][\bG_{\bar{i}}],\in,\bA}\prec\kla{L_\tau[A][G_i],\in,A}$, so in particular, $\bsigma:=\sigma_n\rest L_\btau[\bA]:\kla{L_\btau[\bA],\in,\bA}\prec\kla{L_\tau[A],\in,A}$, and $\bsigma(\bar{q}^{(a,n)},\bT^{(a,n)})=q^{(a,n)},T^{(a,n)}$. Clearly, in $L_\tau[A]$, it is true that $q^{(a,n)}$ is a mixture of $T^{(a,n)}$ up to $j$, so it is true in $L_\btau[\bA]$ that $\bar{q}^{(a,n)}$ is a mixture of $\bT^{(a,n)}$ up to $\bar{j}$, and by absoluteness, this it true in $\V$ as well.

Let $\bT_0^{(a,n)}=\{\ba_0\}$. Since we're working in $V[G_{l(a)}]$, we have access to all $\bG_{k}$ for $k \leq l(a)$ by considering the pointwise pre image of $\sigma^{(a, n)}$. By induction these are all generic over $\bN[G_{\bar{i}}]$. Moreover, by Fact \ref{fact:CharacterizationOfMixtures}.\ref{item:Root}, $\ba_0\equiv\bar{q}^{(a,n)}\rest l(\ba_0)\in\bG_{l(\ba_0)}$, since $l(\bar{q}^{(a,n)})=\bar{j}>l(\ba_0)$. So $\ba_0\in\bG_{l(\ba_0)}$.
Let $\bar{v}\in\bT_1^{(\ba,n)}$, that is, $\bar{v}\in\suc_{\bT^{(\ba,n)}}^0(\ba_0)$, be such that $\bar{v}\rest l(a_0)\in\bar{G}_{l(\ba_0)}$. There is such a $\bar{v}$ by Definition \ref{definition:NestedAC-Mixture-beta-Nice-hooks}(5). By Fact \ref{fact:CharacterizationOfMixtures}.\ref{item:HigherUp}, again since $l(\bar{v})<\bar{j}=l(\bar{q}^{(a,n)})$, it follows that $\bar{v}\equiv\bar{v}\rest l(\ba_0)\verl\bar{q}^{(a,n)}\rest[l(\ba_0),l(\bar{j}))$.
Since $\bar{v}\rest l(\ba_0)\in\bG_{l(\ba_0)}$, this implies that $\bar{v}\rest[l(\ba_0),l(\bar{v}))\equiv\bar{q}^{(a,n)}\rest[l(\ba_0),l(\bar{v}))$ (in the partial order $\bar{\P}_{l(\ba_0),l(\bar{v})}=\bar{\P}_{l(\bar{v})}/\bG_{l(\ba_0)}$), and $\bar{q}^{(a,n)}\rest[l(\ba_0),l(\bar{v}))\in\bG\rest[l(\ba_0),l(\bar{v}))$.
So we have that $\bar{v}\rest l(\ba_0)\in\bG_{l(\ba_0)}$ and $\bar{v}\rest[l(\ba_0),l(\bar{v}))\in\bG\rest[l(\ba_0),l(\bar{v}))$. By Fact \ref{fact:FactorsAndQuotients}.\ref{item:CriterionForBeingInGeneric}, this implies that $\bar{v}\in\bG_{l(\bar{v})}$.

Now let $\bar{u}$ strengthen $\bar{v}^{\frown}\bar{q}^{(a, n)} \hook [\bar{l(v)}, j)$ so that $\bar{u} \in \bar{D}_{n+1}$ and decides $\dot{x} \hook n$. This last clause is the only thing different in this case from the proofs of the iteration theorems above.

By Lemma \ref{2.11}, applied in $\bN$, there is a nested antichain $\bar{S} \hooks \bar{T}^{(a, n)}$ such that $\bar{u}$ is a mixture of $\bS$ up to $\bar{j}$ and such that letting $\bS_0 = \{\bd_0\}$, we have that  $l(\bar{v}) \leq l(\bd_0)$ and $\bd_0 \rest l(\bar{v}) \leq \bar{v}$. Let $S,d_0,u=\sigma_n(\bS,\bd_0,\bu)$, and let $w\in G_{l(a)}$ force this. Since $a\in G_{l(a)}$, we may choose $w$ so that $w\le a$.

Note that $S,d_0,u$ are in $N$ (and hence in $\V$), since $\bS,\bd_0,\bu\in\bN$.

Let $l(d_0) = \beta$. Now we force one further step and let $G_{l(a), \beta}$ be $\P_{l(a), \beta}$-generic over $\V[G_{l(a)}]$ with $u \hook [l(a), \beta) \in G_{l(a), \beta}$. Let $G_\beta$ be the composition of the $G_{l(a)}$ and $G_{l(a), \beta}$ and let $\bG_{\bar{\beta}}$ be the pointwise preimage of $G_\beta$ under $\sigma_n$. Now, working in $\bN [\bG_{\bar{\beta}}]$, define recursively a descending sequence of conditions $\bar{r}_k \in \bar{\P}_{\bar{j}}$, so that $\bar{r}_k$ decides $\dot{x} \hook k$, $\bar{u} = \bar{r}_0$ and if $\bar{s}_k \hook\beta \in \bG_\beta$ then $\bar{r}_k = \bar{s}_k$. Let $x_1 \in \bN[\bG_{l(a)}]$ be the real so that $\bar{r}_k \forces \check{x}_1 \hook \check{k} = \dot{x} \hook \check{k}$.

Back in $\V[G_{l(a)}]$ let $\langle \bar{\dot{r}_k} \; | \; k < \omega \rangle$ name the sequence of $\bar{r}$'s, let $\dot{r}_k = \sigma_n (\bar{\dot{r}}_k)$ and let $\dot{x}_1$ be the name for $x_1$. Apply the inductive hypothesis $\phi(\beta)$, noting that $\beta<j$, to $i =l(a) \leq \beta$, the filters $\bG_{l(\bar{a})}$, $\bG_{{l(\bar{a})},l(\bar{\beta})}$, the models $\bN$, $N$, the condition $w$ (in place of $p$), the name $\dot{\sigma}^{(a,n)}$ (in place of $\dot{\sigma}$) and the parameters $\langle \bar{\dot{r}}_k \; | ; k < \omega \rangle, \bar{\dot{x}}_1 \in\bN$. The hypothesis allows us to obtain a condition $w^*\in\P_{\beta}$ with $w^*\rest l(a)=w$ and a name $\dot{\sigma}'$ such that $w^*$ forces with respect to $\P_{l(a)}$:
\begin{enumerate}[label=(\alph*)]
  \item
  $\dot{\sigma}'(\langle \bar{\dot{r}}_k \; | ; k < \omega \rangle\check{}, \check{\bar{\dot{x}}}_1,\check{\bar{\vec{\P}}}, \check{l}(\bar{a}),\check{\bar{\beta}},\check{\btheta},\dot{\bG}_{l(\bar{a})}, \check{\bu},\check{\bd}_0,\check{\bS})=\dot{\sigma}^{(a,n)}(\langle \bar{\dot{r}}_k \; | ; k < \omega \rangle\check{}, \check{\bar{\dot{x}}}_1,\check{\bar{\vec{\P}}},\check{l}(\bar{a}),\check{\bar{\beta}},\check{\btheta},\dot{\bG}_{l(\bar{a})}, \check{\bu},\check{\bd}_0,\check{\bS})$,
  \item
  $(\dot{\sigma}'{}^{-1})``\dot{G}_{l(\bar{a}),\bar{\beta}}$ is generic over $\check{\bN}[\dot{\bG}_{l(\bar{a})}$ and,
  \item
  $\dot{\sigma}':\check{\bN}[\dot{\bG}_{l(\bar{a})}]\prec \check{N}[\dot{G}_{l(a)}]$.
\end{enumerate}

Note that $w$ forced that $\dot{\sigma}^{(a,n)}(\bu,\bd_0,\bS)=u,d_0,S$ and hence, since $w^*\rest l(a)=w$, we get that $w^*$ forces that $\dot{\sigma}'(\bu,\bd_0,\bS)=u,d_0,S$ as well. In addition, we may insist that $\sigma'$ moves the parameters $\bar{i},\bar{j},\bar{\vec{\P}},\btheta,\bs,\bp_0,\ldots,\bp_n,t_0,\ldots,t_n$ the same way $\dot{\sigma}^{(a,n)}$ does. Note that already $a$ forced with respect to $\P_{l(a)}$ that $\bar{i},\bar{j},\bar{\vec{\P}},\btheta$ are mapped to $i,j,\vec{\P},\theta$ by $\dot{\sigma}^{(a,n)}$. Finally, applying the $R^*$-preservation part of the inductive hypothesis to $\beta$ and $\dot{x}_1$ we may assume that $w^*$ forces that $\dot{x}_1 \mathrel{R_0} \check{y}$.

To finish, I claim that $w^* \in D$ as witnessed by $u$, $\langle \dot{r}_k \; | \; k < \omega\rangle$, $S$ $\dot{x}_1$ and $\dot{\sigma} '$. Much of this follows from the previous iteration theorem. Indeed the only thing that requires checking is $6$: that $u \forces \dot{x}_0 \hook n \mathrel{R_0} \check{y} \hook n$. But this is now clear since we explicitly constructed $u$ to decide $\dot{x} \hook n$ and $w^*$ forced $s_n \leq u$ to force that $\dot{x}_0 \hook n = \dot{x}_1 \hook n$ and $w^*$ forced that $\dot{x}_1 \mathrel{R_0} \check{y}$. This completes the inductive step of the construction and hence the proof.
\end{proof}

There are many classic examples of such $R$ in the literature. Let me list a few.
\begin{example}
Let $R$ be the order $\leq$ on $\omega$. Then $\leq^*$ is the well-known eventual domination order. The property of $\leq^*$-preserving is equivalent to $\omega^\omega$-bounding. This follows from Lemma 6.3.4 of \cite{BarJu95}.
\end{example}

\begin{example}
Let $R$ be defined by letting $n \mathrel{R} m$ if $m$ codes a pair $(k, a)$ where $k \in \omega$ and $a \in [\omega]^{< \omega}$ and $n < k$ implies $n \in a$. Then, fixing $y \in \V \cap \baire$ and, via some coding, $R^*$ can be thought of as a relation whose domain is $\baire$ and whose range is the set of pairs $(y, s)$ where $s$ is a slalom. Then $x \mathrel{R^*} (y, s)$ if and only if $x \leq^* y$ implies $x \in^* s$. Unioning over all such $y$, the associated property of a forcing notion is the Laver property.
\end{example}

\begin{example}
Let $R$ be defined by $n \mathrel{R} m$ if $m$ codes an element $a \in [\omega]^{< \omega}$ and $n \in a$. Then, again by coding, we can think of $R^*$ as a relation between elements of Baire space and slaloms so that $x \mathrel{R^*} s$ is $x \in^* s$. In this case the forcing property of $R^*$-preserving is equivalent to the Sacks property\footnote{This is slightly weaker than the definition of the Sacks property given as Definition \ref{sacks} as that definition considered functions $f:\omega \to V$ whereas here we only look at functions $f:\omega \to \omega$.}.
\end{example}

Note that for the properties of being $\baire$-bounding, Laver or Sacks are all preserved by two step iterations, each being a special case of the general Lemma 6.1.12 of \cite{BarJu95}. Plugging these examples into the preservation theorem above gives the following.
\begin{theorem}
Let $\langle \P_j \; | \; j \leq \nu\rangle$ be a nice iteration of $\infty$-subproper forcing notions.
\begin{enumerate}
\item
If for each $i$ with $i + 1 < \nu$ $\forces_{\P_i} `` \P_{i, i+1} \; {\rm is \; }\omega^\omega$-bounding" then the whole iteration is $\omega^\omega$-bounding.
\item
If for each $i$ with $i + 1 < \nu$ $\forces_{\P_i} `` \P_{i, i+1}$ has the Laver property" then the whole iteration has the Laver property.
\item
If for each $i$ with $i + 1 < \nu$ $\forces_{\P_i} `` \P_{i, i+1}$ has the Sacks property" then the whole iteration has the Sacks property.
\end{enumerate}
\end{theorem}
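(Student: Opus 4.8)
The final theorem is an immediate corollary of the general preservation machinery in Theorem~\ref{RRiteration}, once the abstract hypotheses are checked for the three concrete relations $R$ corresponding to $\omega^\omega$-bounding, the Laver property, and the Sacks property. The plan is to verify three things in each case and then invoke Theorem~\ref{RRiteration} with $i=0$, $j=\nu$, $p=1_0$, $q=1_\nu$, and an arbitrary name $\dot{x}$ for a real. First, I must exhibit the arithmetically-definable relation $R\subseteq\omega\times\omega$ whose derived \say{eventually $R$} relation $R^*$ has the property that \say{$R^*$-preserving} coincides with the forcing property in question; these identifications are exactly the content of the three Examples immediately preceding the statement (the $\omega^\omega$-bounding case via Lemma 6.3.4 of \cite{BarJu95}, and the Laver and Sacks cases via the coding of slaloms by reals). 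Second, I must observe that $\mfb(R^*)>\aleph_0$ in each case: for $R$ the usual order this is the classical fact $\mfb>\aleph_0$, and for the slalom relations it is the (equally classical) fact that countably many reals can be simultaneously captured by a single slalom, which is precisely $\mfb(\in^*)=\add(\Null)>\aleph_0$ by Fact~\ref{infact}. Third, I must note that $R^*$-preservation is preserved under two-step iterations in each of the three cases; this is recorded in the excerpt as a consequence of the general Lemma 6.1.12 of \cite{BarJu95}, applied to the respective forcing property.

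\textbf{Execution.} Fix one of the three forcing properties and let $R$ be the corresponding relation from the appropriate Example. By the Example, a subproper forcing notion is $R^*$-preserving if and only if it has the stated property; in particular for each $i$ with $i+1<\nu$ the hypothesis gives $\forces_i$ \say{$\P_{i,i+1}$ is $\infty$-subproper and $R^*$-preserving}. By the second observation $\mfb(R^*)>\aleph_0$, and by the third observation $R^*$-preservation is closed under two-step iterations. Thus all the hypotheses of Theorem~\ref{RRiteration} are met for the iteration $\langle\P_\alpha\mid\alpha\le\nu\rangle$. Applying $\phi(\nu)$ with $i=0$: given any $\P_\nu$-name $\dot{x}$ for an element of Baire space, and any $q\in\P_\nu$ forcing this, we run the standard-setup machinery to obtain $\sigma\colon\bN\prec N$ with the relevant parameters in range, and Theorem~\ref{RRiteration} produces a condition $p^*\le q$ and a real $y\in V$ with $p^*\forces\dot{x}\mathrel{R_0}\check{y}$. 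Since $q$ was arbitrary, the set of conditions forcing \say{$\dot{x}$ is $R_0$-below some ground model real} is dense, so $\P_\nu$ is $R^*$-preserving, i.e. $\P_\nu$ has the stated property. (In the Laver and Sacks cases, the relation $R^*$ has range a set of pairs $(y,s)$ or a set of slaloms, and \say{$R^*$-below a ground model object} unwinds under the coding to exactly the Laver-property or Sacks-property conclusion, possibly after taking a union over the ground-model real $y$ in the Laver case.)

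\textbf{Main obstacle.} The genuine mathematical work is entirely inside Theorem~\ref{RRiteration}, which I am permitted to assume; the corollary itself is bookkeeping. Consequently the only real point requiring care is the verification that the three concrete relations genuinely fit the abstract framework of Theorem~\ref{RRiteration}: namely that each $R$ is non-trivial (for every $n$ there are infinitely many $k$ with $n\mathrel{R}k$), that its definition is absolute between forcing extensions, and that the coding translating between slaloms and reals commutes appropriately with the relation $R^*$ and with elementary embeddings of full models (so that $\bar{\dot{x}}$ being a name for a real and $\dot{\sigma}$ elementary really does let one apply the theorem). These are routine but must be stated. A secondary subtlety, already flagged in the footnote to Example 3, is that the version of the Sacks property captured by $R^*$-preservation concerns functions $f\colon\omega\to\omega$ rather than $f\colon\omega\to V$, so the conclusion is literally \say{the Sacks property} in the sense of Definition~6.3.37 of \cite{BarJu95} restricted to such functions; this is the standard formulation and suffices for applications, but it should be acknowledged so the reader does not expect the a priori stronger statement.
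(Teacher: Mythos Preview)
Your proposal is correct and follows essentially the same approach as the paper: identify the relation $R$ for each property via the three Examples, note that two-step preservation holds by Lemma~6.1.12 of \cite{BarJu95}, and invoke Theorem~\ref{RRiteration}. You are slightly more thorough than the paper in explicitly verifying the hypothesis $\mfb(R^*)>\aleph_0$ and flagging the footnoted subtlety about the Sacks property, but the argument is the same.
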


Finally this allows us to build even more new models of $\SCFA$, again using weaving constructions of the type described previously in Theorem \ref{conSCFAcohen}.

\begin{theorem}
Assume there is a supercompact cardinal $\kappa$. Then there are $\kappa$-length nice iterations of subproper forcing notions $\mathbb P$, $\mathbb Q$ and $\mathbb R$ so that
\begin{enumerate}
\item
If $G \subseteq \mathbb P$ is $V$-generic then in $V[G]$ we have that $\SCFA$ holds as does $\mfd = \aleph_1$ and $cov (\Null) = \kappa = \aleph_2 = \mfc$.
\item
If $H \subseteq \mathbb Q$ is $V$-generic then in $V[H]$ we have that $\SCFA$ holds as does $cov (\Me) = \aleph_1 < \mfb = \mfc = \aleph_2 = \kappa$.
\item
If $K \subseteq \mathbb R$ is $V$-generic then in $V[K]$ we have that $\SCFA$ holds, $\mfc = \aleph_2 = \kappa$ and all cardinals in the Cicho\'n diagram are $\aleph_1$.
\end{enumerate}
\end{theorem}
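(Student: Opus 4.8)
The plan is to build the three forcings $\mathbb P$, $\mathbb Q$, $\mathbb R$ by weaving constructions exactly analogous to the proof of Theorem \ref{conSCFAcohen}: fix a supercompact $\kappa$, a Laver function $f:\kappa \to \V_\kappa$, and perform a $\kappa$-length nice iteration in which at stage $\alpha$, if $f(\alpha)$ names a subcomplete forcing together with a short sequence of dense sets, we use that forcing (this ensures $\SCFA$ holds in the end by the usual Baumgartner-style argument and the $\kappa$-c.c.), and otherwise we force with a carefully chosen ``filler'' forcing tailored to the cardinal-characteristic behavior we want. The point is that each filler forcing must be (i) $\infty$-subproper — for which it suffices that it be proper, since proper forcings are $\infty$-subproper — and (ii) have the preservation property needed so that the relevant nice-iteration preservation theorem (Lemma \ref{souslinpre} or Theorem \ref{RRiteration} together with its corollaries) applies across the whole iteration. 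Since every iterand is either subcomplete or proper, the whole iteration is $\infty$-subproper, and the preservation theorems of the previous sections apply to the tail segments $\P_{i,i+1}$.

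For part (1): to get $\mfd = \aleph_1$ while $cov(\Null) = \mfc = \kappa$, I would use as filler an $\omega^\omega$-bounding, proper forcing that adds reals avoiding ground-model null sets — random real forcing $\mathbb B$ is the natural choice, since it is $\omega^\omega$-bounding (Example after Definition \ref{ooboundingdef}, or Fact \ref{factB}(1)) and increases $cov(\Null)$. By the first clause of the theorem following Example with the Sacks property (i.e.\ the $\omega^\omega$-bounding preservation theorem, a special case of Theorem \ref{RRiteration}), the whole iteration is $\omega^\omega$-bounding, so no dominating real is added over the ground model and (since $\CH$ held initially) $\mfd = \aleph_1$ in the extension. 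Unbounded use of random forcing drives $cov(\Null)$ — and hence $\mfc$ — up to $\kappa = \aleph_2$. For part (2): to get $cov(\Me) = \aleph_1 < \mfb = \mfc = \aleph_2$, I would use Hechler forcing $\mathbb D$ as filler, which is proper (ccc), adds a dominating real pushing $\mfb$ up to $\kappa$, but I must simultaneously ensure $cov(\Me)$ stays small; the cleanest route is to instead use a proper forcing with the Laver property interleaved with dominating-real forcing, or to invoke a preservation theorem for ``not adding Cohen reals''/``preserving meager-covering families'' along nice iterations — this is where I would lean on Theorem \ref{RRiteration} with $R$ chosen as in the Laver-property example, or on a direct preservation-of-non-meagerness argument adapted to nice iterations. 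For part (3): to make every Cicho\'n-diagram cardinal equal to $\aleph_1$ while $\mfc = \aleph_2$, the filler should add reals (so $\mfc$ grows) while having the Sacks property — Sacks forcing $\mathbb S$ itself, which is proper and has the Sacks property (Definition \ref{sacks}); by the third clause of the preservation theorem the whole iteration has the Sacks property, which forces $cof(\Null) = \aleph_1$ and hence collapses the entire Cicho\'n diagram to $\aleph_1$, while the $\kappa$-many Sacks reals added give $\mfc = \aleph_2 = \kappa$.

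The main obstacle I anticipate is part (2): unlike the $\omega^\omega$-bounding and Sacks-property cases, the combination $cov(\Me) = \aleph_1$ with $\mfb = \aleph_2$ requires a filler that simultaneously adds dominating reals and does not add Cohen reals (or more precisely does not make the ground-model reals meager), and the relevant preservation property must be shown both to hold of the iterand and to be preserved under two-step iteration — the latter is an explicit hypothesis in Theorem \ref{RRiteration} (``$R^*$-preservation is preserved by two step iterations'') which must be verified for whatever $R$ captures ``not increasing $cov(\Me)$''. The standard candidate is to iterate a forcing like Miller's rational perfect tree forcing $\mathbb{PT}$ (which adds an unbounded, in fact dominating-free-but-$\mfd$-raising behavior — actually one wants a dominating real here, so $\mathbb D$ modified, or a ``tame'' Hechler-type forcing) with the Laver property; but Hechler forcing notoriously adds Cohen reals, so a more delicate filler — perhaps a countable-support iteration of a tree-Hechler variant with the Laver property — is needed, and checking its Laver property, properness, and that the Laver property is iterable (Lemma 6.1.12 of \cite{BarJu95}) is the crux. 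Once the right filler is identified, the rest follows by the already-established iteration and preservation machinery together with the Baumgartner argument for $\SCFA$ and a $\Delta$-system argument for the $\kappa$-c.c.
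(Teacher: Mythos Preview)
Your approach for parts (1) and (3) matches the paper's proof exactly: random forcing as filler gives $\omega^\omega$-bounding (hence $\mfd=\aleph_1$) while pushing $cov(\Null)$ up to $\kappa$, and Sacks forcing as filler gives the Sacks property for the whole iteration (hence $cof(\Null)=\aleph_1$) while adding $\kappa$ many reals.

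The gap is in part (2). You correctly identify that the filler must add a dominating real and have the Laver property, but then wander through Hechler forcing (which adds Cohen reals), Miller forcing (which does not add dominating reals), and a hypothetical ``tame Hechler variant'' before leaving the question open. The answer is simply Laver forcing $\mathbb L$. Laver forcing is proper, its generic real is dominating (so iterating it cofinally in $\kappa$ gives $\mfb=\kappa=\aleph_2$), and it has the Laver property (this is the origin of the name; see \cite[Theorem 7.3.29]{BarJu95}). Since subcomplete forcing adds no reals it trivially has the Laver property as well, and the Laver property is preserved by two-step iterations (the general Lemma 6.1.12 of \cite{BarJu95} that you already cite), so Theorem~\ref{RRiteration} applies and the full iteration has the Laver property. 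This forces $cov(\Me)=\aleph_1$ in the extension, since a forcing with the Laver property adds no Cohen reals and in fact the ground-model reals remain non-meager. No exotic filler is needed.
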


\begin{proof}
In each case, the argument is similar to that of Theorem \ref{conSCFAcohen}, so I focus on what the \say{woven-in} forcing will be. In the first, use random reals. These are $\baire$-bounding, so $\baire \cap V$ will be a dominating family, hence $\mfd = \aleph_1$ but since unboundedly often random reals are added, no $\omega_1$ sized set of null sets can cover $\mathbb R$ so $cov (\Null) = \aleph_2$.

In the second case, iterate with Laver reals. By above, the iteration will have the Laver property, hence $cov (\Me) = \aleph_1$, but since dominating reals are added unboundedly often, $\mfb = \aleph_2$.

For the third case, use Sacks forcing. It follows that the whole iteration has the Sacks property so $cof(\Null)$ is still $\aleph_1$ in the extension.
\end{proof}

\section{Open Questions}
There are still several things not known about models of $\SCFA$. In particular, Sean Cox observed that there is a gap in the proof of \cite[Corollary 7.2]{Jen14} and hence the following is open.
\begin{question}
Is $\SCFA$ consistent with the continuum larger than $\aleph_2$?
\end{question}

One can also ask about what positive implications there are for the continuum and $H_{\omega_1}$ under $\SCFA$. 
\begin{question}
Does $\SCFA$ decide any positive implications about the reals or combinatorics on $\omega_1$?
\end{question}

Finally in anticipation of the next chapter I ask:
\begin{question}
Does $\SCFA$ imply there are no Kurepa trees?
\end{question}

\clearpage

\chapter{Specializing Wide Trees and the Dee-Complete Forcing Axiom}
\chaptermark{Specializing Wide Trees}
In this chapter I consider a different class of forcing notions which do not add reals: the dee-complete forcing notions. My initial interest in this class was to contrast its behavior with that of subcomplete forcing. Dee-complete forcing was first introduced by Shelah in Chapter V of \cite{PIPShelah} in the course of his alternative proof of Jensen's result that $\CH$ is consistent with every Aronszajn tree being special. In that chapter Shelah defined a poset for specializing Aronszajn trees without adding reals and proved that it had the properties of being ${<}\omega_1$-proper and dee-complete (to be defined below). He then showed that posets with these properties can be iterated with countable support without adding reals, hence one can iteratively specialize all Aronszajn trees without adding reals. The method turns out to be rather general and using a supercompact cardinal he obtains the consistency of the associated forcing axiom for this class coupled with the cardinal arithmetic $2^{\aleph_0} = \aleph_1$ and $2^{\aleph_1} = \aleph_2$. This axiom is called Axiom II in \cite[p. 377]{PIPShelah} and several consequences are given, including the failure of Kurepa's hypothesis. Some of these consequences, including the failure of Kurepa's hypothesis, actually do not need the cardinal arithmetic. Here, I will follow Jensen, \cite{JensenCH}, and refer to the forcing axiom (with no assumption on the cardinal arithmetic) as $\DCFA$ (the \say{dee-complete forcing axiom}). So $\DCFA$ is a competitor with $\SCFA$ in that both are compatible with $\CH$. 

Since the results of the previous chapter suggest that $\SCFA$ wields very little influence over combinatorics at the level of $H_{\omega_1}$, it's striking that Shelah obtained so many consequences of $\DCFA$ at this level. In this chapter I give a new example of a dee-complete and ${<}\omega_1$-proper poset which is a new variation of Shelah's aforementioned specializing poset and which, under certain circumstances allows one to specialize trees of uncountable width without adding reals. Specifically, the main theorem is the following.

\begin{theorem}
Suppose $T$ is an $\omega_1$-tree (countable levels) and $S \subseteq T$ is a (potentially wide) Aronszajn tree with the induced suborder. Then there is a forcing notion $\mathbb P = \mathbb P_{S, T}$ which specializes $S$ and is dee-complete and ${<}\omega_1$-proper.
\label{specwidetree}
\end{theorem}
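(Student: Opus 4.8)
The plan is to adapt Shelah's construction of the standard specializing poset for Aronszajn trees (Chapter V of \cite{PIPShelah}) to the situation where the tree $S$ being specialized is a subtree of an $\omega_1$-tree $T$ with countable levels, but $S$ itself may have uncountable levels. First I would set up the forcing $\mathbb P = \mathbb P_{S,T}$: conditions should be countable approximations to a specializing function $f : S \to \omega$, i.e.\ finite or countable partial functions $p$ with $\dom(p) \subseteq S$ such that $p$ is increasing along chains (if $s <_S t$ both in $\dom(p)$ then $p(s) \neq p(t)$, or more strongly $p(s) < p(t)$). The key design choice — and this is where the width of $S$ forces a departure from the Aronszajn case — is to tie the domains of conditions to the levels of the \emph{ambient} tree $T$: a condition $p$ should have domain contained in $S \restriction \alpha$ for some countable $\alpha$, and crucially $p$ should decide, for each node $t \in T$ of level $\geq \alpha$, a ``promise'' (a finite set of forbidden values, or a commitment about which colors remain available below $t$) so that extensions can always be found. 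Because $T$ has countable levels, at each countable stage there are only countably many nodes of $T$ to make promises about, which is what keeps the poset manageable even though $S$ is wide.

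Next I would verify the two structural properties. For $\mathord{<}\omega_1$-properness: given a countable elementary submodel $M$ (or an $\in$-chain of such models $\langle M_i : i < \gamma\rangle$ for $\gamma < \omega_1$) with $\mathbb P, S, T \in M$, and a condition $p \in M$, I need to build an $(M,\mathbb P)$-generic condition $q \leq p$ — and for the iteration one needs genericity simultaneously for the whole chain. The domain of $q$ will be (roughly) $S \cap M$ or $S \restriction (\omega_1 \cap M)$; the point is to meet every dense set in $M$ while keeping $q$ a legitimate condition, which uses the promises to reserve enough colors. For dee-completeness (the property that makes countable-support iterations add no reals): I would exhibit the required completeness system — for each countable $M$ and condition $p \in M$, a filter on the set of $(M,\mathbb P)$-generic conditions below $p$, generated in a definable-from-a-real way — such that any countable descending sequence of conditions that is generic over $M$ and lies in the filter has a lower bound. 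The filter should be generated by sets of the form ``generic conditions $q$ whose color choices on the $\delta$-th level of $T$ (where $\delta = \omega_1 \cap M$) avoid a prescribed finite bad set'' indexed over countably much data; the compatibility of a generic descending sequence with this filter is exactly what lets one amalgamate the sequence into a single condition, using that $T$'s $\delta$-th level is countable so only countably many constraints arise.

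The main obstacle I expect is the construction of the lower bound witnessing dee-completeness (and the analogous limit step in $\mathord{<}\omega_1$-properness): one must show that when you take a descending $\omega$-sequence $\langle p_n : n < \omega\rangle$ of conditions, generic over $M$ and respecting the completeness system, the union $\bigcup_n p_n$ — after possibly adding a layer of new color commitments on the critical level $\delta = \sup(\omega_1 \cap M)$ — is still a condition. This requires that no chain of $S$ of order type $\omega$ cofinal in $\delta$ gets all its colors ``used up,'' i.e.\ that infinitely many colors remain available along each such branch; since $S$ is Aronszajn it has no cofinal branch of length $\omega_1$, but it can certainly have many branches of length $\delta$, and there may be uncountably many nodes of $S$ at level $\delta$. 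The promises inherited from the ambient $\omega_1$-tree $T$ are precisely what resolve this: each node of $S$ at level $\delta$ sits below some node of $T$ at level $\delta$, of which there are only countably many, so the bookkeeping of reserved colors can be organized along $T_\delta$ rather than $S_\delta$. I would then finish by invoking Shelah's iteration theorem for $\mathord{<}\omega_1$-proper dee-complete forcing to note (as a remark, though not needed for the theorem as stated) that such specializations can be iterated without adding reals, which is the point of interest for $\DCFA$.
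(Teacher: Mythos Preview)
Your high-level plan is on the right track: adapt Shelah's specializing poset and exploit the countable levels of the ambient tree $T$ to organize the bookkeeping. The paper does exactly this. But your description of the forcing diverges from what actually works in ways that matter.

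First, the target of the specializing function should be $\mathbb Q$ with its order, not $\omega$ as a set of colors. Conditions are pairs $(f_p,\Gamma_p)$ where $f_p: S\cap T_{\le\alpha}\to\mathbb Q$ is \emph{strictly increasing on chains}, and the entire extension machinery rests on the density of $\mathbb Q$: the Extension Lemma says one can extend $f_p$ to any height $\beta$ with arbitrarily slow growth (within $\varepsilon$). Your ``reserve colors / forbid finite bad sets'' picture is the ccc Baumgartner-style forcing, which adds reals; the no-new-reals version needs the ordered-rationals setup.

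Second, the promise component $\Gamma_p$ is more elaborate than finite forbidden sets. A promise assigns to each level $\gamma$ a countable family of \emph{requirements}, each requirement being an infinite dispersed family of finite functions $h:T_\gamma\hook S\to\mathbb Q$; ``keeping'' the promise means that for every finite $\tau$ one can find such an $h$ with domain disjoint from $\tau$ that bounds $f_p$. This dispersion is exactly what lets one amalgamate $\omega$-sequences while leaving room on $T_\delta\hook S$.

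Third, and most importantly, you are missing the engine of the argument: the submodel lemma (given $p\in M$, a dense $D\in M$, and a finite bounding function $h$ on $T_\delta\hook S$, there is $q\le p$ in $D\cap M$ still bounded by $h$). Its proof is where the Aronszajn property of $S$ is actually used: assuming failure, one defines a set $B_\infty$ of ``bad'' $n$-tuples of nodes of $S$, and then uses the fact that $S$ has no cofinal branch (via Jech's lemma that an unbounded family of disjoint finite subsets of an Aronszajn tree contains two elementwise-incomparable members) to produce infinitely many pairwise disjoint bad tuples on a single level. These form a dispersed requirement that can be added as a new promise, yielding a contradiction. Without this lemma the construction of the master condition (and hence dee-completeness and ${<}\omega_1$-properness) does not go through; your sketch records the obstacle but not the mechanism that resolves it.
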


A similar idea is sketched in \cite[Chapter VII]{PIPShelah}, however the poset here is slightly more general in that it applies to a larger class of trees. I also use this to give a more fleshed out proof of the fact that $\DCFA$ implies there are no Kurepa trees. The general question of when one can specialize a wide tree without adding reals turns out to be very interesting and there are many open questions still. I investigate this further as well and make some observations.

\section{Preliminaries: Dee-complete Forcing, ${<}\omega_1$-Properness and Trees}

\subsection{Strengthening Properness}

Given a sufficiently large cardinal $\theta$, a countable model $N$ so that either $N \prec H_\theta$ or else $N$ is transitive and elementarily embeds into $H_\theta$, a forcing notion $\mathbb P \in N$ and a condition $p \in \mathbb P$ let ${\rm Gen}(N, \mathbb P, p)$ be the set of $\mathbb P$-generic filters over $N$ containing $p$ i.e. the set of filters $G \subseteq \mathbb P \cap N$ which intersect every dense subset $D$ of $\mathbb P$ in $N$. The following definitions come from \cite[Chapter V]{PIPShelah} and a particularly good exposition is also given in \cite{AbrahamHB}. What I call a completeness system here is called an \say{$\aleph_1$-complete} completeness system in \cite{PIPShelah} and a ``countably complete" completeness system in \cite{AbrahamHB}. However, every completeness system considered in this chapter is countably complete so I omit the additional notation. For reference, countable completeness is the condition \ref{countcomp} in the definition below.

\begin{definition}
A {\em completeness system} is a function $\mathbb D$ defined on some set of triples $(N, \mathbb P, p)$ such that $N \prec H_\theta$ for some $\theta$, $N$ is countable, $\mathbb P \in N$ is a forcing notion and $p \in \mathbb P \cap N$ is a condition and the following hold:
\begin{enumerate}
\item
$\mathbb D(N, \mathbb P, p)$ is a family of sets, $A$, such that each $A \subseteq {\rm Gen}(N, \mathbb P, p)$.
\item
If $A_i \in \mathbb D(N, \mathbb P, p)$ for each $i < \omega$ then the intersection $\bigcap_{i < \omega} A_i$ is non-empty.
\label{countcomp}
\end{enumerate}
If for a fixed $\mathbb P$ and some cardinal $\theta$, $\mathbb D$ is defined on the set of all triples $(N, \mathbb P, p)$ with $p \in \mathbb P \in N$, $p \in N$ and $N \prec H_\theta$ then we call $\mathbb D$ a completeness system on $\theta$ for $\P$.
\end{definition}
Completeness systems in general are quite easy to construct, which leads one to question their utility. In general we will only be interested therefore in ones which are \say{nicely defined}, a notion Shelah refers to as {\em simple}.
\begin{definition}
A completeness system $\mathbb D$ is {\em simple} if there are a formula $\phi$ and a parameter $s \in H_{\omega_1}$ such that for all $(N, \mathbb P, p)$ in the domain of $\mathbb D$ we have that $\mathbb D(N, \mathbb P, p) = \{A^{N, \P, p}_u \; | \; u \in H_{\omega_1}\}$ where $A^{N, \P, p}_u$ is defined as follows: for $N\prec H_\theta$, let $\bar{N}$ be the Mostowski collapse of $N$ and $\pi_N:\bar{N} \to N$ the inverse of the Mostowski collapse. We let $\bar{A}^{N, \mathbb P, p}_u := \{\bG \in {\rm Gen}(\bar{N}, \pi^{-1}(\P), \pi_N^{-1}(p)) \; | \; H_{\omega_1} \models \phi (\bar{N}, \bG, \pi^{-1}(\P), \pi_N^{-1}(p), u, s)\}$. Finally let $A^{N, \P, p}_u$ be the set of generics of the form $\pi_N``\bG$ for $\bG \in \bar{A}^{N, \mathbb P, p}_u$.
\end{definition}
Using this, I can define dee-completeness. 
\begin{definition}
We say that $\mathbb P$ is {\em dee-complete} if for every sufficiently large $\theta$ there is a simple completeness system $\mathbb D$ on $\theta$ for $\P$ such that whenever $\mathbb P \in N \prec H_\theta$, with $N$ countable and $p \in \mathbb P \cap N$ there is an $A \in \mathbb D(N, \mathbb P, p)$ such that for all $G \in A$ there is a condition $q \in \mathbb P$ so that $G = \{r \in N \cap \mathbb P \; | \; q \leq r\}$.
\end{definition}
Given a poset $\mathbb P$ we say that a (not necessarily simple) completeness system $\mathbb D$ is a {\em completeness system for} $\mathbb P$ if it satisfies the requirements of the definition of dee-completeness (other than simplicity)\footnote{The simplicity condition is not given as part of the definition of dee-completeness in either \cite{PIPShelah} nor \cite{AbrahamHB}. However, in order to have an iteration theorem for this class, one needs to assume simplicity, as is done below in Theorem \ref{iterationofdeecomplete}, or else assume that all the completeness systems already appear in the ground model, as in \cite[Theorem 5.17]{AbrahamHB}. Since I will be only considering simple completeness systems and using the corresponding iteration theorem in this chapter, I have rolled this into the definition in order to, if you will, simplify the discussion.}. Observe that the existence of a completeness system for $\mathbb P$ implies that $\mathbb P$ is proper and adds no new reals (or indeed $\omega$ sequences of elements from $V$) since the condition $q$ as in the definition of dee-completeness is an $(N, \mathbb P)$-generic condition and if $\dot{a}: \check{\omega} \to \check{V}$ names an $\omega$-sequence, then there is a model $N \ni \dot{a}$ and a $\P$-generic $G$ over $N$ which has a lower bound $q$ so $q$ decides $\dot{a}(\check{n})$ for all $n < \omega$.

\begin{definition}[$\alpha$-Properness]
Let $\theta$ be a cardinal and $\alpha < \omega_1$. An $\alpha$-{\em tower} for $H_\theta$ is a sequence $\vec{N} = \langle N_i \; | \; i < \alpha \rangle$ of countable elementary substructures of $H_\theta$ so that for each $\beta < \alpha$, we have $\langle N_i \; | \; i \leq \beta \rangle \in N_{\beta + 1}$ and if $\lambda < \alpha$ is a limit ordinal then $N_\lambda = \bigcup_{i < \lambda} N_i$. We say that $\mathbb P$ is $\alpha$-proper if for all sufficiently large $\theta$, all $p \in \mathbb P$ and all $\alpha$-towers $\vec{N}$ in $H_\theta$ so that $p, \mathbb P \in N_0$ there is a $q \leq p$ which is simultaneously $(N_i, \mathbb P)$-generic for every $i < \alpha$. We say that $\mathbb P$ is ${<}\omega_1$-proper if it is $\alpha$-proper for all $\alpha < \omega_1$.
\end{definition}
Note that properness is $1$-properness. The point is the following iteration theorem due to Shelah, \cite[Chapter V, Theorem 7.1]{PIPShelah}.
\begin{theorem}
If $\langle (\mathbb P_\alpha, \dot{\mathbb Q}_\alpha) \; | \; \alpha < \nu\rangle$ is a countable support iteration of some length $\nu$ so that for each $\alpha < \nu$, $\forces_{\mathbb P_\alpha} ``\dot{\mathbb Q}_\alpha$ is dee-complete and ${<}\omega_1$-proper", then $\mathbb P_\nu$ is dee-complete and ${<}\omega_1$-proper. In particular such iterations do not add reals.
\label{iterationofdeecomplete}
\end{theorem}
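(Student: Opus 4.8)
The plan is to prove the theorem by induction on the length $\nu$ of the iteration, carrying a strengthened inductive hypothesis. The bare statements ``$\mathbb P_\nu$ is dee-complete'' and ``$\mathbb P_\nu$ is ${<}\omega_1$-proper'' do not by themselves survive a limit stage, so instead I would prove at stage $\nu$ a tower-indexed version: for every sufficiently large $\theta$, every $\alpha<\omega_1$, every $\alpha$-tower $\vec N=\langle N_i\;|\;i<\alpha\rangle$ in $H_\theta$ with $\mathbb P_\nu\in N_0$, and every $p\in\mathbb P_\nu\cap N_0$, there is a \emph{simple} completeness system $\mathbb D$ — whose defining formula and $H_{\omega_1}$-parameter depend only on a fixed code for the iteration $\langle(\mathbb P_\beta,\dot{\mathbb Q}_\beta)\;|\;\beta<\nu\rangle$ and on $\vec N,p$ in a definable way — such that some $A\in\mathbb D(\vec N,\mathbb P_\nu,p)$ consists of filters $G$, each of which has a lower bound $q\le p$ that is simultaneously $(N_i,\mathbb P_\nu)$-generic for all $i<\alpha$, with $G$ the trace below $N_0$ of the generic through $q$. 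The successor step $\nu=\mu+1$ is then essentially the two-step case: $\mathbb P_\nu\cong\mathbb P_\mu * \dot{\mathbb Q}_\mu$, the first factor handled by the inductive hypothesis and the second by assumption, so one chooses generic conditions in two stages and takes the composite of the two simple completeness systems, which is again simple because its defining formula can be written out explicitly from the two component formulas.

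The limit step has two tasks. The first is ${<}\omega_1$-properness of $\mathbb P_\nu$: this is Shelah's $\alpha$-proper iteration lemma, which I would prove by an inner induction on $\alpha$ inside the induction on $\nu$. Given an $\alpha$-tower $\vec N$ with $\mathbb P_\nu,p\in N_0$, set $\delta_i=\sup(\nu\cap N_i)$ and perform a diagonal fusion running simultaneously along the levels $i<\alpha$ of the tower and along cofinal $\omega$-sequences inside the $\delta_i$; at iteration-successor coordinates one invokes the inductive hypothesis that shorter iterations preserve $\beta$-properness for all $\beta<\omega_1$, and at iteration-limit coordinates one reshuffles the tower appropriately, countable supports guaranteeing that the coordinatewise limit is a condition. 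This step is technical but standard.

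The main obstacle is the second task, dee-completeness at a limit, which is exactly where the ${<}\omega_1$-properness hypothesis on the iterands is indispensable. Fix a countable $N\prec H_\theta$ with $\mathbb P_\nu,p\in N$. Working inside $N$, choose an increasing $\omega$-sequence $\langle\alpha_n\;|\;n<\omega\rangle$ cofinal in $\sup(\nu\cap N)$ together with an $\omega$-tower $\langle N_n\;|\;n<\omega\rangle$ of countable elementary submodels recording this data. By the inductive hypothesis each $\mathbb P_{\alpha_n}$ carries a simple completeness system adapted to $\langle N_m\;|\;m\le n\rangle$, producing sets $A_n$ of $\mathbb P_{\alpha_n}$-generics each with a lower bound $q_n$ generic over $N_0,\dots,N_n$. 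I would then define the completeness system for $\mathbb P_\nu$ so that, for a parameter $u$ encoding $\langle\alpha_n\rangle$, the tower, an enumeration of the dense subsets of $\mathbb P_\nu$ in $N$, and the component parameters, the set $A_u$ consists of those $G$ whose restrictions $G\rest\alpha_n$ all lie in the corresponding component sets with the $q_n$ chosen decreasing; the coordinatewise limit $q$ of the $q_n$ is then a lower bound for $G$, coherent precisely because supports are countable and each $q_n$ is generic over a model containing the relevant names, and the ${<}\omega_1$-properness established in the first task is what makes this limit an honest condition rather than merely an $(N,\mathbb P_\nu)$-generic sequence. Countable completeness of $\mathbb D(N,\mathbb P_\nu,p)$ is inherited from that of the component systems (at level $n$ one intersects only countably many component sets). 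The delicate point — and the reason simplicity was built into the definition of dee-completeness — is verifying that the resulting $\mathbb D$ is \emph{itself} simple: one must exhibit a single formula $\varphi$ and a parameter $s\in H_{\omega_1}$ uniformly defining $\mathbb D(N,\mathbb P_\nu,p)$ from the Mostowski collapse of $N$ together with $u$. This works because a countable support iteration of posets each equipped with a simple completeness system is coded by a single set, and because every choice made in the construction (cofinal $\omega$-sequences, towers, enumerations) can be carried out absolutely inside $H_{\omega_1}$ from the collapsed copy of $N$ and the parameter $u$; assembling $\varphi$ is the heart of the argument.

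Finally, as observed in the excerpt, the existence of a completeness system for $\mathbb P_\nu$ immediately yields that $\mathbb P_\nu$ adds no new reals — indeed no new $\omega$-sequences of ground-model elements — since for any name $\dot a$ for such a sequence one takes $N\ni\dot a$, a generic $G$ over $N$ inside the relevant $A$, and its lower bound $q$, which then decides $\dot a(\check n)$ for every $n<\omega$. Together with ${<}\omega_1$-properness this gives the full statement, and the ``in particular'' clause is then immediate.
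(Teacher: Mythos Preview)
The paper does not prove this theorem at all: it is stated in the preliminaries as a result of Shelah and attributed to \cite[Chapter V, Theorem 7.1]{PIPShelah}, with no proof given. So there is no ``paper's own proof'' to compare against.

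Your outline is broadly in the spirit of Shelah's original argument --- induction on $\nu$, carrying a strengthened hypothesis, with the limit case for dee-completeness driven by ${<}\omega_1$-properness so that the diagonal sequence of generic conditions has an honest lower bound. A couple of points would need tightening if you actually wrote this out. First, the tower you need at the limit is not an $\omega$-tower chosen \emph{inside} $N$; rather $N$ itself sits at the top of an internal chain, and the crucial use of ${<}\omega_1$-properness is applied to the tails of the iteration between successive $\alpha_n$'s, not to the whole $\mathbb P_{\alpha_n}$. Second, your treatment of simplicity is the weakest link: saying ``every choice can be carried out absolutely inside $H_{\omega_1}$'' is exactly where Shelah's proof is most delicate, since the completeness systems for the iterands live in intermediate extensions and one must argue carefully that the whole assembly is uniformly definable from a single $H_{\omega_1}$-parameter. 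None of this is wrong as a plan, but since the paper treats the result as a black box, you are effectively proposing to redo a chapter of \cite{PIPShelah} rather than filling a gap in the present paper.
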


As an immediate consequence, we obtain, relative to a supercompact, the consistency of $\DCFA$, the forcing axiom for dee-complete and ${<}\omega_1$-proper forcing notions and even its consistency with $\CH$. Of course $\DCFA$ does not imply $\CH$ since $\PFA$ implies $\DCFA$ trivially. Very little attention has gone into $\DCFA$ as an axiom in its own right outside of \cite{JensenCH}. However one notable exception is \cite{AbrahamTodPP} where it is shown that $\DCFA + \CH$ implies the P-Ideal Dichotomy.

\subsection{Trees}
The main purpose of this chapter is to look at applications of dee-complete forcing to trees. Let me review some notation and terminology related to this here for reference. Recall that a {\em tree} $T = \langle T, \leq_T\rangle$ is a partially ordered set so that for each $t \in T$ the set of $s \leq_T t$ is well ordered by $\leq_T$. A {\em branch} through a tree is a maximal linearly ordered subset.

\begin{definition}
Let $T$ be a tree, $\alpha$ an ordinal and $\kappa$ and $\lambda$ cardinals.
\begin{enumerate}
\item
The $\alpha^{\rm th}$-{\rm level} of $T$, denoted $T_\alpha$ is the set of all $t \in T$ so that $\{s \; | \; s <_T t\}$ has order type $\alpha$. Also let $T_{\leq \alpha} = \bigcup_{i \leq \alpha} T_i$ and $T_{< \alpha} = \bigcup_{i < \alpha} T_i$.
\item
The {\em height} of $T$ is the least $\alpha$ with $T_\alpha = \emptyset$. 
\item
If $\alpha < \beta$ are ordinals, $T$ is a tree of height at least $\beta + 1$ and $t \in T_\beta$ then denote by $t \hook \alpha$ the unique $s \in T_\alpha$ so that $s \leq_T t$.
\item
We say that $T$ is a $\kappa$-tree if it has height $\kappa$ and each level has size $<\kappa$.
\item
$T$ is a $\kappa$-{\em Aronszajn} tree if it is a $\kappa$-tree with no branch of size $\kappa$. If $\kappa = \aleph_1$ we just say Aronszajn tree.
\item
$T$ is a $(\kappa, {\leq}\lambda)$-Aronszajn tree if it is a tree of height $\kappa$ with each level of size ${\leq}\lambda$ and no branch of size $\kappa$. An $(\aleph_1, {\leq}\lambda)$-Aronszajn tree is called a {\em wide Aronszajn tree} if $\lambda$ is uncountable and the equality is witnessed at some level i.e. it is not a $(\omega_1, {<}\omega_1)$-tree\footnote{The use of the word ``wide" appears to come from the recent (and fascinating) paper \cite{DzSh2020}, though the concept has been in the literature for over 50 years. }. The latter we sometimes call {\em thin} to emphasize that it's not wide.
\item
A (wide) Aronszajn tree is {\em special} if it can be decomposed into countably many antichains. Equivalently if there is a {\em specializing function} $f:T \to \mathbb Q^+$, the set of positive rationals so that $f$ is strictly increasing on linearly ordered subsets of $T$.
\item
An $\omega_1$-tree is Kurepa if it has more than $\aleph_1$ many uncountable branches. It's a {\em weak} Kurepa tree if it is a tree of height and cardinality $\aleph_1$ with more than $\aleph_1$ many branches.
\end{enumerate}
\end{definition}
Throughout this chapter I will only be considering {\em normal} trees. Recall that a tree $T$ is {\em normal} if $|T_0| = 1$, every node is comparable with nodes on every level, and for each $s, t \in T$ of limit height $\alpha$, if $s\neq t$ there is a $\beta < \alpha$ so that $s \hook \beta \neq t \hook \beta$. Unless otherwise specified, in what follows ``tree" means ``normal tree".

Special trees were first investigated in connection with forcing in \cite{BMR70} where it was shown that the poset to add a specializing function with finite approximations is ccc and hence $\MA + \neg \CH$ implies that all trees of height $\aleph_1$, cardinality less than $2^{\aleph_0}$ and no uncountable branch are special. This poset obviously adds reals. Specializing without adding reals is more delicate as we will see.

\section{Specializing a Wide Tree}
In this section I work towards proving Theorem \ref{specwidetree}. The forcing notion used is very similar to the poset from \cite[Section 4]{AbrahamShelah93} which specializes a thin tree without adding reals. This is due to Abraham and Shelah, building on the original example of such a poset from \cite[Chapter V, Theorem 6.1]{PIPShelah}. Throughout, fix an $\omega_1$-tree $T$ (possibly with uncountable branches) and let $S \subseteq T$ be an $(\omega_1, {\leq}\omega_1)$-Aronszajn tree with the induced suborder. Some writers assume that a subtree must be downward closed in the larger tree, note that I am explicitly {\em not} assuming this. Also, note that without loss that we may assume that $T \subseteq H_{\omega_1}$. 

The first step will be to make a reduction in terms of the hypotheses needed. Given a $\leq_T$-increasing sequence $\vec{t} = \langle t_i \; | \; i < \omega \rangle$ of elements of $T$ the {\em limit} of $\vec{t}$, denoted $\lim \vec{t}$, is the unique minimal element $t \in T$ so that for all $i < \omega$ $t_i \leq_T t$. Note that the limit may not always exist, however if it does it is unique by normality of $T$. We say that a subtree $U \subseteq T$ with the induced suborder is {\em closed} if it contains all of its limit points. Given any subtree $U \subseteq T$ define the {\em closure} of $U$, denoted $\overline{U}$ to be the smallest closed subtree containing $U$ i.e. $\overline{U} = \bigcap \{U' \; | \; U \subseteq U' \; {\rm and} \; U' \; {\rm is \; closed}\}$. 

Observe that we can construct $\overline{U}$ more concretely by simply unioning up $U$ with its limit points. Clearly $\overline{U}$ contains all these points, so I need to just argue that this union is already closed. Let $U'$ be the union of $U$ with its limit points and let $\vec{t} = \langle t_i \; | \; i < \omega \rangle \subseteq U'$ be a $\leq_T$-strictly increasing sequence whose limit exists in $T$. I need to show that $\lim \vec{t} \in U'$. Every element of $\vec{t}$ is either a limit point of $U$ or is in $U$ so we can find an element $s_i \in U$ so that $t_i \lneq_T s_i \leq_T t_{i+1}$ for each $i < \omega$. Let $\vec{s} = \langle s_i \; | \; i < \omega \rangle$. We have that $\vec{s} \subseteq U$ and $\lim \vec{t} = \lim \vec{s}$ so the limit of this sequence is in $U'$ as needed.

The point of this detour is the following.
 
\begin{proposition}
If $U \subseteq T$ is a (potentially wide) Aronszajn tree with the induced suborder then so is $\overline{U}$.
\end{proposition}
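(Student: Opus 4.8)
The plan is to verify the two properties of $\overline{U}$ separately: that it has countable levels (equivalently, that each level has size at most $\aleph_1$, or more precisely the same width constraint as $U$) and that it has no uncountable branch. For the first property, I would use the concrete description of $\overline{U}$ established in the paragraph preceding the proposition: $\overline{U}$ is the union of $U$ with its set of limit points. So $\overline{U}_\alpha = U_\alpha \cup L_\alpha$ where $L_\alpha$ is the set of limits at level $\alpha$ of strictly increasing $\omega$-sequences from $U$. The key observation is that each such limit point $t = \lim \vec{t}$ is already a node of $T$, and $T$ is an $\omega_1$-tree, so $|\overline{U}_\alpha| \le |T_\alpha| \le \aleph_0$ when $\alpha$ is a countable ordinal; hence $\overline{U}$ is itself an $\omega_1$-tree (in particular it has countable levels, which is stronger than being $(\omega_1,{\le}\omega_1)$). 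If instead one wants to keep parity with the general statement (a subtree of $T$ of height $\omega_1$ with levels of size at most $\aleph_1$), the same bound $|\overline U_\alpha| \le |T_\alpha|$ does the job and is in fact sharper.

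For the second property — no cofinal branch — I would argue by contradiction: suppose $b \subseteq \overline{U}$ is an uncountable branch. Since $\overline{U}_\alpha \subseteq T_\alpha$ for all $\alpha < \omega_1$, the branch $b$ would also be an uncountable branch through $T$; that is not a contradiction by itself, since $T$ may have uncountable branches. The real point is to show $b$ gives rise to an uncountable branch through $U$, contradicting that $U$ is Aronszajn. Here I would use normality of $T$ together with the structure of $\overline{U}$: for each $t \in b$ at a successor level, $t$ either lies in $U$ or is a limit point, and between any two elements $t <_T t'$ of $b$ there must be an element of $U$ — indeed if $t'$ is a limit point witnessed by $\vec{s} \subseteq U$ with $\lim \vec{s} = t'$ and $t <_T t'$, then by normality some $s_i$ satisfies $t <_T s_i <_T t'$ (or $t \le_T s_i$), so $s_i \in U \cap b$. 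Thus $U \cap b$ is cofinal in $b$, hence an uncountable branch of $U$, contradiction.

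The main obstacle I anticipate is the careful handling of the case analysis at limit levels and making the interpolation claim ``between any two elements of $b$ there is an element of $U$'' fully rigorous using normality: one has to be attentive to whether a limit point $t'$ might coincide with $t$, whether $b$ restricted below $t'$ actually contains $t$, and that the witnessing sequence $\vec{s}$ for $t'$ really produces a node strictly between $t$ and $t'$ on the branch — this relies on normality (distinct nodes at a limit level differ below that level) to ensure $s_i$ is comparable with $t$ and lies on $b$. Once that interpolation lemma is in hand, the rest is bookkeeping. Everything else — the width bound and the reduction to $U$ being Aronszajn — follows directly from the concrete description of $\overline{U}$ and the hypothesis that $T$ is an $\omega_1$-tree, so I would keep the write-up short, stating the interpolation step as the crux and the two conclusions as immediate corollaries.
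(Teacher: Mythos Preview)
Your proposal is correct and takes essentially the same approach as the paper: both argue by contradiction from an uncountable branch $b \subseteq \overline{U}$, and the key observation is the same --- any limit point of $U$ lying on $b$ is a limit of points of $U \cap b$ (your interpolation step). The paper simply runs this contrapositively, noting that $b \cap U$ is bounded since $U$ is Aronszajn, hence so are its limit points, hence so is $b \cap \overline{U}$.
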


\begin{proof}
Suppose $\overline{U}$ is not an Aronszajn tree and let $b \subseteq \overline{U}$ be an uncountable branch. Since $U$ is Aronszajn $b \cap U$ is bounded. It follows that there are at most countably many limit points of $b \cap U$. But then, by the observation preceding this paragraph $b \cap \overline{U}$ is bounded, contradiction.
\end{proof}

As a consequence of this observation we may from now on assume that $S$ is closed in $T$, since if it's not we can replace it with its closure which, if special, will imply that $S$ is special as well. Also assume without loss that the root of $S$ is the root of $T$, i.e. the unique element of $T_0$ is in $S$.

Our next goal is to define the forcing $\mathbb P$. The idea is to force with partial specializing functions $f:S \to \mathbb Q$ but use the structure of $T$ to control the forcing. 

I begin by defining the objects that will build up the conditions. Throughout there is a subtlety concerning partial functions from $T$ to $\mathbb Q$ that I want to address up front. Let $\beta < \omega_1$. Often times I will be considering some function $h$ which maps a finite subset of $T_\beta$ to $\mathbb Q$ and I would like to consider the projection of this function to some set $X \subseteq T$ of elements of rank $< \beta$ i.e. a new function $\hat{h}$ so that for each $t \in {\rm dom}(h)$ with $t \hook \alpha \in X$ for some $\alpha < \beta$ and $\hat{h}(t \hook \alpha) = h(t)$. The issue is that $\hat{h}$ as written may not be a function since several different $t$'s on level $\beta$ may have the same projection to lower levels. To avoid this, I will implicitly assume that these projections are defined, i.e. the projection into $X$ is injective on ${\rm dom}(h)$, and roll this into the definition. Thus we will need to show whenever we work with such a projection that it is well defined in this sense.

Let's start with some notation.
\begin{definition}
Let $t \in T$, $X \subseteq T$ and $\alpha  <\omega_1$.
\begin{enumerate} 
\item
Define $t \hook S$ to be the $\leq_T$ maximal $u \in S$ so that $u \leq_T t$. Note that this maximal element exists because $S$ is closed.
\item
Define $X \hook S = \{ t\hook S \; | \; t \in X\}$.
\item
Let $t \downarrow_S \alpha = (t \hook \alpha )\hook S$.
\item
Let $X \hook \alpha = \{ x \hook \alpha \; | \; x \in X\}$ and $X \downarrow_S \alpha = \{ x \downarrow_S \alpha \; | \; x \in X\}$.
\end{enumerate}
\end{definition}

Now I move on to the definitions needed to define the poset.

\begin{definition}

\begin{enumerate}
\item
A {\em partial specializing function} of height $\alpha = last(f)$  is a function $f: T_{\leq \alpha} \cap S \to \mathbb Q$ so that for all $s, t \in T_{\leq \alpha} \cap S$, if $s <_T t$ then $f(s) < f(t)$.
\item
If $f$ is a partial specializing function, $last(f) = \alpha$, $\beta \geq \alpha$ and $h:T_\beta \hook S \to \mathbb Q$ is a finite partial function then we say that $h$ {\em bounds} $f$ is for every $t \in {\rm dom} (h)$ $f(t\downarrow_S \alpha) < h(t)$.
\item
A {\em requirement} $H$ of height $\beta = ht(H) < \omega_1$ and arity $n = n(H) \in \omega$ is a countably infinite family of finite functions $h:T_\beta \hook S \to \mathbb Q$ whose domains have size $n$ and which are {\em dispersed} in the sense that for every finite $\tau \subseteq T_\beta \hook S$ there is an $h \in H$ so that $\tau \cap {\rm dom}(h) =\emptyset$.
\item
A partial specializing function $f$ {\em fulfills} a requirement $H$ if $ht(H) = last(f) = \alpha$ and for every finite $\tau \subseteq T_\alpha \hook S$, there is an $h \in H$ whose domain is disjoint from $\tau$ and {\em bounds} $f$.
\item
A {\em promise} is a function $\Gamma$ defined on a tail set of countable ordinals, the first of which we denote $\beta = \beta (\Gamma)$ so that for each $\gamma \geq \beta$, $\Gamma (\gamma)$ is a countable set of requirements of height $\gamma$ satisfying the following {\em projection property}: 
\begin{center}
if $\beta(\Gamma) \leq \gamma \leq \gamma '$ then $\Gamma (\gamma) = \{H \downarrow_S \gamma \; | \; H \in \Gamma(\gamma ')\}$
\end{center}
where $H \downarrow_S \gamma = \{ h \downarrow_S \gamma \; | \; h \in H\}$ and $h \downarrow_S \gamma$ is the function whose domain is ${\rm dom}(h) \downarrow_S \gamma$ and for each $x \in {\rm dom}(h)$ $h\downarrow_S \gamma (x \downarrow_S \gamma)$ is the projection of $h$ to $x$. As noted in the paragraph above the previous definition, I'm implicitly assuming in this that this function is defined i.e. if $x, y \in {\rm dom}(h)$ are distinct then $x \downarrow_S \gamma \neq y \downarrow_S \gamma$. 
\item
A partial specializing function $f$ {\em keeps} a promise $\Gamma$ if $\beta (\Gamma) = last(f)$ and $f$ fulfills every $H \in \Gamma (last(f))$.
\item
The forcing notion $\mathbb P = \mathbb P_{T, S}$ consists of pairs $p = (f_p, \Gamma_p)$ where $f_p$ is a partial specializing function, $\Gamma_p$ is a promise and $f_p$ keeps $\Gamma_p$. We write $ht(p)$ for $last(f_p) = \beta (\Gamma)$. Finally let $q \leq p$ if $f_p \subseteq f_q$, $ht(p) \leq ht(q)$ and for all $\gamma \geq ht(q)$,  $\Gamma_p (\gamma) \subseteq \Gamma_q(\gamma)$. 
\end{enumerate}
\end{definition}

The proof of Theorem \ref{specwidetree} is broken up into a number of lemmas which collectively show that $\mathbb P$ has the properties advertised in the theorem. First let's show that any condition can be extended arbitrarily high up the tree. Note that this will imply that $\mathbb P$ specializes $S$.
\begin{lemma}[The Extension Lemma]
Suppose $p \in \mathbb P$ of height $\alpha$ and let $\beta \geq \alpha$ and $\varepsilon \in \mathbb Q$ be positive. Then there is a $q \leq p$ of height $\beta$, with $\Gamma_q = \Gamma_p\hook[\beta, \omega_1)$ so that for all $x \in T_\alpha \hook S$ and all $y \in T_\beta \hook S$ if $x \leq_T y$ then $f_q(y) - f_p(x) < \varepsilon$.
\label{extensionlemma}
\end{lemma}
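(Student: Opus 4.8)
The plan is to build $q$ by recursion on the levels from $\alpha$ up to $\beta$, at each stage assigning rational values to the new nodes in $S$ that are small enough to stay within the $\varepsilon$-window while simultaneously keeping enough room to fulfill all the requirements in the promise. First I would reduce to the case where $\beta = \alpha + 1$ is a successor and where $\beta$ is a limit, handling successors by a single-step argument and limits by choosing a cofinal sequence $\alpha = \gamma_0 < \gamma_1 < \cdots$ with supremum $\beta$ and a summable sequence of positive rationals $\varepsilon_n$ with $\sum_n \varepsilon_n < \varepsilon$, then iterating the successor construction while shrinking the windows so that limits of the resulting $\leq_T$-increasing sequences land in the right place. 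The key bookkeeping object at a limit level $\beta$: for each $s \in T_\beta \cap S$, the value $f_q(s)$ must be $\sup$ of the values along the branch below $s$ (or any value above that sup but below $f_p(s\downarrow_S\alpha) + \varepsilon$), and since $S$ is closed in $T$ every such $s$ is genuinely a limit point, so the branch below it inside $S$ is cofinal in $s$ and the sup is well-defined; the point is to arrange during the construction that this sup stays strictly below the target bound.

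The main work is the successor step: given a condition of height $\gamma$, I need to extend $f$ to $T_{\gamma+1} \cap S$. For each $t \in T_{\gamma+1}\cap S$ let $s = t\downarrow_S\gamma \in T_\gamma \cap S$; I must pick $f_q(t) \in (f_q(s), f_p(t\downarrow_S\alpha)+\varepsilon)$ — this interval is nonempty by the inductive hypothesis that $f_q(s) < f_p(s\downarrow_S\alpha) + \varepsilon$, noting $s\downarrow_S\alpha = t\downarrow_S\alpha$. The constraint is that the resulting $f_q$ must \emph{keep} the promise $\Gamma_q = \Gamma_p\restriction[\beta,\omega_1)$, i.e. for every $H \in \Gamma_q(\gamma+1)$ — equivalently every $H \in \Gamma_p(\beta)$ projected down to level $\gamma+1$ — and every finite $\tau \subseteq T_{\gamma+1}\cap S\restriction S$, some $h \in H$ with domain disjoint from $\tau$ must bound $f_q$. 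Here the dispersal/dispersedness of requirements is exactly what is used: because $H$ is dispersed and countable, there are infinitely many $h \in H$ avoiding any fixed finite $\tau$, and each such $h$ imposes finitely many lower bounds $h(u) > f_q(u\downarrow_S(\gamma+1))$ at finitely many nodes $u$. Since at each node $t$ we get to choose $f_q(t)$ \emph{arbitrarily small} within the open interval above $f_q(t\downarrow_S\gamma)$, for any single requirement $H$ I can find $h \in H$ with the $h$-values large enough (bigger than the lower endpoints of the relevant intervals) and then choose the $f_q$-values below those $h$-values — but I must do this for \emph{all} $H \in \Gamma_q(\gamma+1)$ simultaneously, of which there are countably many.

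The hard part is this simultaneous fulfillment of countably many requirements at the successor step, and the standard device to handle it is to note that fulfillment only demands, for each $H$ and each finite $\tau$, the \emph{existence} of one bounding $h$, not that all $h$ bound. So I would fix an enumeration $\langle (H_k, \tau_k) : k < \omega\rangle$ of all pairs (requirement in $\Gamma_q(\gamma+1)$, finite subset of $T_{\gamma+1}\cap S$) — there are only countably many requirements, but $T_{\gamma+1}\cap S$ may be uncountable, so instead I enumerate, for each of the countably many $H$, a single cofinal-enough piece of data, using that a requirement is fulfilled as soon as for \emph{each} finite $\tau$ there is a bounding member: I pick, inductively in $k$, a member $h_k \in H_k$ with $\mathrm{dom}(h_k) \cap \tau_k = \emptyset$ and with $h_k$-values chosen large (possible since $H_k$ is dispersed, giving infinitely many candidates, and we only need values above a countable dense set of thresholds — but actually only finitely many thresholds matter for $h_k$ itself since $h_k$ has finite domain), ensuring the $h_k$ for different $k$ have pairwise disjoint domains when the $\tau$'s are chosen to grow; then define $f_q(t)$ for $t \in T_{\gamma+1}\cap S$ by: if $t\downarrow_S(\gamma+1) \in \mathrm{dom}(h_k)$ for some (least) $k$, set $f_q(t)$ in the interval $(f_q(t\downarrow_S\gamma), \min(h_k(t\downarrow_S(\gamma+1)), f_p(t\downarrow_S\alpha)+\varepsilon))$; otherwise set it anywhere in $(f_q(t\downarrow_S\gamma), f_p(t\downarrow_S\alpha)+\varepsilon)$. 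One checks this is a valid partial specializing function (monotone along $\leq_T$), lies within the $\varepsilon$-window, and fulfills every $H_k$ (witnessed by $h_k$ against $\tau_k$, and by a similar choice for larger $\tau$). Iterating up to $\beta$ and taking the obvious union at limit levels (where closedness of $S$ guarantees limits exist in $S$ and the summability of the $\varepsilon_n$ keeps the windows from closing) produces $q \leq p$ of height $\beta$ with $\Gamma_q = \Gamma_p\restriction[\beta,\omega_1)$ as required. I expect the genuinely delicate point to be verifying that the limit-level values can be taken strictly below $f_p(\cdot\downarrow_S\alpha)+\varepsilon$ rather than merely $\le$, which is why the window must be strictly shrunk by a summable amount at each successor step of the recursion.
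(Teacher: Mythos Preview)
Your overall induction scheme matches the paper's, and your successor step is essentially correct, but there are two issues worth flagging.

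First, a minor confusion: you worry that ``$T_{\gamma+1}\cap S$ may be uncountable,'' but recall that $T$ is an $\omega_1$-tree, so every level $T_\gamma$ is countable. Hence $T_\beta\hook S$ and $[T_\beta\hook S]^{<\omega}$ are countable, and the enumeration $\langle(\tau_k,H_k):k<\omega\rangle$ of pairs (finite subset, requirement) goes through without trouble. The paper simply enumerates $[T_\beta\hook S]^{<\omega}\times\Gamma_{p'}(\beta)$ in type $\omega$ and builds finite approximations $g_i$ much as you describe.

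Second, and more seriously, your limit case has a real gap. You propose to iterate the successor construction along a cofinal sequence $\beta_n\nearrow\beta$ with summable $\varepsilon_n$, then ``take the obvious union at limit levels.'' But this does not explain why the resulting $f_q$ fulfills the requirements in $\Gamma_p(\beta)$. Each intermediate $p_n$ fulfills the \emph{projected} requirements $H\downarrow_S\beta_n$, but the witnessing $h$'s may change with $n$, and there is no reason a single $h\in H$ bounds the limit. The paper handles this by \emph{interweaving} the top-level bookkeeping with the downward recursion: it enumerates pairs $(\tau_i,H_i)$ at level $\beta$, and at stage $i$ it first uses the fact that $p_{i-1}$ fulfills $H_i\downarrow_S\beta_{i-1}$ to pick an $h_i\in H_i$ whose domain avoids $\tau_i$ and all earlier commitments and which bounds $f_{i-1}$; it records this in a finite function $g_i$ on $T_\beta\hook S$; and only then extends $f_{i-1}$ to $f_i$ so that $g_i\downarrow_S\beta_i$ still bounds $f_i$. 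The level-$\beta$ values of $f_q$ are then read off from $\bigcup_i g_i$, not from sups along branches, and fulfillment of each $(H_i,\tau_i)$ is witnessed by the pre-chosen $h_i$. Your sketch omits this top-down choice of witnesses, which is the heart of the limit case.
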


Before I prove this lemma, let me note the significance of the condition concerning $\varepsilon$. It implies in particular that given a finite partial function $g:T_\beta \hook S  \to \mathbb Q$ which bounds $f_p$ we can extend $p$ to a stronger condition $q$ so that $g$ still bounds $f_q$. This follows by letting $\varepsilon = {\rm min}\{g(y) - f_p(y \downarrow_S \alpha) \; | \; y \in {\rm dom}(g)\}$.

\begin{proof}
The proof is by induction on $\beta$. There are two cases.

\noindent {\bf\underline{Case I}:} $\beta$ is a successor ordinal, say $\beta = \beta_0 + 1$. Fix a positive $\varepsilon \in \mathbb Q$ and, using the inductive hypothesis extend $p$ to a condition $p'$ of height $\beta_0$ with $\Gamma_{p'} = \Gamma_p \hook [\beta_0, \omega_1)$ so that for all $x \in T_{\beta_0} \hook S$ $f_{p'}(x) - f_p(x \downarrow_S \alpha) < \varepsilon/2$. We need to see how to extend $p'$ further to a $q$ of height $\beta$. This is done as follows. First note that we may assume that $T_\beta \cap S \neq \emptyset$ for if this is the case then, trivially $(f_{p'}, \Gamma_p \hook[\beta, \omega_1))$ is the required condition. Thus, from on we assume there is some $t \in T_\beta \cap S$. This set may be finite or infinite. Let's suppose its size is $k \leq \omega$. Enumerate $T_\beta \hook S \setminus T_{\beta_0} \hook S = T_{\beta} \cap S$ as $\{t_i \; | \; i < k\}$. Also, enumerate $[T_\beta \hook S]^{< \omega} \times \Gamma_{p'}(\beta)$ as $\{(\tau_i, H_i) \; | \; i < \omega\}$. If $\Gamma(\beta) = \emptyset$ then let each $H_i$ consist of ``empty functions" $h$ which we think of as bounding any function. I will define recursively finite functions $\langle g_i \; | \; i < \omega\rangle$ so that for every $i < k$ the following conditions hold:
\begin{enumerate}
\item
\label{sequence}
$g_i \subseteq g_{i+1}$, 
\item
$g_i$ is a partial finite function from $T_\beta \hook S$ to $\mathbb Q$,
\item
\label{domainistotal}
$t_i \in {\rm dom}(g_i)$,
\item
\label{epcondition}
For every $x \in {\rm dom}(g_i)$ if $x \in T_\beta \cap S$ then $0 < g_i(x) - f_{p'}(x \downarrow_S \beta_0) < \varepsilon/2$, 
\item
\label{extcondition}
For every $x \in {\rm dom}(g_i)$ if $x \in T_{\beta_0} \hook S$ then $ g_i(x) = f_{p'}(x \downarrow_S \beta_0)$ and,
\item
\label{promisethis}
There is an $h_i \in H_i$ whose domain is disjoint from $\tau_i$ and bounds $g_i$. Note that this implies in particular that ${\rm dom}(h_i) \subseteq {\rm dom}(g_i)$.
\end{enumerate}
Supposing that such a sequence can be constructed, let's see that this finishes the case. Let $f_q = f_{p'} \cup \bigcup_{i < k} g_i$ and let $q = (f_q, \Gamma_{p'} \hook[\beta, \omega_1))$. Clearly if $q$ is a condition then it's a strengthening of $p$ so we just need to see that $q \in \mathbb P$ and it satisfies the slow growth condition that for all $y \in T_\beta \hook S$ and $x \in T_\alpha \hook S$, if $x \leq_T y$ then $f_q(y) - f_p(x) < \varepsilon$. By combining \ref{sequence} - \ref{domainistotal} with \ref{extcondition} it follows that $f_q$ is a partial specializing function extending $f_p$ and \ref{epcondition} implies that it satisfies the slow growth condition. Thus it remains to see that $f_q$ fulfills the promise $\Gamma_{p'} \hook [\beta, \omega_1)$. But this is exactly what \ref{promisethis} says.

So to finish the case we need to build the sequence of $g_i$'s satisfying \ref{sequence} - \ref{promisethis}. This is done recursively as follows. Assume for some $j < \omega$ we have defined $g_0, ...g_{j-1}$ which satisfy \ref{sequence} - \ref{promisethis}. Let $\tau = \tau_j \cup {\rm dom}(g_{j-1}) \cup \{t_j\}$ and $\bar{\tau} = \tau \downarrow_S \beta_0$. By the definition of a condition in $\mathbb P$, there is an $h_j \in H_j$ so that $h_j \downarrow_S \beta_0$ bounds $f_{p'}$ and ${\rm dom}(h_j \downarrow_S \beta_0) \cap \bar{\tau} = \emptyset$. The domain of $g_j$ will be $d:=\tau \cup {\rm dom}(h_j)$. For each $x \in d$ let
\[
  g_j(x) =
  \begin{cases}
                                   g_{j-1}(x) & \text{if $x \in {\rm dom}(g_{j-1})$} \\
			   f_{p'}(x) & \text{if $x \in T_{\beta_0} \hook S$} \\
			  f_{p'}(x \downarrow_S \beta_0) + \frac{\varepsilon}{2} & \text{if $x= t_j \notin {\rm dom}(g_{j-1})$} \\
                                   f_{p'}(x \downarrow_S \beta_0) + {\rm min}(\frac{\varepsilon}{4}, \frac{h_j(x) - f_{p'}( x \downarrow_S 			             	\beta_0)}{2}) & \text{$x \in T_\beta \cap S \; \& \; x \notin \tau$}
  \end{cases}
\]

This function $g_j$ is then as wished for. 

\noindent {\bf\underline{Case II}:} $\beta$ is a limit ordinal. Fix a strictly increasing sequence $\langle \beta_n \; | \; n < \omega\rangle$ so that $\beta_0 = \alpha$ and ${\rm sup}_{n} \beta_n = \beta$. Fix $\varepsilon > 0$. The idea is to weave the procedure described in Case I to build a function on $T_\beta \cap S$ with the inductive assumption that allows us to extend $f_p$ to each $\beta_n$. As before, enumerate $T_{\beta} \cap S$ as $\{t_i \; | \; i < k\}$ where $k \leq \omega$ is the cardinality of this set (which may be $0$), and enumerate $[T_\beta \hook S]^{< \omega} \times \Gamma_{p}(\beta)$ as $\{(\tau_i, H_i) \; | \; i < \omega\}$. Similar to last time it's possible that $\Gamma(\gamma)$ is empty in which case we again treat each $H_i$ as a set of ``empty functions" which bound any condition and are disjoint from any $\tau$. I want to construct sequences $\langle f_i \; | \; i < \omega\rangle$, $\langle g_i \; | \; i < \omega\rangle$ and $\langle h_i \; | \; i <\omega \rangle$ so that the following hold for all $i < \omega$.
\begin{enumerate}
\item
$p_i = (f_i, \Gamma_p \hook [\beta_i, \omega_1) )$ is a condition of height $\beta_i$
\item
$p_{i+1} \leq p_i \leq p = p_0$
\item
For every $y \in T_{\beta_{i+1}} \hook S \;$ $f_{i + 1} (y) - f_i(y \downarrow_S \beta_i) < \frac{\varepsilon}{2^{i+2}}$
\item
\label{sequence1}
$g_i \subseteq g_{i+1}$, 
\item
$g_i$ is a finite function from $T_\beta \hook S$ to $\mathbb Q$
\item
\label{domainistotal1}
If $i < k$ then $t_i \in {\rm dom}(g_i)$
\item
\label{bound}
$g_i \downarrow_S \beta_i$ bounds $f_i$
\item
$h_i \in H_i$ and
\item
\label{promisethis1}
${\rm dom} (h_i) \cap (\tau_i \cup \bigcup_{j < i} h_j) = \emptyset$, ${\rm dom} (h_i) = {\rm dom}(g_i) \setminus \bigcup_{j < i} {\rm dom}(g_j) \cup \{t_i\}$
\item
For all $x \in {\rm dom}(h_i)$ $g_i(x) < h_i(x)$
\end{enumerate}
Suppose first that such a triple of sequences can be constructed and let $f_q = (\bigcup_{i < \omega} f_i) \cup (\bigcup_{i < \omega} g_i)$. Then $f_q$ is a partial specializing function with $last(f_q) = \beta$ and $q = (f_q, \Gamma_p \hook [\beta, \omega_1))$ is the condition needed. The verification of this last point is nearly identical to the first case.

Thus it remains to show that these sequences can be constructed. Recursively assume for some $j < \omega$ $\langle f_i \; | \; i < j\rangle$, $\langle g_i \; | \; i < j\rangle$ and $\langle h_i \; | \; i <j \rangle$ have been constructed. By assumption $(f_{j-1}, \Gamma_p \hook [\beta_{j-1}, \omega_1))$ is a condition in $\mathbb P$ so it satisfies all of the requirements in $\Gamma_p(\beta_{j-1})$. In particular there is an $h_j \in H_j$ so that $h_j \downarrow_S \beta_{j-1}$ (which is in $\Gamma_p(\beta_{j-1})$ by the projection property) is such that $({\rm dom}( h_j) \downarrow_S \beta_{j-1}) \cap ((\tau_j \cup \bigcup_{i < j} {\rm dom}(g_i) \cup \{t_j\}) \downarrow_S\beta_{j-1}) = \emptyset$ and bounds $f_{j-1}$. This is the $h_j$ we need. Let $g_{j-1} \subseteq g_{j}$ be so that ${\rm dom}(g_j) = {\rm dom}(h_j) \cup \bigcup_{i < j} {\rm dom}(g_i) \cup \{t_j\}$, $h_j$ bounds $g_j$ pointwise on their shared domain, $g_j$ bounds $f_{j-1}$ and for all $x \in {\rm dom}(g_j)$ $g_j(x \hook S) - f_p(x \downarrow_S \alpha) < \varepsilon$. This is the $g_j$ we need. Finally by our inductive assumption we can find a function $f_j:T_{\leq \beta_j} \cap S \to \mathbb Q$ so that $p_j = (f_j, \Gamma_p \hook [\beta_j, \omega_1)) \in \mathbb P$, extends $p_{j-1}$, is bounded by $g_j$ and is such that for all $y \in T_{\beta_{j}} \hook S$ $f_{j} (y) - f_{j-1}(y \downarrow_S \beta_{j-1}) < \frac{\varepsilon}{2^{j+1}}$. This $f_j$ is then as required so the construction is complete.
\end{proof}

Next I show how to add promises.  Given two promises $\Gamma$ and $\Psi$ I write $\Psi \subseteq \Gamma$ to mean that $\beta(\Gamma) \geq \beta(\Psi)$ and for all $\gamma \geq \beta (\Gamma)$ we have that $\Psi(\gamma) \subseteq \Gamma (\gamma)$. Also, I will write $\bar{\Gamma \cup \Psi}$ to mean the promise $\Delta$ so that $\beta(\Delta) = {\rm max}\{\beta(\Gamma), \beta(\Psi)\}$ and for all $\gamma \geq \beta(\Delta)$ $\Delta(\gamma) = \Gamma(\gamma) \cup \Psi(\gamma)$.

\begin{lemma}
Suppose $p \in \mathbb P$ is of height $\alpha$, $\beta \geq \alpha$ and $g:T_\alpha \hook S \to \mathbb Q$ is a finite function bounding $f_p$. Let $\Psi_g$ be a promise so that $\beta(\Psi_g) = \beta$ and for all $\gamma > \beta$ $H \in \Psi_g (\gamma)$ if $h \in H$ then $h \downarrow_S \alpha = g$. There there is a $q \leq p$ so that $ht(q) = \beta$ and $\Psi_g \subseteq \Gamma_q$.
\label{basis}
\end{lemma}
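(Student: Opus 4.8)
The plan is to prove Lemma \ref{basis} by reducing it to the Extension Lemma \ref{extensionlemma} together with a book-keeping argument that shows the new promise $\Psi_g$ can be safely added without destroying the condition property. The statement really has two things to verify: first, that there \emph{is} a condition $q$ of height $\beta$ with $f_p \subseteq f_q$ and $ht(q) = \beta$ whose specializing function still ``respects'' $g$ in the sense needed; and second, that the pair $(f_q, \overline{\Gamma_p \cup \Psi_g})$ is genuinely a member of $\mathbb P$, i.e.\ that $f_q$ keeps the combined promise $\overline{\Gamma_p \cup \Psi_g}$.

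First I would handle the case $\beta = \alpha$ separately, since there $f_q = f_p$ and the whole issue is just checking that $f_p$ fulfills each $H \in \Psi_g(\alpha)$. By hypothesis every $h \in H$ for $H \in \Psi_g(\alpha)$ has $h \downarrow_S \alpha = h = g$ (the projection to level $\alpha$ is the identity at height $\alpha$), and $g$ bounds $f_p$; moreover a requirement is by definition dispersed, so for any finite $\tau \subseteq T_\alpha \hook S$ there is $h \in H$ with $\mathrm{dom}(h) \cap \tau = \emptyset$, and since $h = g$ bounds $f_p$ this shows $f_p$ fulfills $H$. Hence $q := (f_p, \overline{\Gamma_p \cup \Psi_g})$ works. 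Wait --- this only works if the $h \in H$ disjoint from $\tau$ is literally $g$, which forces $\mathrm{dom}(g) \cap \tau = \emptyset$; so in fact for the $\beta = \alpha$ case one needs the requirements in $\Psi_g(\alpha)$ to consist of translates/variants of $g$ with varying domains, all projecting to $g$ --- the phrasing ``$h \downarrow_S \alpha = g$'' presupposes $\beta > \alpha$ genuinely, and for $\beta = \alpha$ one takes $\Psi_g(\alpha)$ to be a dispersed family each of whose members \emph{extends consistently to} $g$ on its own domain; I would state this carefully at the outset, possibly just assuming $\beta > \alpha$ WLOG by first invoking Lemma \ref{extensionlemma} to move up one level.

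For the main case $\beta > \alpha$: using the remark following Lemma \ref{extensionlemma}, since $g$ bounds $f_p$ I can apply the Extension Lemma with $\varepsilon = \min\{g(x) - f_p(x) \mid x \in \mathrm{dom}(g)\}$ to obtain $q_0 \leq p$ of height $\beta$ with $\Gamma_{q_0} = \Gamma_p \hook [\beta, \omega_1)$ and with the slow-growth property guaranteeing that for every $y \in T_\beta \hook S$ lying above some $x \in \mathrm{dom}(g)$ we have $f_{q_0}(y) - f_p(x) < \varepsilon \le g(x) - f_p(x)$, i.e.\ $f_{q_0}(y) < g(x)$. I then set $f_q = f_{q_0}$ and $q = (f_q, \overline{\Gamma_{q_0} \cup \Psi_g})$, and $\Psi_g \subseteq \Gamma_q$ holds by construction while $q \le p$ follows since $f_p \subseteq f_{q_0}$, $ht(p) \le ht(q)$, and $\Gamma_p(\gamma) \subseteq \Gamma_q(\gamma)$ for $\gamma \ge ht(q)$. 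The substantive remaining point is that $f_q$ keeps $\overline{\Gamma_{q_0} \cup \Psi_g}$: it keeps $\Gamma_{q_0}$ since $q_0$ is a condition, so I only need $f_q$ to fulfill each $H \in \Psi_g(\beta)$. Given such an $H$ and a finite $\tau \subseteq T_\beta \hook S$, dispersedness gives $h \in H$ with $\mathrm{dom}(h) \cap \tau = \emptyset$; I must check $h$ bounds $f_q$, meaning $f_q(t \downarrow_S \beta) < h(t)$ --- but wait, $ht(H) = \beta = last(f_q)$, so ``bounds'' here means $f_q(t) < h(t)$ for $t \in \mathrm{dom}(h) \subseteq T_\beta \hook S$; and by hypothesis $h \downarrow_S \alpha = g$, so writing $x = t \downarrow_S \alpha \in \mathrm{dom}(g)$ we have $h(t) = g(x)$, and the slow-growth estimate above gives exactly $f_q(t) < g(x) = h(t)$.

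The main obstacle I anticipate is bookkeeping around the ``implicit'' well-definedness convention for projections $h \downarrow_S \gamma$ flagged in the text before the definitions: I need to ensure that all the requirements in $\Psi_g(\gamma)$ for $\gamma > \beta$ are legitimately of the form required (dispersed, countable, with consistent projections), and that the projection property $\Gamma(\gamma) = \{H \downarrow_S \gamma \mid H \in \Gamma(\gamma')\}$ is satisfied by $\overline{\Gamma_{q_0} \cup \Psi_g}$ --- this needs the projection property for $\Gamma_{q_0}$ (inherited from $\Gamma_p$) and for $\Psi_g$ separately, plus the observation that $\downarrow_S \gamma$ distributes over unions, so $(\Gamma_{q_0} \cup \Psi_g)$ at level $\gamma'$ projects to $(\Gamma_{q_0} \cup \Psi_g)$ at level $\gamma$. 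I would also need to confirm the hypothesis of the lemma implicitly requires $\Psi_g$ to \emph{be} a promise (satisfy the projection property), which I will assume is part of what ``let $\Psi_g$ be a promise so that\ldots'' means; so the only genuine verification is that $f_q$ keeps it, handled above. Apart from these routine coherence checks, the proof is essentially immediate from Lemma \ref{extensionlemma}.
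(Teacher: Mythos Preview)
Your proposal is correct and follows essentially the same approach as the paper: apply the Extension Lemma with $\varepsilon = \min\{g(x) - f_p(x) \mid x \in \mathrm{dom}(g)\}$, then verify that the resulting $f_{q'}$ is bounded by every $h$ in every $H \in \Psi_g(\beta)$ using the slow-growth estimate and $h \downarrow_S \alpha = g$, and finally set $q = (f_{q'}, \overline{\Gamma_{q'} \cup \Psi_g})$. The paper does this in a single paragraph without your separate $\beta = \alpha$ case or the bookkeeping discussion about the projection property of $\overline{\Gamma_{q'} \cup \Psi_g}$; those are not wrong but are unnecessary, since $\Psi_g$ is assumed to be a promise and the Extension Lemma applies verbatim when $\beta = \alpha$.
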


Following \cite[Lemma 4.4]{AbrahamShelah93} we refer to the $g$ in the above lemma as a {\em basis} for the promise $\Psi_g$ and say that $g$ {\em generates} $\Psi_g$.

\begin{proof}
Let $\varepsilon = {\rm min}\{g(x) - f(x)\; | \; x \in {\rm dom}(g)\}$. By Lemma \ref{extensionlemma} there is a $q' \leq p$ of height $\beta$ so that for all $y \in T_\beta \hook S$ $f_{q'} (y) - f_p (y \downarrow_S \alpha) < \varepsilon$. I claim that $f_{q'}$ fulfills the promise $\Psi_g$. To see this, let $H \in \Psi_g(\beta)$, let $\tau \in [T_\beta \hook S]^{<\omega}$ and, by the fact that $\Psi_g$ is a promise, find an $h \in H$ whose domain is disjoint from $\tau$. Observe then that $h$ bounds $f_{q'}$ since for every $x \in {\rm dom}(h)$ $h(x) = g(x \hook \alpha)$ and $f_{q'}(x) < f_p(x \downarrow_S \alpha) + \varepsilon \leq f(x \downarrow_S \alpha) +(g(x \downarrow_S \alpha) - f_p(x \downarrow \alpha)) = g(x \downarrow_S \alpha) = h(x)$. Therefore $q : = (f_{q'}, \bar{\Gamma_{q'} \cup \Psi_g})$ is a condition and satisfies the conclusion of the lemma .
\end{proof}

To prove that $\mathbb P$ is proper the following lemma is the most important.
\begin{lemma}
Let $\theta$ be sufficiently large and let $M \prec H_\theta$ be countable containing $T, S, \mathbb P, etc$. Let $p \in \mathbb P \cap M$ and let $\delta = M \cap \omega_1$. Note that $M \cap T = T_\delta$. Let $D \in M$ be a dense open subset of $\mathbb P$ and let $h:T_\delta \hook S \to \mathbb Q$ be a finite function bounding $f_p$. Then there is an extension $q \in D \cap M$ so that $f_q$ is also bounded by $h$.
\label{submodel}
\end{lemma}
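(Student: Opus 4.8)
The target is a purely internal statement: given $M\prec H_\theta$ countable with $T,S,\mathbb P\in M$, a condition $p\in\mathbb P\cap M$, and a finite function $h:T_\delta\!\hook S\to\mathbb Q$ bounding $f_p$ (where $\delta=M\cap\omega_1$), we must produce $q\in D\cap M$ with $q\le p$ and with $f_q$ still bounded by $h$. The difficulty is that $h$ lives \emph{outside} $M$ — its domain is a finite subset of $T_\delta\!\hook S$, and $T_\delta$ is precisely $M\cap T$, i.e.\ the ``top level from $M$'s point of view.'' So $h$ cannot be quoted directly inside $M$. The key idea, following Abraham--Shelah, is to replace $h$ by a \emph{promise} that $M$ does see, namely the promise $\Psi_h$ generated by $h$ in the sense of Lemma~\ref{basis}, pushed down to a condition inside $M$.

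First I would do the reflection step. Fix an enumeration $\{t_0,\dots,t_{k-1}\}$ of $\mathrm{dom}(h)\subseteq T_\delta\!\hook S$. For each $t_i$ there is a unique $\alpha_i<\delta$ with $t_i\in T_{\alpha_i}$, and since $\alpha_i\in M$ we get $t_i\in M$, hence $\mathrm{dom}(h)\subseteq M$ and in fact $h\in M$ \emph{as a finite set of pairs} — but what matters is that we can now work with $h$ inside $M$. Let $\alpha=\mathrm{ht}(p)$. Since $h$ bounds $f_p$, by Lemma~\ref{basis} there is a promise $\Psi_h$ with $\beta(\Psi_h)=\delta$ generated by $h$, and a condition $q_0\le p$ of height $\delta$ with $\Psi_h\subseteq\Gamma_{q_0}$; however this $q_0$ has height $\delta\notin M$, so it is not yet in $M$. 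Instead I would apply Lemma~\ref{basis} \emph{one level at a time below $\delta$}: working in $M$, for each ordinal $\gamma$ with $\alpha\le\gamma<\delta$ (all of which lie in $M$) one can, inside $M$, form a condition whose promise includes the restriction $\Psi_h\!\hook[\gamma,\omega_1)$ — more precisely, by elementarity $M$ knows that there exists a condition $r\le p$ of some height in $M$ carrying a promise $\Psi$ whose requirements, when projected to level $\alpha$, all equal $h\!\downarrow_S\alpha$ (a finite function in $M$). So by elementarity there is such an $r\in M$; call it $p^*$, with $\mathrm{ht}(p^*)=\gamma^*<\delta$ and $\Psi_h\!\hook[\gamma^*,\omega_1)\subseteq\Gamma_{p^*}$. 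The point is that $\Psi_h$, as a function on a tail of $\omega_1$ with the projection property, is itself definable from $h\!\downarrow_S\alpha$ and $\alpha$, both of which belong to $M$, so $M$ sees a condition committed to keeping (a tail of) this promise.

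Next comes the descent into $D$. Since $D$ is dense open and $D,p^*\in M$, by elementarity there is $q\in D\cap M$ with $q\le p^*$. Because $q\le p^*\le p$ we have $f_p\subseteq f_q$ and $\Gamma_{p^*}\!\hook[\mathrm{ht}(q),\omega_1)\subseteq\Gamma_q$, so $\Psi_h\!\hook[\mathrm{ht}(q),\omega_1)\subseteq\Gamma_q$. Now I claim $f_q$ is bounded by $h$, which is where the promise does its work. Take any $t\in\mathrm{dom}(h)=\{t_0,\dots,t_{k-1}\}$; I must show $f_q(t\!\downarrow_S\mathrm{ht}(q))<h(t)$. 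Since $\mathrm{ht}(q)<\delta$ and $t\in T_\delta\!\hook S$ with $\mathrm{ht}(q)\in M$, the projection $t\!\downarrow_S\mathrm{ht}(q)$ lies in $M\cap S = S\cap T_{<\delta}$ and hence in $\mathrm{dom}(f_q)$. Because $q$ keeps every requirement in $\Gamma_q(\mathrm{ht}(q))\supseteq\Psi_h(\mathrm{ht}(q))$, and each requirement $H\in\Psi_h(\mathrm{ht}(q))$ is dispersed, for the finite set $\tau$ of all projections $t_i\!\downarrow_S\mathrm{ht}(q)$ that I wish to control I can find $h'\in H$ with $\mathrm{dom}(h')$ \emph{disjoint from} $\tau$ and bounding $f_q$ — but that only controls other points. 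The correct move is instead: take a single requirement $H\in\Psi_h(\mathrm{ht}(q))$ and use fulfillment together with the structure of $\Psi_h$, all of whose functions project at level $\alpha$ to $h\!\downarrow_S\alpha$; combined with the ``slow growth'' control built into Lemma~\ref{extensionlemma} (which Lemma~\ref{basis} invokes), one gets $f_q(t\!\downarrow_S\mathrm{ht}(q)) - f_p(t\!\downarrow_S\alpha) < \varepsilon$ where $\varepsilon=\min\{h(t_i)-f_p(t_i\!\downarrow_S\alpha)\}>0$, hence $f_q(t\!\downarrow_S\mathrm{ht}(q))<h(t)$ as required. I would need to be careful that the ``bounded by $h$'' notion is insensitive to which level below $\delta$ we project to, i.e.\ that bounding $f_p$ at level $\alpha$ plus slow growth yields bounding at every intermediate level and in the limit at $\delta$ — this is exactly the remark following Lemma~\ref{extensionlemma}.

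\textbf{Main obstacle.} The subtle point — and the step I expect to require the most care — is the interplay between the external function $h$ (domain in $T_\delta$, outside $M$) and the internal promise machinery: I must verify that the promise $\Psi_h$ really is an object $M$ can reason about (it is, being definable from the finite function $h\!\downarrow_S\alpha\in M$ and $\alpha\in M$), and, more delicately, that the condition $q\in D\cap M$ obtained from elementarity genuinely keeps $f_q$ bounded by $h$ \emph{at level $\delta$}, not merely at levels inside $M$. Resolving this hinges on the observation that ``$g$ bounds $f$'' propagates downward through projections and that the $\varepsilon$-slow-growth guaranteed by Lemma~\ref{extensionlemma} (and inherited through Lemma~\ref{basis}) gives a uniform bound $f_q(x\!\downarrow_S\gamma)<h(x)$ for all $\gamma\in[\alpha,\delta]$ and all $x\in\mathrm{dom}(h)$. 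Everything else is routine elementarity plus density of $D$.
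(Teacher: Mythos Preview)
Your approach has a genuine gap, and the underlying strategy cannot be repaired as written.

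First, a small but real error: you claim that each $t_i\in\mathrm{dom}(h)\subseteq T_\delta\hook S$ lies at some level $\alpha_i<\delta$ and hence $h\in M$. This is false in the important case. If $t\in T_\delta\cap S$ then $t\hook S=t$ sits at level $\delta$, so $t\notin M$. The paper in fact begins by reducing (via minimality of $|\mathrm{dom}(h)|$) to exactly the situation where every element of $\mathrm{dom}(h)$ is a limit point of $S$ and hence lies at level $\delta$.

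Second, and more seriously, the promise mechanism does not do what you need. Fulfilling a requirement $H$ at height $\mathrm{ht}(q)$ says: for every finite $\tau$ there is some $h'\in H$ with $\mathrm{dom}(h')\cap\tau=\emptyset$ bounding $f_q$. Taking $\tau=\{t_i\downarrow_S\mathrm{ht}(q):i<k\}$, the witnessing $h'$ avoids precisely the points you want to control; so keeping $\Psi_h$ gives no bound on $f_q(t_i\downarrow_S\mathrm{ht}(q))$. You then appeal to ``slow growth'' from Lemma~\ref{extensionlemma}, but that $\varepsilon$-control applies only to the specific extension that lemma constructs. Once you pass to an \emph{arbitrary} $q\in D\cap M$ below $p^*$ by density, there is no reason $f_q(t_i\downarrow_S\mathrm{ht}(q))-f_p(t_i\downarrow_S\alpha)$ should be small. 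This is the step you yourself flag as delicate, and it does not go through.

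The paper's argument is entirely different and proceeds by contradiction. Assume the lemma fails with $|\mathrm{dom}(h)|$ minimal. Call a finite $g:T_\gamma\hook S\to\mathbb Q$ \emph{bad} if $g\downarrow_S\alpha=h\downarrow_S\alpha$ and no $q\le p$ bounded by $g$ lies in $D$; the domains of bad functions form a projection-closed family $B$. Iteratively prune $B$ to a core $B_\infty$ (nonempty since every $h\downarrow_S\gamma$ is bad). The crucial step, which uses that $S$ is Aronszajn, shows that every tuple in $B_\infty$ has two extensions in $B_\infty$ no coordinate of one comparable with any coordinate of the other --- otherwise an ultrafilter argument produces a cofinal branch through $S$. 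Bootstrapping yields infinitely many pairwise-disjoint bad tuples on a single level $\beta$, so the bad functions at each level $\ge\beta$ form a dispersed requirement and hence a promise $\Psi\in M$ with basis $h\downarrow_S\alpha$. By Lemma~\ref{basis}, strengthen $p$ to carry $\Psi$; then any $r\in D\cap M$ below must be bounded by some bad function, contradicting badness. The Aronszajn property of $S$ is the key ingredient your attempt never invokes.
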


\begin{proof}
Suppose the statement of the lemma is false and let $M$, $T$, $S$, $p$, $D$, $h$ etc be a counter example where $h$ is chosen to be of minimal possible cardinality and $p$ is chosen to witness this. Let me fix that $ht(p) = \alpha$. Note that the assumption implies that if $q \leq p$ and $q \in D \cap M$, then $q$ is not bounded by $h$. Let us enumerate the domain of $h$ by $t^h_0, ..., t^h_{n-1}$. First let me make a reduction in the hypothesis needed. I claim that for each $i < n$ $t^h_i \in S$ and in fact, is a limit point of $S$. Indeed suppose not and note that by the fact that $S$ is closed in $T$, if any $t^h_i$ is not a limit point then its set of predecessors in $S$ is bounded in $T$. In this case, let $\beta > \alpha$ be any ordinal less than $\delta$ so that for all $t^h_i$ which are not limit points of $S$ we have that $\beta$ is greater than the level of $t_i^h \hook S$. Now $h \downarrow_S \beta \in M$ and bounds $p$ so by Lemma \ref{extensionlemma}, applied in $M$ there is a $p' \leq p$ of height $\beta$ bounded by $h \downarrow_S \beta$ and hence $h$. But now, let $h '$ be $h$ restricted to its limit points. Since $|{\rm dom}(h')| < |{\rm dom}(h)|$ by minimality we know that that there is a $q \leq p'$ so that $q \in D \cap M$ and $q$ is bounded by $h'$. But then $f_q$ is actually bounded by $h$, which is a contradiction.

From now on, we assume that all the elements of the domain of $h$ are limit points of $S$, and hence in $S$. Moreover, by strengthening if necessary (and using the normality of $T$) we may assume without loss of generality that $\alpha$ is large enough that for all $i, j < n$ with $i \neq j$ $t^h_i \downarrow_S \alpha \neq t^h_j \downarrow_S \alpha$. Note $h \downarrow_S \alpha \in M$. Work in $M$. Observe that by our initial assumption $h \downarrow_S \alpha$ has the property that if $\gamma \in [\alpha, \omega_1)$ and $g:T_\gamma \hook S \to \mathbb Q$ is a finite function with domain of size $n$ so that $g \downarrow_S \alpha = h \downarrow_S \alpha$ then for any $q \leq p$ with $ht(q) \leq \gamma$ bounded by $g$ $q \notin D$. Let us denote any such $g$ with this property as being {\em bad}. Note that for any $\gamma \in [\alpha, \omega_1)$ $h \downarrow_S\gamma$ is bad, however many other functions may also be bad. It follows in particular though that $M$ thinks there are functions which are bad for whom the minimal level of an element in the domain is arbitrarily high. By elementarity this is also true in $V$. Finally note that if $g: T_\beta \to \mathbb Q$ is bad, and $\alpha \leq \gamma \leq \beta$ then $g \downarrow_S \gamma$ is bad as well. In other words, if a function is bad, then so are its $\downarrow_S$ projections to any level above $\alpha$ as well. Let $B = \{ {\rm dom }(g) \; | \; g \; {\rm is \; bad}\}$. By what we have just argued, $B$ consists of $n$-tuples of arbitrarily large minimal ranks in $T$ and is downward closed above $\alpha$ under projections by $\downarrow_S$.


Given any $j \in [\alpha, \omega_1)$ let $B(j)$ be the set of tuples $\vec{s} \in[T_j \hook S]^n$ which are the domain of a bad function. Given tuples $\vec{s}, \vec{t} \in B_\infty$ write $\vec{s} \leq_T \vec{t}$ if for each $i < n$ $s_i \leq_T t_i$ where $s_i$ is the unique element of $\vec{s}$ so that $s_i \downarrow_S \alpha = t^h_i \downarrow_S \alpha$ and idem for $t_i$.

Define recursively $B_0 = B$, $B_{i+1} = \{\vec{s} \in B_i \; | \; {\rm for \; uncountably \; many \; levels \; } j \, \exists \vec{u} \in B_i(j) \, \vec{s} \leq_T \vec{u}\}$, and $B_\lambda = \bigcap_{i < \lambda} B_i$ for $\lambda$ limit. Observe by construction that if $i \leq j$ then $B_i \supseteq B_j$. Let $B_\infty = B_\rho$ where $\rho$ is the least so that $B_\rho = B_{\rho + 1}$. Note that ${\rm dom}(h \downarrow_S \gamma) \in B_i$ for every $i$ so in particular $B_\infty$ is not empty. By construction every element of $B_\infty$ has extensions on uncountably many levels and is closed downwards above $\alpha$.

\begin{claim}
Every $\vec{s} \in B_\infty$ has two extensions, $\vec{s}_0$ and $\vec{s}_1$ in $B_\infty$ so that no element of $\vec{s}_0$ is $\leq_T$-comparable with any element from $\vec{s}_1$.
\end{claim}

This is essentially a consequence of the more general \cite[Lemma 16.18]{JechST}, which states that if $S$ is an Aronszajn tree and $W$ is an uncountable collection of pairwise disjoint subsets of $S$ then there are $\vec{s}_0, \vec{s}_1$ so that no $x \in \vec{s}_0$ is comparable with any $y \in \vec{s}_1$. Letting $W$ be the set of extensions of $\vec{s}$ in $B_\infty$ and applying this lemma gives what we want with one minor caveat. Since in our case $S$ is potentially wide, we need to replace ``uncountable" with ``unbounded" however this is just a cosmetic change, see \cite[Claim 4.8]{DzSh2020}. For completeness here is the proof in our case.

\begin{proof}[Proof of Claim]
Suppose not and let $\vec{s} \in B_\infty$ be a counter example. I will use $\vec{s}$ to define a branch through $S$ contradicting the fact that $S$ is Aronszajn. Let $W \subseteq B_\infty$ be the collection of all $\vec{u}$ extending $\vec{s}$ and for each $i < \omega_1$ let $W(i)$ denote the set of tuples $\vec{z} \in W \cap B(i)$. Since every element of $B_\infty$ has extensions on cofinally many levels, there are uncountably many $\gamma$ so that $W(\gamma) \neq \emptyset$. Also, let's denote the height of $\vec{s}$ by $\gamma_s$. Finally note that since we're assuming that $\vec{s}$ does not have disjoint extensions, given any $\gamma_s < i < j < \omega_1$ we have that if $\vec{z}^i \in W(i)$ and $\vec{z}^j \in W(j)$ then it must be that there is a $k, k' < n$ for which $z^j_k \hook i = z^i_{k'}$.

Let $U$ be an ultrafilter on $W$ all of whose elements contains tuples unboundedly high up in $S$. For any $x \in S$ and $k < n$ let $Y_{x, k}$ be the collection of all elements $\vec{z} \in W$ so that $x$ is comparable with the $k^{\rm th}$ element of $\vec{z}$. Notice by the above assumption, for every $i \in (\gamma_s, \omega_1)$ and any $\vec{z}\in W(i)$ we get that $W = \bigcup_{l < n}\bigcup_{k < n} Y_{z_l, k}$. Since $U$ is an ultrafilter, for any such $i$ we must have that there is an $l_i < n$ and a $k_i < n$ so that $Y_{z^i_{l_i}, k_i} \in U$. But then for some $k$ the set $I = \{ i \in (\gamma_s, \omega_1) \; | \; k_i = k\}$ is uncountable. Let $i < j \in I$ and let $\vec{z} \in W(i)$ and $\vec{y} \in W(j)$. I claim that $z_{l_i} \leq_S y_{l_j}$. To see this, note that since $Y_{z_{l_i}, k} \cap Y_{y_{l_j}, k} \in U$ so there is a $\vec{x} \in W$ of height $\lambda$ in this intersection for some $\lambda > j$ and hence $z_{l_i}, y_{l_j} \leq_S x_k$ so $z_{l_i}, y_{l_j}$ are comparable. But now the set $\{z_{l_i} \; | \; i \in I \; {\rm and}\; \vec{z} \in W(i)\}$ must generate a cofinal branch in $S$, contradiction.
\end{proof}

Let us say that two tuples $\vec{s}_0, \vec{s}_1$ as found in the claim are {\em pairwise disjoint}. By bootstrapping the above argument, there is a level $\beta$ so that $B_\infty (\beta)$ contains an infinite family of pairwise disjoint bad tuples. This is because, given any $\vec{s} \in B_\infty$, by the claim it has two pairwise disjoint extensions $\vec{s}^{ \; 0, 0}$ and $\vec{s}^{\; 0, 1}$ in some $B_\infty(i_0)$ and recursively if for any $n < \omega$, we're given $\vec{s}^{\; n,0}, \vec{s}^{\; n, 1} \in B_\infty(i_n)$ with $i_n > i_{n-1}$ we can find two pairwise disjoint extensions of $\vec{s}^{\; n, 0}$, call them $\vec{s}^{\; n+1, 0}$ and $\vec{s}^{\; n+1, 1}$ in $B_\infty(i_{n+1})$ for some $i_{n+1} > i_n$. Let $\beta$ be the supremum of $\{i_n \; | \; n < \omega\}$ and let $\vec{s}_n$ be an extension of $\vec{s}^{\; n, 1}$ to this level. Then the set $\{ \vec{s}_n \in B_\infty(\beta) \; | \; n < \omega\}$ is an infinite set of pairwise disjoint tuples. 

Now, for each $\gamma \in [\beta, \omega_1)$ let $H_\gamma = \{g \; |  {\rm dom}(g) \in  B_\infty(\gamma) \;{\rm and \;} g {\rm \;is\; bad}\}$. Define in $M$ the promise $\Psi$ as follows: $\beta(\Psi) = \beta$ and $\Psi(\gamma) =\{ H_\gamma\}$. Observe that $\Psi$ is a promise since each $H_\gamma$ is dispersed by the argument in the previous paragraph. By construction $\Psi$ has a basis, namely $h \downarrow_S \alpha$ and, since $h \downarrow_S \alpha$ bounds $f_p$ we can strengthen $p$ so as to include $\Psi$ in its promise as in Lemma \ref{basis}. This is the desired contradiction though since if, $r \leq q$ is a strengthening so that $r \in D \cap M$ then there must be some bad function bounding $r$ contradicting the defining property of being bad.

\end{proof}

\begin{lemma}
$\mathbb P$ is proper. In fact, $\mathbb P$ is dee-complete for some simple completeness system $\mathbb D$.
\end{lemma}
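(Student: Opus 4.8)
The plan is to establish the stronger assertion — that $\mathbb P$ is dee-complete for some simple completeness system $\mathbb D$ — since properness (and adding no reals) was already noted to follow from dee-completeness, and together with Theorem~\ref{iterationofdeecomplete} this also completes the proof of Theorem~\ref{specwidetree}. So I would fix a sufficiently large regular $\theta$, a countable $M \prec H_\theta$ with $T, S, \mathbb P, \mathbb Q \in M$, and set $\delta = M \cap \omega_1$, so that $M \cap T = T_{<\delta}$. The key structural point, and the reason $T$ is required to be an $\omega_1$-tree, is that $T_\delta$ is then \emph{countable}; hence both $T_\delta \restriction S$ and the family $\mathcal B_\delta$ of branches of $T_{<\delta}$ induced by the nodes of $T_\delta$ are countable, and each such branch, being a subset of the countable set $T_{<\delta}$, lies in $H_{\omega_1}$. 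The completeness-system parameter $u$ will code, via the Mostowski collapse of $M$, an enumeration $\langle \bar D_n \mid n<\omega\rangle$ of the dense open subsets of $\bar{\mathbb P}$ lying in $\bar M$ together with an enumeration $\langle \bar b_n \mid n<\omega\rangle$ of (the collapse of) $\mathcal B_\delta$.

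The heart of the matter is the recursion, carried out in $V$, which iterates Lemma~\ref{submodel}. Given $p \in \mathbb P\cap M$ and a finite $h\colon T_\delta\restriction S\to\mathbb Q$ bounding $f_p$, I would build a descending sequence $p = p_0 \geq p_1 \geq \cdots$ in $M$ with $p_{n+1}\in D_{n+1}\cap M$, strictly increasing heights $\alpha_n \nearrow \delta$ (the sets $\{q : \mathrm{ht}(q)>\alpha_n\}$ being dense by the Extension Lemma~\ref{extensionlemma}, hence included among the $D_n$), and finite functions $h = h_0$, $h_n \subseteq h_{n+1}$ with $t_n\in\dom(h_{n+1})$ (fixing an enumeration $\langle t_n\rangle$ of $T_\delta\restriction S$), each $h_n$ bounding $f_{p_n}$. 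The step from $p_n$ to $p_{n+1}$ first applies Lemma~\ref{extensionlemma} inside $M$ to raise the height past $\alpha_n$ with growth slow enough to respect the newly committed value $h_{n+1}(t_n) := f_{p_n}(t_n\downarrow_S\alpha_n)+1$ along the relevant branches, and then applies Lemma~\ref{submodel} to $D_{n+1}$ and the finite function $h_{n+1}$ projected to the current height, landing in $D_{n+1}\cap M$ while staying bounded by $h_{n+1}$. Putting $G = \{r\in\mathbb P\cap M : (\exists n)\,p_n\leq r\}$ gives a $\mathbb P$-generic filter over $M$; I let $f = \bigcup_n f_{p_n}$, of domain $T_{<\delta}\cap S$, extended to $T_{\leq\delta}\cap S$ by $f(t_n) := h_{n+1}(t_n)$. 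Then I would verify: (i) $f$ is a partial specializing function of height $\delta$ — for $m>n$ one has $f_{p_m}(t_n\downarrow_S\alpha_m) < h_m(t_n) = h_{n+1}(t_n) = f(t_n)$, so values along each branch to level $\delta$ stay bounded and the monotonicity condition holds; (ii) $\Gamma := \overline{\bigcup_n\Gamma_{p_n}}\restriction[\delta,\omega_1)$ is a promise (using the projection property of each $\Gamma_{p_n}$) with $\beta(\Gamma) = \delta$; and (iii) $f$ keeps $\Gamma$, i.e.\ fulfils every requirement in $\Gamma(\delta)$. Granting these, $q = (f,\Gamma)$ is a lower bound for $G$.

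To package this as a completeness system, for $(M,\mathbb P,p)$ with collapse $\pi_M\colon\bar M\to M$, let $A^{M,\mathbb P,p}_u$ consist of all $\pi_M``\bar G$ where $\bar G$ is a $\bar{\mathbb P}$-generic filter over $\bar M$ through $\pi_M^{-1}(p)$ that is the union of a descending $\omega$-sequence meeting every dense open set coded in $u$ and respecting (in the sense of the slow-growth commitments above) the branch enumeration coded in $u$; take $s\in H_{\omega_1}$ trivial. When $u$ codes \emph{all} the dense open subsets of $\bar{\mathbb P}$ in $\bar M$ and all of $\mathcal B_\delta$, the recursion of the previous paragraph shows $A^{M,\mathbb P,p}_u\neq\emptyset$ and that every member has a lower bound in $\mathbb P$, which is exactly what dee-completeness demands. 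That $\mathbb D\colon(M,\mathbb P,p)\mapsto\{A^{M,\mathbb P,p}_u : u\in H_{\omega_1}\}$ is a completeness system reduces to countable completeness: given $u_0,u_1,\dots$, the union of the dense sets they list is still countable and the branch sets they list all lie inside $\mathcal B_\delta$, so one recursion interleaving all of them and committing to all listed branches produces a generic in $\bigcap_i A^{M,\mathbb P,p}_{u_i}$. Simplicity is then routine: the defining condition on $\bar G$ is first-order over $H_{\omega_1}$ in the parameters $\bar M$ (a countable transitive set, hence coded by a real), $\bar G$, $\bar{\mathbb P}$, $\pi_M^{-1}(p)$ and $u$, and $\bar S,\bar T$ are recoverable from $\bar{\mathbb P}$.

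The main obstacle I anticipate is clause (iii): ensuring the limit function $f$ fulfils each requirement $H\in\Gamma(\delta)$. Such an $H$ is the $\downarrow_S$-image of a requirement that entered some $\Gamma_{p_n}$, and fulfilment demands that for every finite $\tau\subseteq T_\delta\restriction S$ there be $h'\in H$ with $\dom(h')\cap\tau=\emptyset$ and $h'$ bounding $f$ at level $\delta$ — yet the commitments $h_n$ have, cumulatively, pinned down the $f$-values along the countably many branches of $T_\delta\restriction S$, and one must guarantee that dispersedness of $H$ together with a careful choice of the $h_n$ (exploiting the slack left by the $\varepsilon$-parameter in Lemma~\ref{extensionlemma}, the freedom in Lemma~\ref{basis}, and the bounded-extension output of Lemma~\ref{submodel}) prevents any requirement from being inadvertently blocked. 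Threading this promise bookkeeping through the recursion is essentially the Abraham–Shelah argument adapted to the wide tree $S$ sitting inside the $\omega_1$-tree $T$; by comparison, the boundedness and genericity checks of clauses (i)–(ii), and the verifications of countable completeness and simplicity, are mechanical.
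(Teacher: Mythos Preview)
Your proposal identifies the right skeleton, but the recursion you describe does not actually carry out the promise bookkeeping needed for clause~(iii), and the fix is not a small adjustment. Setting $h_{n+1}(t_n) := f_{p_n}(t_n\downarrow_S\alpha_n)+1$ commits to a value along the branch through $t_n$ with no regard to any requirement $H$ that may later appear in $\Gamma(\delta)$; once all the values $f(t_n)$ have been fixed this way, there is nothing preventing every $h'\in H$ with $\dom(h')$ disjoint from a given $\tau$ from having $h'(x)\le f(x)$ somewhere, so the requirement is not fulfilled. You correctly flag this as the main obstacle, but you then defer it rather than address it --- and this bookkeeping is exactly the content of the lemma. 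The paper's recursion is organised differently: one enumerates triples $(m,n,k)$ where $m$ indexes a condition already built, $n$ indexes a requirement in $\Gamma_{p_m}(\delta)$, and $k$ indexes a finite $\tau_k\subseteq T_\delta$; at the stage handling $(m_i,n_i,k_i)$ one chooses $h_i$ in the $n_i$-th requirement of $\Gamma_{p_{m_i}}(\delta)$ with $\dom(h_i)$ disjoint from $\tau_{k_i}\cup\bigcup_{j<i}\dom(g_j)\cup\{t_i\}$ and bounding $f_{p_i}$ (such an $h_i$ exists precisely because $p_i$ is a condition), and then the bounding function $g_{i+1}$ is taken below $h_i$ on their shared domain. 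This is what guarantees that the final $f$, defined by $f(t_i)=g_{i+1}(t_i)$, fulfils every requirement versus every finite $\tau$.

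Your completeness system also diverges from the paper's in a way that creates a second gap. You let $u$ code an enumeration of dense sets together with an enumeration of branches, but for an arbitrary $u\in H_{\omega_1}$ the listed branches need not lie in $\mathcal B_\delta$, and your countable-completeness argument quietly assumes they do. More importantly, the definition of ``respects $u$'' you give says nothing about fulfilling requirements at level $\delta$, so even for the correct $u$ you have not explained why members of $A_u$ have lower bounds. The paper resolves both issues by letting a \emph{possible continuation} $u=\langle T^*,S^*,c\rangle$ carry a candidate top level $T^*_\delta$ (any countable set of cofinal branches, not necessarily the true one), a candidate $S^*$, and a function $c$ assigning to each $\bar p$ a countable set of requirements of height $\delta$ projecting down to $\Gamma_{\bar p}$; a generic \emph{respects} $u$ if the limit $f$ can be extended to a partial specializing function on $T^{**}\supseteq T^*$ fulfilling all requirements in $c(\bar q)$ for $\bar q\in\bar G$. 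Countable completeness is then immediate, since the union of countably many possible continuations is again one, and the lower-bound property holds whenever $u$ contains the true $T_\delta\cap S$ among its branches.
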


\begin{proof}
Work in the setting of Lemma \ref{submodel}, in particular letting $M \prec H_\theta$ be as before with $\omega_1 \cap M = \delta$. I want to prove the existence of a master condition for $M$. Let $\langle D_n \; | \; n < \omega \rangle$ be an enumeration of the dense open subsets of $\mathbb P$ in $M$. Let $p \in \mathbb P \cap M$ and let $\langle t_i \; | \; i < \omega \rangle$ enumerate the elements of $T_\delta \hook S$. If this set is finite then allow for repetitions. Let $\langle \tau_k \; | \; k < \omega\rangle$ enumerate all the finite subsets of $T_\delta$. I want to define a sequence $p \geq p_0 \geq p_1 \geq ... \geq p_{n} \geq ...$ so that $p_i \in D_i \cap M$ for all $i < \omega$ and there is a condition $q$ extending the union of the $p_i$'s. Such a $q$ defines a generic over $M$. The idea is to use Lemma \ref{submodel} $\omega$-many times to make sure that the union of an $M$ generic filter is bounded and hence can be extended into a further condition. I will then extract from the proof a definition of the generics bounded by such a $q$ and this will be used to define a simple completeness system as needed. 

Fix an enumeration in order type $\omega$ of all triples $e_l = (m_l, n_l, k_l)$ so that $m_l, n_l, k_l \in \omega$ and the first occurrence of $m$ in the first coordinate is after the $m^{\rm th}$ element of the enumeration and each such triple appears infinitely often. For each condition $p' \in M \cap \mathbb P$ let us fix ahead of time an enumeration of $\Gamma_{p'}(\delta)$ in order type $\omega$. Now, using Lemma \ref{submodel}, recursively define conditions $p_{i+1}$ and functions $g_i$, $h_i$ satisfying the following conditions:

\begin{enumerate}
\item
For all $i < \omega$ $p_{i+1} \leq p_i$ and $p_{i+1} \in D_{i+1}$.

\item
For all $i < \omega$, $g_{i} \subseteq g_{i+1}$ is a finite function from $T_\delta \hook S \to \mathbb Q$ bounding $f_{p_{i+1}}$. This uses Lemma \ref{submodel}.

\item
If $H$ is the $n_i^{\rm th}$ requirement of $\Gamma_{p_{m_i}}(\delta)$ in our prefixed enumeration then $h_i \in H$ and has domain disjoint from $\tau_{k_i} \cup \bigcup_{j < i} {\rm dom}(g_j) \cup \{t_{i}\}$ and bounds $p_i$. 

\item
$g_{i+1}$ has domain ${\rm dom}(h_i) \cup \tau_{k_i} \cup \bigcup_{j < i} {\rm dom}(g_j) \cup \{t_{i}\}$ and is bounded by $h_i$ on their shared domain.

\end{enumerate}

Such a sequence can be constructed in much the same way as in the limit case of Lemma \ref{extensionlemma} applying Lemma \ref{submodel} to ensure that each successive $g_{i+1}$ bounds the condition $p_{i+1} \in D_{i+1}$. Moreover this sequence generates a generic filter on $M$. I need to show that there is a lower bound, $q$. Note that $\bigcup_{n < \omega} f_{p_n}$ is a partial specializing function defined on $T_{ < \delta} \hook S$. I claim that we can extend it to a function defined on $T_\delta \hook S$ which keeps the promises $\bigcup_{n < \omega} \Gamma_{p_n}$. Indeed, let $f_q(t_i) = g_{i+1}(t_i)$. This is defined, since we insisted that $t_i \in {\rm dom}(g_i)$. Also, since $g_i$ bounded all $p_i$, $f_q(t_i)$ is at least the supremum of the values of $f_n (t_i \downarrow_S \beta)$ for all $\beta < \delta$. What needs to be checked is that $f_q$ actually keeps all the promises in the $p_i$'s. This is what was planned for though. If $H \in \Gamma_q(\delta)$ then $H \in \Gamma_{p_i} (\delta)$ for some $i$ and for any $\tau \subseteq T_\delta$ finite, there was a stage where we ensured that $f_q$ was bounded by some $h_n$ which included being bounded by some $H$ on a node disjoint from $\tau$. Then, from that stage on, since all $p_j$'s were bounded by this $h_n$ since they're bounded by the $g_n$ we constructed at that stage which itself was boudned by $h_n$, we get that $f_q$ keeps that instance of the promise.

Thus we have shown that $q$ is an $(M, \mathbb P)$-master condition so $\mathbb P$ is proper. It remains to show that it is in fact dee-complete. The issue is that the proof above required knowledge of $T_\delta$ and $T_\delta \hook S$, which we do not have from $M$ alone. However, given a countable set of ``potential nodes" for $T_\delta$ and $T_\delta \hook S$ we can run the argument above, and, if we add in on top of that the true $T_\delta$ and $T_\delta \hook S$ then this won't change anything. This motivates the idea of a {\em possible continuation} defined below.

Given a sufficiently large $\theta$ and a countable transitive $\sigma:\bar{M} \prec H_\theta$ so that $\sigma (\bar{\P}, \bar{T}, \bar{S}) = \P, T, S$, let us say that an element $u \in H_{\omega_1}$ is a {\em possible continuation} for $\bar{M}$ and $\bar{\P}$ if $u$ codes a triple $\langle T^*, S^*, c\rangle$ so that:
\begin{enumerate}
\item
$T^*$ is a normal tree of height $\delta +1$ so that $T^*_{<\delta} = T_{<\delta}$ and $T^*_{\delta}$ is countable for $\delta = M \cap \omega_1$. Here we associate an element $t \in T^*_{\delta}$ with its set of predecessors so we can think of $t$ as a subset of $\bar{M}$.
\item
$S^* \subseteq T^*$ is a closed subtree with the induced suborder so that $S^* \cap T_{<\delta} = S \cap T_{< \delta}$.
\item
$c: \bar{\P} \to \mathcal P(\bar{M})$ is a function so that $c(\bar{p})$ is a countable set of requirements of height $\delta$ so that for each $\alpha < \delta$ and each $\bar{H} \in \Gamma_{\bar{p}}(\alpha)$ there is a $H \in c(\bar{p})$ so that $\bar{H} = \{h \downarrow_S \alpha \; | \; h \in H\}$.
\end{enumerate}
Note that being a possible continuation is definable in $H_{\omega_1}$ from $\bar{M}$ and $\bar{\P}$. Now, if $\bar{M}$ is the transitive collapse of some $M \prec H_\theta$ and $u = \langle T^*, S^*, c\rangle$ is a possible continuation for $\bar{M}$ and $\bar{\P}$ let us say that a $\bar{\mathbb P}$-generic $\bar{G}$ over $\bar{M}$ {\em respects} $u$ if there is a function $f:T^{**} \to \mathbb Q$ for some tree $T^{**}$ of height $\delta+1$ so that 
\begin{enumerate}
\item
$T^{**}_\delta \supseteq T^{*}_\delta$ and $T^{**}_{<\delta} = T_{<\delta}$
\item
$f$ is a partial specializing function.
\item
$f \supseteq \bigcup_{\bar{q} \in \bar{G}} f_{\bar{q}}$
\item
$f$ fulfills all of the requirements in $c(\bar{q})$ for each $\bar{q} \in \bar{G}$.
\end{enumerate}
Finally we say that a $\mathbb P$-generic $G$ over $M$ {\em respects} $u$ if $G$ is generated by $\sigma `` \bar{G}$ for a $\bar{G}$ which respects $u$. Note that if $u = \langle T^*, S^*, c\rangle$ is such that $T^*$ contains the set of branches with upper bounds in $T_\delta \cap S$ and $G$ respects $u$ as witnessed by $f$ then $G$ has a lower bound: the pair consisting of the partial specializing function $f$ and the promise generated by $\sigma``\bigcup_{\bar{q} \in \bar{G}}c(\bar{q})$. Also, given any possible continuation $u$, generics which respect $u$ exist for every condition and model by the argument for properness in the first half of this proof.

Finally we can define our completeness system by letting $\mathbb D(\bar{M}, \bar{\mathbb P}, \bar{p})$ be the set of $A_u$ for $u \in H_{\omega_1}$ where if $u$ is a possible continuation for $\bar{M}$ and $\bar{\mathbb P}$ then $A_u$ is the set of generics which respect $u$ and if $u$ is not a possible continuation then $A_u$ is all generics. This is definable and satisfies the conditions of a completeness system. The only thing that is not immediately clear is the countable closure. This is why promises consist of countable sets of requirements: suppose that $\{\langle T^*_i, S^*_i, c_i\rangle \; | \; i < \omega\}$ is a countable set of possible continuations and let $T^* = \bigcup_{i < \omega} T^*_i$, $S^* = \bigcup_{i < \omega} S^*_i$ and $c$ be the function sending $\bar{p} \mapsto \bigcup_{i < \omega} c_i(\bar{p})$. Then $u = \langle T^*, S^*, c \rangle$ is a possible continuation and any generic that respects $u$ respects all $\langle T^*_i, S^*_i, c_i\rangle$ hence $\bigcap_{i < \omega} A_{\langle T_i^*, S^*_i, c_i\rangle}$ is nonempty.

\end{proof}

Finally I prove that $\mathbb P$ is $\alpha$-proper for all $\alpha < \omega_1$. 
\begin{lemma}
Let $\alpha < \omega_1$ and let $\vec{N} = \langle N_i \; | \; i \leq \alpha\rangle$ be a tower of length $\alpha$ for $N_i \prec H_\theta$, $\theta$ sufficiently large with $\mathbb P \in N_0$. Then for any $p \in N_0 \cap \mathbb P$ there is a $q \leq p$ which is $(N_i, \mathbb P)$-generic simultaneously for every $i \leq \alpha$.
\end{lemma}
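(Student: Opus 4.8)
The plan is to prove, by induction on $\alpha<\omega_1$, a strengthening of the statement that feeds itself through limit stages. For a tower $\vec N=\langle N_i\mid i\le\alpha\rangle$ for $H_\theta$ write $\delta_i=N_i\cap\omega_1$, so that $\langle\delta_i\mid i\le\alpha\rangle$ is weakly increasing, continuous, strictly increasing at successors, and $T_{<\delta_\alpha}\subseteq N_\alpha$. The inductive statement $\phi(\alpha)$ will assert: for every sufficiently large $\theta$, every tower $\vec N=\langle N_i\mid i\le\alpha\rangle$ for $H_\theta$ with $T,S,\mathbb P\in N_0$, every $p\in\mathbb P\cap N_0$, and every finite function $h\colon T_{\delta_\alpha}\hook S\to\mathbb Q$ bounding $f_p$, there is $q\le p$ with $\mathrm{ht}(q)=\delta_\alpha$ such that $f_q$ is bounded by $h$ and $q$ is $(N_i,\mathbb P)$-generic simultaneously for every $i\le\alpha$. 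Taking $h$ empty, $\phi(\alpha)$ immediately gives the lemma; the role of carrying the external bounding function $h$ (living \emph{above} all the $N_i$, exactly as in Lemma~\ref{submodel}) is that it lets genericity obtained at one stage survive the extensions made at later stages.

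The base case $\alpha=0$ is a minor variant of the proof that $\mathbb P$ is proper: one runs the same $\omega$-step construction of a decreasing sequence $p\ge p_0\ge p_1\ge\cdots$ through the dense open subsets of $\mathbb P$ in $N_0$, invoking Lemma~\ref{submodel} with the given $h$ at each step so that every $p_n$, hence the union of the resulting $N_0$-generic filter, is bounded by $h$; the promise/bounding bookkeeping then supplies an $(N_0,\mathbb P)$-generic lower bound $q$ of height $\delta_0$ with $f_q$ bounded by $h$. For the successor case $\alpha=\beta+1$, the key point is that the sub-tower $\langle N_i\mid i\le\beta\rangle$ lies in $N_{\beta+1}$ and that $T_{\delta_\beta}\hook S$ (a countable set) lies in $N_{\beta+1}$. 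One first establishes a combined lemma: working inside $N_{\beta+1}$, the set of conditions $r\le p'$ belonging to a prescribed dense open $D\in N_{\beta+1}$ which are $(N_i,\mathbb P)$-generic for all $i\le\beta$ is dense below $p'$, with the additional feature (obtained by merging the proof of Lemma~\ref{submodel} with $\phi(\beta)$) that the $f$-parts can be kept bounded by a prescribed finite function at level $\delta_\beta$; since ``$(N_i,\mathbb P)$-generic for all $i\le\beta$'' is a parameterized first-order property over $H_\theta$, density transfers into $N_{\beta+1}$ by elementarity. Using this, one repeats the $\omega$-step properness construction with $M=N_{\beta+1}$, choosing each $p_n\in D_n\cap N_{\beta+1}$ to be moreover $(N_i,\mathbb P)$-generic for all $i\le\beta$ and bounded by the running finite function (itself threaded so as to refine $h$); the lower bound $q$ of height $\delta_{\beta+1}$, bounded by $h$, is then $(N_i,\mathbb P)$-generic for every $i\le\beta+1$.

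For the limit case, fix an increasing cofinal $\langle\alpha_n\mid n<\omega\rangle$ in $\alpha$ with $\alpha_0=0$, so $\delta_\alpha=\sup_n\delta_{\alpha_n}$ and $N_\alpha=\bigcup_n N_{\alpha_n}$, and enumerate the dense open subsets of $\mathbb P$ lying in $N_\alpha$ as $\langle D_n\mid n<\omega\rangle$, arranging $D_n\in N_{\alpha_n}$. One builds a decreasing chain $p=q_{-1}\ge q_0\ge q_1\ge\cdots$ with $\mathrm{ht}(q_n)=\delta_{\alpha_n}$, together with a coherent decreasing sequence of finite bounding functions $h_n$ on $T_{\delta_{\alpha_n}}\hook S$ refining $h$, where at step $n$ the combined lemma (applied to the sub-tower $\langle N_i\mid i\le\alpha_n\rangle$ together with $\phi(\alpha_n)$) yields $q_n\le q_{n-1}$ which is $(N_i,\mathbb P)$-generic for all $i\le\alpha_n$, lies below some element of $D_n$, and is bounded by $h_n$. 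Then $\bigcup_n f_{q_n}$ is a partial specializing function on $T_{<\delta_\alpha}\hook S$, and exactly as in the limit case of Lemma~\ref{extensionlemma} and the properness lemma, the dispersedness of requirements together with the coherence of the $h_n$ lets one extend it to a specializing function $f_q$ on $T_{\delta_\alpha}\hook S$ bounded by $h$ that keeps every promise $\Gamma_{q_n}$; then $q=\bigl(f_q,\overline{\bigcup_{n<\omega}\Gamma_{q_n}}\bigr)$ is the required condition.

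The main obstacle, and the only part requiring genuine care, is the interlocking bookkeeping at the limit stage that makes the chain $\langle q_n\mid n<\omega\rangle$ admit a lower bound: one must simultaneously meet all dense sets of $N_\alpha$, secure genericity over every lower model (transferred into the relevant model by elementarity), and maintain a coherent family of finite bounding functions so that all promises accumulated along the chain can be fulfilled at the top level $\delta_\alpha$ — precisely the situation for which the combinatorics of Lemmas~\ref{extensionlemma}, \ref{basis}, and \ref{submodel} were designed. That $S$ is wide rather than thin introduces nothing new at this point: the reduction to $S$ closed (so limits of branches exist and $T_{\delta_\alpha}\hook S$ behaves) and the dispersedness condition on requirements were already set up with the wide case in mind, and the one genuinely tree-theoretic input, the splitting property for $B_\infty$, was already proved for wide $S$ inside Lemma~\ref{submodel}.
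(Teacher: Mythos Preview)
Your approach --- strengthening the inductive hypothesis to carry an external bounding function $h$ at level $\delta_\alpha$ --- is the right way to make precise what the paper only sketches (``by iteratively applying Lemma~\ref{submodel} as in the previous proof''), and the overall structure matches the paper's.

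There is, however, a genuine slip in the limit step. You apply $\phi(\alpha_n)$ to the tower $\langle N_i\mid i\le\alpha_n\rangle$ with starting condition $q_{n-1}$, but your own $\phi$ requires the starting condition to lie in the \emph{first} model of the tower, and for $n\ge 1$ the condition $q_{n-1}$ has height $\delta_{\alpha_{n-1}}$, hence is not in $N_0$. The fix is the standard one for ${<}\omega_1$-properness arguments: arrange inductively (by carrying out each step inside the next model via elementarity) that $q_{n-1}\in N_{\alpha_{n-1}+1}$, and then apply the inductive hypothesis not to the full initial segment but to the \emph{shifted} tower $\langle N_i\mid \alpha_{n-1}+1\le i\le\alpha_n\rangle$, re-indexed so that $N_{\alpha_{n-1}+1}$ is its first model. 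This tower has length $<\alpha$, has $q_{n-1}$ in its base, and the projected bound $h\downarrow_S\delta_{\alpha_n}$ lies in $N_{\alpha_n+1}$, so elementarity inside $N_{\alpha_n+1}$ produces $q_n\in N_{\alpha_n+1}$ for the next round; genericity over the earlier $N_i$ is inherited from $q_{n-1}$. (A related caution in your successor case: the set of $(N_i)_{i\le\beta}$-generic conditions lying in $D$ is not literally dense below an arbitrary $p'\in N_{\beta+1}$ --- a condition of height $\ge\delta_0$ may simply fail to have any $(N_0,\mathbb P)$-generic extension. What you actually need, and have, is that this set is nonempty below the original $p\in N_0$; after step $0$ genericity propagates downward and only Lemma~\ref{submodel} is required.) With these adjustments your argument goes through and is more explicit than the paper's.
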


\begin{proof}
If $\alpha$ is a successor ordinal, this is just the proof of properness given above so assume that $\alpha$ is a limit ordinal. Pick an increasing sequence $\langle \alpha_n \; | \; n < \omega \rangle$ with ${\rm sup}_{n < \omega} \alpha_n = \alpha$. Let $\delta = \omega_1 \cap N_\alpha$. One can perform the same proof as when it was proved that $\mathbb P$ was proper, except now we insist (via the inductive assumption) that $p_i$ be $(N_j , \mathbb P)$-generic for all $j < i$ and $p_i \in N_i$ as opposed to $p_i$ being in some specified dense open. Since, by the definition of a tower $\langle N_j \; | \; j < i \rangle \in N_i$ this is possible (given the sequence, by elementarity, $N_i$ can find a master condition). Moreover, since, again by definition of a tower, the sequence of models is continuous and in particular, $N_\alpha = \bigcup_{n < \omega} N_{\alpha_n}$ the set $\{r \in N_\alpha \cap \mathbb P \; |\; \exists i \, p_i \leq r\}$ is $(N_\alpha, \mathbb P)$-generic. The only thing to be careful about is that the union of the $p_i$'s can be extended to some $q$ of height $\delta$. However, by iteratively applying Lemma \ref{submodel} as in the previous proof this is easily accounted for.
\end{proof}

Therefore $\mathbb P$ is dee-complete and ${<}\omega_1$-proper, thus proving Theorem \ref{specwidetree}. We get as an immediate corollary the following.
\begin{corollary}
Assume $\DCFA$. Every wide Aronszajn tree which embeds into an $\omega_1$-tree is special.
\end{corollary}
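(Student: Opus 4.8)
The plan is to apply the forcing axiom $\DCFA$ directly to the poset furnished by Theorem \ref{specwidetree}. First I would reduce to the exact setting of that theorem: if $S$ is a wide Aronszajn tree and $e\colon S \to T$ is an embedding into an $\omega_1$-tree $T$, then, replacing $S$ by $e[S]$ equipped with the suborder induced from $T$, I may assume $S \subseteq T$, so that $\mathbb P := \mathbb P_{S,T}$ as constructed in Theorem \ref{specwidetree} is dee-complete, ${<}\omega_1$-proper, and specializes $S$. Since every level of $T$ is countable, $|T| = \aleph_1$, and hence $|S| \leq \aleph_1$; in particular $S$ has at most $\aleph_1$ many nodes. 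This cardinality bound is exactly what is needed for $\DCFA$, which is $\MA_{\aleph_1}$ for the class of dee-complete ${<}\omega_1$-proper forcings, to apply.

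Next I would specify the dense sets. For each $t \in S$ put $D_t = \{p \in \mathbb P \;|\; t \in \dom(f_p)\}$. Since a condition $p$ of height $\alpha$ has $\dom(f_p) = T_{\leq\alpha} \cap S$, the set $D_t$ contains every condition of height at least the $T$-level of $t$; and by the Extension Lemma (Lemma \ref{extensionlemma}) every condition extends to one of arbitrarily large height, so $D_t$ is (open) dense in $\mathbb P$. Thus $\{D_t \;|\; t \in S\}$ is a family of at most $\aleph_1$ dense subsets of a dee-complete ${<}\omega_1$-proper poset, and $\DCFA$ yields a filter $G \subseteq \mathbb P$ meeting every $D_t$.

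Finally I would read off the specializing function. Set $f = \bigcup_{p \in G} f_p$. Since $G$ is a filter, any two $p, q \in G$ have a common extension $r \in G$, and by the definition of the ordering on $\mathbb P$ we have $f_p, f_q \subseteq f_r$; hence the $f_p$ cohere and $f$ is a well-defined function $S \to \mathbb Q$. Each $f_p$ is strictly increasing along $\leq_T$-chains contained in $S$, a property preserved under unions of a directed family, so $f$ is strictly increasing along chains of $S$; and $\dom(f) = S$ because $G$ meets every $D_t$. Therefore $f$ is a specializing function for $S$ (composing with an order isomorphism $\mathbb Q \cong \mathbb Q^+$ if one insists on positive values), so $S$ is special.

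This argument is a formality once Theorem \ref{specwidetree} is in hand; the only points requiring any care are the observation that embeddability into an $\omega_1$-tree forces $|S| \leq \aleph_1$, so that the forcing axiom for $\aleph_1$ many dense sets suffices, and the (immediate) verification that the union of the first coordinates of a filter is a genuine specializing function. I do not anticipate any real obstacle.
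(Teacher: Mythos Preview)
Your proposal is correct and is exactly the standard forcing-axiom argument the paper intends; the paper gives no proof, declaring the result ``an immediate corollary'' of Theorem \ref{specwidetree}. Your write-up supplies precisely the routine details (reduction to $S\subseteq T$, the dense sets $D_t$, the Extension Lemma for density, and reading off $f=\bigcup_{p\in G}f_p$) that make this immediate.
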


We can also iterate this forcing with countable support to obtain the following (with no consistency strength). Note that under $\CH$ the forcing notion $\mathbb P$ has the $\aleph_2$-c.c. since any two conditions with the same partial specializing function are compatible. 

\begin{corollary}
It's consistent with $\CH$ that all wide Aronszajn trees which embed into an $\omega_1$-tree are special.
\end{corollary}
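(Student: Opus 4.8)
The plan is to obtain the model by a countable support iteration of length $\omega_2$ over a ground model $V\models\GCH$, at successor stages forcing with the poset $\mathbb P_{S,T}$ of Theorem \ref{specwidetree} for a pair $(S,T)$ delivered by a bookkeeping device, and forcing trivially whenever the bookkept object is not (a name for) such a pair. Concretely, I would first fix in $V$ a bookkeeping surjection $\pi\colon\omega_2\to\omega_2\times\omega_2$ with $\pi(\alpha)_0\le\alpha$, and define $\langle(\mathbb P_\alpha,\dot{\mathbb Q}_\alpha)\mid\alpha<\omega_2\rangle$ recursively: at stage $\alpha$, if the $\pi(\alpha)$-th nice $\mathbb P_{\pi(\alpha)_0}$-name for a subset of $\omega_1$ is forced to code a pair $(S,T)$ with $T$ an $\omega_1$-tree (countable levels) and $S\subseteq T$ a wide Aronszajn tree in the induced order, set $\dot{\mathbb Q}_\alpha$ to be a $\mathbb P_\alpha$-name for $\mathbb P_{S,T}$ as computed in $V^{\mathbb P_\alpha}$; otherwise let $\dot{\mathbb Q}_\alpha$ be trivial. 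By Theorem \ref{specwidetree} each iterand is dee-complete and ${<}\omega_1$-proper, so by Shelah's iteration theorem (Theorem \ref{iterationofdeecomplete}) every $\mathbb P_\alpha$, and $\mathbb P_{\omega_2}$ itself, is dee-complete and ${<}\omega_1$-proper; in particular no reals are added at any stage, so $\CH$ holds in every $V^{\mathbb P_\alpha}$ and in the final model $W:=V^{\mathbb P_{\omega_2}}$, and $\omega_1$ is preserved since properness preserves $\omega_1$.

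Next I would establish the $\aleph_2$-chain condition. Under $\CH$ — which holds in each $V^{\mathbb P_\alpha}$ — there are only $\aleph_1$ many partial specializing functions (countably many possible domains $T_{\le\xi}\cap S$ for $\xi<\omega_1$, each carrying $\aleph_1$ many $\mathbb Q$-valued functions), and any two conditions of $\mathbb P_{S,T}$ sharing the same partial specializing function are compatible, their promises being amalgamated into a single promise kept by that function; hence each $\dot{\mathbb Q}_\alpha$ is forced to have size $\le\aleph_1$. The standard $\Delta$-system argument for countable support iterations of proper forcings of size $\le\aleph_1$ over models of $\CH$ then gives that $\mathbb P_{\omega_2}$ has the $\aleph_2$-c.c., so cardinals $\ge\aleph_2$ are preserved. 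It follows that every subset of $\omega_1$ in $W$ has a nice $\mathbb P_\alpha$-name for some $\alpha<\omega_2$, hence already belongs to an intermediate model $V^{\mathbb P_\alpha}$; moreover, since $\GCH$ in $V$ and the chain condition give $2^{\aleph_1}=\aleph_2$ at every stage, there are only $\aleph_2$ such names altogether, so I may arrange that the bookkeeping addresses each of them cofinally often.

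Finally I would run the reflection-and-persistence argument. Suppose in $W$ there is an $\omega_1$-tree $T$ and a wide Aronszajn subtree $S\subseteq T$. Coding $S$ and $T$ as subsets of $\omega_1$, there is $\alpha<\omega_2$ with $S,T\in V^{\mathbb P_\alpha}$. The pair is still valid there: any cofinal branch of $S$ in $V^{\mathbb P_\alpha}$ would survive to $W$, contradicting that $S$ is Aronszajn in $W$; any uncountable level of $T$ in $V^{\mathbb P_\alpha}$ would remain uncountable in $W$ since $\omega_1$ is not collapsed; and $S\subseteq T$ with the induced order is absolute. By the choice of bookkeeping there is $\beta\in[\alpha,\omega_2)$ with $\dot{\mathbb Q}_\beta=\mathbb P_{S,T}$ (computed in $V^{\mathbb P_\beta}$, where $S$, $T$ and the relevant initial segments $T_{<\delta}$ are unchanged), and forcing with it adds a specializing function $f\colon S\to\mathbb Q$ — conditions extend arbitrarily high up $S$ by the Extension Lemma \ref{extensionlemma} — which persists as a specializing function in all further extensions. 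Hence $S$ is special in $W$, and $W\models\CH$ together with ``every wide Aronszajn tree that embeds into an $\omega_1$-tree is special.''

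The bulk of the difficulty — the dee-completeness and ${<}\omega_1$-properness of $\mathbb P_{S,T}$ — is already discharged by Theorem \ref{specwidetree}, and the iteration theorem is cited, so the remaining work is largely routine iteration bookkeeping. The one point that needs genuine care is the reflection step: one must check that ``$S$ is a wide Aronszajn subtree of an $\omega_1$-tree $T$'' reflects downward from $W$ to the intermediate model $V^{\mathbb P_\alpha}$ where $S$ and $T$ first appear — which is exactly where the facts that the iteration adds no cofinal branches through $S$ retroactively, collapses no $\omega_1$, and adds no reals, are used.
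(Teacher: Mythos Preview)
Your proposal is correct and follows essentially the same approach as the paper: a countable-support $\omega_2$-iteration of the posets $\mathbb P_{S,T}$ over a $\GCH$ ground model, using Theorem \ref{iterationofdeecomplete} to see no reals are added and the $\aleph_2$-c.c.\ (from the observation that conditions with the same partial specializing function are compatible) plus bookkeeping to catch every pair. The paper merely sketches this in a sentence, while you have written out the reflection and chain-condition details explicitly.
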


In contrast to the next section, note that by the $\aleph_2$-c.c. all cofinalities, and hence cardinals, are preserved which implies that whatever Kurepa trees existed in the ground model are still Kurepa in the extension above. 

I will give a concrete application of such a wide Aronszajn tree in the next section. Let me note first that the condition is not trivial: there are wide Aronszajn trees in $\ZFC$ which cannot be embedded into $\omega_1$ trees.

\begin{lemma}(Essentially Todor\v{c}evi\'c, see \cite[Definition 3.2]{Todorcevic1981})
There is an $(\omega_1, {\leq}2^{\aleph_0})$-Aronszajn tree which is $\ZFC$-provably non-special, and cannot be specialized by any forcing not adding reals.
\label{lemmaT(S)}
\end{lemma}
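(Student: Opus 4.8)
The plan is to recall Todorčević's $T(S)$ construction and verify the two claims: it is an $(\omega_1, {\leq}2^{\aleph_0})$-Aronszajn tree, it is provably non-special, and moreover no forcing that adds no reals can specialize it. First I would fix a stationary, co-stationary set $S \subseteq \omega_1$ and let $T(S)$ be the tree of all closed bounded subsets $t \subseteq S$ of order type a successor ordinal (or, in the usual formulation, all closed bounded $t$ with $\max(t) \notin S$), ordered by end-extension. The levels are indexed by $\sup(t)$; since a closed bounded subset of $\omega_1$ is coded by a real, each level $T(S)_\alpha$ has size at most $2^{\aleph_0}$, giving the width bound. That $T(S)$ is Aronszajn (has no cofinal branch) is the standard argument: a cofinal branch would give a club $C \subseteq S$, contradicting that $\omega_1 \setminus S$ is stationary; I would just cite this, it is classical.

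Next, the non-specialness. The key point is that for any club $C \subseteq \omega_1$ and any uncountable antichain decomposition attempt, a pressing-down argument produces a linearly ordered pair with the same ``color.'' More precisely, I would show that if $f : T(S) \to \omega$ (or $\to \mathbb Q^+$) were a specializing function, then fixing a large enough countable elementary submodel $M \prec H_\theta$ with $\delta = M \cap \omega_1 \in S$, the node $t = \{\,\sup(M_i \cap \omega_1) \mid i < \omega, M_i \text{ an increasing chain of submodels}\,\} \cup \{\delta\}$ (the canonical limit node at level $\delta$, which exists precisely because $\delta \in S$) is approximated from below by nodes inside $M$; since $f$ is increasing along this branch and bounded by $f(t)$, but $f$ restricted to $M$ takes values cofinally, one derives a contradiction — this is essentially the statement that $T(S)$ cannot be special because specialness would produce a club avoiding $S$. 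I expect to lean on the fact, recorded in Todorčević's work, that $T(S)$ is special if and only if $S$ is non-stationary, so that the choice of $S$ stationary and co-stationary makes $T(S)$ provably non-special in $\ZFC$.

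The strengthening — that no reals-preserving forcing specializes $T(S)$ — is where the main obstacle lies, and it is the reason $T(S)$ is a genuine counterexample to Theorem \ref{specwidetree}'s hypothesis rather than just to its conclusion. The argument is: if $\mathbb Q$ adds no reals, then $\mathbb Q$ adds no new closed bounded subsets of $\omega_1$, hence no new nodes to $T(S)$, so $T(S)^{V[G]} = T(S)^V$; moreover $\mathbb Q$ preserves stationarity of co-stationary sets is \emph{not} automatic, but $\mathbb Q$ does preserve $\omega_1$ (reals-preserving implies $\omega_1$ is not collapsed, since a collapse of $\omega_1$ adds a real coding a well-order), and — crucially — since $T(S)$ remains Aronszajn in $V[G]$ whenever $\omega_1$ and the stationarity of $\omega_1 \setminus S$ are preserved, and since specialness of $T(S)$ in $V[G]$ would imply $\omega_1 \setminus S$ is non-stationary in $V[G]$, one needs that $\mathbb Q$ preserves the stationarity of $\omega_1\setminus S$. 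The clean way to get this: specializing $T(S)$ already forces $S$ to become non-stationary (a specializing function yields a club in $\omega_1 \setminus S$), but $\mathbb Q$ adding no reals means any new club on $\omega_1$ would have to be built from old countable initial segments in a way that, together with $\omega_1$-preservation, still threads through $S$; I would instead argue directly that the existence of a specializing $f \in V[G]$ together with ``$T(S)$ has the same nodes'' lets one reflect $f$ into a ground-model model and produce a ground-model club avoiding $S$, contradicting $\ZFC$ in $V$. So the hard part will be making precise that the specializing function, even if it lives only in $V[G]$, can be ``decoded'' back to a $V$-definable club on the stationary set because $V[G]$ and $V$ agree on all the relevant reals and countable objects; I would do this via the standard trick of noting that for each $\alpha \in S$ the branch below the canonical limit node at $\alpha$ lies in $V$, so the sup of $f$-values along it is a $V[G]$-real but equals a genuine $V$-real, and the function $\alpha \mapsto$ that value pressing down gives the club in $V$. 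I would then simply cite Todorčević \cite{Todorcevic1981} for the precise combinatorics and keep the write-up to a sketch.
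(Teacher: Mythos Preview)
You correctly identify the crux --- that a forcing adding no reals need not preserve the stationarity of $S$ --- and this is exactly where your argument breaks. Your proposed workaround, ``decoding back'' a ground-model club from a specializing function in $V[G]$, cannot succeed, because the lemma as literally stated appears to be false. Take $\mathbb Q = T(\omega_1 \setminus S)$, the club-shooting poset through the complement: since $\omega_1\setminus S$ is stationary this is $\sigma$-distributive, hence adds no reals, but in $V^{\mathbb Q}$ there is a club $C$ disjoint from $S$. Each node $t\in T(S)$ is then a closed bounded set disjoint from $C$, so meets only finitely many gaps $(c_\alpha,c_{\alpha+1})$; writing $\rho(t)$ for that finite index set and fixing for each finite $F$ a bijection $g_F\colon c_{\max(F)+1}\to\omega$, the map $t\mapsto\bigl(|\rho(t)|,\,g_{\rho(t)}(\max t)\bigr)\in\omega\times\omega$ specializes $T(S)$. (If $s<_T t$ share a value then $\rho(s)\subseteq\rho(t)$ with $|\rho(s)|=|\rho(t)|$ gives $\rho(s)=\rho(t)$, and injectivity of $g_{\rho(t)}$ forces $\max s=\max t$, impossible.) So there is no ground-model club to recover by pressing down --- in this example $S$ is stationary in $V$ yet $T(S)$ is special in $V^{\mathbb Q}$.

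The paper's own proof is much shorter and makes the same slip in different clothing: it simply asserts that since $\mathbb P$ adds no reals, $T(E)$ has the same nodes in $V^{\mathbb P}$ and ``is still $\sigma$-distributive'' there. The example above refutes this (indeed $T(S)\times T(\omega_1\setminus S)$ collapses $\omega_1$). What is true, and what suffices for Theorem~\ref{embedthm} and Corollary~\ref{cornonspec}, is the weaker statement that $T(S)$ cannot be specialized by any forcing that adds no reals \emph{and preserves stationary subsets of $\omega_1$} --- in particular by any proper such forcing. Under that hypothesis $S$ remains stationary in $V^{\mathbb P}$, so the one-line argument ($S$ stationary $\Rightarrow$ $T(S)$ is $\sigma$-distributive $\Rightarrow$ forcing with it preserves $\omega_1$ $\Rightarrow$ $T(S)$ not special) goes through in $V^{\mathbb P}$ verbatim; since $\mathbb P_{T,S}$ from Theorem~\ref{specwidetree} is proper, the downstream applications are unaffected.
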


\begin{proof}
Let $E \subseteq \omega_1$ be stationary, co-stationary and let $T(E)$ be the tree of attempts to shoot a club through $E$. In other words, elements of $T$ are closed, bounded, countable initial segments of $E$ ordered by end extension. This poset is well known to be $\sigma$-distributive, hence the tree has height $\aleph_1$. Also, every element is a countable set of ordinals hence it can be coded by a real and therefore the tree has width $2^{\aleph_0}$. So we conclude that $T(E)$ is an $(\omega_1, {\le}2^{\aleph_0})$-Aronszajn tree. However, it can't be special, since, as mentioned before, forcing with this tree does not add reals, so in particular, $\omega_1$ is preserved. To see that it remains non-special in every forcing extension not adding reals, note that, if $\mathbb P$ does not add reals then the reinterpretation of $T(E)$ in $V^\mathbb P$ is just $\check{T(E)}$ so it's still $\sigma$-distributive and hence it must still not be special.
\end{proof}

Putting together this lemma and Theorem \ref{specwidetree} we conclude the following odd result which may be of independent interest. Note that the theorem below is provable in $\ZFC$.

\begin{theorem}
For any stationary, co-stationary $E \subseteq \omega_1$ the tree $T(E)$ cannot be embedded into any $\omega_1$-tree.
\label{embedthm}
\end{theorem}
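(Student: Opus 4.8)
The plan is to prove the contrapositive by a forcing argument, playing Theorem~\ref{specwidetree} off against Lemma~\ref{lemmaT(S)}. Suppose, toward a contradiction, that $T(E)$ embeds into some $\omega_1$-tree; concretely, let $e\colon T(E)\to T$ be an injection with $s<_{T(E)}t\iff e(s)<_T e(t)$, where $T$ is an $\omega_1$-tree. (If the ambient tree has larger height or fails to be normal, I would first restrict to its first $\omega_1$ levels and normalize; this affects nothing.) Put $S=e[T(E)]$ with the suborder induced from $T$ — which, as the chapter stresses, need not be downward closed in $T$. Since $e$ both preserves and reflects the order, it is an isomorphism of the tree $T(E)$ onto $(S,{<_T}\restriction S)$; in particular $e$ is level-preserving.

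The next step is to check that $S$ meets the hypotheses of Theorem~\ref{specwidetree}. Because $e$ is level-preserving and $T(E)$ has height $\omega_1$, so does $S$. Because $S\subseteq T$ and $T$ has countable levels, the $\alpha$-th level of $S$ is a union of at most $\omega_1$ countable sets (its elements lie on the countable $T$-levels $\geq\alpha$), hence has size at most $\aleph_1$ — so even though $T(E)$ may be wider than $\aleph_1$, its homeomorphic copy inside a thin tree is automatically an $(\omega_1,{\leq}\omega_1)$-tree. Finally $S$ has no cofinal branch: such a branch would map under $e^{-1}$ to a cofinal branch of $T(E)$, equivalently (modulo the routine verification that the union of such a chain is closed and unbounded) to a club subset of $E$, which cannot exist since $\omega_1\setminus E$ is stationary. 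Thus $S\subseteq T$ is an $(\omega_1,{\leq}\omega_1)$-Aronszajn tree with the induced suborder, exactly the setting of Theorem~\ref{specwidetree}.

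Now I would apply Theorem~\ref{specwidetree} to obtain a dee-complete, ${<}\omega_1$-proper forcing $\mathbb P=\mathbb P_{S,T}$ that specializes $S$. Dee-completeness supplies a completeness system for $\mathbb P$, so, as observed immediately after the definition of dee-completeness, $\mathbb P$ adds no new reals; consequently $T(E)$ is literally unchanged in any generic extension $V[G]$ by $\mathbb P$. In $V[G]$ fix a specializing function $f\colon S\to\mathbb{Q}^+$, i.e.\ a function strictly increasing along every chain of $S$. Then $f\circ e\colon T(E)\to\mathbb{Q}^+$ is strictly increasing along every chain of $T(E)$, since $e$ carries chains to chains preserving strict order; hence $f\circ e$ specializes $T(E)$ in $V[G]$. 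But $V[G]$ is a forcing extension of $V$ adding no reals, contradicting Lemma~\ref{lemmaT(S)}, which says $T(E)$ cannot be specialized by any forcing not adding reals. So no such $e$ exists; and since the whole argument is carried out in $\ZFC$, the statement is a theorem of $\ZFC$.

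There is no genuine obstacle here — the result is a clean corollary of two earlier theorems — so the only points requiring attention are bookkeeping ones: confirming that ``embedding'' is taken in the order-theoretic (order-preserving and order-reflecting) sense, so that a specializing function for $S$ pulls back along $e$ to one for $T(E)$; and confirming that the induced-suborder image $S$ genuinely lands inside the scope of Theorem~\ref{specwidetree}, which, as noted above, is automatic precisely because sitting inside an $\omega_1$-tree bounds the width of $S$ by $\aleph_1$ regardless of how wide $T(E)$ is.
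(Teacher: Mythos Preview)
Your proposal is correct and takes essentially the same approach as the paper: assume an embedding into an $\omega_1$-tree, apply Theorem~\ref{specwidetree} to the image to specialize it without adding reals, pull the specializing function back along the embedding, and contradict Lemma~\ref{lemmaT(S)}. The paper's proof is a three-line version of exactly this argument; your additional bookkeeping (checking that the image $S$ is an $(\omega_1,{\leq}\omega_1)$-Aronszajn subtree and that the specializing function pulls back) just makes explicit what the paper leaves to the reader.
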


\begin{proof}
Suppose $T(E)$ could be embedded into an $\omega_1$-tree. Then, by forcing with the forcing from Theorem \ref{specwidetree} we could make $T(E)$ special without adding reals. But this contradicts Lemma \ref{lemmaT(S)}.
\end{proof}

\begin{corollary}
$\DCFA$ is consistent with the existence of non-special wide Aronszajn trees.
\label{cornonspec}
\end{corollary}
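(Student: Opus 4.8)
The plan is to produce, inside any model of $\DCFA$, a concrete non-special wide Aronszajn tree; since the non-specialness of the relevant tree is a theorem of $\ZFC$, the whole content of the corollary reduces to the consistency of $\DCFA$ itself together with Lemma \ref{lemmaT(S)} and Theorem \ref{embedthm}.

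First I would recall the consistency of $\DCFA$. By the Shelah iteration theorem for dee-complete and ${<}\omega_1$-proper forcing (Theorem \ref{iterationofdeecomplete}), a standard Baumgartner-style bookkeeping iteration of length $\kappa$ over a supercompact cardinal $\kappa$ forces $\DCFA$; moreover, since every iterand, and hence the whole iteration, adds no reals, one may in fact arrange $\CH$ in the resulting model. Fix such a model $V \models \DCFA$ (for concreteness, one in which $\CH$ holds, though this is not needed).

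Next, work in $V$. Fix a stationary, co-stationary set $E \subseteq \omega_1$ (such $E$ exist in $\ZFC$), and let $T(E)$ be the club-shooting tree of closed bounded subsets of $E$ ordered by end-extension, exactly as in Lemma \ref{lemmaT(S)}. That lemma (and its proof) give that $T(E)$ is an $(\omega_1,{\leq}2^{\aleph_0})$-Aronszajn tree which, by the $\sigma$-distributivity of $T(E)$ as a forcing notion, cannot be special in any $\omega_1$-preserving universe, in particular not in $V$. The one point to add is that $T(E)$ is genuinely \emph{wide}: its first level consists precisely of the singletons $\{e\}$, $e \in E$, so $|T(E)_1| = \aleph_1$ and $T(E)$ is not an $(\omega_1,{<}\omega_1)$-tree. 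Hence $T(E)$ is a non-special wide Aronszajn tree in $V$, which proves the corollary.

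Finally I would observe that this is compatible with the preceding corollary rather than in tension with it: by Theorem \ref{embedthm} the tree $T(E)$ does not embed into any $\omega_1$-tree, so it falls outside the hypothesis ``embeds into an $\omega_1$-tree'' of that corollary. In fact the present corollary is exactly the statement that this hypothesis is essential. There is no genuine obstacle in the argument; the only item that requires even a sentence of verification is the width of $T(E)$, which is already visible at level one.
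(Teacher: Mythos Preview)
Your proposal is correct and follows the same approach as the paper: exhibit $T(E)$ in a model of $\DCFA$ and invoke Lemma \ref{lemmaT(S)}. The paper's proof is a single sentence appealing to the natural model of $\DCFA$ where $\CH$ holds; you go slightly further by verifying directly that $T(E)$ has an uncountable level (via the singletons $\{e\}$), which shows, as you note, that $\CH$ is not actually needed for the width claim.
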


\begin{proof}
If $\CH$ holds, which it does in the natural model of $\DCFA$, then the tree $T(E)$ witnesses the corollary.
\end{proof}

\section{$\DCFA$ implies there are no Kurepa Trees}

In this section I use the forcing from the previous section to prove that $\DCFA$ implies there are no Kurepa trees. This fleshes out an idea sketched in \cite[Chapter VII, Application G]{PIPShelah} and was the motivation of proving Theorem \ref{specwidetree}.
\begin{theorem}
Under $\DCFA$ there are no Kurepa Trees.
\label{DCFAimpliesnotKH}
\end{theorem}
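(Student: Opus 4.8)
The plan is to derive a contradiction from the existence of a Kurepa tree $K$ under $\DCFA$. First I would recall the standard fact that if $K$ is a Kurepa tree then one can extract from it a \emph{wide Aronszajn tree}: namely, consider the tree $K^*$ whose $\alpha$-th level consists of the pairs $(t, b)$ where $t \in K_\alpha$ and $b$ is a cofinal branch of $K$ passing through $t$ — equivalently, work with the tree of branches themselves restricted to levels. More precisely, I would take the tree $B(K)$ of all pairs $(\alpha, b)$ where $\alpha < \omega_1$ and $b$ is the set of nodes of some uncountable branch of $K$ below level $\alpha$, ordered by end-extension of the branch-segments; its $\alpha$-th level has size equal to the number of uncountable branches, hence uncountably many, and it has no cofinal branch (a cofinal branch would correspond to a single branch appearing at every level, but here distinct branches of $K$ give distinct nodes at every level beyond their splitting point, so the tree is $\omega_1$-in-height with uncountable levels and no $\omega_1$-branch — wait, I must be careful: I actually want \emph{no} cofinal branch, and the branches of $B(K)$ correspond to individual uncountable branches of $K$, so $B(K)$ \emph{does} have branches).

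So the correct move is the one sketched in \cite[Chapter VII]{PIPShelah}: form the tree $T = B(K)$ of initial segments of uncountable branches of $K$ (which is itself an $\omega_1$-tree \emph{with} uncountable branches, not Aronszajn), and note that $T$ is an $\omega_1$-tree in the sense of having countable levels only if $K$ has countably many branches — which it doesn't. Let me reconsider. The key object is a wide Aronszajn tree \emph{together with} an embedding into an $\omega_1$-tree. I would argue as follows: fix a Kurepa tree $K$ and let $\mathcal{B}$ be a set of $\aleph_2$ many distinct uncountable branches of $K$. For $\alpha<\omega_1$, let $S_\alpha = \{\,b\cap K_{<\alpha} : b \in \mathcal{B}\,\}$ viewed as a set of nodes, and let $S = \bigcup_\alpha S_\alpha$ with end-extension ordering. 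This $S$ has uncountable levels for large $\alpha$ (since distinct branches eventually separate), no cofinal branch \emph{is the wrong claim} — rather, its cofinal branches correspond exactly to the branches in $\mathcal{B}$. The trick that makes this work is to instead build $S$ so that it \emph{is} Aronszajn: I would pass to the tree of \emph{partial matchings} or use the standard device of replacing $K$ by the $\omega_1$-tree $K$ itself (countable levels) and taking $S\subseteq K$ to be... Actually the cleanest route, and the one I would commit to, is: take $S$ to be a wide Aronszajn tree \emph{associated to the Kurepa family} via the construction that for each pair of branches records the level at which they split, organized as a tree; more concretely, one takes the tree $T(K)$ whose nodes at level $\alpha$ are the equivalence classes of $\mathcal{B}$ under "agree below $\alpha$", which has no cofinal branch precisely because $|\mathcal{B}| > \aleph_1$ forces the class-sizes to shrink to singletons without ever reaching a stable single class — no, this still isn't Aronszajn.

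Given the delicacy, here is the plan I would actually execute, following Shelah's sketch faithfully: from a Kurepa tree $K$ one constructs a \emph{wide Aronszajn subtree of an $\omega_1$-tree} $S \subseteq T$ such that a specializing function on $S$ yields a contradiction with $K$ being Kurepa. The construction: let $T = K$ (an $\omega_1$-tree, countable levels), and let $S$ be the set of nodes of $K$ lying on at least two uncountable branches — equivalently the "uncountably-branching skeleton"; this $S$, with the induced suborder, has the property that if it had a cofinal branch $b$ then... again I need Aronszajn-ness. The resolution: one does \emph{not} apply Theorem \ref{specwidetree} to $K$ directly but to an auxiliary tree. Let $\mathcal{B}$ enumerate $\aleph_2$ branches; define $T'$ to be the $\omega_1$-tree whose level $\alpha$ is $\{b\!\restriction\!\alpha : b\in\mathcal{B}\}$ \emph{collapsed to countable size} — impossible. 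I concede that the precise auxiliary tree is the technical heart, and I would spell it out as: the tree $U$ of all finite partial injections from $\omega$ into $K$ that are "branch-coherent", which is $\sigma$-closed-ish...

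The main obstacle, and the step I expect to be genuinely delicate, is producing the wide Aronszajn tree $S$ embeddable into an $\omega_1$-tree $T$ from the Kurepa tree in such a way that $S$ has \emph{at least $\aleph_2$ many} "would-be branches" coded by the Kurepa branches, so that after applying $\DCFA$ together with Theorem \ref{specwidetree} to specialize $S$ — giving $f: S \to \mathbb{Q}$ strictly increasing on chains — we can count: the $\aleph_2$ branches of $K$ inject into $S$, and a branch through $S$ of length $\omega_1$ would map under $f$ to a strictly increasing $\omega_1$-sequence of rationals, impossible; hence $S$ is Aronszajn; but then (and this is where the Kurepa-ness bites) the $\aleph_2$ branches of $K$ must \emph{already} have been "captured below some countable level" in a way contradicting their distinctness. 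Concretely, once $S$ is special via $f$, for each of the $\aleph_2$ branches $b$ of $K$ one gets an $\omega_1$-chain in $S$ (the images $b\!\restriction\!\alpha$), which $f$ sends to a strictly increasing $\omega_1$-sequence of rationals — contradiction, unless these chains are not cofinal, i.e., two distinct branches $b,b'$ of $K$ give the \emph{same} node of $S$ cofinally often, forcing $b=b'$ by normality. This is the contradiction. So the skeleton of the argument is: (1) from a Kurepa tree $K$ extract $S\subseteq T$, $T$ an $\omega_1$-tree, with $S$ a wide Aronszajn tree carrying an injection of the $\aleph_2$ Kurepa branches as cofinal-ish chains; (2) apply $\DCFA$ and Theorem \ref{specwidetree} to get a specializing function $f$ on $S$; (3) observe that the image of any one Kurepa branch under $f$ is a strictly monotone $\omega_1$-sequence in $\mathbb{Q}$, absurd. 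Step (1) — verifying that the natural $S$ (nodes of $K$ on $\geq 2$ uncountable branches, say) is genuinely Aronszajn, which requires that $K$ has no uncountable branch all of whose nodes lie on $\geq 2$ branches, together with its being a suborder of the countable-levelled $\omega_1$-tree $K$ — is the main thing to get exactly right, and I would devote the bulk of the proof to it, invoking normality of $K$ and a counting/pressing-down argument on the splitting levels of the $\aleph_2$ branches. With that in hand, Theorem \ref{specwidetree} and the forcing axiom finish it.
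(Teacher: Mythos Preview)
Your skeleton has a genuine logical gap at its core: you want $S$ to be a wide Aronszajn tree (no cofinal branch) \emph{and} to carry the $\aleph_2$ Kurepa branches as ``cofinal-ish chains'' so that specializing $S$ forces each into a strictly increasing $\omega_1$-sequence of rationals. These requirements are contradictory. If a branch $b$ of $K$ meets $S$ in an uncountable chain, that chain generates a cofinal branch of $S$, so $S$ is not Aronszajn; if instead every branch of $K$ meets $S$ only countably often, then specializing $S$ tells you nothing about the Kurepa branches. Your various candidate constructions for $S$ (pairs $(\alpha,b)$, initial segments of branches, nodes on $\ge 2$ branches) all founder on exactly this point, as you yourself observe. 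There is no $S\subseteq K$ that is simultaneously Aronszajn and cofinally met by each Kurepa branch.

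The paper's proof (following Baumgartner's $\PFA$ argument) works quite differently and involves two ingredients you are missing. First, one forces with $\mathrm{Col}(\aleph_1,\lambda)$ to collapse the number of branches to $\aleph_1$; this is $\sigma$-closed, hence dee-complete and $<\omega_1$-proper, and adds no new branches to $T$. Only \emph{after} this collapse can one use Baumgartner's injection $g:\mathcal{B}\to T$ (which needs $|\mathcal{B}|\le|T|=\aleph_1$) to build an uncountable $S\subseteq T$ with no uncountable branch. Second, after specializing $S$ via $\mathbb{P}_{T,S}$, one does \emph{not} derive a direct contradiction from chains in $S$; instead one extends the specializing function $f$ from $S$ to all of $T$ using $g$, obtaining a function with the property that $f(s)=f(t)=f(u)$ and $s\le t,u$ forces $t,u$ comparable. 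Pulling such an $f$ back to $V$ via $\DCFA$ (this needs only $\aleph_1$ dense sets since $|T|=\aleph_1$), one then argues that any tree admitting such a function has at most $\aleph_1$ branches --- the contradiction. The collapse step is essential both for constructing $S$ and for keeping the number of dense sets within reach of the forcing axiom.
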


My interest in this result stems from that fact that, in contrast to the situation with $\SCFA$ from the previous section, there are positive implications of $\DCFA$ on the level of $\omega_1$, regardless of the size of the continuum. The proof of Theorem \ref{DCFAimpliesnotKH} follows Baumgartner's original proof from $\PFA$, however using the poset from Theorem \ref{specwidetree}. I include a detailed proof for completeness, but to be clear, there is nothing new here beyond the use of the forcing from Theorem \ref{specwidetree}. Again, the idea of using dee-complete specializing forcing in this way was already present in \cite{PIPShelah}. Note that there the additional hypotheses of $\CH$ and $2^{\aleph_1} = \aleph_2$ are stated, but they do not actually appear in the proof sketch Shelah gives.

\begin{proof}
Assume $\DCFA$ and suppose towards a contradiction that $T$ is a Kurepa tree. Let $\lambda \geq \aleph_2$ be the number of branches through $T$. First, force with $Col (\lambda, \aleph_1)$, the $\sigma$-closed forcing to collapse $\lambda$ to $\aleph_1$. Note that, being $\sigma$-closed, this is dee-complete and ${<}\omega_1$-proper. Work in the collapse extension. As noted in Lemma 7.11 of \cite{BaumPFA} $\sigma$-closed forcing won't add uncountable branches to a tree of width ${<}2^{\aleph_0}$ hence, in particular, there are no new branches added to $T$ in the extension.

I use the following claim, due to Baumgartner, see \cite[Lemma 7.7]{BaumPFA}.
\begin{claim}
There is an uncountable subtree $S \subseteq T$ with no uncountable branches.
\end{claim}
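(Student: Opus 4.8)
The plan is to carry out, inside the collapse extension $W[G]$ (where $G$ is $\mathrm{Col}(\lambda,\aleph_1)$-generic), the subtree construction due to Baumgartner. In $W[G]$ the partial order $T$ is literally unchanged, it still has height $\omega_1$ with countable levels, it has acquired no new cofinal branches by Lemma~7.11 of \cite{BaumPFA}, and $\lambda$, the number of its cofinal branches, has become $\aleph_1$. So I would first fix in $W[G]$ an enumeration $\langle b_\alpha \st \alpha<\omega_1\rangle$ of all cofinal branches of $T$, identifying each $b_\alpha$ with the chain $\{b_\alpha\hook\gamma \st \gamma<\omega_1\}$ of its nodes.

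The construction is by recursion on $\alpha<\omega_1$: choose $\gamma_\alpha<\omega_1$ with $\gamma_\alpha>\sup_{\beta<\alpha}\gamma_\beta$ (and $\gamma_\alpha>\alpha$, say) together with a node $x_\alpha\in T_{\gamma_\alpha}$ lying above no node $b_\beta\hook\gamma_\beta$ with $\beta<\alpha$, i.e. with $x_\alpha\hook\gamma_\beta\neq b_\beta\hook\gamma_\beta$ for every $\beta<\alpha$. Let $S$ be the $\leq_T$-downward closure of $\{x_\alpha\st\alpha<\omega_1\}$. Then $S$ is a subtree of $T$ with the induced order, and it is uncountable: it meets level $\gamma_\alpha$ for each $\alpha$ and $\{\gamma_\alpha\st\alpha<\omega_1\}$ is cofinal in $\omega_1$, so $S$ has height $\omega_1$. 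Moreover $S$ has no uncountable chain. Indeed, an uncountable chain of $S$ lies on a cofinal branch of $T$, hence on some $b_\eta$ with $b_\eta\subseteq S$; but $b_\eta\hook\nu\in S$ only if $b_\eta\hook\nu\leq_T x_\alpha$ for some $\alpha$, and for $\alpha>\eta$ the choice of $x_\alpha$ forces $x_\alpha\hook\gamma_\eta\neq b_\eta\hook\gamma_\eta$, so no such $x_\alpha$ lies above $b_\eta\hook\gamma_\eta$; hence $b_\eta\cap S$ is contained in the levels below $\sup_{\alpha\le\eta}\gamma_\alpha+1$ and so is countable, contradicting that $b_\eta$ is cofinal in $S$. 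So this $S$ is as required, and in the main proof it feeds, together with the identity embedding $S\hookrightarrow T$, into Theorem~\ref{specwidetree}.

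The one point that requires real work --- and is the heart of Baumgartner's lemma --- is that the recursion never gets stuck: at stage $\alpha$ one must find a level $\gamma_\alpha$ above all previous ones and a node at that level avoiding the countably many ``forbidden'' cones $\{t\in T\st t\ge_T b_\beta\hook\gamma_\beta\}$, $\beta<\alpha$. The complement of the union of these cones is a $\leq_T$-downward-closed subtree of $T$, and if it were bounded then every cofinal branch of $T$ would pass through one of the countably many nodes $b_\beta\hook\gamma_\beta$; the subtlety is that, a priori, countably many cones could between them contain all $\aleph_1$ cofinal branches, so the witnessing levels $\gamma_\beta$ must be chosen with care --- pushing each $b_\beta\hook\gamma_\beta$ far enough up that the cone above it carries only countably many cofinal branches whenever that is possible, and, when it is not (when uncountably many cofinal branches pass through $b_\beta$ at every level), first passing to the downward-closed subtree of such ``fat'' nodes, which meets every level, and noting that if \emph{that} subtree has no cofinal branch it is already the desired Aronszajn subtree. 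Disentangling these cases and the attendant bookkeeping, so that at every stage the accumulated forbidden cones leave some cofinal branch of $T$ uncovered, is the combinatorial content of \cite[Lemma~7.7]{BaumPFA}, and I would follow that argument; this is the only place the hypothesis that $T$ has exactly $\aleph_1$ cofinal branches --- a consequence of the collapse --- is used.
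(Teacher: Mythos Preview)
Your approach differs from the paper's. The paper cites Baumgartner's Lemma~7.6, which supplies an injection $g:\mathcal B\to T$ with $g(b)\in b$ and the separation property $g(b_1)<_T g(b_2)\Rightarrow g(b_2)\notin b_1$; it then sets $S=\{t\in T:\forall b\in\mathcal B\,(t\in b\Rightarrow t\le_T g(b))\}$ and verifies in a few lines that $S$ is uncountable (it contains $\ran(g)$) and has no cofinal branch.

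Your recursive construction is a different idea, and the verification that your $S$ has no uncountable branch is fine once the $x_\alpha$'s exist. But the crucial step---that at each stage $\alpha$ a suitable $x_\alpha$ can be found---is not established. Your dichotomy is self-defeating as written: you pass to the fat subtree $T^*$ precisely in the situation that some $b_\beta$ lies entirely inside it, so $T^*$ already has $b_\beta$ as a cofinal branch and your clause ``if that subtree has no cofinal branch'' can never fire. Handling the case where $T^*$ carries branches needs further work (e.g.\ iterating the fat-node construction, or---as Baumgartner and the paper do---first producing the function $g$ of Lemma~7.6 and using its separation property to drive the recursion). Appealing to \cite[Lemma~7.7]{BaumPFA} at this point is circular: Lemma~7.7 \emph{is} the claim.

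It is also worth noting how the claim feeds into the surrounding proof. The paper's specific $S$ and the function $g$ are reused immediately afterward: once $S$ is specialized, $f$ is extended to each $t\in T\setminus S$ by setting $f(t)=f(g(b))$ for the unique branch $b$ with $g(b)\le_T t$, and the argument that this extension has the required comparability property rests on the separation property of $g$ and on the exact definition of $S$. Your $S$ (the downward closure of the $x_\alpha$'s) does not come packaged with such a $g$, so even granting the claim, the next paragraph of the main proof would have to be redone.
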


\begin{proof}
Let $\mathcal B$ denote the set of uncountable branches through $T$. By the remark preceding the claim we have that $|\mathcal B| = \aleph_1$. By Lemma 7.6 of \cite{BaumPFA} there is an injection $g:\mathcal B \to T$ so that for each $b \in \mathcal B$ $g(b) \in b$ and whenever $g(b_1) <_T g(b_2)$ then $g(b_2) \notin b_1$. Now let $S = \{t \in T \; | \; \forall b \in \mathcal B \, {\rm if} \; t \in b \, {\rm then} \; t \leq_T g(b)\}$. This is a tree with the order inherited from $T$. Moreover, it's uncountable since it countains the range of $g$. It remains to see that this tree has no uncountable branches.

Towards a contradiction, suppose that $b$ were an uncountable branch through $S$. Let $\bar{b} = \{t \in T \; | \; \exists s \in b \; t <_T s\}$ i.e. the downward closure of $b$ in $T$. This must be an uncountable branch through $T$. But then since $g(\bar{b}) \in \bar{b}$ we get an $s \in b$ with $g(\bar{b}) \leq_T s$ contradicting the definition of $S$.
\end{proof} 

Now, by applying the specializing forcing $\mathbb P_{T, S}$ from Theorem \ref{specwidetree} to $T$ and $S$ and working in that extension we have that $S$ is special. Let $f:S \to \mathbb Q$ be such a specializing function. Let $t\in T \setminus S$. We extend $f$ to include $t$ as follows. Since $t \notin S$ there is a branch $b$ so that $t \in b$ but $g(b) \leq_T t$. This branch is unique: If $g(b_1) <_T g(b_2) <_T t$ with $t\in b_1 \cap b_2$ then in particular $g(b_2) \in b_1$ which contradicts the choice of $g$. Now let $f(t) = f(g(b))$ for this unique branch. 

\begin{claim}
$f:T \to \mathbb Q$ has the property that if $f(s) = f(t) = f(u)$ and $s \leq_T t, u$ then $t$ and $u$ are comparable.
\end{claim}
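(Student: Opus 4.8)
The plan is to isolate the single fact that does all the work: a canonical assignment of an $S$-node to each node of $T$ that records its $f$-value. For $x\in T$ I would set $h(x)=x$ if $x\in S$, and $h(x)=g(b_x)$ if $x\notin S$, where $b_x$ is the (unique) branch with $x\in b_x$ and $g(b_x)<_T x$ supplied by the construction above. The basic properties to note are: $h(x)\le_T x$; $h(x)\in S$, since $\ran(g)\subseteq S$ (immediate from the defining property of $g$ that $g(b_1)<_T g(b_2)$ implies $g(b_2)\notin b_1$); and $f(h(x))=f(x)$, which for $x\notin S$ holds \emph{by definition}, as $f(x)$ was declared equal to $f(g(b_x))$.

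The crucial lemma I would prove is: \emph{if $x\le_T y$ and $f(x)=f(y)$, then $h(x)=h(y)$.} Indeed, $h(x)\le_T x\le_T y$ and $h(y)\le_T y$, so $h(x)$ and $h(y)$ are both predecessors of $y$ and hence $\le_T$-comparable; both lie in $S$ and have the common $f$-value $f(x)=f(y)$; and since $f\restriction S$ is strictly increasing along chains of $T$, two comparable nodes of $S$ with equal $f$-value must coincide. This is where, and the only place where, the strictness of the specializing function is used, and it is precisely why one needs $h(x),h(y)$ to be comparable rather than merely both in $S$.

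Granting the lemma, the claim follows by a short case split. Suppose $f(s)=f(t)=f(u)=:q$ with $s\le_T t$ and $s\le_T u$; if $s=t$ or $s=u$ we are done, so assume $s<_T t$ and $s<_T u$. Applying the lemma to $s\le_T t$ and to $s\le_T u$ gives $h(t)=h(s)=h(u)=:v\in S$. Now $t\notin S$: otherwise $h(t)=t$, forcing $v=t$, yet $v=h(s)\le_T s<_T t=v$, absurd; symmetrically $u\notin S$. Hence $v=h(t)=g(b_t)$ and $v=h(u)=g(b_u)$, so by injectivity of $g$ we get $b_t=b_u$, and therefore $t\in b_t$ and $u\in b_u=b_t$ lie on a common branch of $T$ and are comparable, as required.

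The genuinely delicate inputs are not in this argument but have already been secured earlier: the well-definedness of $h$ on $T\setminus S$ (uniqueness of the witnessing branch $b_x$, proved just before the claim) and the containment $\ran(g)\subseteq S$. Once those are in hand the claim itself is essentially bookkeeping, so the real ``obstacle'' is simply recognizing the canonical-representative map $h$ and the observation that comparability inside $S$ is automatically forced by tracking predecessors of a common upper bound.
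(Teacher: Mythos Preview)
Your proof is correct and uses the same ingredients as the paper's proof: the containment $\ran(g)\subseteq S$, the strict monotonicity of $f$ on $S$-chains, the uniqueness of the witnessing branch, and the injectivity of $g$. The paper carries out a direct case analysis (first arguing that $s,t\notin S$ except possibly $s=g(b)$, then ruling out $s<_T g(b_t)$ to force $b_t=b_s$, and symmetrically for $u$), whereas you package the same reasoning into the canonical-representative map $h$ and the single lemma ``$x\le_T y$, $f(x)=f(y)\Rightarrow h(x)=h(y)$''. Your organization is cleaner and makes the logical structure more transparent, but the underlying argument is the same.
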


\begin{proof}
Let $s \leq_T t, u$ be as in the claim. Since $f(t) = f(s)$ at least one of $t$ and $s$ is not in $S$ since $f$ is injective on chains in $S$. In fact neither $s$ nor $t$ are in $S$ unless $s = g(b)$ for some $b$. To see this, first note that if $s \in S$ then, since $t \notin S$ we would have that there is some $b$ so that $b$ is the unique branch with $t \in b$ and $g(b) \leq_T t$ and, since $s \in b$ as well and $s \in S$ we have that $s \leq_T g(b)$ and so either $s = g(b)$ or $f(s) \neq f(g(b)) = f(t)$ which is a contradiction. Similarly if $t \in S$ then since $s \notin S$ there is some branch $c$ so that $s \in c$ but $g(c) \leq_T s$ and since $g(c), t \in S$ and $g(c) <_T t$ we have that $f(g(c)) \neq t$ but this is a contradiction since $f(g(c)) = f(s) = f(t)$.

Now, let $b$ be the unique branch so that $t \in b$ and $g(b) \leq_T t$. As noted before, $s \in b$ as well. If $s <_T g(b)$ then there is a branch $c \neq b$ so that $s \in c$ and $g(c) \leq_T s$ (since either $s = g(c)$ or is above it, by the argument in the previous paragraph). But now $g(c), g(b) \in S$ and $g(c) <_T g(b)$ so $f(g(c)) \neq f(g(b))$ but this is a contradiction since $f(s) = f(g(c))$ and $f(t) = f(g(b))$. Therefore $g(b) \leq_T s$, $b = c$ and hence $s \in b$. A symmetric argument allows one to conclude the same for $u$ so $t, s, u \in b$ and hence are comparable.
\end{proof}

Now, finally applying $\DCFA$ we can pull back to $V$ and find an $f:T \to \mathbb Q$ as in the last claim (note that such an $f$ required meeting only $\aleph_1$ many dense sets since $|T| = \aleph_1$). Therefore, the proof of Theorem \ref{DCFAimpliesnotKH} will be done once we show that the existence of such a function $f$ implies there are at most $\aleph_1$ many cofinal branches through $T$. This is Theorem 7.4 of \cite{BaumPFA} coupled with the remarks preceeding its statement on page 949 of the same article. From such an $f:T \to \mathbb Q$ we can define a function $g:\mathcal B \to T$ as follows. For each $b \in \mathcal B$ by pigeonhole there is some $r \in \mathbb Q$ so that $\{t \in b \; |\; f(t) = r\}$ is cofinal in $b$. Pick such an $r$ and let $g(b)$ be the least $t \in b$ with $f(t) = r$ (or indeed any such $t$). By the definition of $f$, if $b_1 \neq b_2$ then $g(b_1) \neq g(b_2)$. To see this, suppose that $g(b_1) = g(b_2) = s$, let $f(s) = r$ and let $t \in b_1 \setminus b_2$ with $f(t) = r$ and $u \in b_2 \setminus b_1$ with $f(u) = r$. Such $t$ and $u$ exist by the assumption on $r$. But this is a contradiction since we have that $s \leq_T t, u$ with $f(s) = f(t) = f(u)$ and $t$ and $u$ are incomparable. Therefore $g$ is an injection from $\mathcal B$ into $T$ so $|\mathcal B| \leq \aleph_1$.
\end{proof} 

An $\omega_1$-tree $T$ is called {\em essentially special} if there is an $f:T \to \mathbb Q$ which is (weakly) increasing on chains and for all $s, t, u \in T$ if $s \leq_T t, u$ and $f(s) = f(t) = f(u)$ then $t$ and $u$ are comparable. The above proof actually shows the following.

\begin{theorem}
Under $\DCFA$ all $\omega_1$-trees are essentially special. 
\end{theorem}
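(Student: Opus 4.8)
The plan is to observe that the proof of Theorem~\ref{DCFAimpliesnotKH} never really used that $T$ has more than $\aleph_1$ cofinal branches: the collapse step was harmless, and the rest of the argument produces an essentially special function on \emph{all} of $T$. So I would simply re-run that argument for an arbitrary $\omega_1$-tree. Fix such a $T$ (so $|T|=\aleph_1$, countable levels), let $\mathcal{B}$ be its set of cofinal branches, and consider the two-step iteration $\mathbb{Q}=\mathrm{Col}(|\mathcal{B}|,\aleph_1)*\dot{\mathbb{P}}_{T,\dot{S}}$. In the $\sigma$-closed collapse extension no new cofinal branch through $T$ is added (by \cite[Lemma~7.7]{BaumPFA}), so $\mathcal{B}$ is unchanged and now has size at most $\aleph_1$; in that extension one runs Baumgartner's construction \cite[Lemma~7.6]{BaumPFA} to obtain an injection $g:\mathcal{B}\to T$ with $g(b)\in b$ and $g(b_1)<_T g(b_2)\Rightarrow g(b_2)\notin b_1$, sets $\dot S=\{t\in T:\forall b\in\mathcal{B}\ (t\in b\to t\leq_T g(b))\}$ (a subtree of $T$ with no cofinal branch, exactly as in the Claim in the proof of Theorem~\ref{DCFAimpliesnotKH}, and with the suborder induced from the $\omega_1$-tree $T$, so that Theorem~\ref{specwidetree} applies), and lets $\dot{\mathbb{P}}_{T,\dot S}$ be the associated specializing forcing. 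Since $\mathrm{Col}(|\mathcal{B}|,\aleph_1)$ is $\sigma$-closed, hence dee-complete and ${<}\omega_1$-proper, and $\dot{\mathbb{P}}_{T,\dot S}$ is forced to have these properties by Theorem~\ref{specwidetree}, the iteration theorem (Theorem~\ref{iterationofdeecomplete}) shows $\mathbb{Q}$ is dee-complete and ${<}\omega_1$-proper, so $\DCFA$ applies to it.

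Next I would fix the canonical $\mathbb{Q}$-name $\dot f$ for the function on $T$ built from a generic $G*H$ as in the proof of Theorem~\ref{DCFAimpliesnotKH}: $\dot f$ extends the specializing function $\bigcup H$ on $\dot S$, and for $t\notin\dot S$ one sets $\dot f(t)=(\bigcup H)(g(b))$ for the unique branch $b$ through $t$ with $g(b)\leq_T t$ (uniqueness of $b$, and the fact $g(b)\in\dot S$, are checked precisely as there). The Claim in that proof shows this $\dot f$ is essentially special, and this is forced by $1_{\mathbb{Q}}$. Now for each $t\in T$ let $D_t$ be the set of conditions $(p,\dot q)\in\mathbb{Q}$ that decide whether $t\in\dot S$, force $t\in{\rm dom}(\dot q)$ if so, and otherwise decide a node $s<_T t$ with $s\in\dot S$, $s\in{\rm dom}(\dot q)$, and $s=g(b)$ for the branch $b$ through $t$ — in either case pinning down the value $\dot f(t)$. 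Density of $D_t$ follows by first extending $p$ to force the relevant branch to appear in the collapse-generic enumeration of $\mathcal{B}$ (possible since $\mathrm{Col}$ is $\sigma$-closed and the enumeration has length $\omega_1$), and then applying the Extension Lemma~\ref{extensionlemma} inside $\dot{\mathbb{P}}_{T,\dot S}$ to enlarge ${\rm dom}(\dot q)$. Since $|T|=\aleph_1$, this is a family of $\aleph_1$ dense sets, so $\DCFA$ yields a filter $G$ meeting all of them; defining $f(t)=r$ whenever some $(p,\dot q)\in G\cap D_t$ decides $\dot f(t)=\check r$ gives a total $f:T\to\mathbb{Q}$ in $V$.

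Finally I would verify that $f$ is essentially special in $V$: given $s\leq_T t,u$ with $f(s)=f(t)=f(u)$, choose $(p,\dot q)\in G$ below the conditions in $G$ witnessing membership of $s,t,u$ in $D_s,D_t,D_u$; then $(p,\dot q)$ forces ``$\dot f(\check s)=\dot f(\check t)=\dot f(\check u)$'', and since $1_{\mathbb{Q}}$ forces ``$\dot f$ is essentially special'', $(p,\dot q)$ forces ``$\check t$ and $\check u$ are $\leq_T$-comparable''; as $\leq_T$-comparability is absolute, $t$ and $u$ are comparable in $V$. The main obstacle — and the only genuinely new point beyond transcribing the proof of Theorem~\ref{DCFAimpliesnotKH} — is exactly this pull-back: checking that $D_t$ really is dense (which is where the Extension Lemma and the interaction between the two steps of the iteration come in) and that meeting only $\aleph_1$ of the $D_t$'s, together with the essential specialness of the name $\dot f$, transfers faithfully to a function living in $V$. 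Everything else (the combinatorics of $g$ and $S$, the extension rule for $f$, and the proof that the extended function is essentially special) is imported verbatim from the proof of Theorem~\ref{DCFAimpliesnotKH}.
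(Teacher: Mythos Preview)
Your proposal is correct and follows essentially the same route as the paper: the paper's ``proof'' is literally the sentence ``The above proof actually shows the following,'' meaning that the argument for Theorem~\ref{DCFAimpliesnotKH} (collapse, build $S$ via Baumgartner's $g$, specialize $S$ with $\mathbb{P}_{T,S}$, extend $f$ to all of $T$, pull back through $\DCFA$ using $\aleph_1$ dense sets) already produces an essentially special function on an arbitrary $\omega_1$-tree, with the Kurepa hypothesis only used at the very last step to derive the contradiction. You have simply made explicit the dense sets $D_t$ and the verification of the pull-back that the paper compresses into one parenthetical remark; the only minor slip is that the no-new-branches fact is \cite[Lemma~7.11]{BaumPFA} rather than Lemma~7.7.
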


In contrast with the case of $\PFA$, note that by Corollary \ref{cornonspec} this result cannot be improved to trees of width $\omega_1$. Note also that $\SCFA$ does not imply this since consistently there may be Souslin trees in a model of $\SCFA$. As mentioned in the previous chapter it is open whether or not $\SCFA$ implies that there are no Kurepa trees.

\section{Looking Forward: Some Cardinal Characteristics and Many Open Questions}

The previous sections suggest some new directions for studying wide trees, particularly in connection with cardinal characteristics. While I leave an in depth investigation of these ideas for future research I want to finish this chapter by recording some easy observations and connecting them back to what has been shown.

The main observation is that the behavior of trees is as much connected to their width and cardinality as to their height. This is obscured by the fact that the ccc forcing to specialize a tree works equally well regardless of the width of the tree. However, the trees of the form $T(E)$ suggest that there is something more subtle going on with regards to specializing wider trees. The following cardinals attempt to measure this.

\begin{definition}
\begin{enumerate}
\item
$\mathfrak{st}$, the $\mathfrak{s}$pecial $\mathfrak{t}$ree number, is the least cardinal $\lambda$ such that there is a non-special $(\omega_1, {\leq}\lambda)$-Aronszajn tree of cardinality $\lambda$.
\item
$\mathfrak{no}$, the $\mathfrak{no}$ new reals number, is the least cardinal $\lambda$ of an $(\omega_1, 
{\leq}\lambda)$-Aronszajn tree of cardinality $\lambda$ which can be forced to be special without adding reals.
\end{enumerate}
\end{definition}

Let's make some easy observations.
\begin{observation}
$\aleph_1 \leq \mathfrak{st} \leq \mathfrak{no}\leq \mfc$
\end{observation}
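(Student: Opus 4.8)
The plan is to verify the three inequalities $\aleph_1\le\mathfrak{st}$, $\mathfrak{st}\le\mathfrak{no}$ and $\mathfrak{no}\le\mfc$ one at a time, first observing that both cardinals are well defined. For this, recall from Lemma~\ref{lemmaT(S)} that for any stationary, co-stationary $E\subseteq\omega_1$ the tree $T(E)$ is a $\ZFC$-provable example of a non-special $(\omega_1,{\le}2^{\aleph_0})$-Aronszajn tree of cardinality $2^{\aleph_0}=\mfc$; this already witnesses both that $\mathfrak{st}$ exists and that $\mathfrak{st}\le\mfc$. The inequality $\aleph_1\le\mathfrak{st}$ is then pure cardinality bookkeeping: a normal tree of height $\omega_1$ has a node on each of its $\omega_1$ levels, hence at least $\aleph_1$ nodes, so every $(\omega_1,{\le}\lambda)$-Aronszajn tree of cardinality $\lambda$ satisfies $\lambda\ge\aleph_1$, and in particular $\mathfrak{st}\ge\aleph_1$.

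For $\mathfrak{st}\le\mathfrak{no}$: a witness for $\mathfrak{no}$ is an $(\omega_1,{\le}\mathfrak{no})$-Aronszajn tree $S$ of cardinality $\mathfrak{no}$ which can be forced to be special without adding reals, and we read this definition with the standing proviso that $S$ is \emph{not} already special (otherwise the defining clause is vacuous and carries no information). Then $S$ is a non-special Aronszajn tree of cardinality $\mathfrak{no}$, so by the definition of $\mathfrak{st}$ we immediately obtain $\mathfrak{st}\le\mathfrak{no}$.

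The real content is $\mathfrak{no}\le\mfc$, and here the plan splits according to whether $\mathfrak{st}=\aleph_1$. If $\mathfrak{st}=\aleph_1$ — in particular whenever $\CH$ holds, by the remark on $T(E)$ above — one wants a non-special Aronszajn tree $S$ of cardinality $\aleph_1$ which embeds, with the induced suborder, into some $\omega_1$-tree $T$: then Theorem~\ref{specwidetree} supplies a dee-complete, ${<}\omega_1$-proper forcing $\mathbb P_{S,T}$ specializing $S$, and since dee-complete forcing adds no reals this gives $\mathfrak{no}\le\aleph_1\le\mfc$. When the witnessing non-special tree is thin it embeds into itself and there is nothing to do; when it is a wide tree of size $\aleph_1$ like $T(E)$ this route fails outright, since by Theorem~\ref{embedthm} the tree $T(E)$ embeds into no $\omega_1$-tree, so one must instead produce a genuinely different non-special wide Aronszajn tree sitting inside an $\omega_1$-tree (with a stretched level structure, e.g.\ via a direct walks-on-ordinals construction). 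The hardest case, and the main obstacle, is when every Aronszajn tree of cardinality $\aleph_1$ is special — so $\mathfrak{st}>\aleph_1$, as happens under $\MA+\neg\CH$ — since then a witness for $\mathfrak{no}$ necessarily has cardinality above $\aleph_1$, cannot embed into any $\omega_1$-tree, and Theorem~\ref{specwidetree} does not apply; one would have to build from scratch a dee-complete, ${<}\omega_1$-proper forcing specializing some wide Aronszajn tree of size $>\aleph_1$ without adding reals. Settling this point is intimately tied to the open questions on wide trees raised below, and is the crux of the whole statement.
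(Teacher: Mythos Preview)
Your argument for $\aleph_1\le\mathfrak{st}$ is fine, but you have been led astray by what is evidently a typo in the definition of $\mathfrak{no}$: it should read that $\mathfrak{no}$ is the least cardinality of an $(\omega_1,{\le}\lambda)$-Aronszajn tree of size $\lambda$ which \emph{cannot} be forced to be special without adding reals. This is the only reading consistent with the paper's own proof, and it also matches the name (the ``$\mathfrak{no}$ new reals'' number marks the first place where specializing necessarily costs new reals). Your ad hoc proviso that the witness not already be special was a symptom of trying to squeeze content out of the wrong definition.

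With the intended definition both remaining inequalities are one-liners, exactly as in the paper. For $\mathfrak{st}\le\mathfrak{no}$: any tree that cannot be specialized without adding reals is in particular not special, since a special tree is trivially ``specialized'' by the trivial forcing; hence every witness for $\mathfrak{no}$ is already a witness for $\mathfrak{st}$. For $\mathfrak{no}\le\mfc$: Lemma~\ref{lemmaT(S)} shows $T(E)$ is an $(\omega_1,{\le}\mfc)$-Aronszajn tree of size $\mfc$ which cannot be specialized by any forcing that adds no reals, so it witnesses $\mathfrak{no}\le\mfc$ directly. Your entire case analysis for the third inequality --- invoking Theorem~\ref{specwidetree}, splitting on whether $\mathfrak{st}=\aleph_1$, and proposing to build new dee-complete forcings for wide trees --- is aimed at producing a tree that \emph{can} be specialized without adding reals, which is the wrong target; none of it is needed.
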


\begin{proof}
That $\mathfrak{st}$ is uncountable is essentially by definition. To see that $\mathfrak{st}\leq \mathfrak{no}$ it suffices to note that any special tree is obviously specializable without adding reals (by trivial forcing). Finally Todorcevic's tree $T(E)$ defined above witnesses that there is always a tree of size continuum that cannot be specialized without adding reals.
\end{proof}

I do not know exactly what these cardinals can be. It's clear that $\mathfrak{st}$ can remain $\aleph_1$ in models where many other cardinal characteristics are big since nearly all known cardinal characteristics can be made to have size continuum ($\neq \aleph_1$) while preserving the existence of a Souslin tree since we can make all cardinals (except $\mathfrak{m}$) in the Cicho\'n and van Douwen diagrams large using $\sigma$-linked forcing. The following however is less clear.
\begin{question}
What provable bounds exist between known cardinal invariants and $\mathfrak{st}$? For instance, is it provable that $\mathfrak{st} \leq \mfd$? 
\end{question}
The number $\mathfrak{no}$ seems even more mysterious. I do not even know if it can consistently be less than the continuum.
\begin{question}
Is it consistent that $\mathfrak{no} < \mfc$? Is it consistent that $\mathfrak{st} < \mathfrak{no}$?
\end{question}
A potentially easier question, for which I conjecture the answer is ``yes" is the following:
\begin{question}
Does $\DCFA$ imply that $\mathfrak{no} = \mfc$?
\end{question}

Finally let me ask about extensions of the main theorem of this chapter.
\begin{question}
Are there (in $\ZFC$) trees which can be specialized without adding reals but are not embeddable into $\omega_1$-trees?
\end{question}

The use of forcing notions which specialize wide trees is key in several important applications of $\PFA$ including failure of various square principles, and the tree property on $\omega_2$. Therefore a natural question is whether the forcing $\mathbb P_{T,S}$ can be substituted in in these arguments.
\begin{question}
What other consequences of $\DCFA$ (possibly with some additional cardinal arithmetic assumption) can be obtained using $\mathbb P_{T,S}$? Does $\DCFA + \neg \CH$ imply the tree property on $\omega_2$? Does it imply the failure of weak square on $\omega_1$?
\end{question}
\clearpage

\backmatter

\singlespacing


\bibliographystyle{plain}
\bibliography{PhDBib}

\end{document}